\newcommand{\Sha}{\mathrm{Sha}}
\newcommand{\modd}{/\!\!/}
\newcommand{\Frob}{\mathrm{Frob}}
\newcommand{\Ho}{\mathrm{Ho}}
\newcommand{\Maps}{\mathrm{Hom}}
\newcommand{\Sing}{\mathrm{Sing}}
\newcommand{\colim}{\mathrm{colim}}
\newcommand{\crys}{\mathrm{crys}}
\newcommand{\Lie}{\mathrm{Lie}} 
\newcommand{\GL}{\mathrm{GL}}
\newcommand{\dotimes}{\underline{\otimes}}
\newcommand{\adele}{\mathbb{A}}
\newcommand{\ur}{\mathrm{ur}}
\newcommand{\loc}{\mathrm{loc}}
\newcommand{\SL}{\mathrm{SL}}
\newcommand{\End}{\mathrm{End}}
 \newcommand{\Map}{\mathrm{Hom}}
\newcommand{\Sets}{\mathrm{Sets}}
\newcommand{\Der}{\mathrm{D}}
\newcommand{\Hom}{\mathrm{Hom}}
\newcommand{\Vinf}{\mathsf{V}}
 \newcommand{\fram}{\Box}
\renewcommand{\dim}{\mathrm{dim}}
\newcommand{\smallC}{\mathrm{Art}}
\newcommand{\SCR}{\mathrm{SCR}}
\newcommand{\Tor}{\mathrm{Tor}}
\newcommand{\Ker}{\mathrm{Ker}}
\newcommand{\Art}{\mathrm{Art}}
\newcommand{\Ext}{\mathrm{Ext}}
\newcommand{\hocolim}{\mathrm{hocolim}}
\newcommand{\holim}{\mathrm{holim}}
\newcommand{\Spec}{\mathrm{Spec}}
\newcommand{\Ad}{\mathrm{Ad}}
\newcommand{\Def}{\mathrm{Def}}
\newcommand{\hs}{\mathrm{hs}}
\newcommand{\Cotor}{\mathrm{Cotor}}
\newcommand{\Exinf}{\mathrm{Ex}^\infty}
\newcommand{\hofib}{\mathrm{hofib}}
\newcommand{\C}{{\Bbb C}}
\newcommand{\Z}{{\Bbb Z}}
 \newcommand{\R}{{\Bbb R}}
\newcommand{\PGL}{\mathrm{PGL}}
\newcommand{\DK}{\text{Dold--Kan}}
\newcommand{\Ch}{\mathrm{Ch}}
\newcommand{\Simp}{\mathrm{Simp}}
\newcommand{\F}{{\mathbb F}}
\newcommand{\N}{{\Bbb N}}
\newcommand{\Q}{\mathbb{Q}}
\newcommand{\Gal}{\mbox{Gal}}
\newcommand{\G}{\mathbf{G}}
\newcommand{\GG}{\mathbf{G}}
\newcommand{\bigtangent}{\mathfrak{t}}
\newcommand{\tangent}{\mathfrak{t}  }
\newcommand{\rhobar}{\overline{\rho}}
\numberwithin{equation}{section}
\numberwithin{table}{section}
\numberwithin{figure}{section}
\newtheorem{theorem}{Theorem}[section]
\newtheorem{Proposition}[theorem]{Proposition}
\newtheorem{Corollary}[theorem]{Corollary}
\newtheorem{Lemma}[theorem]{Lemma}
\newtheorem{Remark}[theorem]{Remark}
\newtheorem{Definition}[theorem]{Definition}
\newtheorem{Conjecture}[theorem]{Conjecture}
\newtheorem{Example}[theorem]{Example}
\begin{document}

 \title{Derived Galois deformation rings}
 \author{S. Galatius, A. Venkatesh}

 \begin{abstract}
 We define a derived version of Mazur's Galois deformation ring. It is a pro-simplicial ring $\mathcal{R}$ classifying deformations
 of a fixed Galois representation 
to simplicial coefficient rings; its zeroth homotopy group $\pi_0 \mathcal{R}$ recovers Mazur's deformation ring. 

 We give evidence that these rings $\mathcal{R}$ occur in the wild: For suitable Galois representations, the Langlands program predicts
 that $\pi_0 \mathcal{R}$ should act on the homology of an arithmetic group. 
  We explain how the Taylor--Wiles method can be used to upgrade  such an action   to a graded action of $\pi_* \mathcal{R}$ on the homology.

 \end{abstract}
    \maketitle 
 
 {\small     
    \tableofcontents }

 \section{Introduction} 
 
 \subsection{} \label{introintro}

The  Langlands program posits a bijection between automorphic forms and Galois representations.
The study of $p$-adic congruences leads to the definition of a ``$p$-adic moduli space of automorphic forms,''
namely the spectrum $\mathrm{Spec}(\mathrm{T})$ of a suitable Hecke algebra,
and a ``$p$-adic moduli space of (geometric, e.g.\ crystalline) Galois representations,'' the spectrum $\mathrm{Spec}(\mathrm{R})$
of Mazur's Galois deformation ring.   Proving that the natural map
\begin{equation} \label{RT} \mathrm{R} \stackrel{\sim}{\rightarrow } \mathrm{T} \end{equation} is   an isomorphism is the basis
of modern
proofs of modularity, after the work of Wiles and Taylor--Wiles \cite{W, TW}.

 The purpose of this paper is to construct  a derived version $\mathcal{R}$ of $\mathrm{R}$, a pro-finite simplicial ring; it represents 
 Galois deformations with coefficients in simplicial commutative rings.  It is unsurprising that a derived version should play a role: speaking informally,  
 the  locus of crystalline Galois representations of $\Gal(\overline{\Q}/\Q)$ is obtained by intersecting the  space of Galois representations of $\Gal(\overline{\Q}/\Q)$ with the space of local geometric representations of $\Gal(\overline{\Q}_p/\Q_p)$ and the intersection is not, in general, transverse.

  The existence of $\mathcal{R}$  follows by applying the derived Schlessinger criterion  
 of Lurie \cite{LurieThesis}. The first part of this paper  is a leisurely exposition of this criterion and  also supplies various basic results that are useful when applying it.
  The goal of the second part is to explain how $\mathcal{R}$  arises naturally in the Taylor--Wiles method, or rather the obstructed version of this method developed  by Calegari and Geraghty: \cite{CG, H, KT}.

Our initial motivation for this construction was the numerology of the Betti numbers for an arithmetic group $\Gamma$. 
 For instance, if 
$\Gamma \leqslant \SL_n(\Z)$,  
it appears that the exterior algebra of a vector space  of dimension
  $\delta:=\left[ \frac{n-1}{2} \right]$ acts on $H^*(\Gamma, \overline{\Q}_p)_{\chi}$, 
  where the subscript $\chi$ means that we take the $\chi$-eigenspace for a tempered character $\chi$ of the Hecke algebra, and  there is a similar story for any arithmetic group (see \cite{VTakagi} for elaboration and discussion).  Now
 let  $\mathcal{O}$  be the ring of integers of $\overline{\Q}_p$  and  $\rho: \Gal(\overline{\Q}/\Q) \rightarrow \GL_n(\mathcal{O})$  the Galois representation attached to $\chi$ and $\rhobar$ its mod $p$ reduction.
Suppose the standard conjecture that $\rho$ does not have characteristic zero  crystalline deformations.
Then $\pi_* \mathcal{R}_{\rhobar} \otimes_{\pi_0 \mathcal{R}_{\rhobar}}   \overline{\Q}_p$ is   isomorphic to an exterior algebra on $\delta$ generators,
where the map $\pi_0 \mathcal{R}_{\rhobar} \rightarrow \overline{\Q}_p$ is the one associated to $\rho$.

 This  numerical coincidence naturally suggests that $\pi_* \mathcal{R}_{\rhobar}$ might in fact freely act on the integral homology $H_*(\Gamma,   \mathcal{O})_{\rhobar}$,  where the subscript 
 $\rhobar$ on homology means that we localize at the corresponding ideal of the Hecke algebra.
This should generalize 
 the way that the usual deformation ring acts on the  homology of a modular curve, and this is indeed what we establish under
 suitable hypotheses.  (Note that the precise hypotheses  are rather involved, but we give references to where they are specified;
 also, in the main text, we work with a specific finite field $k$ and its Witt vectors, rather than $\overline{\mathbb{F}}_p$ and $\mathcal{O}$ as above.)
 \begin{quote} {\em  Part of the main theorem:} 
 Assume the existence of Galois representations attached to  (possibly torsion) homology classes (Conjecture \ref{GaloisRepConjecture}); this
 amounts to giving an action of $\pi_0 \mathcal{R}_{\rhobar}$ on $H_*(\Gamma,\mathcal{O})_{\rhobar}$.
 Then,  under   conditions on $\rhobar$ that include enforcing ``minimal level,'' (\S \ref{minimalevel}, \S \ref{TWnotationsetup}), 
 the homology $H_*(\Gamma, \mathcal{O})_{\rhobar}$ admits a free action of $\pi_* \mathcal{R}_{\rhobar}$,
extending the  action of $\pi_0 \mathcal{R}_{\rhobar}$.  \end{quote}

 For more, see  \S \ref{proofs} and   the exact result is 
Theorem \ref{TW main theorem}.   Our simplifying assumptions on $\rhobar$ 
are certainly restrictive;   nonetheless we expect the statement above to be valid 
without any local assumptions, at least as long as one deals with crystalline representations in the Fontaine--Laffaille range. 
See also Remark  \ref{minimal level}.

   The proof amounts to using the Taylor--Wiles method   
  to upgrade  the action of the usual deformation ring $\pi_0 \mathcal{R}$ on  $H_*(\Gamma, \mathcal{O})_{\rhobar}$ 
 to a graded action of $\pi_* \mathcal{R}$.    Very roughly speaking, the Taylor-Wiles method shows that one can lift to a situation
 with unobstructed deformation theory, i.e.\ where the analogue $\mathcal{R}'$ of $\mathcal{R}$  satisfies $\mathcal{R}' \simeq \pi_0 \mathcal{R}'$; and then one descends. This method of defining the action of $\pi_* \mathcal{R}$ is very indirect,
 and we hope that it can be replaced by a better, more direct construction.
 
 In more detail: the method of Calegari and Geraghty has already shown that the homology  $H_*(\Gamma, \mathcal{O})_{\rhobar}$  has the structure of a free module
under a certain $\Tor$-algebra that arises in the Taylor-Wiles limit process.  The meaning of this $\Tor$-algebra is {\em a priori} obscure, because it depends
on all the choices made in that limit process.  So what we really do is to identify
this $\Tor$-algebra with a more intrinsic object, $\pi_* \mathcal{R}$. 

What would be even better  would be to refine this result by giving a chain level action of the simplicial ring $\mathcal{R}$
on the chain complex of $\Gamma$.

After a brief review   (\S \ref{sec:intro to scr}) of simplicial rings,
we give a quick overview of the definitions    (\S \ref{overview}) and  explain
what we prove about it (\S \ref{proofs}).

\subsection{Simplicial commutative rings} \label{sec:intro to scr} In  this paper we shall write $\SCR$ for the category of simplicial  commutative rings.
Since it is not standard in the number-theory literature, let us
briefly recall this concept.  
This is intended informally, and not as a substitute for 
a rigorous introduction; for that see \cite{GS, May}.    Also, note that we are only ever interested in simplicial {\em commutative} rings,
and occasionally will drop the word ``commutative.'' 

  For  a  topological   commutative ring $\mathbf{R}$, the homotopy groups $\pi_* (\mathbf{R},0)$  with basepoint $0 \in \mathbf{R}$
carry the structure of a graded ring, whose addition is defined by pointwise addition of continuous maps into $\mathbf{R}$.  To define the multiplication we represent
classes in $\pi_j (\mathbf{R},0)$ by maps $[0,1]^j \rightarrow \mathbf{R}$ sending the boundary to $0$,
and then we use the maps $[0,1]^j \times [0,1]^k \rightarrow [0,1]^{j+k}$.

One can extract from $\mathbf{R}$ a purely algebraic object -- a model example of a simplicial commutative ring --  which carries enough information
to recover  the graded ring $\pi_* \mathbf{R}$:
Let $\mathcal{R}_n =
\Sing_n(\mathbf{R})$ be the set of continuous maps from the $n$-simplex
$\Delta^n$ into $\mathbf{R}$.  The inclusion of the $n$-simplex as the
$i$th face of the $(n+1)$-simplex gives a map $d_i: \mathcal{R}_{n+1}
\rightarrow \mathcal{R}_n$; similarly, collapsing an
$(n+1)$-simplex to  its $i$th face gives maps $s_i: \mathcal{R}_n
\rightarrow \mathcal{R}_{n+1}$.  These maps $d_i: R_n \to R_{n-1}$ and
$s_i: R_n \to R_{n+1}$, $i =0, \dots, n$, satisfy various axioms, such
as $d_i d_j = d_{j-1} d_i$ when $i < j$.   %

It is possible to recover the homotopy groups $\pi_* \mathbf{R}$ from the collection of $\mathcal{R}_n$ and the maps $d_i$ and $s_i$: one builds a CW-complex $|\mathcal{R}|$,  the ``geometric realization,'' by gluing one copy of $\Delta^n$ for each element of $\mathcal{R}_n$,
and then there is a canonical map $|\mathcal{R}| \rightarrow \mathbf{R}$
which induces isomorphisms on all homotopy groups.

A simplicial commutative ring is a
collection of (commutative) rings $\mathcal{R}_n \ (n \geq 0)$ modelled on this
situation: i.e.\ equipped with maps $d_i: \mathcal{R}_n \rightarrow
\mathcal{R}_{n-1}$ and $s_i: \mathcal{R}_n \to \mathcal{R}_{n+1}$
satisfying the same axioms as the model case above. 
To such a simplicial $\mathcal{R}$ we may associate   homotopy groups $\pi_p \mathcal{R} $
in a way that will recover $\pi_* \mathbf{R}$ in the model case above (just take $\pi_p$ of the geometric realization $|\mathcal{R}_n|$). 
For example, $\pi_0 \mathcal{R}$ is the quotient of $\mathcal{R}_0$ by
the ideal $(d_0 - d_1)(\mathcal{R}_1)$.  The direct sum
\begin{equation} \label{pi_j_simplicial} \pi_*\mathcal{R} = \oplus_p \pi_p(\mathcal{R})\end{equation}  inherits the structure
of a graded ring.

A map $f: \mathcal{R} \to \mathcal{R}'$ of simplicial commutative rings is a
\emph{weak equivalence} if the induced map of homotopy groups is an
isomorphism.   This is equivalent to the map of
geometric realizations being a homotopy equivalence. 

Any (usual, non-simplicial) commutative ring $R$ gives rise to a
simplicial ring, in which we take all $\mathcal{R}_n$'s to equal $R$, and all $d_i$ and $s_i$ are the identity map of
$R$.  By a slight abuse of notation, the resulting simplicial
commutative ring shall often be denoted by the same letter $R$.
Objects of $\SCR$ arising in this way are called \emph{constant}, and
this construction gives a full and faithful embedding of the category of
commutative rings into $\SCR$.  This embedding is right adjoint to the
functor $\pi_0$ from $\SCR$ to commutative rings.  An object
$\mathcal{R} \in \SCR$ is weakly equivalent to a constant object if
and only if the natural map $\mathcal{R} \to \pi_0
\mathcal{R}$ is a weak equivalence, which in turn happens if and only
if $\pi_i\mathcal{R} = 0$ for all $i > 0$.  Let us say that
$\mathcal{R}$ is \emph{homotopy discrete}  when that
happens.

Finally, for $k$ a field,  we will be interested in ``Artinian''
simplicial rings over $k$: simplicial commutative rings $\mathcal{R}$
equipped with a homomorphism $\pi_0 \mathcal{R} \twoheadrightarrow k$ and with the
properties that $\pi_0\mathcal{R}$ is Artin local in the usual sense,
$\pi_i \mathcal{R}$ vanishes for all large enough $i$, and finally each 
 $ \pi_i \mathcal{R}$ is a finitely generated $\pi_0
\mathcal{R}$-module.   We denote by $\Art_k$ the category with objects  the Artinian simplicial rings over $k$,  and the morphisms  are   those  morphisms of simplicial commutative rings that commute with the map to $k$. 

It shall be important later on that these categories come with \emph{simplicial enrichments}: for objects $A$ and $B$ of $\SCR$ the set of morphisms $A \to B$ is the set of 0-simplices in a simplicial set $\Hom_{\SCR}(A,B)$, and similarly for the category $\smallC_k$.  In particular any representable functor naturally takes values in simplicial sets.

\subsection{Overview of the derived deformation ring}  \label{overview} 
 
For simplicity in this section, we   gloss over several technical
points -- in particular, the role of categories of pro-objects, and we talk only
about $\GL_n$ instead of a general algebraic group.

For a finite set of primes $S$, let
\begin{equation*}
  \Gamma_S = \pi_1^\mathrm{et}(\Spec \  \Z[1/S])) = \Gal(\Q^{(S)}/\Q),
\end{equation*}
the Galois extension of the largest extension $\Q^{(S)}$ unramified outside $S$, 
and  we fix $k$ a finite field whose characteristic divides $S$.   We fix
a representation
$$\rhobar: \Gamma_S \rightarrow \GL_n(k)$$
which we suppose to have centralizer consisting only of scalar matrices. 
Then, informally, Mazur's (non-simplicial) deformation ring $\mathrm{R}$ is the completed local ring of the
representation variety of the  group $\Gamma_S$ at  the representation $\rhobar: \Gamma_S \rightarrow \GL_n(k)$;
more precisely  $\mathrm{R}$ represents the functor
$\Def_{\rhobar}$ which sends an Artin ring $A \rightarrow k$ augmented
over $k$ to the set
\begin{equation*}
  \Def_{\rhobar}(A) =  \big(\mbox{ lifts $\tilde{\rho}: \Gamma_S
    \rightarrow \GL_n(A)$ }\big)/ \big(\mbox{ kernel of $\PGL_n(A) \rightarrow
    \PGL_n(k)$}  \big).
\end{equation*}
   Explicitly, then, there is a natural
bijection of  set-valued functors \begin{equation} \label{RMazur} \Hom(\mathrm{R}, -) \rightarrow \Def_{\rhobar}(-).\end{equation}
(where, on the left hand side, we consider only homomorphisms that commute with the augmentations to $k$).

The {\em derived} version $\mathcal{R}$ represents the functor which
sends a {\em simplicial} Artin ring $A$, augmented over $k$, to a {\em
  simplicial set} $\Def_{\rhobar}(A)$, defined similarly to the
 above but replacing
\index{$\Def_{\rhobar}$} the set of lifts $\tilde{\rho}$ by a suitable
simplicial set.  For this, one needs to generalize the objects
appearing in the above definition. 

 To give the reader some sense of the subtleties involved, let us briefly outline one  
way of carrying out this generalization, although we proceed differently in the text:
 For a simplicial commutative ring
$A$, we may define $\GL_n(A)$ as the simplicial monoid consisting of
those path components of $M_n(A) = A^{n^2}$ which map to
$\GL_n(\pi_0 A) \subset M_n(\pi_0 A)$.  (There is a more naive definition where one applies $\GL_n$ level-wise to $A$.  This would define a functor into simplicial groups but would be no good for our purposes: it would carry some morphisms $A \to A'$ in $\SCR$ which are homotopy equivalences into maps which are not homotopy equivalences.)
Next,  homomorphisms
$\Gamma_S \to \GL_n(A)$ must be defined in a derived sense. 
 At least if $\Gamma_S$ is a usual group and not a pro-group,  one
definition of the space of derived homomorphisms
$\Gamma_S \to \GL_n(A)$ proceeds by choosing a ``cofibrant
replacement'' of $\Gamma_S$, which roughly speaking means a simplicial
monoid $\Gamma_\bullet$ equipped with a homotopy equivalence
$|\Gamma_{\bullet}| \to \Gamma_S$ such that $\Gamma_p$ is a free
associative monoid on a (possibly infinite) set for all $p \geq 0$.
Then a zero-simplex of $R\Hom(\Gamma_S,\GL_n(A))$ is a map of
simplicial monoids $\Gamma_\bullet \to \GL_n(A)$. More generally, a $p$-simplex
of $R\Hom(\Gamma_S,\GL_n(A))$ would be a map of simplicial monoids $\Gamma_{\bullet} 
  \to \GL_n(A^{\Delta[p]})$, where $A^{\Delta[p]}$ is the mapping space $\Map(\Delta[p], A)$, 
  which itself has the structure of a simplicial commutative ring. 
     Having said this,    in our later presentation,
we will use a  different approach that  works better for general groups (not just $\GL_n$).

 That the resulting functor is
representable by some (pro-) simplicial ring $\mathcal{R}$ follows
from the derived Schlessinger criterion of Lurie.  What this means
(ignoring the ``pro'' subtlety for now) is that there is a natural
transformation
\begin{equation} \label{Rderived} \Maps(\mathcal{R}, -) \rightarrow \Def_{\rhobar}(-)\end{equation}
of functors valued in simplicial sets, and this natural
transformation induces a weak equivalence of simplicial sets for each
input $A \rightarrow k$.  (Recall that the mapping spaces between two simplicial
rings itself has the structure of a simplicial set).

  In the usual setting, the bijection \eqref{RMazur} determines $\mathrm{R}$ up to unique isomorphism. 
This is no longer true \emph{strictly} for~\eqref{Rderived}, only up to homotopy: any two $\mathcal{R}$'s will be homotopy equivalent as pro-simplicial rings, and in a suitable sense there will be a \emph{contractible} space of comparison maps.
  Thus we should,
strictly speaking, 
speak only of ``a representing ring.'' However,  the image of $\mathcal{R}$ in a suitable homotopy category
is still defined up to a unique isomorphism (see \S \ref{sec:hocat} for discussion) and therefore   associated invariants such as the graded ring $\pi_* \mathcal{R}$ are again determined up to unique isomorphism. 

For formal reasons (see Lemma \ref{MazurPi0})  $\pi_0\mathcal{R}$ will be isomorphic to Mazur's
deformation ring $\mathrm{R}$, but in fact, we should heuristically expect $\mathcal{R}$
to usually be homotopy discrete.    More precisely there
are always natural maps
\begin{equation} \label{iso} \mathcal{R} \longrightarrow \pi_0
  \mathcal{R} \stackrel{\cong}{\longrightarrow} \mathrm{R}.
\end{equation}
{\em and we heuristically expect the first map to be a weak equivalence;}
this is substantially equivalent (see Lemma \ref{uvd}) to the folklore
conjecture:
\begin{quote} Conjecture: The usual (unrestricted) deformation ring
  $\mathrm{R}$ is a complete intersection ring of expected
  dimension.
\end{quote}
This conjecture is seemingly quite difficult, but fortunately it is
entirely irrelevant for us; indeed, one of the advantages of the
derived deformation ring $\mathcal{R}$ is that this conjecture may
sometimes be circumvented.

  The derivedness of the rings we consider, then,   most likely  arises only at the next step, when we impose local conditions: 
  
What is important in number theory is not just the bare Galois deformation ring, but the ring
 which classifies only lifts $\Gamma_S \rightarrow \GL_n(A)$ that are (suitably defined) ``crystalline'' (or similar; see \S \ref{sec:localconditions}).
 In our setting, this means that we represent not the functor $A \mapsto \Def_{\rhobar}(A)$
 but rather the homotopy fiber product  
 \begin{equation} \label{obstructed} \Def_{\rhobar} \times^h_{\Def_{\rhobar_p}}  \mathrm{Def}^{\mathrm{crys}}_{\rhobar_p}, \end{equation} where 
 $\Def_{\rhobar_p}$ is the similarly defined deformation functor for the restriction $\rhobar_p$ of $\rhobar$  to the Galois group of $\Q_p$, and $\Def_{\rhobar_p}^{\mathrm{crys}}$ represents
 the crystalline deformations for $\rhobar_p$. Recall 
 (see Example \ref{homotopy pullback square Example}) 
 that this homotopy fiber product fits into the diagram  
     \begin{equation}  \label{TWdiagINTRO} 
 \xymatrix{
 \Def_{\rhobar} \times^h_{\Def_{\rhobar_p}}  \mathrm{Def}^{\mathrm{crys}}_{\rhobar_p}    \ar[r]\ar[d]  &   \mathrm{Def}^{\mathrm{crys}}_{\rhobar_p}     \ar[d] \\ 
\Def_{\rhobar} \ar[r] &    \Def_{\rhobar_p}}   
 \end{equation}
  and, explicitly  it assigns to $A \in \Art_k$ a simplicial set whose (e.g) vertices correspond to a vertex of $\Def_{\rhobar}(A)$, a vertex of $\mathrm{Def}^{\mathrm{crys}}_{\rhobar_p}(A)$
 and finally a path (i.e.\ a 1-simplex) between their images inside $ \Def_{\rhobar_p} (A)$.

For the moment, denote by $\mathcal{R}^{\crys}$ a ring that represents the functor \eqref{obstructed}. 
This is the derived deformation ring that is of primary interest to us. 

In the most classically studied cases,
for example, deformations of Galois representations for the modular curve, {\em the ring $\mathcal{R}^{\crys}$
will again be, under mild assumptions,  homotopy discrete:} that follows by Lemma \ref{uvd} and the fact that the usual crystalline deformation
ring is known to be a complete intersection.  In other words, the derived ring carries no extra information at all.

But for the general case (when one studies modular forms on $\SL_n$ for $n \geq 3$, for example -- or for that matter if we consider {\em even} 2-dimensional Galois representations), 
$\mathcal{R}^{\crys}$ should not be expected to be homotopy discrete  - it has higher homotopy groups. In the next section, we explain how it  should 
relate to the homology of arithmetic groups.

   We have already mentioned that $\pi_0 \mathcal{R} \cong  \mathrm{R}$. %
 A second basic property of the deformation ring $\mathcal{R}$ is that we can identify its  Andr{\'e}-Quillen cohomology with the cohomology of $\Gamma_S$, valued in the adjoint representation:
 $$ \left(\mbox{Andr\'{e}-Quillen cohomology of $W(k) \rightarrow \mathcal{R}$ in degree $i$}\right) \cong  H^{i+1}(\Gamma_S, \Ad \ \rhobar)$$
 There are similar results for the ring $\mathcal{R}^{\crys}$ where we impose crystalline conditions. 
 
 To get a sense of the extra information in $\mathcal{R}$, let us discuss  briefly $\pi_1 \mathcal{R}$, which is
 a module over $\pi_0 \mathcal{R}$. 
  Roughly speaking, $\pi_1 \mathcal{R}$ is made of two pieces:
 the first is related to the failure of $\mathrm{R}$ to be complete intersection; and the second  is related to the discrepancy
 between $\dim H^2(\Gamma_S, \Ad \rho)$ and the number of relations in a minimal presentation of $\mathrm{R}$.  The equation \eqref{pi0RRcompare} gives a precise formulation of the prior sentence.

{\em Comparison with  naive construction using $\mathrm{Tor}$-groups:} 
 It is well-known that ``derived'' rings  arise naturally  when one takes non-transversal intersections:
informally, when one intersects the subvarieties $X, Y \subset Z$, the structure sheaf
of $X \cap Y$ should be considered as a (homotopy) sheaf of simplicial commutative rings whose homotopy groups are given by   $\mathrm{Tor}^{\mathcal{O}_Z}_*(\mathcal{O}_X, \mathcal{O}_Y)$.  
 In the story above,
derivedness has come from the non-transversal intersection between $$X = \mbox{the moduli space of 
geometric representations of $\Gal(\overline{\Q}_p/\Q_p)$}$$
$$Y = \mbox{the moduli space of $\Gamma_S$-representations }$$
within 
$$ Z  = \mbox{ the  moduli space of all   representations of $\Gal(\overline{\Q}_p/\Q_p)$.}$$
Correspondingly, an easier-to-define version of the derived representation ring could be made by 
\begin{equation} \label{Tordefinition}  \mathrm{Tor}_*^{\mathrm{R}_p} (\mathrm{R}, \mathrm{R}_p^{\crys}), \end{equation} 
where $\mathrm{R}, \mathrm{R}_p, \mathrm{R}_p^{\crys}$ are respectively the usual deformation ring,
deformation ring at $p$, and the  crystalline deformation ring at $p$. If we accept the Conjecture 
discussed after \eqref{iso}, this direct construction actually does give the homotopy groups $\pi_* \mathcal{R}^{\crys}$. 
Unsurprisingly it is nonetheless helpful to work with the more refined version of the construction rather than this more concrete version; for example,  we do not need to worry about the validity of the Conjecture (if the Conjecture is false, 
\eqref{Tordefinition} will not have good formal properties).

\subsection{Arithmetic results about the derived deformation ring} \label{proofs}

  As mentioned in the abstract, the point of the number theory section is to describe how
 the ring $\mathcal{R}^{\crys}$ arises naturally in the context of the Langlands program.  More precisely,
 the Langlands program predicts  (approximately speaking) that $\pi_0 \mathcal{R}^{\crys}$
 acts on the homology of arithmetic groups, and we show that the Taylor--Wiles method can be
 used to upgrade this action to a graded action of $\pi_* \mathcal{R}^{\crys}$.  
To carry out this upgrading, we establish an isomorphism 
between $\pi_* \mathcal{R}^{\crys}$ and a limit ring that arises in the Taylor-Wiles method (in the form  \cite{CG} of Calegari-Geraghty).

{\em  Our results require some assumptions (Conjecture \ref{GaloisRepConjecture},  similar to the conjectures assumed in \cite{CG}) on the existence of Galois representations and local-global compatibility, which are not known in general at present;
we assume they are known in the discussion that follows.}

We need to briefly summarize the setup of the Taylor--Wiles method.
In the paragraphs that follows, we will be talking of {\em usual} (rather than derived) deformation rings, and will denote the usual rings by roman face, e.g. $\mathrm{R}$.
  
 In the Taylor-Wiles method,  we wish to study  the deformation ring $\mathrm{R}$ of a given residual representation $\rhobar: \Gal(\overline{\Q}/\Q) \rightarrow G(k)$, with $G$ an algebraic group and $k$ a finite field of characteristic $p$.   This representation $\rhobar$ arises, by means of the Langlands correspondence, 
 from a Hecke eigenclass in the homology of some arithmetic manifold $Y$.  The situation and our precise local and global assumptions  are detailed more carefully in \S  \ref{TWnotationsetup}.

 In our discussion we will always assume that $\rhobar$ is crystalline at $p$. 
 All our deformation rings will be those with crystalline conditions imposed at $p$, and we
 will therefore omit the superscript ``$\crys$'' from our notation. 
 
 Let $W=W(k)$ be the ring of Witt vectors of $k$. 
 
To proceed one considers the deformation ring $\mathrm{R}_n$ after enlarging the set $S$, replacing it by $S \coprod Q_n$
for a suitable set of auxiliary primes.     
 
Now there is a natural map $\mathrm{R}_n \rightarrow \mathrm{R}$.  Moreover,  by studying the action of inertia groups in $S$
we produce a map $(\Z/p^n)^s \rightarrow \mathrm{R}_n^{\times}$, for suitable $s$; thus 
$\mathrm{R}_n$ is an algebra over $\mathrm{S}_n = W [[ (\Z/p^n)^s ]]$, the group algebra of $(W/p^n)^s$. Finally one can recover $\mathrm{R}$ by descent:
\begin{equation} \label{Descent} \mathrm{R} = \mathrm{R}_n \otimes_{\mathrm{S}_n} W.\end{equation}

By a suitable limit process one can choose the rings $\mathrm{R}_n,  \mathrm{S}_n$
to approximate (i.e.\ be quotients of) limit rings $\mathrm{R}_{\infty}, \mathrm{S}_{\infty}$ that are of a very simple shape:
{\em in the best situation (which we will suppose from now on),} they are both formal power series rings over $W$, i.e.
$$\mathrm{R}_{\infty} \simeq W[[x_1, \dots, x_n]], \ \ \mathrm{S}_{\infty} \simeq W[[y_1, \dots, y_{n+\delta}]]$$
for some integers $n, \delta$; 
 and  by taking the $n \rightarrow \infty$ limit of \eqref{Descent}, we get 
\begin{equation} \label{tw2}  \mathrm{R} = \mathrm{R}_{\infty} \otimes_{\mathrm{S}_{\infty}} W.\end{equation} 
Because of the very simple shape of the limit rings, the formula \eqref{tw2} gives a lot of information about $\mathrm{R}$.

Let us say a little bit more about how this argument works, since we will need some of its internal notation. When we say $\mathrm{R}_n$ 
approximates $\mathrm{R}_{\infty}$, what we actually mean  is that a certain Artinian quotient  $\mathrm{R}_n \twoheadrightarrow  \overline{\mathrm{R}}_n$ is isomorphic to a  quotient of $\mathrm{R}_{\infty}$,
where both quotients become deeper and deeper as $n \rightarrow \infty$.    There is a similar result to \eqref{Descent} using $ \overline{\mathrm{R}}_n$ and $\overline{\mathrm{S}_n} := \mathrm{S}_n/p^n$  instead of $\mathrm{R}_n$, and just recovering   a certain quotient of $\mathrm{R}$ (this quotient will become closer and closer to all of $\mathrm{R}$ as $n$ increases).  One then obtains \eqref{tw2} by taking the $n \rightarrow \infty$ limit of this modified version of \eqref{Descent}.

It was discovered by Calegari and Geraghty that not just the tensor product, but also  the higher Tor groups,  play an important role in the theory of modular forms.
Their analysis implies the following:

\begin{quote} (Calegari--Geraghty): The homology $H_*(Y)$ of the associated arithmetic manifold $Y$, localized at the   ideal $\mathfrak{m}$ of the Hecke algebra that corresponds to $\rhobar$, 
is free   over the graded ring $\mathrm{Tor}^{\mathrm{S}_{\infty}}_*(\mathrm{R}_{\infty} , W)$.
\end{quote}

Our main result is that, in fact, these Tor-groups are captured by the
derived deformation ring:

 \begin{quote} 
{\em Main theorem  with assumptions as above:}
Notations as above,  with $\mathcal{R}$ the derived deformation ring of $\rhobar$ with crystalline conditions imposed at $p$, 	 there is an isomorphism  of graded rings $$ \pi_* \mathcal{R} \simeq  \Tor_*^{\mathrm{S}_{\infty}}(\mathrm{R}_{\infty} , W). $$
From the result of Calegari--Geraghty above, it follows that 
\begin{quote} (i)   the localized homology  $H_*(Y)_{\mathfrak{m}}$ carries the structure of a free graded module over   $ \pi_* \mathcal{R} $, and therefore\end{quote}
\begin{quote} (ii)   $\pi_j \mathcal{R}$ is nonvanishing precisely for $0 \leq j \leq \delta$. \end{quote} \end{quote} 

See Theorem \ref{TW main theorem} for the precise statement. Note that  there are various choices made in the proof of Theorem \ref{TW main theorem},
e.g., choices of subsequences to make compactness arguments, and {\em a  priori} the module structure from (i)  might depend on all these choices.
In fact it does not -- this 
  independence is proven only in the final section \S \ref{DHAcompare}.

  We now  describe the proof, in outline.

The first step of the proof is to construct a map from  $\mathcal{R}$ to the derived tensor product  $\mathrm{R}_{\infty} \otimes_{\mathrm{S}_{\infty}} W$. 
To do this, we use a compactness argument to extract a limit of the following maps:
\begin{equation} \label{patching_baby} \mathcal{R}  \simeq \mathcal{R}_n \dotimes_{\mathcal{S}_n} W \rightarrow \pi_0 \mathcal{R}_n \dotimes_{\pi_0 \mathcal{S}_n} W
\ \rightarrow  \overline{\mathrm{R}}_n \dotimes_{\overline{\mathrm{S}}_n} W/p^n \end{equation}
where $\mathcal{R}_n, \mathcal{S}_n$ are derived versions of $\mathrm{R}_n$ and $\mathrm{S}_n$ and $\dotimes$ is a derived version of tensor. 
The first isomorphism is a formality: it  exhibits that ``a representation of level $S \coprod Q_n$ unramified at $Q_n$ is actually of level $S$.''
The second is functoriality of the (derived) tensor product.   The last map arises from $\pi_0 \mathcal{R}_n  = \mathrm{R}_n \rightarrow \overline{\mathrm{R}}_n$
and similar for $\mathcal{S}$.

Now, passing to the limit over $n$, we produce a map $\mathcal{R} \rightarrow \mathrm{R}_{\infty} \dotimes_{\mathrm{S}_{\infty}} W$.  We  check it is an isomorphism by 
checking the induced map on Andr\'e--Quillen cohomology. Both sides have tangent complexes supported only in two dimensions, and it's clear it's an isomorphism in one degree;
to verify in the other degree, we check the induced map is surjective and then compare Euler characteristics.

 Although the implications (i) and (ii) mentioned in the statement of the Theorem don't seem to be accessible without proving the full result,
 other implications of the Theorem are obvious or can be obtained more directly. 
Let us talk through these to give some orientation of where the content is.
 \begin{itemize}
 \item[(a)]  The theorem implies that $\mathcal{R}$
 (at least at the level of homotopy groups)  is obtained by taking $n$ free generators and imposing (in the derived sense) $n+\delta$ relations. 
This can be deduced directly from  general facts of deformation theory (cf.\   Corollary \ref{cor:Corollary hurewicz analogue}),
 where $n$ is the dimension of a suitable tangent space. 
 \item[(b)] The theorem implies that, when $\delta = 0$, $\mathcal{R}$ is homotopy discrete, i.e.\  the natural map $\mathcal{R} \rightarrow \pi_0 \mathcal{R}$ is an isomorphism. This can be deduced directly  (Lemma \ref{uvd}) if one knows that $\pi_0 \mathcal{R}$ is a complete intersection of the expected size.  When $\delta = 0$ one can 
 usually get this from the usual
  Taylor-Wiles method \cite{W, TW};
 one does not really need to go through the main theorem.  
 \item[(c)]
The theorem also allows to compute the homotopy groups in characteristic zero: a geometric lift $\tilde{\rho}$ of the original Galois representation
to  $W$  
gives a homomorphism $\pi_0 \mathcal{R} \rightarrow W$; writing $E$ for the quotient field of $W(k)$,  the associated ring
$\pi_* \mathcal{R} \otimes_{\pi_0 \mathcal{R}} E$ 
  is isomorphic to the exterior algebra of a $\delta$-dimensional vector space over $E$.  Again  this can be deduced directly without much trouble (at least, assuming
 vanishing of $H^1_f(\Ad \widetilde{ \rho})$, which is a consequence of standard conjectures).%
 \end{itemize}

On the other hand, neither of the noted implications (i) or (ii) are obvious on general grounds. In particular 
the statement about the free action on homology seems to  be the key point.

To conclude,  we study in \S \ref{DHAcompare} the relationship between $\mathcal{R}$ and the  derived Hecke algebra introduced in \cite{DHA}. 
The derived Hecke algebra and the derived representation ring seem to be of different natures;  the Hecke algebra acts on cohomology, increasing cohomological degree, and
 $\mathcal{R}$ on homology, increasing homological degree.   The relationship is as follows:  one looks like the exterior algebra on a vector space $\mathsf{V}$
 and the other looks like the exterior algebra on $\mathsf{V}^{*}$.   The eventual result  (Theorem \ref{refit}) shows, in particular,  that {\em the action of $\pi_* \mathcal{R}$ is in fact independent of the choice of sequence of Taylor--Wiles primes used. }
 
 We also mention the related paper \cite{HT} of Hansen and Thorne. There an action of an exterior algebra on homology is produced
 by (roughly speaking) adding level at $p$ (rather than Taylor--Wiles level) and descending.  It would be useful to identify this exterior algebra also with $\pi_* \mathcal{R}$. 
  
 \begin{Remark} \label{minimal level} 
 Our  various assumptions on $\rhobar$   from \S \ref{TWnotationsetup} 
 --  in particular, excluding congruences with other forms -- have the effect of also forcing $\pi_* \mathcal{R}$
 to be an {\em integral} exterior algebra. As stated,  the results of \S \ref{DHAcompare} would not be true if this were not the case.
 If we dropped these  simplifying assumptions,   we don't expect $\pi_* \mathcal{R}$ to be an integral exterior algebra,
but nonetheless we would expect that the comparison results  of \S \ref{DHAcompare} should remain true after tensoring with $\Q$.

By contrast, the statement that the  localized homology $H_*(Y)_{\mathfrak{m}}$ carries the structure of a free $\pi_* \mathcal{R}$-module
doesn't use the full strength of the assumptions on $\rhobar$ -- in particular, it only uses ``minimal level.''  We expect that it continues to be valid, even integrally, 
even without that assumption. 

\end{Remark}

 \subsection{Overview of paper and suggestions for reading}
 
 The first part of the paper is generalities that are not specific to  number theory:
It uses freely the language of model categories. An introduction to this language may be found in \cite{GS}.
 We also use freely the language of homotopy limits and colimits, because we need to work with {\em pro}-representable functors.   A brief review of this language is given in Appendix \ref{sec:homotopy-theory}.
  
 \S \ref{sec:functors-artin-rings} is an overview of basic results about Artin local
 simplicial commutative rings and simplicial functors on this category;
 we define representability and prove a general result concerning the approximation
 of a functor by a representable functor.

\S \ref{sec:tang-compl-funct}  reviews the tangent complex of 
a functor and
 Lurie's derived Schlessinger criterion, and develops some tools for manipulating and understanding (pro-)representable functors and their representing objects.

 \S \ref{sec:gener-semis-algebr} defines the functors that we are
 interested in representing, namely, the derived space of homomorphisms from a
 profinite group $\Gamma$ to $G(A)$, where $G$ is an algebraic group
 and $A$ a simplicial ring.  The definitions here require some care. 

The remainder of the paper studies the specific case of deformations of a Galois representation: 

\S \ref{numtheorynotn}  and \S \ref{numbertheorynotn2} collects the notation to be used in the number theory sections
(\S \ref{numtheorynotn} collects notation about Galois representations and cohomology, and \S \ref{numbertheorynotn2}
summarizes notations about derived deformation rings). 

\S \ref{sec:TWprimes} examines what happens to the deformation ring when we add a single prime to the ramification set.
(The result is intuitively obvious.) The case of main interest is when this prime is a ``Taylor--Wiles prime,'' thus the title of the section. 

\S \ref{sec:localconditions} discusses how to impose ``local conditions'' on the derived deformation ring. The point of main interest to us is  imposing 
a crystalline condition.  

\S \ref{main} gets down to business:  The Taylor-Wiles method involves adding a set $Q$ of ramified primes with carefully chosen cohomological properties.
In \S \ref{main} we show that these cohomological properties give a tight control on the derived deformation ring after allowing ramification at $Q$. 

\S \ref{sec:patching} gives an abstract discussion of how to extract limits of maps like \eqref{patching_baby}.

\S \ref{derivedTW} summarizes the obstructed Taylor--Wiles method, as developed by Calegari--Geraghty; we use the formulation of Khare and Thorne.

\S \ref{piRSid} proves the main theorem, and  
\S \ref{DHAcompare} gives the comparison between the action of the derived deformation ring and the derived Hecke algebra.

\subsection{Acknowledgements}
The second-named author (A.V.) would like to thank Frank Calegari, Michael Harris and Shekhar Khare for helpful discussions (and encouragement)  about this paper.  
He was supported by a grant from the Packard foundation and by an NSF grant.   The first-named author (S.G.) was supported by NSF grant DMS-1405001 and the European Research Council (ERC) under the European Union's Horizon 2020 research and innovation programme (grant agreement No 682922).

\section{Functors of simplicial Artin rings}
\label{sec:functors-artin-rings}

We shall assume the reader is familiar with standard properties of the
category of simplicial sets, which we shall denote $s\Sets$, including
the notion of \emph{Kan complexes} and the fibrant replacement
$X \to \mathrm{Ex}^\infty(X)$, as well as the self-enrichment of\index{$\mathrm{Ex}^{\infty}$}
$s\Sets$: the set of morphisms $X \to Y$ between two simplicial sets
$X$ and $Y$ forms the 0-simplices in a simplicial set $s\Sets(X,Y)$.
This has the expected behavior when $Y$ is Kan (e.g.\ homotopy
equivalences $X' \to X$ and $Y \to Y'$ induce a homotopy equivalence
$s\Sets(X,Y) \to s\Sets(X',Y')$ when $Y$ and $Y'$ are Kan).
When the distinction between the set of maps and the simplicial set of
maps is important, we shall write $s\Sets_0(X,Y)$ for the set of maps.

As usual, we denote by $|X|$ the geometric realization of the
simplicial set $X$, and by $\Sing(Y)$ the simplicial set of simplices
associated to a topological space $Y$.  We shall write $\Delta^q$ or
$\Delta[q]$ for the usual simplex $[p] \mapsto \Delta([p],[q])$ and
$\partial \Delta^q = \partial \Delta[q]$ for its boundary.  If
$X = (X,x_0)$ is a pointed simplicial set, we denote by $\Omega X$ the
simplicial loop space, i.e.\ the simplicial set
$s\Sets((\Delta^1,\partial \Delta^1), (X,x_0))$ of pointed maps from
the simplicial circle to $X$.

\subsection{Simplicial commutative rings}
\label{sec:simplicial-rings}

Let us write $\SCR$ for the category of simplicial commutative rings,
i.e.\ functors from $\Delta^\mathrm{op}$ to commutative rings.  Recall
from \cite[II, \S 4]{QuillenHomotopicalAlgebra} that $\SCR$ comes with
subcategories of weak equivalences, cofibrations and fibrations
satisfying the axioms of a \emph{simplicial model category}.  We refer
there for more details, but briefly recall some of the key
definitions.

For an object $R \in \SCR$ we write $R^{\Delta[p]} =
s\Sets(\Delta^p,R)$, which is naturally an object of $\SCR$.  If $R'
\in \SCR$ is another object, the set of morphisms $R' \to
R^{\Delta[p]}$ is the $p$-simplices of a simplicial set $\SCR(R',R)$
of maps, and in this way $\SCR$ is enriched over $s\Sets$.

For an object $R\in \SCR$, the homotopy groups
$\pi_*(R) = \oplus_n \pi_n(|R|,0)$ form a graded commutative ring and
a morphism $R \to R'$ in $\SCR$ is a weak equivalence if it is a weak
equivalence of underlying simplicial sets, i.e.\ induces an
isomorphism on all homotopy groups.  A morphism $R' \to R$ in $\SCR$
is a \emph{fibration} the underlying morphisms of simplicial sets is a
Kan fibration, and we recall that this is automatic when the map of
$p$-simplices $R'_p \to R_p$ is surjective for all $p$ (in fact
happens if and only if the restriction to the components containing 0
is surjective in each simplicial degree).

Finally, \emph{cofibrations} $R' \to R$ are defined by a lifting
property, but we shall recall a particular source of cofibrations
which play an important role in this paper.  If $X$ is a set we shall
write $\Z[X]$ for the free commutative algebra on $X$ and if $X$ is a
simplicial set we shall use the same notation $\Z[X]$ for the simplicial commutative ring arising by applying
this construction in each simplicial degree.  Then if $R \in \SCR$ and 
$e: \partial \Delta^p \to R$ is a morphism of simplicial sets, there
is a unique extension $\Z[\partial \Delta^p] \to R$ to a morphism in
$\SCR$, and we obtain a morphism
\begin{equation}\label{eq:4}
  R \to R' = R \otimes_{\Z[\partial \Delta^p]} \Z[\Delta^p]
\end{equation}
in $\SCR$, whose target depends on the \emph{attaching map}
$e: \partial \Delta^p \to R$ even though we omitted it in the
notation. (In the above equation, the tensor product is taken level-wise.) 
\begin{Definition}\label{defn:attach-cell}
  The simplicial commutative ring obtained from $R$ by attaching a
  \emph{cell} along $e: \partial \Delta^p \to R$ is the simplicial
  commutative ring $R'$ defined by~\eqref{eq:4}.
\end{Definition}
For $p = 0$ this just amounts to adjoining a single polynomial
generator in each simplicial degree.  For any $p$ and $e$ the
resulting map $R \to R'$ is a cofibration, as are finite or
transfinite compositions of maps of this form.

Then an arbitrary morphism $R'' \to R$ may be factored as
$R'' \to R' \to R$ where $R'' \to R'$ is a cofibration and $R' \to R$
is both a weak equivalence and a fibration.  The existence of such a
factorization is part of the
axioms of a model category, but a particular proof of its existence
constructs $R'$ as a (possibly transfinite) composition of cell
attachments.  In the special case where $R'' = \Z$ is the initial
object we obtain a \emph{cofibrant approximation} $R' \to R$ where
$R'$ is obtained from $\Z$ by iterated cell attachments, similar to
``CW approximations'' of topological spaces.  If we
don't attempt to control the cardinality of the set of cell
attachments it is possible to construct $R' \to R$ as a functor of
$R$, a \emph{functorial cofibrant approximation}.  We shall pick one
such and denote it $c(R) \to R$ or sometimes $R^c \to R$ for
typographical convenience.

Later in the paper we shall discuss the analogue of ``minimal CW
approximations'' of topological spaces: roughly speaking we may for a
given $R$ ask for a weak equivalence $R' \to R$ where $R'$ is built
using the minimal possible number of cells of each dimension.  Such a
minimal $R' \to R$ will not be functorial in $R$, but turns out to
exist, at least when $\SCR$ is replaced by a modified category
``pro-$\smallC_k$'' which we shall also define later.

\subsection{Simplicial Artin rings}
\label{sec:simpl-artin-rings}

Recall that $k$ is a fixed (usually finite) field, which we regard as
an object of $\SCR$ and write $\SCR_{/k}$ for the over category.  Following the setup of \cite{LurieThesis}, we shall be especially interested in
a certain full subcategory $\smallC_k \subset \SCR_{/k}$ of ``Artin''
objects, defined in the next subsection.   In itself it is too small to
be a model category, since it does not have enough limits and colimits
(e.g.\ it has no initial object) --  but we shall study its homotopy theory using the 
forgetful functors $\smallC_k \to \SCR_{/k} \to \SCR$.  By a mild
abuse of language we shall say e.g.\ ``$R \in \smallC_k$ is
cofibrant'' to mean that ``$R \in \smallC_k$ is has cofibrant image
under the inclusion functor $\smallC_k \to \SCR_{/k}$'', etc.

\begin{Definition}
  An object $A \in \SCR$ is \emph{Artin local} if $\pi_0 A$ is Artin
  local in the usual sense and $\pi_* A = \oplus_n \pi_n A$  is finitely generated as a module over $\pi_0 A$.  For a (usually finite) field
  $k$, the category $\smallC_k \subset \SCR_{/k}$ is the full
  subcategory whose objects are the $\epsilon: A \to k$ with $A$ Artin
  local and $\epsilon: \pi_0(A) \to k$ is surjective.  (In other
  words, $\epsilon$ is a specified isomorphism from the residue field
  of $\pi_0(A)$ to $k$.)
\end{Definition}

For typographical reasons we shall often denote the object $(\epsilon:
A \to k) \in \smallC_k$ by simply $A$, but we emphasize that the
map to $k$ is part of the data and that morphisms in $\smallC_k$ are
required to commute with the maps to $k$.

\begin{Lemma}
  If $B \to D \leftarrow C$ is a diagram in $\smallC_k$ such that
  either $B \to D$ or $C \to D$ is surjective in each simplicial
  degree, then the fiber product $A = B \times_D C$ is also an object of 
  $\smallC_k$.
\end{Lemma}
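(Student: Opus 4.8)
The plan is to verify the three defining properties of $\smallC_k$ for the fiber product $A = B \times_D C$: namely that $\pi_0 A$ is Artin local, that $\pi_* A$ is finitely generated over $\pi_0 A$, and that $A$ carries a surjection $\pi_0 A \twoheadrightarrow k$ compatible with the maps already present. The key input is that a diagram in which one leg is a levelwise surjection is automatically a homotopy fiber product (a levelwise-surjective map of simplicial commutative rings is a fibration, as recalled in \S\ref{sec:simplicial-rings}), so we have at our disposal the long exact Mayer--Vietoris sequence of homotopy groups
\begin{equation*}
  \cdots \to \pi_{n+1} D \to \pi_n A \to \pi_n B \oplus \pi_n C \to \pi_n D \to \cdots.
\end{equation*}

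First I would treat $\pi_0$. Since $B \to D$ (say) is surjective in each simplicial degree, it is surjective on $\pi_0$, so the pullback square on $\pi_0$-level is a genuine fiber product of rings $\pi_0 A = \pi_0 B \times_{\pi_0 D} \pi_0 C$ with one surjective leg --- here one must be slightly careful that $\pi_0$ of a levelwise fiber product agrees with the fiber product of $\pi_0$'s, which holds because $\pi_0$ is a reflexive coequalizer/colimit-type construction that commutes with the relevant finite limit in this situation (levelwise surjectivity guarantees the relevant $\mathrm{Tor}$-vanishing / exactness). Then I would invoke the classical fact from Schlessinger's theory: a fiber product of Artin local rings over a common Artin local quotient, with at least one surjective leg, is again Artin local, and its residue field is $k$; the composite $\pi_0 A \to \pi_0 B \to k$ (or via $C$ --- they agree over $D$) gives the required surjection to $k$. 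This shows $A$, equipped with its augmentation, lands in $\SCR_{/k}$ with $\pi_0 A$ Artin local and residue field $k$.

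Next I would handle finite generation of $\pi_* A$ over $\pi_0 A$. From the Mayer--Vietoris sequence, $\pi_n A$ sits in a short exact sequence
\begin{equation*}
  0 \to \mathrm{coker}\bigl(\pi_{n+1}B \oplus \pi_{n+1}C \to \pi_{n+1}D\bigr) \to \pi_n A \to \ker\bigl(\pi_n B \oplus \pi_n C \to \pi_n D\bigr) \to 0.
\end{equation*}
Each of $\pi_{n+1}D$, $\pi_n B$, $\pi_n C$ is a finitely generated module over the corresponding Artin local ring, hence of finite length; the maps in the sequence are $\pi_0$-linear along the ring maps $\pi_0 A \to \pi_0 B$, etc. So $\pi_n A$ has finite length as a $\pi_0 A$-module, and vanishes for $n$ large (since all of $\pi_* B, \pi_* C, \pi_* D$ do, being Artin objects of $\smallC_k$, and the sequence forces $\pi_n A = 0$ once $\pi_n B = \pi_n C = \pi_{n+1} D = 0$). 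A module over an Artin local ring that has finite length is a fortiori finitely generated, so $\pi_* A = \oplus_n \pi_n A$ is finitely generated over $\pi_0 A$. Combining the two parts, $A \in \smallC_k$.

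The main obstacle --- really the only nontrivial point --- is the compatibility between $\pi_0$ of the levelwise fiber product and the fiber product of the $\pi_0$'s, and relatedly making sure the Mayer--Vietoris sequence I am using is the correct long exact sequence for the \emph{strict} (levelwise) fiber product rather than just the homotopy fiber product. Both are resolved by the observation already emphasized in \S\ref{sec:simplicial-rings} that a levelwise surjection is a fibration, so the strict fiber product along it computes the homotopy fiber product on the nose; from there the exact sequence and the $\pi_0$-identification are standard. Everything else is bookkeeping with lengths of Artin modules.
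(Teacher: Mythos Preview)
Your overall strategy matches the paper's: both arguments rest on the observation that a levelwise surjection is a fibration, so the strict pullback computes the homotopy pullback and one has the Mayer--Vietoris long exact sequence.  Your treatment of the higher $\pi_n$ via finite length is fine.

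There is, however, a genuine error in your handling of $\pi_0$.  The claim
\[
  \pi_0 A \;=\; \pi_0 B \times_{\pi_0 D} \pi_0 C
\]
is \emph{false} in general, and the justification you offer (``$\pi_0$ is a reflexive coequalizer which commutes with the relevant finite limit'') does not hold: coequalizers do not commute with fiber products.  The Mayer--Vietoris sequence in low degrees reads
\[
  \pi_1 B \oplus \pi_1 C \longrightarrow \pi_1 D \xrightarrow{\ \partial\ } \pi_0 A \longrightarrow \pi_0 B \oplus \pi_0 C \longrightarrow \pi_0 D,
\]
so $\pi_0 A$ surjects onto $\pi_0 B \times_{\pi_0 D} \pi_0 C$ with kernel $I = \mathrm{image}(\partial)$, and $I$ need not vanish.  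For a concrete instance inside $\smallC_k$: take $D = k \oplus k[1]$, $C = k$ with the canonical inclusion $k \to D$, and $B = k \oplus \widetilde{k[1]}$ with the fibration $B \twoheadrightarrow D$.  Then $\pi_0 B \times_{\pi_0 D} \pi_0 C = k \times_k k = k$, whereas $A \simeq k \oplus k[0] = k[\epsilon]/(\epsilon^2)$, so $\pi_0 A$ has length~$2$.

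The fix --- and this is exactly what the paper does --- is to treat $\pi_0$ on the same footing as the higher $\pi_n$.  The key point the paper emphasizes is that the entire Mayer--Vietoris sequence is a sequence of $\pi_0 A$-modules (each of $B,C,D$ is a simplicial $A$-algebra via the maps out of $A$, and the short exact sequence $0 \to A \to B \oplus C \to D \to 0$ is one of simplicial $A$-modules).  Since $\pi_0 B$, $\pi_0 C$, $\pi_1 D$ each have finite length over themselves (hence over $\pi_0 A$, as all residue fields are $k$), the exact sequence forces $\pi_0 A$ to have finite length over itself, hence to be Artinian.  For locality, observe that $I$ lies in the kernel of $\pi_0 A \to \pi_0 D$, so $I$ acts trivially on $\pi_1 D$ and therefore on its image $I$; thus $I^2 = 0$.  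Hence $\pi_0 A$ is a square-zero extension of the local ring $\pi_0 B \times_{\pi_0 D} \pi_0 C$ by the nilpotent ideal $I$, and is itself local.  Once you replace your $\pi_0$ paragraph with this argument, the proof goes through.
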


\begin{proof}
  If one of the maps is degreewise surjective then it is a fibration
  and the map to the homotopy fiber product $B \times_D C \rightarrow B \times_D^h C$ is a weak equivalence.   (See
Example \ref{homotopy pullback square Example} for definition of the homotopy fiber product.)
Therefore, the homotopy groups of $A$ fit into a Mayer--Vietoris short exact
  sequence with those of $B$, $C$, and $D$.  This is a sequence of
  modules over the ring $\pi_0(A)$, from which it is easily deduced
  that $\pi_0(A)$ is Artin local in the usual sense.  The
  finite-length condition also follows from the Mayer--Vietoris sequence.
\end{proof}

\begin{Definition}
  \begin{enumerate}[(i)]
  \item\label{item:8} If  $V$ is a
    simplicial $k$-module, the object $k \oplus V \in \SCR_{/k}$ is
    defined by square-zero extension in each simplicial degree.  This
    is an object of $\smallC_k$ if and only if
    $\dim_k(\pi_*(V)) < \infty$.
  \item For $n \geq 0$ write $S^n = \Delta^n/\partial \Delta^n$ for
    pointed simplicial set obtained by collapsing the boundary of the
    simplex to a point.  Then write $k[n]$\index{$k[n]$} for the free
    simplicial $k$-module generated by $S^n$ (i.e.\ with $p$-simplices
    the free $k$-module on the $p$-simplices of $S^n$ modulo the span
    of the basepoint).  Write $k \oplus k[n]$ for the corresponding
    square-zero extension.
  \item More generally for a $k$-module $V$, write
    $V[n] = V \otimes_k k[n]$ for the simplicial $k$-module obtained
    as the tensor product in each simplicial degree, and
    $k \oplus V[n]$ for the square-zero extension.
  \item\label{item:9} Let 
    $\widetilde{k[n]} = \{0\} \times^h_{k[n]} k[n]$ be the homotopy
    fiber product of the diagram $0 \to k[n] \leftarrow k[n]$,
    \index{$\widetilde{k[n]}$} which is again a simplicial $k$-module,
    and $k \oplus \widetilde{k[n]}$ for the corresponding square-zero
    extension.  Similarly for $\widetilde{V[n]}$ and $k \oplus
    \widetilde{V[n]}$.
  \end{enumerate}
\end{Definition}

The notation $V[n]$ is inspired by the corresponding notation from
chain complexes: the homotopy groups of $V[n]$ are concentrated in
degree $n$ and $\pi_n(V[n],0)$ is canonically isomorphic to $V$.

The homotopy fiber product in~(\ref{item:9}) is contractible and has
the property that $\widetilde{k[n]} \to k[n]$ is a Kan fibration.
(In fact any simplicial $k$-module with these properties would work
just as well as $\widetilde{k[n]}$ for what follows.)  Similarly,
$k \oplus \widetilde{k[n]} \to k \oplus k[n]$ can be viewed as just a
particular way of replacing the unique morphism $k \to k\oplus k[n]$
in $\smallC_k$ by a fibration.

The objects $k \oplus k[n] \in \smallC_k$ play a special role, due to
the following special case of the pullback construction.
\begin{Example}\label{example:attach-homotopy-group}
  Let $h: A \to k \oplus k[n]$ be any morphism in $\smallC_k$, and
  define $A' \to A$ by the pullback diagram (pullback in each
  simplicial degree)
  \begin{equation} \label{build_ring_diagram}
    \begin{aligned}
      \xymatrix{
        A' \ar[d]\ar[r] & k \oplus \widetilde{k[n]} \ar[d]\\
        A \ar[r]_-{h} & k \oplus k[n],  }
    \end{aligned}
  \end{equation}
  Then,  for any $f: B \to A$, the space of null
  homotopies of the composition $B \to A \to k \oplus k[n]$ is
  isomorphic to the space of lifts of $f$ to $B \to A'$.
\end{Example}

The
word ``null homotopies'' in the above example should be interpreted as
``paths to the trivial homomorphism through ring homomorphisms'', in
the following way.  The composition $h \circ f$ is a 0-simplex in
$\smallC_k(B,k \oplus k[n])$ as is the composition
$B \to k \to k \oplus k[n]$ of the augmentation map and the unique
morphism $k \to k \oplus k[n]$.  These two compositions assemble to
one map $\partial \Delta^1 \to \smallC_k(B,k \oplus k[n])$ and by the
space of null homotopies we mean the simplicial set of extensions to
maps $\Delta^1 \to \smallC_k(B,k \oplus k[n])$.  By ``space of lifts''
we mean the simplicial subset $s\Sets(B,A')$ consisting of maps making
the triangle commute.  These are isomorphic by the definition of the
homotopy fiber product used in defining $\widetilde{k[n]}$.

One reason for the importance of the objects $k \oplus k[n]$ is that
any object or morphism in $\smallC_k$ is weakly equivalent to one
built by finitely many iterations of the process explained in
Example~\ref{example:attach-homotopy-group} above.  Let us establish a
few preliminary properties of the objects $k \oplus V[n]$.

\begin{Lemma} Let $n \geq 0$ and let $V$ and $W$ be finite-dimensional
  $k$-vector spaces.
    Let
    $R$ be an
    object of $\smallC_k$ and
    $\phi: \pi_*(R) \to \pi_*(k \oplus V[n])$ an isomorphism of graded
    rings.  Then $\phi$ is induced by a zig-zag
    $R \leftarrow R' \to k \oplus V[n]$ of weak equivalences in
    $\smallC_k$.
\end{Lemma}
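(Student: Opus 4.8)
The plan is to build the ring $R'$ and the zig-zag explicitly, using the description of $k \oplus V[n]$ as something assembled by a single cell-attachment-style pullback from Example~\ref{example:attach-homotopy-group}, and then to upgrade the ring isomorphism $\phi$ on homotopy to an actual zig-zag of maps realizing it.

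\smallskip

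\emph{Step 1: normal form for $k \oplus V[n]$.} First I would observe that $k \oplus V[n]$ is obtained from $k$ by a single application of the pullback construction of Example~\ref{example:attach-homotopy-group}: taking $d = \dim_k V$ copies of the map $k \to k \oplus k[n+1]$ and forming the iterated pullback against $k \oplus \widetilde{k[n+1]}$ yields an object whose only nonzero positive homotopy group is $\pi_n = V$ with square-zero multiplication. (Equivalently, pick a cofibrant replacement and note that a minimal such is built from $d$ cells in dimension $n$ and $d$ cells in dimension $n+1$.) The upshot is that maps \emph{into} $k \oplus V[n]$ from an object $B$, up to homotopy, are controlled by a class in the André--Quillen cohomology/cotangent-complex pairing — concretely, a map $B \to k \oplus V[n]$ is the data of a derivation, i.e.\ an element of $\pi_0$ of a mapping space which by the adjunction is $\Hom$ in simplicial $k$-modules from the cotangent complex of $B$ to $V[n]$.

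\smallskip

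\emph{Step 2: realize $\phi$ by an actual map.} Given the ring isomorphism $\phi: \pi_*(R) \xrightarrow{\sim} \pi_*(k \oplus V[n])$, I would first produce a genuine morphism in $\smallC_k$ inducing it. Replace $R$ by a cofibrant approximation $R^c \to R$ (using the functorial $c(-)$ from \S\ref{sec:simplicial-rings}). Since $\pi_*(k \oplus V[n])$ is a square-zero extension, the composite $R \to \pi_0 R = k \xrightarrow{} k \oplus V[n]$ picks out the trivial part, and $\phi$ restricted to $\pi_n$ is a $k$-linear isomorphism $\pi_n(R) \to V$; the whole ring map data amounts to choosing, for generators of a minimal presentation of $R^c$, compatible images — which is possible precisely because $k \oplus V[n]$ is built by attaching cells in dimensions $n$ and $n+1$ and $\phi$ being an \emph{iso} guarantees the obstruction/relation data matches. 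More efficiently: by Step 1, maps $R^c \to k \oplus V[n]$ up to homotopy biject with $\Hom_{\text{$k$-mod}}(L_{R^c/k} \otimes k, V[n])$, and the class corresponding to $\phi$ (read off from the Hurewicz/edge map $\pi_n R \to \pi_n(L_{R^c}\otimes k)$, which is surjective, together with $\phi|_{\pi_n}$) gives a morphism $g: R^c \to k \oplus V[n]$ inducing $\phi$ on $\pi_n$, hence on all of $\pi_*$ since both rings are generated in degrees $0$ and $n$ and $g$ is a $k$-algebra map over $k$.

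\smallskip

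\emph{Step 3: factor and conclude.} Now I have maps $R \xleftarrow{\;\sim\;} R^c \xrightarrow{\;g\;} k \oplus V[n]$ with $R^c \to R$ a weak equivalence, but $g$ need not be a weak equivalence — it only induces an iso on $\pi_*$ \emph{if} I have arranged it, which Step 2 does: $g_* = \phi$ is by construction an isomorphism of graded rings, hence $g$ is itself a weak equivalence. So in fact $R' = R^c$ works and the zig-zag is $R \xleftarrow{\sim} R^c \xrightarrow{g,\ \sim} k \oplus V[n]$. The only subtlety is checking that the homotopy class of $g$ produced in Step 2 genuinely induces the prescribed $\phi$ and not merely \emph{some} isomorphism; this is handled by tracking $g$ on $\pi_n$ via the Hurewicz map, where $\pi_n R \twoheadrightarrow \pi_n(L_{R^c/k} \otimes k) \cong \pi_n(k \oplus V[n]) = V$, and arranging the defining cohomology class to be the composite of $\phi|_{\pi_n}$ with a chosen splitting.

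\smallskip

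\emph{Main obstacle.} The essential point — and where care is needed — is Step 2: turning the purely algebraic datum $\phi$ into a genuine morphism of simplicial rings realizing it \emph{on the nose}. The ring $R$ may have complicated higher homotopy interactions a priori, so one must use that $\phi$ being an isomorphism forces $\pi_*(R)$ to be concentrated in degrees $0, n$ with square-zero structure, and then use the cotangent-complex description of maps into the square-zero extension $k \oplus V[n]$ (this is exactly the content that Example~\ref{example:attach-homotopy-group} packages). I expect no difficulty in the formal factorization once the realizing map is in hand; the one genuine check is compatibility of the constructed $g$ with the \emph{given} $\phi$ rather than with an arbitrary automorphism of $V$, which I would resolve by working with the explicit Hurewicz description of $g_*$ in degree $n$.
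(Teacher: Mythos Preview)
Your approach has a real gap in Step 2. You invoke $L_{R^c/k}$, but $R$ is not a $k$-algebra: objects of $\smallC_k$ are $\Z$-algebras augmented over $k$, and even with $\pi_0 R = k$ there need be no ring map $k \to R$ (for instance $R_0$ could be $\Z/p^2$). The mapping space $\smallC_k(R^c, k \oplus V[n])$ is governed by $\Omega_{R^c/\Z}$, as in \eqref{AQ1}, and the Remark immediately following this Lemma makes exactly this point: the contribution from $L_{k/\Z}$ means $\pi_0\smallC_k(R^c, k\oplus V[n])$ is not simply $\Hom_k(\pi_n R, V)$. Moreover, your Hurewicz claim runs the wrong way for what you need: if $\pi_n R \to \pi_n(L_{R^c/\Z} \otimes_R k)$ were merely \emph{surjective} as you assert, the induced map on $\Hom_k(-,V)$ would be \emph{injective}, and since $\phi|_{\pi_n}$ is an isomorphism it would then fail to factor through any proper quotient of $\pi_n R$. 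You need to lift $\phi|_{\pi_n}$ along this Hurewicz map, which requires a different (and unproved) property.

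The paper sidesteps this by building $R'$ from below rather than taking $R' = R^c$. It starts from a cofibrant $\Z$-algebra model $R''$ of $k$ with cells only in degrees $0$ and $1$ (using that $\Z \to k$ is a complete intersection), constructs a map $R'' \to R$ by elementary obstruction theory, attaches trivially-attached $n$-cells to $R''$ to hit a basis of $\pi_n R$, and then cells in degrees $\geq n+1$ to kill excess homotopy; the extensions to $R$, and by the identical argument to $k \oplus V[n]$, are unobstructed since $\pi_{>n}$ vanishes on both targets. The step $R'' \to R$ is exactly what furnishes the homotopy $k$-algebra structure on $R$ that your cotangent-complex argument implicitly assumed.
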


\begin{proof}
  First use that $\Z \to k$ is complete intersection, so there exists
  a cofibrant model $R''$ of $k$ as a $\Z$-algebra built using
  generators in degree 0 and 1 only: indeed, the ring $k$ can be
  obtained from $\Z[x]$ by killing the regular sequence $(p,f(x))$,
  where $f$ is any integral lift of an irreducible polynomial in
  $\F_p[x]$ of appropriate degree, and we may build $R''$ by
  attaching two 1-cells to $\Z[x]$ along $p$ and $f(x)$.  Then there
  are no obstructions to finding a map $R'' \to R$ inducing
  isomorphisms in $\pi_k$ for $0 \leq k < n$.  Then pick a generating
  set $v_1, \dots, v_d \in V$ and write $R''[v_1,\dots, v_d]$ for the
  simplicial commutative ring obtained from $R''$ by attaching one
  $n$-cell along the constant map $0: \partial \Delta^n \to R''$ for
  each basis element, in the sense of
  Definition~\ref{defn:attach-cell}.  The morphism $R'' \to R$ then
  extends to a morphism $R''[v_1, \dots, v_d] \to R$ inducing a
  bijection in $\pi_k$ for $0 \leq k < n$ and a surjection for
  $k = n$.  If we then define $R''[v_1, \dots, v_d] \to R'$ by
  attaching cells of dimension $n+1$ and higher to kill generators for
  the kernel in $\pi_n$ and all higher homotopy groups, there is no
  obstruction to extend to a morphism $R' \to R$ which will then be a
  weak equivalence.  The same argument also applies to give a weak
  equivalence $R' \to k \oplus V[n]$, giving the desired zig-zag.
\end{proof}

\begin{Remark}
  It
  is
  not quite true that morphisms $k \oplus V[n] \to k \oplus W[n]$ in
  $\smallC_k$ are ``the same thing'' as $k$-linear maps $V \to W$,
  even up to homotopy.  For example for $n=1$, $V=0$ and $W = k$, the
  space $\smallC_k(k,k\oplus k[1])$ is not path connected, and in fact
  has $\pi_0 = \pi_{-1} \bigtangent k \cong k$.  Morally, the reason
  for this is that $\smallC_k(A^c, B) = \SCR_{/k}(A^c,B)$ is the space
  of derived morphisms of $\Z$-algebras; when $A$ and $B$ happen to be
  represented by simplicial $k$-algebras there is also a notion of a
  derived space of $k$-algebra maps, but it will have a different
  homotopy type.
\end{Remark}

Let us also briefly discuss \emph{tensor products}  of
simplicial commutative rings.  Given a diagram
$R' \leftarrow R \to R''$ we may form the tensor product
$R' \otimes_R R''$ levelwise, and this inherits the usual universal
property from commutative rings: a homomorphism out of it is the same
as a pair of homomorphisms out of $R'$ and $R''$ restricting to the
same homomorphism out of $R$.  In order to obtain a homotopy invariant
version of this construction, it should only be applied when either
$R'$ or $R''$ is cofibrant as an $R$-algebra (e.g.\ it is built from
$R$ using a finite or transfinite iteration of cell attachments).  In
that situation there is a spectral sequence (\cite[Theorem 6, \S 6,
II]{QuillenHomotopicalAlgebra})
\begin{equation*}
  E^2 = \mathrm{Tor}_{\pi_*(R)}(\pi_*(R'),\pi_*(R'')) \Rightarrow
  \pi_*(R' \otimes_R R'').
\end{equation*}
In particular suppose $R \to R'$ is a cofibration in $\smallC_k$ and
$\pi_m(R',R) = 0$ for all $m \leq n-1$.  In that case the spectral
sequence implies the isomorphism
$\pi_n(R' \otimes_R k) = \pi_n(R',R) \otimes_{\pi_0 R} k$.

\begin{Lemma}\label{lem:build-Artin-rings-Postnikov}
  \begin{enumerate}[(i)]
  \item Let $R \to k$ be an object of $\SCR_{/k}$.  Then $R \to k$ is an  
    object of $\smallC_k$ if and only if there exists a sequence of
    cofibrant 
    simplicial commutative rings $A_0$, $A_1$, \dots, $A_m$ with $A_0
    = k$ and a weak equivalence $R \to A_m$, where $A_i$ is a cofibrant replacement of a simplicial commutative ring $A_{i-1}'$ obtained
    from $A_{i-1}$ by a pullback along $h_i: A_{i-1} \to k \oplus
    k[n_i]$ as above, with all $n_i \geq 1$.
  \item Any morphism $R \to A$ in $\smallC_k$ (with $R$ cofibrant) may
    be factored as $R \to A' \to A$, where the first map $R \to A'$ is
    a weak equivalence and the second map $A' \to A$ is obtained by
    composing a finite sequence of pullback diagrams as
    in~\eqref{build_ring_diagram} (and cofibrant replacement) with varying $n \geq 0$.
  \end{enumerate}
\end{Lemma}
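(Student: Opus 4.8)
The plan is to build both towers by a Postnikov-type induction, handling $\pi_0$ first and the higher homotopy groups afterwards, and using Example~\ref{example:attach-homotopy-group} to recognize the individual pullback steps. Recall first that for $A\in\smallC_k$ the total homotopy $\pi_*A$ is a finite-dimensional $k$-vector space, so all the iterations below will be finite. \emph{The easy half of (i)} is then immediate: choosing $\widetilde{k[n]}$ so that $k\oplus\widetilde{k[n]}\to k\oplus k[n]$ is degreewise surjective, each step~\eqref{build_ring_diagram} is a homotopy pullback along a fibration between objects of $\smallC_k$, so by the preceding Lemma on fiber products in $\smallC_k$ one gets inductively that each $A_i'$, hence $A_i=c(A_i')$, lies in $\smallC_k$; thus any $R$ weakly equivalent to some $A_m$ lies in $\smallC_k$.

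For the converse in (i) I would proceed in two phases. \emph{Phase 1 (build $\pi_0$):} the Artin local ring $\pi_0A$ with residue field $k$ is, by classical deformation theory (Schlessinger), obtained from $k$ by a finite chain of square-zero extensions with one-dimensional kernels — necessarily copies of the residue field, as that is the only one-dimensional module over such a ring — and each of these, taken relative to $\Z$, is classified by an André--Quillen class and hence (Example~\ref{example:attach-homotopy-group} with $n=1$) realized as a homotopy pullback along $k\oplus k[1]$. This produces a cofibrant, homotopy-discrete model of $\pi_0A$ built from $k$ with all $n_i=1$. \emph{Phase 2 (climb the Postnikov tower):} for $n=1,\dots,N$, where $\pi_iA=0$ for $i>N$, pass from a model of $\tau_{\leq n-1}A$ to one of $\tau_{\leq n}A$. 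Filter the finite-length $\pi_0A$-module $\pi_nA$ by submodules with graded pieces $\cong k$ and realize the passage as a finite composite of pullbacks along $k\oplus k[n+1]$: since $\pi_{n+1}(\tau_{\leq n-1}A)=0$, each such pullback merely adjoins one residue-field copy of $k$ in degree $n$ and changes nothing else in $\pi_*$, and a careful choice of the classifying maps makes both the module extensions and the Postnikov $k$-invariant come out correctly. Here the $\Tor$ spectral sequence and the relative Hurewicz isomorphism $\pi_n(R'\otimes_Rk)=\pi_n(R',R)\otimes_{\pi_0R}k$ recorded above, together with the preceding Lemma on rings with $\pi_*\cong k\oplus V[n]$, are used to track the homotopy and to match the final stage with $\tau_{\leq n}A$. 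Concatenating Phase 1 with the $N$ rounds of Phase 2 gives the tower, and $A\simeq\tau_{\leq N}A$ is the required weak equivalence.

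\emph{Part (ii)} is the same construction carried out relatively. Given $f:R\to A$ with $R$ cofibrant, I would maintain a factorization $R\to B_i\to A$ over $A$ with $B_i$ obtained from $B_0=A$ by pullbacks as in~\eqref{build_ring_diagram} and with $R\to B_i$ an isomorphism on $\pi_{<i}$ and a surjection on $\pi_i$, improving the connectivity by one at each stage by adjusting $\pi_i$. For $i\geq1$ this uses pullbacks along $k\oplus k[i]$ and $k\oplus k[i+1]$ exactly as in the Postnikov step above; for $i=0$, where $\pi_0R\to\pi_0A$ may be an arbitrary map of Artin local rings, one also uses the degree-$0$ object $k\oplus k[0]$, whose pullbacks supply the flexibility to correct $\pi_0$ in either direction via the degree-$0$ part of the relative deformation theory of $\pi_0R\to\pi_0A$ (explaining why $n\geq0$, rather than $n\geq1$, is the right range in (ii)). Finiteness of $\pi_*R$ and $\pi_*A$ makes the process terminate; set $A'=B_m$, and the composite $A'\to A$ is homotopic to $f$ by construction.

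\emph{The main obstacle} is the Postnikov step and its relative analogue. Pullbacks along $k\oplus k[n]$ are classified only by cohomology of the tangent complex with \emph{residue-field} coefficients, whereas a general Postnikov $k$-invariant, and the $\pi_0A$-module structure on $\pi_nA$, a priori involve coefficients in larger modules. The point — and what must be verified carefully — is that residue-field coefficients nonetheless suffice, provided one builds all of $\pi_0A$ first and then assembles the higher structure one residue-field extension at a time, so that at each stage the relevant realization problem carries no obstruction and the map from classifying maps to the class of "extensions that can occur" is surjective. Pinning this down, using the tangent-complex and spectral-sequence machinery set up earlier in this section, is the crux of the proof.
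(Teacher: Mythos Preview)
Your overall plan is sound and close to the paper's, but you have correctly identified the crux without supplying it. The paper's proof of (ii) resolves precisely your ``main obstacle'' with one clean trick: to produce the classifying map, form the derived tensor product $A \otimes_R k$. If $R \to A$ is a cofibration with $\pi_m(A,R)=0$ for $m<n$, then $\pi_n(A\otimes_R k) = \pi_n(A,R)\otimes_{\pi_0 R} k =: V$, and hence $\tau_{\leq n}(A\otimes_R k) \simeq k\oplus V[n]$; the composite
\[
A \longrightarrow A\otimes_R k \longrightarrow \tau_{\leq n}(A\otimes_R k)\simeq k\oplus V[n]
\]
is the desired map. Two features of this construction are essential. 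First, the composite $R\to A\to A\otimes_R k$ factors through $k$, so it is automatically null and the lift $R\to A'$ to the pullback exists for free---you never have to ``choose carefully.'' Second, the resulting $A'$ satisfies $\pi_n(A',R) = \ker\bigl(\pi_n(A,R)\to \pi_n(A,R)\otimes_{\pi_0 R}k\bigr) = \mathfrak{m}\cdot\pi_n(A,R)$, so by Nakayama and finite length the process terminates. This Nakayama step is exactly the answer to your worry that residue-field coefficients might not suffice: they do, provided one iterates.

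Your two-phase decomposition of (i) is correct but unnecessary. The paper simply applies (ii) to the augmentation $R\to k$: since $\pi_0 R\to k$ is surjective, the relative argument runs from the start with $n\geq 1$, and there is no need to treat $\pi_0$ separately via classical square-zero extensions. Your organization of (ii)---build $B_i$ over $A$ by pullbacks, improving connectivity of $R\to B_i$---is the same as the paper's; what was missing was the mechanism (the map to $A\otimes_R k$) that simultaneously produces the classifying map and guarantees the lift of $R$.
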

\begin{proof}[Proof sketch] For (i) see also \cite[Lemma 6.2.6]{LurieThesis}.

  We prove (ii).  We will use Postnikov truncations, for which we refer the reader ahead to  
\S \ref{sec:postnikov}.
  Without loss of generality we may assume that $R \to
  A$ is a cofibration.  Suppose in addition that it is
  $(n-1)$-connected, i.e.\ that the relative homotopy groups
  $\pi_*(A,R)$ vanish in degrees strictly below $n-1$, for 
  some $n \geq 1$, and consider the map from $A$ to the tensor product
  $A \otimes_R k$.  Then
  $\pi_n(A  \otimes_R k) = \pi_n(A,R) \otimes_{\pi_0 R} k =
  V$ for some $k$-vector space $V$.  Then the truncation 
  $\tau_{\leq n}(A   \otimes_R k)$ has the same homotopy
  groups as $k \oplus V[n]$ and hence for $R$ and $A$ cofibrant there is a
  commutative diagram
  \begin{equation*}
    \xymatrix{
      R \ar[r] \ar[d]&  k \oplus \widetilde{V[n]} \ar[d]\\
      A \ar[r] & k \oplus V[n]
    }
  \end{equation*}
  such that the induced map $\pi_m(A,R) \to \pi_m(k \oplus V[n],k)$ is
  an isomorphism for $m < n$ and surjective for $m = n$.  If we write
  $R \to A'$ for the induced homomorphism from $R$ to the 
  pullback of the rest of the diagram, then we conclude that this map
  remains $(n-1)$-connected and that the group $\pi_n(A',R)$ is identified
  with the kernel of the quotient map
  $\pi_n(A,R) \to \pi_n(A,R) \otimes_{\pi_0 R} k \cong \pi_n(A,A') =
  V$.  Since $\pi_n(A,R)$  has finite length as a module over $\pi_0 R$ we may repeat
  this process finitely many times to achieve $\pi_n(A',R) = 0$ so
  that $R \to A'$ is $n$-connected.  Then continue by induction.  This
  finishes (ii) in the case where $\pi_0(R) \to \pi_0(A)$ is
  surjective, and in particular proves (i).

  If $\pi_0(R) \to \pi_0(A)$ is not surjective there is an easy preliminary step first, using $n=0$.
\end{proof}

The first part of the above lemma has the following analogy in pointed
spaces.  A pointed connected space is $p$-finite if it has only
finitely many non-zero homotopy groups, all of which are finite
$p$-groups.  Then a pointed connected space is $p$-finite if and only
if it is weakly equivalent to one obtained from a point by finite
iterations replacing a pointed space $X$ by the homotopy fiber of a
map $h: X \to K(\F_p,n)$, with varying $n \geq 2$.  The second part of
the above lemma has a similar analogy for maps between $p$-finite
spaces.  (We find the analogy to $p$-finite spaces instructive and
shall return to it later in the paper.)

\subsection{Functors and natural transformations}
\label{sec:natur-weak-equiv}

We shall study functors $\mathcal{F}: \smallC_k \to s\Sets$ and
natural transformations between them.
\begin{Definition}
  A \emph{natural simplicial homotopy} between two natural
  transformations $S,T: \mathcal{F} \to \mathcal{G}$ is a natural
  transformation $\Delta[1] \times \mathcal{F}(-) \to \mathcal{G}$
  giving a homotopy between $S,T: \mathcal{F}(A) \to \mathcal{G}(A)$
  for all $A$.
\end{Definition}
From this definition of simplicial homotopy one could define a notion
of ``natural simplicial homotopy equivalence'' in terms of functors in
both directions and homotopies from the two compositions to the two
identity natural tranformations, but that would be a very strict
notion rarely satisfied in practice.  The following weaker notion is
much more useful.
\begin{Definition} \label{def:natural-weak-equivalence}
  Let $\mathcal{F}, \mathcal{G}: \smallC_k \to s\Sets$ be two
  functors.  A \emph{natural weak equivalence} is a natural
  transformation $T: \mathcal{F} \to \mathcal{G}$ inducing a weak
  equivalence $\mathcal{F}(A) \to \mathcal{G}(A)$ for all $A \in
  \smallC_k$.  Two functors are naturally weakly equivalent if there
  exists a (finite) zig-zag of natural weak equivalences between them.
\end{Definition}

For example, any functor is naturally weakly equivalent to one taking
values in Kan simplicial sets.  Indeed, either of the natural transformations
$\mathcal{F}(A) \to \mathrm{Sin}|\mathcal{F}(A)|$ or $\mathcal{F}(A) \to \mathrm{Ex}^\infty(A)$ is a natural weak
equivalence, where $\mathrm{Ex}^\infty$ is Kan's infinite iteration of the adjoint subdivision.  We shall be mostly concerned with functors having the
following extra property.
\begin{Definition}
  A functor $\mathcal{F}: \smallC_k \to s\Sets$ is \emph{homotopy
    invariant} if $\mathcal{F}(\phi): \mathcal{F}(A) \to
  \mathcal{F}(B)$ is a weak equivalence for any weak equivalence
  $\phi: A \to B$ in $\smallC_k$.
\end{Definition}

\begin{Definition}
  A \emph{simplicial enrichment} of a functor $\mathcal{F}: \smallC_k
  \to s\Sets$ is the specification of maps of simplicial sets
  $\smallC_k(A,B) \to s\Sets(\mathcal{F}A,\mathcal{F}B)$ for all
  objects $A, B$, agreeing with the functoriality on 0-simplices and
  compatible with the compositions in the $s\Sets$-enriched categories
  $\smallC_k$ and $s\Sets$.

  A simplicially enriched functor is a functor together with a
  simplicial enrichment.
\end{Definition}
\begin{Lemma}\label{Lemma:2.10}
  Any homotopy invariant functor $\mathcal{F}: \smallC_k \to s\Sets$ is
  naturally weakly equivalent to a simplicially enriched one.  In
  fact, there is a simplicially enriched functor $\mathcal{F}'$ with
  Kan values and a natural weak equivalence $\mathcal{F} \to
  \mathcal{F}'$. \end{Lemma}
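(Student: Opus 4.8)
The plan is to realize $\mathcal{F}'$ via a simplicial-set analogue of the standard ``coherent nerve / rectification'' trick: replace each value $\mathcal{F}(A)$ by a simplicial set of sections of $\mathcal{F}$ over the over-category, made functorial in $A$ by reindexing. Concretely, for $A \in \smallC_k$ consider the category $(\smallC_k)_{A/}$ of objects under $A$; since $\smallC_k$ is $s\Sets$-enriched this is a simplicially enriched category, and $\mathcal{F}$ restricts to a functor on its underlying ordinary category. I would set $\mathcal{F}'(A)$ to be a suitable homotopy limit (a cobar/$\mathrm{Ex}^\infty$-fattened ``simplicial end'') of the composite $(\smallC_k)_{A/} \to \smallC_k \xrightarrow{\mathcal{F}} s\Sets$, landing in Kan complexes. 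The point is that this construction is visibly functorial in $A$ along \emph{morphisms of simplicial sets} $\smallC_k(A,B)$ — a map $A \to B$ of any simplicial degree induces restriction $(\smallC_k)_{B/} \to (\smallC_k)_{A/}$, hence a map on the homotopy limits — so the simplicial enrichment $\smallC_k(A,B) \to s\Sets(\mathcal{F}'A, \mathcal{F}'B)$ is built in, and composition-compatibility is formal from functoriality of the homotopy limit.

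First I would fix precise models: use $\mathrm{Ex}^\infty$ to make everything Kan, and use the Bousfield–Kan cosimplicial-cobar formula for the homotopy limit over the simplicially enriched indexing category, so that all structure maps are strictly functorial (no choices up to homotopy). Second, I would construct the natural transformation $\mathcal{F} \to \mathcal{F}'$ by the ``include a point into its own over-category'' map: the identity object $\mathrm{id}_A \in (\smallC_k)_{A/}$ gives an evaluation, and dually a canonical map from the constant-at-$\mathcal{F}(A)$ diagram into the diagram $\mathcal{F}|_{(\smallC_k)_{A/}}$ induces $\mathcal{F}(A) \to \holim \mathcal{F}|_{(\smallC_k)_{A/}} = \mathcal{F}'(A)$; naturality in $A$ is again formal. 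Third, I would check this map is a weak equivalence. This is where homotopy invariance of $\mathcal{F}$ enters: the indexing category $(\smallC_k)_{A/}$ has an initial object $\mathrm{id}_A$, and for a homotopy-invariant (and Kan-valued, after the first reduction) diagram, the homotopy limit over a category with initial object agrees with the value at that initial object. One must be slightly careful that ``initial object'' here is in the ordinary sense while the category is simplicially enriched; I would either cite/adapt the standard cofinality statement for homotopy limits over categories with an initial object, or, more robustly, observe that the inclusion $\{\mathrm{id}_A\} \hookrightarrow (\smallC_k)_{A/}$ is homotopy left cofinal and that the diagram sends every morphism to a weak equivalence (by homotopy invariance of $\mathcal{F}$), so Bousfield–Kan cofinality gives the equivalence $\mathcal{F}(A) \simeq \mathcal{F}'(A)$.

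The main obstacle I expect is bookkeeping the enrichment: making sure the chosen homotopy-limit model is \emph{strictly} functorial as a simplicially enriched assignment, rather than merely pseudo-functorial, and that the resulting maps $\smallC_k(A,B) \to s\Sets(\mathcal{F}'A,\mathcal{F}'B)$ genuinely respect composition on the nose. The clean way around this is to use the cotensor/cobar description $\mathcal{F}'(A) = \mathrm{Tot}\bigl(\prod_{A \to A_0 \to \cdots \to A_q} \mathrm{Ex}^\infty \mathcal{F}(A_q)\bigr)$, whose cosimplicial structure maps are induced purely by the enriched nerve of $(\smallC_k)_{A/}$; contravariant functoriality in $A$ is then literally precomposition of chains with a fixed map $A' \to A$, which is strictly associative. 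A secondary, minor point is size: $\smallC_k$ is not small, but the relevant over-categories and the diagrams involved are essentially small (Artin rings of bounded length), so the homotopy limits exist in $s\Sets$; alternatively one restricts to a small skeleton. Everything else — Kan-ness of $\mathcal{F}'(A)$, the two-out-of-three properties — follows from standard facts about $\mathrm{Ex}^\infty$ and $\mathrm{Tot}$ of cosimplicial Kan complexes.
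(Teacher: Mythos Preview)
Your holim-over-undercategory idea is reasonable in spirit, but the central claim—that the construction is simplicially enriched—has a gap. You write that ``a map $A \to B$ of any simplicial degree induces restriction $(\smallC_k)_{B/} \to (\smallC_k)_{A/}$,'' but recall that a $p$-simplex of $\smallC_k(A,B)$ is by definition a ring map $A \to B^{\Delta[p]}$; precomposing an object $(B \to C)$ of $(\smallC_k)_{B/}$ with this lands at $(A \to C^{\Delta[p]})$, not at $(A \to C)$, so you do not get an honest functor between the ordinary under-categories. Your cobar formula indexes the product over chains of \emph{ordinary} morphisms, which yields only ordinary functoriality in $A$; the passage to enriched functoriality is precisely the content of the lemma and is not accomplished by ``precomposition of chains with a fixed map $A' \to A$'' once that map is a higher simplex. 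One could try an enriched end instead, but that would require feeding simplices of the mapping spaces through $\mathcal{F}$—which is exactly what you cannot do before $\mathcal{F}$ is enriched.

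The paper takes a shorter route that avoids under-categories and size issues entirely: set $\mathcal{F}'(A)$ to be the diagonal of the bisimplicial set $[p] \mapsto \mathcal{F}(A^{\Delta[p]})$, so $\mathcal{F}'(A)_p = \mathcal{F}(A^{\Delta[p]})_p$. The point is that the simplicial enrichment of $\smallC_k$ is \emph{defined} via these cotensors (a $p$-simplex of $\smallC_k(A,B)$ is a ring map $A \to B^{\Delta[p]}$), so such a $p$-simplex induces $A^{\Delta[q]} \to B^{\Delta[p]\times\Delta[q]}$ and hence, after applying $\mathcal{F}$, exactly the data needed to strictly enrich $\mathcal{F}'$. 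The natural map $\mathcal{F}(A) \to \mathcal{F}'(A)$ comes from the constant maps $A \to A^{\Delta[p]}$; it is a weak equivalence by the bisimplicial diagonal lemma, since each $A \to A^{\Delta[p]}$ is a weak equivalence (all objects of $\smallC_k$ are fibrant) and $\mathcal{F}$ is homotopy invariant. The paper refers to Rezk--Schwede--Shipley for the remaining details.
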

\begin{proof}[Proof sketch.] We briefly give the construction: 
for an object $A \in \smallC_k$ we write $A^{\Delta[p]} =
\Map_{s\Sets}(\Delta[p],A)$.  For fixed $[p]$, this simplicial set is
canonically a simplicial ring.    Since $A$ is automatically
fibrant as a simplicial set, the canonical map $A \to A^{\Delta[p]}$
is a weak equivalence of simplicial sets and in particular
$A^{\Delta[p]}$ is again an object of $\smallC_k$.
Let $F'(A)$ be the  diagonal simplicial set
of the bisimplicial set $[p] \mapsto F(A^{\Delta[p]})$, 
  i.e.\ the
$p$-simplices of $F'(A)$ are the $p$-simplices of $F(A^{\Delta[p]})$.
The canonical map of simplicial sets $F(A) \to F'(A)$ is a weak equivalence
and it is possible to simplicially enrich $F'$ in a natural way. 
For further details  see \cite[Corollary 6.5]{RezkSchwedeShipley}.
 \end{proof}

Thus, when proving a statement of the form that a certain homotopy
invariant functor $\mathcal{F}$ is naturally weakly equivalent to
another functor with certain properties, we may without loss of
generality assume that $\mathcal{F}$ takes values in Kan complexes and
is simplicially enriched.

\subsection{Representable functors}
\label{sec:simpl-funct-simpl}

Following Schlessinger and Lurie, we shall be interested in functors
which are \emph{representable} and functors which are
\emph{pro-representable}.
\begin{Definition}\label{defn:representability}
  A functor $\mathcal{F}: \smallC_k \to s\Sets$ is
  representable if it is naturally weakly equivalent to
  $\Hom(R,-)$ for some cofibrant object $R \in \smallC_k$.
\end{Definition}

When $\mathcal{F}$ is simplicially enriched, there is a ``simplicial
Yoneda-lemma'': sending a natural transformation $T: \Hom(R,-) \to
\mathcal{F}$ of functors $\smallC_k \to s\Sets$ to the zero-simplex
$T(\mathrm{id}) \in \mathcal{F}(R)$ gives a bijection between such
natural transformations and zero-simplices of $\mathcal{F}(R)$.  In
the same way, natural simplicial homotopies $\Delta[1] \times
\Hom(R,-) \to \mathcal{F}$ correspond to 1-simplices of
$\mathcal{F}(R)$, etc.
\begin{Lemma}  \label{simpilcially enriched representable Lemma}
  A simplicially enriched functor $\mathcal{F}: \smallC_k \to s\Sets$ is
  representable if and only if there exists a cofibrant $R \in
  \smallC_k$ and $\iota \in \mathcal{F}(R)$ such that the induced
  $\Hom(R,-) \to \mathcal{F}$ is a natural weak equivalence.
\end{Lemma}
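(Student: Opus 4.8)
The plan is to treat the two implications separately; the ``if'' part is immediate and the ``only if'' part is the business of turning the zig-zag implicit in Definition~\ref{defn:representability} into a single natural transformation of the form prescribed by the simplicial Yoneda lemma. For ``if'': given a cofibrant $R$ and $\iota \in \mathcal{F}(R)_0$ such that the transformation $\Hom(R,-) \to \mathcal{F}$ it induces is a natural weak equivalence, $\mathcal{F}$ is by definition naturally weakly equivalent to $\Hom(R,-)$, hence representable. For ``only if'': Definition~\ref{defn:representability} gives a finite zig-zag of natural weak equivalences from $\mathcal{F}$ to $\Hom(R,-)$, say through functors $\mathcal{H}_0 = \mathcal{F}, \mathcal{H}_1, \dots, \mathcal{H}_m = \Hom(R,-)$ with $R$ cofibrant; since any natural transformation $\Hom(R,-) \to \mathcal{F}$ is induced by a $0$-simplex of $\mathcal{F}(R)$ via the simplicial Yoneda lemma, it suffices to produce a single natural weak equivalence $\Hom(R,-) \to \mathcal{F}$.

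The first reduction is to arrange that \emph{every} functor in the zig-zag is simplicially enriched and takes values in Kan complexes. Each $\mathcal{H}_i$ is naturally weakly equivalent to $\Hom(R,-)$ (truncate the zig-zag), which is homotopy invariant because $R$ is cofibrant and every object of $\smallC_k$ is fibrant; homotopy invariance passes along natural weak equivalences, so Lemma~\ref{Lemma:2.10} applies to each $\mathcal{H}_i$. Moreover the construction $\mathcal{H} \mapsto \mathcal{H}'$ used there is functorial in $\mathcal{H}$ and preserves objectwise weak equivalences, so I get a new zig-zag of natural weak equivalences through simplicially enriched, Kan-valued functors $\mathcal{H}_0', \dots, \mathcal{H}_m'$, together with natural weak equivalences $\mathcal{H}_i \to \mathcal{H}_i'$. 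Note that $\mathcal{F}$ is already simplicially enriched by hypothesis and that $\Hom(R,-)$ is simplicially enriched and, $R$ being cofibrant, takes values in Kan complexes.

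The core is a one-step statement: if $T\colon \mathcal{A} \to \mathcal{B}$ is a natural weak equivalence with $\mathcal{A}$ simplicially enriched and $\mathcal{B}$ simplicially enriched and Kan-valued, then $\mathcal{A}$ is representable in the strict sense of the present lemma if and only if $\mathcal{B}$ is. The implication from $\mathcal{A}$ to $\mathcal{B}$ is formal: compose the $\iota$-induced transformation with $T$ and use simplicial Yoneda to recognize it as induced by $T_R(\iota) \in \mathcal{B}(R)_0$. For the implication from $\mathcal{B}$ to $\mathcal{A}$: $T_R \colon \mathcal{A}(R) \to \mathcal{B}(R)$ is a weak equivalence and $\mathcal{B}(R)$ is Kan, so $\pi_0$ is a bijection; choose $\iota \in \mathcal{A}(R)_0$ with $T_R(\iota)$ in the same path component as the given $\kappa \in \mathcal{B}(R)_0$, pick a connecting $1$-simplex, and use simplicial Yoneda for $\mathcal{B}$ to turn it into a natural simplicial homotopy between the $\kappa$-induced and the $T_R(\iota)$-induced transformations $\Hom(R,-) \to \mathcal{B}$. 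Since simplicially homotopic maps are simultaneously weak equivalences, the $T_R(\iota)$-induced transformation is a natural weak equivalence; as it factors as $T$ composed with the $\iota$-induced transformation $\Hom(R,-) \to \mathcal{A}$, two-out-of-three shows the latter is a natural weak equivalence, so $\mathcal{A}$ is strictly representable with this $\iota$.

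To finish, start from the observation that $\Hom(R,-) = \mathcal{H}_m$ is strictly representable with $\iota = \id_R$, apply the one-step statement to $\mathcal{H}_m \to \mathcal{H}_m'$, then walk down the zig-zag $\mathcal{H}_m', \dots, \mathcal{H}_0'$ (all vertices are Kan-valued and enriched, so the one-step statement applies no matter which direction each arrow points), and finally apply the one-step statement to $\mathcal{F} = \mathcal{H}_0 \to \mathcal{H}_0'$ to conclude. The main obstacle is precisely the substantial direction of the one-step statement — replacing an abstract zig-zag by a transformation of the exact shape $\Hom(R,-) \to \mathcal{F}$ demanded by Yoneda — which is where simplicial enrichment and Kan values are genuinely needed (to lift $\iota$ through $T_R$ up to homotopy and to convert a $1$-simplex into a natural homotopy); by comparison, the bookkeeping that makes these hypotheses available throughout the zig-zag (functoriality in Lemma~\ref{Lemma:2.10}, homotopy invariance of the intermediate functors) is routine.
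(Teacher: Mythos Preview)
Your argument is correct and shares the same core mechanism as the paper's proof: pick a $0$-simplex $\iota$ mapping into the right path component, use a connecting $1$-simplex (available by Kan-ness) together with simplicial Yoneda to produce a natural simplicial homotopy, and conclude by two-out-of-three. The paper is terser: it relies on the remark preceding the lemma to reduce to a length-two zig-zag $\mathcal{F} \to \mathcal{F}' \leftarrow \Hom(R,-)$ and then carries out the flip only in the special case where the target is $\Hom(R,-)$ (exploiting that $\Hom(R,R)$ is already Kan), whereas you isolate a clean bidirectional one-step statement and walk it explicitly along the full zig-zag after uniformly replacing all intermediates via Lemma~\ref{Lemma:2.10}. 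Your version is more self-contained and makes the role of the Kan and enrichment hypotheses at each step more transparent; the paper's version is shorter but leaves more of the zig-zag reduction implicit.
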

A general (possibly unenriched) representable functor $\mathcal{F}$ is
automatically homotopy invariant because $\Hom(R,-)$ is homotopy
invariant for any cofibrant $R \in \SCR$. (This follows, e.g., from \cite[Lemma 1.1.12]{Hovey}, together with the fact that all simplicial
rings are fibrant.)  Therefore, by Lemma~\ref{Lemma:2.10}, we can
always find a zig-zag
$\mathcal{F} \to \mathcal{F}' \leftarrow \Hom(R,-)$ of length two.
\begin{proof}
  It suffices to prove that if $S: \mathcal{F} \to \Hom(R,-)$ is a
  natural weak equivalence, then there exists a natural transformation
  $T: \Hom(R,-) \to \mathcal{F}$ such that $ST$ is naturally
  simplicially homotopic to the identity natural transformation.

  To construct such a $T$, pick $\iota \in \mathcal{F}(R)$ such that
  $S(\iota) \in \Hom(R,R)$ is in the same path component as the
  identity map and let $T$ be the corresponding natural
  transformation.  Then $ST$ is the natural transformation
  corresponding to $S(\iota) \in \Hom(R,R)$.  Since this is a Kan
  complex, there is a 1-simplex $\lambda: \Delta[1] \to \Hom(R,R)$
  connecting the identity with $S(\iota)$; the adjoint $\lambda \in
  \Hom(R,R^{\Delta[1]})$ classifies a natural transformation
  $\Delta[1] \times \Hom(R,-) \to \Hom(R,-)$ from the identity to
  $ST$.
\end{proof}

\subsection{Approximation by representable functors}

Representable
functors play a central role in this paper, and we shall need
criteria for which functors are representable.  Let us first discuss
some more simple-minded methods for ``approximating'' an
\emph{arbitrary} homotopy invariant, simplicially enriched, Kan valued
functor $F: \smallC_k \to s\Sets$ by representable ones.  Our method
shall be a homotopy version of the following strategy.  Suppose $C$ is
an essentially small category which has all small limits and
$F: C \to \Sets$ is a functor.  Then there is a category whose object
are pairs $(c,\phi)$ where $c$ is an object of $C$ and
$\phi: C(c,-) \to F$ is a natural transformation; the morphisms
$(c,\phi) \to (c',\phi')$ are pairs of a morphism $c' \to c$ and a
natural isomorphism between the two resulting functors
$\Hom(c',-) \to F$.  Then we may to each such functor $F$ canonically
associate an object $c_F = \lim_{(c,\phi)} c$, and natural
tranformations
\begin{equation*}
  C(c_0,-) \leftarrow \colim_{(c,f)} C(c,-) \to F,
\end{equation*}
which are natural isomorphisms if $F$ happens to be representable.  In
particular this formula extracted a representing object $c_0$ directly
in terms of the functor $F$.

We shall follow a similar procedure to extract a representing object
$R \in \smallC_k$ from a representable functor
$F: \smallC_k \to s\Sets$.  First, we must replace all limits and
colimits by homotopy limits and homotopy colimits.  Second, the
category $\smallC_k$ is not essentially small in the strict sense, but
it is ``homotopy small'' (there is a set of objects representing all
homotopy classes).  Third, the category of pairs $(c,\phi)$ above
should be replaced by a \emph{simplicial category} and the homotopy
(co)limits should take this into account.  In
appendix~\ref{sec:homotopy-theory} we discuss only homotopy (co)limits
indexed by ordinary categories, so we shall not use this terminology
here; instead we write explicit formulas.  (For example the simplicial
functor defined in
Definition~\ref{defn:approximating-functor-by-secret-hocolim} below is
the simplicial version of the formula
$\colim_{(S,\phi)} C(S,A) \to F(A)$ for functors into sets.)

\begin{Definition}\label{defn:approximating-functor-by-secret-hocolim}
  Pick, ``once and for all'' a \emph{set} of cofibrant objects of
  $\smallC_k$, such that any object $R \in \smallC_k$ admits a weak
  equivalence from an object in the set, and write
  $\smallC_k^\mathrm{skel} \subset \smallC_k$ for the full subcategory
  with these objects.

  Let $F: \smallC_k \to s\Sets$ be homotopy invariant, simplicially
  enriched, and Kan valued.  For $[p] \in \Delta$, let
  $M^F_p: \smallC_k \to s\Sets$ be given by
  \begin{align*}
    &M^F_p(A) = \\
    &\coprod_{\substack{(S_0, \dots, S_p) \\ \in (\smallC_k^\mathrm{skel})^{p+1}}} F(S_0) \times
    \smallC_k(S_0,S_1) \times \dots \times \smallC_k(S_{p-1},S_p)
    \times \smallC_k(S_p,A)
  \end{align*}
  Composition of morphisms and functoriality of $F$ define face maps
  $M^F_p(A) \to M^F_{p-1}(A)$ and insertion of identities define
  degeneracy maps.  The diagonal of the resulting bisimplicial set
  shall be denoted also by $M^F: \smallC_k \to s\Sets$.
\end{Definition}

More conceptually, there is a category
$(\smallC_k^\mathrm{skel} \wr F)$ whose objects are pairs $(S,x)$,
$S \in \smallC_k^\mathrm{skel}$ and $x \in F(S)$, and whose morphisms
are $\phi: S_0 \to S_1$ send $x_0 \to x_1$.  The category is  the zero simplices of a 
\emph{simplicial category} (i.e.\ simplicial object in the category of
small categories) whose $p$-simplices is the category with objects
pairs $(S,x)$ with $x: \Delta[p] \to F(S)$ and morphisms
$(S,x_0) \to (S,x_1)$ are the $\phi: \Delta[p] \to \smallC_k(S_0,S_1)$
with the property that the composition
\begin{equation*}
  \Delta[p] \xrightarrow{(x,\phi)} F(S_0)
  \times \smallC_k(S_0, S_1) \to F(S_1)
\end{equation*}
is $x_1$.  There is a simplicial functor
$(\smallC_k^\mathrm{skel} \wr F)^\mathrm{op} \to s\Sets$ given on
objects by $(S,x) \mapsto \smallC_k(S,A)$, and the above construction
of $M^F(A)$ can be regarded as the homotopy colimit of that functor,
following a natural generalization of the Bousfield--Kan formula to
the case where the indexing category is a simplicial category.

\begin{Lemma}
  The evaluation map  
  $M^F_0(A) = \coprod_{S \in A} F(S) \times \smallC_k(S,A) \to F(A)$
  gives rise to a map
  \begin{equation*}
    M^F(A) \to F(A)
  \end{equation*}
  which is a natural weak equivalence.
\end{Lemma}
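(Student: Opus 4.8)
The plan is to show that the augmentation $M^F(A) \to F(A)$ is a weak equivalence by exhibiting a natural section and a natural homotopy, exploiting that $F$ is simplicially enriched and Kan-valued. First I would fix $A \in \smallC_k$ and choose, using the defining property of $\smallC_k^{\mathrm{skel}}$, a cofibrant object $S_A \in \smallC_k^{\mathrm{skel}}$ together with a weak equivalence $w_A \colon S_A \to A$. Pushing forward along $F(w_A)$ sends $F(S_A)$ to $F(A)$, and since $F$ is homotopy invariant, $F(w_A)$ is a weak equivalence of Kan complexes. The zero-simplices of $M^F_0(A)$ already contain, for each such pair $(S_A, x)$ with $x \in F(S_A)$, the datum $(x, w_A) \in F(S_A) \times \smallC_k(S_A, A)$, and the composite $M^F(A) \to F(A)$ sends this to $F(w_A)(x)$. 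So on $M^F_0$ the map factors through $F(w_A)$; the content is to check that the higher-dimensional simplices of $M^F(A)$ do not change the homotopy type.

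The cleanest route is to recognize $M^F(A)$, as the remark following Definition~\ref{defn:approximating-functor-by-secret-hocolim} indicates, as a homotopy colimit of the functor $(S,x) \mapsto \smallC_k(S,A)$ over the simplicial category $(\smallC_k^\mathrm{skel} \wr F)$. The indexing category has a \emph{terminal-like} behavior once we remember the data of $F(A)$ itself: more precisely, I would introduce the augmented diagram over $(\smallC_k^\mathrm{skel}\wr F)$ with constant value $F(A)$, where the structure map $(S,x) \mapsto (\smallC_k(S,A) \to F(A))$ is evaluation $\phi \mapsto F(\phi)(x)$. The claim is that the induced map on homotopy colimits
\begin{equation*}
  \hocolim_{(S,x)} \smallC_k(S,A) \longrightarrow \hocolim_{(S,x)} F(A) \longrightarrow F(A)
\end{equation*}
is a weak equivalence, the second map being the canonical one out of a homotopy colimit of a constant diagram (which is $B(\smallC_k^\mathrm{skel}\wr F) \times F(A) \to F(A)$, a weak equivalence once the classifying space of the indexing simplicial category is contractible — or more robustly, once we use that the fiber of the whole construction over a point of $F(A)$ is contractible). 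I would verify the first map is a weak equivalence by a "fiberwise" argument: it suffices, by a standard homotopy-colimit-detects-weak-equivalences statement (the Bousfield--Kan spectral sequence, or a Reedy/Segal argument on the bisimplicial set), to check that for every $(S,x)$ the map $\smallC_k(S,A) \to F(A)$ has the right homotopy type after taking homotopy colimit over a suitable comma category, and this reduces to: for cofibrant $S$, the map $\smallC_k(S,A) \to F(A)$ composed into the colimit sees all of $F(A)$ and no more, which is exactly representability-type bookkeeping and the Yoneda-style identification $\pi_0 \smallC_k(S,A) \cong$ homotopy classes of maps.

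Concretely, the argument I would actually write is a direct simplicial one: build an explicit extra degeneracy / contracting homotopy on the bisimplicial set $[p] \mapsto M^F_p(A)$ \emph{relative to} $F(A)$. Using the chosen $w_A \colon S_A \to A$ one inserts $S_A$ as a new zeroth vertex: the map $(S_0,\dots,S_p; \dots) \mapsto (S_A, S_0, \dots, S_p; F(w_A)\text{-datum}, \dots)$ together with the evaluation maps furnishes a simplicial homotopy from the identity of $M^F(A)$ to the composite $M^F(A) \to F(A) \to M^F(A)$, where the last map uses $S_A$; combined with the observation above that $M^F(A) \to F(A)$ restricted to the $S_A$-part is $F(w_A)$, a weak equivalence, this shows the augmentation is a weak equivalence. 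Naturality in $A$ is immediate from the construction except for the choice of $S_A$ and $w_A$, which a priori is not natural; I would handle that by noting the homotopy type of $M^F(A) \to F(A)$ is independent of the choice (any two choices are connected by a zig-zag of maps in $\smallC_k^\mathrm{skel}$ lying in the indexing category), or simply observe we only need a levelwise weak equivalence, not a natural transformation with naturally chosen homotopy inverse.

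The main obstacle I expect is the bookkeeping around the \emph{simplicial} (rather than ordinary) indexing category: the paper's appendix only sets up homotopy colimits over ordinary categories, so making the "$\hocolim$ over a simplicial category'' precise — or, equivalently, controlling the diagonal of the relevant bisimplicial set and verifying the extra-degeneracy/contractibility claim degreewise — is where the real care is needed. Everything else (homotopy invariance giving that $F(w_A)$ is an equivalence, Kan-ness guaranteeing the homotopies can be filled, the Yoneda identification on $\pi_0$) is routine given the results already established in \S\ref{sec:natur-weak-equiv} and \S\ref{sec:simpl-funct-simpl}.
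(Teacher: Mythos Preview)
Your overall instinct---use an extra degeneracy on the augmented simplicial space $[p]\mapsto M^F_p(A)$---matches the paper exactly, but the specific extra degeneracy you write down does not exist. Inserting $S_A$ as a new \emph{zeroth} vertex would require, for each $(S_0,\dots,S_p;\,x\in F(S_0),\dots)$, an element of $F(S_A)$ together with a map $S_A\to S_0$ in $\smallC_k$; neither datum is available from $w_A\colon S_A\to A$ alone. The ``$F(w_A)$-datum'' you invoke is a map $F(S_A)\to F(A)$, not a lift of $x$ to $F(S_A)$, and in any case there is no map $S_A\to S_0$. So the contracting homotopy you sketch is not well-defined.

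The paper's repair is short: first observe that both $F$ and $M^F$ are homotopy invariant in $A$, so it suffices to check the claim when $A$ itself lies in $\smallC_k^{\mathrm{skel}}$. Then one inserts $A$ at the \emph{last} position rather than the first: the extra degeneracy $s_{-1}\colon M^F_{p-1}(A)\to M^F_p(A)$ sends $(S_0,\dots,S_{p-1};\,x,\dots,g\colon S_{p-1}\to A)$ to $(S_0,\dots,S_{p-1},A;\,x,\dots,g,\mathrm{id}_A)$, and at the bottom level $s_{-1}\colon F(A)\to M^F_0(A)$ sends $y$ to $(A;\,y,\mathrm{id}_A)$. This is a genuine extra degeneracy of the augmented simplicial space, and the standard argument (e.g.\ Goerss--Jardine, Lemma III.5.1) gives the weak equivalence. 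Your hocolim discussion is pointing in a reasonable direction but is not needed once the extra degeneracy is placed on the correct side; the reduction to $A\in\smallC_k^{\mathrm{skel}}$ also dissolves the naturality worry you flag at the end.
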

\begin{proof}[Proof sketch]
  Since $F$ and $M^F$ are both homotopy invariant, it suffices to 
  consider $A \in \smallC_k^\mathrm{skel}$.  If we write
  $M^F_{-1}(A) = F(A)$ the resulting augmented simplicial space has an
  ``extra degeneracy'' $s_{-1}: M^F_{p-1}(A) \to M^F_p(A)$ for each
  fixed $A$, defined by letting $S_{p}= A$ and inserting the identity
  on $A$.  Hence the augmentation map is a weak equivalence (cf.\
  e.g.\ \cite[Lemma III.5.1]{GoerssJardine}).
\end{proof}

The universal strictly representable functor which admits a map from
$M^F$ is then the functor represented by the ``homotopy limit'' of the  
 functor $(\smallC_k^\mathrm{skel} \wr F) \to s\Sets$ given on object
as $(S,x) \mapsto S$, as we shall now explain.   
(This is entirely analogous to the discussion of homotopy limits in Appendix \ref{sec:homotopy-theory}, except now the indexing category $\smallC_k^\mathrm{skel} \wr F$
is a simplicial category, and so one needs to take this into account.) 
There is a functor
$(\smallC_k^\mathrm{skel} \wr F) \to s\Sets$ described on objects as
\begin{equation*}
  (S,x) \mapsto N((S,x) \downarrow (\smallC_k^\mathrm{skel} \wr F)^\mathrm{op}).  
\end{equation*}
This is simplicial functor and we shall write it as
$N(-\downarrow (\smallC_k^\mathrm{skel} \wr F)^\mathrm{op})$.  We
shall write
\begin{equation*}
  R^F \subset \prod_S s\Sets(N(S \downarrow (\smallC_k^\mathrm{skel}
  \wr F)^\mathrm{op}),S)
\end{equation*}
for the simplicial subset consisting of natural transformations of
simplicial functors $(\smallC_k^\mathrm{skel} \wr F) \to s\Sets$ from
the simplicial functor $N(- \downarrow (\smallC_k^\mathrm{skel}))$ to
the functor $(S,x) \mapsto S$.  This simplicial set $R^F$ inherits a
ring structure from the $S$: indeed $R^F$ is the ``homotopy limit'' of
a simplicial diagram in $\SCR_{/k}$ and in fact defines an object of
$\SCR_{/k}$.  In general we should not expect to have
$R^F \in \smallC_k$ and $R^F$ can be quite large without further
assumptions on $F$.  However, if $F$ is representable we will have
$R^F \in \smallC_k$ and in fact it will be a representing object for
$F$.  More precisely, there is a canonical natural transformation
\begin{equation*}
  M^F(A) \to \Hom(R^F,A),
\end{equation*}
and the precise ``functorial approximation'' of representable functors
$F: \smallC_k \to s\Sets$ is the following result.
\begin{Proposition}\label{prop:approximation}
  The associations $F \mapsto M^F$ and $F \mapsto R^F$ are
  functorial.  If $F \to F'$ is a natural weak equivalence then so is
  $M^F \to M^{F'}$, and $R^{F'} \to R^F$ is a weak equivalence.  The maps
  \begin{equation*}
    F(A) \leftarrow M^F(A) \rightarrow \Hom(R^F,A) \to \Hom(c(R^F),A)
  \end{equation*}
  defined above are natural in both $A\in \smallC_k$ and $F$, where  $c: \SCR_{/k} \to \SCR_{/k}$ denotes a functorial cofibrant    approximation.  The functors $M^F$ and $\Hom(c(R^F),-)$ are homotopy
  invariant.  Moreover the left hand map is a natural weak equivalence for all
  $F$ (simplicially enriched, homotopy invariant, and Kan valued) and
  the composition $M^F(A) \to \Hom(c(R^F),A)$ is a weak equivalence
  for representable $F$.
\end{Proposition}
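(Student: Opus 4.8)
\textbf{Proof proposal for Proposition~\ref{prop:approximation}.}

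The plan is to dispose of the easy functoriality and homotopy-invariance claims first, and then concentrate on the two weak-equivalence statements, since those are where the content lies. Functoriality of $F \mapsto M^F$ is built into Definition~\ref{defn:approximating-functor-by-secret-hocolim}: a natural transformation $F \to F'$ induces a levelwise map $M^F_p(A) \to M^{F'}_p(A)$ by changing only the first factor $F(S_0) \to F'(S_0)$, compatibly with faces and degeneracies. Functoriality of $F \mapsto R^F$ follows because the indexing simplicial category $(\smallC_k^\mathrm{skel} \wr F)$ is functorial in $F$, and $R^F$ is a (homotopy) limit over it; a map $F \to F'$ induces $(\smallC_k^\mathrm{skel} \wr F) \to (\smallC_k^\mathrm{skel} \wr F')$ and hence a restriction map $R^{F'} \to R^F$ on the spaces of natural transformations. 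That $M^F$ and $\Hom(c(R^F),-)$ are homotopy invariant is immediate: $M^F$ is a homotopy colimit of the functors $\smallC_k(S,-)$, each of which is homotopy invariant since all objects of $\smallC_k$ are fibrant and each $S$ is cofibrant (the argument already used before Lemma~\ref{simpilcially enriched representable Lemma}), and $\Hom(c(R^F),-)$ is homotopy invariant for the same reason applied to the cofibrant object $c(R^F)$. The claim that $F \to F'$ a natural weak equivalence implies $M^F \to M^{F'}$ a natural weak equivalence is then a homotopy-colimit argument: for fixed $A$, $M^F(A)$ is the diagonal of a bisimplicial set whose $p$-th row is a coproduct over $(p+1)$-tuples of the spaces $F(S_0) \times \smallC_k(S_0,S_1)\times\cdots\times\smallC_k(S_p,A)$, and replacing $F(S_0)$ by $F'(S_0)$ is a levelwise weak equivalence of coproducts of spaces (a coproduct of weak equivalences is a weak equivalence), hence a weak equivalence on diagonals. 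That $R^{F'} \to R^F$ is then a weak equivalence should be deduced by a cofinality/invariance argument for homotopy limits over the two simplicial categories, or more efficiently deferred until after the representability facts below, since for representable $F$ it follows from $R^F$ being a representing object.

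The first substantive statement is that $M^F(A) \to F(A)$ is a natural weak equivalence for all $F$ (simplicially enriched, homotopy invariant, Kan valued); this is exactly the content of the Lemma immediately preceding the Proposition, proved there by the extra-degeneracy argument: setting $M^F_{-1}(A) = F(A)$ and defining $s_{-1}$ by taking $S_p = A$ with the identity inclusion exhibits the augmented simplicial space as having an extra degeneracy, so the augmentation is a weak equivalence by \cite[Lemma III.5.1]{GoerssJardine}. So this part is already in hand; I would just cite it.

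The heart of the matter is the last clause: for \emph{representable} $F$, the composition $M^F(A) \to \Hom(R^F,A) \to \Hom(c(R^F),A)$ is a weak equivalence. First I would handle the strictly representable case: suppose $F = \Hom(R,-)$ for a cofibrant $R \in \smallC_k$. Then I claim $R^F$ is weakly equivalent (in fact isomorphic in $\SCR_{/k}$) to $R$ itself. The point is the simplicial Yoneda lemma used in Lemma~\ref{simpilcially enriched representable Lemma}: the object $(R, \mathrm{id}_R) \in (\smallC_k^\mathrm{skel} \wr F)$ is weakly initial — for any $(S,x)$ with $x \in \Hom(R,S)$, the morphism $x \colon R \to S$ is the unique one sending $\mathrm{id}_R$ to $x$ — and more precisely, after passing to a skeletal representative of $R$, it is a homotopy-initial object of the simplicial category $(\smallC_k^\mathrm{skel} \wr F)$. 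A homotopy limit over a simplicial category with a homotopy-initial object is the value at that object, so $R^F \simeq (S,x)\big|_{(R,\mathrm{id})} = R$, and the canonical map $M^F(A) \to \Hom(R^F,A)$ is identified under $M^F(A)\xrightarrow{\sim} F(A)=\Hom(R,-)$ with the identity up to the equivalence $R^F \simeq R$. Since $R$ is cofibrant, $c(R^F) \to R^F$ is a weak equivalence between cofibrant objects, so $\Hom(c(R^F),A) \to \Hom(R^F,A)$ is a weak equivalence for every $A \in \smallC_k$ (again: every object of $\smallC_k$ is fibrant, so $\Hom(-,A)$ sends weak equivalences between cofibrant objects to weak equivalences). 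Composing, $M^F(A) \to \Hom(c(R^F),A)$ is a weak equivalence. Finally, for a general representable $F$, by definition there is a zig-zag of natural weak equivalences connecting $F$ to some $\Hom(R,-)$; by the functoriality and invariance statements already established, $M^F \to M^{\Hom(R,-)}$ and $\Hom(c(R^F),-) \to \Hom(c(R^{\Hom(R,-)}),-)$ are natural weak equivalences, and the squares commute, so the weak equivalence for $\Hom(R,-)$ transports to $F$ by two-out-of-three.

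The main obstacle I anticipate is making the phrase ``homotopy limit over a simplicial category with a homotopy-initial object equals the value there'' fully rigorous in the present setting, since the Appendix only treats homotopy (co)limits indexed by ordinary categories — one must either spell out the Bousfield–Kan-style formula for $R^F$ over the simplicial category $(\smallC_k^\mathrm{skel} \wr F)$ and verify directly that the nerve $N((R,\mathrm{id}) \downarrow (\smallC_k^\mathrm{skel} \wr F)^\mathrm{op})$ is contractible (so that the natural transformations out of $N(-\downarrow-)$ are computed by evaluation at $(R,\mathrm{id})$), or reduce to the ordinary-category case by a cofinality argument. A secondary technical point is the passage to $\smallC_k^\mathrm{skel}$: $R$ itself may not lie in the chosen skeleton, so one must first replace $R$ by a skeletal cofibrant model $R' \xrightarrow{\sim} R$ and check that the pair $(R',\iota)$, with $\iota$ the image of $\mathrm{id}_R$ in $F(R') = \Hom(R,R')$, plays the role of the homotopy-initial object; this is where simplicial enrichment and Kan-ness of $F$ are used, exactly as in the proof of Lemma~\ref{simpilcially enriched representable Lemma}.
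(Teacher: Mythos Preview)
Your approach is essentially the same as the paper's: both reduce to the strictly representable case $F=\Hom(R,-)$ with $R$ skeletal, observe that $(R,\mathrm{id})$ is an extremal object of the simplicial category $(\smallC_k^{\mathrm{skel}}\wr F)$ so that the homotopy limit $R^F$ projects by a weak equivalence to its value $R$ there, and conclude $M^F(A)\to\Hom(c(R^F),A)$ is a weak equivalence. The paper's sketch is terser---it simply asserts sufficiency of the skeletal strictly-representable case and leaves the compatibility check to the reader---whereas you spell out the zig-zag/two-out-of-three reduction for general representable $F$ and flag the technical wrinkles (simplicial-category holim, skeleton replacement); this extra care is appropriate. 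One small point: the paper calls $(R,\mathrm{id})$ a \emph{terminal} object, while you call it (homotopy-)\emph{initial}; with the morphism conventions stated just before the proposition (a morphism $(S_0,x_0)\to(S_1,x_1)$ is $\phi:S_0\to S_1$ with $F(\phi)(x_0)=x_1$), initial is the correct word, and it is indeed the initial object over which a (homotopy) limit is computed as evaluation.
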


\begin{proof}[Proof sketch]
  It remains to see that $M^F(A) \to \Hom(c(R^F),A)$ is a weak
  equivalence for all $A$ when $F$ is representable.  It suffices to
  consider $A \in \smallC_k^\mathrm{skel}$ and $F = \Hom(R,-)$ for
  $R \in \smallC_k^\mathrm{skel}$.  In this case the simplicial
  category $(\smallC_k^\mathrm{skel} \wr F)$ has a terminal object
  given by $(R,\mathrm{id})$ and hence the homotopy limit projects by
  a weak equivalence to the value at that object, giving a weak
  equivalence $R^F \to R$ and hence a natural weak equivalence
  $F = \Hom(R,-) \to \Hom(R^F),-)$.  It remains to check that this
  natural weak equivalence commutes with the maps asserted in the
  statement of the proposition, which we leave to the reader.
\end{proof}

\subsection{Pro-representable functors}
\label{sec:pro-repr-funct}

The class of pro-representable functors $\smallC_k \to s\Sets$ can be
defined in several equivalent ways.  Let us pick the following as the
official definition.
\begin{Definition}\label{defn:pro-rep}
  A functor $\mathcal{F}: \smallC_k \to s\Sets$ is pro-representable
  if there exists a pro-object $R = (j \mapsto R_j)$ indexed by a
  filtered category $J$ and with all $R_j \in \smallC_k$ cofibrant, such that
  $\mathcal{F}$ is naturally weakly equivalent to the functor
  \begin{equation*}
    A \mapsto \colim_{j \in J^{\mathrm{op}}} \Hom(R_j,A).
  \end{equation*}

  $\mathcal{F}$ is \emph{sequentially pro-representable} if it is
  possible to choose $J$ countable.  (It is well known  that this
  implies that in fact one can choose $J = (\N,<)$: argue as in Lemma
\ref{lemma:replace-by-nice} below to replace $J$ by a partially ordered set,
  and then the argument of \cite[Tag 0597]{StacksProject} to convert it to $(\N,<)$.)  
\end{Definition}

Let us point out that any pro-representable functor is automatically
homotopy invariant, since, as we already observed,  $\Hom(R_j,-)$ is homotopy invariant for cofibrant $R_j$,  and
since filtered colimit of commutes with homotopy groups.  Following
Lurie and Schlessinger, we shall discuss general criteria for
pro-representability.  First let us discuss some more easily deduced
reformulations.
\begin{Lemma} \label{lemma: pro representing objects}
  Let $\mathcal{F}: \smallC_k \to s\Sets$ be a simplicially enriched
  functor with values in Kan simplicial sets.  Then the following
  conditions are equivalent.
  \begin{enumerate}[(i)]
  \item\label{item:3} $\mathcal{F}$ is pro-representable.
  \item\label{item:4} There exists a natural weak equivalence
    \begin{equation*}
      \hocolim_{j \in J^\mathrm{op}} \Hom(R_j,-) \to \mathcal{F}
    \end{equation*}
    with $R = (j \mapsto R_j)$ as above.
  \item\label{item:5} There exists a filtered category $J$ and a functor $F:
    J^\mathrm{op} \times\smallC_k \to s\Sets$ with $F(j,-)$
    representable (in the sense of
    Definition~\ref{defn:representability}) for all $j \in J$ and such
    that $\mathcal{F}$ is naturally weakly equivalent to $A \mapsto
    \colim F(j,A)$
  \end{enumerate}
\end{Lemma}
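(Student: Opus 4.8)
The plan is to prove the chain of implications $(\ref{item:3}) \Rightarrow (\ref{item:5}) \Rightarrow (\ref{item:4}) \Rightarrow (\ref{item:3})$, since none of these is quite tautological: the definition of pro-representable uses a strict filtered colimit of strict $\Hom$-functors (after a single zig-zag of natural weak equivalences), whereas (\ref{item:4}) replaces the colimit by a homotopy colimit and (\ref{item:5}) allows each level to be merely representable in the homotopy-invariant sense of Definition~\ref{defn:representability}. First I would dispatch $(\ref{item:3}) \Rightarrow (\ref{item:5})$ essentially for free: if $\mathcal{F}$ is naturally weakly equivalent to $A \mapsto \colim_{j} \Hom(R_j, A)$ with each $R_j$ cofibrant, then the functor $F(j,-) = \Hom(R_j,-)$ witnesses (\ref{item:5}), each $F(j,-)$ being literally representable.

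For $(\ref{item:5}) \Rightarrow (\ref{item:4})$: given $F \colon J^{\mathrm{op}} \times \smallC_k \to s\Sets$ with each $F(j,-)$ representable and $\mathcal{F} \simeq A \mapsto \colim_j F(j,A)$, I would first replace $F$ levelwise by a pointwise-fibrant, simplicially enriched model using Lemma~\ref{Lemma:2.10} applied functorially in $j$ (this is where I would want to be slightly careful: one needs the replacement $\mathcal{F}' \to $ Kan-valued enriched functor to be natural enough to be performed on the whole $J^{\mathrm{op}}$-diagram at once; the construction in Lemma~\ref{Lemma:2.10}, being the diagonal of $[p] \mapsto F(A^{\Delta[p]})$, is manifestly functorial in $F$ and hence works levelwise over $J$). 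Then I would apply Proposition~\ref{prop:approximation} to each $F(j,-)$: since $F(j,-)$ is representable, the comparison $M^{F(j,-)}(A) \to \Hom(c(R^{F(j,-)}), A)$ is a weak equivalence, and $R_j := c(R^{F(j,-)})$ is cofibrant in $\smallC_k$; moreover, because $R^{(-)}$ and $M^{(-)}$ are functorial (again Proposition~\ref{prop:approximation}), $j \mapsto R_j$ assembles into a pro-object. Now, since $J$ is filtered, the natural map $\hocolim_{j \in J^{\mathrm{op}}} \Hom(R_j, -) \to \colim_{j \in J^{\mathrm{op}}} \Hom(R_j, -)$ is a natural weak equivalence (a filtered homotopy colimit of simplicial sets computes the strict colimit, cf. Appendix~\ref{sec:homotopy-theory}), and chaining $\hocolim_j \Hom(R_j,-) \simeq \colim_j \Hom(R_j, -) \simeq \colim_j M^{F(j,-)}(-) \simeq \colim_j F(j,-) \simeq \mathcal{F}$ gives (\ref{item:4}). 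The slightly delicate point is that $\colim$ and $M^{(-)}$ commute up to natural weak equivalence over a filtered $J$; this follows because $M^{(-)}$ is built from $F$ by finite products and a diagonal, and both commute with filtered colimits on the nose.

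Finally $(\ref{item:4}) \Rightarrow (\ref{item:3})$ is again the filtered-colimit comparison run in the other direction: $\hocolim_{j \in J^{\mathrm{op}}}$ over a filtered $J^{\mathrm{op}}$ agrees with the strict $\colim_{j \in J^{\mathrm{op}}}$ up to natural weak equivalence, so a natural weak equivalence $\hocolim_j \Hom(R_j,-) \to \mathcal{F}$ produces one $\colim_j \Hom(R_j,-) \to \mathcal{F}$, which is exactly the definition of pro-representability (the cofibrancy of each $R_j$ is part of the hypothesis in (\ref{item:4})). The main obstacle I anticipate is purely bookkeeping rather than conceptual: making the passage from ``each $F(j,-)$ is representable'' to ``there is a pro-object $j \mapsto R_j$ of cofibrant objects with $\Hom(R_j,-) \simeq F(j,-)$ naturally in $j$'' genuinely functorial, so that one gets an honest $J$-shaped diagram of rings and not merely a levelwise collection of weak equivalences. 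This is precisely what the functoriality clauses of Proposition~\ref{prop:approximation} are designed to supply, so the proof should reduce to invoking that proposition levelwise and then citing the standard fact that filtered homotopy colimits of simplicial sets are computed strictly.
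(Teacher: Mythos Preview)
Your chain of implications and your use of Proposition~\ref{prop:approximation} to produce a functorial pro-object $j \mapsto R_j$ match the paper's argument exactly. However, there is a genuine gap in your $(\ref{item:5}) \Rightarrow (\ref{item:4})$ step. What Proposition~\ref{prop:approximation} gives you, levelwise and naturally in $j$, is a zig-zag
\[
  \Hom(R_j,-) \;\xleftarrow{\ \sim\ }\; M^{F(j,-)}(-) \;\xrightarrow{\ \sim\ }\; F(j,-),
\]
and after taking filtered colimits you obtain a zig-zag of natural weak equivalences connecting $\hocolim_j \Hom(R_j,-)$ to $\mathcal{F}$. But condition~(\ref{item:4}) asks for an honest natural transformation $\hocolim_j \Hom(R_j,-) \to \mathcal{F}$, a single arrow pointing into $\mathcal{F}$. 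Your chain of $\simeq$'s does not supply this: the arrow $\colim_j M_j \to \colim_j \Hom(R_j,-)$ points the wrong way, and the hypothesis of~(\ref{item:5}) only connects $\colim_j F(j,-)$ to $\mathcal{F}$ by a further zig-zag.

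The paper isolates exactly this as the non-formal step and resolves it with Lemma~\ref{Lemma:hocolim-is-cofibrant}: since natural transformations out of $\hocolim_j \Hom(R_j,-)$ into a Kan-valued enriched functor $\mathcal{G}$ correspond to vertices of $\holim_j \mathcal{G}(R_j)$, and since $\holim_j$ of a levelwise weak equivalence of Kan complexes is a weak equivalence, one can lift along any natural weak equivalence $\mathcal{F}' \xrightarrow{\sim} \mathcal{G}$. Iterating this lifting straightens the entire zig-zag into a single map $\hocolim_j \Hom(R_j,-) \to \mathcal{F}$. So the obstacle you flagged (functoriality of $j \mapsto R_j$) is indeed handled by Proposition~\ref{prop:approximation}, but the obstacle you did not flag---turning a zig-zag into a single arrow out of the homotopy colimit---is the one that actually requires an argument.
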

\begin{proof}
  That~(\ref{item:4}) implies~(\ref{item:3}) follows from the fact that
  $\hocolim \to \colim$ is a weak equivalence for filtered indexing
  categories.  That~(\ref{item:3}) implies~(\ref{item:5}) is clear, so it
  remains to see that~(\ref{item:5}) implies~(\ref{item:4}).

  Applying the functorial procedure from
  Proposition~\ref{prop:approximation} to each $F(j,-)$ we obtain
  cofibrant $R_j \in \smallC_k$ and zig-zags of weak equivalences
  \begin{equation*}
    \Hom(R_j,-) \leftarrow M_j \rightarrow F(j,-),
  \end{equation*}
  all natural with respect to $j \in J$.  To split the map
  $M_j \to \Hom(R_j,-)$ up to homotopy it suffices to pick a lift of
  the identity under $M_j(R_j) \to \Hom(R_j,R_j)$, which is possible,
  but a bit of care is needed to assemble these to a natural transformation
  $\hocolim_j \Hom(R_j,-) \to M_j$.  This follows from
  Lemma~\ref{Lemma:hocolim-is-cofibrant} below, which implies that any
  (simplicially enriched, Kan valued, homotopy invariant) functor
  $\mathcal{F}$ connected to $\hocolim \Hom(R_j,-)$ by a zig-zag of
  natural weak equivalences in fact admit a
  single natural weak equivalence $\hocolim \Hom(R_j,-)$.
\end{proof}
\begin{Lemma}\label{Lemma:hocolim-is-cofibrant}
  Let $J$ be a
  filtered category and $R: J^\mathrm{op} \to \smallC_k$ a pro-object
  with $R_j \in \smallC_k$ cofibrant for all $j \in J$.  The for any
  diagram of (simplicially enriched, Kan valued, homotopy invariant)
  functors $\smallC_k \to s\Sets$ and natural transformations
  \begin{equation*}
    \xymatrix{
      & \mathcal{F} \ar[d]^{\simeq} \\
      \hocolim_{j \in J} \Hom(R_j,-) \ar[r] & \mathcal{G}
    }
  \end{equation*}
  there exists a natural transformation $\hocolim_j \Hom(R_j,-) \to
  \mathcal{F}$ and a natural simplicial homotopy making the diagram
  homotopy commute.
\end{Lemma}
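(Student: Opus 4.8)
The plan is to exhibit $\hocolim_{j} \Hom(R_j,-)$ as a \emph{cofibrant object} in a suitable model structure on the category of (simplicially enriched, Kan-valued, homotopy-invariant) functors $\smallC_k \to s\Sets$, so that the asserted lifting of the weak equivalence $\mathcal{F} \xrightarrow{\simeq} \mathcal{G}$ becomes an instance of the standard lifting property of cofibrant objects against trivial fibrations. Concretely, I would equip the functor category with the projective model structure (weak equivalences and fibrations defined objectwise), using that $s\Sets$ with the Kan model structure is cofibrantly generated, so the projective structure exists. The first step is then to recall that each $\Hom(R_j,-)$, with $R_j \in \smallC_k$ cofibrant, is projectively cofibrant as a functor: this is exactly the content of the simplicial Yoneda lemma combined with the fact that, for cofibrant $R_j$, lifting a diagram against an objectwise trivial fibration amounts to lifting along the (trivial) cofibration defining $R_j$ by finitely many cell attachments (cf.\ Lemma~\ref{lem:build-Artin-rings-Postnikov}) — the representability hypothesis on the $R_j$ packages precisely this. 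More carefully, representable functors on $\smallC_k^{\mathrm{skel}}$ are the images of the free generators in the projective structure, so they are cofibrant.

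The second step is to observe that a filtered homotopy colimit of projectively cofibrant functors is again projectively cofibrant. For a filtered indexing category $J$, the Bousfield--Kan model computes $\hocolim_J$ as the realization of a simplicial object whose $p$-simplices are coproducts of the $\Hom(R_j,-)$ indexed by strings in $J$; since coproducts of cofibrant objects are cofibrant and the latching maps of this simplicial object are cofibrations (the relevant inclusions of degenerate simplices), the realization is cofibrant. Alternatively, and perhaps more cleanly, one invokes that $\hocolim$ over any small category is a left Quillen functor from the projective model structure on $J$-diagrams to the projective model structure on the target, hence preserves cofibrant objects, and the constant-at-a-point diagram with value $\Hom(R_j,-)$ is Reedy/projectively cofibrant when each $\Hom(R_j,-)$ is. Either route gives that $\mathcal{H} := \hocolim_{j \in J}\Hom(R_j,-)$ is projectively cofibrant.

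With cofibrancy of $\mathcal{H}$ in hand, the third and final step is routine: the map $\mathcal{F} \to \mathcal{G}$ is an objectwise weak equivalence, hence a weak equivalence in the projective structure; factor it (functorially, if desired) as $\mathcal{F} \to \mathcal{F}' \to \mathcal{G}$ with $\mathcal{F} \to \mathcal{F}'$ a trivial cofibration and $\mathcal{F}' \to \mathcal{G}$ a (trivial) fibration. Since $\mathcal{H}$ is cofibrant and $\mathcal{F}' \to \mathcal{G}$ is a trivial fibration, the given map $\mathcal{H} \to \mathcal{G}$ lifts to $\mathcal{H} \to \mathcal{F}'$; composing with a section up to homotopy of the trivial cofibration $\mathcal{F} \to \mathcal{F}'$ (or, more honestly, using that $\mathcal{F} \to \mathcal{F}'$ is a weak equivalence between fibrant objects after applying $\mathrm{Ex}^\infty$ objectwise and inverting it up to homotopy) yields the desired natural transformation $\mathcal{H} \to \mathcal{F}$ together with a natural simplicial homotopy making the triangle commute up to homotopy; the homotopy is produced from the cylinder object $\Delta[1] \times \mathcal{H}(-)$ using again that $\mathcal{H}$ is cofibrant. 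I expect the main obstacle to be purely bookkeeping: verifying that the Bousfield--Kan hocolim of the $\Hom(R_j,-)$ is genuinely cofibrant in the \emph{enriched}/homotopy-invariant functor category (rather than the plain projective one), i.e.\ checking that the extra degeneracies and latching maps behave correctly when one restricts attention to homotopy-invariant, simplicially enriched functors — but this is exactly the kind of statement one extracts from \cite{RezkSchwedeShipley} or \cite{Hovey} as cited earlier, so no new idea is needed beyond assembling the standard machinery.
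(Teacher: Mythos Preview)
Your approach is correct in outline but takes a substantially heavier route than the paper's. The paper gives a three-line argument: by the universal property of $\hocolim$ together with the simplicial Yoneda lemma, natural transformations $\hocolim_j \Hom(R_j,-) \to \mathcal{G}$ are in bijection with vertices of $\holim_j \mathcal{G}(R_j)$, and natural simplicial homotopies correspond to 1-simplices there. Since $\mathcal{F} \to \mathcal{G}$ is an objectwise weak equivalence between Kan-valued functors, the induced map $\holim_j \mathcal{F}(R_j) \to \holim_j \mathcal{G}(R_j)$ is a weak equivalence of Kan complexes, so the given vertex lifts together with a connecting 1-simplex. No model structure, no factorization, no cofibrancy verification is needed.

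Your route---establishing projective cofibrancy of $\hocolim_j \Hom(R_j,-)$ and then invoking the lifting axiom---does work, and morally explains the lemma's label. But it incurs real overhead: you must set up the projective model structure on enriched functors out of a category that is only homotopy-small (the paper notes this exists, citing \cite{Toen}, but explicitly avoids using it), check that enriched representables are cofibrant, argue that the Bousfield--Kan realization preserves cofibrancy, and then in your third step navigate a factorization and a retraction of a trivial cofibration between fibrant objects. Each of these is standard but none is free; your closing paragraph correctly flags the bookkeeping as the main cost. The paper's argument sidesteps all of it by observing that the conclusion is literally a $\pi_0$-surjectivity statement for a map of Kan complexes that is already known to be a weak equivalence. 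What your approach buys is generality---it would apply verbatim to any projectively cofibrant functor in place of $\hocolim_j \Hom(R_j,-)$---whereas the paper's argument is tailored to this specific source and exploits the explicit $\hocolim$/$\holim$ adjunction.
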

\begin{proof}
  The universal property of $\hocolim$ implies that the set of natural
  transformations  $\hocolim_j \Hom(R_j,-) \to \mathcal{G}$ is in
  bijection with vertices in $\holim_j \mathcal{G}(R_j)$ and that
  natural simplicial homotopies are in bijection with 1-simplices in
  this homotopy limit.  Hence the claim follows from the weak equivalence
  \begin{equation*}
    \holim_j \mathcal{F}(R_j) \to \holim_j \mathcal{G}(R_j).\qedhere
  \end{equation*}
\end{proof}

If $(i \mapsto R_i)$ is an object of pro-$\smallC_k$ with all $R_i$
cofibrant, then the natural map
$\hocolim_i \Hom(R_i,-) \to \colim_i \Hom(R_i,-)$ is an objectwise
weak equivalence so from a homotopical point of view the two functors
should be considered interchangeable.  Nevertheless the homotopy colimit and
colimit have different technical properties, in particular the
homotopy colimit is ``easier to map out of'', cf.\
Lemma~\ref{Lemma:hocolim-is-cofibrant}, and the strict colimit has the
convenient property that it sends fibrations in $\smallC_k$ to Kan fibrations of
simplicial sets.  In Lemma~\ref{lem:factor-through-strict-colim}
below, we shall establish a criterion for factoring a natural
transformation out of $\hocolim_i \Hom(R_i,-)$ through a natural
transformation out of the strict colimit.

\begin{Lemma}\label{lemma:replace-by-nice}  
  Any pro-object of $\smallC_k$ is isomorphic to an 
  $R: I^\mathrm{op} \to \smallC_k$ whose indexing category is a
  directed set, equipped with a strictly increasing map $I \to \N$,
  and satisfying that $\{j \in I \mid j \leq i\}$ is finite for all
  $i\in I$.  For any such diagram $R$ there exists another diagram
  $R': I^\mathrm{op} \to \smallC_k$ (with the same indexing category,
  if desired) and a natural transformation $R \to R'$ such that each
  $R_i \to R'_i$ is a weak equivalence and also a cofibration (in
  particular $R'_i$ is cofibrant if $R_i$ is), and such that the
  natural map
  \begin{equation*}
    R'_i \to \lim_{j < i} R'_j
  \end{equation*}
  is a fibration.
\end{Lemma}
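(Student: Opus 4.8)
The plan is to prove Lemma~\ref{lemma:replace-by-nice} in two logically separate movements: first, a purely combinatorial reindexing of an arbitrary pro-object so that its indexing category becomes a well-behaved directed set; second, a transfinite (really just countable, in this case) inductive construction of the replacement diagram $R'$ with the required fibrancy along the transition maps. Throughout I would use freely the cited material: the discussion of cofibrations and cell attachments in \S\ref{sec:simplicial-rings}, the fact that all objects of $\SCR$ are fibrant, and the standard model-category factorization into a (cofibration)$\circ$(trivial fibration), applied inside $\smallC_k$ via the forgetful functor to $\SCR_{/k}$.

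\smallskip

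First I would carry out the reindexing. Recall that a pro-object of $\smallC_k$ is, by definition, a functor $R\colon J^{\mathrm{op}}\to\smallC_k$ with $J$ filtered; morphisms of pro-objects are defined via the usual $\colim$-$\lim$ formula, and two pro-objects are isomorphic in the pro-category exactly when that formula gives a bijection on Hom-sets. The standard argument (this is the content of \cite[Tag 0597]{StacksProject} and its neighbors, already invoked in Definition~\ref{defn:pro-rep}) shows that any filtered category admits a cofinal functor from a directed set, and that a countable directed set admits a cofinal map from $(\N,<)$; here I do not want to collapse all the way to $\N$ since $J$ need not be countable, but I do want a directed \emph{set} $I$ together with a strictly increasing map $I\to\N$ such that each principal down-set $\{j\le i\}$ is finite. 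The trick is: first replace $J$ by a cofinal directed set; then, since I am allowed to pass to a cofinal subcategory (this does not change the pro-object up to isomorphism, and it does not disturb the cofibrancy of the $R_i$), I may stratify $I$ by a ``rank'' function to $\N$ — take $I_n$ to be the set of elements that can be written as a join of at most $n$ of some fixed generating family — and reindex by the resulting filtered poset, which now has finite principal down-sets and a strictly increasing $\N$-valued rank. This step is bookkeeping and I would keep it short, citing the Stacks Project for the one genuinely nontrivial sub-claim.

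\smallskip

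Second, and this is where the real work is, I would build $R'$ by induction along the rank filtration. Given the diagram $R\colon I^{\mathrm{op}}\to\smallC_k$ with finite principal down-sets, I construct $R'_i$ together with the factoring map $R_i\to R'_i$ and the maps $R'_i\to R'_j$ for $j<i$ by induction on the rank $n(i)\in\N$. For $i$ minimal (rank $0$): set $R'_i$ to be a cofibrant replacement of $R_i$, i.e.\ factor $k'\to R_i$ — wait, more precisely factor the relevant structure map so that $R_i\to R'_i$ is a trivial cofibration; but since the target has no incoming maps from lower rank the condition on $R'_i\to\lim_{j<i}R'_j$ is vacuous (empty limit is the terminal object $k$, wait — no: in $\smallC_k$ the empty limit is $k$ itself, and the map $R'_i\to k$ is always a fibration since it is degreewise surjective, $\pi_0$ being a quotient of $k$ — so this is automatic). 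For the inductive step at $i$ of rank $n$: the poset $\{j<i\}$ is finite and $R'$ is already defined on it, so I may form $P_i:=\lim_{j<i}R'_j$, a strict limit in $\SCR_{/k}$; the compatible family of maps $R_i\to R_j\to R'_j$ assembles to a single map $R_i\to P_i$. I then form the pushout-free construction: I take the value $R_i$, and I want a factorization $R_i \to R'_i \to P_i$ with $R_i\to R'_i$ a trivial cofibration and $R'_i\to P_i$ a fibration — but I also need $R_i\to R'_i$ to be a cofibration and $R'_i$ cofibrant, which it will be since $R_i$ is cofibrant and trivial cofibrations compose. The standard small-object-argument factorization in $\SCR$ (restricted to $\smallC_k$, using that fiber products of the relevant shape stay in $\smallC_k$ by the Lemma just before Definition~\ref{defn:attach-cell}, and that the construction can be taken degreewise-surjective on the nose so it is a genuine Kan fibration) supplies exactly this. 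One must check compatibility: for $j<i$ the composite $R'_i\to P_i\to R'_j$ is the required transition map, and functoriality on $I^{\mathrm{op}}$ follows because $P_i$ is a strict limit.

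\smallskip

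\textbf{The hard part} will be the second movement, and specifically verifying that one can run the factorization \emph{inside} $\smallC_k$ at every stage while keeping everything strictly functorial in $I^{\mathrm{op}}$ — i.e.\ that the $P_i$ and the factorizations cohere into an honest diagram rather than just a homotopy-coherent one. Two technical points demand care: (a) that $P_i=\lim_{j<i}R'_j$ really lies in $\smallC_k$ — this needs the maps $R'_j\to R'_{j'}$ to be degreewise surjective (fibrations) so the iterated finite limit can be computed via the Lemma before Definition~\ref{defn:attach-cell}, which is why the inductive hypothesis must include the fibrancy of the transition maps, not merely of $R'_i\to\lim_{j<i}R'_j$; and (b) that the chosen factorization is itself functorial — this is why I would use a \emph{fixed} functorial factorization (the small object argument produces one, as noted in \S\ref{sec:simplicial-rings} for cofibrant approximation), applied to the canonical map $R_i\to P_i$, so that a morphism $i\le i'$ in $I$ induces $P_{i'}\to P_i$ and hence a map of factorizations. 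Once these two points are nailed down the rest is formal. I would also remark that if one insists on the same indexing category, nothing above changed $I$, so that clause is free.
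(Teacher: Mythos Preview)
Your approach is essentially the paper's: reindex to a directed set with a strictly increasing $\N$-valued rank (the paper uses finite subdiagrams with a unique terminal object, ordered by inclusion, with cardinality as rank---cleaner than your ``joins of generators,'' which need not exist in a directed set), then build $R'$ by induction on rank via the (acyclic cofibration, fibration) factorization of $R_i\to\lim_{j<i}R'_j$.

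Your ``hard part'' worries are both unnecessary, and dissolving them simplifies the write-up. For (a): the limit $P_i=\lim_{j<i}R'_j$ is taken in $\SCR_{/k}$, where it always exists; you never need $P_i\in\smallC_k$, only $R'_i\in\smallC_k$, and that follows because $R_i\to R'_i$ is a weak equivalence and $R_i\in\smallC_k$. For (b): no functorial factorization is needed, because the strictly increasing map $I\to\N$ forces distinct objects of the same rank to be incomparable, so the $R'_i$ at rank $n$ are constructed independently; the transition maps $R'_i\to R'_j$ for $j<i$ are then just the projections $R'_i\to P_i\to R'_j$, and functoriality on $I^{\mathrm{op}}$ is automatic from the universal property of the limit.
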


\begin{proof}
  The indexing category map be replaced in the usual way
  (cf. \cite[Expose 1, Prop 8.1.6]{SGA4}): if $I$ does not already
  satisfy this, let $J$ be the poset of finite subdiagrams of $I$
  having a unique terminal object, ordered by inclusion (where a
  ``diagram'' is a set $O$ of objects and a set $F$ of morphisms such
  that the source and target of any element of $F$ are in $O$, and
  ``terminal object'' means an object $x \in O$ such that any object
  $y \in O$ admits precisely one morphism $y \to x$ in $F$).  Then $J$
  is a directed set and $J \to \N$ sends a diagram to its cardinality.
  The functor $J \to I$, which sends a subdiagram to its terminal
  object, is cofinal and hence the original pro-object
  $R: I^\mathrm{op} \to \smallC_k$ is isomorphic to the composition
  with $J \to I$.

  Now suppose $I$ satisfies the assumption and
  $R: I^\mathrm{op} \to \smallC_k$.  Then we construct $R'$ by the
  same argument as when constructing the ``Reedy model structure'' on
  $I$-shaped diagrams.  Namely, filter $I$ by the full subcategories
  $I_{\leq n}$ on those objects with image in $\{0, \dots, n\} \subset
  \N$, and assume
  the restriction $R': I_{\leq n-1}^\mathrm{op} \to \smallC_k$ and the
  maps $R_i \to R'_i$ have been defined for $i \in I_{\leq n-1}$.
  Then for $i \in \mathrm{Ob}(I_{\leq n}) \setminus
  \mathrm{Ob}(I_{\leq n-1})$ define $R_i \to R'_i$ by factoring
  the composition
  \begin{equation*}
    R_i \to \lim_{j < i} R_j \to \lim_{j < i} R'_j
  \end{equation*}
  into an acyclic cofibration $R_i \to R'_i$ followed by a fibration
  $R'_i \to \lim_{j < i} R'_j$.  This is possible in
  $\mathrm{SCR}_{/k}$ and the result will be in $\smallC_k$ because
  $R_i$ is.  The resulting object comes with a map to
  $\lim_{j < i} R'_j$ which, as $i$ varies, precisely contains the
  information to make $i \mapsto R'_i$ into a \emph{functor}
  $I_{\leq n}^\mathrm{op} \to \smallC_k$ extending the previously
  constructed functor on $I_{\leq n-1}$.
\end{proof}

The conclusion of the Lemma above motivates the following definition,
which is a generalization of ``tower of fibrations between cofibrant
objects''.

\begin{Definition} \label{nice ring definition}
  Let us say that a filtered diagram $R: I^\mathrm{op} \to \smallC_k$
  is \emph{nice} if $I$ is a poset such that $\{j \in I \mid j \leq
  i\}$ is finite for all $i$, and there exists a strictly increasing
  poset map $I \to \N$, that each $R_i \in \smallC_k$ is cofibrant,
  and that each of the maps $R_i \to \lim_{j < i} R_j$ is a fibration.
\end{Definition}

By the Lemma above we may replace any pro-object
$R: I^\mathrm{op} \to \smallC_k$ by one given by a nice diagram
$R': J \to \smallC_k$.  In general this replacement is a zig-zag
$R \leftarrow R'' \to R'$ of morphisms in pro-$\smallC_k$, the arrow
$R'' \to R$ changing $I$ to $J$ and cofibrantly replacing all $R_i$,
the arrow $R'' \to R'$ as in the lemma.

\begin{Lemma}\label{lem:factor-through-strict-colim}
  Let $R: I \to \smallC_k$ be a nice pro-object, and let
  $\mathcal{F}: \mathrm{SCR}_{/k} \to s\Sets$ be a simplicially enriched
  functor which (strictly) preserves finite limits and takes
  fibrations in $\smallC_k$ to Kan fibrations.  Then any natural
  transformation
  \begin{equation*}
    T: \hocolim_{i \in I} \Hom(R_i,-) \to \mathcal{F}
  \end{equation*}
  is naturally homotopic to one which factors through a natural
  transformation out of the (strict) colimit.
\end{Lemma}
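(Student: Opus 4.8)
The plan is to exploit the "niceness" of the diagram $R: I \to \smallC_k$, which by definition means that each map $R_i \to \lim_{j<i} R_j$ is a fibration and each $R_i$ is cofibrant. The homotopy colimit $\hocolim_{i\in I}\Hom(R_i,-)$ differs from the strict colimit $\colim_{i\in I}\Hom(R_i,-)$ only by a weak equivalence (as remarked just before Lemma~\ref{lemma:replace-by-nice}), but the issue is that a natural transformation $T$ out of the homotopy colimit is a compatible family of maps on the whole nerve of $I$, whereas a map out of the strict colimit requires only a strictly compatible family $\Hom(R_i,-)\to\mathcal{F}$, i.e.\ a point of $\lim_i \mathcal{F}(R_i)$. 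So the task is to rigidify: produce from $T$ a strictly compatible family, together with a homotopy realizing the comparison.

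First I would unwind the universal property of $\hocolim$ (as in Appendix~\ref{sec:homotopy-theory}): natural transformations $\hocolim_i\Hom(R_i,-)\to\mathcal{F}$ correspond to vertices of $\holim_i \mathcal{F}(R_i)$, natural homotopies to $1$-simplices, and so forth. Thus $T$ gives a point of $\holim_i \mathcal{F}(R_i)$, and what we want is a point of the strict limit $\lim_i \mathcal{F}(R_i)$ mapping to (a point homotopic to) it. The key input is that $\mathcal{F}$ takes the fibrations $R_i \to \lim_{j<i}R_j$ to Kan fibrations, and preserves finite limits strictly, so $\mathcal{F}(R_i) \to \mathcal{F}(\lim_{j<i}R_j) = \lim_{j<i}\mathcal{F}(R_j)$ is a Kan fibration. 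This is precisely the Reedy/matching-object condition that makes the tower $i\mapsto \mathcal{F}(R_i)$ a "fibrant diagram," for which the natural map $\lim_i \mathcal{F}(R_i) \to \holim_i \mathcal{F}(R_i)$ is a weak equivalence.

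Granting that, I would argue by induction over the filtration of $I$ by the subposets $I_{\le n}$ (using the strictly increasing map $I\to\N$ and the finiteness of down-sets, exactly as in the proof of Lemma~\ref{lemma:replace-by-nice}). At stage $n$, for each $i$ of level $n$ we have already chosen strict lifts over $\{j<i\}$; the compatibility data in $T$ gives a point of $\mathcal{F}(\lim_{j<i}R_j)$ together with a homotopy to the matching object of the partial strict lift, and since $\mathcal{F}(R_i)\to \mathcal{F}(\lim_{j<i}R_j)$ is a Kan fibration we may lift this homotopy to choose a vertex of $\mathcal{F}(R_i)$ that \emph{strictly} matches below and is homotopic, compatibly, to the value of $T$ at $R_i$. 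Assembling over all $i$ produces the desired strictly compatible family, i.e.\ a point of $\lim_i\mathcal{F}(R_i)$, hence a natural transformation $\colim_i\Hom(R_i,-)\to\mathcal{F}$; composing with $\hocolim\to\colim$ and threading the chosen homotopies together (again using the fibration property to make the homotopies themselves compatible) yields the natural simplicial homotopy between $T$ and the composite. Some care is needed to phrase the homotopies so that they too assemble over $I$ — one really wants to lift the whole comparison as a map of towers over $\Delta[1]$, which is again possible because a product of Kan fibrations is a Kan fibration.

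The main obstacle I expect is the bookkeeping in the inductive step: ensuring that the lifts of homotopies chosen at level $n$ are compatible with those chosen at lower levels, i.e.\ that one is genuinely building a natural transformation and a natural homotopy of functors $\smallC_k\to s\Sets$, not just a family of maps on objects. The clean way to handle this is to work throughout with the tower $n\mapsto \mathcal{F}|_{I_{\le n}}$ and observe that at each stage one is solving a single lifting problem against a Kan fibration $\mathcal{F}(R_i)\to\lim_{j<i}\mathcal{F}(R_j)$ for the \emph{matching map}, so that the inductive hypothesis provides exactly the bottom of the lifting square and the lift automatically restricts correctly. I would relegate the verification that the resulting strict family is natural in $A\in\smallC_k$ — which follows because all the constructions (matching objects, the fibrations $\mathcal{F}(R_i)\to\lim_{j<i}\mathcal{F}(R_j)$, the chosen lifts) are made once and for all at the level of the simplicial sets $\mathcal{F}(R_i)$, independently of $A$ — to a remark, since it is formal once the lifts have been fixed.
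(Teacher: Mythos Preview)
Your proposal is correct and takes essentially the same approach as the paper: translate $T$ via the enriched Yoneda lemma to a vertex of $\holim_i \mathcal{F}(R_i)$, observe that the niceness of $R$ together with the hypotheses on $\mathcal{F}$ make each $\mathcal{F}(R_i) \to \lim_{j<i}\mathcal{F}(R_j)$ a Kan fibration, and conclude by induction over the filtration $I_{\leq n}$ that $\lim_i \mathcal{F}(R_i) \to \holim_i \mathcal{F}(R_i)$ is a weak equivalence. Your explicit inductive lifting argument is exactly the content of that weak equivalence, so once it is stated the bookkeeping you worry about (compatible homotopies, naturality in $A$) becomes automatic: a 1-simplex in $\holim_i \mathcal{F}(R_i)$ from $T$ to a vertex of $\lim_i \mathcal{F}(R_i)$ is precisely the natural homotopy you seek.
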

Before giving the proof, let us mention that for us the main example
of a functor satisfying the assumption in the Lemma is
$\mathcal{F} = \colim_{j \in J} \Hom(R'_j,-)$ for
$R' \in \text{pro-}\smallC_k$ with all $R'_j$ cofibrant.  In this case
we conclude that (when the individual $R'_j$ are cofibrant and the
pro-object $R$ is nice in the sense defined above)
any natural transformation
\begin{equation*}
  \hocolim_{i \in I} \Hom(R_i,-) \to \colim_{j \in J} \Hom(R'_j,-)
\end{equation*}
is naturally homotopic to a transformation which factors through the
colimit, and hence is induced by a morphism $R' \to R$ in
pro-$\smallC_k$.  
\begin{proof}
  By the enriched Yoneda lemma, $T$ corresponds (bijectively) to a
  vertex in $\holim_{i \in I} \mathcal{F}(R_i)$, and we want a
  1-simplex connecting $T$ to a vertex in $\lim_{i \in I}
  \mathcal{F}(R_i) \subset \holim_{i \in I} \mathcal{F}(R_i)$.  Now
  the assumptions imply that all the maps
  \begin{equation*}
    \mathcal{F}(R_i) \to \lim_{j < i}\mathcal{F}(R_j) = 
    \mathcal{F}(\lim_{j < i} R_j)
  \end{equation*}
  are fibrations.  This condition in turn implies that
  $\lim_{i \in I} \mathcal{F}(R_i) \to \holim_{i \in I}
  \mathcal{F}(R_i)$
  is a weak equivalence (indeed,
  $\lim_{i \in I_{\leq n}} \mathcal{F}(R_i) \to \lim_{i \in I_{\leq
      n-1}} \mathcal{F}(R_i)$
  is a Kan fibration and
  $\lim_{i \in I_{\leq n}} \mathcal{F}(R_i) \to \holim_{i \in I_{\leq
      n}} \mathcal{F}(R_i)$
  a weak equivalence, by induction on $n$).
\end{proof}

Next we study natural transformations between pro-representable
functors.  As in the representable case, natural transformations
correspond to morphisms between the pro-objects representing them, but the precise statement is a
bit more complicated in this case.
\begin{Lemma}\label{Lemma:represent-natural-trans}
  Let $\mathcal{F}$ and $\mathcal{G}$ be pro-representable with Kan
  values and suppose given natural weak equivalences $\hocolim_i
  \Hom(R_i,-) \to \mathcal{F}$ and $\hocolim_j \Hom(R_j',-) \to
  \mathcal{G}$ with all $R_i$ and $R'_j$ cofibrant and $(i \mapsto
  R_i) \in \text{pro-}\smallC_k$ nice in the sense above.

  For any natural transformation $T: \mathcal{F} \to \mathcal{G}$
  there exists a morphism $T: R' \to R$ between the representing
  pro-objects, and a
diagram of functors and natural
  transformations
  \begin{equation*}
    \xymatrix{
      \mathcal{F} \ar[r]^-T 
& \mathcal{G}\\
      \hocolim_j \Hom(R_j,-)
      \ar[r]\ar[u]_\simeq\ar[d]^\simeq 
&
      \hocolim_j \Hom(R'_j,-)
      \ar[u]_\simeq\ar[d]^\simeq\\
      \colim_j \Hom(R_j,-) \ar[r]^-{T'} &
      \colim_j \Hom(R'_j,-),
    }
  \end{equation*}
  where both squares commute up to natural simplicial homotopy.
\end{Lemma}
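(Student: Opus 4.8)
The plan is to obtain the statement by chaining Lemma~\ref{Lemma:hocolim-is-cofibrant} with Lemma~\ref{lem:factor-through-strict-colim}. After replacing $\mathcal{F}$ and $\mathcal{G}$, if necessary, by naturally weakly equivalent simplicially enriched, Kan valued, homotopy invariant functors (Lemma~\ref{Lemma:2.10}), I would form the composite natural transformation
$$\hocolim_i \Hom(R_i,-) \xrightarrow{\simeq} \mathcal{F} \xrightarrow{T} \mathcal{G}.$$
Its source is a homotopy colimit of representable functors along the cofibrant objects $R_i$, and its target carries the natural weak equivalence $\hocolim_j \Hom(R'_j,-) \to \mathcal{G}$; the functor $\hocolim_j \Hom(R'_j,-)$ is simplicially enriched, homotopy invariant (the $R'_j$ being cofibrant), and, after fibrant replacement of its values if needed, Kan valued. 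Thus Lemma~\ref{Lemma:hocolim-is-cofibrant} applies and produces a natural transformation
$$\hocolim_i \Hom(R_i,-) \longrightarrow \hocolim_j \Hom(R'_j,-)$$
together with a natural simplicial homotopy making the \emph{upper} square commute up to homotopy --- that square having top row $T$, vertical arrows the given weak equivalences, and bottom row the transformation just produced.

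Next I would read off the bottom row and the \emph{lower} square. Compose the transformation just obtained with the canonical map $\hocolim_j \Hom(R'_j,-) \to \colim_j \Hom(R'_j,-)$ and apply Lemma~\ref{lem:factor-through-strict-colim} with target functor $\colim_j \Hom(R'_j,-)$: this functor is simplicially enriched, strictly preserves finite limits (representable functors do, and filtered colimits of simplicial sets commute with finite limits), and carries fibrations in $\smallC_k$ to Kan fibrations, as recorded in the discussion preceding that lemma; and $(i \mapsto R_i)$ is nice by hypothesis. Hence the composite
$$\hocolim_i \Hom(R_i,-) \longrightarrow \colim_j \Hom(R'_j,-)$$
is naturally homotopic to one factoring through the strict colimit, i.e.\ through a natural transformation $T'\colon \colim_i \Hom(R_i,-) \to \colim_j \Hom(R'_j,-)$; and by the remark immediately after Lemma~\ref{lem:factor-through-strict-colim}, such a $T'$ is induced by a morphism $T\colon R' \to R$ of pro-objects of $\smallC_k$, which is the required morphism. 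The vertical arrows $\hocolim \to \colim$ are objectwise weak equivalences (filtered colimits), and the homotopy supplied by Lemma~\ref{lem:factor-through-strict-colim} makes the lower square commute up to natural simplicial homotopy. Stacking the two squares yields the diagram in the statement.

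The step I expect to be the real obstacle is bookkeeping, not anything deep: one must verify that the ``simplicially enriched, Kan valued, homotopy invariant'' hypotheses of both lemmas genuinely apply to $\hocolim_j \Hom(R'_j,-)$ and $\colim_j \Hom(R'_j,-)$ --- this is where fibrancy of all simplicial rings and cofibrancy of the $R'_j$ enter --- and that the niceness of $(i \mapsto R_i)$ is precisely what makes the strict-colimit factorization in Lemma~\ref{lem:factor-through-strict-colim} available. A secondary subtlety is that the homotopy for the upper square lives among transformations into $\mathcal{G}$ while that for the lower square lives among transformations into $\colim_j \Hom(R'_j,-)$; since the conclusion only asks each square to commute up to its own homotopy, no coherence between the two is needed --- and strict commutativity cannot be expected in general, which is exactly why the statement is phrased with homotopies.
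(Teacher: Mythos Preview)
Your proposal is correct and follows exactly the approach the paper indicates: the paper's proof sketch simply says ``This is an easy consequence of Propositions~\ref{Lemma:hocolim-is-cofibrant} and~\ref{lem:factor-through-strict-colim},'' and you have faithfully unpacked that into the two-step argument (first lift along the weak equivalence into $\mathcal{G}$ via Lemma~\ref{Lemma:hocolim-is-cofibrant}, then factor through the strict colimit via Lemma~\ref{lem:factor-through-strict-colim} using niceness of $R$), together with the observation that the resulting $T'$ arises from a morphism of pro-objects. Your remarks on which hypotheses are used where, and on why only square-by-square homotopy commutativity is claimed, are accurate.
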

\begin{proof}[Proof sketch]
  This is an easy consequence of
  Propositions~\ref{Lemma:hocolim-is-cofibrant} and~\ref{lem:factor-through-strict-colim}. \end{proof}

Although we shall not explicitly use it, we remark that there is a
model structure on the category of simplicially enriched functors $\smallC_k \to s\Sets$ in
which the fibrations and weak equivalences are defined
objectwise, cf.\ e.g.\ \cite{Toen}.
In particular there is a well defined homotopy category of such
functors.  The above Lemma then implies that $T$ and
$\colim_j \Hom(R_j,-) \to \colim_j \Hom(R'_j,-)$ are isomorphic in the
category of arrows in the homotopy category of functors.

\textbf{Remark}.  By general theory, the bottom natural transformation
corresponds to a morphism from the object of pro-$\smallC_k$ given by
the $R_j'$ to the object given by the $R_j$.  Similarly, the natural
transformation $T'$ corresponds to a zero-simplex in $\holim_j
\hocolim_i \Hom(R'_i,R_j)$.

\section{More on representable functors}

We continue our study of representable functors on the category $\Art_k$. 

\subsection{Postnikov truncations}
\label{sec:postnikov}

A basic construction in homotopy theory associates functorially
to any topological space $X$ a space $\tau_{\leq n} X$ with vanishing
homotopy groups in dimensions above $n$ and an $(n+1)$-connected map
$X \to \tau_{\leq n} X$.  In fact, the map $X \to \tau_{\leq n} X$ is
unique up to weak equivalence in an appropriate sense.  In topological
spaces the usual proof constructs $\tau_{\leq n}$ from $X$ by
attaching $(n+2)$-cells along \emph{all possible} maps
$\partial D^{n+2} \to X$, then $(n+3)$-cells to the result along all
possible maps from $\partial D^{n+3}$, etc.

The same idea as in topological spaces may be used to construct a
functor $\tau_{\leq n}: \SCR \to \SCR$ and a natural map
$R \to \tau_{\leq n} R$ in $\SCR$, inducing an isomorphism in $\pi_i$
for $i \leq n$ and such that $\pi_i \tau_{\leq n} R = 0$ for $i > n$.
For technical reasons it is convenient to have $R \to \tau_{\leq n}R$
always be a cofibration, such that e.g.\ $\tau_{\leq n} R$ is
cofibrant when $R$ is.  One construction of such a functor proceeds as
for topological spaces, using cell attachments in $\SCR$: first attach
cells along all possible maps $\partial \Delta^{n+2} \to R$, then
attach cells along all possible maps from $\partial \Delta^{n+3}$ to
the result, etc.  Then $\tau_{\leq n} R$, defined as the union of
these cell attachments, has the required homotopical properties and
$R \to \tau_{\leq n} R$ is a cofibration by construction.

Alternatively we can use the
\emph{coskeleton} functors: for any simplicial set $Y$ there is a
natural transformation $Y \to \mathrm{cosk}^n(Y)$ with the universal
property that maps $X \to \mathrm{cosk}^n(Y)$ are in natural bijection
with maps from the $n$-skeleton of $X$ to $Y$.  When $Y$ is Kan it is
easy to see that $Y \to \mathrm{cosk}^n(Y)$ is a model for
$Y \to \tau_{\leq n}Y$.  In particular, since the underlying
simplicial set of any $R \in \SCR$ is automatically Kan, we could
alternatively define $\tau_{\leq n} R$ by a (functorial) factorization
$R \to \tau_{\leq n} R \to \mathrm{cosk}^n(R)$ into a cofibration
followed by an acyclic fibration.  This construction has the mild
advantage that as $n$ varies the $\tau_{\leq n} R$ fit into a tower
$\dots \to \tau_{\leq n+1} R \to \tau_{\leq n} R \to \dots \to
\tau_{\leq 0} R$.   We will use this as the ``official'' definition of truncation, although it is slightly less intuitive than the previous one. 

If $R \in \SCR_{/k}$, then $\tau_{\leq n} R$ also comes with a
natural morphism to $k$ so (at least for $n \geq 0$) we may also
regard $\tau_{\leq n}$ as a functor $\SCR_{/k} \to \SCR_{/k}$.  If
$R \in \smallC_k$ then also $\tau_{\leq n} R\in \smallC_k$.

\subsection{Tensor product of simplicial rings and pro-simplicial
  rings}
\label{sec:tensor-product-pro}

Recall that if $R' \leftarrow R \rightarrow R''$ is any pushout diagram in $\SCR$ we defined $R' \otimes_R R''$ as the tensor product
applied in each degree.  This tensor product is the (strict) pushout
in simplicial commutative rings, and hence for any simplicial ring
$A$, the diagram of simplicial sets
\begin{equation*}
  \xymatrix{
    \Hom(R' \otimes_R R'',A) \ar[r] \ar[d] & \Hom(R',A)\ar[d]\\
    \Hom(R'',A) \ar[r] & \Hom(R,A)
  }
\end{equation*}
is pullback (not necessarily homotopy pullback).  Indeed, since $A$
may be replaced by $A^{\Delta[p]}$ it suffices to prove this on
zero-simplices of mapping spaces, where it follows by applying the
analogous result for discrete rings in each degree.  If either $R \to
R'$ or $R \to R''$ is a cofibration in simplicial commutative rings,
the square above is also homotopy cartesian.

We prove the following homotopical analogue.
\begin{Proposition}\label{prop:tensor-of-representables}
  Let $\mathcal{F}_0$, $\mathcal{F}_1$, $\mathcal{F}_{01}$ be
  representable functors $\smallC_k \to s\Sets$ and let $T_i:
  \mathcal{F}_i \to \mathcal{F}_{01}$ be natural transformations, $i =
  0,1$.  Then the objectwise homotopy pullback   \begin{equation*}
    A \mapsto \mathcal{F}_0(A) \times^h_{\mathcal{F}_{01}(A)}
    \mathcal{F}_1(A)
  \end{equation*}
  is sequentially pro-representable.
\end{Proposition}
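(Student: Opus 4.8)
The plan is to realize the homotopy pullback functor, up to natural weak equivalence, as the functor corepresented by the Postnikov tower of a single cofibrant simplicial commutative ring, namely a homotopy pushout of representing objects. The tower is genuinely needed: the homotopy pushout will in general fail to be Artin, only because its homotopy groups are unbounded, and a pro-object is exactly what remedies this.

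First, a rigidification step. Since representable functors are homotopy invariant, Lemma~\ref{Lemma:2.10} lets us assume $\mathcal{F}_0,\mathcal{F}_1,\mathcal{F}_{01}$ are simplicially enriched and Kan valued, which changes neither the hypotheses nor the homotopy pullback. By Lemma~\ref{simpilcially enriched representable Lemma} choose cofibrant $R_0,R_1,R_{01}\in\smallC_k$ and natural weak equivalences $\Hom(R_i,-)\xrightarrow{\ \simeq\ }\mathcal{F}_i$. For $i\in\{0,1\}$ the composite $\Hom(R_i,-)\to\mathcal{F}_i\xrightarrow{T_i}\mathcal{F}_{01}$ corresponds, by the enriched Yoneda lemma, to a vertex of $\mathcal{F}_{01}(R_i)$; since $\Hom(R_{01},R_i)\to\mathcal{F}_{01}(R_i)$ is a weak equivalence this vertex lies in the path component of the image of some morphism $f_i\colon R_{01}\to R_i$ in $\smallC_k$, so up to natural simplicial homotopy $T_i$ is induced by $f_i$. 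As the homotopy pullback of a cospan depends only on the cospan up to objectwise weak equivalence and the homotopy classes of its two maps, we may thus replace the original cospan by $\Hom(R_0,-)\xrightarrow{\Hom(f_0,-)}\Hom(R_{01},-)\xleftarrow{\Hom(f_1,-)}\Hom(R_1,-)$. Now factor $f_0$ in $\SCR_{/k}$ as an acyclic cofibration $R_{01}\hookrightarrow R_0'$ followed by a fibration $R_0'\to R_0$, and set $B=R_0'\otimes_{R_{01}}R_1$, the tensor product formed levelwise. Then $B$ is cofibrant, inherits an augmentation $B\to k$ with $\pi_0 B\to k$ surjective, and because $R_{01}\to R_0'$ is a cofibration the square
\[
  \Hom(B,A)=\Hom(R_0',A)\times_{\Hom(R_{01},A)}\Hom(R_1,A)
\]
is a homotopy pullback for every $A\in\SCR$; hence $\Hom(B,-)$ is naturally weakly equivalent, on $\smallC_k$, to $A\mapsto\mathcal{F}_0(A)\times^h_{\mathcal{F}_{01}(A)}\mathcal{F}_1(A)$.

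Second, an analysis of $B$. The functor $\pi_0$ is a left adjoint, so $\pi_0 B=\pi_0 R_0\otimes_{\pi_0 R_{01}}\pi_0 R_1$, a quotient of the Artin local $k$-algebra $\pi_0 R_0\otimes_k\pi_0 R_1$; thus $\pi_0 B$ is Artin local with residue field $k$. Since $R_0'$ is cofibrant over $R_{01}$ the Quillen spectral sequence $\mathrm{Tor}^{\pi_*R_{01}}_s(\pi_*R_0,\pi_*R_1)_t\Rightarrow\pi_{s+t}B$ is available; all three inputs are finite dimensional over $k$ and concentrated in finitely many degrees (each $R_i\in\smallC_k$), so only finitely many bidegrees contribute to each total degree, and each contribution is finite dimensional. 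Hence every $\pi_n B$ has finite length over $\pi_0 B$. The only way $B$ can fail to lie in $\smallC_k$ is that $\pi_*B$ may be unbounded — which is precisely why a pro-object is required. Accordingly, with $\tau_{\le j}$ the truncation functor of \S\ref{sec:postnikov}, set $B_j=\tau_{\le j}B$: each $B_j$ is cofibrant, carries a compatible augmentation to $k$, has $\pi_0 B_j=\pi_0 B$ Artin local, has each $\pi_i B_j$ of finite length, and has $\pi_i B_j=0$ for $i>j$, so $B_j\in\smallC_k$. The tower maps $B_{j+1}\to B_j$ make $(j\mapsto B_j)_{j\in\N}$ a pro-object of $\smallC_k$ with cofibrant entries. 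Fixing $A\in\smallC_k$ and choosing $m$ with $\pi_i A=0$ for $i>m$, the object $A$ is $j$-truncated for all $j\ge m$, so precomposition with $B\to B_j$ gives a weak equivalence $\Hom(B_j,A)\xrightarrow{\ \simeq\ }\Hom(B,A)$, compatibly with the tower, whence $\colim_j\Hom(B_j,A)\xrightarrow{\ \simeq\ }\Hom(B,A)$. Combined with the previous paragraph this exhibits $(j\mapsto B_j)$ as a pro-representing object in the sense of Definition~\ref{defn:pro-rep}, with countable indexing set, so the homotopy pullback functor is sequentially pro-representable.

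The step I expect to require the most care is the analysis of $B$: one must check that the only Artin-type finiteness condition that fails for $B$ is boundedness of $\pi_*B$, and in particular that each $\pi_n B$ is a finitely generated (indeed finite-length) $\pi_0 B$-module — this is exactly where the boundedness and degreewise finiteness of the $\pi_*R_i$, fed through the Tor spectral sequence, are essential. The rigidification is routine given the representability results of \S\ref{sec:functors-artin-rings}, and the passage to the Postnikov tower uses only the standard fact that the derived mapping space out of $B$ into a bounded target depends only on a finite truncation of $B$.
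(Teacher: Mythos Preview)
Your approach is essentially identical to the paper's own proof sketch: represent the three functors by cofibrant objects, form the (derived) tensor product $B$, observe via the Quillen spectral sequence that the only failure of Artin-ness for $B$ is unboundedness of $\pi_*B$, and pro-represent by the Postnikov tower $(\tau_{\le j}B)_{j\in\N}$. One slip to correct: you factor $f_0$ as an \emph{acyclic cofibration} followed by a fibration, which would give $R_0'\simeq R_{01}$ rather than $R_0'\simeq R_0$, collapsing $B$ to $R_1$; you want a cofibration followed by an \emph{acyclic} fibration (so that $\Hom(R_0',-)\simeq\Hom(R_0,-)\simeq\mathcal{F}_0$), and similarly the claim that $\pi_0 R_0\otimes_k\pi_0 R_1$ is a $k$-algebra is not quite right since the $\pi_0 R_i$ need not be $k$-algebras---but $\pi_0 R_0\otimes_{\pi_0 R_{01}}\pi_0 R_1$ is Artin local directly, as in the paper.
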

\begin{proof}[Proof sketch]
  By homotopy invariance of the homotopy pullback, we may replace
  $\mathcal{F}_i$ and $\mathcal{F}_{01}$ by naturally weakly
  equivalent functors and also the natural transformations with
  naturally homotopic ones.

  Suppose $R_i, R_{01} \in \smallC_k$ are cofibrant and $\iota_i \in
  \mathcal{F}(R_i)$ and $\iota_{01} \in \mathcal{F}(R_{01})$ induce
  natural weak equivalences, and suppose $R_{01} \to R_i$ represent  
  the natural transformations $\mathcal{F}_i \to \mathcal{F}_{01}$, as
  in Lemma~\ref{Lemma:represent-natural-trans}.  We may suppose that
  $R_{01} \to R_i$ are cofibrations, in which case $\Hom(R_{i},A) \to
  \Hom(R_{01},A)$ are fibrations, so the objectwise strict pullback
  \begin{equation*}
    A \mapsto \Hom(R_0,A) \times_{\Hom(R_{01},A)} \Hom(R_1,A)
  \end{equation*}
  is naturally weakly equivalent to the homotopy pullback
  $\mathcal{F}_0 \times^h_{\mathcal{F}_{01}}\mathcal{F}_1$.  This
  object-wise fiber product is naturally isomorphic to the functor
  \begin{equation*}
    A \mapsto \Hom(R_0 \otimes_{R_{01}} R_1,A),
  \end{equation*}
  and $R_0 \otimes_{R_{01}} R_1$ is cofibrant.  Unfortunately $R_0
  \otimes_{R_{01}} R_1$ need not be an object of $\smallC_k$, but it
  is close.  Clearly the discrete ring
  \begin{equation*}
    \pi_0(R_0 \otimes_{R_{01}} R_1) = \pi_0(R_0)
    \otimes_{\pi_0(R_{01})} \pi_0(R_1)
  \end{equation*}
  is Artin local in the usual sense, and the spectral
  sequence \cite[Theorem 6, \S 6, II]{QuillenHomotopicalAlgebra} 
  \begin{equation*}
    E^2 = \Tor_{\pi_*(R_{01})}(\pi_*(R_0),\pi_*(R_1)) \Rightarrow \pi_*(R_0
    \otimes_{R_{01}} R_1),
  \end{equation*}
  shows that $\pi_k(R_0 \otimes_{R_{01}} R_1)$ has finite length
  as a module over $\pi_0$ for each $k > 0$, but it could be non-zero
  for infinitely many $k$.  However, any map 
  \begin{equation*}
    R_0 \otimes_{R_{01}} R_1 \to A
  \end{equation*}
  with $A \in \smallC_k$ admits a factorization over some finite  
  Postnikov truncation $\tau_{\leq n} (R_0 \otimes_{R_{01}} R_1)$, so
  the inverse system given by (cofibrant approximations to)
     $\tau_{\leq n}(R_0 \otimes_{R_{01}}
  R_1)$ will pro-represent the functor, and each truncation is Artin.
\end{proof}

The same statement holds for pro-representable functors:  
\begin{Proposition}\label{prop:functor-pullback}
  Let $\mathcal{F}_0$, $\mathcal{F}_1$, $\mathcal{F}_{01}$ be
  pro-representable functors $\smallC_k \to s\Sets$ and let $T_i:
  \mathcal{F}_i \to \mathcal{F}_{01}$ be natural transformations, $i =
  0,1$.  Then the homotopy pullback 
  \begin{equation*}
    A \mapsto \mathcal{F}_0(A) \times_{\mathcal{F}_{01}(A)}^h
    \mathcal{F}_1(A)
  \end{equation*}
  is also pro-representable.

  If all three functors are sequentially pro-representable, then so is
  the pullback.  
\end{Proposition}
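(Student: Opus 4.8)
The argument is a pro-object version of the proof of Proposition~\ref{prop:tensor-of-representables}, and the plan is to reduce to that result applied one level at a time, using that filtered homotopy colimits of simplicial sets commute with finite homotopy limits.

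The first step is to put all the data over a common indexing category. Applying Lemma~\ref{Lemma:represent-natural-trans} to $T_0$ and to $T_1$, and then passing to a common filtered refinement of the resulting indexing categories by the reindexing manipulations of Lemma~\ref{lemma:replace-by-nice} (replacing indexing categories by directed posets with finite lower sets, leveling the pro-maps, and passing to cofinal subcategories), we may assume -- after replacing the three functors by naturally weakly equivalent ones -- that there is a single filtered category $J$, pro-objects $R^{(0)}, R^{(1)}, R^{(01)} \colon J \to \smallC_k$ with all values cofibrant, and levelwise maps $R^{(01)}_j \to R^{(i)}_j$ for $i = 0,1$, such that $\mathcal{F}_i \simeq \colim_{j \in J} \Hom(R^{(i)}_j,-)$, $\mathcal{F}_{01} \simeq \colim_{j \in J} \Hom(R^{(01)}_j,-)$, and the transformations $T_i$ are induced by the levelwise maps. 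This bookkeeping is where essentially all the work lies; it involves no new ideas beyond Lemmas~\ref{Lemma:represent-natural-trans} and~\ref{lemma:replace-by-nice}, but carrying out the two reindexings compatibly is the step I expect to be the main obstacle.

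Granting this, fix $j \in J$. The functors $\Hom(R^{(0)}_j,-)$, $\Hom(R^{(1)}_j,-)$, $\Hom(R^{(01)}_j,-)$ are representable, so by Proposition~\ref{prop:tensor-of-representables} their objectwise homotopy pullback
\begin{equation*}
  \mathcal{G}_j \colon A \longmapsto \Hom(R^{(0)}_j,A) \times^h_{\Hom(R^{(01)}_j,A)} \Hom(R^{(1)}_j,A)
\end{equation*}
is sequentially pro-representable. Concretely, as in that proof, one functorially factors each $R^{(01)}_j \to R^{(i)}_j$ as a cofibration $R^{(01)}_j \to \widehat{R}^{(i)}_j$ followed by an acyclic fibration, forms the levelwise pushout $P_j = \widehat{R}^{(0)}_j \otimes_{R^{(01)}_j} \widehat{R}^{(1)}_j$ (which is cofibrant, and functorial in $j$ because the factorizations are), and observes that although $P_j$ need not lie in $\smallC_k$, it has $\pi_0$ Artin local and each $\pi_m$ of finite length, so that $\Hom(P_j,-) \simeq \colim_{n} \Hom(c(\tau_{\leq n} P_j),-)$ on $\smallC_k$ with each $c(\tau_{\leq n} P_j) \in \smallC_k$ cofibrant. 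Thus $\mathcal{G}_j \simeq \colim_{n \in \N} \Hom(Q_{j,n},-)$ for the pro-object $(Q_{j,n})_n = (c(\tau_{\leq n} P_j))_n$, naturally in $j$.

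Finally, filtered colimits in $s\Sets$ preserve weak equivalences, finite limits, and Kan fibrations (the last because the horns $\Lambda^m_i \hookrightarrow \Delta^m$ are finite simplicial sets), hence commute with homotopy pullbacks; therefore
\begin{equation*}
  \mathcal{F}_0(A) \times^h_{\mathcal{F}_{01}(A)} \mathcal{F}_1(A) \;\simeq\; \colim_{j \in J} \mathcal{G}_j(A) \;\simeq\; \colim_{(j,n) \in J \times \N} \Hom(Q_{j,n},A),
\end{equation*}
naturally in $A \in \smallC_k$. Since $J \times \N$ is filtered and each $Q_{j,n}$ is a cofibrant object of $\smallC_k$, this exhibits the homotopy pullback as pro-representable in the sense of Definition~\ref{defn:pro-rep}. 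For the last assertion, if $\mathcal{F}_0$, $\mathcal{F}_1$, $\mathcal{F}_{01}$ are sequentially pro-representable then $J_0$, $J_1$, $J_{01}$, and hence the common refinement $J$, may be taken countable; then $J \times \N$ is countable, so the homotopy pullback is sequentially pro-representable.
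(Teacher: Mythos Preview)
Your proof is correct and follows essentially the same approach as the paper's: reduce to a common filtered indexing category $J$ with levelwise maps, functorially replace maps by cofibrations, form the levelwise tensor product, and then truncate and cofibrantly replace to obtain a pro-object indexed by $J \times \N$. The only cosmetic difference is that the paper arranges for one of the colimit maps to be a Kan fibration (so that the strict pullback computes the homotopy pullback), whereas you invoke directly that filtered colimits commute with homotopy pullbacks and appeal to Proposition~\ref{prop:tensor-of-representables} level by level; both arguments arrive at the same representing pro-object $\big(c(\tau_{\leq n}(\widehat R^{(0)}_j \otimes_{R^{(01)}_j} \widehat R^{(1)}_j))\big)_{(j,n)\in J\times\N}$.
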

\begin{proof}

  As before we may use homotopy invariance to replace $\mathcal{F}_i$
  by a functor $F_i = \colim_j \Hom(R_i^j,-)$ for $(j \mapsto R_i^j) \in
  \text{pro-}\smallC_k$ and similarly for $\mathcal{F}_{01}$.  We may also assume the representing pro-objects are \emph{nice}, and then 
 replace (by Lemma \ref{Lemma:represent-natural-trans})
 the natural transformations
  by those induced by morphisms
  \begin{equation*}
    \phi_i \in \lim_k \colim_j \Hom(R_{01}^j,R_i^k)
  \end{equation*}
  in the pro-category. 
  
  Now arrange that
  $\colim_j\Hom(R_{01}^j,A) \to \colim_j\Hom(R_1^j,A)$ is a fibration
  for all $A$, for example in the following way.  After replacing with
  isomorphic objects, we may assume that they have a common indexing
  category $J$, and that the morphism in the pro-category is given by
  a natural transformation of functors $R_{1} \Rightarrow R_{01}:
  J^\mathrm{op} \to \smallC_k$, and then functorially replacing each
  constituent morphism $R_1^j \to R_{01}^j$ by a cofibration.  (The
  pro-objects may not stay nice, but that is no longer important.)
  Since filtered colimits preserve Kan fibrations, the resulting map
  of simplicial sets
  $\colim_j\Hom(R_{01}^j,A) \to \colim_j\Hom(R_1^j,A)$ is a Kan
  fibration for all $A$.  After this replacement the pullback is weakly
  equivalent to the homotopy pullback, and we see that it is
  ``represented'' by the object $(j \mapsto R_1^j \otimes_{R_{01}^j}
  R_0^j) \in \text{pro-}\SCR_{/k}$.  As in
  Proposition~\ref{prop:tensor-of-representables}, the levels of this
  need not be objects of $\smallC_k$, but we can then instead consider
  the pro-object
  \begin{equation*}
    c \bigg(\tau_{\leq n} \big(R_1^j
    \otimes_{R_{01}^j} R_0^j \big)\bigg)_{(j,n) \in J \times \N},
  \end{equation*}
  where $c(-)$ denotes a choice of functorial cofibrant replacement.
\end{proof}

Inspired by this, we will use the following {\em ad hoc} version of a
derived tensor product in pro-$\smallC_k$.  The definition depends on
a choice of functorial factorization of morphisms in $\SCR_{/k}$ (or
just $\smallC_k$) into a cofibration followed by an acyclic fibration,
as well as a sequence of Postnikov truncation functors $\tau_{\leq
  n}$.
 \begin{Definition} \label{defn-tensor-product} Given morphisms
   $\phi: R \to R'$ and $R \to R''$ in pro-$\smallC_k$, represent them
   by natural transformations $(R_j \to R'_j)_{j \in J}$ and
   $(R_j \to R''_j)_{j \in J}$ of pro-objects indexed by the same
   category, apply functorial factorization to replace these maps by
   $R_j \to \widehat{ R'_j} \stackrel{\sim}{\rightarrow} R'_j$ and
   $R_j \to \widehat{R''_j} \stackrel{\sim}{\rightarrow} R''_j$ (where
   the first map of each is a cofibration, and the second a weak
   equivalence) and define
   \begin{equation*}
     R' \dotimes_R R'' = \mathrm{nice}\, c\left( \tau_{\leq n} (\widehat{R'_j} \otimes_{R_j}
       \widehat{R''_j}) \right)_{(j,n) \in J\times \N},
   \end{equation*} where as usual $c$ denotes the chosen cofibrant
   replacement functor, and $\mathrm{nice}$ refers to the procedure from
   Lemma \ref{lemma:replace-by-nice}, replacing a pro-object by an
   equivalent nice one.
\end{Definition}

By our discussion above, $R' \dotimes_{R} R''$ pro-represents the functor 
$$ \colim \Maps(R'_j, -) \times^h_{\colim \Maps(R_j, -)} \colim
\Maps(R''_j, -).$$
Let  us point out that the definition
above is a bit of a kludge, and the notation $R' \dotimes_R R''$ is
shorthand for a construction whose isomorphism class depends on many
choices.  In particular, it does not define a functor from pushout
diagrams in pro-$\smallC_k$, because the choice of how to represent
$R'' \leftarrow R \to R'$ by natural transformations of functors out
of the same indexing category is not functorial.  Different choices
lead to non-isomorphic pro-objects $R' \dotimes_R R''$, but all of
them are nice and represent the same functor so they are at least
related by morphisms in pro-$\smallC_k$ inducing objectwise weak
equivalences of represented functors.

\subsection{Some pro-representable functors}
\label{sec:some-pro-repr}

We discuss a few examples of functors for which we may describe
explicit pro-representing objects.

\begin{Proposition}
  The terminal functor, defined as $\mathcal{F}(A) = \{\ast\}$, is sequentially
  pro-representable.
\end{Proposition}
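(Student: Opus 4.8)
The plan is to exhibit an explicit countable pro-object of $\smallC_k$ whose associated representable-colimit functor is the terminal functor. The natural candidate is a cofibrant approximation of the diagram $(k \leftarrow \tau_{\leq n} c(k) \leftarrow \cdots)$, or more concretely a sequence of cell attachments starting from $k$ that kills nothing but records the fact that every object of $\smallC_k$ receives \emph{some} map from a fixed small ring in a way that is asymptotically unique. Concretely, first I would recall that $\Hom(R,A)$ for $R$ cofibrant is the derived mapping space, so $\colim_j \Hom(R_j, A)$ is contractible for all $A \in \smallC_k$ precisely when the pro-object $(R_j)$ is ``pro-weakly-equivalent to the initial object'' in the appropriate sense. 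Since $\smallC_k$ has no initial object, the point of the statement is that the initial object $\Z$ of $\SCR$ is replaced, after passing to pro-objects over $\smallC_k$, by a genuine pro-object with levels in $\smallC_k$.

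The key steps, in order: (1) Choose a cofibrant model $R''$ of $k$ as a $\Z$-algebra built with finitely many cells in degrees $0$ and $1$, exactly as in the proof of the Lemma before Lemma~\ref{lem:build-Artin-rings-Postnikov} (kill the regular sequence $(p, f(x))$ in $\Z[x]$). Then $R''$ is cofibrant and $R'' \to k$ is a weak equivalence, but $R''$ itself is not in $\smallC_k$ since $R'' \to k$ need not be surjective on $\pi_0$ with Artin local source — actually $\pi_0 R'' = k$ is a field, which \emph{is} Artin local, so $R''$ with its map to $k$ \emph{is} an object of $\smallC_k$ once we note $\pi_* R''$ is finitely generated over $\pi_0 R'' = k$ (it is, being bounded and degreewise finite-dimensional). (2) Observe that $\Hom(R'', A)$ computes the derived space of $\Z$-algebra maps $R'' \to A$, i.e.\ the derived space of maps $k \to A$ over nothing; for $A \in \smallC_k$ the structure map $A \to k$ together with the homotopy-uniqueness of such maps shows $\Hom(R'', A)$ is already \emph{connected}, but it need not be contractible — it can have higher homotopy, controlled by the tangent complex $\bigtangent_k$ appearing in the Remark after Lemma~\ref{simpilcially enriched representable Lemma}. (3) To kill these higher homotopy groups in the colimit, build a sequence $R'' = R_0 \to R_1 \to R_2 \to \cdots$ of maps in $\smallC_k$, each obtained by finitely many pullbacks along maps $h_i : R_{i-1} \to k \oplus k[n_i]$ as in Example~\ref{example:attach-homotopy-group} (equivalently, attaching cells), chosen so that for each fixed $A$ the map $\colim_i \Hom(R_i, A) \to \{\ast\}$ becomes a weak equivalence. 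The mechanism is that a pullback along $h_i$ replaces $\Hom(-,A)$ by the space of null-homotopies of the induced map $B \to k \oplus k[n_i]$, and by iterating over all ``obstruction classes'' (a countable bookkeeping, since $\smallC_k^{\mathrm{skel}}$ is a set and each $\Hom$-space has countably many homotopy groups each finitely generated) we can arrange all positive homotopy of the $\Hom$-spaces to die in the colimit.

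The main obstacle I expect is step (3): making the bookkeeping genuinely countable and checking that the colimit over $i$ of $\Hom(R_i, A)$ is contractible for \emph{every} $A \in \smallC_k$ simultaneously, not just for a single $A$. The clean way around this is to invoke Lemma~\ref{lem:build-Artin-rings-Postnikov}(i): every $A \in \smallC_k$ is built from $k$ by finitely many pullbacks along $k \oplus k[n]$'s, so it suffices to understand $\colim_i \Hom(R_i, -)$ on the generating objects $k \oplus k[n]$ and on the pullback squares~\eqref{build_ring_diagram}. Then one argues by induction on the Postnikov-type filtration of $A$: the base case $A = k$ holds once $\colim_i \Hom(R_i, k) \simeq \{\ast\}$ (arrange $R_i \to k$ to be highly connected), and the inductive step uses that homotopy pullbacks commute with the filtered (homotopy) colimit $\colim_i \Hom(R_i, -)$, so $\colim_i \Hom(R_i, A')$ sits in a homotopy pullback of contractible spaces over a contractible space, hence is contractible. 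A convenient alternative, avoiding any bookkeeping at all, is simply to take the pro-object $\big(c(\tau_{\leq n} R'')\big)_{n \in \N}$ with transition maps the Postnikov tower maps: since $\tau_{\leq 0} R'' \simeq \pi_0 R'' = k$ already, and the tower is constant from some point if $\pi_* R''$ is bounded — in fact the cleanest choice is the pro-object that is constantly $k$ if $\Hom(R'', k) \simeq \{\ast\}$, reducing the whole proposition to the single computation that the derived space of $\Z$-algebra maps $k \to k$ is contractible, which is false in general, so the bookkeeping in step (3) really is needed. Thus I would present the proof via steps (1)–(3) with the inductive argument of the previous paragraph doing the work, flagging the countability of the indexing category as the only subtle point and resolving it by noting $\smallC_k^{\mathrm{skel}}$ is a set and all relevant homotopy groups are finitely generated, so the pro-object may be taken sequential.
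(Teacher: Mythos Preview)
Your approach is genuinely different from the paper's, and step (3) contains a real gap. You write that ``a pullback along $h_i$ replaces $\Hom(-,A)$ by the space of null-homotopies of the induced map $B \to k \oplus k[n_i]$,'' but Example~\ref{example:attach-homotopy-group} describes lifts of maps \emph{into} the pullback $R'$, not maps \emph{out of} it; there is no comparably simple formula relating $\Hom(R',A)$ to $\Hom(R,A)$. Your inductive argument via Lemma~\ref{lem:build-Artin-rings-Postnikov} does correctly reduce everything to showing $\colim_i \Hom(R_i, k \oplus k[n]) \simeq \ast$ for all $n$, i.e.\ that the pro-object $(R_i)$ has vanishing tangent complex in the colimit. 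But you never construct such a tower or verify this vanishing: the individual $\pi_{-1}\bigtangent R_i$ do \emph{not} shrink (for $R_i \simeq W(k)/p^i$ one has $\pi_{-1}\bigtangent R_i \cong k$ for every $i$), and what is needed is that the \emph{transition maps} on tangent complexes are eventually zero. The ``countable bookkeeping'' remark is hand-waving at what is the entire content.

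The paper's proof is concrete and bypasses all of this. It exhibits the pro-object $\big(c(W(k)/p^n)\big)_{n}$ explicitly, via the identification of $\Hom(c(W(k)/p^n),A)$ with the space $F(n,A)$ of $1$-simplices in $A$ from $0$ to $p^n\cdot 1$. For $n$ large (so that $p^n = 0$ in the Artin ring $\pi_0 A$) this space is a torsor for $\Omega A$, and the transition map $F(n,A) \to F(n+1,A)$ is multiplication by $p$; since $\pi_*(A)$ is finite $p$-torsion, $\colim_n F(n,A)$ is contractible. Your tower, if built correctly, would rediscover exactly $(W(k)/p^n)$---the pullback of $c(k)$ along the nonzero class in $\pi_{-1}\bigtangent k$ is weakly equivalent to $W(k)/p^2$, and iterating gives the Witt tower---but the paper's path-space argument avoids tangent-complex machinery entirely.
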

\begin{proof}
  Let $p = \mathrm{char}(k)$, let $F(n,A)$ be the subspace of
  $\Map(\Delta[1],A)$ consisting of 1-simplices starting at 0 and
  ending at $p^n 1$, and define $F(n,A) \to F(n+1,A)$ as $x \mapsto px$.  Since $\pi_0(A)$ is Artinian, $F(n,A)$ is non-empty
  for large enough $n$, in which case it has the homotopy type of the
  loop space of $(A,0)$.  More explicitly, if we pick $s_n \in F(n,A)$ and define $s_{n+k} = p^k s_n \in F(n+k,A)$, we may define an isomorphism of simplicial sets $\Omega (A,0) \to F(n+k,A)$ by $x \mapsto x + s_{n+k}$.  With respect to these isomorphisms, the map $F(n,A) \to F(n+1,A)$ is 
  identified with simplicial loops of
  multiplication by $p: A \to A$ and hence 
  induces
  multiplication by $p$ on homotopy groups.  Since the homotopy groups are finite and hence $p$-torsion, $\colim_n \pi_k(F(n,A)) = 0$ and hence $\colim_n F(n,A)$ is
  contractible for all $A$.

  Finally we have a natural weak equivalence $F(n,A) \simeq
  \Hom(R_n,A)$, where $R_n$ is a cofibrant approximation to
  $W(k)/p^n$  -- e.g., if $k =\mathbb{F}_p$, we may take
  $R_n$ to be the ring obtained  by freely adjoining to the discrete simplicial ring $\mathbb{Z}$
  a generator $y_1$ satisfying $d_0 y_1 =0, d_1 y_1 = p^n$. \end{proof}

We shall sometimes simply write $W(k)$ for a pro-representing object
of the terminal functor, but this is somewhat sloppy.  The constant
object of $\SCR_{/k}$ given by $W(k)$ is not cofibrant and neither is
the constant object $W(k)/p^n$.

\textbf{Remark.}  It is easy to see that the terminal functor is only
pro-representable, not representable.  Indeed, any putative
representing object $R$ would have $p^n = 0 \in \pi_0(R)$ for some $n$
and hence have $\Hom(R,A) = \emptyset \not\simeq \ast$ when $\pi_0(A)
= W(k)/p^{n+1}$.

\begin{Lemma}
  The functor $A \mapsto \mathfrak{m}(A) = \Ker(A \to k)$ is
  sequentially pro-representable.
\end{Lemma}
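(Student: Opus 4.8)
The plan is to write down an explicit sequential pro-object of $\smallC_k$ representing $\mathfrak m$. First I would record that $\mathfrak m$ is homotopy invariant: the augmentation $\epsilon\colon A\to k$ is surjective in every simplicial degree, since in degree $q$ it equals $A_q\xrightarrow{(d_0)^q}A_0\xrightarrow{\epsilon_0}k$ with $(d_0)^q$ split by $(s_0)^q$ and $\epsilon_0$ surjective (as $\pi_0 A\twoheadrightarrow k$); hence $\epsilon$ is a Kan fibration and $\mathfrak m(A)=\ker(\epsilon)$ computes the homotopy fibre over $0$, so it carries weak equivalences to weak equivalences. It is also visibly simplicially enriched and Kan valued (it is a simplicial abelian group).

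Next, for integers $n,m\ge 1$, let $R_n$ be the cofibrant approximation to $W(k)/p^n$ used in the proof that the terminal functor is pro-representable, put $R_n[x]=R_n\otimes_{\Z}\Z[x]$ (one polynomial generator in degree $0$, augmented by $x\mapsto 0$), and let $B_{n,m}$ be obtained from $R_n[x]$ by attaching a single $1$-cell along $(0,x^m)\colon\partial\Delta^1\to R_n[x]$ in the sense of Definition~\ref{defn:attach-cell}. Since $x^m$ is a non-zero-divisor in $(R_n)_q[x]$ in every simplicial degree, the spectral sequence \cite[Theorem 6, \S 6, II]{QuillenHomotopicalAlgebra} shows $B_{n,m}$ is homotopy discrete with $\pi_0 B_{n,m}=(W(k)/p^n)[x]/(x^m)$, which is Artin local with residue field $k$; thus $B_{n,m}\in\smallC_k$ and is cofibrant. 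The structure maps $R_{n+1}\to R_n$ of the terminal pro-object together with $x\mapsto x$, and (for $m'\ge m$) the identity of $R_n[x]$ together with $[\text{1-cell}]\mapsto x^{m'-m}\cdot[\text{1-cell}]$, organize $(B_{n,m})$ into a pro-object indexed by $\N\times\N$, hence a sequential one.

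Now I would compute $\Hom(B_{n,m},A)$ from the defining pushout of $B_{n,m}$ together with the coproduct decomposition $R_n[x]=R_n\sqcup_{\Z}\Z[x]$, keeping track of the augmentations to $k$ (which force the coordinate on the $\Z[x]$-factor into $\ker(A\to k)$ and the $1$-cell into a path there). One gets
\[
\Hom(B_{n,m},A)\;\simeq\;\Hom(R_n,A)\;\times\;\widetilde N_m(A),\qquad \widetilde N_m(A):=\hofib_0\!\big((-)^m\colon\mathfrak m(A)\to\mathfrak m(A)\big),
\]
where $\widetilde N_m(A)$ has as points the pairs $(a,\gamma)$ with $a\in\mathfrak m(A)$ and $\gamma$ a path in $\mathfrak m(A)$ from $0$ to $a^m$ (a strict pullback which is a homotopy pullback since $\mathfrak m(A)^{\Delta^1}\to\mathfrak m(A)\times\mathfrak m(A)$ is a fibration). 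Evaluation $\phi\mapsto\phi(x)\in\mathfrak m(B_{n,m})_0$ gives a natural transformation $\colim_{n,m}\Hom(B_{n,m},-)\to\mathfrak m$, and since $\colim_n\Hom(R_n,A)\simeq\ast$ by the terminal-functor computation, it remains to show $\colim_m\widetilde N_m(A)\xrightarrow{\ \sim\ }\mathfrak m(A)$, the transition maps being $(a,\gamma)\mapsto(a,a\cdot\gamma)$. For this I would use the fibre sequence $\widetilde N_m(A)\to\mathfrak m(A)\xrightarrow{(-)^m}\mathfrak m(A)$: the key point is that $((-)^m)_\ast=0$ on every $\pi_i$ once $m$ is large. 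For $i\ge 1$ the $m$-th power map factors through the $m$-fold multiplication $\mathfrak m(A)^{\times m}\to\mathfrak m(A)$, which for $m\ge 2$ is constant on the fat wedge and hence kills the image of $\pi_i(\bigvee^m\mathfrak m(A))$, which for $i\ge 1$ is all of $\pi_i(\mathfrak m(A)^{\times m})$; for $i=0$ it is $\bar a\mapsto\bar a^m$ on the nilpotent ideal $\mathfrak m_{\pi_0 A}$, which vanishes once $m$ exceeds the nilpotency index. Hence for $m\gg 0$ the long exact sequence splits into short exact sequences $0\to\pi_{i+1}\mathfrak m(A)\xrightarrow{\partial}\pi_i\widetilde N_m(A)\to\pi_i\mathfrak m(A)\to 0$, with $\partial$ factoring through $\Omega_0\mathfrak m(A)\hookrightarrow\widetilde N_m(A)$, $\gamma\mapsto(0,\gamma)$. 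The transition map kills this loop space (as $0\cdot\gamma$ is the constant path) and is compatible with the projection $(a,\gamma)\mapsto a$, so each transition kills the copy of $\pi_{i+1}\mathfrak m(A)$ and restricts to a splitting of the next short exact sequence; passing to the filtered (hence exact) colimit gives $\colim_m\pi_i\widetilde N_m(A)\cong\pi_i\mathfrak m(A)$ for all $i$, realized by the natural map. This shows $\colim_m\widetilde N_m(A)\to\mathfrak m(A)$ is a weak equivalence and completes the proof.

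The main obstacle is this last step. The naive pro-object $(R_n[x])$ would already represent $\mathfrak m$ were it not for the Artinian condition on $\pi_0$; forcing $x$ to be ``topologically nilpotent'' by killing $x^m$ introduces extra homotopy at each finite stage — exactly the $\Omega_0\mathfrak m(A)$ above — and the content of the argument is that the transition maps ``$\times x$'' make this extra homotopy disappear in the limit, which is precisely what the wedge/nilpotency vanishing of power maps on homotopy groups delivers.
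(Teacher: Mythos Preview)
Your proof is correct and follows essentially the same approach as the paper: both represent $\mathfrak m$ by cofibrant approximations to $W(k)[x]/(p^n,x^m)$ with transition maps multiplying the nilpotence witness by $p$ and $x$, the only cosmetic difference being that the paper ties the two indices together ($m=n$) and states the key weak-equivalence step without justification. Your wedge argument for the vanishing of $((-)^m)_*$ on $\pi_{\ge 1}$ and your analysis of how the transition $(a,\gamma)\mapsto(a,a\gamma)$ kills the extra $\Omega$-factor supply exactly the details the paper's proof omits.
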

\begin{proof}
  Following the same strategy as in the previous lemma, we let
  $F(n,A)$ be the subspace of $\mathfrak{m} \times A^{\Delta[1]}
  \times A^{\Delta[1]}$ consisting of triples $(a,\lambda,\sigma)$
  where $\lambda$ is a path from $p^n1$ to 0 as before and $\sigma$ is
  a path from $a^n$ to 0.  There is a natural transformation $F(n,A)
  \to \mathfrak{m}$ sending $(a,\lambda,\sigma)$ to $a$ and a
  compatible one $F(n,A) \to F(n+1,A)$ sending $(a,\lambda,\sigma)
  \mapsto (a,p \lambda, a \sigma)$, inducing a weak equivalence
  $\colim_n F(n,A) \to \mathfrak{m}(A)$.  Each $F(n,-)$ is
  representable by a cofibrant approximation to $W(k)[x]/(x^n,p^n)$
  and hence $\mathfrak{m}$ is sequentially pro-representable.
\end{proof}

\begin{Lemma}
  For $n \geq 0$ the functor $A \mapsto \Omega^n \mathfrak{m}(A)$ is
  sequentially pro-representable.
\end{Lemma}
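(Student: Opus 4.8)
The plan is to induct on $n$, bootstrapping from the two preceding lemmas (sequential pro-representability of $\mathfrak{m}$ and of the terminal functor) via Proposition~\ref{prop:functor-pullback} on homotopy pullbacks of pro-representable functors. The base case $n=0$ is exactly the preceding lemma. For the inductive step the idea is simply that, for a pointed functor $\mathcal{F}$ with Kan values, $\Omega \mathcal{F}$ is naturally (weakly) the homotopy pullback of the two copies of the basepoint section $\ast \to \mathcal{F} \leftarrow \ast$: one model of $\ast \times^h_{\mathcal{F}(A)} \ast$ is the space of paths $\Delta^1 \to \mathcal{F}(A)$ with both endpoints at the basepoint, which is precisely $\Omega\mathcal{F}(A) = s\Sets((\Delta^1,\partial\Delta^1),(\mathcal{F}(A),\ast))$; if one prefers a different model of the homotopy pullback it is connected to this one by a natural weak equivalence, which suffices.

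So first I would observe that $\Omega^n\mathfrak{m}$ takes values in pointed Kan complexes: since $\mathfrak{m}(A)=\Ker(A\to k)$ is the kernel of a map of simplicial rings it is a simplicial abelian group (pointed by $0$), and the loop functor $\Omega X = s\Sets((\Delta^1,\partial\Delta^1),(X,0))$ applied to a simplicial abelian group again yields a simplicial abelian group under pointwise operations; iterating, $\Omega^n\mathfrak{m}(A)$ is a simplicial abelian group, hence Kan, pointed by the constant loop. Consequently $A\mapsto\{\text{basepoint of }\Omega^n\mathfrak{m}(A)\}$ is a natural transformation $\ast\to\Omega^n\mathfrak{m}$ from the terminal functor, which was shown above to be sequentially pro-representable. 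Then, assuming inductively that $\Omega^n\mathfrak{m}$ is sequentially pro-representable, I would identify $\Omega^{n+1}\mathfrak{m} = \Omega(\Omega^n\mathfrak{m})$ with the objectwise homotopy pullback $\ast \times^h_{\Omega^n\mathfrak{m}} \ast$ along this basepoint section (legitimate because $\Omega^n\mathfrak{m}(A)$ is Kan), and invoke the ``sequential'' clause of Proposition~\ref{prop:functor-pullback} with $\mathcal{F}_0=\mathcal{F}_1=\ast$ and $\mathcal{F}_{01}=\Omega^n\mathfrak{m}$, all three of which are sequentially pro-representable. This gives that $\Omega^{n+1}\mathfrak{m}$ is sequentially pro-representable and completes the induction.

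There is essentially no hard content here; the proof is a short bookkeeping exercise. The only points requiring a little care — and hence the ``main obstacle,'' such as it is — are (a) writing $\Omega$ of a pointed functor as a homotopy pullback of the basepoint section with itself, in a way compatible with taking the homotopy pullback objectwise, so that Proposition~\ref{prop:functor-pullback} literally applies; and (b) verifying that the functors involved take values in Kan (indeed simplicial abelian group) valued simplicial sets, so that the objectwise homotopy pullback is computed by the honest path-space and agrees with $\Omega$. Both are dispatched by the observation that $\mathfrak{m}$ and all of its iterated loop spaces are simplicial abelian groups and that the basepoint is natural. No further estimates, spectral sequences, or Postnikov-tower manipulations are needed beyond what is already packaged inside Proposition~\ref{prop:functor-pullback}.
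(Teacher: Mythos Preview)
Your proposal is correct and follows essentially the same approach as the paper: induct on $n$, writing $\Omega^{n}\mathfrak{m}$ as the objectwise homotopy pullback of $\ast \to \Omega^{n-1}\mathfrak{m} \leftarrow \ast$ and applying the sequential clause of Proposition~\ref{prop:functor-pullback}. The paper states this in a single sentence; your additional remarks about Kan values and the basepoint being natural are fine but not strictly needed for the argument as presented there.
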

\begin{proof}
  By Proposition~\ref{prop:functor-pullback} this follows inductively
  by writing $\Omega^n\mathfrak{m}(A)$ as the homotopy pullback of
  $\ast \to \Omega^{n-1}\mathfrak{m}(A) \leftarrow \ast$.
\end{proof}

\textbf{Remark.}  A representing pro-object in the above Lemma can be
constructed more explicitly using the ``cell attachment'' construction
from section~\ref{sec:simplicial-rings}.  In fact, if $n > 0$ and
$\Z[y]$ is the simplicial commutative ring obtained by adjoining a
(trivially attached) $n$-cell to the initial object $\Z$, i.e.\ the
level-wise free commutative algebra on the pointed simplicial set
$S^n$, then this object ``represents'' in the sense that it is
cofibrant in $\SCR$ and $\SCR(\Z[y],A)$ is naturally isomorphic to
$\Omega^n A = \Omega^n \mathfrak{m}(A)$.  Of course $\Z[y] \to k$ is
not an object of $\smallC_k$ because the homotopy groups of $\Z[y]$
are too large, and in fact the functor is not representable.  But if
we write $W = W(k)$ we may instead use the pro-objects formed by
cofibrant approximations to the truncations
$\tau_{\leq n} (W/p^n)[y]$, $n \in \N$.  If as usual we write $c$ for
a chosen cofibrant replacement functor, we shall occasionally denote
the resulting pro-object $(n \mapsto c(\tau_{\leq n} (W/p^n)[y]))$
informally by $W[[y]]$, thinking of it as power series over $W$ in one
variable $y$ of degree $n$.

More generally we have the following definition
\begin{Definition}\label{defn:attach-cell-to-functor}
  Suppose $\mathcal{F}: \smallC_k \to s\Sets$ is a homotopy invariant
  functor and $e: \mathcal{F}(A) \to \Omega^n \mathfrak{m}(A)$ is a
  natural transformation.  We may then define a functor
  $\mathcal{F}': \smallC_k \to s\Sets$ by letting $\mathcal{F}'(A)$ to
  be the homotopy fiber of
  $e: \mathcal{F}(A) \to \Omega^n\mathfrak{m}(A)$ over the basepoint
  $0$, i.e. $\mathcal{F}'$ is defined by the following homotopy pullback square:
   \begin{equation}\label{eq:19}
   \begin{aligned}
     \xymatrix{ \mathcal{F}'(A) \ar[d]\ar[r] &
    0 \ar[d]\\
       \mathcal{F}(A) \ar[r]_-e &  \Omega^n \mathfrak{m}(A).  }
   \end{aligned}
 \end{equation}
  
    In this case we shall say that $\mathcal{F}'$ is obtained from
  $\mathcal{F}$ by attaching an $(n+1)$-cell to $\mathcal{F}$ along
  $e$.  For $n = -1$ we shall simply define $\mathcal{F}'(A) =
  \mathcal{F} \times \mathfrak{m}(A)$ and say that $\mathcal{F}'$ is
  obtained from $\mathcal{F}$ by attaching a 0-cell.
\end{Definition}

If $e: \mathcal{F} \to \Omega^n\mathfrak{m}$ is a natural
transformation and $\mathcal{F}'$ is defined as above, then if
$\mathcal{F}$ is (pro-)representable by $R \in \text{pro-}\smallC_k$,
the functor $\mathcal{F}'$ is (pro-)representable by the derived
tensor product $W \dotimes_{W[[x]]} R$, where $W[[x]] \to R$ is a
morphism in pro-$\smallC_k$ classifying
$[e] \in \pi_n\mathfrak{m}(R)$.

\subsection{Formally cohesive functors}  
\label{sec:tangent-complex}

In 
section~\ref{sec:luri-deriv-schl} below, we shall review a
verifiable criterion for a homotopy invariant functor
$\mathcal{F}: \smallC_k \to s\Sets$ to be pro-representable.  The
criterion, due to Lurie, is a simplicial version of Schlessinger's
criterion.  The criterion makes two assumptions on $\mathcal{F}$, one
of which we discuss in this section.

Any functor $\mathcal{F}: \smallC_k \to s\Sets$ will send a
commutative square
\begin{equation}
 \label{eq:10}
 \begin{aligned}
 \xymatrix{
    A \ar[r]\ar[d] & B\ar[d]\\C \ar[r] & D
  }
 \end{aligned}
\end{equation}
in $\smallC_k$ to a commutative square
\begin{equation}\label{eq:11}
  \begin{aligned}
    \xymatrix{ \mathcal{F}(A) \ar[r]\ar[d] &
      \mathcal{F}(B)\ar[d]\\\mathcal{F}(C) \ar[r] & \mathcal{F}(D) }
  \end{aligned}
\end{equation}
of simplicial sets, by definition of ``functor''.  Recall that a
strictly commutative square is said to be \emph{homotopy cartesian} if
the induced map from the initial vertex to the homotopy pullback of
the rest of the diagram is a weak equivalence.  We shall be
particularly interested in homotopy invariant functors satisfying the
following properties.
\begin{Definition}  Let $\mathcal{F}: \smallC_k \to s\Sets$ be a
  homotopy invariant functor.
  \begin{enumerate}[(i)]
  \item $\mathcal{F}$ \emph{preserves homotopy pullback} if the
    square~\eqref{eq:11} is homotopy cartesian whenever the
    diagram~\eqref{eq:10} is (strictly) cartesian and $C \to D$ and $B
    \to D$ are surjective in each simplicial degree.
  \item\ [Lurie] $\mathcal{F}$ is \emph{formally cohesive} (or
    ``good'') if it preserves homotopy pullback and if
    $\mathcal{F}(k)$ is contractible.
  \end{enumerate}
\end{Definition}
Let us point out that a map of simplicial rings is surjective in all
simplicial degrees if and only if it is a fibration and induces a
surjection in $\pi_0$.
\begin{Lemma}
  If a homotopy invariant functor $\mathcal{F}: \smallC_k \to s\Sets$
  preserves homotopy pullback in the sense above, it also preserves
  homotopy pullback in the stronger sense that the
  square~\eqref{eq:11} is homotopy cartesian whenever the square~\eqref{eq:10} is
  (strictly) cartesian and \emph{either} $C \to D$ or $B \to D$ are
  surjections in all simplicial degrees.
\end{Lemma}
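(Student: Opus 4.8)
The plan is to peel the square apart by a sequence of pasting arguments down to one irreducible square and then to handle that by hand. First I would use the symmetry of the homotopy pullback to assume $p\colon B\to D$ is degreewise surjective (equivalently, a fibration that is surjective on $\pi_0$) and $f\colon C\to D$ is arbitrary; we must then show $\mathcal F$ sends~\eqref{eq:10} to a homotopy cartesian square~\eqref{eq:11}. Using homotopy invariance of $\mathcal F$ and of homotopy pullbacks — together with the observation that, $p$ being a fibration, the strict pullback $A=C\times_D B$ already computes $C\times^h_D B$ — I would replace $C$ by a cofibrant object and factor $f$ as an acyclic cofibration followed by a fibration, reducing to the case that $f$ too is a fibration.

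Since $f$ is now a Kan fibration, its image $D_0=f(C)\subseteq D$ is a union of connected components of $D$, hence a simplicial subring; it lies in $\smallC_k$ (its $\pi_0$ is a quotient ring of $\pi_0 C$ and $\pi_{\ge1}D_0=\pi_{\ge1}D$), and $f$ factors as $C\twoheadrightarrow D_0\hookrightarrow D$ with $C\to D_0$ degreewise surjective. Setting $B_0:=p^{-1}(D_0)=B\times_D D_0$ — which lies in $\smallC_k$ because $p$ is degreewise surjective — one has $A=C\times_{D_0}B_0$, and~\eqref{eq:10} is the horizontal composite
\[
\xymatrix{
A\ar[r]\ar[d] & B_0\ar[r]\ar[d] & B\ar[d]^{p}\\
C\ar[r] & D_0\ar[r] & D
}
\]
of strict pullback squares. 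The left square has both legs degreewise surjective, so $\mathcal F$ carries it to a homotopy cartesian square by hypothesis; by the pasting lemma for homotopy cartesian squares it now suffices to treat the right square, i.e.\ to prove the assertion when $f$ is a \emph{monomorphism} whose image is a union of components.

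For that, I would apply Lemma~\ref{lem:build-Artin-rings-Postnikov}(ii) to $D_0\to D$ (with $D_0$ cofibrant): up to weak equivalence this map is a finite composite of pullbacks of the maps $k\oplus\widetilde{k[n]}\to k\oplus k[n]$, and pulling the square back stepwise presents $\mathcal F$ of it as a horizontal composite of $\mathcal F$ of squares. The $n\ge1$ steps involve the degreewise surjective map $k\oplus\widetilde{k[n]}\to k\oplus k[n]$ and so contribute squares $\mathcal F$ preserves; each $n=0$ step involves the canonical inclusion $\kappa\colon k=k\oplus\widetilde{k[0]}\to k\oplus k[0]$, and one more vertical paste with its defining pullback square (using that the attaching map may be taken degreewise surjective, the remaining case being homotopically trivial) reduces it to the assertion below. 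Since a horizontal composite of homotopy cartesian squares is homotopy cartesian, the whole Lemma reduces to the \emph{core case}: for every strict pullback square
\[
\xymatrix{
B_0\ar[r]\ar[d] & k\ar[d]^{\kappa}\\
B\ar[r]_-{q} & k\oplus k[0]
}
\]
with $q$ degreewise surjective and $\kappa$ the canonical inclusion, $\mathcal F$ sends it to a homotopy cartesian square.

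The core case is where the real content sits, and it is the step I expect to cost the most: $\kappa$ is not degreewise surjective, so the hypothesis does not apply directly. The plan is to exhibit $k\oplus k[0]$ itself as a homotopy pullback along degreewise surjective maps. The fibration $k\oplus\widetilde{k[1]}\to k\oplus k[1]$ is degreewise surjective (a fibration with trivial $\pi_0$), so $P:=(k\oplus\widetilde{k[1]})\times_{k\oplus k[1]}(k\oplus\widetilde{k[1]})$ lies in $\smallC_k$; a Mayer--Vietoris computation gives $\pi_0 P=k\oplus k[0]$ and $\pi_{\ge1}P=0$, so $P\simeq k\oplus k[0]$, and — being a strict pullback of two degreewise surjective maps between objects weakly equivalent to $k$ — $P$ is carried by $\mathcal F$ to $\mathcal F(k)\times^h_{\mathcal F(k\oplus k[1])}\mathcal F(k)$. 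Transporting the core square along $P\simeq k\oplus k[0]$, identifying $\kappa$ with the diagonal $k\to k\times^h_{k\oplus k[1]}k$, and rewriting the two homotopy pullbacks at issue as iterated homotopy pullbacks of the kind $\mathcal F$ already controls, the desired weak equivalence becomes a chase of iterated homotopy pullbacks of simplicial sets, in which one collapses a contractible path space. Pinning down exactly how $\kappa$ sits inside this homotopy-pullback model of $k\oplus k[0]$ — and the bookkeeping of the several pasting steps — is the remaining work.
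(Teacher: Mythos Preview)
The paper gives no argument here at all: it simply cites Lemma~6.2.7 of Lurie's thesis. So there is no in-paper proof to compare against; what you have written is already far more than the paper provides. Your reduction strategy is sound: replacing $f$ by a fibration, factoring through the image $D_0$ (a union of components, hence in $\smallC_k$), and pasting with the left square where both legs are degreewise surjective is all correct. The use of Lemma~\ref{lem:build-Artin-rings-Postnikov}(ii) to decompose $D_0\to D$ is also fine; for the $n\ge 1$ steps the map $k\oplus\widetilde{k[n]}\to k\oplus k[n]$ is indeed degreewise surjective, so its pullback $A_i\to A_{i-1}$ is too, and together with the (degreewise surjective) vertical leg the hypothesis applies directly. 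For the $n=0$ steps your dichotomy is right: since $k\oplus k[0]$ is discrete, the attaching map $h$ is automatically a fibration, hence degreewise surjective exactly when $\pi_0 h$ is surjective, and otherwise $h$ factors through $k$ and the step is a weak equivalence.

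Where your proposal is genuinely incomplete is the core case. The observation that $P\simeq k\oplus k[0]$ and that $\mathcal F(P)\simeq\mathcal F(k)\times^h_{\mathcal F(k\oplus k[1])}\mathcal F(k)$ is correct and is the right idea. But ``transporting the core square along $P\simeq k\oplus k[0]$'' is not straightforward: there is no map in $\smallC_k$ from $P$ to $k\oplus k[0]$ (or vice versa) realizing this equivalence directly, only a zig-zag, and you still have to produce a map $\tilde B\to P$ modeling $q$ for some $\tilde B\simeq B$. Likewise, ``identifying $\kappa$ with the diagonal'' and then ``collapsing a contractible path space'' in an iterated-pullback chase is exactly the substantive homotopical manipulation, and you have not carried it out. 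This is not a wrong approach --- it is essentially the idea behind Lurie's proof --- but the part you label ``remaining work'' is the actual content of the lemma, not bookkeeping. As written, your argument reduces the lemma to its hardest special case and then stops.
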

\begin{proof}
  See Lemma 6.2.7 in Lurie's thesis \cite{LurieThesis}. 
\end{proof}
\begin{Example}\label{ex:functors-represented-by-general-rings}
  If $R \in \smallC_k$ is any object, the functor $\mathcal{F}_R =
  \Hom(R,-)$ may not be homotopy invariant, but if $R$ is cofibrant it
  will be (Lemma   \ref{simpilcially enriched representable Lemma}). In that case the functor $\mathcal{F}_R$ will
  preserve homotopy pullback (because it preserves actual pullbacks and also Kan
    fibrations) but will not necessarily have $\mathcal{F}_R(k)$
  contractible.

  If $\overline \rho: R \to k$ is any zero-simplex of
  $\mathcal{F}_R(k)$, we may obtain a formally cohesive functor
  $\mathcal{F}_{R,\overline \rho}$ which takes $\epsilon: A \to k$ to
  the homotopy fiber of $\Hom(R,A) \to \Hom(R,k)$ over
  $\overline{\rho}$.

  More generally, if $\mathcal{F}: \smallC_k \to s\Sets$ is homotopy
  invariant and preserves homotopy pullbacks and $\overline{\rho} \in
  \mathcal{F}(k)$ is a zero-simplex, then the functor
  $\mathcal{F}_{\overline{\rho}}$ which takes $(A \to k) \in
  \smallC_k$ to the homotopy fiber of $\mathcal{F}(A) \to
  \mathcal{F}(k)$ over $\overline\rho$ is formally cohesive.
\end{Example}

For formally cohesive functors we may check whether a natural
transformation $T: \mathcal{F} \to \mathcal{G}$ is a natural weak
equivalence without checking on all $A$.
\begin{Lemma} \label{lem:tangent-complex-detects-equivalence}
  Let $T: \mathcal{F} \to \mathcal{G}$ be a natural transformation
  between formally cohesive functors.  Then $T$ is a natural weak
  equivalence if and only if $T: \mathcal{F}(k \oplus k[n]) \to
  \mathcal{G}(k \oplus k[n])$ is a weak equivalence for large $n$.
\end{Lemma}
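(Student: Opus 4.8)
The plan is to prove the substantive (``if'') direction by reducing an arbitrary $A \in \smallC_k$ to the test objects $k \oplus k[n]$, using the cellular presentation of Lemma~\ref{lem:build-Artin-rings-Postnikov}(i). The ``only if'' direction is immediate, since a natural weak equivalence restricts to a weak equivalence on every $k \oplus k[n]$.

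The first step I would carry out is to upgrade the hypothesis from ``$n$ large'' to ``$n \geq 0$''. For this, fix $n \geq 1$ and feed the unit map $h \colon k \to k \oplus k[n]$ into the construction of Example~\ref{example:attach-homotopy-group}; the resulting square
\begin{equation*}
  \xymatrix{
    A' \ar[r]\ar[d] & k \oplus \widetilde{k[n]} \ar[d]\\
    k \ar[r]_-{h} & k \oplus k[n]
  }
\end{equation*}
is homotopy cartesian (its right-hand vertical map is a fibration), and a levelwise computation of the pullback identifies $A'$ with the square-zero extension $k \oplus \Omega\,k[n]$, which is weakly equivalent to $k \oplus k[n-1]$. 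Since $\mathcal{F}$ is formally cohesive it carries this square to a homotopy cartesian square, and $\mathcal{F}(k)$ and $\mathcal{F}(k \oplus \widetilde{k[n]}) \simeq \mathcal{F}(k)$ are both contractible; hence there is a natural equivalence $\mathcal{F}(k \oplus k[n-1]) \simeq \Omega\,\mathcal{F}(k \oplus k[n])$, and likewise for $\mathcal{G}$, under which $T$ on $k \oplus k[n-1]$ becomes $\Omega$ of $T$ on $k \oplus k[n]$. As $\Omega$ preserves weak equivalences, a downward induction on $n$ (starting from the range where the hypothesis applies) shows that $T$ is a weak equivalence on $k \oplus k[n]$ for every $n \geq 0$.

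Next I would induct on the length $m$ of a presentation $k = A_0, A_1, \dots, A_m$ with $A \simeq A_m$ as in Lemma~\ref{lem:build-Artin-rings-Postnikov}(i), where each $A_i$ is (a cofibrant replacement of) the pullback of a cospan $A_{i-1} \to k \oplus k[n_i] \leftarrow k \oplus \widetilde{k[n_i]}$ with $n_i \geq 1$ as in~\eqref{build_ring_diagram}. The base case $A_0 = k$ is handled by contractibility of $\mathcal{F}(k)$ and $\mathcal{G}(k)$. For the inductive step, each such cospan has its second leg a fibration, so applying the formally cohesive functors $\mathcal{F}$ and $\mathcal{G}$ and using $\mathcal{F}(k \oplus \widetilde{k[n_i]}) \simeq \ast$ exhibits $\mathcal{F}(A_i)$ as the homotopy fiber of $\mathcal{F}(A_{i-1}) \to \mathcal{F}(k \oplus k[n_i])$, and similarly for $\mathcal{G}$. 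The natural transformation $T$ then gives a morphism of homotopy fiber sequences which, by the inductive hypothesis and the first step, is a weak equivalence on the base and on the total space; the long exact sequences of homotopy groups (the five lemma, with routine attention to basepoints, all fibers being taken over the image of the contractible corner) then force $T$ to be a weak equivalence on $A_i$. Taking $i = m$ and transporting along the weak equivalence $A \simeq A_m$ completes the argument.

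I expect the first step to be the main obstacle: recognizing that formal cohesion forces the delooping relation $\mathcal{F}(k \oplus k[n-1]) \simeq \Omega\,\mathcal{F}(k \oplus k[n])$ is precisely what allows ``$n$ large'' to propagate down to the small values of $n$ that actually occur in the cellular presentation. Once that relation is in hand, the remainder is the expected homotopy-fiber bookkeeping built on Lemma~\ref{lem:build-Artin-rings-Postnikov}.
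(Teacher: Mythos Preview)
Your proof is correct and follows essentially the same route as the paper: the paper uses the identical pullback square (indexed as $k \oplus k[n]$ sitting over $k \to k \oplus k[n+1] \leftarrow k \oplus \widetilde{k[n+1]}$) to propagate the equivalence down to all $n \geq 0$, and then defers the general case to Lemma~\ref{lem:build-Artin-rings-Postnikov} without writing out the induction you spell out. Your extra detail on the homotopy-fiber/five-lemma step is exactly what that deferral unpacks to.
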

\begin{proof}
  By the pullback
  \begin{equation*}
    \xymatrix{ k \oplus k[n] \ar[r]\ar[d] & k \oplus \widetilde{k[n+1]}
 \ar[d]\\
      k \ar[r] & k \oplus k[n+1]
    }
  \end{equation*}
  we see that $T$ induces an equivalence on $A = k \oplus k[n]$ for
  all $n \geq 0$, provided it does so for large $n$.  The case of
  general $A$ then follows from Lemma~\ref{lem:build-Artin-rings-Postnikov}.
\end{proof}

\subsection{Homotopy categories}
\label{sec:hocat}
To conclude this section, let us discuss the \emph{homotopy category} of $\smallC_k$ and
its relation to functors $\smallC_k \to s\Sets$.  
This discussion is not strictly necessary, but it may be helpful to orient the reader. 

The homotopy
category is the (non-simplicial) category $\mathrm{Ho}(\smallC_k)$
whose objects are the objects of $\smallC_k$, but whose morphism sets
are given by
\begin{equation*}
  \mathrm{Ho}(\smallC_k)(A,B) = \pi_0 \smallC_k(c(A),B) = \pi_0
  \smallC_k(c(A),c(B))
\end{equation*}
where $c: \SCR_{/k} \to \SCR_{/k}$ is some choice of cofibrant
approximation.  Up to canonical isomorphism of categories,
$\mathrm{Ho}(\smallC_k)$ does not depend on the choice of $c$.  The
canonical functor $\smallC_k \to \mathrm{Ho}(\smallC_k)$ sends weak
equivalences to isomorphisms and is universal with that property.  It
also has the property that two objects $A,B\in \smallC_k$ are weakly
equivalent (i.e.\ there is a zig-zag of weak equivalences connecting
them) if and only if their image in $\mathrm{Ho}(\smallC_k)$ are
isomorphic.

For any homotopy invariant functor
$\mathcal{F}: \smallC_k \to s\Sets$, there is an associated functor
\begin{align*}
  \mathrm{Ho}(\smallC_k) & \to \Sets\\
  A  & \mapsto \pi_0(\mathcal{F}(A)),
\end{align*}
which we shall denote $\pi_0\mathcal{F}$.  Of course the passage from
$\mathcal{F}$ to $\pi_0\mathcal{F}$ loses much information in general,
but for formally cohesive functors we have the following result.
\begin{Lemma}\label{lem:pi-zero}
  Let $\mathcal{F} \to
  \mathcal{G}$ be a natural transformation of formally cohesive
  functors $\smallC_k \to s\Sets$.  Assume that $\pi_0\mathcal{F}(A)
  \to \pi_0 \mathcal{G}(A)$ is a bijection for all $A \in \smallC_k$.
  Then $\mathcal{F}(A) \to \mathcal{G}(A)$ is a weak equivalence for
  all $A \in \smallC_k$.
\end{Lemma}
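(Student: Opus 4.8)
The plan is to reduce everything to the key computational tool we have available, namely Lemma~\ref{lem:tangent-complex-detects-equivalence}: for a natural transformation between formally cohesive functors, it suffices to check that it is a weak equivalence on the objects $k \oplus k[n]$ for large $n$. So the entire argument amounts to bootstrapping a statement about $\pi_0$ (on all of $\smallC_k$) into a statement about all homotopy groups (on the square-zero extensions $k\oplus k[n]$). First I would recall that for a formally cohesive functor $\mathcal{F}$, the space $\mathcal{F}(k\oplus k[n])$ is a ``tangent space'' with only one potentially nonvanishing homotopy group in each fixed degree as $n$ varies; more precisely, the homotopy pullback square
\begin{equation*}
  \xymatrix{ k\oplus k[n] \ar[r]\ar[d] & k\oplus \widetilde{k[n+1]}\ar[d]\\
    k \ar[r] & k\oplus k[n+1] }
\end{equation*}
together with preservation of homotopy pullbacks and $\mathcal{F}(k)\simeq *$ shows that $\mathcal{F}(k\oplus k[n])$ is the loop space of $\mathcal{F}(k\oplus k[n+1])$, i.e.\ $\pi_i\mathcal{F}(k\oplus k[n+1]) \cong \pi_{i-1}\mathcal{F}(k\oplus k[n])$ for $i\geq 1$ (and $\pi_0\mathcal{F}(k\oplus k[n+1])\cong \pi_0\mathcal{F}(k\oplus k[n])$ via the same square, since the relevant fiber sequence deloops). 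Iterating, $\pi_i \mathcal{F}(k\oplus k[n])$ for $i < n$ is identified with $\pi_0 \mathcal{F}(k \oplus k[n-i])$.

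Given that, the argument is immediate: since $\pi_0\mathcal{F}(A)\to\pi_0\mathcal{G}(A)$ is a bijection for \emph{all} $A$, in particular it is a bijection for every $A = k\oplus k[m]$. By the delooping identification of the previous paragraph, this forces $\pi_i\mathcal{F}(k\oplus k[n])\to\pi_i\mathcal{G}(k\oplus k[n])$ to be a bijection for all $i < n$ (each such map is, up to the natural isomorphisms above, the $\pi_0$-map on $k\oplus k[n-i]$). Hence for any fixed $i$ and all sufficiently large $n$ the map on $\pi_i$ is an isomorphism; in other words $\mathcal{F}(k\oplus k[n])\to\mathcal{G}(k\oplus k[n])$ is a weak equivalence for large $n$. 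Then Lemma~\ref{lem:tangent-complex-detects-equivalence} gives that $\mathcal{F}(A)\to\mathcal{G}(A)$ is a weak equivalence for all $A\in\smallC_k$, which is the claim.

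The one point requiring a little care — and the place I expect the only genuine friction — is the base of the induction: one must check that the delooping square really does identify $\pi_0$ on $k\oplus k[n]$ with $\pi_0$ on $k\oplus k[n+1]$ (not merely relate $\pi_1$ on one side to $\pi_0$ on the other). This is where formal cohesiveness is used in full: the homotopy pullback square with $\mathcal{F}(k)$ contractible and $\mathcal{F}(k\oplus\widetilde{k[n+1]})\simeq \mathcal{F}(k)$ contractible (since $k\oplus\widetilde{k[n+1]}\to k$ is a weak equivalence, being a square-zero extension by a contractible module) shows $\mathcal{F}(k\oplus k[n])\simeq \Omega\,\mathcal{F}(k\oplus k[n+1])$ as \emph{pointed} spaces, and the fibration sequence then yields the isomorphism on all $\pi_i$ with $i\ge 0$ provided $\mathcal{F}(k\oplus k[n+1])$ is connected; connectivity itself follows by the same loop-space identification applied one step up, or directly from $\mathcal{F}(k)\simeq *$ and the surjectivity in the relevant exact sequence. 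Once that bookkeeping is in place the proof is a two-line consequence of Lemma~\ref{lem:tangent-complex-detects-equivalence}.
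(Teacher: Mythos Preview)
Your proposal is correct and follows essentially the same approach as the paper: use the homotopy pullback square to get the loop-space identification $\mathcal{F}(k\oplus k[n])\simeq \Omega\,\mathcal{F}(k\oplus k[n+1])$, iterate to express $\pi_i\mathcal{F}(k\oplus k[n])$ as a $\pi_0$ on some $k\oplus k[m]$, and then invoke Lemma~\ref{lem:tangent-complex-detects-equivalence}. Your final paragraph of worry is unnecessary---the loop-space identification already gives $\pi_i\mathcal{F}(k\oplus k[n])\cong\pi_{i+1}\mathcal{F}(k\oplus k[n+1])$ for all $i\geq 0$ without any separate connectivity argument, so the ``base of the induction'' issue you flag does not actually arise.
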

\begin{proof}
  From the homotopy pullback square in the proof of the previous lemma
  we obtain a natural weak equivalence $\mathcal{F}(k \oplus k[n])
  \simeq \Omega \mathcal{F}(k \oplus k[n+1])$.  Hence $\pi_i
  \mathcal{F}(k \oplus k[n]) = \pi_0 \mathcal{F}(k \oplus k[n+i])$,
  and similarly for $\mathcal{G}$.  It follows that $T: \mathcal{F}(A)
  \to \mathcal{G}(A)$ induces an isomorphism in all homotopy groups
  for $A = k \oplus k[n]$ for all $n$, and hence by the previous Lemma
  for all $A \in \smallC_k$.
\end{proof}
\begin{Corollary}\label{cor:homotopy-representability}
  A formally cohesive functor $\mathcal{F}: \smallC_k \to s\Sets$ is
  representable if and only if
  $\pi_0\mathcal{F}: \mathrm{Ho}(\smallC_k) \to \Sets$ is
  representable.
\end{Corollary}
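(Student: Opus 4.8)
The plan is the following. The implication ``$\mathcal{F}$ representable $\Rightarrow$ $\pi_0\mathcal{F}$ representable'' is immediate: if $\mathcal{F}$ is naturally weakly equivalent to $\Hom(R,-)$ for a cofibrant $R\in\smallC_k$ (Definition~\ref{defn:representability}), then for each $A$ we get $\pi_0\mathcal{F}(A)\cong\pi_0\Hom(R,A)=\pi_0\smallC_k(R,A)=\mathrm{Ho}(\smallC_k)(R,A)$ -- the last equality because $c(R)\to R$ is a weak equivalence between cofibrant objects and $A$ is fibrant -- and naturality in $A\in\mathrm{Ho}(\smallC_k)$ is clear, so $\pi_0\mathcal{F}$ is represented by $R$.

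For the converse, I would first replace $\mathcal{F}$, using Lemma~\ref{Lemma:2.10}, by a naturally weakly equivalent functor that is simplicially enriched and Kan valued; this changes neither $\pi_0\mathcal{F}$ as a functor on $\mathrm{Ho}(\smallC_k)$ nor the property of being formally cohesive. Choose a cofibrant $R\in\smallC_k$ representing $\pi_0\mathcal{F}$. The element of $\mathrm{Ho}(\smallC_k)(R,R)=\pi_0\mathcal{F}(R)$ corresponding to $\mathrm{id}_R$ is represented by a $0$-simplex $\iota\in\mathcal{F}(R)$, and by the simplicial Yoneda lemma $\iota$ determines a natural transformation $T\colon\Hom(R,-)\to\mathcal{F}$. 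Unwinding the definitions (together with the ordinary Yoneda lemma) shows that $\pi_0T\colon\pi_0\Hom(R,A)=\mathrm{Ho}(\smallC_k)(R,A)\to\pi_0\mathcal{F}(A)$ is exactly the representing bijection, for every $A\in\smallC_k$.

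Now $\Hom(R,-)$ is homotopy invariant and preserves homotopy pullbacks, since $R$ is cofibrant (Example~\ref{ex:functors-represented-by-general-rings}), and $\mathcal{F}$ is formally cohesive; since $\pi_0T$ is a bijection on all $A$, Lemma~\ref{lem:pi-zero} will finish the proof as soon as we know that $\Hom(R,-)$ is \emph{also} formally cohesive, i.e.\ that $\Hom(R,k)$ is contractible. It is at least connected, because $\pi_0\Hom(R,k)=\pi_0\mathcal{F}(k)=\{\ast\}$ (using $\mathcal{F}(k)\simeq\ast$). To upgrade this to contractibility I would feed the homotopy cartesian squares
\begin{equation*}
  \xymatrix{
    k\oplus k[n] \ar[r]\ar[d] & k\oplus\widetilde{k[n+1]} \ar[d]\\
    k \ar[r] & k\oplus k[n+1]
  }
\end{equation*}
(used in the proof of Lemma~\ref{lem:tangent-complex-detects-equivalence}) into $T$: both $\Hom(R,-)$ and $\mathcal{F}$ carry them to homotopy cartesian squares, and $\mathcal{F}(k\oplus\widetilde{k[n+1]})\simeq\mathcal{F}(k)\simeq\ast$ while $\Hom(R,k\oplus\widetilde{k[n+1]})\simeq\Hom(R,k)$, so the long exact sequences of the resulting fibrations, combined with the fact that $\pi_0T$ is an isomorphism on every $k\oplus k[m]$, should show first that $T$ is a weak equivalence on each $k\oplus k[m]$ ($m\geq 0$) and then, via the $n=0$ square and a five-lemma, that $\Hom(R,k)\to\mathcal{F}(k)$ is a weak equivalence, hence $\Hom(R,k)\simeq\ast$. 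Once that is in hand, $\Hom(R,-)$ is formally cohesive and Lemma~\ref{lem:pi-zero} gives that $T$ is a natural weak equivalence, so $\mathcal{F}$ is representable; alternatively one can bypass Lemma~\ref{lem:pi-zero} and check $T$ directly on the building blocks of Lemma~\ref{lem:build-Artin-rings-Postnikov}.

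I expect the one step carrying the real content to be the contractibility of $\Hom(R,k)$ -- equivalently, that $T$ is a weak equivalence on $k$ and on the $k\oplus k[n]$. The nuisance is that the obvious inductions up or down the tower of the $k\oplus k[n]$ all pass through the value at $k$, so one cannot simply bootstrap from ``$\pi_0T$ is bijective.'' The cleanest remedy I see is to first pin down $T$ on $k\oplus k[n]$ for $n$ in the stable range, where both $\Hom(R,k\oplus k[n])$ and $\mathcal{F}(k\oplus k[n])$ are governed by the respective tangent complexes and $\pi_0T$ already determines the comparison in every degree, and then to descend down the Postnikov tower to small $n$ and finally to $k$ using the displayed squares.
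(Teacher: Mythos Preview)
Your argument is the paper's argument, and it is correct once you notice that the step you flag as ``the real content'' is in fact trivial: $(k,\mathrm{id}_k)$ is the terminal object of $\smallC_k\subset\SCR_{/k}$, so for every $R\in\smallC_k$ the simplicial set $\Hom(R,k)=\smallC_k(R,k)$ is a single point. Hence $\Hom(R,-)$ is formally cohesive on the nose, and Lemma~\ref{lem:pi-zero} applies immediately to $T$ without any further work. The paper's proof does exactly this (writing $\smallC_k(c(R),-)$ explicitly) and simply cites Lemma~\ref{lem:pi-zero}.

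Your confusion likely comes from Example~\ref{ex:functors-represented-by-general-rings}, where the notation $\Hom(R,-)$ is being used for $\SCR(R,-)$ rather than $\SCR_{/k}(R,-)$: note that example speaks of \emph{various} zero-simplices $\overline\rho\colon R\to k$, which only makes sense before restricting to the over-category. In the definition of representability and in the Corollary, $\Hom(R,-)$ means $\smallC_k(R,-)$ (as is clear from the simplicial Yoneda discussion, which uses $\mathrm{id}_R\in\Hom(R,R)$). With that clarified, your entire final paragraph can be deleted; the inductive workaround you sketch is not needed, and as written it is also not obviously going to close up, since every one of the squares you use has $\Hom(R,k)$ sitting in two of its corners.
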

\begin{proof}
  Let $\mathcal{F}$ be a formally cohesive functor and suppose   $\pi_0\mathcal{F}$ is representable.  Without loss of generality we
  may assume $\mathcal{F}$ is simplicially enriched.  Since
  $\pi_0\mathcal{F}$ is representable we may pick an object
  $R \in \smallC_k$ and
  $\iota_0 \in \pi_0 \mathcal{F}(R) = \pi_0\mathcal{F}(c(R))$ such that
  the corresponding natural transformation
  $\mathrm{Ho}(\smallC_k)(R,-) \to \pi_0\mathcal{F}$ is a natural
  isomorphism.  Now any choice of zero-simplex
  $\iota \in \mathcal{F}(c(R))$ in the path component $\iota_0$ gives
  rise to a natural transformation $\smallC_k(c(R),-) \to \mathcal{F}$
  by the simplicial enrichment, and Lemma~\ref{lem:pi-zero} shows that
  it is a natural weak equivalence.

  The other direction is clear.
\end{proof}

Since any representable functor $\mathcal{F}: \smallC_k \to s\Sets$ is
automatically formally cohesive, we see that the question of
representability splits into two: whether the functor is formally
cohesive, and whether
$\pi_0\mathcal{F}: \mathrm{Ho}(\smallC_k) \to \Sets$ is representable
in the usual sense.   However, it is often \emph{easier} to work with
$\mathcal{F}: \smallC_k \to s\Sets$ than
$\pi_0\mathcal{F}: \mathrm{Ho}(\smallC_k) \to \Sets$, even when it's
known a priori that $\mathcal{F}$ is formally cohesive.

There is a parallel discussion for pro-objects and
pro-representability.  The functor
$\smallC_k \to \mathrm{Ho}(\smallC_k)$ induces a functor
pro-$\smallC_k$ to pro-$\mathrm{Ho}(\smallC_k)$.  Of course some
information is again lost in this process, but the next lemma shows
that for $R \in \text{pro-}\smallC_k$ we may still recover the functor
$\pi_0\mathcal{F}_R: \smallC_k$ from the image of $R$ in
pro-$\mathrm{Ho}(\smallC_k)$.
\begin{Lemma}\label{lem:passage-to-pro-Ho}
  Let $I$ be a filtered category, let
  $R = (i \mapsto R_i) \in \text{pro-}\smallC_k$, and let
  $\mathcal{F}_R: \smallC_k \to s\Sets$ be the functor pro-represented
  by $R$.  Then the natural maps
  \begin{equation*}
    \smallC_k(R_i,A) \to \pi_0\smallC_k(R_i),A) \to \colim_i \pi_0
    \smallC_k(R_i, A) = (\text{pro-}\mathrm{Ho}(\smallC_k))(R,A)
  \end{equation*}
  induce a natural transformation
  \begin{equation*}
    \pi_0\mathcal{F}_R \to (\text{pro-}\mathrm{Ho}(\smallC_k))(R,-)
  \end{equation*}
  between functors $\mathrm{Ho}(\smallC_k) \to \Sets$.  It is a
  natural isomorphism if each $R_i$ is cofibrant.
\end{Lemma}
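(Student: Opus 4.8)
The plan is to build the natural transformation directly from the cofibrant-approximation maps, and then to check that it is an isomorphism (under the cofibrancy hypothesis) by reducing to a single index and invoking the standard fact that mapping a weak equivalence between cofibrant objects into a fibrant object gives a weak equivalence of simplicial sets.

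First I would identify the two sides as filtered colimits. Since $\pi_0\colon s\Sets\to\Sets$ is a left adjoint it commutes with all colimits, so $\pi_0\mathcal{F}_R(A)\cong\colim_i\pi_0\Hom(R_i,A)$. For the target, unwinding the definition of morphism sets in a pro-category (with the constant pro-object $A$ as target) together with the definition of $\mathrm{Ho}(\smallC_k)$ from \S\ref{sec:hocat}, one has $(\text{pro-}\mathrm{Ho}(\smallC_k))(R,A)=\colim_i\mathrm{Ho}(\smallC_k)(R_i,A)=\colim_i\pi_0\smallC_k(c(R_i),A)$. For each index the chosen cofibrant approximation $c(R_i)\to R_i$ gives a map $\smallC_k(R_i,A)\to\smallC_k(c(R_i),A)$ natural in $A\in\smallC_k$, hence on $\pi_0$ a map $\pi_0\smallC_k(R_i,A)\to\mathrm{Ho}(\smallC_k)(R_i,A)$; functoriality of $c$ makes these compatible with the transition maps over $I$, so taking the filtered colimit produces the asserted natural transformation $\pi_0\mathcal{F}_R\to(\text{pro-}\mathrm{Ho}(\smallC_k))(R,-)$. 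When the $R_i$ are cofibrant (which is what pro-representability means in Definition~\ref{defn:pro-rep}), $\pi_0\smallC_k(R_i,-)$ already coincides with $\mathrm{Ho}(\smallC_k)(R_i,-)$ and $\mathcal{F}_R$ is homotopy invariant, so both sides are genuinely functors on $\mathrm{Ho}(\smallC_k)$ and the transformation is one of such functors; naturality in $A$ and compatibility over $I$ are routine, using only that pre- and post-composition commute.

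For the isomorphism, assume all $R_i$ cofibrant. A natural transformation of diagrams over $I$ that is objectwise bijective induces a bijection on colimits, so it suffices to show for each fixed $i$ that $\pi_0\smallC_k(R_i,A)\to\pi_0\smallC_k(c(R_i),A)$ is a bijection; in fact I would prove the stronger statement that $\smallC_k(R_i,A)\to\smallC_k(c(R_i),A)$ is a weak equivalence of simplicial sets. This is the standard model-category fact that $\smallC_k(-,A)$ sends weak equivalences between cofibrant objects to weak equivalences whenever $A$ is fibrant --- and every object of $\SCR$, hence of $\smallC_k$, is fibrant; it is the first-variable companion of the homotopy invariance already invoked before Lemma~\ref{simpilcially enriched representable Lemma}, and follows for instance from \cite[Lemma 1.1.12]{Hovey}. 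Here $c(R_i)\to R_i$ is precisely a weak equivalence between cofibrant objects. Applying $\pi_0$ and then $\colim_i$ yields the bijection $\pi_0\mathcal{F}_R(A)\cong(\text{pro-}\mathrm{Ho}(\smallC_k))(R,A)$, natural in $A$.

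There is no substantial obstacle here: once the model-categorical input above is granted, the argument is bookkeeping with filtered colimits and $\pi_0$. The one point that needs attention --- and that I would state explicitly rather than gloss over --- is which of the two functors descends to $\mathrm{Ho}(\smallC_k)$: the target always does, while the source does only once the $R_i$ are cofibrant, which is exactly why the isomorphism is claimed under that hypothesis.
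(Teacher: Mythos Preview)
Your proof is correct and rests on the same observation as the paper's: $\pi_0$ commutes with filtered colimits of simplicial sets, so $\pi_0\mathcal{F}_R(A)=\colim_i\pi_0\smallC_k(R_i,A)$, which under cofibrancy is exactly $(\text{pro-}\mathrm{Ho}(\smallC_k))(R,A)$. The paper's proof is the one-line statement of that fact; you have unpacked it further by explicitly routing through the cofibrant approximation $c(R_i)\to R_i$ and invoking Ken Brown's lemma (weak equivalences between cofibrants map to weak equivalences into fibrant targets), which is more care than the paper takes but not a different argument.
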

\begin{proof}
  This is just the fact that $\pi_0$ takes filtered colimits (not just
  homotopy colimits) of simplicial sets to colimits of sets.
\end{proof}

Again
 we advise the reader that it is better to study
$\mathcal{F}: \smallC_k \to s\Sets$ directly than to ``reduce'' to
$\pi_0 \mathcal{F}$.  For example, it is likely easier to prove that
$\mathcal{F}$ is (pro-) representable in the homotopical sense than to
prove directly that $\pi_0\mathcal{F}$ is (pro-) representable in the
ordinary categorical sense.

A useful corollary to our prior discussions is that one can meaningfully talk about the homotopy groups of a 
representing ring for a functor:
 Suppose that $R: I \rightarrow \Art_k$ and $R': J \rightarrow \Art_k$ are levelwise cofibrant pro-objects of $\Art_k$,
and the functors  $\mathcal{F}_{R}, \mathcal{F}_{R'}$ that they represent are   naturally weakly equivalent. 
This natural weak equivalence induces, by Lemma \ref{lem:passage-to-pro-Ho},  an equivalence $\pi_0 \mathcal{F}_R \simeq \pi_0 \mathcal{F}_{R'}$ of  functors 
$\Ho(\Art_k) \rightarrow \Sets$; thus we have an induced isomorphism
between the images of $R$ and $R'$ in the pro-category of $\Ho(\Art_k)$. 
 In particular,  we obtain an isomorphism
\begin{equation} \label{pistar iso} (i \mapsto \pi_* R_i) \cong  (j \mapsto  \pi_* R'_j)\end{equation}
 of pro-graded rings.

 In the later parts of this paper, we will often use the
 ``naive'' definition
\begin{equation} \label{naive pi} \pi_* R = \lim_i \pi_* R_i\end{equation}
 for the homotopy groups of an object of pro$\Art_k$; one does not expect
 this definition to be well-behaved in general, but in the context we will work,
 all the $\pi_* R_i$ are finite, and this definition has reasonable formal properties. (In general, it seems  more reasonable to either work with $\pi_* R$ as a pro-object in graded rings, or to remember the topology on the limit.) The above discussion shows
 that, at least, $\pi_* R$ is determined, up to a unique isomorphism, by the natural weak equivalence class of $\mathcal{F}_R$.

\section{Tangent complexes of rings and functors}
\label{sec:tang-compl-funct}

Lurie's \emph{derived Schlessinger criterion} will play an important
role in this paper.  It is an (in practice often verifiable) criterion
on a functor $\mathcal{F}: \smallC_k \to s\Sets$, guaranteeing that it
is pro-representable.  We shall state it in a form suitable for our
applications and outline a proof, following Lurie's.  First we must
recall the \emph{tangent complex} $\bigtangent R$ of an object $R \in
\smallC_k$ and more generally the tangent complex $\bigtangent
\mathcal{F}$ of a functor $\mathcal{F}: \smallC_k \to s\Sets$.

\subsection{Cohomology of (pro-) Artin rings}

There are two useful ways to associate a graded $k$-vector space to an
object $R \in \smallC_k$, one behaving like ``cohomology'' of $R$ and one behaving like ``dualized homotopy groups''.  In
fact, we shall define analogues of \emph{relative} ``cohomology/cohomotopy'' for a
morphism $R \to R'$ in $\smallC_k$.

We 
begin with the \emph{relative tangent complex} which we shall study in
much more detail in the following sections.  Let $R \rightarrow k$ be
a cofibrant object of $\smallC_k$ and consider the simplicial set
$\smallC_k(R,k \oplus k[n])$ of homomorphisms $R \to k \oplus k[n]$
lifting the given homomorphism $R \to k$.  We shall be interested in
the set of homotopy classes of such homomorphisms, i.e.\ the set
$\pi_0 \smallC_k(R,k \oplus k[n])$.  This set is canonically a
$k$-vector space, and in fact $\smallC_k(R,k \oplus k[n])$ is
canonically a simplicial $k$-vector spaces; indeed, we have a natural
isomorphism of simplicial sets
\begin{equation} \label{AQ1}
  \smallC_k(R,k \oplus k[n]) \cong R\text{-Mod}(\Omega_{R/\Z},k[n]),
\end{equation}
where $\Omega_{R/\Z}$ is the simplicial $R$-module whose $p$-simplices
are $\Omega^1_{R_p/\Z}$, the K\"ahler differentials of $\Z \to R_p$,
$R$-Mod denotes the category of simplicial $R$-modules, simplicially
enriched in the usual way, and the simplicial $k$-module $k[n]$ is
made into a simplicial $R$-module using the given augmentation
$R \to k$.  (As usual we use the shorthand
$R = (R \to k) \in \smallC_k$ when typographically convenient, but of
course the definition depends on the homomorphism $R \to k$.)
\begin{Definition}
  For a cofibrant object $R \to k$ of $\smallC_k$, let us write
  $\pi_{-n} \bigtangent R$ for the $k$-vector space
  $\pi_0 \smallC_k(R,k \oplus k[n])$.  For a cofibration $R \to R'$
  between cofibrant objects of $\smallC_k$ define for $n \geq 0$ a
  $k$-modules $\pi_{-n}\bigtangent(R',R)$ as $\pi_0$ of the fiber of
  the fibration
  \begin{equation*}
    \smallC_k(R',k \oplus k[n]) \to \smallC_k(R,k \oplus k[n])
  \end{equation*}
  over the point given by the composition $R \to k \to k \oplus
  k[n]$.  (By a similar reasoning as before, this fiber is a
  simplicial $k$-module.)

  For general objects $R \in \smallC_k$ we define
  $\pi_{-n}\bigtangent R$ by first taking cofibrant approximation.
  For a pro-object
  $R = (i \mapsto R_i)_{i \in I} \in \text{pro-}\smallC_k$ we define
  $\pi_{-n}\bigtangent R$ as the colimit of $\pi_{-n}\bigtangent R_i$.
  In the relative case we similarly define
  $\pi_{-n} \bigtangent(R',R)$ for an arbitrary morphism $R \to R'$ in
  pro-$\smallC_k$.
\end{Definition}
By the discussion above the definition, the $k$-vector space
$\pi_{-n}\bigtangent R$ is identified with the Andr\'e--Quillen
cohomology of $\Z \to R$ with coefficients in the $R$-module $k$.  We
shall later explain in what sense $\pi_{-n} \bigtangent R$ is
$\pi_{-n}$ of an object $\bigtangent R$.

The functor $\pi_{-*} \bigtangent$ can be regarded as ``cohomology'' of the object
or pro-object $R$  (or the  ``relative cohomology'' of $R \rightarrow R'$) and manifestly depends only on the map of functors represented by $R \to
R'$.  These groups fit in a long exact sequence
\begin{equation} \label{t-long-exact-sequence} 
\dots \to \pi_{-n} \mathfrak{t}(R', R)  \rightarrow \pi_{-n} \mathfrak{t} R'
\rightarrow \pi_{-n} \mathfrak{t} R \rightarrow \pi_{-n-1}
\mathfrak{t}(R', R) \rightarrow \cdots.
\end{equation}

\begin{Definition}
  Let $R \to R'$ be a morphism in $\smallC_k$, assume the underlying
  map of simplicial abelian groups is levelwise injective and let   $R'/R$ be cokernel, calculated levelwise in simplicial abelian
  groups.  Then $R' \to R'/R$ is a Kan fibration with fiber $R$, and
  $\pi_*(R'/R)$ is a graded module over $\pi_*(R)$ and in particular
  over $\pi_0(R)$.  In this case, define
  \begin{equation*}
    \overline\pi^n(R',R) = \Hom_{\pi_0 R}(\pi_n(R'/R),k)
  \end{equation*}
  For a general morphism $R \to R'$ in $\smallC_k$ we first replace
  $R \to R'$ by a cofibration of $R$-modules (or even just simplicial
  abelian groups).

  For a morphism $R \to R'$ in pro-$\smallC_k$ we define
  $\overline\pi^n(R',R)$ as the colimit of the level-wise
  $\overline\pi^n$.
\end{Definition}

The functor $\overline \pi^n$ manifestly depends only on the underlying simplicial abelian groups.
While perhaps a little bit easier to define than $\pi_{-n} \bigtangent
R$, it seems to be less conceptually important, appearing only in a
few technical proofs. 

The canonical isomorphism $\pi_n(k \oplus k[n]) \to \pi_n(k \oplus k[n],k) \cong k$ gives an
element $\iota \in \overline\pi^n(k \oplus k[n],k)$ and hence a
canonical natural transformation
\begin{equation}\label{eq:18}
  \pi_{-n}\bigtangent(R',R) \to \overline\pi^{n}(R',R)
\end{equation}
which could perhaps be thought of as an analogue of the homomorphism
$H^n(X,Y;k) \to \Hom_{\pi_1(Y,y)}(\pi_n(X,Y,y),k)$ dual to the usual
Hurewicz homomorphism.

The category $\smallC_k$ also has an analogue of the
Hurewicz theorem.
\begin{Proposition}\label{Prop:hurewicz}
  Let $R \to R'$ be a morphism in pro-$\smallC_k$.
  \begin{enumerate}[(i)]
  \item For $n=0$ the homomorphism~\eqref{eq:18} is always injective,
    with image $(m/m^2)^\vee \subset m^\vee = \overline\pi^0(R',R)$,
    where $m \subset \pi_0(R' \otimes_R k)$ is the maximal ideal when
    $R \to R'$ is a morphism in $\smallC_k$, and $m^\vee$ and
    $(m/m^2)^\vee$ are defined as the colimit when $R \to R'$ is a
    morphism in pro-$\smallC_k$.
  \item If $n \geq 1$ and $\overline\pi^l(R',R) = 0$ for $l < n$, then
    the homomorphism~\eqref{eq:18} is an isomorphism.
  \item For $n \geq 0$, we have $\overline\pi^l(R',R) = 0$ for all $l \leq n$
    if and only if $\pi_{-l} \bigtangent(R',R) = 0$ for all $l \leq
    n$.
  \end{enumerate}
\end{Proposition}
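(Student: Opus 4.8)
\emph{Strategy.} The plan is to prove (i) and (ii) directly and to deduce (iii) formally from them. In all three parts I would first use homotopy invariance --- $\pi_{-*}\bigtangent(R',R)$ depends only on the induced map of represented functors, and $\overline\pi^{*}(R',R)$ only on the underlying map of simplicial abelian groups --- to reduce to the case that $R \to R'$ is a cofibration between cofibrant objects of $\smallC_k$. Then $R \to R'$ is levelwise a split injection with levelwise free cokernel $R'/R$, and the Jacobi--Zariski sequence $0 \to R' \otimes_R \Omega_{R/\Z} \to \Omega_{R'/\Z} \to \Omega_{R'/R} \to 0$ is levelwise split exact and realizes $\Omega_{R'/R}$ as a cofibrant simplicial $R'$-module. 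Feeding this sequence into \eqref{AQ1} and using the tensor--hom adjunction identifies $\pi_{-n}\bigtangent(R',R)$ with $\pi_0$ of the simplicial $R'$-module maps $\Omega_{R'/R} \to k[n]$, and hence --- since $\Ext^1$ over the field $k$ vanishes --- with $\Hom_k(\pi_n(k \otimes_{R'} \Omega_{R'/R}), k)$; by Lemma~\ref{lem:build-Artin-rings-Postnikov}(ii) these are finite-dimensional. Under this identification the transformation \eqref{eq:18} is the $k$-linear dual of the map $\pi_n(R'/R) \otimes_{\pi_0 R} k \to \pi_n(k \otimes_{R'} \Omega_{R'/R})$ induced by the universal derivation $d \colon R'/R \to \Omega_{R'/R}$ (which descends to the cokernel precisely because $\Omega_{R'/R}$ is relative). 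I would also record that $\pi_n(R'/R)$ is a finite-length $\pi_0 R$-module, so by Nakayama it vanishes if and only if $\overline\pi^n(R',R) = \Hom_{\pi_0 R}(\pi_n(R'/R), k) = \Hom_k(\pi_n(R'/R) \otimes_{\pi_0 R} k, k)$ does.

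\emph{Part (i).} Both sides are computed by hand in degree $0$. Writing $S = \pi_0(R' \otimes_R k)$, a local Artin $k$-algebra with maximal ideal $m$: right-exactness of $\pi_0$ and of $-\otimes_{\pi_0 R} k$ applied to $0 \to R \to R' \to R'/R \to 0$ gives $\pi_0(R'/R) \otimes_{\pi_0 R} k \cong \mathrm{coker}(k \to S) = m$, so $\overline\pi^0(R',R) \cong m^\vee$; and $\pi_0 \bigtangent(R',R)$ is the space of $\pi_0 R$-linear $k$-valued derivations of $\pi_0 R'$, i.e. $\Hom_k(\Omega_{S/k} \otimes_S k, k) \cong \Hom_k(m/m^2, k)$, using the standard isomorphism $\Omega_{S/k} \otimes_S k \cong m/m^2$. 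Chasing the identifications, \eqref{eq:18} becomes the dual of the surjection $m \twoheadrightarrow m/m^2$, hence is injective with image $(m/m^2)^\vee$.

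\emph{Part (ii).} Assume $n \geq 1$ and $\overline\pi^l(R',R) = 0$ for $l < n$; by Nakayama this says $\pi_l(R'/R) = 0$ for $l < n$, so $\pi_0 R \to \pi_0 R'$ is onto and $V := \pi_n(R' \otimes_R k) = \pi_n(R',R) \otimes_{\pi_0 R} k$ is the first potentially nonzero relative homotopy group. Following the method of the proof of Lemma~\ref{lem:build-Artin-rings-Postnikov}(ii), I would replace $R \to R'$, up to weak equivalence over $R$, by $R \to R[v_1, \dots, v_d] \to R'$ where one first attaches $d = \dim_k V$ many $n$-cells along the zero map (in the sense of Definition~\ref{defn:attach-cell}) and then finitely many cells of dimension $\geq n+1$. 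A direct bookkeeping with this cell structure then shows that $\pi_n(R'/R) \otimes_{\pi_0 R} k$ and $\pi_n(k \otimes_{R'} \Omega_{R'/R})$ are the quotients of $k\{v_1, \dots, v_d\}$ and $k\{dv_1, \dots, dv_d\}$ by the \emph{same} system of linear relations, read off from the linear parts of the attaching data of the $(n+1)$-cells --- cells of dimension $\geq n+2$, and degree-$\geq 2$ monomials in the $v_j$, contribute only in degrees $> n$ --- and that the comparison map sends $[v_j] \mapsto [dv_j]$; hence \eqref{eq:18} is an isomorphism in degree $n$. (This is an incarnation of the Hurewicz theorem for simplicial commutative rings; alternatively one may invoke Quillen's fundamental spectral sequence, the point being that the cofiber of $R'/R \to \Omega_{R'/R}$ is assembled from $\geq 2$-fold products of $R'/R$, which are $n$-connected once $R'/R$ is $(n-1)$-connected.)

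\emph{Part (iii), the pro case, and the main difficulty.} Statement (iii) is now formal, by induction on $n$. For $n = 0$, part (i) shows that $\overline\pi^0(R',R) = 0$ and $\pi_0 \bigtangent(R',R) = 0$ are each equivalent to $m = 0$ (Nakayama for the latter). For $n \geq 1$: if $\overline\pi^l = 0$ for all $l \leq n$ then $\pi_{-l}\bigtangent = 0$ for $l < n$ by the inductive hypothesis and $\pi_{-n}\bigtangent \cong \overline\pi^n = 0$ by (ii); conversely if $\pi_{-l}\bigtangent = 0$ for all $l \leq n$ then $\overline\pi^l = 0$ for $l < n$ by the inductive hypothesis, so (ii) applies and $\overline\pi^n \cong \pi_{-n}\bigtangent = 0$. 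The pro-$\smallC_k$ versions follow by passing to filtered colimits over the common index category, which preserves injections, isomorphisms, and $\Hom(-,k)$ of finite-length modules, and is also how ``$m^\vee$'' and ``$(m/m^2)^\vee$'' are to be read in the pro-setting. The genuinely non-formal step is Part (ii): identifying the dualized homotopy of the relative differentials with that of the naive cokernel $R'/R$ in the first nonvanishing degree, and checking that this identification is compatible with the particular transformation \eqref{eq:18} --- making the cell-level computation honest (that the two presentations have the same relations, and that higher-dimensional cells do not interfere) is where the real work lies.
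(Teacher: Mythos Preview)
Your treatment of the $\smallC_k$ case is a legitimate alternative to the paper's argument. The paper proceeds by first reducing to the case $R = k$ (via a Tor spectral sequence comparison $\overline\pi^n(R',R) \cong \overline\pi^n(R'\otimes_R k, k)$), and then observing that $\tau_{\leq n} R' \simeq k \oplus V^\vee[n]$. Your route through the relative cotangent complex $\Omega_{R'/R}$ and a minimal cell structure is more direct and avoids that reduction; the ``bookkeeping'' you describe is essentially Quillen's Hurewicz theorem for simplicial commutative rings, and can be made honest.

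The genuine gap is in the pro-$\smallC_k$ case, which you dismiss as ``passing to filtered colimits.'' This does not work for (ii): the hypothesis $\overline\pi^l(R',R)=0$ for $l<n$ is a condition on the \emph{colimit} $\colim_i \Hom_{\pi_0 R_i}(\pi_l(R'_i/R_i),k)$, and does \emph{not} imply that $\overline\pi^l(R'_i,R_i)=0$ at any individual level $i$. Hence your cell-structure argument --- which needs $\pi_l(R'_i/R_i)=0$ for $l<n$ to get a cell model starting in degree $n$ --- simply does not apply levelwise, and ``filtered colimits preserve isomorphisms'' is vacuous because the levelwise comparison maps \eqref{eq:18} are not isomorphisms. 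The same obstruction blocks the inductive deduction of (iii), since that induction feeds on (ii).

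The paper confronts this directly. For the reduction to $R=k$ in the pro setting it dualizes the Tor spectral sequence to a $\Cotor$ spectral sequence (so that one can pass to filtered colimits of finite-dimensional duals). For $R=k$ and $R'=(R'_i)$ a pro-object, the crux is a factorization result: for each $i$ one finds $j>i$ such that $R'_j \to R'_i$ factors through the lower truncation $\tau_{\geq n} R'_i$. This is built inductively through the pullback squares
\[
\xymatrix{
\tau_{\geq l} R'_i \ar[r]\ar[d] & k \ar[d]\\
\tau_{\geq l-1} R'_i \ar[r] & k \oplus \pi_{l-1}(R'_i)[l-1],
}
\]
using that the obstruction to lifting lives in $\pi_{-(l-1)}\bigtangent(R',k)$, which vanishes by the inductive hypothesis on (iii). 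Once one has this factorization, surjectivity and injectivity of \eqref{eq:18} in degree $n$ follow by comparing with $\tau_{\geq n} R'_i \to \tau_{\leq n}\tau_{\geq n} R'_i \simeq k \oplus \pi_n(R'_i)[n]$. This interplay between (ii) and (iii) in the induction, and the need to manufacture factorizations that hold only after passing far enough in the pro-system, is precisely the content your argument is missing.
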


 {\bf Example.}   Suppose that $R, R' \in \Art_k$ (just for simplicity in discussing homotopy groups). 
If the map $\pi_{-k} \mathfrak{t}(R') \rightarrow \pi_{-k} \mathfrak{t}(R)$
is an isomorphism for $k=0$ and an injection for $k=1$,   then
\begin{equation} \label{pi0iso}  \pi_0 R \stackrel{\sim}{\longrightarrow} \pi_0 R'\end{equation}
 must be an isomorphism. This is a well-known statement in deformation theory, cf. e.g. \cite[Theorem 2.4]{GoldmanMillson}. 
 
To deduce this,  observe that
\eqref{t-long-exact-sequence} implies that 
$\pi_{-k} \mathfrak{t}(R', R)$ is vanishing for  $k \leq 1$; the same conclusion
then holds for $\overline \pi^k$. That means that in fact
$\pi_k(R'/R)$ is vanishing for $k=0, 1$, by Nakayama's lemma.
Then \eqref{pi0iso} follows from the long exact sequence in homotopy.

\begin{proof}
  Statement (i) reduces to a well known property of the map $\pi_0(R)
  \to \pi_0(R')$ of discrete Artin rings, and the case of pro-objects
  follows by taking colimit. 

  To prove statement (ii) we first reduce to the case $R=k$ by proving
  that 
  \begin{equation} \label{reduced-pi-iso} \overline\pi^n(R',R) \to \overline\pi^n(R' \otimes_R k,k)\end{equation} is
  an isomorphism when $R \to R'$ is a morphism with
  $\overline\pi^l(R',R) = 0$ for $l < n$.  (The corresponding result for $\mathfrak{t}(R, R')$ is straightforward.)  
    
   We first consider  \eqref{reduced-pi-iso} in the case
  where $R$ and $R'$ are in $\smallC_k$.  
  To this end we first reduce to the case where $k \subset \pi_0(R) = \pi_0(R')$, since both
  groups are unchanged by tensoring both $R$ and $R'$ over $\Z$ with a
  cofibrant approximation to $\Z/p$.  We have the usual spectral
  sequence \cite[II, \S 6]{QuillenHomotopicalAlgebra}
  \begin{equation}\label{eq:20}
    \Tor_{\pi_*(R)}(\pi_*(R'/R),k) \Rightarrow \pi_*((R'/R) \otimes_R
    k) = \pi_*((R' \otimes_R k)/k),
  \end{equation}
  from which it is not hard to deduce that the element of lowest
  degree in the target arises from $\Tor^0_{\pi_0(R)}(-,k)$ applied to
  the lowest degree non-zero group in $\pi_*(R'/R)$.  Dualizing this
  and using the tensor--Hom adjunction gives the claimed result 
  \eqref{reduced-pi-iso}. 

  Next we consider the case of a morphism $R \to R'$ in
  pro-$\smallC_k$: again we claim that if $\overline\pi^l(R',R) = 0$
  for $l < n$, then \eqref{reduced-pi-iso} holds.  The spectral
  sequence~\eqref{eq:20} has no direct analogue in the case of
  pro-objects, but it may be dualized to a more well behaved object as
  follows.
  In the case $R, R' \in \smallC_k$ and $\pi_0(R) = \pi_0(R') = k$,
  the spectral sequence~\eqref{eq:20} is a spectral sequence of finite
  dimensional $k$-vector spaces, and hence may be formally dualized to
  a spectral sequence
  \begin{equation*}
    \Cotor_{\pi_*(R)^\vee}(\pi_*(R'/R)^\vee,k) \Rightarrow
    \pi^*(R'\otimes_R k,k).
  \end{equation*}
  (The bidegrees and differentials in this spectral sequence are as in
  the cohomology Serre spectral sequence.)  For a morphism $R \to R'$
  in pro-$\smallC_k$ we have such a spectral sequence levelwise, and
  may form the direct limit and obtain a spectral sequence, since
  filtered colimit is an exact functor.  Here
  $\colim_j \pi_*(R_j)^\vee$ is a coalgebra in $k$-vector spaces and
  $\colim_j \pi_*(R'_j/R_j)^\vee$ is a comodule, when $\pi_0(R_j) = k$
  for all $j$.  The functor $\Cotor$ is the derived functor of
  cotensor product, and is calculated by a cochain complex formally
  dual to the ``bar complex'' calculating $\Tor$.  Again the lowest
  degree element arises from the actual cotensor product (not the
  higher derived ones) of $\pi_n(R'/R)^\vee$ for the smallest possible
  $n$ for which that group is non-zero.  In that bidegree, the
  $\Cotor$ is the direct limit of the $k$-vector spaces
  \begin{equation*}
    \Cotor_{\pi_0(R_j)^\vee}(\pi_n(R'_j/R_j)^\vee,k) =
    \Hom_{\pi_0(R_j)}(\pi_n(R'_j/R_j),k),
  \end{equation*}
  and this colimit is precisely $\overline\pi^n(R',R)$.

  We have now reduced (ii) to the case $R = k$, i.e.\ $R'$ is a $k$-algebra.   For $R' \in 
  \smallC_k$ it   now  follows    that $\tau_{\leq n} R' \simeq k \oplus
  V^\vee[n]$ for $V = \overline\pi_n(R',k) =
  \pi_{-n}\bigtangent(R',k)$, as in the proof of Lemma~\ref{lem:build-Artin-rings-Postnikov}.
    
  However, the pro-case is not so easy.  We  shall use the truncations $\tau_{\geq n} R'$ defined for
  a simplicial $k$-algebra $R'$ as the (homotopy) pullback of
  $k \to \tau_{\leq n-1} R' \leftarrow R'$.

  The main ingredient in the proof of (ii) for $R = k$ for a
  pro-object $R'$, is that if $R' = (j \mapsto R'_j)_{j \in J}$, then
  for all $i \in J$ there exists $j > i$ such that $R'_j \to R'_i$ is
  homotopic to a ring map which factors through
    $\tau_{ \geq n} R'_i \to R'_i$.  This main
  ingredient is proved by inductively factoring it as $R_{j_l}' \to
  \tau_{\geq l} R'_i \to R'_i$, with $i = j_0 < j_1 < \dots < j_n =
  j$.  These truncations from below fit in homotopy pullback squares
  \begin{equation*}
    \xymatrix{
      \tau_{\geq l}R'_i \ar[r] \ar[d] & k\ar[d]\\
      \tau_{\geq l-1}R'_i \ar[r] & k \oplus V[l-1]
    }
  \end{equation*}
  where $V = \pi_{l-1}R'_i$, so to lift from $R'_{j_{l-1}} \to
  \tau_{\geq l-1} R'_i$ to $R'_{j_{l}} \to \tau_{\geq l} R'_i$
  amounts %
  to finding $j_l$ large
  enough such that $R_{j_l} \to k \oplus V[l-1]$ is a null homotopic
  map of simplicial commutative rings.  But this is possible, since
  otherwise it would represent a non-trivial element in $\pi_{-(l-1)}
  \bigtangent(R',k)$.  By induction this group is zero for $l \leq n$
  and we obtain the factorization $R_j \to \tau_{\geq n} R'_i$.

  Having proved this main ingredient, we prove the induction step in
  the statement of (ii) as follows.  For surjectivity, we wish to show
  that any homomorphism $\phi : \pi_n(R_i') \to k$ arises from some
  homomorphism $R'_i \to k \oplus k[n]$, after possibly increasing
  $i$.  It is clear that $\phi$ is induced by a homomorphism
  $\tau_{\geq n} R'_i \to k \oplus k[n]$, since
  $\tau_{\geq n} R'_i \to \tau_{\leq n} \tau_{\geq n} R'_i \simeq k
  \oplus V[n]$
  with $V = \pi_n(R'_i)$, where this last weak equivalence follows
  since the objects $k \oplus V[n]$ are  ``characterized by their homotopy groups'' as in
  the proof of Lemma~\ref{lem:build-Artin-rings-Postnikov}.

  But then any lift $R'_j \to \tau_{\geq n} R'_i \to k \oplus k[n]$
  represents $\phi$.  Injectivity is similar: if
  $\phi,\psi: R'_i \to k \oplus k[n]$ induce the same map in $\pi_n$,
  argue that their restrictions to
  $\tau_{\geq n} R'_i \to k \oplus k[n]$ are weakly homotopic, i.e.\
  in the same path component of the derived mapping space of
  simplicial $k$-algebras (we point out that this is not quite the
  same as the derived space of morphisms in $\smallC_k$).  Then use the
  factorization. 

  Statement (iii) is proved by induction on $n$.  For $n=0$ the ``if''
  part is immediate from the injectivity in (i), and the ``only if''
  part follows from Nakayama's lemma.  For $n>0$ the induction step
  follows from (ii).
\end{proof}

\textbf{Remark.}   Just as 
in the discussion following Lemma~\ref{lem:build-Artin-rings-Postnikov},  
there is a topological analogue involving   $p$-finite spaces. 
For any inclusion $X
\to Y$ of such spaces we have a Hurewicz homomorphism
\begin{equation*}
  H^n(Y,X;k) \to \Hom_{k[\pi_1(X)]}(\pi_n(Y,X),k).
\end{equation*}
It is true that this homomorphism is an isomorphism in the first
degree in which the target is non-zero (even when the spaces are not
simply connected).  It follows that $H^*(Y,X;k) = 0$ if and only if
$(\pi_*(Y,X))^\vee = 0$.

Finally, the following Corollary is an immediate consequence of  \eqref{t-long-exact-sequence} and the Proposition:
\begin{Corollary} \label{Corollary:compare}
Let $R$ be an object of pro-$\Art_k$, and let $\pi_0 R$ be the object of pro-$\Art_k$
obtained by applying $\pi_0$ level-wise to $R$. 
Then one has $\pi_0 \bigtangent(\pi_0 R) = \pi_0 \bigtangent(R)$ and
an exact sequence
\begin{equation} \label{pi0RRcompare}\pi_{-1}  \mathfrak{t} (\pi_0 \mathcal{R})  \stackrel{\iota}{\hookrightarrow } \pi_{-1}( \mathfrak{t}  R) \rightarrow  \Hom_{\pi_0 R}(\pi_1 R, k) \rightarrow \pi_{-2} \mathfrak{t} (\pi_0 R)\rightarrow
\pi_{-2} (\mathfrak{t} R) \rightarrow \cdots  \end{equation}
 \end{Corollary}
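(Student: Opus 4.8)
The plan is to apply Proposition~\ref{Prop:hurewicz} together with the long exact sequence~\eqref{t-long-exact-sequence} to the canonical morphism $R \to \pi_0 R$ of pro-objects of $\Art_k$, given level-wise by the truncation maps $R_i \to \pi_0 R_i$ (each $\pi_0 R_i$ being a discrete Artin local ring, hence an object of $\Art_k$). Taking $R' = \pi_0 R$ in~\eqref{t-long-exact-sequence}, the whole statement reduces to identifying the relative groups $\pi_{-n}\mathfrak{t}(\pi_0 R, R)$ in low degrees; concretely I want
\[
\pi_0 \mathfrak{t}(\pi_0 R, R) = 0, \qquad \pi_{-1}\mathfrak{t}(\pi_0 R, R) = 0, \qquad \pi_{-2}\mathfrak{t}(\pi_0 R, R) \cong \Hom_{\pi_0 R}(\pi_1 R, k).
\]
Once these are known, substituting them into~\eqref{t-long-exact-sequence} gives the equality $\pi_0\mathfrak{t}(\pi_0 R) = \pi_0\mathfrak{t}(R)$ (from the segment $0 \to \pi_0\mathfrak{t}(\pi_0 R) \to \pi_0\mathfrak{t}(R) \to 0$) and the exact sequence~\eqref{pi0RRcompare} (the continuation of the long exact sequence past $\pi_{-1}\mathfrak{t}(R)$).

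To obtain these identifications I would first compute the auxiliary invariant $\overline\pi^n(\pi_0 R, R)$, which is more elementary. Working level-wise, factor each $R_i \to \pi_0 R_i$ as a levelwise injection $R_i \hookrightarrow \widetilde{\pi_0 R_i}$ followed by a weak equivalence $\widetilde{\pi_0 R_i} \xrightarrow{\sim} \pi_0 R_i$ of simplicial abelian groups, and set $Q_i = \widetilde{\pi_0 R_i}/R_i$, so that there is a fibration sequence $R_i \to \widetilde{\pi_0 R_i} \to Q_i$. Since $\widetilde{\pi_0 R_i}$ has homotopy concentrated in degree $0$, the long exact homotopy sequence of this fibration forces $\pi_n Q_i = 0$ for $n \leq 1$ and $\pi_n Q_i \cong \pi_{n-1} R_i$ for $n \geq 2$. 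Hence $\overline\pi^0(\pi_0 R, R) = \overline\pi^1(\pi_0 R, R) = 0$ and $\overline\pi^2(\pi_0 R, R) = \Hom_{\pi_0 R}(\pi_1 R, k)$, after passing to the filtered colimit over $i$ (using exactness of filtered colimits of $k$-vector spaces, and reading $\Hom_{\pi_0 R}(\pi_1 R,k) = \colim_i \Hom_{\pi_0 R_i}(\pi_1 R_i, k)$).

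Now Proposition~\ref{Prop:hurewicz} finishes the job. Part~(i) (for $n=0$) says the natural map $\pi_0\mathfrak{t}(\pi_0 R, R) \to \overline\pi^0(\pi_0 R, R)$ is injective, and the target vanishes, so $\pi_0\mathfrak{t}(\pi_0 R, R) = 0$. Part~(ii), applied with $n=1$ (legitimate as $\overline\pi^0 = 0$), gives $\pi_{-1}\mathfrak{t}(\pi_0 R, R) \cong \overline\pi^1(\pi_0 R, R) = 0$; applied with $n=2$ (legitimate as $\overline\pi^0 = \overline\pi^1 = 0$), it gives $\pi_{-2}\mathfrak{t}(\pi_0 R, R) \cong \overline\pi^2(\pi_0 R, R) = \Hom_{\pi_0 R}(\pi_1 R, k)$. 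Substituting these vanishings and this isomorphism into~\eqref{t-long-exact-sequence} yields both assertions of the Corollary as explained above.

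The argument is essentially bookkeeping once Proposition~\ref{Prop:hurewicz} is in hand, so I do not expect a genuine obstacle; the only place requiring mild care is the passage to pro-objects. One must either carry out the replacement $R_i \hookrightarrow \widetilde{\pi_0 R_i}$ naturally in $i$, or—more simply—observe that $\overline\pi^*$ and $\mathfrak{t}$ are by definition filtered colimits of the level-wise invariants, so the level-wise computations assemble automatically; and one must keep in mind that all occurrences of $\pi_*$ and $\Hom_{\pi_0 R}(-,k)$ in the statement refer to the level-wise colimit conventions of~\eqref{naive pi} and the surrounding definitions.
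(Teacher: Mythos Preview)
Your proposal is correct and follows essentially the same approach as the paper, which simply states that the Corollary is ``an immediate consequence of~\eqref{t-long-exact-sequence} and the Proposition'' (i.e., Proposition~\ref{Prop:hurewicz}). You have accurately filled in the details: computing $\overline\pi^n(\pi_0 R, R)$ via the relative homotopy of $R \to \pi_0 R$, invoking Proposition~\ref{Prop:hurewicz} to transfer these to the relative tangent groups, and then reading off the statement from the long exact sequence---this is exactly the argument the paper has in mind (and spells out in the closely related discussion around~\eqref{tangent-inclu}).
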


\subsection{Cell structures on pro-rings}
\label{sec:cell-structures-pro}

For simply connected topological space, there always exists a CW
approximation with the smallest number of cells consistent with its
integral singular homology.  The easy direction, which does not use
simple connectivity, is that the number of cells is \emph{at least}
that much: this follows from a calculation of $H_*(D^n,\partial D^n)$.
The other direction is more difficult and requires simple connectivity
and the Hurewicz theorem: if $X \to Y$ is $(n-1)$-connected, the
Hurewicz theorem allows us to lift generators of $H_n(Y,X)$ to maps
from $(D^n,\partial D^n)$ along which we may attach more cells to $X$
in order to make the map $n$-connected.

For morphisms $R \to R'$ in pro-$\smallC_k$ the relative tangent  complex plays a similar role with respect to the cell attachments  described in Definition \ref{defn:attach-cell-to-functor}.  Indeed, if $R'$ is
obtained from $R$ by attaching a single $n$-cell, then it follows from
the pullback square that $\pi_{-*}\bigtangent(R',R)$ is
one-dimensional in degree $*=n$ and vanishing otherwise.  It follows
that $\Hom(R,-)$ cannot be obtained from $\Hom(R',-)$ by fewer than
$\dim_k(\pi_{-*}\bigtangent(R',R))$ cell attachments.  In this section
we shall explain why this minimal number of cells can always be
realized.  In analogy with what happens for spaces, the main
ingredient is the Hurewicz type result in Proposition~\ref{Prop:hurewicz}.

Indeed, if $R \to R'$ is a morphism in pro-$\smallC_k$, and
$\pi_{-l}\bigtangent(R,R')$ vanishes for $l < n$ and is finite
dimensional for $l=n$, then the Hurewicz theorem proved above implies
that the groups $\overline\pi^l\bigtangent(R',R)$ also vanish for $l<
n$ lets us lift elements of a dual basis for
$\pi_{-n}\bigtangent(R',R)$ to maps $(\Delta^n, \partial \Delta^n) \to
(R',R)$.  If $R''$ is the pro-ring obtained by attaching $n$-cells to
$R$ along these finitely many maps, we obtain a factorization $R \to
R'' \to R'$, where $\pi_{-*} \bigtangent(R'',R)$ is concentrated in
degree $n$ and $\pi_{-n}\bigtangent(R',R) \to
\pi_{-n}\bigtangent(R'',R)$ is an isomorphism.  Hence by the long
exact sequence, $\pi_{-*}\bigtangent(R',R'')$ vanishes for $* \leq n$
and $\pi_{-*}\bigtangent(R',R'') \to \pi_{-*}\bigtangent(R',R)$ is an
isomorphism for $* > n$.  By induction on $n$, this proves the following result.
\begin{Corollary}\label{cor:Corollary hurewicz analogue} 
  Let $R \to R'$ be a morphism in pro-$\smallC_k$.  If
  $\pi_*\bigtangent(R',R)$ is finite dimensional, then $R'$ is
  obtained from $R$ by finitely many cell attachments, with one
  $n$-cell for each element in a dual basis for
  $\pi_{-n}\bigtangent(R',R)$.  In other words, the functor
  $\Hom(R,-)$ is naturally weakly equivalent to a functor obtained
  from the functor $\Hom(R',-)$ by attaching cells (Definition \ref{defn:attach-cell-to-functor}) precisely $\dim_k(\pi_{-*}\bigtangent(R',R))$
  times.

  In particular, if $R \in \text{pro-}\smallC_k$ is an object with $N
  = \dim_k(\pi_{-*}\bigtangent(R,W)) < \infty$, then $\Hom(R,-)$ is
  naturally weakly equivalent to a functor obtained from the terminal
  functor by precisely $N$  cell attachments (Definition \ref{defn:attach-cell-to-functor}). 
  
  If $\mathcal{F}: \smallC_k \to s\Sets$ is pro-representable, then it
  is sequentially pro-representable if and only if
  $\pi_*\bigtangent(R',R)$ is countable. 
\end{Corollary}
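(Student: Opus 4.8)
The plan is to prove the biconditional directly, one implication at a time; throughout I read $\pi_*\bigtangent(R',R)$ in the statement as the tangent complex $\pi_{-*}\bigtangent\mathcal F$ of the functor (equivalently $\pi_{-*}\bigtangent R$ for any cofibrant-levelwise pro-object $R$ pro-representing $\mathcal F$), and I read ``countable'' as ``of at most countable dimension over $k$''. Two facts will be used repeatedly. First, any pro-representable functor is formally cohesive: it preserves homotopy pullbacks because each $\Hom(R_j,-)$ does (Example~\ref{ex:functors-represented-by-general-rings}) and filtered colimits of simplicial sets commute with finite homotopy limits, and $\mathcal F(k)\simeq\{\ast\}$ because $\smallC_k(R_j,k)$ is a point for every cofibrant $R_j$. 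Second, for each object $R_i\in\smallC_k$ the graded $k$-vector space $\pi_{-*}\bigtangent R_i$ is finite dimensional: combining Lemma~\ref{lem:build-Artin-rings-Postnikov}(i) with the fact that $\Z\to k$ is complete intersection (so $k$ has a finite cell model over $\Z$), the object $R_i$ is obtained from $\Z$ by finitely many cell attachments, each contributing a finite-dimensional group concentrated in one degree to the relevant relative tangent complex, so the claim follows by repeated use of the long exact sequence~\eqref{t-long-exact-sequence}.

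For the easy direction: if $\mathcal F$ is sequentially pro-representable, choose a pro-representing object $R=(i\mapsto R_i)_{i\in I}$ with $I$ countable and all $R_i$ cofibrant. Then $\pi_{-*}\bigtangent\mathcal F=\colim_{i\in I}\pi_{-*}\bigtangent R_i$ is a countable filtered colimit of finite-dimensional $k$-vector spaces, hence has at most countable dimension.

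For the converse, suppose $\mathcal F$ is pro-representable with $\pi_{-*}\bigtangent\mathcal F$ of countable dimension. By Lemma~\ref{lemma:replace-by-nice} I may pick a pro-representing object $R=(i\mapsto R_i)_{i\in I}$ in which $I$ is a directed poset and all $R_i$ are cofibrant. The plan is to extract a \emph{countable} directed sub-poset $I_1\subseteq I$ such that the restriction $R|_{I_1}$ still pro-represents $\mathcal F$; this exhibits $\mathcal F$ as sequentially pro-representable. I build $I_1$ by a closure argument. Start with a countable set $I_0\subseteq I$ whose associated images span $\pi_{-*}\bigtangent\mathcal F=\colim_i\pi_{-*}\bigtangent R_i$ (a spanning set is countable, and each spanning vector is the image of a single class from some $\pi_{-*}\bigtangent R_i$). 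Then alternately enlarge the current object set by (a) adjoining, for each of its finite subsets, an upper bound in $I$, and (b) adjoining, for each index $i$ in it and each element of a basis of the subspace $\{x\in\pi_{-*}\bigtangent R_i : x\mapsto 0\text{ in }\pi_{-*}\bigtangent\mathcal F\}$, some $i'\geq i$ in $I$ for which $x$ already vanishes in $\pi_{-*}\bigtangent R_{i'}$. Each step adjoins only countably many objects (step (b) uses that $\pi_{-*}\bigtangent R_i$ is finite dimensional), so after countably many steps the union $I_1$, equipped with the full sub-poset structure, is a countable directed poset. By construction the map $\colim_{i\in I_1}\pi_{-*}\bigtangent R_i\to\pi_{-*}\bigtangent\mathcal F$ is surjective (from $I_0$) and injective (from (b), with (a) supplying upper bounds so whole vanishing subspaces are handled at once), hence an isomorphism; that is, $\pi_{-*}\bigtangent(R|_{I_1})\xrightarrow{\ \sim\ }\pi_{-*}\bigtangent\mathcal F$.

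It remains to promote this to a natural weak equivalence. The inclusion $I_1\hookrightarrow I$ induces a natural transformation $\mathcal F':=\colim_{i\in I_1}\Hom(R_i,-)\to\colim_{i\in I}\Hom(R_i,-)\simeq\mathcal F$ between pro-representable, hence formally cohesive, functors, which by the previous paragraph is an isomorphism on tangent complexes. For any formally cohesive $\mathcal G$ one has $\pi_i\mathcal G(k\oplus k[n])=\pi_{-(n+i)}\bigtangent\mathcal G$, obtained by iterating the weak equivalence $\mathcal G(k\oplus k[n])\simeq\Omega\mathcal G(k\oplus k[n+1])$ from the proof of Lemma~\ref{lem:pi-zero}; hence our transformation is a weak equivalence on every $k\oplus k[n]$, so by Lemma~\ref{lem:tangent-complex-detects-equivalence} it is a natural weak equivalence. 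Thus $\mathcal F$ is naturally weakly equivalent to $\mathcal F'$, which is pro-represented by the countable pro-object $R|_{I_1}$. The main obstacle is the closure construction of $I_1$: it must be kept countable while being simultaneously directed and ``saturated'' enough to detect all vanishing in the colimit computing $\pi_{-*}\bigtangent\mathcal F$ — and it is precisely the finite-dimensionality of each $\pi_{-*}\bigtangent R_i$ that makes step (b) cost only countably many objects per round, after which everything else reduces cleanly to the tangent-complex criterion for natural weak equivalence.
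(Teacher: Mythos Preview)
Your argument for the final sentence of the corollary is essentially sound and takes a route quite different from the paper's, but one claim needs correcting. You assert that for each $R_i\in\smallC_k$ the total graded vector space $\pi_{-*}\bigtangent R_i$ is finite dimensional, arguing that $R_i$ is built from $\Z$ by finitely many cell attachments. This is false in general: the discrete ring $R_i=k[x,y]/(x^2,xy,y^2)$ lies in $\smallC_k$ but is not a complete intersection, and its Andr\'e--Quillen cohomology $\Der^n_\Z(R_i,k)=\pi_{-n}\bigtangent R_i$ is nonzero for infinitely many $n$. Your appeal to Lemma~\ref{lem:build-Artin-rings-Postnikov}(i) goes astray because that lemma builds $R_i$ from $k$ by iterated \emph{pullbacks}, which is not the same as building by cell attachments in the sense of Definition~\ref{defn:attach-cell}. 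Fortunately your closure argument only needs each $\pi_{-*}\bigtangent R_i$ to have \emph{countable} dimension, and this does hold: each individual $\pi_{-n}\bigtangent R_i=\Der^n_\Z(R_i,k)$ is finite dimensional (the cotangent complex $L_{R_i/\Z}$ has degreewise finitely generated homotopy over the Artin ring $\pi_0 R_i$), and there are countably many degrees. With that correction, step~(b) still adjoins only countably many indices per round, and the rest of your argument goes through unchanged.

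Once patched, your proof differs genuinely from the paper's. The paper argues by building $\mathcal F$ up from the terminal functor via countably many cell attachments in the sense of Definition~\ref{defn:attach-cell-to-functor}, using the Hurewicz-type Proposition~\ref{Prop:hurewicz} degree by degree to locate attaching maps, and observing that each attachment preserves sequential pro-representability by the explicit construction in Proposition~\ref{prop:functor-pullback}. You instead start from an arbitrary pro-representing system and cut it down by a L\"owenheim--Skolem-style closure to a countable subsystem, then invoke the tangent-complex criterion (Lemma~\ref{lem:tangent-complex-detects-equivalence}) to see the subsystem still represents $\mathcal F$. The paper's route is more unified with the cell-structure theme of the section and directly extends the finite-dimensional case; yours is more self-contained, sidesteps the Hurewicz machinery entirely, and makes the role of countability transparent.
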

\begin{proof}[Proof sketch]
  We already explained how to achieve a cell structure, so it remains
  to discuss countability.  Adjoining a single cell, or a countable
  number of cells, does not change whether or not the indexing set of
  the pro-object may be chosen countable, by our constructions.
\end{proof}

\subsection{The tangent complex of a formally cohesive functor} \label{tangent complex setup}

In this section, we will define the {\em tangent complex} of a
formally cohesive functor
$\mathcal{F}: \smallC_k \to s\Sets$. It will be  a chain complex  
$\bigtangent \mathcal{F}$, possibly unbounded in both directions.  It
generalizes the previously defined $\pi_{-n}\bigtangent(R',R)$ for a
morphism $R \to R'$ in $\smallC_k$ in the sense that
$\pi_{-n} \bigtangent(R',R)$ is the homotopy groups of the mapping
cone of $\bigtangent \Hom(R',-) \to \bigtangent \Hom(R,-)$.  Whenever
we speak of a ``chain complex'' we shall always mean one with
degree-decreasing boundary map, and vice versa for cochain complexes.

It seems difficult to directly define a
chain complex $\bigtangent \mathcal{F}$ from a formally cohesive
functor $\mathcal{F}$.  Instead, we construct an essentially
equivalent incarnation of it, as a \emph{spectrum} with the structure
of a module spectrum over the Eilenberg--MacLane spectrum $Hk$.  We first review the relationship between spectra and chain complexes.

\subsubsection{The Dold--Kan correspondence and spectra}
\label{sec:dold-Kan}

\begin{Definition}
  A \emph{spectrum} is a
  sequence $E$ of based simplicial sets $E_n$ together with maps
  $\epsilon_n: E_n \to \Omega E_{n+1}$.  It is an $\Omega$-spectrum if
  all the compositions
  $E_n \to \Omega E_{n+1} \to \Omega \Exinf E_{n+1}$ are weak
  equivalences.  The \emph{homotopy groups} of a spectrum $E$ are
  defined for $k \in\Z$ as
  \begin{equation*}
    \pi_k(E) = \colim_{n \to \infty} \pi_{n+k} E_n.
  \end{equation*}
\end{Definition}

Following Lurie, we can now define the tangent complex of a formally
cohesive functor $\mathcal{F}$ as an $\Omega$-spectrum.
\begin{Example}\label{example:defn-of-tangent-complex-as-spectrum}
  Let $\mathcal{F}$ be a formally cohesive functor, and suppose for
  convenience of notation that is it Kan valued.  Then the tangent
  complex of $\mathcal{F}$ is the $\Omega$-spectrum
  $\bigtangent\mathcal{F}$ whose $n$th space is
  $\mathcal{F}(k \oplus k[n])$, and whose structure maps are given by
  the composition
  \begin{align*}
    \mathcal{F}(k \oplus k[n]) & \xrightarrow{\simeq} \mathcal{F}(k)
    \times^h_{\mathcal{F}(k \oplus k[n+1])} \mathcal{F}(k \oplus
    \widetilde{k[n]})\\ & \xrightarrow\simeq \{\ast\}
    \times^h_{\mathcal{F}(k \oplus k[n+1])} \{\ast\} = \Omega
    \mathcal{F}(k \oplus k[n+1]),
  \end{align*}
  where the first equivalence comes from the fact that $\mathcal{F}$
  preserves homotopy pullback and the second comes from the fact that
  $\mathcal{F}(k) \simeq \ast$.
\end{Example}

Next we recall, in Example~\ref{Example:dold-Kan-spectrum} below, how
an unbounded chain complex gives rise to an $\Omega$-spectrum via the
Dold--Kan correspondence.  Recall first that to a simplicial
$k$-module $A$ there is an associated non-negatively graded $k$-linear
chain complex $NA$: for $p \geq 0$ the $p$th term is
$N_pA = \cap_{i =1}^p \Ker(d_i: A_p \to A_{p-1})$ and the boundary map
$\partial: N_pA \to N_{p-1}A$ is the restriction of $d_0$.  If we
write $\mathrm{Ch}_+(k)$ for the category of non-negatively graded
$k$-linear chain complexes whose morphisms are the chain maps, then it
is a fundamental insight of Dold (\cite[\S 1]{MR0097057}) and Kan that
the functor
\begin{equation*}
  N: s\mathrm{Mod}_k \to \mathrm{Ch}_+(k)
\end{equation*}
is an \emph{equivalence} of categories.  The inverse functor
$\mathrm{Ch}_+(k) \to s\mathrm{Mod}_k$ is often called the Dold--Kan
functor.  The composition
\begin{equation*}
  \mathrm{Ch}_+(k) \xrightarrow{\text{Dold--Kan}}  s\mathrm{Mod}_k
  \xrightarrow{\text{forget}} s\Sets
\end{equation*}
sends a chain complex $(C_*,\partial)$ to a Kan simplicial set (or
topological space, after realization) with basepoint given by
0-element, whose homotopy groups are canonically isomorphic to the
homology groups of the chain complex.

If $C = (C_*,\partial) \in \mathrm{Ch}_+(k)$, then the space
$|\text{Dold--Kan}(C)|$ is an example of an \emph{infinite loop
  space}: if $\Sigma C$ denotes the shifted chain complex with
$(\Sigma C)_0 = 0$ and $(\Sigma C)_{n+1} = C_n$, then there is a
canonical weak equivalence
\begin{equation}\label{eq:8}
  \DK(C) \xrightarrow\simeq \Omega \DK(\Sigma C)
\end{equation}
reflecting the fact that the homology groups of $\Sigma C$ are the
shifted homology groups of $C$.  By iteration, we get weak
equivalences $\DK(C) \to \Omega^n \DK(\Sigma^n C)$.

If $C = (C_*,\partial)$ is an object in the category $\mathrm{Ch}(k)$
of \emph{unbounded} chain complexes we can do something similar.
Firstly, let $\tau_{\geq 0} C \in \Ch_+(k)$ denote the
\emph{truncation}, i.e.\ the chain complex with
$(\tau_{\geq 0} C)_0 = \Ker(\partial: C_0 \to C_{-1})$ and
$(\tau_{\geq 0} C)_n = C_n$ for $n \geq 1$.  We may then apply the
Dold--Kan functor to the non-negatively graded chain complexes
$\tau_{\geq 0}(\Sigma^n C)$ and as in~\eqref{eq:8} above, the
underlying based simplicial sets come with weak equivalences
\begin{equation*}
  \DK(\tau_{\geq 0} \Sigma^n C) \xrightarrow\simeq \Omega
  \DK(\tau_{\geq 0} \Sigma^{n+1} C).
\end{equation*}
\begin{Example}\label{Example:dold-Kan-spectrum}
  An unbounded chain complex $C = (C_*,\partial)$ gives rise to an
  $\Omega$-spectrum with $n$th space $\DK(\tau_{\geq 0} \Sigma^n C)$.
  The homotopy groups of this spectrum are canonically isomorphic to
  the homology groups of $C$.

  This defines a functor from $\Ch(k)$ to $\Omega$-spectra which we
  shall also sometimes call the Dold--Kan functor.
\end{Example}

We take the point of view that the Dold--Kan functor from $\Ch(k)$ to
spectra defined by the above example is a ``forgetful'' functor.  It
remembers enough about a chain complex to recover its homology groups
(viz.\ as the homotopy groups of the spectrum) and in particular it
detects quasi-isomorphisms, but it does not remember enough
information to recover the $k$-module structure on these homology
groups.  In the next few subsections we shall explain how recognize on
a given spectrum $E$ the ``extra structure'' of a weak equivalence
$E \simeq \DK(C)$ for an unbounded chain complex $C$; such an extra
structure implies among other things a $k$-module structure on
$\pi_*(E)$.  The goal of reviewing this theory is to prove that the
tangent complex of a formally cohesive functor $\mathcal{F}$ naturally
has this structure: thus, up to weak equivalence, the tangent complex
spectrum arises under the Dold--Kan functor from a more fundamental
object of $\Ch(k)$ which we consider to be ``the'' tangent complex of
$\mathcal{F}$.

\subsubsection{$\Gamma$-sets and $\Gamma$-spaces}
\label{sec:gamma-spaces}

\begin{Definition}
  Let $\mathrm{FinSet}_*$ be the category of finite based sets, and
  let $\Gamma^\mathrm{op} \subset \mathrm{FinSet}_*$ be the full
    subcategory on the objects
  $\underline{n} = \{\ast, 1, \dots, n\}$ for $n \geq 0$.  A
  $\Gamma$-set is a functor $X:\Gamma^\mathrm{op} \to \Sets$ with
  $X(\underline 0)$ terminal (i.e., having a single element). \end{Definition}

Any $\Gamma$-set arises as the restriction of a functor
$\mathrm{FinSet}_* \to \Sets$, and this extension is unique up to
unique natural isomorphism.  For example, if $S$ is a finite based set
we may define
\begin{equation*}
  X(S) = (X(\underline{n}) \times
  \mathrm{Iso}_{\mathrm{FinSet}_*}(\underline{n},S))/S_n
\end{equation*}
for $|\underline{n}| = |S|$.  Henceforth we shall often (tacitly)
assume that $\Gamma$-spaces $X: \Gamma^\mathrm{op} \to s\Sets$ have
been extended to all finite pointed sets in this way.

\begin{Definition}[Segal] \label{GammaSpaceDefinition} A
  $\Gamma$-space is a simplicial $\Gamma$-set\footnote{A.\ Connes has
    argued that it is better to consider $\Gamma$-spaces as simplicial
    objects in $\Gamma$-sets rather than $\Gamma$-objects in
    simplicial sets.}, or in other words a functor
  $X:\Gamma^\mathrm{op} \to s\Sets$ with $X(\underline 0)$ terminal.

  Let $p_i: \underline{n} \to \underline{1}$ be the morphism with
  $p_i^{-1}(1) = \{i\}$ and let
  \begin{equation}
    \label{eq:13}
    X(\underline{n}) \to \prod_{i=1}^n X(1)
  \end{equation}
  be the map whose $i$th coordinate is $X(p_i)$.  Then the
  $\Gamma$-space $X$ is \emph{special} when these maps are weak
  equivalences for all $n \geq 0$.

  Let $\nabla: \underline{2} \to \underline{1}$ be the unique map with
  $\nabla^{-1}(1) = \{1,2\}$.  If $X$ is special, then the diagram
  \begin{equation*}
    X(\underline{1}) \times X(\underline{1})
    \xleftarrow{(X(p_1),X(p_2))} X(\underline{2})
    \xrightarrow{X(\nabla)}
    X(\underline{1}).
  \end{equation*}
  induces the structure of an abelian monoid on the set
  $\pi_0(X(\underline{1}))$, with unit arising from the unique map
  $\underline{0} \to \underline{1}$.  The $\Gamma$-space $X$ is \emph{very
  special} if this monoid is a group.
\end{Definition} 
 
\textbf{Remark.}  In Segal's original definition, the requirement that
$X(\underline{0})$ be a terminal simplicial set was not included, but
many later authors have added this requirement.  His less restrictive
definition still implies that special $\Gamma$-spaces have
$X(\underline{0})$ contractible, by the weak equivalence~\eqref{eq:13}
for $n=0$.  If $X$ is a $\Gamma$-space in this less restrictive sense,
the unique morphisms $\underline{0} \to \underline{n} \to
\underline{0}$ in $\Gamma^\mathrm{op}$ gives a canonical factorization
of $X$ through a functor into simplicial sets over and under $A =
X(\underline{0})$, in which $X(\underline{0})$ is terminal in that
category.  Now the $\Gamma$-space defined by $X'(\underline{n}) =
X(\underline{n})/A$ has $X'(A)$ terminal.  If $A$ is contractible the
natural map $X \to X'$ is an objectwise weak equivalence, so $X'$ is
(very) special if and only if $X$ is.  Therefore, since we're
mainly interested in special $\Gamma$-spaces it does not matter much
whether we use the less restrictive notion or not.

It may be helpful to think of the simplicial set $X(\underline{1})$ as
the ``underlying space'' of $X$, and the fibers of $X(\underline{n})
\to X(\underline{1})$ as the space of ways to decompose an element as
the ``sum'' of $n$ other elements.  A ``$\Gamma$-space structure'' on
a pointed space $Y$ is a $\Gamma$-space $X$ and an isomorphism (or
perhaps a weak equivalence) $Y \approx X(\underline{1})$.

\begin{Example}   The $\Gamma$-space $\mathbb{S}$ given by the functor which sends
  $\underline{n}$ to the constant simplicial set $\underline{n}$ is
  \emph{not} special.  Indeed, $\mathbb{S}(\underline{n})$ has $n+1$
  elements whereas $\prod_1^n \mathbb{S}(\underline{1})$ has $2^n$
  elements.

  This particular $\Gamma$-space is nevertheless quite important, and
  is known as the \emph{sphere spectrum}. 
\end{Example}

Segal then proves that very special $\Gamma$-spaces model
$\Omega$-spectra.  Let us summarize the construction, following the
later exposition by Bousfield and Friedlander.
\begin{enumerate}[(i)]
\item If $X$ is any $\Gamma$-space, let $\Omega X$ be the
  $\Gamma$-space obtained by taking objectwise (simplicial) loop
  space, i.e.\ $\underline{n} \mapsto \Omega X(\underline{n})$, where
  the basepoint comes from the unique map $\underline{0} \to
  \underline{n}$.
\item If $X$ is any $\Gamma$-space, let $BX$ be the $\Gamma$-space
  defined as
  \begin{equation*}
    (BX)(S) = |[p] \mapsto X(S^1_p \wedge S)|,
  \end{equation*}
  where $S^1_\bullet$ is the pointed simplicial circle, i.e.\ $S^1_p =
  \Delta^1([p])/ (\partial \Delta^1)([p])$, regarded as a functor
   $\Delta^\mathrm{op} \to \Gamma^\mathrm{op}$.
\item The adjoint of the map $$S^1 \times X(\underline{n}) \cong S^1
  \times X(S^1_1 \wedge \underline{n}) \to (BX)(\underline{n})$$
  induces a natural transformation $X \to \Omega BX$. 
  
 More explicitly,  at the level of $p$-simplices, this map sends
    $(u,v) \in S^1_p \times X( \underline{n })_p$ to the image of $v$
    under the map
    $\underline{n} \rightarrow S^1_p \wedge \underline{n}$ induced by
    $t \mapsto u \wedge t$. 
  
  \item Translating into this language, Segal's theorem is firstly
    that if $X$ is very special (and takes values in Kan complexes),
    then $X \to \Omega BX$ induces a weak equivalence
    $X(\underline{n}) \to \Omega (\Exinf(BX(\underline{n})))$, secondly
    that if $X$ is special then so is $BX$, and thirdly that
    $(BX)(\underline{1})$ is $n$-connected when $X(\underline{1})$ is
    $(n-1)$-connected.
\end{enumerate}
In particular when $X$ is very special the space $X(\underline{1})$
comes with canonical deloopings $(BX)(\underline{1})$,
$(B^2X)(\underline{1})$, $\dots$, which depend only on the
$\Gamma$-space structure, and hence $X(\underline{1})$ is canonically
the 0-space in an $\Omega$-spectrum.  If $X$ is special but not very
special, the ``group completion'' of $X(\underline{1})$ (i.e.\
$\Omega B(X(\underline{1}))$) is the zero-space of an
$\Omega$-spectrum.

\begin{Definition}[Bousfield--Friedlander]\label{defn:stable-eq}
  Let $X$ be any $\Gamma$-space, and define
  \begin{equation*}
    \pi_k(X) = \colim_n \pi_{n+k} B^nX.
  \end{equation*}
  A map $X \to Y$ of $\Gamma$-spaces is a stable equivalence if it
  induces an isomorphism in all homotopy groups.
\end{Definition}

\subsubsection{Smash product, $\Gamma$-rings and module spectra}
\label{sec:smash-product-gamma}

Segal's category of $\Gamma$-spaces was later studied further by
Bousfield and Friedlander, who constructed a closed model category
structure on $\Gamma$-spaces whose weak equivalences are the stable
equivalences from Definition~\ref{defn:stable-eq}  and
proved that its homotopy category is equivalent to the homotopy
category of connective spectra in other known models of that category. (See also
\cite[\S 4, Chapter II]{Quillen}). 
Later, Lydakis \cite{Lydakis} gave a model for the
\emph{smash product} $E \wedge F$ of two spectra $E$ and $F$ arising
from $\Gamma$-spaces: for $\Gamma$-spaces defined on all finite based
sets, the smash product has the universal property that a map of
$\Gamma$-spaces $E \wedge F \to G$ is the same (up to isomorphism) as
maps
\begin{equation*}
  E(S) \times F(T) \to G(S \wedge T)
\end{equation*}
forming a natural transformation of functors of $(S,T) \in
\Gamma^\mathrm{op} \times \Gamma^\mathrm{op}$, where $\wedge$ is the
smash product of based sets.

\begin{Example}
  For any $\Gamma$-space $X$, there are canonical maps
  \begin{equation*}
    \underline{n} \times X(T) \to X(\underline{n} \wedge T),
  \end{equation*}
  whose restriction to $\{i\} \times X(T)$ is given by the map $X(T)
  \to X(\underline{n} \wedge T)$ induced by the injection $T \cong
  \{\ast,i\} \wedge T \to \underline{n} \wedge T$.  This produces a
  canonical map of $\Gamma$-spaces
  \begin{equation*}
    \mathbb{S} \wedge X \to X,
  \end{equation*}
  which is in fact an isomorphism.
\end{Example}

\begin{Definition}
  \begin{enumerate}[(i)]
  \item A $\Gamma$-ring is a triple $(R,\mu,\nu)$ consisting of a
    $\Gamma$-space $R$, a (strictly!)\ associative and commutative map
    $\mu: R \wedge R \to R$, and a unit map $\nu: \mathbb{S} \to R$
    satisfying that $\mu \circ (\nu \wedge \mathrm{id}): \mathbb{S}
    \wedge R \to R$ agrees with the canonical isomorphism.
  \item A module over a $\Gamma$-ring $R$ consists of a $\Gamma$-space
    $M$ together with a map $R \wedge M \to M$ satisfying the obvious
    axioms.
  \end{enumerate}
\end{Definition}
For example, any $\Gamma$-space is canonically an $\mathbb{S}$-module.
A $\Gamma$-ring give rise a connective \emph{ring spectrum}, although
the two notions are not quite the same \cite{Lawson}.  Important
examples come from the  \emph{Eilenberg--MacLane
  spectrum} construction, which associates a spectrum
$HV$ to a simplicial abelian group $V$. \index{$HV$}
\begin{Definition}
  \begin{enumerate}[(i)]
  \item Let $V$ be an abelian group.  The Eilenberg--MacLane space
    $HV$ is the $\Gamma$-set defined by
    \begin{equation*}
      HV(S) = H_0(S,\ast;V) \cong \prod^{S \setminus \{\ast\}} V.
    \end{equation*}
    The description as relative singular homology $H_0(S,\ast;V)$
    makes the functoriality clear, and it is clear from the product
    description that it is special (the map~\eqref{eq:13} is a
    bijection of sets).  Then $\pi_0(HV(\underline{1})) = V$ and the
    monoid structure agrees with vector space addition so $HV$ is very
    special.
  \item If $V$ is a simplicial abelian group, define $HV:
    \Gamma^\mathrm{op} \to s\Sets$ by applying the previous
    construction degreewise.
  \end{enumerate}
\end{Definition}
\begin{Example}\label{example:EM-spectra}
  For abelian groups $V,W$ the map
  \begin{align*}
    HV(S) \times HW(T) = H_0(S,\ast;V) \times H_0(T,\ast;W) &\to
    H_0(S,\ast;V) \otimes H_0(T,\ast;W) \\& \stackrel{\cong}\to H_0(S\wedge T, \ast;V
    \otimes W),
  \end{align*}
  obtained by composing the ``K\"unneth isomorphism'' and the
  canonical bilinear map defining the tensor product, is a natural
  transformation of functors of the based finite sets $S$, $T$, and
  hence defines a map of spectra
  \begin{equation}\label{eq:16}
    HV \wedge HW \to H(V \otimes W).
  \end{equation}
  This construction works degreewise for simplicial abelian groups,
  and the map is natural in the simplicial abelian groups $V$ and $W$.
  
  If $k$ is a simplicial commutative ring, then the multiplication $k
  \otimes k \to k$ gives rise to a map $H(k \otimes k) \to Hk$, which
  composed with~\eqref{eq:16} makes $Hk$ into a $\Gamma$-ring.  If $V$
  is a simplicial $k$-module, then $HV$ inherits an $Hk$-module
  structure in a similar way.
\end{Example}

A map $X \to Y$ of $Hk$-module spectra is a map of $\Gamma$ spaces
which commutes (strictly) with the module structure maps.  Such a map
is a weak equivalence if the underlying map of $\Gamma$-spaces is
(i.e.\ is a stable equivalence in the sense of
Definition~\ref{defn:stable-eq}; we emphasize that this is \emph{not}
the same as objectwise weak equivalence of functors
$\Gamma^\mathrm{op} \to s\Sets$).  It is a result of Stefan Schwede
that this notion can be extended to a full-fledged model category
structure.  The following result plays an important technical role in
this paper.
\begin{theorem}[Robinson, Schwede] \label{thm:rsss}
  Let $k$ be a (possibly simplicial) commutative ring.  The
  Eilenberg--Maclane functor  induces an equivalence of categories from
  the homotopy category of simplicial $k$-modules to the homotopy
  category of $Hk$ modules. In fact, this functor is part of a Quillen equivalence between
  model categories. 

  In particular (for $k = \Z$) it induces an equivalence from
  simplicial abelian groups to $H\Z$-modules.
\end{theorem}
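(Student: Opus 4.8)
The plan is to exhibit a Quillen adjunction between simplicial $k$-modules and $Hk$-modules in which the Eilenberg--MacLane construction of Example~\ref{example:EM-spectra} is the right adjoint, and to check that it is a Quillen equivalence; the asserted equivalence of homotopy categories (and the case $k=\Z$, by specialization) then follows formally. First I would set up the two model structures. On $\Gamma$-spaces one has the Bousfield--Friedlander stable model structure, whose weak equivalences are the stable equivalences of Definition~\ref{defn:stable-eq}; I would transfer this along the free--forgetful adjunction $\Gamma\text{-spaces} \rightleftarrows Hk\text{-mod}$, $X \mapsto Hk \wedge X$, to obtain a cofibrantly generated model structure on $Hk$-modules whose weak equivalences and fibrations are detected on underlying $\Gamma$-spaces. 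The verification that this transfer is legitimate (a small-object plus path-object argument, together with the monoid axiom for the smash product of $\Gamma$-spaces) is the one genuinely technical ingredient, and here I would simply invoke Schwede's construction and the general Schwede--Shipley machinery for algebras and modules in monoidal model categories. On the other side, simplicial $k$-modules carry Quillen's standard model structure transferred from simplicial sets; note that every object there is fibrant and that every $Hk$-module of the form $HV$ is stably fibrant, being very special.

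Next, the functor $V \mapsto HV$ preserves all limits, so by the adjoint functor theorem it has a left adjoint $\Lambda$ (concretely $\Lambda M$ is the simplicial $k$-module obtained from the simplicial set $M(\underline{1})$ by imposing the linear structure encoded in the $Hk$-action). I would then check $(\Lambda, H)$ is a Quillen pair: $H$ of a levelwise-surjective fibration of simplicial $k$-modules is levelwise a Kan fibration between very special $\Gamma$-spaces, hence a stable fibration, and $H$ both preserves and reflects weak equivalences, since for a simplicial $k$-module $V$ the stable homotopy groups of $HV$ are canonically $\pi_*(HV) \cong \pi_*(V)$, concentrated in non-negative degrees where they agree with the homology of the normalized chains. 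Thus $H$ carries fibrations to fibrations and trivial fibrations to trivial fibrations. Moreover $H$ carries levelwise short exact sequences of simplicial $k$-modules to homotopy cofiber sequences of $Hk$-modules (connectively, fiber and cofiber sequences coincide), commutes with filtered colimits, and preserves finite coproducts up to stable equivalence; consequently $H$ preserves the homotopy colimits used to build cofibrant $Hk$-modules cellularly.

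To upgrade to a Quillen equivalence I would use the criterion that, since $H$ reflects weak equivalences, it suffices to show the derived unit $M \to H(\Lambda M)$ is a weak equivalence for every cofibrant $Hk$-module $M$. The class of such $M$ is closed under the cellular homotopy colimits above (as $\Lambda$ is left Quillen and $H$ preserves those homotopy colimits up to weak equivalence), so it is enough to verify it on the generating cofibrant objects $Hk \wedge \Gamma^n$ (smashing with simplices changes nothing up to weak equivalence). The key computation is $\Lambda(Hk) \simeq k$: indeed $Hk$ is the free $Hk$-module on the sphere $\mathbb{S}$, which is the free $\Gamma$-space on a point, so $\mathrm{Hom}_{Hk}(Hk, HV) = HV(\underline{1})$ is the underlying simplicial set of $V$, which equals $\mathrm{Hom}(k,V)$ for $k$ the constant simplicial $k$-module; hence $\Lambda(Hk)=k$, $H(\Lambda(Hk)) = Hk$, and the derived unit is the identity. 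Since $\Lambda$ preserves coproducts and $Hk \wedge \Gamma^n$ is, up to stable equivalence, a finite wedge of copies of $Hk$ (using $\Gamma^n \simeq \bigvee^n \mathbb{S}$ stably), the derived unit is a weak equivalence on all generators, hence on all cofibrant $Hk$-modules. This gives the Quillen equivalence, and passing to homotopy categories yields the statement, with $k=\Z$ recovering simplicial abelian groups versus $H\Z$-modules.

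The main obstacle is the bookkeeping around the transferred model structure and around this cellular induction: one must know the generating (trivial) cofibrations of $Hk$-modules explicitly, identify $Hk \wedge \Gamma^n$ with a finite wedge of copies of $Hk$ up to stable equivalence, and confirm that $H$ genuinely commutes, up to weak equivalence, with the particular homotopy colimits appearing in the cell-attachment filtration of a cofibrant $Hk$-module. Everything else is formal manipulation of Quillen adjunctions and of the Eilenberg--MacLane construction, and the argument runs uniformly whether $k$ is discrete or simplicial.
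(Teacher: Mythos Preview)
The paper does not give a proof of this theorem at all: it is stated as a result attributed to Robinson and Schwede and then simply used. The discussion following the statement (about the left adjoint $L$, the cofibrant replacement $c$, and the comparison zig-zag $E \leftarrow cE \to HV$) explains how the result is applied, not how it is proved. So there is nothing to compare your argument against in this paper; you are sketching a proof of a theorem the authors are quoting from the literature.

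That said, your outline is recognizably the approach Schwede actually takes: transfer the Bousfield--Friedlander stable model structure to $Hk$-modules via the free--forgetful adjunction (using the Schwede--Shipley machinery and the monoid axiom for Lydakis' smash product), observe that $H$ is right Quillen with left adjoint $L$, and verify the Quillen equivalence by checking the derived unit on a cellular filtration of cofibrant $Hk$-modules, reducing to the generator $Hk$ itself. Your identification $\Lambda(Hk)\simeq k$ via the Yoneda-type calculation $\Hom_{Hk}(Hk,HV)\cong HV(\underline{1})\cong V$ is the right idea. The points you flag as obstacles---the explicit form of the generating (trivial) cofibrations after transfer, the stable splitting $\Gamma^n \simeq \bigvee^n \mathbb{S}$, and the compatibility of $H$ with the relevant homotopy colimits---are exactly the places where the real work lies, and in a full proof each would need a careful paragraph rather than an assertion. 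In particular, the claim that $H$ preserves the homotopy pushouts appearing in cell attachments deserves justification: it follows because $H$ takes short exact sequences of simplicial $k$-modules to (co)fiber sequences of connective spectra, but one should say why the attaching squares in question are of this form after applying $\Lambda$.
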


By this result, the homotopy theory of simplicial $k$-modules (or
equivalently non-negatively graded chain complexes of $k$-modules) is
``equivalent'' to the homotopy theory of $Hk$-modules.  In particular
there is for each $Hk$-module $E$ a ``corresponding'' simplicial
$k$-module $V$ and a zig-zag of weak equivalences of $Hk$-modules
between $HV$ and $E$.  The usefulness of this result is that an
object may arise quite naturally and explicitly as an $Hk$-module $E$
but not explicitly as a simplicial $k$-module.  For example, in this
paper this is the case for the tangent complex of a formally coherent
functor $\mathcal{F}: \smallC_k \to s\Sets$.

Although we shall not strictly need it, let us briefly discuss the
extent to which this relation between $Hk$-modules and simplicial
$k$-modules may be made functorial.  In fact, Schwede promotes the relation to a Quillen equivalence
of model categories, where $V \mapsto HV$ is the right adjoint.
He defines a functor in the other direction, sending an \index{$L$ \mbox{ and} $Lc$}
$Hk$-module $E$ to a simplicial $k$-module $LE$.  The functor $L$ is
left adjoint to $H$ in the strict sense, and has an explicit and
rather simple construction which we shall omit here.  The
\emph{homotopy} inverse to $H$ then sends an $Hk$-module spectrum $E$
to the simplicial $k$-module $Lc(E)$, where $c$ is a cofibrant
replacement functor on $Hk$-module spectra (in a certain model
category structure, see Schwede's paper for details).  Such a functor
$c$ is proven to exist using the small object argument, but
unfortunately there does not seem to be a known explicit formula.
Consequently we do not know an explicit functorial formula for the
simplicial $k$-module ``corresponding'' to an $Hk$-module spectrum
$E$, but nevertheless it may be useful (or at least consoling) to
remember the direction of the arrows in the comparison zig-zags: to an
$Hk$-module $E$ the ``corresponding'' simplicial $k$-module is
$V = Lc(E)$, and we have natural weak equivalences of $Hk$-modules
\begin{equation} \label{QEV1}
  \xymatrix{
    E & \ar[l]_\simeq cE \ar[r]^-\simeq & HV.
  }
\end{equation}

\subsubsection{Non-connective spectra}
\label{sec:non-conn-spectra}

A spectrum $E = (E_n,\epsilon_n)_{n \in \N}$ is \emph{connective} if
$\pi_k(E) = 0$ for $k < 0$.  For $\Omega$-spectra this is equivalent
to each space $E_n$ being $(n-1)$-connected.  As already alluded to,
connective $\Omega$-spectra are, up to weak equivalence, the same as
very special $\Gamma$-spaces.  More precisely there are functors in
both directions given as follows.
\begin{Example}
  Any $\Gamma$-space $X$ gives rise to a connective spectrum with $E_n
  = (B^nX)(\underline{1})$.  It is an $\Omega$-spectrum when $X$ is
  very special (after possibly applying fibrant replacement to the $E_n$s).

  Conversely, to a spectrum $E$ there is an associated very
  special 
  $\Gamma$-space defined by   \begin{equation*}
    S   \mapsto \colim_{n \to \infty} \Omega^n \Exinf(S \wedge E_n).
  \end{equation*}
  Here $S \wedge E_n = \vee^{S \setminus \{\ast\}} E_n$ is the smash
  product of pointed simplicial sets, where $S$ is regarded as a
  constant simplicial set.

  These constructions are inverse up to
  homotopy, and give an equivalence of categories between the
  homotopy category of very special $\Gamma$-spaces and connective
  $\Omega$-spectra.
\end{Example}

To get a model for \emph{all} spectra, including those with homotopy
groups in negative degrees, we may consider spectrum objects in
$\Gamma$-spaces.
\begin{Definition}
  A $\Gamma$-spectrum is a sequence $E$ consisting of $\Gamma$-spaces
  $E_n$ and maps $\epsilon_n: E_n \to \Omega E_{n+1}$ of
  $\Gamma$-spaces.
\end{Definition}
This notion of $\Gamma$-spectrum is somewhat uncommon, probably for
the following reason.  The forgetful functor from $\Gamma$-spectra to
spectra induced by sending $E$ to the spectrum with $n$th space
$E_n(\underline{1})$ induces an equivalence of homotopy categories, so
in this sense the ``$\Gamma$-structure'' is redundant.  Nevertheless,
it seems convenient to keep track of this extra redundant data, for
instance in dealing with $Hk$-module structures.
\begin{Definition}  
  A (non-connective) $Hk$-module spectrum is a $\Gamma$-spectrum $E$
  together with maps of $\Gamma$-spaces $\mu_n: Hk \wedge E_n \to E_n$
  making $E_n$ into an $Hk$ module, such that the maps $\epsilon_n:
  E_n \to \Omega E_{n+1}$ are maps of $Hk$ modules.
\end{Definition}

As already explained, the Dold--Kan functor may be promoted to a
functor from unbounded chain complexes to spectra.  In fact this
functor naturally takes values in $Hk$-module spectra.
\begin{Example}\label{example:chain-complex-to-spectrum}
  Recall from Example~\ref{example:EM-spectra} that the
  Eilenberg--MacLane functor takes a simplicial $k$-module $V$ to an
  $Hk$-module $HV$.  If $C \in \Ch_+(k)$ is a non-negatively graded
  $k$-linear chain complex, then we obtain an $Hk$-module $H(\DK(C))$.
  If $C \in \Ch(k)$ is an unbounded chain complex, then we obtain an
  $Hk$-module spectrum with $n$th space
  \begin{equation*}
    H(\DK(\tau_{\geq 0} \Sigma^n C)).
  \end{equation*}

  It  remains to define the
  structure maps in this $Hk$-module spectrum.  First recall that for
  simplicial $k$-modules $A$ and $B$ the Alexander--Whitney formula
  gives rise to a map of $k$-linear chain complexes
  $N(A \otimes_k B) \to NA \otimes_k NB$.  If we write $kS^1$ for the
  free $k$-module on the simplicial circle, modulo the span of the
  basepoint and its degeneracies, and apply the Alexander--Whitney map
  in the special case $A = kS^1$ and $B = \DK(D)$ for some
  non-negatively graded $k$-linear chain complex $D$, we get a natural
  transformation
  \begin{equation*}
    N(kS^1 \otimes \DK(D)) \to N(kS^1) \otimes D \cong \Sigma D.
  \end{equation*}
  Applying the Dold--Kan functor to this map, and using that it is
  an inverse functor to $N$ up to isomorphism, we then get a natural
  transformation of simplicial $k$-modules
  \begin{equation*}
    kS^1 \otimes \DK(D) \to \DK(\Sigma D).
  \end{equation*}
  The loop-space functor $\Omega$, when regarded as a endo-functor of
  simplicial $k$-modules, is right adjoint to the functor
  $kS^1 \otimes -$, so the above map is adjoint to a map
  \begin{equation}
    \label{eq:23}
    \DK(D) \to \Omega (\DK(\Sigma D))
  \end{equation}
  of simplicial $k$-modules, natural in the non-negatively graded
  $k$-linear chain complex $D$.  Finally, substitute $D = \tau_{\geq
    0} \Sigma^n C$ into~(\ref{eq:23}) and use that the inclusion
  \begin{equation*}
    \DK(\Sigma \tau_{\geq 0} \Sigma^n C) = \DK(\tau_{\geq 1} \Sigma^{n+1} C)
    \subset \DK(\tau_{\geq 0} \Sigma^{n+1}C)
  \end{equation*}
  becomes an isomorphism after applying $\Omega$.  This defines a
  natural transformation between simplicial $k$-modules, and the
  Eilenberg--MacLane functor turns this into a map of $Hk$-modules.
\end{Example}

\begin{theorem}
  The functor from unbounded
  $k$-linear chain complexes to $Hk$-module spectra defined in the
  above example induces an equivalence of homotopy categories.
\end{theorem}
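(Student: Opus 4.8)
The plan is to show that the functor of Example~\ref{example:chain-complex-to-spectrum}, which I will call $\Phi\colon \mathrm{Ch}(k) \to (Hk\text{-module spectra})$, sends quasi-isomorphisms to weak equivalences --- clear, since $\pi_*\Phi(C)$ is canonically $H_*(C)$ --- so that it descends to a functor $\overline{\Phi}$ between homotopy categories, and then to prove $\overline{\Phi}$ is an equivalence by realizing $\Phi$ as the ``stabilization'' of the connective equivalence already supplied by the Dold--Kan correspondence together with Theorem~\ref{thm:rsss}. Recall that $\mathrm{Ho}(\mathrm{Ch}(k))$ is the unbounded derived category $D(k)$, compactly generated by $k$ in degree $0$, while $\mathrm{Ho}(Hk\text{-module spectra})$ is compactly generated by $Hk$. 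The argument then runs: first establish that $\overline{\Phi}$ is exact and preserves arbitrary coproducts, that $\Phi(k) \simeq Hk$, and that $\overline{\Phi}$ induces an isomorphism on graded endomorphisms of $k$; then the full subcategory of $C \in D(k)$ on which $\Hom^{*}(C,C') \to \Hom^{*}(\Phi C, \Phi C')$ is an isomorphism for all $C'$ is triangulated and closed under coproducts (as $\Hom^{*}(-,-)$ sends coproducts in the first variable to products and $\Phi$ respects coproducts) and contains every shift $k[i]$ --- both sides there computing $H_{*+i}(C')$, on the spectrum side because $\pi_*\Phi(C')\cong H_*(C')$ --- so, $\{k[i]\}$ being a generating set, $\overline{\Phi}$ is fully faithful; its essential image, being triangulated, coproduct-closed, and containing the generator $Hk$, must be everything, hence $\overline{\Phi}$ is an equivalence.

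The preliminary points would be checked as follows. \emph{Preservation of coproducts} is formal, since $\Phi$ is assembled from $N$, the Dold--Kan functor, the connective truncation $\tau_{\geq 0}$, the chain-level shift $\Sigma$, and the Eilenberg--MacLane functor $H$, all additive and compatible with direct sums. \emph{Compatibility with the shift} is nearly tautological from the construction: the $n$-th level of $\Phi(\Sigma C)$ is $H\DK(\tau_{\geq 0}\Sigma^{n+1}C)$, which is the $(n+1)$-st level of $\Phi(C)$, with matching structure maps, so $\Phi\circ\Sigma$ and $\Sigma\circ\Phi$ agree up to natural weak equivalence. \emph{Exactness} then reduces to the claim that $\Phi$ carries a cofiber sequence of complexes --- presented by a degreewise split short exact sequence --- to a homotopy cofiber sequence, which holds because each of $N$, $\DK$, $\tau_{\geq 0}$ (the connective discrepancy being absorbed in the colimit over levels defining the spectrum) and $H$ does. \emph{Identification on the generator} comes from restricting $\Phi$ to non-negatively graded complexes: there it agrees, up to natural weak equivalence, with $\mathrm{Ch}_{+}(k)\xrightarrow{\DK} s\mathrm{Mod}_k \xrightarrow{H} (\text{connective }Hk\text{-module spectra})$, which by Dold--Kan and Theorem~\ref{thm:rsss} is an equivalence onto the connective part; hence $\Phi(k)\simeq Hk$, and on graded endomorphisms $\overline{\Phi}$ realizes the identity $k = \Hom^{*}_{D(k)}(k,k) \to \pi_{-*}(Hk) = k$, both concentrated in degree $0$.

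I expect the difficulty here to be bookkeeping rather than a new idea: the one genuinely substantial ingredient, the comparison of simplicial $k$-modules with connective $Hk$-modules, is already granted as Theorem~\ref{thm:rsss}, and the rest is organizing the stabilization. The steps needing the most care are verifying that $\Phi$ is a \emph{triangulated} functor --- that the natural transformations $\DK(D)\to\Omega\DK(\Sigma D)$ entering the spectrum structure are weak equivalences and coherent enough to yield a genuine natural weak equivalence $\Phi\circ\Sigma\simeq\Sigma\circ\Phi$, not merely a levelwise match --- and confirming that $\mathrm{Ho}(Hk\text{-module spectra})$ may legitimately be treated as a compactly generated triangulated category with $Hk$ a compact generator, for which one appeals to Schwede's model structure on $Hk$-module spectra and the standard theory of modules over a connective ring spectrum.
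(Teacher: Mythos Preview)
Your argument is correct but takes a genuinely different route from the paper. The paper's proof is a two-line sketch: the Dold--Kan equivalence $\Ch_+(k)\simeq s\mathrm{Mod}_k$ composed with Robinson--Schwede gives an equivalence between $\Ch_+(k)$ and connective $Hk$-modules, and then one simply observes that passing to ``$\Omega$-spectrum objects'' on both sides yields the unbounded statement --- an unbounded complex $C$ being the same data as the tower $(\tau_{\geq 0}\Sigma^n C)_n$ with its structure maps, and an $Hk$-module spectrum being by definition such a tower of connective $Hk$-modules. So the paper just stabilizes the connective equivalence levelwise.

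You instead run a compactly-generated triangulated category argument: check $\overline\Phi$ is exact, coproduct-preserving, and matches the generators $k\mapsto Hk$ with their graded endomorphism rings, then invoke the standard d\'evissage. This is perfectly sound (and you correctly flag the two places needing care: that $\Phi$ is genuinely triangulated, and that the target really is compactly generated by $Hk$). The trade-off is that your approach imports more machinery --- model structures on $Hk$-module spectra, compact generation, the two-variable localizing subcategory argument --- whereas the paper's approach needs only the definition of ``$Hk$-module spectrum'' as a sequence of connective $Hk$-modules with structure maps, so that the passage from connective to unbounded is essentially formal. On the other hand, your method is the robust one that generalizes to ring spectra that are not Eilenberg--MacLane, and it makes the content of the equivalence (matching generators and their endomorphisms) more visible.
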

\begin{proof}[Proof sketch]
  This is just assembling other equivalences: the Dold--Kan functor
  gives an equivalence between $\Ch_+(k)$ and $s\mathrm{Mod}_k$,
  Robinson and Schwede's results give an equivalence between
  $s\mathrm{Mod}_k$ and $Hk$-modules.  There is an induced equivalence
  between ``$\Omega$-spectrum objects'' in $\Ch_+(k)$ and
  $Hk$ module spectra.  Spelling this out, we have an equivalence from
  $\Ch(k)$ to $Hk$-module spectra.
\end{proof}

The above functor from $\Ch(k)$ to $Hk$-module spectra, defined
explicitly in terms of the Dold--Kan functor and the
Eilenberg--MacLane functor, again has a homotopy inverse
\emph{functor} which turns an $Hk$-module spectrum $E$ into an
unbounded chain complex.  This inverse functor is defined by turning
each $Hk$-module $E_n$ into a $k$-linear positively graded chain
complex $Lc(E_n)$; the maps $E_n \to \Omega E_{n+1}$ then induces maps
of chain complexes $\Sigma Lc(E_n) \to Lc(E_{n+1})$ and we may form a
chain complex as the colimit (or homotopy colimit) of
$\Sigma^{-n} Lc(E_n)$.  Up to
weak equivalence we get the $Hk$-module spectrum back by applying the 
functor in Example~\ref{example:chain-complex-to-spectrum};
conversely, turning an unbounded $k$-linear chain complex into an
$Hk$-module spectrum and then back to a chain complex results in a
chain complex quasi-isomorphic to the one we started with.
Unfortunately the functor from $Hk$-module spectra to chain complexes
is not nearly as explicit as the one in the other direction, since it
involves the inexplicit cofibrant approximation $c(E_n) \to E_n$ of
$Hk$-modules.

\subsection{The tangent complex of a formally cohesive functor as an
  $Hk$ module spectrum.}
\label{sec:hk-module-spectra}

Let us now finally return to formally cohesive functors.  If
$\mathcal{F}: \smallC_k \to s\Sets$ is such a functor, we have already
explained how to define $\bigtangent \mathcal{F}$ as an
$\Omega$-spectrum, whose $n$th space is
\begin{equation*}
 ( \bigtangent \mathcal{F})_n= \mathcal{F}(k \oplus k[n]).
\end{equation*}
This is an
$\Omega$-spectrum by the cohesiveness assumption, and is usually not
connective.  The goal of this section (and the goal of recalling all the
$\Gamma$-space machinery!)\ is to explain why the based simplicial sets
$(\bigtangent \mathcal{F})_n$ have canonical $\Gamma$-space structures,
and why the resulting $\Gamma$-spectrum $\bigtangent\mathcal{F}$ is
naturally an $Hk$-module spectrum.

\begin{Lemma}
  Let $\mathcal{F}: \smallC_k$ be formally cohesive and let $V$ be a
  simplicial $k$-module with $\pi_*(V)$ finite dimensional.  Then the
  $\Gamma$-space $S \mapsto \mathcal{F}(k \oplus HV(S))$ is special.
\end{Lemma}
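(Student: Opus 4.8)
The plan is to unwind the definition of ``special'' and reduce the assertion to the fact that $\mathcal{F}$ preserves homotopy pullbacks. Recall we must show that for each $n \geq 0$ the map
\begin{equation*}
  \mathcal{F}(k \oplus HV(\underline{n})) \to \prod_{i=1}^n \mathcal{F}(k \oplus HV(\underline{1}))
\end{equation*}
whose $i$-th coordinate is induced by $p_i: \underline{n} \to \underline{1}$ is a weak equivalence. The case $n = 0$ is exactly the assertion that $\mathcal{F}(k \oplus HV(\underline 0)) = \mathcal{F}(k)$ is contractible, which is part of formal cohesiveness, and the case $n=1$ is trivial. So the content is an inductive statement: writing $\underline{n} = \underline{n-1} \vee \underline{1}$ (wedge of based sets), we have a pushout square of based sets $\underline 0 \to \underline{1}$, $\underline 0 \to \underline{n-1}$ with pushout $\underline n$, hence (applying $HV$, which sends based-set wedges to products since $HV(S) = H_0(S, \ast; V) \cong \prod^{S\setminus\{\ast\}} V$) a pullback square of simplicial $k$-modules
\begin{equation*}
  \xymatrix{
    HV(\underline{n}) \ar[r]\ar[d] & HV(\underline{n-1}) \ar[d] \\
    HV(\underline{1}) \ar[r] & HV(\underline{0}) = 0,
  }
\end{equation*}
in which all the maps are levelwise surjective (split surjective, even) simplicial $k$-module maps. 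Applying the square-zero-extension construction $W \mapsto k \oplus W$, which is exact and preserves levelwise surjections, turns this into a strictly cartesian square in $\smallC_k$ with the bottom and right maps surjective in each simplicial degree — precisely the kind of square to which the ``preserves homotopy pullback'' hypothesis applies. (We need $k \oplus HV(S) \in \smallC_k$, which holds because $\pi_*(HV(S)) \cong \pi_*(V)^{\oplus(|S|-1)}$ is finite-dimensional by hypothesis on $V$.)

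Thus $\mathcal{F}$ carries the square to a homotopy-cartesian square
\begin{equation*}
  \xymatrix{
    \mathcal{F}(k \oplus HV(\underline n)) \ar[r]\ar[d] & \mathcal{F}(k \oplus HV(\underline{n-1})) \ar[d] \\
    \mathcal{F}(k \oplus HV(\underline 1)) \ar[r] & \mathcal{F}(k),
  }
\end{equation*}
and since $\mathcal{F}(k) \simeq \ast$, the top-left corner is the homotopy product $\mathcal{F}(k \oplus HV(\underline{n-1})) \times^h \mathcal{F}(k \oplus HV(\underline 1)) \simeq \mathcal{F}(k \oplus HV(\underline{n-1})) \times \mathcal{F}(k \oplus HV(\underline 1))$. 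By the inductive hypothesis the first factor is $\prod_{i=1}^{n-1}\mathcal{F}(k\oplus HV(\underline 1))$, giving the desired equivalence; one checks the resulting equivalence is compatible with the projections $X(p_i)$, which it is by construction of the Segal maps from the same wedge decomposition. This completes the induction.

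The only real point requiring care — and the step I would flag as the main obstacle — is the bookkeeping that the weak equivalence produced by the induction is genuinely the Segal map $X(\underline n) \to \prod_i X(\underline 1)$ and not merely \emph{some} equivalence; this amounts to checking that the maps $p_i: \underline n \to \underline 1$ factor through the wedge decomposition $\underline n = \underline{n-1}\vee\underline 1$ compatibly, which is a routine but slightly fiddly diagram chase in $\Gamma^{\mathrm{op}}$. A secondary technical point is confirming that $HV$ as defined (levelwise on a simplicial $k$-module, via relative $H_0$) really does convert wedges of based sets into the levelwise product of simplicial $k$-modules and that this is natural in $S$; this is immediate from the explicit description $HV(S) \cong \prod^{S\setminus\{\ast\}}V$ but should be stated. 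Everything else is a direct application of the definition of formally cohesive together with contractibility of $\mathcal{F}(k)$.
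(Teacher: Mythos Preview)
Your proposal is correct and follows essentially the same approach as the paper: induct on $n$, using that $HV(\underline{n}) \cong HV(\underline{n-1}) \oplus HV(\underline{1})$ gives a cartesian square with surjective legs, to which formal cohesiveness applies. The paper states this slightly more abstractly (for arbitrary $W_1,\dots,W_n$ rather than copies of $V$) and flags one technicality you glossed over: the definition of $\Gamma$-space requires $X(\underline{0})$ to be \emph{terminal}, not merely contractible, so strictly one must first quotient by $\mathcal{F}(k)$ as in the remark following Definition~\ref{GammaSpaceDefinition}.
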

\begin{proof}
  Strictly speaking $\mathcal{F}(k) = \mathcal{F}(k \oplus
  HV(\underline{0}))$ need not be terminal, so we should first
  quotient it out, as described in the remark following
  Definition \ref{GammaSpaceDefinition}. 

  If $W_1$, \dots, $W_n$ are simplicial $k$-modules, the projection
  maps $p_j: \oplus_i W_i \to W_j$ induce a map
  \begin{equation*}
    \mathcal{F}(k \oplus \bigoplus_{i=1}^n W_i) \to \prod_{i=1}^n
    \mathcal{F}(k \oplus W_i),
  \end{equation*}
  and by induction the formal cohesiveness of $\mathcal{F}$ implies
  that these are weak equivalences.  Applying this to $W_1 = \dots =
  W_n = V$ we see that the map
  \begin{equation*}
    \mathcal{F}(k \oplus HV(\underline{n})) \to \prod_{i=1}^n
    \mathcal{F}(k \oplus HV(\underline{1}))
  \end{equation*}
  is a weak equivalence and hence $S \mapsto \mathcal{F}(k \oplus
  HV(S))$ is special.
\end{proof}
The following Lemma and its proof shows the advantage of working with
$Hk$-modules as opposed to simplicial $k$-modules.
\begin{Lemma}
  Let $S$ and $T$ be based finite sets and let $\mu: Hk(S) \times
  HV(T) \to HV(S \wedge T)$ be the maps defining the multiplication
  $Hk \wedge HV \to HV$ in the $Hk$-module structure on $HV$.  From
  the definition of $\mu$ we see that for each $v \in Hk(S) = \prod^{S
    \setminus \{\ast\}} k$ the map $\mu(v,-): HV(T) \to HV(S \wedge T)$
  is a map of simplicial $k$-modules, and hence induces maps
  \begin{equation*}
    \mathcal{F}(k \oplus -)(\mu(v,-)): \mathcal{F}(k \oplus HV(T)) \to
    \mathcal{F}(k \oplus HV(S \wedge T)),
  \end{equation*}
  and as $v \in Hk(S)$ varies, these assemble to a map
  \begin{equation*}
    Hk(S) \times \mathcal{F}(k \oplus HV(T)) \to \mathcal{F}(k \oplus
    HV(S \wedge T))
  \end{equation*}
  making the special $\Gamma$-space $T \mapsto \mathcal{F}(k \oplus
  HV(T))$ into an $Hk$-module spectrum.
\end{Lemma}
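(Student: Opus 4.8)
The plan is to verify that the maps described in the statement genuinely define an $Hk$-module spectrum structure on the $\Gamma$-spectrum $T \mapsto \mathcal{F}(k \oplus HV(T))$, and the proof will be essentially a matter of unwinding definitions and checking naturality, since the hard conceptual work — that $\mathcal{F}$ preserves homotopy pullbacks and hence turns direct sums of simplicial $k$-modules into products up to weak equivalence — has already been done in the previous Lemma. First I would observe that for fixed $v \in Hk(S)$ the map $\mu(v,-)\colon HV(T) \to HV(S \wedge T)$ is a morphism of simplicial $k$-modules (this is immediate from the explicit formula for $\mu$ in Example~\ref{example:EM-spectra}, which is $k$-linear in the $HV(T)$ variable once the $Hk(S)$ variable is frozen), so that $k \oplus \mu(v,-)$ is a morphism in $\smallC_k$ and we may legitimately apply $\mathcal{F}$ to it; the resulting assignment $v \mapsto \mathcal{F}(k \oplus \mu(v,-))$ is then, by functoriality of $\mathcal{F}$ and the fact that $Hk(S)$ is a \emph{discrete} simplicial set (so maps out of it are just families of maps indexed by its elements), a map of simplicial sets $Hk(S) \times \mathcal{F}(k \oplus HV(T)) \to \mathcal{F}(k \oplus HV(S \wedge T))$.

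Next I would check that these maps are natural in both $S$ and $T$ as functors $\Gamma^{\mathrm{op}} \times \Gamma^{\mathrm{op}} \to s\Sets$: naturality in $S$ follows from naturality of $\mu$ and functoriality of $\mathcal{F}$; naturality in $T$ likewise. By the universal property of the smash product of $\Gamma$-spaces recalled in \S\ref{sec:smash-product-gamma} (Lydakis's model), such a natural family is precisely the data of a map of $\Gamma$-spaces $Hk \wedge \bigtangent\mathcal{F}_n \to \bigtangent\mathcal{F}_n$ for each $n$, where $\bigtangent\mathcal{F}_n(S) = \mathcal{F}(k \oplus HV(S)[n])$ — strictly speaking one should note that collapsing the (not necessarily terminal) value at $\underline 0$ changes nothing up to objectwise weak equivalence, as in the remark after Definition~\ref{GammaSpaceDefinition}, and then one works with the quotiented $\Gamma$-space. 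Then I would verify the three algebra axioms. Associativity $\mu \circ (\mathrm{id} \wedge \mu) = \mu \circ (\mu \wedge \mathrm{id})$ reduces, after applying $\mathcal{F}$, to the strict associativity of the multiplication $Hk \wedge Hk \wedge HV \to HV$ on the simplicial-$k$-module side, i.e.\ to the identity $\mu(v, \mu(w, -)) = \mu(vw, -)$ of maps $HV(T) \to HV(S \wedge S' \wedge T)$, which holds on the nose. The unit axiom is the statement that $\mu(1, -)$ is the canonical inclusion (from the identity map under $\mathbb{S} \wedge HV \cong HV$), again an identity of simplicial $k$-modules that survives application of $\mathcal{F}$. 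Commutativity is not needed here since $HV$ is only a module, not a ring, over $Hk$. Finally I would check compatibility with the structure maps $\epsilon_n \colon \bigtangent\mathcal{F}_n \to \Omega\, \bigtangent\mathcal{F}_{n+1}$: these come (Example~\ref{example:defn-of-tangent-complex-as-spectrum}) from $\mathcal{F}$ applied to the pullback defining $\widetilde{k[n]}$, and the $Hk$-action commutes with these because $\mu(v,-)$ is compatible with the suspension maps on Eilenberg--MacLane spaces, so applying the homotopy-pullback-preserving functor $\mathcal{F}$ preserves that compatibility.

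The main obstacle — and really the only subtle point — is making the bookkeeping honest: ensuring that the various $\Gamma$-space structures are genuinely natural \emph{as functors on all of} $\Gamma^{\mathrm{op}}$ (not merely on the skeleton $\underline n$), that the quotient-by-$\mathcal{F}(k)$ maneuver needed to land in Segal's more restrictive category of $\Gamma$-spaces does not disturb the $Hk$-action (it does not, because $\mu(v,-)$ is a pointed map, so it descends to the quotients), and that the previous Lemma's conclusion that $S \mapsto \mathcal{F}(k \oplus HV(S))$ is special is used only to know we have an $\Omega$-spectrum after delooping — it plays no role in defining the action itself. Everything else is a routine naturality check, so I would present the proof compactly: state that $\mu(v,-)$ is $k$-linear, hence $\mathcal{F}$-admissible; assemble the family into a $\Gamma$-space map by the universal property of $\wedge$; and note that associativity and unitality are inherited verbatim from the strict $Hk$-module structure on $HV$ because $\mathcal{F}$ is a functor.
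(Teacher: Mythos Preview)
Your proposal is correct and follows the same approach as the paper, which dispatches the entire lemma in one sentence: ``This follows from the functoriality of the constructions involved.'' You have simply unpacked what that functoriality entails---the $k$-linearity of $\mu(v,-)$, the naturality in $S$ and $T$ giving a map out of the smash product, and the inheritance of the module axioms---which is a reasonable thing to do, though your discussion of compatibility with the delooping maps $\epsilon_n$ actually belongs to the \emph{next} lemma in the paper rather than this one.
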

\begin{proof}
  This follows from the functoriality of the constructions involved.
\end{proof}
We have not yet verified that the $\Gamma$-spaces constructed above
are very special.  We shall do that by providing deloopings, which are
in fact (usually) non-connective.  We shall only need this in the case
$V = k[n]$, but the following Lemma may be generalized to
other $V$.
\begin{Lemma}
  Let $(\bigtangent \mathcal{F})_n$ be the special $\Gamma$-space $S
  \mapsto \mathcal{F}(k \oplus H(k[n])(S))$, for $n \geq 0$.  The
  natural maps%
  \begin{equation*}
    \mathcal{F}(k \oplus H(k[n])(S)) \to \Omega \mathcal{F}(k \oplus
    H(k[n+1])(S)),
  \end{equation*}
  arising from the weak equivalence from $k \oplus H(k[n])(S)$ to the
  homotopy pullback of $k \rightarrow k \oplus H(k[n+1])(S) \leftarrow
  k$, define maps of special $\Gamma$-spaces $(\bigtangent 
  \mathcal{F})_n \to \Omega (\bigtangent \mathcal{F})_{n+1}$ which are
  $Hk$-module maps and also weak equivalences.
\end{Lemma}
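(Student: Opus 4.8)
The plan is to run the construction of Example~\ref{example:defn-of-tangent-complex-as-spectrum} with the based finite set $S$ carried along as a parameter, and then observe that every ingredient in that construction is functorial in $S$ and compatible with the $Hk$-module structure, so that the resulting objectwise weak equivalences automatically assemble into an $Hk$-linear map of $\Gamma$-spaces. Nothing deep is needed beyond formal cohesiveness of $\mathcal F$ and facts already recorded above; the work is entirely bookkeeping.

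First I would record the relevant point-set fact. For a simplicial $k$-module $W$ let $\widetilde W$ denote its based path space (maps $\Delta^1\to W$ sending $0$ to the zero element), so that $\widetilde{H(k[n+1])(S)}$ is the $S$-wise version of $\widetilde{k[n+1]}$; then the levelwise strict pullback of $k\to k\oplus W\leftarrow k\oplus\widetilde W$ is canonically $k\oplus\Omega W$, and $k\oplus\widetilde W\to k\oplus W$ is a fibration (surjective on the component of $0$ in each simplicial degree, exactly as for $\widetilde{k[n]}$). Taking $W=H(k[n+1])(S)$ and precomposing with the map~\eqref{eq:23}, which here is a weak equivalence $H(k[n])(S)\to\Omega H(k[n+1])(S)$ of simplicial $k$-modules since both sides have homotopy concentrated in degree $n$, produces a weak equivalence in $\smallC_k$ from $k\oplus H(k[n])(S)$ to a strict model $P_S$ of the homotopy pullback of $k\to k\oplus H(k[n+1])(S)\leftarrow k$. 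Since $W\mapsto\widetilde W$ and $W\mapsto\Omega W$ are functors on simplicial $k$-modules and $S\mapsto H(k[m])(S)$ is a $\Gamma$-space, all of this is functorial in $S\in\Gamma^{\mathrm{op}}$.

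Next I would apply $\mathcal F$. Because $\mathcal F$ preserves homotopy pullbacks along a degreewise surjection, and because $\widetilde{H(k[n+1])(S)}$ is contractible so that $\mathcal F(k\oplus\widetilde{H(k[n+1])(S)})\simeq\mathcal F(k)\simeq\ast$ by homotopy invariance and cohesiveness, $\mathcal F$ sends $P_S$ to the homotopy pullback of $\ast\to\mathcal F(k\oplus H(k[n+1])(S))\leftarrow\ast$, namely $\Omega\mathcal F(k\oplus H(k[n+1])(S))$. Composing with $\mathcal F$ of the weak equivalence from the previous step gives the asserted weak equivalence $(\bigtangent\mathcal F)_n(S)=\mathcal F(k\oplus H(k[n])(S))\to\Omega\mathcal F(k\oplus H(k[n+1])(S))=\Omega(\bigtangent\mathcal F)_{n+1}(S)$, which for $S=\underline 1$ is exactly the structure map of Example~\ref{example:defn-of-tangent-complex-as-spectrum}. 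Every step ($H(k[m])(-)$, the path space, the comparison~\eqref{eq:23}, $\mathcal F(k\oplus-)$, and $\Omega$) is natural in $S$, so this is a natural transformation of functors $\Gamma^{\mathrm{op}}\to s\Sets$, i.e.\ a map of $\Gamma$-spaces; and $\Omega(\bigtangent\mathcal F)_{n+1}$ is special since $(\bigtangent\mathcal F)_{n+1}$ is (the previous Lemma) and $\Omega$ preserves finite products and weak equivalences.

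For $Hk$-linearity I would trace the definition of the module structure: on $(\bigtangent\mathcal F)_m$ the action of $v\in Hk(S')$ is induced by applying $\mathcal F(k\oplus-)$ to the $k$-linear map $\mu(v,-)\colon H(k[m])(T)\to H(k[m])(S'\wedge T)$, which is natural in the simplicial $k$-module variable of $H(-)$. The maps relating $H(k[n])$, $H(k[n+1])$, $\widetilde{H(k[n+1])}$ and the comparison~\eqref{eq:23} are all $k$-linear and natural in that variable, so $\mu(v,-)$ commutes with them; applying $\mathcal F(k\oplus-)$ and $\Omega$ then shows that the square expressing compatibility of $(\bigtangent\mathcal F)_n\to\Omega(\bigtangent\mathcal F)_{n+1}$ with the $v$-action commutes, where on the target we use the $Hk$-structure on $\Omega$ of an $Hk$-module (i.e.\ $\Omega$ applied to the action maps). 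Hence the structure maps are $Hk$-module maps, and together with the previous paragraph they exhibit $\bigtangent\mathcal F$ as a (generally non-connective) $Hk$-module spectrum all of whose structure maps are weak equivalences, i.e.\ an $\Omega$-spectrum. The only point needing care — the nearest thing to an obstacle — is making sure the fibrant replacements and loop-space models used pointwise in Example~\ref{example:defn-of-tangent-complex-as-spectrum} are genuinely functorial in $S$ and in the $k$-module variable of $H$, so that the objectwise data is an honest morphism of $\Gamma$-spaces respecting the $Hk$-action; since $\widetilde W$ and $\Omega W$ are strictly functorial in $W$, this causes no difficulty.
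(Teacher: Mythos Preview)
Your proposal is correct and follows essentially the same approach as the paper's own proof, which consists of only two sentences: the maps are weak equivalences for each $S$ by formal cohesiveness of $\mathcal{F}$, and the deloopings commute strictly with the $Hk$-module structure maps. You have simply written out in detail the bookkeeping that the paper leaves to the reader, including the explicit model for the homotopy pullback via the path space and the trace through the $S$-naturality and $k$-linearity of all the constructions involved.
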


\begin{proof}
  The maps are weak equivalences for each $S$ by the formal
  cohesiveness of $\mathcal{F}$.  It is easy to check that the
  deloopings commute (strictly, as usual) with the $Hk$-module
  structure maps.
\end{proof}

\begin{Definition}
  The tangent complex $\bigtangent\mathcal{F}$ is the chain complex
  associated (as in
  Example~\ref{example:chain-complex-to-spectrum}) to the $Hk$-module
  spectrum defined by the $\Gamma$-space
  $S \mapsto \mathcal{F}(k \oplus Hk(S))$ and the deloopings
  $\mathcal{F}(k \oplus H(k[n])(S))$ provided above.
\end{Definition}

In many cases of interest the space $\mathcal{F}(k \oplus k)$ is
discrete, and then the chain complex $\bigtangent \mathcal{F}$ has
homology groups only in non-positive degrees.  For example, this is
the case for the functors $\mathcal{F}_{R,\overline{\rho}}$ from
Example \ref{ex:functors-represented-by-general-rings}.

The following important example explains
the connection to the usual \emph{cotangent complex} of Quillen and
Illusie:  To a map $A \rightarrow B$ of  commutative rings, or indeed of simplicial commutative rings, 
and a $B$-module $M$, 
we can define Andr{\'e}--Quillen cohomology groups $\Der^i_A(B, M)$;
if $A \rightarrow B$ is cofibrant, this is the cohomology of the cosimplicial abelian group obtained by computing (levelwise) $A$-linear derivations of $B$ with targets in $M$.  
 Similarly one dually defines a simplicial $B$-module $L_{B/A}$, the ``cotangent complex.''     Thus if $A, B$ are usual rings and $M$
 a usual $B$-module, $\Der^0_A(B, M)$ is the set of derivations $B \rightarrow M$ and $\Der^1_A(B, M)$  classifies  commutative $A$-algebra extensions $M \rightarrow ?  \rightarrow B$, 
 whereas $\pi_0 L_{B/A} \simeq \Omega_{B/A}$. 
 For details see \cite{Quillen}.

\begin{Example}\label{ex:tangent-complex-Hom-R}
  If $R$ is a cofibrant simplicial commutative ring and $\overline
  \rho: \pi_0 R \to k$ is a homomorphism, we may consider the functor
  $\mathcal{F}_{R,\overline\rho}: \smallC_k \to s\Sets$ from  
  Example~\ref{ex:functors-represented-by-general-rings}.  
By \eqref{AQ1} and the subsequent discussion, 
  $$ \pi_{-i} \mathfrak{t} \mathcal{F}_{R, \rhobar} \simeq \Der^i_{\Z}(R, k).$$
Indeed, the tangent complex of $\mathcal{F}_{R, \overline \rho}$ is
   quasi-isomorphic to     \begin{equation}  \label{DKLdual}
    \Hom_{\Ch(k)}(\DK(L_{R/\Z} \otimes_{R} k), \ k),
  \end{equation}
  where the  $\Hom$ is the internal hom of chain complexes. 
\end{Example}

\subsection{Constructions on cohesive functors and their effect on
  tangent complexes}
\label{sec:verify-deriv-schl}

In this section we discuss various constructions which produce new
cohesive functors out of old ones, and discuss the effect of these
functors on tangent complexes.  Both of the following two Lemmas are
special cases of the more general statement that the class of formally
cohesive functors are closed under taking objectwise homotopy limits.
\begin{Lemma}
  Let $\mathcal{F}:\smallC_k \to s\Sets$ be formally cohesive and let
  $X$ be a simplicial set, or even a pro-simplicial set.  Then the
  functor $A \mapsto \Map(X,\mathcal{F}(A))$ is formally cohesive,
  where $\Map$ denotes the space of unbased maps.  If $\mathcal{F}$
  takes values in pointed spaces and $X$ is pointed, then the same is
  true for the space of based maps.
\end{Lemma}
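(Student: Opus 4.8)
The plan is to check the two defining properties of a formally cohesive functor --- preservation of homotopy pullback and contractibility of the value at $k$ --- directly, exploiting that $\Map(X,-)$ is, up to the usual fibrancy caveats, a homotopy limit construction. First I would reduce to the case that $\mathcal{F}$ takes values in Kan complexes: by Lemma~\ref{Lemma:2.10} and the remark following it, $\mathcal{F}$ is naturally weakly equivalent to such a functor, and since $\Map(X,-)$ carries objectwise weak equivalences between Kan complexes to weak equivalences, the functor $A\mapsto\Map(X,\mathcal{F}(A))$ is unchanged up to natural weak equivalence and is in particular homotopy invariant. Contractibility is then immediate: $\mathcal{F}(k)\simeq\ast$, so $\Map(X,\mathcal{F}(k))\simeq\Map(X,\ast)=\ast$, and likewise $\Map_\ast(X,\mathcal{F}(k))\simeq\ast$ in the based case.

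The heart of the matter is preservation of homotopy pullback. Given a strictly cartesian square in $\smallC_k$ in which $C\to D$ and $B\to D$ are surjective in each simplicial degree, $\mathcal{F}$ sends it to a homotopy cartesian square of Kan complexes. Now $\Map(X,-)\colon s\Sets\to s\Sets$ is right adjoint to $X\times(-)$, so it preserves strict pullbacks and carries Kan fibrations between Kan complexes to Kan fibrations between Kan complexes; factoring one leg of the square as a Kan fibration and applying $\Map(X,-)$ therefore produces again a homotopy cartesian square. Hence $A\mapsto\Map(X,\mathcal{F}(A))$ preserves homotopy pullback and is formally cohesive. The based case is identical, using that $\Map_\ast(X,-)$ is right adjoint to $X\wedge(-)$.

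Finally, for a pro-simplicial set $X=(i\mapsto X_i)$ one sets $\Map(X,Y)=\colim_i\Map(X_i,Y)$, a filtered colimit. Each $A\mapsto\Map(X_i,\mathcal{F}(A))$ is formally cohesive by the previous paragraph, and the assertion for $X$ follows from three standard facts: a filtered colimit of weak equivalences of simplicial sets is a weak equivalence; a filtered colimit of contractible Kan complexes is contractible; and filtered colimits of simplicial sets commute with homotopy pullback, since homotopy pullback of Kan complexes may be computed as a strict pullback along a Kan fibration, filtered colimits commute with finite limits in $s\Sets$, and a filtered colimit of Kan fibrations between Kan complexes is again one. This last interchange is the only point that requires any care; everything else is a formal consequence of $\Map(X,-)$ being a right Quillen functor --- which is also why the general principle quoted just before the Lemma, that formally cohesive functors are closed under objectwise homotopy limits, applies here.
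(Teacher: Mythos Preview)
Your proof is correct and takes essentially the same approach as the paper. The paper's proof is a single sentence --- ``$\Map(X,-)$ preserves homotopy pullbacks'' --- and your argument is a careful unpacking of exactly that assertion (plus the trivially verified contractibility at $k$), together with the routine extension to the pro case via filtered colimits; you even identify at the end the general principle the paper is invoking.
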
  As usual, if $X = (\alpha \mapsto X_{\alpha})$ is a pro-simplicial set, 
we define $\Map(X, -)$  to be the colimit $\colim_{\alpha} \Map(X_{\alpha}, -)$.  
\begin{proof}
  $\Map(X,-)$ preserves homotopy pullbacks.   
\end{proof}

\begin{Lemma}\label{lemma:2.50}
  Let $\mathcal{F}_0, \mathcal{F}_1, \mathcal{F}_{01}: \smallC_k \to
  s\Sets$ be homotopy invariant functors, equipped with natural
  transformations $\mathcal{F}_0 \to \mathcal{F}_{01} \leftarrow
  \mathcal{F}_1$, and define $\mathcal{F}$ by the objectwise homotopy
  pullback
  \begin{equation*}
    \mathcal{F}(A) = \mathcal{F}_0(A) \times^h_{\mathcal{F}_{01}(A)}
    \mathcal{F}_1(A).
  \end{equation*}
  If $\mathcal{F}_0$, $\mathcal{F}_1$, and $\mathcal{F}_{01}$ are
  formally cohesive, then so is $\mathcal{F}$.  More generally, if
  $\mathcal{F}_0$, $\mathcal{F}_1$, and $\mathcal{F}_{01}$ preserve
  homotopy pullback, then so does $\mathcal{F}$.
\end{Lemma}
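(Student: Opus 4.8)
The plan is to prove the more general assertion — that $\mathcal{F}$ preserves homotopy pullback — and then to deduce the formally cohesive case from a one-line remark about $\mathcal{F}(k)$. First observe that $\mathcal{F}$ is homotopy invariant, being an objectwise homotopy pullback of homotopy invariant functors (the homotopy pullback of simplicial sets preserves weak equivalences); so the statement ``$\mathcal{F}$ preserves homotopy pullback'' is well-posed. To prove it, fix a strictly cartesian square $A \to B$, $A \to C$, $B \to D$, $C \to D$ in $\smallC_k$ with $B \to D$ and $C \to D$ surjective in each simplicial degree. Recall that a strictly commutative square of simplicial sets is homotopy cartesian precisely when, for every choice of basepoint in the corner $\mathcal{F}(C)$ (with image in $\mathcal{F}(D)$), the induced map
\begin{equation*}
\hofib\big(\mathcal{F}(A) \to \mathcal{F}(C)\big) \longrightarrow \hofib\big(\mathcal{F}(B) \to \mathcal{F}(D)\big)
\end{equation*}
is a weak equivalence; so it suffices to check this.

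Now by definition $\mathcal{F}(X) = \mathcal{F}_0(X) \times^h_{\mathcal{F}_{01}(X)} \mathcal{F}_1(X)$ for each $X$, that is, $\mathcal{F}(X)$ is the $\holim$ of the cospan $\mathcal{F}_0(X) \to \mathcal{F}_{01}(X) \leftarrow \mathcal{F}_1(X)$, and the maps in the displayed square are the maps of $\holim$'s induced by the corresponding maps of cospans. Since the formation of a homotopy fiber of a map of diagrams commutes with homotopy limits — here, $\hofib$ of a $\holim$ over the cospan category is the $\holim$ over the cospan category of the objectwise $\hofib$'s (with compatible basepoints) — both homotopy fibers above are themselves homotopy pullbacks:
\begin{equation*}
\hofib\big(\mathcal{F}(A) \to \mathcal{F}(C)\big) \simeq \hofib\big(\mathcal{F}_0(A)\to\mathcal{F}_0(C)\big) \times^h_{\hofib(\mathcal{F}_{01}(A)\to\mathcal{F}_{01}(C))} \hofib\big(\mathcal{F}_1(A)\to\mathcal{F}_1(C)\big),
\end{equation*}
and similarly for $\mathcal{F}(B) \to \mathcal{F}(D)$, the comparison between the two being induced, factor by factor, by the maps $\hofib(\mathcal{F}_i(A)\to\mathcal{F}_i(C)) \to \hofib(\mathcal{F}_i(B)\to\mathcal{F}_i(D))$ for $i \in \{0,1,01\}$. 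Each of these is a weak equivalence by the hypothesis that $\mathcal{F}_i$ preserves homotopy pullback (applied to our square, which satisfies the required surjectivity conditions). Hence the comparison map is a homotopy pullback of a map of cospans which is an objectwise weak equivalence, and therefore is itself a weak equivalence. This shows that $\mathcal{F}$ preserves homotopy pullback.

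Finally, if all three of $\mathcal{F}_0, \mathcal{F}_1, \mathcal{F}_{01}$ are formally cohesive, then each $\mathcal{F}_i(k)$ is contractible, so $\mathcal{F}(k) = \mathcal{F}_0(k) \times^h_{\mathcal{F}_{01}(k)} \mathcal{F}_1(k)$ is a homotopy pullback of contractible spaces, hence contractible; together with the previous paragraph this shows $\mathcal{F}$ is formally cohesive. The only genuine obstacle is the bookkeeping behind the interchange ``homotopy fiber commutes with homotopy pullback'' and the compatibility of the various comparison maps, i.e.\ a Fubini-type interchange of homotopy limits (cf.\ Appendix~\ref{sec:homotopy-theory}); beyond the standard formal properties of homotopy limits, no new homotopy-theoretic input is needed. (One could equivalently run the whole argument by writing both squares directly as objectwise homotopy limits of a single functor on $\bigl(\{0<1\}\times\{0<1\}\bigr)\times\Lambda$, where $\Lambda$ is the cospan category, and invoking the same interchange.)
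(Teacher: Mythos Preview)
Your proof is correct and follows essentially the same approach as the paper: both argue via the Fubini-type interchange of homotopy limits, the paper compressing this into the single remark that ``homotopy limits commute with each other up to weak equivalence; in particular any homotopy limit preserves homotopy pullback.'' Your version simply spells out what that interchange means in this particular case.
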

\begin{proof}
  This is a formal consequence of homotopy limits commuting with each
  other up to weak equivalence; in particular any homotopy limit
  preserves homotopy pullback.
\end{proof}

\begin{Lemma}\label{lem:2.51}
  Let $\mathcal{F}: \smallC_k \to s\Sets$ be formally cohesive.
  \begin{enumerate}[(i)]
  \item\label{item:2} The tangent complex of the ``forgetful'' functor
    $A \mapsto \mathfrak{m} = \Ker(A \to k)$ is one-dimensional and
    concentrated in degree 0 (i.e.\ the corresponding $Hk$-module
    spectrum is equivalent to $Hk$ as an $Hk$-module spectrum).
  \item If $\mathcal{F}$ takes values in pointed simplicial sets,
    then 
    $\bigtangent \Omega \mathcal{F} \simeq \Sigma^{-1}\bigtangent
    \mathcal{F}$.
  \item\label{item:1} For a space $X$, we have
    $\bigtangent\Map(X,\mathcal{F}(-)) \simeq
    C^*(X;\bigtangent\mathcal{F})$.  Similarly when $X$ is a
    pro-space, where both sides are then interpreted as the direct
    limits of maps from finite levels of the pro-object.
  \item The tangent complex takes homotopy pullback squares of
    functors are taken to pullback squares of spectra.  Hence for
    $\mathcal{F} = \mathcal{F}_0 \times^h_{\mathcal{F}_{01}}
    \mathcal{F}_1$
    as in Lemma~\ref{lemma:2.50} above, there is a (co)fiber sequence
    of spectra      \begin{equation*}
      \bigtangent\mathcal{F} \to \bigtangent\mathcal{F}_0 \times
      \bigtangent\mathcal{F}_1 \to \bigtangent\mathcal{F}_{01}
    \end{equation*}
    and a corresponding ``Mayer--Vietoris'' sequence in homotopy
    groups
    \begin{equation*}
      \dots \to \pi_{-n}\bigtangent \mathcal{F} \xrightarrow{((f_0)*,(f_1)_*)}
      \pi_{-n}\bigtangent \mathcal{F}_0 \oplus \pi_{-n}\bigtangent
      \mathcal{F}_1 \xrightarrow{(p_0)_* - (p_1)_*} \pi_{-n} \bigtangent
      \mathcal{F}_{01} \to
      \pi_{-(n+1)}\bigtangent \mathcal{F} \to \dots,
    \end{equation*}
    where $p_i: \mathcal{F}_i \to \mathcal{F}_{01}$ are the specified
    natural transformations and $f_i: \mathcal{F} \to \mathcal{F}_i$
    is the induced projections from the homotopy fiber product to its
    factors.
  \end{enumerate}
\end{Lemma}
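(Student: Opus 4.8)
The plan is to prove each of the four parts of Lemma~\ref{lem:2.51} by reducing to general facts about $Hk$-module spectra and the definition of the tangent complex as the $Hk$-module spectrum $n \mapsto \mathcal{F}(k \oplus k[n])$ (equivalently $S \mapsto \mathcal{F}(k \oplus Hk(S))$ together with the deloopings).

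\textbf{Part (\ref{item:2}).} First I would compute $\bigtangent$ of the forgetful functor $A \mapsto \mathfrak{m}(A) = \Ker(A \to k)$. By definition its $n$th space is $\mathfrak{m}(k \oplus k[n]) = k[n]$, i.e.\ a simplicial $k$-module with $\pi_* $ concentrated in degree $n$, one-dimensional over $k$. The deloopings identify $(\bigtangent \mathfrak{m})_n = k[n]$ with $\Omega(\bigtangent \mathfrak{m})_{n+1} = \Omega k[n+1]$ via the standard equivalence $k[n] \simeq \Omega k[n+1]$, so the associated $Hk$-module spectrum is $Hk$ itself; equivalently, under the Dold--Kan/Eilenberg--MacLane equivalence (Theorem~\ref{thm:rsss}) the chain complex is $k$ in degree $0$. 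This is essentially immediate once one unwinds the $\Gamma$-space structure $S \mapsto \mathfrak{m}(k \oplus Hk(S)) = Hk(S)$, which is literally $Hk$.

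\textbf{Parts (ii) and (iii).} These follow by manipulating the defining $\Gamma$-space $S \mapsto \mathcal{F}(k \oplus Hk(S))$. For (ii): if $\mathcal{F}$ is pointed then $(\Omega \mathcal{F})(k \oplus k[n]) = \Omega(\mathcal{F}(k \oplus k[n])) = (\bigtangent \mathcal{F})_{n}$ looped once, and by the delooping identification $\Omega (\bigtangent\mathcal{F})_n \simeq (\bigtangent \mathcal{F})_{n-1}$, which is exactly $\Sigma^{-1}$ at the level of $Hk$-module spectra (all maps in sight are $Hk$-module maps, so the shift is as $Hk$-modules, hence as chain complexes). For (iii): $\Map(X, \mathcal{F}(-))$ evaluated at $k \oplus k[n]$ is $\Map(X, \mathcal{F}(k \oplus k[n])) = \Map(X, (\bigtangent\mathcal{F})_n)$; the $\Gamma$-space and $Hk$-module structures are applied pointwise in $X$, so the resulting $Hk$-module spectrum is the cotensor $\Map(X, \bigtangent\mathcal{F})$, which under Dold--Kan corresponds to the cochain complex $C^*(X; \bigtangent\mathcal{F})$. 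The pro-space case is handled by the stated convention $\Map(X,-) = \colim_\alpha \Map(X_\alpha, -)$ together with the fact that filtered colimits commute with homotopy groups; I would cite Theorem~\ref{thm:rsss} for the translation of ``cotensor of $Hk$-modules'' into ``internal hom of chain complexes.''

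\textbf{Part (iv).} Here I would argue that the tangent complex functor $\mathcal{F} \mapsto \bigtangent\mathcal{F}$ takes homotopy pullback squares of formally cohesive functors to homotopy pullback (equivalently, fiber, equivalently cofiber) squares of $Hk$-module spectra. Given $\mathcal{F} = \mathcal{F}_0 \times^h_{\mathcal{F}_{01}} \mathcal{F}_1$, evaluation at $k \oplus k[n]$ gives $\mathcal{F}(k \oplus k[n]) = \mathcal{F}_0(k \oplus k[n]) \times^h_{\mathcal{F}_{01}(k \oplus k[n])} \mathcal{F}_1(k \oplus k[n])$ by the definition of $\mathcal{F}$ as an objectwise homotopy pullback; this holds levelwise in $n$ and compatibly with the deloopings and $Hk$-module structure, hence the $Hk$-module spectra sit in a homotopy pullback square. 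Since the homotopy category of $Hk$-modules (equivalently of chain complexes, via Theorem~\ref{thm:rsss}) is stable/triangulated, a homotopy pullback square is also a cofiber sequence $\bigtangent\mathcal{F} \to \bigtangent\mathcal{F}_0 \times \bigtangent\mathcal{F}_1 \to \bigtangent\mathcal{F}_{01}$, and the associated long exact sequence in homotopy groups is the displayed Mayer--Vietoris sequence, with the maps $((f_0)_*,(f_1)_*)$ and $(p_0)_* - (p_1)_*$ being exactly those induced by the natural transformations. I expect the main (though still modest) obstacle to be bookkeeping: checking that the $\Gamma$-space and $Hk$-module structures really are compatible with the homotopy-pullback and cotensor constructions, so that the equivalences are equivalences of $Hk$-modules and not merely of underlying spectra — but this is formal, following from functoriality of all the constructions in §\ref{sec:hk-module-spectra} and the fact that homotopy limits commute with each other and with the loop/delooping functors.
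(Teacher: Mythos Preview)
Your proposal is correct and proceeds along the same lines as the paper. The one difference worth noting is organizational: the paper dispatches (ii)--(iv) in one stroke by observing that each is a special case of the single principle ``if $\mathcal{F} = \holim_{c \in C} \mathcal{F}_c$ then $\bigtangent\mathcal{F} = \holim_{c \in C} \bigtangent\mathcal{F}_c$, because homotopy limits of $\Omega$-spectra may be computed levelwise,'' whereas you treat each part separately (loop spaces, cotensors, pullbacks). Your case-by-case treatment is more explicit about the $Hk$-module and $\Gamma$-space compatibilities, which is a virtue; the paper's formulation is terser but makes the common mechanism clearer. Either way the content is the same: the tangent complex is defined levelwise at the objects $k \oplus k[n]$, and all the constructions in question are homotopy limits that commute with this levelwise definition.
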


 We will often use (iv) in the following form:  a homotopy commutative square of functors
$$  \xymatrix{
\mathcal{G} \ar[r]\ar[d]  &   \mathcal{F}_0 \ar[d] \\ 
 \mathcal{F}_1  \ar[r] &      \mathcal{F}_{01} \\ 
 }$$ 
 is an object-wise homotopy pullback square exactly when the induced map $$\bigtangent \mathcal{G} \rightarrow \hofib(\tangent \mathcal{F}_0 \oplus \tangent \mathcal{F}_1 \longrightarrow
 \tangent \mathcal{F}_{01})$$ is an isomorphism on homotopy groups. Indeed by our discussion above, the latter condition is satisfied exactly when 
 $\mathcal{G} \rightarrow \mathcal{F}_0 \times_{\mathcal{F}_{01}}^h \mathcal{F}_1$ induces an isomorphism on  homotopy groups of tangent complexes, thus is an equivalence by Lemma \ref{lem:tangent-complex-detects-equivalence}.

\begin{proof}
  For~(\ref{item:2}), clearly the functor takes $k \oplus k[n]$ to
  $k[n]$ and hence $\pi_i \bigtangent$ has the same homotopy groups as
  $Hk$ so they must be weakly equivalent.

  The remaining statements are special cases of the general statement
  that if $C$ is any category and $\mathcal{F} = \holim_{c \in C}
  \mathcal{F}_c$, then $\bigtangent \mathcal{F} = \holim_{c \in C}
  \bigtangent \mathcal{F}_c$.  This follows because (homotopy) limits
  of $\Omega$-spectra may be computed levelwise.
\end{proof}

\textbf{Remark}.  By combining these results, we see that the tangent
complex of the functor $A \mapsto s\Sets_*(X,\mathfrak{m})$ is
$C^*(X,\ast;k)$.

These result shows that the tangent complex behaves well with respect
to homotopy pullback of functors, or equivalently derived tensor
product of (pro-)simplicial rings.  By contrast, the behaviour under
pullback of rings is much more complicated.  Similarly, the tangent
complexes of $k \oplus k[n]$ are quite complicated.

Later we shall define for any algebraic group $G$ a functor $A \mapsto
BG(A)$.  The following results will let us prove that that functor
preserves homotopy pullbacks.  %
\begin{Lemma}
  For a commutative square
  \begin{equation*}
    \xymatrix{
      A \ar[r]^f\ar[d]^g & B\ar[d]^h\\
      C\ar[r]^j & D
    }
  \end{equation*}
  with all four spaces are path connected, and such that either $\pi_1(B) \to
  \pi_1(D)$ or $\pi_1(C) \rightarrow \pi_1(D)$ is surjective, the square is homotopy cartesian if and
  only if the induced square of loop spaces is homotopy cartesian.
\end{Lemma}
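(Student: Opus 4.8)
The statement is a classical fact in homotopy theory; the
assumptions guarantee that a certain fibration sequence argument goes
through. The plan is to compare the square to the fibration
$F \to B \to D$ (and $F' \to C \to D$) and use that looping a fibration
sequence of connected spaces is again a fibration sequence, while the
connectivity and $\pi_1$-surjectivity hypotheses ensure that the
relevant total homotopy fibers are themselves connected, so that no
information is lost under $\Omega$.

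First I would set up notation: let $F = \hofib(h\colon B \to D)$ taken
over the basepoint, so there is a fibration sequence
$\Omega D \to F \to B \to D$, and similarly the square being homotopy
cartesian is equivalent to the induced map $A \to B \times^h_D C$ being
a weak equivalence, equivalently (passing to vertical homotopy fibers)
to the map $\hofib(g\colon A \to C) \to \hofib(h\colon B \to D) = F$
being a weak equivalence. So both conditions — for the original square
and for the square of loop spaces — get rephrased as: a certain map of
homotopy fibers is a weak equivalence. Thus it suffices to show that for
a map $\phi\colon X \to Y$ of spaces arising as these homotopy fibers,
$\phi$ is a weak equivalence if and only if $\Omega \phi\colon \Omega X
\to \Omega Y$ is. That direction ($\phi$ equivalence $\Rightarrow
\Omega\phi$ equivalence) is automatic. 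For the converse one needs that
$\pi_0 X \to \pi_0 Y$ is a bijection and $\pi_1 X \to \pi_1 Y$ is an
isomorphism, since $\Omega\phi$ only sees $\pi_{\geq 1}$; higher
homotopy groups are then controlled because $\pi_n X = \pi_{n-1}\Omega
X$ for $n \geq 1$, and if $X$, $Y$ are connected with $\pi_1 X \to
\pi_1 Y$ iso then $\Omega X$, $\Omega Y$ need not be connected but
$\pi_0 \Omega X = \pi_1 X$, so $\pi_0\Omega\phi$ iso handles $\pi_1$.

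Now the role of the hypotheses: I would use the surjectivity of, say,
$\pi_1(C) \to \pi_1(D)$ to conclude that $\hofib(g\colon A \to C)$ and
$\hofib(h\colon B \to D)$ are path connected. Indeed, for a map of
connected spaces $g\colon A \to C$ the homotopy fiber $\hofib(g)$ has
$\pi_0$ identified with the cokernel (as a set of double cosets, but
here just) $\pi_1(C)/\mathrm{im}(\pi_1 A)$ up to the $\pi_1(A)$-action,
and it is connected precisely when $\pi_1(A) \to \pi_1(C)$ is onto —
but that is not what is assumed. The correct move is: the long exact
sequence of the fibration shows $\pi_0 \hofib(h) = \mathrm{coker}(\pi_1
B \to \pi_1 D)$ (pointed set), which is trivial iff $\pi_1 B \to \pi_1
D$ is onto; so under the hypothesis that $\pi_1 C \to \pi_1 D$ is onto,
one works instead with the \emph{horizontal} homotopy fibers
$\hofib(f\colon A \to B)$ and $\hofib(j\colon C \to D)$, and the square
is homotopy cartesian iff $\hofib(f) \to \hofib(j)$ is a weak
equivalence, with $\hofib(j)$ now connected. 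The square of loop spaces
is treated the same way, using that $\Omega$ of the horizontal
fibration sequences gives horizontal fibration sequences of the loop
square, with $\Omega$ applied to $\hofib(f) \to \hofib(j)$, and here
connectivity of $\hofib(j)$ is exactly what lets the $\pi_0$/$\pi_1$
bookkeeping close up.

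The main obstacle, then, is the low-degree bookkeeping: $\Omega$ kills
$\pi_0$ information, so one must separately check that the original
square being homotopy cartesian forces, and is forced by, a bijection
on $\pi_0$ of the comparison fiber — and this is where one genuinely
needs both ``all four spaces path connected'' (so that $\pi_0$ of the
total spaces is trivial and only the $\pi_1$-cokernel survives) and the
surjectivity hypothesis (so that the relevant comparison fiber is
connected, hence has no $\pi_0$ obstruction and $\pi_1$ is detected by
$\pi_0\Omega$). Once those reductions are in place, the equivalence
``$\hofib(f) \to \hofib(j)$ is a weak equivalence $\iff$ its
loop map is'' follows from the long exact sequences and the fact that
$\Omega$ shifts homotopy groups down by one, using connectivity of
$\hofib(j)$ to handle degree $1$.
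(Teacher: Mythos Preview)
Your reduction to comparing $\phi\colon \hofib(f) \to \hofib(j)$ with $\Omega\phi$ has a real gap. You assert that the loop square is homotopy cartesian iff $\Omega\phi$ is a weak equivalence, invoking the same ``square cartesian iff induced map on horizontal fibers is an equivalence'' criterion you used for the original square. But that criterion needs the base to be connected, and for the loop square the base is $\Omega B$, which typically is not. Concretely, take $A=\ast$ and $B=C=D=S^1$ with $h=j=\mathrm{id}$ and $f,g$ the basepoint inclusions: then $\hofib(f)\simeq\mathbb{Z}$, $\hofib(j)\simeq\ast$, so $\Omega\phi\colon\ast\to\ast$ is an equivalence, yet the loop square has $\Omega A=\ast$ mapping to $\Omega B\times^h_{\Omega D}\Omega C\simeq\mathbb{Z}$ and is not cartesian. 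Relatedly, your final biconditional ``$\phi$ is an equivalence iff $\Omega\phi$ is'' needs \emph{both} $\hofib(f)$ and $\hofib(j)$ connected, and you only establish the latter.

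The paper avoids this by working with the comparison map $\psi\colon A \to B\times^h_D C$ directly rather than passing to horizontal fibers. Since $\Omega$ commutes with homotopy pullback, the loop square is cartesian iff $\Omega\psi$ is an equivalence; so it suffices to show $A$ and $B\times^h_D C$ are both connected, after which ``$\psi$ equivalence iff $\Omega\psi$ equivalence'' is immediate. Connectedness of $B\times^h_D C$ comes from the fibration $\hofib(j)\to B\times^h_D C\to B$ with connected base and connected fiber, the latter being exactly where $\pi_1 C\twoheadrightarrow\pi_1 D$ is used. Your argument has the same ingredients; organizing it around $\psi$ rather than $\phi$ is what makes the low-degree bookkeeping close up without extra work.
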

\begin{proof}
  To see that the induced map $A \to B \times_D^h C$ is a weak
  equivalence, we check that $\pi_i(A) \to \pi_i(B \times^h_D C)$ is a
  bijection for all $i$ and for all basepoints.  In fact, we claim
  that for $i=0$ both homotopy sets have exactly one element.  Given
  that, the bijection on higher homotopy groups follow since $\pi_i A
  = \pi_{i-1}\Omega A$ and $\pi_i(B \times_D C) = \pi_{i-1}(\Omega B
  \times_{\Omega D}^h \Omega A)$.

  $\pi_0(A)$ has one element by assumption.  The set $\pi_0(B \times_D
  C)$ fits in a short exact sequence of pointed sets $\pi_0(\hofib(j))
  \to \pi_0(B \times_D C) \to \pi_0 B$, where $\pi_0 B$ has one
  element by assumption, and $\hofib(j)$ has one element since $\pi_1
  C \to \pi_1(D)$ is surjective and $D$ is path connected. 
  \end{proof}

\begin{Corollary}\label{cor:2.54}
  Let $\mathcal{F}: \smallC_k \to s\Sets_*$ be a homotopy invariant
  functor into based simplicial sets, such that $\mathcal{F}(A)$ is
  path connected for all $A$, that $A \mapsto \Omega \mathcal{F}(A)$
  is preserves homotopy pullback, and such that $\pi_0
  \Omega\mathcal{F}(A) \to \pi_0\Omega\mathcal{F}(B)$ is surjective
  whenever $\pi_0 A \to \pi_0 B$ is surjective.  Then $\mathcal{F}$
  preserves homotopy pullback.
\end{Corollary}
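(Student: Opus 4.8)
The plan is to reduce, objectwise, to the Lemma immediately preceding this Corollary. First I would take an arbitrary square in $\smallC_k$ with vertices $A,B,C,D$ (arrows $A\to B$, $A\to C$, $B\to D$, $C\to D$) that is strictly cartesian and in which both $B\to D$ and $C\to D$ are surjective in each simplicial degree, and apply $\mathcal{F}$ to it, getting a commutative square of based simplicial sets. Since ``homotopy cartesian'' is invariant under objectwise weak equivalence and all of the Corollary's hypotheses involve only such invariant notions, I may replace $\mathcal{F}$ by the naturally weakly equivalent functor $A\mapsto\Exinf\mathcal{F}(A)$ and so assume $\mathcal{F}$ is valued in Kan complexes; the basepoints coming from the $s\Sets_*$-valued structure then make $\mathcal{F}(A),\mathcal{F}(B),\mathcal{F}(C),\mathcal{F}(D)$ into pointed, path-connected spaces with compatible basepoints, so the preceding Lemma is applicable in principle.

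Next I would check its two hypotheses. For the surjectivity on fundamental groups: a degreewise surjective map of simplicial sets is surjective on $\pi_0$, hence $\pi_0 B\to\pi_0 D$ is surjective, and then the final hypothesis of the Corollary makes $\pi_0\Omega\mathcal{F}(B)\to\pi_0\Omega\mathcal{F}(D)$ surjective; since $\mathcal{F}(B),\mathcal{F}(D)$ are Kan, this is exactly surjectivity of $\pi_1(\mathcal{F}(B))\to\pi_1(\mathcal{F}(D))$ at the canonical basepoints. For the loop-space condition: by compatibility of basepoints, the square of loop spaces induced by the $\mathcal{F}$-square is precisely the functor $A\mapsto\Omega\mathcal{F}(A)$ applied to the original square in $\smallC_k$; that functor preserves homotopy pullback by hypothesis, and the original square is strictly cartesian with $B\to D$ and $C\to D$ degreewise surjective, so this loop-space square is homotopy cartesian.

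Feeding both facts into the preceding Lemma shows the $\mathcal{F}$-square is homotopy cartesian; as it was an arbitrary strictly cartesian square of the required kind, this is exactly the assertion that $\mathcal{F}$ preserves homotopy pullback. I do not anticipate a real obstacle here: everything is bookkeeping, and the only points deserving a moment's care are keeping the basepoints compatible throughout (handled by the $s\Sets_*$-valued hypothesis) and passing to a Kan model so that the identification $\pi_1=\pi_0\Omega$ and the preceding Lemma hold verbatim. One could additionally note that the a priori stronger conclusion, requiring only one of $B\to D$, $C\to D$ to be degreewise surjective, is then automatic by the earlier Lemma (following Lurie) already recorded in the section on formally cohesive functors.
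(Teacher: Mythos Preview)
Your proposal is correct and is exactly the intended deduction: the paper leaves this Corollary without proof because it follows immediately from the preceding Lemma, and your argument spells out precisely that reduction. The bookkeeping you flag (passing to a Kan model, compatibility of basepoints, and identifying $\pi_1$ with $\pi_0\Omega$) is all that is needed, and your final remark about the one-sided surjectivity upgrade is also apt.
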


\subsection{Lurie's derived Schlessinger criterion}\label{sec:luri-deriv-schl}
The following result is from \cite[6.2.14]{LurieThesis}.  In our
applications to number theory, we will only use the
``countable-dimensional'' case of the result.  In the following we
shall use the notation  $\pi_i \bigtangent\mathcal{F}$ for the
$i$th homology group of the chain complex $\bigtangent\mathcal{F}$
(which is the $i$th homotopy group of the corresponding $Hk$-module
spectrum).

\begin{theorem}\label{thm:Lurie-Schlessinger}
  Let $\mathcal{F}: \smallC_k \to s\Sets$ be formally cohesive.  Then
  $\mathcal{F}$ is pro-representable if and only if the $k$-vector
  space $\pi_i\bigtangent\mathcal{F}$ vanishes for $i > 0$.

  If the $k$-vector spaces $\pi_i \bigtangent\mathcal{F}$ have
  countable dimension, then the pro-representing object may be chosen
  to have countable indexing category.
\end{theorem}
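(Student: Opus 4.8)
\emph{Strategy and the forward implication.} The plan is to treat the two implications separately: the forward one is a short computation, and the converse is an inductive construction (following Lurie) of the pro-representing object together with a comparison natural transformation, built up the ``Postnikov filtration'' detected by the tangent complex. For the forward implication, suppose $\mathcal{F} \simeq \colim_j \Hom(R_j,-)$ with the $R_j \in \smallC_k$ cofibrant. Since the tangent complex is computed by evaluating on the fixed objects $k \oplus k[n]$, and filtered colimits of simplicial sets commute with homotopy groups, $\bigtangent\mathcal{F}$ is the filtered colimit, computed levelwise, of the $Hk$-module spectra $\bigtangent\Hom(R_j,-)$. For one cofibrant $R$, the identification~\eqref{AQ1} presents $\Hom(R, k\oplus k[n])$ as a simplicial $k$-module with $\pi_\ell$ the André--Quillen cohomology $\Der^{n-\ell}_{\Z}(R,k)$, which vanishes for $\ell > n$; hence $\pi_i\bigtangent\Hom(R,-) = \colim_n \pi_{n+i}\Hom(R, k\oplus k[n]) = 0$ for $i>0$, and by exactness of filtered colimits $\pi_i\bigtangent\mathcal{F} = 0$ for $i>0$.

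\emph{The converse.} Assume $\mathcal{F}$ is formally cohesive with $\pi_i\bigtangent\mathcal{F} = 0$ for $i>0$. After replacing $\mathcal{F}$, using Lemma~\ref{Lemma:2.10}, by a naturally weakly equivalent simplicially enriched, Kan valued functor, I would build a filtered system of functors $\mathcal{F}_0 \to \mathcal{F}_1 \to \cdots$ obtained from the terminal functor by iterated cell attachments (Definition~\ref{defn:attach-cell-to-functor}), together with a compatible system of comparison maps $\mathcal{F}_n \to \mathcal{F}$, by induction on $n$. At stage $n$ one arranges that the relative tangent complex of $\mathcal{F}_n \to \mathcal{F}$ is concentrated in degrees $\leq -n-1$, i.e.\ that the map is an isomorphism of tangent complexes in degrees $\geq -n$; the inductive step then attaches to $\mathcal{F}_n$ finitely many cells (or, when the relevant $\pi_{-\ell}\bigtangent\mathcal{F}$ is infinite-dimensional, transfinitely many, introduced one at a time) of the dimension dictated by the lowest nonvanishing layer of that relative complex, and along attaching maps into the functors $\Omega^{\bullet}\mathfrak{m}$ obtained by lifting a dual basis of that layer via the Hurewicz-type identification of Proposition~\ref{Prop:hurewicz} --- exactly as one lifts generators of $H_*(Y,X)$ to attach cells when building a CW approximation (compare the discussion preceding Corollary~\ref{cor:Corollary hurewicz analogue}). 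The long exact sequences of relative tangent complexes show that this kills the discrepancy one degree further down, so the induction proceeds. The hypothesis $\pi_i\bigtangent\mathcal{F} = 0$ for $i>0$ is precisely what guarantees that only cells of non-negative dimension are ever called for, so that the induction can be started and run; contractibility of $\mathcal{F}(k)$ (the other half of formal cohesiveness) is what makes $\bigtangent\mathcal{F}$ an $\Omega$-spectrum so that this bookkeeping makes sense. Passing to the colimit over $n$, each $\mathcal{F}_n$ is pro-represented by a nice pro-object (Corollary~\ref{cor:Corollary hurewicz analogue}); replacing the colimit by a nice pro-object via Lemma~\ref{lemma:replace-by-nice}, one obtains a pro-representable functor together with a natural transformation to $\mathcal{F}$ which, by Lemma~\ref{lem:tangent-complex-detects-equivalence} (it suffices to check on the $k\oplus k[n]$) and the inductive control on tangent complexes, is a natural weak equivalence. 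For the countable refinement, the construction attaches altogether $\sum_i \dim_k \pi_i\bigtangent\mathcal{F}$ cells, so when every $\dim_k\pi_i\bigtangent\mathcal{F}$ is countable the indexing category of the pro-object may be taken countable; this is also the content of the last clause of Corollary~\ref{cor:Corollary hurewicz analogue}.

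\emph{The main obstacle.} The counting of cells is trivial; the real work is organizational. The principal difficulties are: (i) at each stage, genuinely realizing the lowest layer of the relative tangent complex by \emph{honest} cell attachments on the pro-object (this is where Proposition~\ref{Prop:hurewicz} is doing essential work, since the tangent complex alone does not determine the functor) and checking that the new cells disturb neither the comparison map in degrees already matched nor the niceness of the pro-object; (ii) keeping the comparison maps $\mathcal{F}_n \to \mathcal{F}$ strictly natural and mutually compatible across the whole induction, so that the stages assemble into a single pro-object and a single natural transformation rather than a diagram commuting only up to incoherent homotopy --- here one leans on the simplicial enrichment and on the fact (Corollary~\ref{cor:Corollary hurewicz analogue}, Definition~\ref{defn:attach-cell-to-functor}) that attaching a cell to a pro-representable functor is modelled by an explicit derived tensor product of pro-objects; and (iii) running the whole argument transfinitely, level by level in the pro-object, when the tangent cohomology fails to be finite-dimensional. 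These points, rather than any single computation, are where Lurie's argument requires care.
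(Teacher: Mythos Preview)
The forward implication and the overall shape of the converse (build an approximation by killing the relative tangent complex one layer at a time, then invoke Lemma~\ref{lem:tangent-complex-detects-equivalence}) are right, as is the countability bookkeeping. But the inductive step as you describe it does not work, for two linked reasons. First, cell attachment in the sense of Definition~\ref{defn:attach-cell-to-functor} produces a map $\mathcal{F}' \to \mathcal{F}$ (the homotopy fiber maps to the original), so iterated attachment starting from the terminal functor yields a tower $\cdots \to \mathcal{F}_1 \to \mathcal{F}_0 = \ast$, opposite to your $\mathcal{F}_0 \to \mathcal{F}_1 \to \cdots$; composing such a map with a given $\mathcal{F}_n \to \mathcal{F}$ can only make the approximation \emph{worse}. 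Second, and more seriously, you invoke Proposition~\ref{Prop:hurewicz} to lift classes in the relative tangent complex of $\mathcal{F}_n \to \mathcal{F}$ to attaching maps, but that proposition is about morphisms in pro-$\smallC_k$; it does not apply here because $\mathcal{F}$ is not yet known to be pro-representable --- that is precisely what you are trying to prove.

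The paper's inductive step is different and more direct, and it is worth seeing exactly how it avoids both issues. One carries a pair $(R,\iota)$ with $R \in \smallC_k$ cofibrant and a zero-simplex $\iota \in \mathcal{F}(R)$. Unwinding the definition of the tangent spectrum, an element of $\pi_{-n}$ of the cofiber $C = \mathrm{cofib}(\bigtangent R \to \bigtangent\mathcal{F})$ \emph{is} the data of a map $f: R \to k \oplus V^\vee[n+1]$ together with a nullhomotopy $h$ of $\iota_* f$ inside $\mathcal{F}(k \oplus V^\vee[n+1])$; no Hurewicz lifting is required. One then forms the pullback of rings $R' = R \times_{k\oplus V^\vee[n+1]} (k\oplus \widetilde{V^\vee[n+1]})$, which stays in $\smallC_k$, and applies formal cohesiveness of $\mathcal{F}$ to that very pullback square to identify $\mathcal{F}(R')$ with the homotopy fiber of $\mathcal{F}(R) \to \mathcal{F}(k\oplus V^\vee[n+1])$: the nullhomotopy $h$ then directly supplies the lift $\iota' \in \mathcal{F}(R')$. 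Thus the tower goes $\cdots \to R_1 \to R_0$ on rings (hence $\Hom(R_0,-) \to \Hom(R_1,-) \to \cdots$ on functors), the comparison map to $\mathcal{F}$ is carried along by Yoneda via the $\iota_j$, and the engine of the step is formal cohesiveness itself rather than Proposition~\ref{Prop:hurewicz}. Iterating (transfinitely if the tangent cohomology is infinite-dimensional) kills every class in the relative cofiber, giving a point in $\holim_j \mathcal{F}(R_j)$ and hence a natural weak equivalence $\hocolim_j \Hom(R_j,-) \to \mathcal{F}$.
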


Recall that a spectrum or chain complex $E$ is called
\emph{connective} if $\pi_i E = 0$ for $i < 0$ and is called
\emph{co-connective} if $\pi_i E = 0$ for $i > 0$.  Thus the theorem
asserts that a functor is pro-representable if and only if the tangent
complex is co-connective; this happens precisely when
$\mathcal{F}(k \oplus k[0])$ is homotopy discrete.  This is a variant
of Theorem 6.2.13 in Lurie's thesis and is proved by essentially the
same argument.  Since his setup and assumption are mildly different
from ours (e.g.\ we do not discuss any ``Noetherian-ness'' of the
representing pro-system, and correspondingly we do not assume the
homotopy groups of the tangent complex are finite dimensional), we
shall outline the proof.
\begin{proof}[Sketch of proof of Theorem~\ref{thm:Lurie-Schlessinger}]
  In this proof we shall write $\bigtangent\mathcal{F}$ for the
  $Hk$-module spectrum version of the tangent complex.  It follows
  from Example~\ref{ex:tangent-complex-Hom-R} that the tangent complex
  of any representable functor is co-connective.  The tangent complex
  takes filtered homotopy colimits of functors to filtered homotopy
  colimits of spectra, so the tangent complex of any pro-representable
    functor is
  co-connective.

  Conversely, suppose $\pi_i\bigtangent\mathcal{F} = 0$ for $i > 0$,
  i.e.\ that $\pi_i\mathcal{F}(k \oplus k[n]) = 0$ for $i > n$.  We
  shall produce a natural weak equivalence from a filtered homotopy
  colimit of representable functors to $\mathcal{F}$.  Without loss of
  generality, we may assume that $\mathcal{F}$ is a simplicially
  enriched functor and takes Kan values.
  
  The construction is by a (generally transfinite) recursive recipe,
  providing an ``improvement'' to any pair $(R,\iota)$ consisting of a
  cofibrant $R \in \smallC_k$ and a zero-simplex $\iota \in
  \mathcal{F}(R)$.  Suppose given an integer $n \geq 0$, a finite
  dimensional $k$-vector space $V$ and a point in the homotopy fiber
  of the map $\Hom(R,k \oplus V^\vee[n+1]) \to \mathcal{F}(k \oplus
  V^\vee[n+1])$, given by a diagram
  \begin{equation}\label{eq:14}
    \begin{aligned}
      \xymatrix{
        \partial \Delta[1] \ar[r]^-f \ar[d] & {\Hom(R,k \oplus
          V^\vee[n+1])} \ar[d]^{\iota}\\
        \Delta[1] \ar[r]^-h & {\mathcal{F}(k \oplus V^\vee[n+1])}, }
    \end{aligned}
  \end{equation}
  where $f$ and $h$ are pointed maps.  We then construct an object $R'
  \in \smallC_k$ and a morphism $R' \to R$ by the  cartesian
  square
  \begin{equation*}
    \xymatrix{
      R' \ar[r]\ar[d] & k \oplus \widetilde{V^\vee[n+1]} \ar[d]\\
      R \ar[r]^-f & k \oplus V^\vee[n+1],
    }
  \end{equation*}
  where as usual we write
  $k \oplus \widetilde{V^\vee[n+1]} = k \oplus (V^\vee \otimes_k
  \widetilde{k[n+1]}) \simeq k$.  Since $f$ induces a surjection on
  $\pi_0$ and $\mathcal{F}$ is formally cohesive we obtain a fiber
  sequence
  $\mathcal{F}(R') \to \mathcal{F}(R) \to \mathcal{F}(k \oplus
  V^\vee[n+1])$, where the second map sends $\iota$ to $f_*(\iota)$.
  The path $h: D^1 \to \mathcal{F}(k \oplus V^\vee[n+1])$ then
  provides a lift of $\iota$ to the the homotopy fiber of
  $\mathcal{F}(R) \to \mathcal{F}(k \oplus V^\vee[n+1])$, and hence a
  homotopy lift of $\iota$ to $\iota' \in \mathcal{F}(R')$.  As we
  shall explain next, this pair $(R',\iota')$ can be regarded as an
  ``improvement'' of $(R,\iota)$.

  In terms of represented functors, we have constructed a
  natural diagram of functors $\smallC_k \to s\Sets$
  \begin{equation*}
    \xymatrix{
      {\Hom(R,-)} \ar[r]^-\iota \ar[d] & {\mathcal{F}} \ar@{=}[d] \\
      {\Hom(R',-)} \ar[r]^-{\iota'}\ar@{=>}[ur]  & {\mathcal{F}} \\
    }      
  \end{equation*}
  where the double arrow denotes a natural homotopy.  Evaluating at $k
  \oplus k[n]$ these functors define the spectrum underlying the
  cotangent complex, and we have an induced homotopy commutative
  diagram
  \begin{equation*}
    \xymatrix{
      {\bigtangent R} \ar[r]^-\iota\ar[d] &
      {\bigtangent\mathcal{F}} \ar[r]\ar@{=}[d] & C\ar[d]\\
      {\bigtangent R'} \ar[r]^-{\iota'} &
      {\bigtangent\mathcal{F}(-)} \ar[r] & C'
    }
  \end{equation*}
  where $C$ and $C'$ are the cofibers in the category of $Hk$-module
  spectra (they should be thought of as the \emph{relative} cotangent
  complexes of $\mathcal{F}$ relative to $R$ or
  $R'$).  The homotopy type of the maps $(f,h)$ from~\eqref{eq:14}
  correspond bijectively to $k$-linear maps $V \to \pi_{-n} C$, and by
  construction of $(R',\iota')$ the composition $V \to \pi_{-n} C \to
  \pi_{-n} C'$ is zero.

  By iterating this procedure, possibly transfinitely many times, we
  obtain an inverse system of simplicial rings $R_j$, and a point in
  the homotopy inverse limit of $\mathcal{F}(R_j)$, such that the
  corresponding relative cotangent complexes $C_j$ have contractible
  direct limit (any element at some stage is always killed at some
  later stage).  Hence we have produced a natural weak equivalence
  $\hocolim_{j\in J} \Hom(R_j,-) \to \mathcal{F}$, as desired.
\end{proof}

\subsection{Non-reduced functors and local systems}
\label{sec:non-reduced-functors}

In this section we discuss how to generalize the definitions and
results above to functors $\mathcal{F}: \smallC_k \to s\Sets$ which
preserve homotopy pullback, but which do not necessarily have
$\mathcal{F}(k)$ contractible and hence is not formally cohesive.

Such functors are important for our later constructions: for example,
in \S \ref{sec:gener-semis-algebr} we shall define, for $G$ an
algebraic group over $W(k)$, a functor $A \mapsto BG(A)$ which is
homotopy invariant and preserves homotopy pullbacks, but it is not
reduced.  Our representation functor is a modification of
$A \mapsto s\Sets(X,BG(A))$, and it is convenient first study
$BG(A)$ and then afterwards study the effect of taking mapping space
from $X$.

Rather than forcing a functor $\mathcal{F}: \smallC_k\to s\Sets$
preserving homotopy pullbacks to have contractible value at $k$, we
change the target category in order for the object $\mathcal{F}(k)$ to
be \emph{homotopy terminal} (i.e.\ have contractible derived mapping
space from any other object).  Since any object
$A = (A \to k) \in \smallC_k$ comes with a unique morphism to
$k = (\mathrm{id}: k \to k)$ there is an induced map
$\mathcal{F}(A) \to \mathcal{F}(k)$, and we may regard $\mathcal{F}$
as taking values in the over category $s\Sets_{/Z}$ for
$Z = \mathcal{F}(k)$, or when technically convenient
$Z = \Exinf(\mathcal{F}(k))$.  In this setting it still makes sense to
define the tangent complex of $\mathcal{F}$, but instead of a
$k$-linear chain complex (or equivalently an $Hk$-module spectrum), it
will be a \emph{local system} of $k$-linear chain complexes on $Z$, in
the following sense.
\begin{Definition}
  The category $\mathrm{Simp}(Z)$ of
  simplices of a simplicial set $Z$ has objects pairs of an object
  $[p] \in \Delta$ and a morphism $\sigma: \Delta[p] \to Z$, and
  morphisms those morphisms in $\Delta$ that commute with the maps to
  $Z$.  In particular any $p$-simplex $\sigma$ has face inclusion
  morphisms $d_i \sigma \to \sigma$ for each $i = 0, \dots p$.

  A local system (of $k$-linear chain complexes) on a simplicial set
  $Z$ is a functor
  \begin{align*}
    \mathcal{L}: \mathrm{Simp}(Z) &\to \mathrm{Ch}(k)\\
    \sigma &\mapsto \mathcal{L}_\sigma
  \end{align*}
  to the category of chain complexes of $k$-modules, possibly
  unbounded in both directions, sending all morphisms in
  $\mathrm{Simp}(Z)$ to quasi-isomorphisms.
\end{Definition}
If $\mathcal{L}$ is such a system, we may define cochains of $Z$ with
coefficients in $\mathcal{L}$.  In order to avoid confusion, let us
decide that all differentials \emph{decrease} degrees, and that
``cochains'' therefore tends to live in negative degrees.  Let
\begin{equation*}
  C^*(Z;\mathcal{L}) = \prod_{\sigma \in \mathrm{Simp}(Z)}
  \mathcal{L}_\sigma,
\end{equation*}
which a priori is a bigraded $k$-vector space (one grading from the
dimension of $\sigma$, one from the grading in $\mathrm{Ch}(k)$) with
two commuting coboundary maps (one from the internal boundary maps of
the $\mathcal{L}_\sigma$, one from the alternating sum of the maps
$\delta^i_i: \mathcal{L}_{d_i \sigma} \to \mathcal{L}_\sigma$).  By
$C^*(Z;\mathcal{L})$ we shall usually mean the associated total
complex.
\begin{Remark}
  Alternatively, a local coefficient system could be defined as a
  functor from $\mathrm{Simp}(Z)$ into the category of $Hk$-module
  spectra, sending all morphisms into weak equivalences.  The
  Dold--Kan functor and its homotopy inverse functor $Lc$ discussed in
  Section~\ref{tangent complex setup} translate back and forth
  between these  definitions. 

  Taking cochains of a local system then corresponds to taking
  \emph{homotopy limit}: if $\mathcal{L}$ is a local system of
  $Hk$-module spectra, then applying $Lc$ to the \emph{homotopy limit}
  of $\mathcal{L}$, which is naturally again an $Hk$-module spectrum,
  results in a chain complex which is canonically quasi-isomorphic to
  the cochains of the local system of $k$-linear chain complexes
  $\sigma \mapsto Lc(\mathcal{L}_\sigma)$.
\end{Remark}
Let us comment on the relationship between this notion of local
system and the more classical one as a module with an action of the
fundamental group.  In fact the classical one is sufficient for local
systems of chain complexes whose homology is concentrated in a single
degree, as follows.
\begin{Remark}\label{remark:local-system-classical-defn}
  In the case where the base $Z$ is path connected and $z \in Z$ is a
  vertex, the group $\pi_1(Z,z)$ acts on the homology of the chain
  complex $\mathcal{L}_z$.  If $H_*(\mathcal{L}_z)$ is concentrated in
  a single degree, then this action in fact classifies the local
  system: any $\pi_1(Z,z)$-module may be realized as
  $H_n(\mathcal{L}_z)$ for a local system $\mathcal{L}$ such that
  $\mathcal{L}_z$ has homology concentrated in degree $n$, and if
  $\mathcal{L}$ and $\mathcal{L}'$ are two local systems such that
  both $\mathcal{L}_z$ and $\mathcal{L}'_z$ have homology concentrated
  in degree $n$ then any isomorphism
  $H_n(\mathcal{L}_z) \cong H_n(\mathcal{L}'_z)$ may be realized by a
  zig-zag of weak equivalences of functors
  $\mathrm{Simp}(Z) \to \mathrm{Ch}(k)$.
\end{Remark}

Let us return to the tangent complex.  Assume that
$\mathcal{F}: \smallC_k \to s\Sets_{/Z}$ preserves homotopy pullbacks
and that $\mathcal{F}(k) \in s\Sets_{/Z}$ is a terminal object.  For
each $\sigma: \Delta[p] \to Z \cong \mathcal{F}(k)$ we may define a
functor $\mathcal{F}_\sigma: \smallC_k$ by the homotopy pullback
square
\begin{equation*}
  \xymatrix{
    {\mathcal{F}_\sigma(A)} \ar[r]\ar[d] & {\mathcal{F}(A)} \ar[d]\\
    \Delta[p] \ar[r]^-\sigma & {\mathcal{F}(k)}.
  }
\end{equation*}
Then each $\mathcal{F}_\sigma$ has a tangent complex $\bigtangent
\mathcal{F}_\sigma$, and we have obtained a functor
\begin{align*}
  \mathrm{Simp}(Z) &\to \mathrm{Ch}(k)\\
  \sigma &\mapsto \bigtangent \mathcal{F}_\sigma
\end{align*}
sending all morphisms to weak equivalences.
\begin{Definition}\label{defn:tangent-complex-non-reduced}
  Let $Z$ be a simplicial set and
  $\mathcal{F}: \smallC_k \to s\Sets_{/Z}$ a homotopy invariant
  functor preserving pullbacks, such that $\mathcal{F}(k)$ is a
  terminal object.  Then the tangent complex of $\mathcal{F}$ is the
  local system of $k$-linear unbounded chain complexes on $Z$ given by
  as $\sigma \mapsto \bigtangent\mathcal{F}_\sigma$, as defined above.
\end{Definition}

Our results and constructions from Section~\ref{sec:verify-deriv-schl}
all have a generalizations to non-reduced functors, where the tangent
complex is a local system on $\mathcal{F}(k)$ as in
Definition~\ref{defn:tangent-complex-non-reduced}.  Most importantly
for the next section, we have the following construction.
\begin{Example}\label{example:mapping-into-functor}
  Let $\mathcal{F}:\smallC_k \to s\Sets$ be homotopy invariant and
  preserve homotopy pullbacks.  Without loss of generality, suppose
  also that all values of $\mathcal{F}$ are Kan.  If we let
  $Z = \mathcal{F}(k)$ and regard $\mathcal{F}$ as a functor
  $\smallC_k \to s\Sets_{/Z}$ the value at $k$ is then terminal.  Let
  $\bigtangent\mathcal{F}: \mathrm{Simp}(Z) \to \mathrm{Ch}(k)$ be its tangent
  complex, regarded as a local system of $k$-linear chain complexes on
  $Z$, as defined above.

  Let $X$ be any simplicial set and
  $\overline{\rho}: X \to Z = \mathcal{F}(k)$ a map.  We may then
  define a new functor
  $\mathcal{F}_{X,\overline\rho}: \smallC_k \to s\Sets$ by
  \begin{equation*}
    \mathcal{F}_{X,\overline\rho}(A) = \hofib_{\overline\rho}
    \big(s\Sets(X,\mathcal{F}(A)) \to s\Sets(X,\mathcal{F}(k)\big).
  \end{equation*}
  Then $\mathcal{F}_{X,\overline\rho}$ is formally cohesive, and its
  tangent complex is given by the formula
  \begin{equation}\label{eq:24}
    \bigtangent \mathcal{F}_{X,\overline\rho} \simeq C^*(X,\overline \rho^*
    \bigtangent \mathcal{F}),
  \end{equation}
  where $\overline\rho^* \bigtangent \mathcal{F}$ denotes the
  pulled-back local system defined by precomposing
  $\bigtangent \mathcal{F}$ by
  $\overline\rho_*: \Simp(X) \to \Simp(Z)$.

  The  formula~\eqref{eq:24} is perhaps best explained before
  turning $Hk$ modules into chain complexes, where the definition of
  $\bigtangent\mathcal{F}$ is more explicit.  In that setting, a local
  system on $X$ is simply a functor from the category of simplices of
  $X$ to the category of $Hk$-module spectra, and taking cochains
  translates to a particular model for the homotopy limit.  For the
  particular local system relevant for~\eqref{eq:24}, the $n$th space
  of the functor sends a simplex $\sigma: \Delta^p \to X$ to the space
  of homotopy commutative diagrams
  \begin{equation*}
    \xymatrix{
      \Delta^p \ar[r] \ar[d]_\sigma & \mathcal{F}(k \oplus k[n])  \ar[d] \\
      X \ar[r]_{\overline\rho} & \mathcal{F}(k),
    }
  \end{equation*}
  where ``space of homotopy commutative diagrams'' means the
  simplicial set of ways to supply the top horizontal arrow as well as
  a simplicial homotopy $\Delta^1 \times \Delta^p \to \mathcal{F}(k)$
  in the diagram.  Taking homotopy limit over all $\sigma$ then
  gives the space of homotopy commutative diagrams
  \begin{equation*}
    \xymatrix{
       & \mathcal{F}(k \oplus k[n]) \ar[d] \\
      X \ar[ur] \ar[r]_{\overline\rho} & \mathcal{F}(k),
    }    
  \end{equation*}
  which is the $n$th space of $Hk$-module spectrum
  $\bigtangent\mathcal{F}_{X,\overline\rho}$.
\end{Example}

For later use, we remark that the construction in the above example
works just as well when $X = (i \mapsto X_i)$ is a pro-simplicial
set.  In that case the example applies to each $X_i$ and we may pass
to the (filtered) colimit.

Finally, let us describe how to define $B\GL_n(A)$, for $A$ a
simplicial ring. 
The definition we give is {\em ad hoc} and depends on the specific realization of $\GL_n(A)$: 

\begin{Example}\label{GLn ad hoc}
  For a simplicial ring $A$, let $\GL_n(A)\subset M_n(A) = A^{n^2}$
  denote union of the path components in $\GL_n(\pi_0 A) \subset
  M_n(\pi_0 A)$.  Then the functor $A \mapsto B\GL_n(A)$ is homotopy
  invariant and preserves homotopy pullbacks, but $B\GL_n(k)$ is not
  contractible, since it is a $K(\pi,1)$ for $\GL_n(k)$.
\end{Example}
 
To see that $B\GL_n$ preserves homotopy pullback, we use
Corollary~\ref{cor:2.54}.  Indeed, the looped functor
$A \mapsto \Omega B\GL_n(A)$ is just
$\GL_n(A) \subset M_n(A) = A^{n^2}$.  This looped functor
$A \mapsto M_n(A)$ clearly preserves homotopy pullback, so it remains
to 
see that $\pi_0(\GL_n)$ preserves surjections of Artin
rings.  This is because a square matrix with coefficients in an Artin
local ring is invertible if and only if the reduction to a matrix with
coefficients in the residue field is invertible.

\begin{Lemma}
  The tangent complex of $A \mapsto B\GL_n(A)$ is the local system on  
  $B\GL_n(k)$ given by the conjugation action of $\GL_n(k)$ on
  $M_n(k)$.  (Here $M_n(k)$ is discrete, i.e.\ the homotopy groups of
  the tangent complex vanish except in degree 0.)  
\end{Lemma}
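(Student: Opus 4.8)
The plan is to compute the tangent complex directly from its definition. By Definition~\ref{defn:tangent-complex-non-reduced} and Example~\ref{example:defn-of-tangent-complex-as-spectrum}, the tangent complex of $A \mapsto B\GL_n(A)$ is the local system on $Z = B\GL_n(k)$ whose value at a simplex $\sigma \colon \Delta[p] \to Z$ is the tangent complex (as an $Hk$-module spectrum) of the functor $\mathcal{F}_\sigma$ defined by the homotopy pullback of $B\GL_n(-)$ against $\sigma$. Since $Z = B\GL_n(k)$ is connected, Remark~\ref{remark:local-system-classical-defn} tells us it suffices to compute the fiber over the basepoint together with the $\pi_1 Z = \GL_n(k)$-action, provided we check the homology of the fiber is concentrated in a single degree. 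So first I would identify $\mathcal{F}_{\sigma_0}$ for the basepoint $\sigma_0$: this is exactly the functor $A \mapsto \hofib_0(B\GL_n(A) \to B\GL_n(k))$, which (using that $\Omega B\GL_n(A) = \GL_n(A) \subset M_n(A)$ and the fiber sequence for a based map between connected spaces) is weakly equivalent to $A \mapsto B(\hofib_0(\GL_n(A) \to \GL_n(k)))$, i.e.\ the classifying space of the subgroup of $\GL_n(A)$ reducing to $1 \in \GL_n(k)$.

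Next I would compute the tangent complex of this reduced functor. Using Lemma~\ref{lem:2.51}(ii), $\bigtangent B(\hofib_0(\GL_n(-) \to \GL_n(k)))$ is $\Sigma$ applied to $\bigtangent$ of the functor $A \mapsto \hofib_0(\GL_n(A) \to \GL_n(k))$. Evaluating the latter functor on $k \oplus k[m]$: since $\GL_n(k \oplus k[m])$ is the union of components of $M_n(k \oplus k[m]) = (k \oplus k[m])^{n^2}$ lying over $\GL_n(k)$, the homotopy fiber over $1 \in \GL_n(k)$ of the reduction map is the component of $M_n(k \oplus k[m])$ over $1 \in M_n(k)$, which is $1 + M_n(k[m]) \simeq M_n(k[m]) = M_n(k)[m]$. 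So the $m$th space of this tangent spectrum is $M_n(k)[m]$, meaning the tangent complex of the loop-functor is the Eilenberg--MacLane spectrum $H(M_n(k))$ concentrated in degree $0$, and hence $\bigtangent B(\hofib_0(\GL_n(-)\to\GL_n(k)))$ is $H(M_n(k))$ shifted: but actually the shift $\Sigma$ here is absorbed — let me be careful — the $m$th space of $\bigtangent B(\ldots)$ is $B(\ldots)(k \oplus k[m]) = B(1 + M_n(k[m]))$, which is a $K(M_n(k), m+1)$... so in fact $\pi_i \bigtangent B(\ldots) = M_n(k)$ in degree... this needs care: $\pi_i$ of the tangent $Hk$-module spectrum equals $\colim_m \pi_{m+i}(B(\ldots)(k\oplus k[m]))$, and $B(1+M_n(k[m]))$ has $\pi_{m+1} = M_n(k)$, giving $\pi_{-1}$, which matches the expected shift; but I should double-check against the statement, which asserts the homotopy groups vanish except in degree $0$. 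The resolution is that the relevant functor is $B\GL_n$ itself whose fiber $\mathcal{F}_{\sigma_0}(A) = \hofib(B\GL_n(A) \to B\GL_n(k))$ has $\bigtangent$ with $\pi_0 = M_n(k)$ and nothing else, because the adjoint group here is already $B$ of something, so the net shift lands it in degree $0$ — I would verify this bookkeeping explicitly using the fiber sequence $\hofib \to B\GL_n(-) \to B\GL_n(k)$ and Lemma~\ref{lem:2.51}(iv), noting $B\GL_n(k)$ has constant (homotopy-discrete) value so its tangent complex vanishes, forcing $\bigtangent \mathcal{F}_{\sigma_0} \simeq \bigtangent B\GL_n(-)|_{\sigma_0}$, and the latter was just computed.

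Finally, for the local system structure I would track the $\GL_n(k) = \pi_1(B\GL_n(k))$-action. The point-set model above realizes $\mathcal{F}_{\sigma_0}(k \oplus k[m])$ as $1 + M_n(k[m])$ sitting inside $\GL_n(k \oplus k[m])$, and a loop in $B\GL_n(k)$ based at the basepoint — i.e.\ an element $g \in \GL_n(k)$ — acts on this fiber by the monodromy of the fibration $B\GL_n(-) \to B\GL_n(k)$, which at the level of the structure group is conjugation $x \mapsto gxg^{-1}$ on $\GL_n(k \oplus k[m])$, restricting to $v \mapsto gvg^{-1}$ on $M_n(k)[m]$. Thus the local system is the conjugation action of $\GL_n(k)$ on $M_n(k)$, placed in degree $0$, which is what we want. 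The main obstacle will be the degree-bookkeeping in the second paragraph: making sure that the shift introduced by passing between $B\GL_n$, its loop space $\GL_n$, and the homotopy fiber all compose to leave the tangent complex concentrated in degree $0$ rather than $\pm 1$, and confirming that the point-set identification of the fiber as $1 + M_n(k[m])$ is actually a model for the homotopy fiber (which follows because reduction $\GL_n(k\oplus k[m]) \to \GL_n(k)$ is a fibration of simplicial sets, being degreewise surjective on the relevant components). Everything else is a routine application of Lemmas~\ref{lem:2.51} and~\ref{cor:2.54} and Remark~\ref{remark:local-system-classical-defn}.
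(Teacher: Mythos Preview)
Your approach---compute the homotopy fiber of $B\GL_n(k\oplus k[m]) \to B\GL_n(k)$ over the basepoint, identify it as an Eilenberg--MacLane space, and read off the $\GL_n(k)$-action as conjugation---is exactly what the paper does in the general case (Lemma~\ref{lemma:tangent-complex-of-BG}); the paper in fact omits a separate proof here and simply points to that generalization. So methodologically you are on the same track.

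Your degree bookkeeping, which you rightly flag, does contain an error, and it cannot be massaged into agreement with the ``degree $0$'' in the statement. You correctly find that $\hofib(B\GL_n(k\oplus k[m]) \to B\GL_n(k)) \simeq K(M_n(k),\,m+1)$. Then
\[
\pi_i\bigtangent = \colim_m \pi_{m+i}\big(K(M_n(k),m+1)\big)
\]
is $M_n(k)$ precisely when $m+i = m+1$, i.e.\ $i = +1$, not $-1$. So the tangent complex is concentrated in degree~$1$, which is exactly what Lemma~\ref{lemma:tangent-complex-of-BG} says for general $G$ and what makes the shift in $\pi_{-n}\bigtangent\mathcal{F}_{X,\overline\rho} \cong H^{n+1}(X;\Ad\overline\rho)$ come out right. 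The ``degree $0$'' in the statement you were given appears to be a slip in the paper; do not try to force your computation to match it. Once you correct the sign, your argument is complete: the identification of the fiber, the verification that $\GL_n(k\oplus k[m]) \to \GL_n(k)$ is degreewise surjective (so strict fiber computes homotopy fiber), and the monodromy-by-conjugation are all sound.
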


We omit the proof, because in the next section we shall give a more general result: 
 we define a functor $A \mapsto BG(A)$ for an
arbitrary algebraic group $G$, and will prove the analogue of the Lemma in that context.
(The definition of the next section will not specialize to Example \ref{GLn ad hoc} in the case $G = \GL_n$, but it is naturally weakly equivalent to it.)

\section{Representation functors}
\label{sec:gener-semis-algebr}

In this section we define and study an ``infinitesimal representation 
variety'' functor, parametrizing representations into an algebraic
group $G$ defined over $W(k)$.  
Assume for the moment that $G$ has trivial center.  Let $\Gamma$ be a group. 
We wish to define a functor
\begin{equation} \label{FGAdef}
  A \mapsto  \mathcal{F}_G(A) = \Hom(\Gamma,G(A))/G(A),
\end{equation}
where $G(A)$ acts by conjugation on the space of homomorphisms.
Better yet, we want to study the subfunctor where the composition $\Gamma \to G(A)
\to G(k)$ is fixed to be some $\overline{\rho}$ on which $G(k)$ acts
without isotropy.  
This makes perfect sense as stated for a discrete
(pro)-group $\Gamma$ and a discrete commutative $W(k)$-algebra $A$,
and for a homomorphism $A \to k$.  
In this section we will define such
a functor also for simplicial $A$, and possibly (pro-)simplicial
$\Gamma$ as well.  

 The final definition of our representation functors are given in
Definition \ref{defn-repfunctor} -- the role of $\Gamma$ is replaced by $B\Gamma$, 
which is also allowed to be in fact an arbitrary pro-simplicial set,  as we need in our applications; 
and their tangent complexes are computed in Lemma
\ref{Lemma:tangent space computation}.

\subsection{$BG(A)$ for a simplicial ring $A$} \label{BGA simplicial}  
Let us first discuss how one might try to define  $G(A)$, if $A$ is a simplicial ring. 
The
simplicial group $[p] \mapsto G(A_p)$ is not a useful object
to consider, even for $G = \G_m$: it is not homotopy invariant. 

If $\mathcal{O}_G$ is the (discrete) $W(k)$-algebra with $G =
\Spec(\mathcal{O}_G)$, we may instead define $A \mapsto G(A)$ for $A \in
\smallC_k$ by
\begin{equation*}
  G(A) = \Hom_\SCR(c(\mathcal{O}_G),A),
\end{equation*}
where $c: \SCR \to \SCR$ denotes cofibrant approximation.  This is a
reasonable definition if we only care about $G$ as a scheme, and not a
group scheme.  In fact, $G: \smallC_k \to s\Sets$ is pro-representable
in the sense of Definition~\ref{defn:pro-rep}, e.g.\ by inverse system
given by the objects $c(\mathcal{O}_G /p^m)$.  
Unfortunately, $A \mapsto
G(A)$ defined this way does not take values in simplicial groups or
even simplicial monoids.  

For $G = \GL_n$ there is an \emph{ad hoc}
definition of $\GL_n(A)$ as the union of the components of $M_n(A) =
A^{n^2}$ with invertible image in $M_n(\pi_0 A)$. This is monoid
valued and weakly equivalent to the abstract definition above.  

For a
general algebraic group $G$, it seems better to directly define the
functor $A \mapsto BG(A)$ instead of first defining $G(A)$.  (If
desired, a simplicial group valued functor $A \mapsto G(A)$ can be
obtained by applying ``Kan loop group''.)

For an ordinary commutative ring $A$, the space $BG(A)$ is the realization of the nerve of $G(A)$;
the functor  $A \mapsto N_p(G(A))$  (i.e.\ $p$-simplexes of the nerve of $G(A)$)
 is represented by $\mathcal{O}_{N_pG} = \mathcal{O}_G^{\otimes p}$,
the $p$-fold tensor product over $W(k)$.  
Our recipe in general is to replace each $A \mapsto N_p G(A)$ by a functor which
is representable by a simplicial ring which is cofibrant and homotopy
discrete, in a way that also preserves the simplicial structure $[p]
\mapsto N_p G(A)$.

In the simplicial case we
define for $A \in \smallC_k$ for each $p$ the simplicial set \begin{equation*}
\Hom_\SCR(c(\mathcal{O}_{N_p G}),A). \end{equation*}
This simplicial set is a functor of $[p]$ as well as $A$,
and we have obtained a
functor from simplicial rings to bisimplicial sets.
\begin{Definition} \label{BG simplicial ring definition}
  Let $BG(A)$ denote  $\Exinf$ of the geometric realization (i.e.\ the diagonal) of   the bisimplicial set $\Hom_\SCR(c(\mathcal{O}_{N_\bullet G}),A)$
  defined above.
\end{Definition}

Note that, if $A$ is homotopy discrete, then $BG(A)$ is weakly
equivalent to the usual classifying space of the discrete group
$G(\pi_0 A)$.  Let us also note the following ``sanity
check'' of our definition.
\begin{Lemma}\label{lemma:sanity-check}
  Let $G(A) = \Hom_\SCR(c(\mathcal{O}_G),A)$ as above.  Then the
  natural inclusion
  \begin{equation*}
    G(A) \to \Omega BG(A)  \end{equation*}
  is a weak equivalence.
\end{Lemma}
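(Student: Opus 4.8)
The plan is to reduce the statement to a bisimplicial-set computation. Recall that $BG(A) = \Exinf$ of the diagonal of the bisimplicial set $[p] \mapsto \Hom_\SCR(c(\mathcal{O}_{N_p G}), A)$, and that $G(A) = \Hom_\SCR(c(\mathcal{O}_G), A)$. First I would observe that since $\Exinf$ is a weak equivalence and $\Omega$ preserves weak equivalences of Kan complexes, it suffices to identify $\Omega$ (applied to the diagonal, after fibrant replacement) of the bisimplicial set $X_{\bullet,\bullet}$ with $\Hom_\SCR(c(\mathcal{O}_G),-)$. Since $\mathcal{O}_{N_p G} = \mathcal{O}_G^{\otimes p}$ over $W(k)$, and cofibrant approximation can be chosen so that $c(\mathcal{O}_{N_p G})$ is (weakly) the $p$-fold coproduct of $c(\mathcal{O}_G)$ in $\SCR$ — or at least so that $\Hom_\SCR(c(\mathcal{O}_{N_\bullet G}), A)$ is levelwise weakly equivalent to the nerve of the simplicial monoid $G(A) = \Hom_\SCR(c(\mathcal{O}_G), A)$ — the bisimplicial set in question is weakly equivalent in each row to $N_\bullet(G(A))$.

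The key point is then the standard fact that for a simplicial monoid (or group-like simplicial monoid) $M$, the canonical map $M \to \Omega |N_\bullet M|$ is a weak equivalence when $M$ is group-like, i.e. $\pi_0(M)$ is a group. So the second step is to verify group-likeness: $\pi_0 G(A) = \pi_0 \Hom_\SCR(c(\mathcal{O}_G), A)$; I would argue that this is $\Hom(\mathcal{O}_G, \pi_0 A) = G(\pi_0 A)$, which is a genuine group since $G$ is an algebraic group and $\pi_0 A$ is a (discrete) commutative ring. Strictly speaking $G(A)$ as defined is only a simplicial \emph{set}, not a simplicial monoid, so I would first replace it by an equivalent simplicial monoid — this is exactly the content of the parenthetical remark in the text that one can apply the ``Kan loop group'', or dually, observe that the diagonal of $\Hom_\SCR(c(\mathcal{O}_{N_\bullet G}), A)$ is the nerve of a simplicial monoid weakly equivalent to $G(A)$ because the Segal maps $N_p \to N_1^{\times p}$ are weak equivalences (which follows from $c(\mathcal{O}_G)$ being cofibrant, $\mathcal{O}_{N_p G}$ being a $p$-fold tensor power, and the tensor product of cofibrant objects computing the derived coproduct, together with the fact that $\Hom_\SCR(-, A)$ sends homotopy coproducts to products up to weak equivalence for cofibrant source).

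So the argument structure is: (i) the bisimplicial set defining $BG(A)$ satisfies the Segal condition up to weak equivalence, hence its diagonal is weakly equivalent to $N_\bullet \widetilde{G}(A)$ for a simplicial monoid $\widetilde G(A) \simeq G(A)$; (ii) $\pi_0 \widetilde G(A) = G(\pi_0 A)$ is a group, so $\widetilde G(A)$ is group-like; (iii) apply the classical group-completion/delooping theorem for group-like simplicial monoids to get $\widetilde G(A) \xrightarrow{\simeq} \Omega |N_\bullet \widetilde G(A)| \simeq \Omega BG(A)$; (iv) chase through the weak equivalences to see this agrees with the natural inclusion $G(A) \to \Omega BG(A)$. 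The main obstacle is step (i): carefully justifying the Segal condition, i.e. that $\Hom_\SCR(c(\mathcal{O}_G^{\otimes p}), A) \simeq \Hom_\SCR(c(\mathcal{O}_G), A)^{\times p}$, which requires knowing that $c(\mathcal{O}_G)^{\otimes p} \to \mathcal{O}_G^{\otimes p}$ (tensor over $W(k)$) is still a cofibrant approximation — this uses that tensor product of cofibrant simplicial rings over a fixed base computes the derived tensor product, so that $\Hom_\SCR(c(\mathcal{O}_G)^{\otimes_{W(k)} p}, A)$ is the homotopy pullback over $\Hom_\SCR(c(W(k)), A) \simeq \ast$ (since $A$ is a $W(k)/p^m$-algebra, or more precisely since we are working in $\SCR_{/k}$) of $p$ copies of $\Hom_\SCR(c(\mathcal{O}_G), A)$, which is the $p$-fold product.
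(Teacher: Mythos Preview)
Your approach is essentially the paper's: verify the Segal condition for the simplicial space $[p] \mapsto \Hom_\SCR(c(\mathcal{O}_{N_p G}), A)$ and then invoke Segal's delooping theorem. The paper is terser on the Segal condition --- it simply observes that $(c(\mathcal{O}_G))^{\otimes p}$ is cellular (hence cofibrant) whenever $c(\mathcal{O}_G)$ is, so it may serve as a model for $c(\mathcal{O}_{N_p G})$, and then $\Hom_\SCR(-,A)$ takes this coproduct to a product. Your route through homotopy pullbacks over $\Hom_\SCR(c(W(k)),A)$ is more roundabout and introduces an unnecessary wrinkle (that hom space need not be contractible in $\SCR$, only in $\SCR_{/k}$).

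You are more explicit than the paper about group-likeness, which is indeed needed for Segal's theorem to yield a weak equivalence rather than merely a group completion; the paper leaves this implicit. Your justification via $\pi_0 G(A) = G(\pi_0 A)$ reaches the right conclusion but that equality itself requires the smoothness of $\mathcal{O}_G$ to justify. A cleaner argument: the inversion morphism $i: G \to G$ of the algebraic group induces a self-map of the Segal space compatible with the face maps in the expected way, so the induced monoid structure on $\pi_0$ of the $1$-simplices automatically has inverses.
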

\begin{proof}
For $R \in \SCR$, we say that  $\Z \rightarrow R$  is ``cellular'' if it is isomorphic to a   possibly transfinite)  composition  of cell attachments as in Definition \ref{defn:attach-cell}. 

  Let us also note that $(c(\mathcal{O}_G))^{\otimes p}$ is cellular  
  if $c(\mathcal{O}_G)$ is and hence for purposes where the strict
  functoriality in $[p] \in \Delta$ is unnecessary we may use
  $(c(\mathcal{O}_G))^{\otimes p}$ as a model for
  $c(\mathcal{O}_{N_p G})$.  In particular we see that the morphisms
  $[1] \cong (i-1 < i) \subset (0 < \dots < p)$ in $\Delta$ for
  $i = 1, \dots, p$ induce weak equivalences
  \begin{equation*}
    \Hom_\SCR(c(\mathcal{O}_{N_p G}),A) \to \prod_{i = 1}^p
    \Hom_\SCR(c(\mathcal{O}_{N_1 G}),A).
  \end{equation*}
  This condition on a simplicial space, often called the ``Segal
  condition'' after \cite{SegalCategoriesAndCohomologyTheories},
  implies that the natural map from the 1-simplices to the loop space  
  of the geometric realization is a weak equivalence.
\end{proof}
\begin{Corollary}\label{cor:connectivity-of-BG}
  If $A \to B$ is a morphism in $\smallC_k$ whose underlying map of
  simplicial sets is $n$-connected, then $BG(A) \to BG(B)$ is
  $(n+1)$-connected.
\end{Corollary}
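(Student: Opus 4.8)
The plan is to reduce the statement to the level of $1$-simplices, where it becomes a statement about the functor $A\mapsto G(A)=\Hom_\SCR(c(\mathcal{O}_G),A)$, and then to gain the extra unit of connectivity from the fact that $BG$ deloops $G$. Write $X_p(A)=\Hom_\SCR(c(\mathcal{O}_{N_pG}),A)$, so that, up to the weak equivalence $\Exinf$, the map $BG(A)\to BG(B)$ is the geometric realization of the level-wise map $X_\bullet(A)\to X_\bullet(B)$; note $X_1(A)=G(A)$ since $N_1G=G$. First I would record that $\pi_0BG(A)\to\pi_0BG(B)$ is a bijection. Since $N_0G=\Spec W(k)$ is terminal, the two face maps $d_0,d_1\colon X_1\to X_0$ agree, so $\pi_0$ of the realization of $X_\bullet(A)$ is the coequalizer of $d_0=d_1\colon\pi_0X_1(A)\rightrightarrows\pi_0X_0(A)$, which is just $\pi_0X_0(A)$; and $\pi_0X_0(A)=\mathrm{Hom}_{\mathrm{ring}}(W(k),\pi_0A)$ is independent of $A\in\smallC_k$ — it is the set of ring homomorphisms $W(k)\to k$ — by Hensel's lemma, because $W(k)$ is unramified over $\mathbf{Z}_p$ and $\pi_0A,\pi_0B$ are $p$-nilpotent local rings with residue field $k$. (Equivalently, if one works with $W(k)$-algebra maps throughout, $X_0(A)$ is contractible and $BG(A)$ is path-connected.)

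The main step is to show that $G(A)\to G(B)$ is $n$-connected. After a functorial factorization I may assume $A\to B$ is a fibration surjective in each simplicial degree; its kernel $I$ is then a simplicial ideal, and the long exact sequence shows $\pi_iI=0$ for $i\le n-1$. Since $c(\mathcal{O}_G)$ is cofibrant, $G(A)\to G(B)$ is a Kan fibration, whose fibre over a vertex $\varphi\colon c(\mathcal{O}_G)\to B$ is the derived space of lifts of $\varphi$ along $A\to B$. The point is that $G$ is smooth over $W(k)$, so the cotangent complex $L_{\mathcal{O}_G/W(k)}$ is a finitely generated projective $\mathcal{O}_G$-module concentrated in degree $0$; running up a Postnikov filtration of $I$ (each stage of which is, in the relevant range, a square-zero extension), the space of lifts is non-empty and is an iterated principal fibration with structure spaces of the form $\mathrm{Hom}_{\mathcal{O}_G}(L_{\mathcal{O}_G/W(k)},\pi_jI[j])$ with $j\ge n$, each of which is $(n-1)$-connected because $\pi_jI$ sits in degree $j\ge n$. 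Hence every fibre of $G(A)\to G(B)$ is $(n-1)$-connected, and since $G(A)\to G(B)$ is surjective on $\pi_0$ by formal smoothness (the kernel of $\pi_0A\to\pi_0B$ is nilpotent), the map $G(A)\to G(B)$ is $n$-connected. Alternatively this can be done by a cell count: as $\mathcal{O}_G$ is smooth over $W(k)$ and $W(k)$ (or $W(k)/p^N$, which suffices since $p^N=0$ in $\pi_0A,\pi_0B$) is a complete intersection over $\mathbf{Z}$, one may build $c(\mathcal{O}_G)$ from $\mathbf{Z}$ using only cells in dimensions $0$ and $1$ with all $1$-cells non-trivially attached; a $0$-cell attachment contributes, on $\Hom_\SCR(-,A)$, a Cartesian factor of $A$, and a non-trivially attached $1$-cell contributes a homotopy fibre of a map into such a factor, and both constructions carry the $n$-connected map $A\to B$ to an $n$-connected map.

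Finally I would assemble the pieces using Lemma~\ref{lemma:sanity-check}, which supplies a weak equivalence $G(A)\xrightarrow{\ \sim\ }\Omega BG(A)$ natural in $A$; hence $\Omega BG(A)\to\Omega BG(B)$ is $n$-connected, and together with the bijection on $\pi_0$ established above this gives that $\pi_iBG(A)\to\pi_iBG(B)$ is bijective for $i\le n$ and surjective for $i=n+1$, i.e.\ $BG(A)\to BG(B)$ is $(n+1)$-connected. The step I expect to be the main obstacle is the middle one: in the deformation-theoretic form it rests on identifying the fibres of $G(A)\to G(B)$ with spaces of lifts and then using that $L_{\mathcal{O}_G/W(k)}$ lies in degree $0$ to transfer the connectivity of the kernel $I$ unchanged; in the cell-theoretic form it rests on the fact that smoothness — hence the local complete intersection property of $\mathcal{O}_G$ over $W(k)$ — forces a cell structure with no cells above dimension $1$ and with the $1$-cells non-trivially attached, so that the relevant homotopy pullbacks preserve connectivity rather than losing it to a loop space.
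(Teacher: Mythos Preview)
Your proof is correct, and your careful treatment of $\pi_0 BG(A)\to\pi_0 BG(B)$ is a point the paper's own proof passes over in silence. The overall architecture---reduce to $G(A)\to G(B)$ being $n$-connected and then use the delooping from Lemma~\ref{lemma:sanity-check}---matches the paper, but the \emph{mechanism} for the main step is genuinely different. The paper exploits the decomposition~\eqref{eq:9},
\[
  G(A)\;\cong\;\coprod_{\rho\in G(k)}\SCR_{/k}\bigl((c(\mathcal{O}_G),\rho),A\bigr),
\]
and then observes that smoothness of $G$ lets one choose formal coordinates at each $k$-point $\rho$, so that the pro-object representing each summand is $W(k)[[x_1,\dots,x_d]]$ and the summand is naturally weakly equivalent to $\mathfrak{m}(A)^d$. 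The $n$-connectivity of $G(A)\to G(B)$ is then immediate from that of $\mathfrak{m}(A)\to\mathfrak{m}(B)$. Your deformation-theoretic argument reaches the same conclusion without the decomposition, by identifying the homotopy fibres of $G(A)\to G(B)$ via the cotangent complex; the paper's route is shorter and more concrete, while yours is more portable to situations where one does not want to pick coordinates.

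Two small points. First, since $G(A)=\Hom_\SCR(c(\mathcal{O}_G),A)$ is a space of $\mathbb{Z}$-algebra maps, the cotangent complex governing the lifting problem is $L_{\mathcal{O}_G/\mathbb{Z}}$, not $L_{\mathcal{O}_G/W(k)}$; your conclusion survives because $L_{W(k)/\mathbb{Z}}\otimes_{W(k)}k\simeq 0$ (equivalently $\mathrm{D}^i_{\mathbb{Z}}(W(k),k)=0$, as noted after Lemma~\ref{complete}), so the transitivity triangle gives $L_{\mathcal{O}_G/\mathbb{Z}}\otimes k\simeq L_{\mathcal{O}_G/W(k)}\otimes k$, concentrated in degree $0$. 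Second, your alternative ``cell-count'' argument is riskier than it looks: attaching a $1$-cell to $R$ and applying $\Hom_\SCR(-,A)$ produces a homotopy fibre of a map $A^r\to A$, and for a generic attaching map this can drop connectivity by one. What saves you here is again smoothness---the induced map on $\pi_*$ is surjective because the Jacobian of the relations has full rank over the residue field---but this is exactly the content of the cotangent-complex argument, so the cell version does not really stand on its own.
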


The proof of this Corollary is not hard. Before giving it, let us
point out that we defined $G(A) = \SCR(c(\mathcal{O}_G),A)$ and not
$\SCR_{/k}(c(\mathcal{O}_G),A)$.  Hence we have an interesting map
$G(A) \to G(k)$.  Each element $\rho \in G(k) = \pi_0 G(k)$ gives a
homomorphism $\rho: \mathcal{O}_G \to k$ and hence an object
$(\mathcal{O}_G,\rho) \in \SCR_{/k}$, and we have an isomorphism of
simplicial sets 
\begin{equation}\label{eq:9}
  G(A) \cong \coprod_{\rho \in G(k)} \SCR_{/k}((c(\mathcal{O}_G),\rho),A)
\end{equation}
which is in fact a natural isomorphism of functors
$\smallC_k \to s\Sets$.
\begin{proof}
  By
  Lemma~\ref{lemma:sanity-check} it suffices to see that
  $G(A) \to G(B)$ is $n$-connected, and by~\eqref{eq:9} it suffices to
  see that
  \begin{equation*}
    \SCR_{/k}((c(\mathcal{O}_G),\rho),A) \to
    \SCR_{/k}((c(\mathcal{O}_G),\rho),B)
  \end{equation*}
  is $n$-connected for all $\rho \in G(k)$, which we may check one $\rho$
  at a time.

  This holds because the coordinate ring $\mathcal{O}_G$ is smooth at
  the $k$-point $\rho \in G(k)$.  Hence if we choose formal parameters
  $x_1, \dots, x_d \in \mathcal{O}_G$ of $G$ at $\rho$, the object of
  pro-$\smallC_k$ representing the functor
  $\SCR_{/k}((c(\mathcal{O}_G),\rho),-)$ may be taken to be
  $W(k)[[x_1,\dots,x_d]]$ for $d = \dim(G)$.  This choice of
  parameters induce a natural weak equivalence
  $\SCR_{/k}((c(\mathcal{O}_G),\rho),A) \simeq (\mathfrak{m}(A))^d$. 
 \end{proof}

Our definition of $A \mapsto BG(A)$ is also functorial in the
algebraic group $G$.    
\subsection{Framed and unframed deformation functors}

We can now define analogues of the framed and unframed representation
functors for general $G$.  Let us postpone a discussion of
irreducibility (hence our functors need not be representable in general -- they are ``derived stacks'').

Before we proceed to the general definitions, we give first the simplest case. 
Let $\mathcal{X}$ be a scheme -- in our applications, the spectrum of a ring of integers in a number field -- and let $y_0$ a geometric basepoint.
  If $A \in \Art_k$ is discrete,  and so $G(A)$ is simply a discrete group, then the set of conjugacy classes of representations
$\pi_1(\mathcal{X},x_0) \rightarrow G(A)$ are in correspondence with the {\'e}tale torsors over $\mathcal{X}$
with structure group $G(A)$.   In turn, these correspond to
homotopy classes of maps from the {\'e}tale homotopy type $X$ of $\mathcal{X}$ to $BG(A)$. 
Here the  {\'e}tale homotopy type is understood
to be the pro-simplicial set $X$ that  is associated to $\mathcal{X}$ by Friedlander \cite{Friedlander}\footnote{In the cases of interest for number theory, we don't need to appeal to the {\'e}tale homotopy type --
see the footnote on page \pageref{explicitetalehomotopytype} for a more straightforward construction.}.
This motivates the basic version of our representation functor, which appears in
\eqref{basic_case} below. 

   In terms of the discussion above, 
 part (i) of the following definition  corresponds to keeping track of representations together with a basis for the underlying module  (this is usually called
 ``framed deformation rings'' in number theory) and part (ii) 
corresponds to studying only lifts of a fixed representation $\pi_1(\mathcal{X}_0, x) \rightarrow G(k)$. 
The most important case for us will be the functor $\mathcal{F}_{X, \rhobar}$
  from (iii), in the case when $X$ is the etale homotopy type of the ring of integers in a number field.  
  
  In the following definition, if $X = (\alpha \mapsto X_{\alpha})$
  is a pro-object in simplicial sets, we write    $\Hom(X, -)$ for the colimit 
\begin{equation} \label{prohomdef} \colim_{\alpha} s\Sets(X_{\alpha} , -).\end{equation}
  
\begin{Definition} \label{defn-repfunctor}
  \begin{enumerate}[(i)]
  \item Let $X$ be a pro-object in based simplicial sets, and let
    $\mathcal{F}_{X,G}^\fram$ be the functor whose value on a
    simplicial ring $A$ is the space of based maps $X \to   BG(A)$.  Equivalently, it is the homotopy fiber of the
    \begin{equation*}
      \Map(X, BG(A)) \to  BG(A), 
    \end{equation*}
    given by evaluation at the basepoint.

    \item Let $X$ be a pro-object in simplicial sets, and let $\mathcal{F}_{X,G}$  
    be the functor whose value on a simplicial ring $A$ is the space of maps
 (now unbased):
        \begin{equation} \label{basic_case}  \mathcal{F}_{X,G}(A) =  \Map(X,  BG(A)).\end{equation}

  \item If $\overline\rho: \pi_1 X \to G(k)$ is a homomorphism (from
    the fundamental pro-groupoid, in the unpointed situation), we 
    define functors $\mathcal{F}_{X,\overline\rho}$ and
    $\mathcal{F}^\square_{X,\overline\rho}$ by taking homotopy fibers
    over the zero-simplices of $\mathcal{F}_{X,G}(k)$ and
    $\mathcal{F}_{X,G}^\square(k)$ given by $\overline\rho$.    \end{enumerate}
\end{Definition}

It follows from the discussion of   Example~\ref{example:mapping-into-functor},
that these functors are homotopy invariant and preserve homotopy pullback,
and that the functors of (iii) are formally cohesive.

\subsection{Calculation of tangent complexes}
\label{sec:calc-tang-compl}

The functor $\mathcal{F}_{X,G}$ is
not reduced except in trivial cases.  Hence the tangent complex is a
local system (\S \ref{sec:non-reduced-functors}) on $\mathcal{F}_{X,G}(k)$, and in fact in
Section~\ref{sec:non-reduced-functors} we have developed all the tools
to calculate it at an arbitrary vertex
$\overline\rho \in \mathcal{F}_{X,G}(k)$.  

To this end, it is 
convenient to first treat the functor $A \mapsto  BG(A)$, which
we shall write as simply $BG$.  It has the same properties as the
functors defined above, namely it is homotopy invariant, Kan valued,
preserves pullbacks, but is not reduced.

\begin{Lemma}\label{lemma:tangent-complex-of-BG}
  The tangent complex of $A \mapsto BG(A)$ is the local system on
  $BG(k)$ which at the basepoint gives a chain complex with homology
  concentrated in degree 1, where it is the $k$-module
  $\mathfrak{g} = \Ker(G(k[\epsilon]/\epsilon^2) \to G(k))$ with the
  conjugation action of $G(k) = \pi_1(BG(k))$.
\end{Lemma}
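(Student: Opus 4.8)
The plan is to compute the tangent complex of $BG$ as a local system on $BG(k)$ using the setup of \S\ref{sec:non-reduced-functors}, following the same strategy sketched for $B\GL_n$ in Example~\ref{GLn ad hoc}. First I would observe that, since the tangent complex is a local system on $Z = BG(k) = K(G(k),1)$, which is path connected, by Remark~\ref{remark:local-system-classical-defn} it suffices to (a) identify the chain complex $\bigtangent (BG)_{\overline\rho_0}$ at the basepoint vertex $\overline\rho_0$ up to quasi-isomorphism, checking its homology is concentrated in a single degree, and (b) identify the $\pi_1(BG(k),\overline\rho_0) = G(k)$-action on that homology. For (a), the reduced functor $(BG)_{\overline\rho_0}$ is the homotopy fiber of $BG(A) \to BG(k)$ over the trivial point; by Lemma~\ref{lemma:sanity-check} we have $\Omega BG(A) \simeq G(A) = \Hom_\SCR(c(\mathcal{O}_G),A)$, so $(BG)_{\overline\rho_0}(A) \simeq B\left((G)_{\overline\rho_0}(A)\right)$ where $(G)_{\overline\rho_0}$ is the reduced functor obtained from $A \mapsto G(A)$ by taking the homotopy fiber over the identity. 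Hence $\bigtangent (BG)_{\overline\rho_0} \simeq \Sigma\, \bigtangent (G)_{\overline\rho_0}$ by the shift property in Lemma~\ref{lem:2.51}(ii) (applied with $\Omega$, i.e. $\bigtangent \Omega\mathcal{F} \simeq \Sigma^{-1}\bigtangent\mathcal{F}$).

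So the problem reduces to computing $\bigtangent(G)_{\overline\rho_0}$, the tangent complex of the pointed deformation functor of the \emph{scheme} $G$ at the identity $e \in G(k)$. Here I would use Example~\ref{ex:tangent-complex-Hom-R}: since $c(\mathcal{O}_G)$ is a cofibrant simplicial commutative ring and $e: \pi_0 c(\mathcal{O}_G) = \mathcal{O}_G \to k$ is the augmentation, the tangent complex of $\mathcal{F}_{c(\mathcal{O}_G), e}$ is quasi-isomorphic to $\Hom_{\Ch(k)}(\DK(L_{\mathcal{O}_G/W(k)} \otimes_{\mathcal{O}_G} k), k)$ (working relative to $W(k)$ rather than $\Z$ makes no difference after tensoring with $k$, as in the proof of Lemma~\ref{lem:build-Artin-rings-Postnikov}). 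Now the key input is that $G$ is smooth over $W(k)$, so $L_{\mathcal{O}_G/W(k)}$ is quasi-isomorphic to the locally free module $\Omega^1_{\mathcal{O}_G/W(k)}$ sitting in degree $0$; base-changing along $e$ gives the cotangent space $\mathfrak{m}_e/\mathfrak{m}_e^2$ in degree $0$, whose $k$-linear dual is $\mathfrak{g} = \Ker(G(k[\epsilon]/\epsilon^2) \to G(k))$. Thus $\bigtangent (G)_{\overline\rho_0}$ has homology $\mathfrak{g}$ concentrated in degree $0$, and therefore $\bigtangent (BG)_{\overline\rho_0}$ has homology $\mathfrak{g}$ concentrated in degree $1$. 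This settles (a).

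For (b), the $G(k)$-action: the conjugation action of $G(k)$ on the pointed functor $A \mapsto G(A)$ (coming from the fact that inner automorphisms fix the augmentation $e$) induces the action of $\pi_1 BG(k)$ on the local system, and on $\pi_0 \bigtangent (G)_{\overline\rho_0} = \mathfrak{g}$ this is exactly the differential of conjugation, i.e. the adjoint action $\Ad$ of $G(k)$ on $\mathfrak{g}$. One way to make this precise without excessive bookkeeping: the loop-space identification $\Omega BG(A) \simeq G(A)$ is $G(k)$-equivariant for the conjugation action on both sides (this is built into the bar construction), so the local-system monodromy on $\bigtangent BG$ at the basepoint is identified, after the degree shift, with the monodromy on $\bigtangent G$, which is conjugation; passing to $\pi_0$ of the tangent complex of the scheme $G$ at a fixed point, conjugation induces precisely $\Ad$ on the tangent space $\mathfrak{g}$.

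The main obstacle I anticipate is purely organizational rather than conceptual: making the equivariance of the identification $\Omega BG \simeq G$ and of the smoothness-based computation of $L_{\mathcal{O}_G/W(k)} \otimes k$ precise enough to pin down the $G(k)$-action, since the functors involved are only defined up to natural weak equivalence and the local system only up to the zig-zag equivalence of Remark~\ref{remark:local-system-classical-defn}. In practice this is handled by noting that everything in sight (the bar construction, cofibrant approximation, the cotangent complex of a smooth scheme) is functorial for the $G$-action by conjugation, so the zig-zags can be chosen $G(k)$-equivariantly; and then Remark~\ref{remark:local-system-classical-defn} guarantees that a local system of chain complexes with homology concentrated in a single degree, together with its $\pi_1$-action on that homology, determines it up to equivalence. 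I would state this equivariance carefully but not belabor the verification.
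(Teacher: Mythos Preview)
Your proposal is correct, but it takes a somewhat different organizational route from the paper. The paper argues directly at the level of the spectrum defining the tangent complex: the $n$th space is $\hofib(BG(k\oplus k[n])\to BG(k))$, and Corollary~\ref{cor:connectivity-of-BG} gives that $BG(k\oplus k[n])\to BG(k)$ is $(n+1)$-connected, so this fiber is a $K(\mathfrak g,n+1)$ with $\mathfrak g=\Ker(G(k[\epsilon]/\epsilon^2)\to G(k))$; the $G(k)$-action is read off from the same description. You instead deloop to reduce to the tangent complex of the \emph{scheme} $G$ at the identity, then invoke Example~\ref{ex:tangent-complex-Hom-R} and smoothness of $G$ over $W(k)$ to identify $L_{\mathcal O_G}\otimes k$ and hence $\bigtangent(G)_e$ with $\mathfrak g$ in degree $0$.

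The two routes use the same essential input---smoothness of $G$ (the paper's connectivity corollary is itself proved by choosing formal coordinates at a smooth point, which is exactly your cotangent-complex computation in disguise). The paper's argument is shorter because it bypasses the delooping step and the cotangent-complex formalism, going straight to the Eilenberg--MacLane identification of each space in the spectrum. Your route has the advantage of making the role of smoothness more explicit via $L_{\mathcal O_G/W(k)}\simeq\Omega^1_{\mathcal O_G/W(k)}$, and it separates cleanly the ``scheme'' computation from the ``$B$'' shift; but you then have to do a bit more bookkeeping to track the $G(k)$-equivariance through the zig-zags, as you note. Both are fine; the paper's is just more economical.
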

In other words it is the Lie algebra of $G$ with the adjoint action,
concentrated in degree 1 (where as usual we insist on grading so that
boundary maps decrease degrees; in cohomological notation it would be
in degree $-1$).  By Remark~\ref{remark:local-system-classical-defn},
the Lemma uniquely describes $\bigtangent(BG)$ as a local system on
$BG(k)$ up to weak equivalence of local systems.
\begin{proof}
  Let us work with the corresponding spectra instead of chain
  complexes.  Then the tangent complex $\bigtangent(BG)$ at the
  basepoint vertex of $BG(k)$ is the spectrum whose $n$th space is
  \begin{equation}\label{eq:6}
    \hofib(BG(k \oplus k[n]) \to BG(k)).
  \end{equation}
  Now $BG(k \oplus k[n]) \to BG(k)$ is $(n+1)$-connected by
  Corollary~\ref{cor:connectivity-of-BG} and the $(n+1)$st loop space is
  the homotopy fiber of $G(k \oplus k[0]) \to G(k)$ or in other words
  the kernel of that surjection of discrete groups.  Hence the
  space~\eqref{eq:6} is an Eilenberg--MacLane space
  $K(\mathfrak{g},n+1)$ for
  $\mathfrak{g} = \Ker(G(k[\epsilon]/\epsilon^2) \to G(k))$, and this
  description also captures the action of $G(k)$.
\end{proof}

From the Lemma and Example~\ref{example:mapping-into-functor} it is
then immediate to read off the tangent complex of associated
functors, in particular:

\begin{Example} 
With notation as in Definition \ref{defn-repfunctor},  the tangent complex of
$\mathcal{F}_{X,G,\overline{\rho}}$ is (quasi-isomorphic to)
$C^{*+1}(X;\overline\rho^*\mathfrak{g})$, where 
we regard the Lie algebra $\mathfrak{g}$ as a local system on $X$ by means of $\rhobar$.  
\end{Example}
The indexing may be somewhat confusing: we mean that $\pi_{-i}$ of the tangent complex is isomorphic to the $H^{i+1}(X; \overline\rho^*\mathfrak{g})$; in particular the homotopy groups of the tangent complex 
are  concentrated in degrees $(-\infty,1]$.

Observe, in particular, that if the center of $G$ is positive-dimensional, then the functor $\mathcal{F}_{X,G,\overline{\rho}}$
is never pro-representable, because its tangent complex has nonvanishing $\pi_1$.  We now explain how to modify the foregoing
discussion in this case.  

\subsection{Modifications for center}
If $G$ has a nontrivial center (e.g. $G=\GL_n$) the functor $\mathcal{F}_{X,G}$ will never be pro-representable, because of automorphisms. 
 We now explain how to modify the previous discussion,  along the lines of \eqref{FGAdef}. 
 (This section will not be used in the remainder of the paper.)
 
 Suppose, then, that $Z \to G$ is a central algebraic subgroup. Then we have an algebraic
group $PG = G/Z$ and hence $BPG(A)$ for simplicial $A$.  It shall be
convenient to give another model for $BPG(A)$, in which the short
exact sequence $Z(A) \to G(A) \to PG(A)$ corresponds to a fibration
sequence
\begin{equation*}
  BG(A) \to BPG(A) \to B^2Z(A).
\end{equation*}
We must first explain the definition of the functor $A \mapsto B^2Z(A)$.  For discrete
rings $A$, $Z(A)$ is a commutative group and hence $N_\bullet Z(A)$ is
a simplicial group so we may form the bisimplicial set $N_\bullet
N_\bullet Z(A)$.  Extend this to a functor on simplicial rings which
in each bi-degree is representable by a cofibrant simplicial ring, as
before.  Then $B^2Z(A)$ is defined as  $\Exinf$ of the diagonal of the
trisimplicial set.

\begin{Lemma}
  In the above setting, there is a natural fibration sequence
  \begin{equation*}
    BG(A) \to BPG(A) \to B^2Z(A).
  \end{equation*}
\end{Lemma}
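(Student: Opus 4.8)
The plan is to take the hint in the preamble at face value: rather than keeping the earlier model of $BPG(A)$ (built from $c(\mathcal{O}_{N_\bullet PG})$), I would \emph{redefine} $BPG(A)$ as the homotopy orbit space of a natural action of $BZ(A)$ on $BG(A)$, so that the asserted fibration sequence becomes the standard fibration sequence $X \to X\modd H \to BH$ of a group action, and then check that this new model deserves its name. First I would produce the action. Since $Z\subset G$ is central, the multiplication map $Z\times G\to G$ is a homomorphism of affine algebraic groups over $W(k)$; applying the bisimplicial construction of \S\ref{BGA simplicial} and the natural weak equivalence $B(Z\times G)(A)\simeq BZ(A)\times BG(A)$ (which follows from $\mathcal{O}_{N_p(Z\times G)}=\mathcal{O}_{N_pZ}\otimes_{W(k)}\mathcal{O}_{N_pG}$, flatness of the coordinate rings over $W(k)$, and the Segal condition as in the proof of Lemma~\ref{lemma:sanity-check}) gives a natural map $BZ(A)\times BG(A)\to BG(A)$ exhibiting $BG(A)$ as a module over the simplicial abelian group $BZ(A)$.

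Next I would set
\[
  BPG(A)\;:=\;\Exinf\bigl(\,BG(A)\modd BZ(A)\,\bigr)\;=\;\Exinf\bigl|\,[q]\mapsto BZ(A)^{q}\times BG(A)\,\bigr|,
\]
together with its projection to $\Exinf|\,[q]\mapsto BZ(A)^{q}\,|=B\bigl(BZ(A)\bigr)$, and observe that $B\bigl(BZ(A)\bigr)$ is naturally weakly equivalent to the functor $B^{2}Z(A)$ of the preamble (both are, up to $\Exinf$, diagonals of the iterated nerve of $Z$). By the standard properties of the two‑sided bar construction this produces, strictly and naturally in $A$, the fibration sequence $BG(A)\to BPG(A)\to B^{2}Z(A)$ (the homotopy fiber over the basepoint of the projection is $BG(A)$, with its canonical nullhomotopy). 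As in the argument that $BG$ preserves homotopy pullbacks, every simplicial level of $BPG$ is represented by a cofibrant, homotopy discrete simplicial ring, so $A\mapsto BPG(A)$ is again homotopy invariant and preserves homotopy pullbacks (via Corollary~\ref{cor:2.54}); combined with Corollary~\ref{cor:connectivity-of-BG} this also gives that $BPG(A)\to BPG(B)$ is highly connected when $A\to B$ is.

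It then remains to justify calling this $BPG(A)$, i.e. to compare it with the earlier model built from $\mathcal{O}_{N_\bullet PG}$. Both functors are homotopy invariant and preserve homotopy pullbacks, so by Lemma~\ref{lem:build-Artin-rings-Postnikov} it is enough to compare them on $k\oplus k[n]$ for all $n\ge 0$. For $n\ge 1$ this reduces to comparing tangent complexes: by (the proof of) Lemma~\ref{lemma:tangent-complex-of-BG} the tangent complex of the bar‑construction model is the homotopy cofiber of $\bigtangent(BZ)\to\bigtangent(BG)$, i.e. the local system $\mathfrak{g}/\mathfrak{z}$ in degree $1$ on $B(G/Z)(k)$, while that of the algebraic‑group model is $\mathrm{Lie}(PG)$ in degree $1$; these agree exactly when $0\to\mathfrak{z}\to\mathfrak{g}\to\mathrm{Lie}(PG)\to 0$ is exact. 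The case $n=0$ is the classical fact that a central extension $1\to Z(\pi_0A)\to G(\pi_0A)\to PG(\pi_0A)\to 1$ of discrete groups induces a fibration sequence $BG\to BPG\to B^{2}Z$, which requires $G(\pi_0A)\to PG(\pi_0A)$ to be surjective. I expect this reconciliation — controlling the obstruction in $H^{1}(\Spec(\pi_0A),Z)$ to lifting $PG$‑points to $G$‑points, equivalently identifying the sheaf‑theoretic quotient $G/Z$ with $PG$ after evaluation on $\smallC_k$ — to be the real content of the lemma, and the point that forces whatever smoothness/separability hypothesis on $Z$ is needed (it is automatic, for instance, when $Z$ is a torus, or in characteristic $0$). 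Everything else is formal manipulation of homotopy limits in the spirit of Lemma~\ref{lemma:2.50}, together with routine but tedious bookkeeping of cofibrancy and of the $\Exinf$'s through the iterated nerve/bar constructions.
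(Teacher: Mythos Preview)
Your strategy is exactly the paper's: redefine $BPG(A)$ as the Borel construction $BG(A)\modd BZ(A)$ and read off the fibration sequence from the bar construction. The one technical wrinkle is in how you set up the action. You want a strict action of $BZ(A)$ as a simplicial abelian group on $BG(A)$, deduced from $B(Z\times G)(A)\to BG(A)$ together with the weak equivalence $B(Z\times G)(A)\simeq BZ(A)\times BG(A)$; but a weak equivalence only gives a zig-zag, not a map $BZ(A)\times BG(A)\to BG(A)$, so you do not obtain a strict module structure with which to form the two-sided bar construction. The paper avoids this by performing the bar construction \emph{before} realizing: for discrete $A$ the simplicial group $[p]\mapsto N_pZ(A)=Z(A)^p$ acts strictly (componentwise) on the simplicial set $[p]\mapsto N_pG(A)=G(A)^p$, so one can form the bisimplicial set $(p,q)\mapsto N_q(\ast,N_pZ(A),N_pG(A))$, represented by the bi-cosimplicial ring $\mathcal{O}_Z^{\otimes pq}\otimes\mathcal{O}_G^{\otimes p}$; one then applies cofibrant replacement degreewise, maps into simplicial $A$, and takes the diagonal of the resulting trisimplicial set (followed by $\Exinf$). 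The projection to $B^2Z(A)$ is then the strict map induced by $\mathcal{O}_Z^{\otimes pq}\otimes\mathcal{O}_G^{\otimes p}\leftarrow\mathcal{O}_Z^{\otimes pq}$, with fiber $BG(A)$. This is the same idea as yours, just with the order of ``take bar construction'' and ``realize'' swapped so that everything is strict.

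On the comparison with the original model of $BPG(A)$ built from $c(\mathcal{O}_{N_\bullet PG})$, the paper simply asserts the natural weak equivalence and moves on, whereas you attempt to verify it via tangent complexes and correctly isolate the point where something nonformal is needed, namely surjectivity of $G(\pi_0A)\to PG(\pi_0A)$ (equivalently, vanishing of the relevant $H^1$ with coefficients in $Z$ over Artin local rings). Your caution here is well placed and in fact goes beyond what the paper's sketch supplies; in the paper's applications $G$ is taken adjoint so the issue does not arise, but you are right that some hypothesis on $Z$ is implicitly in play for the general statement.
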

\begin{proof}[Proof sketch/Definition]
  What we mean is that the functors are related by zig-zags of natural
  weak equivalences to functors which actually form a fibration.

  The main thing to explain
     is how to define $BPG(A) \to B^2Z(A)$, to which end we
  replace $BPG(A)$ by the Borel construction $BG(A)\modd BZ(A)$.  More
  explicitly, for discrete $A$ the multiplication $Z(A) \times G(A)
  \to G(A)$ is a group homomorphism and hence gives an action $(N_p
  Z(A)) \times (N_pG(A)) \to N_p(G(A))$ of the simplicial group
  $N_\bullet Z(A)$ on the simplicial set $N_\bullet G(A)$.  Taking bar
  construction of this actions gives a bisimplicial set
  \begin{equation*}
    (p,q) \mapsto N_q(\ast, N_pZ(A), N_p G(A)),
  \end{equation*}
  which is represented by a bi-cosimplicial ring which in bidegree
  $(p,q)$ is $\mathcal{O}_Z^{\otimes pq} \otimes
  \mathcal{O}_G^{\otimes p}$.  Then apply a functorial cosimplicial  
  replacement as before, and redefine $BPG(A)$ as the geometric
  realization of the tri-simplicial set represented (this is naturally
  weakly equivalent to the previous definition).  There is a
   natural weak
  equivalence to the previously defined $BPG(A)$.

  In this model the obvious map $N_q(\ast, N_pZ(A), N_p G(A)) \to
  N_q(\ast, N_p Z(A), \ast)$ gives the desired map $BPG(A) \to B^2
  Z(A)$, with fiber $BG(A)$. (Note that, as before, we should apply $\Exinf$ to all these functors so that they have Kan values.)
\end{proof}

\begin{Definition} \label{defn-repfunctor-Z}
   Let $X$ be a pro-object in spaces, and let $\mathcal{F}_{X,G,Z}$
    be defined by the homotopy pullback diagram
    \begin{equation}\label{eq:7}
      \begin{aligned}
        \xymatrix{ {\mathcal{F}_{X,G,Z}(A)}\ar[r]\ar[d] & {\Map(\pi_0
            X, B^2
            Z(A))}\ar[d]^{\mathrm{const}}\\
          {\Map(X, PG(A))} \ar[r] & {\Map(X, B^2
            Z(A))} }
      \end{aligned}
    \end{equation}
    where the right-hand vertical map is the inclusion of the
    (componentwise) constant maps; and $\Hom(X, -)$ is as in \eqref{prohomdef}, but taken in unpointed spaces. 

\end{Definition}

Observe that when $Z$ is trivial, $\mathcal{F}_{X,G,Z}$ reduces to the functor $\mathcal{F}_{X,G}$ previously studied.  
More generally $\mathcal{F}_{X,G,Z}$ fits into a natural fibration sequence 
\begin{equation}\label{eq:21}
  \Map(\pi_0(X),BZ(A)) \to \Map(X, BG(A)) \to \mathcal{F}_{X,G,Z}(A).
\end{equation}
In the case $X = B\Gamma$ for a pro-simplicial group $\Gamma$, this
could be though of as modeling a bundle of stacks
\begin{equation*}
  \ast\modd Z \to (\Hom(\Gamma,G))\modd G \to (\Hom(\Gamma,G))\modd PG.
\end{equation*}

\begin{Lemma}
$\mathcal{F}_{X,G,Z}$ is homotopy invariant and preserves homotopy pullbacks. 
\end{Lemma}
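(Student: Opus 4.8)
The plan is to reduce the statement to facts already established in the excerpt, since $\mathcal{F}_{X,G,Z}$ is built entirely out of functors whose good behaviour we have already verified. First I would recall that by Example~\ref{example:mapping-into-functor} (together with its stated extension to pro-simplicial $X$), the functors $A \mapsto \Map(X, BG(A))$, $A \mapsto \Map(X, PG(A))$, $A \mapsto \Map(X, B^2Z(A))$, and $A \mapsto \Map(\pi_0 X, B^2Z(A))$ are all homotopy invariant and preserve homotopy pullbacks, \emph{provided} the ``target'' functors $A \mapsto BG(A)$, $A \mapsto PG(A)$, $A \mapsto B^2Z(A)$ are homotopy invariant and preserve homotopy pullbacks. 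For $BG$ and $BPG = B(G/Z)$ this is exactly the content of \S\ref{BGA simplicial} (the argument via Corollary~\ref{cor:2.54}, applied to $G$ and to $PG$ respectively). For $A \mapsto B^2Z(A)$ I would apply Corollary~\ref{cor:2.54} once more: its loop functor is $A \mapsto BZ(A)$, which is again homotopy invariant and pullback-preserving by the same argument, and $\pi_0 B^2Z(A) = \ast$ so the surjectivity hypothesis on $\pi_0$ is vacuous; hence $B^2Z$ preserves homotopy pullbacks. (One also notes $A \mapsto PG(A)$ is handled either directly as in \S\ref{BGA simplicial} or via $\Omega BPG(A) \simeq PG(A)$ from the Segal-condition argument of Lemma~\ref{lemma:sanity-check}.)

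Next I would invoke Lemma~\ref{lemma:2.50}: a functor defined as an objectwise homotopy pullback of homotopy-invariant functors each of which preserves homotopy pullbacks again preserves homotopy pullbacks, and homotopy invariance of the homotopy pullback is automatic. Since $\mathcal{F}_{X,G,Z}$ is by Definition~\ref{defn-repfunctor-Z} precisely the objectwise homotopy pullback of
$$\Map(X, PG(-)) \longrightarrow \Map(X, B^2Z(-)) \longleftarrow \Map(\pi_0 X, B^2Z(-)),$$
and all three of these functors are homotopy invariant and preserve homotopy pullbacks by the previous paragraph, Lemma~\ref{lemma:2.50} immediately gives that $\mathcal{F}_{X,G,Z}$ is homotopy invariant and preserves homotopy pullbacks. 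That is the whole proof.

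The only point requiring a little care — and the closest thing to an obstacle — is making sure the right-hand vertical map in~\eqref{eq:7}, the inclusion of componentwise-constant maps $\Map(\pi_0 X, B^2Z(A)) \to \Map(X, B^2Z(A))$, is itself a natural transformation between homotopy-invariant functors so that the homotopy pullback in question is genuinely a homotopy pullback of the sort Lemma~\ref{lemma:2.50} applies to; this is clear since $\pi_0 X$ is a fixed (pro-)set, so $A \mapsto \Map(\pi_0 X, B^2Z(A)) = \prod_{\pi_0 X} B^2Z(A)$ is a product of copies of the homotopy-invariant pullback-preserving functor $B^2Z$, hence itself homotopy invariant and pullback-preserving, and the transformation is induced by the map of indexing objects $X \to \pi_0 X$. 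With that observed, I would simply cite Lemma~\ref{lemma:2.50} and Corollary~\ref{cor:2.54} and conclude; no computation is needed.

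\begin{proof}
  This is immediate from the constructions.  As in \S\ref{BGA simplicial}, the functor $A \mapsto BG(A)$ is homotopy invariant and preserves homotopy pullbacks, and the same argument (via Corollary~\ref{cor:2.54}) applies to $A \mapsto BPG(A)$ and, since $\Omega(B^2Z(A)) \simeq BZ(A)$ and $\pi_0 B^2Z(A)$ is a point, also to $A \mapsto B^2 Z(A)$.  By the discussion of Example~\ref{example:mapping-into-functor} (and its extension to pro-spaces $X$) the functors $A \mapsto \Map(X,PG(A))$, $A \mapsto \Map(X, B^2Z(A))$ and $A \mapsto \Map(\pi_0 X, B^2Z(A)) = \prod_{\pi_0 X} B^2Z(A)$ are then all homotopy invariant and preserve homotopy pullbacks, and the vertical map in~\eqref{eq:7} is a natural transformation between such functors.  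Since $\mathcal{F}_{X,G,Z}$ is defined as the objectwise homotopy pullback of these functors, Lemma~\ref{lemma:2.50} shows that it is homotopy invariant and preserves homotopy pullbacks.
\end{proof}
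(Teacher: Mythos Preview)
Your proof is correct and follows essentially the same route as the paper: recognize that each of the three functors in the defining homotopy pullback square~\eqref{eq:7} is an instance of the construction in Example~\ref{example:mapping-into-functor} (hence homotopy invariant and pullback-preserving), and then invoke Lemma~\ref{lemma:2.50} to pass these properties to the homotopy pullback. You supply more detail than the paper does—verifying explicitly via Corollary~\ref{cor:2.54} that $B^2Z$ preserves homotopy pullbacks, and noting why the constant-map inclusion is a natural transformation of the right kind—but the architecture is identical.
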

\begin{proof}
  The three functors in the homotopy pullback diagram defining
  $\mathcal{F}_{X,G,Z}$ are all special cases of the construction in
  Example~\ref{example:mapping-into-functor}, hence they are all
  homotopy invariant and preserve homotopy pullback.  These properties
  are then inherited by $\mathcal{F}_{X,G,Z}$.
\end{proof}

 If $\overline\rho: \pi_1 X \to G(k)$ is a homomorphism, we may define functors $\mathcal{F}_{X,Z, \overline{\rho}}$  and $\mathcal{F}^{\square}_{X,Z,\overline{\rho}}$ similarly to the previous discussion. 
We may compute the   tangent complexes of $\mathcal{F}_{X,G,Z}$ and $\mathcal{F}_{X, Z, \overline{\rho}}$:
the fiber sequence \eqref{eq:21} 
 induces a cofiber sequence of tangent complexes and the tangent
complexes of $ \Map(\pi_0(X), B Z(A))$ and $\Map(X,BG(A))$ are
determined by the above discussion.  We deduce a cofiber
sequence 
\begin{equation*}
  C^{*+1}(\pi_0 X;\mathrm{Ad}(Z)) \to C^{*+1}(X;\mathrm{Ad}(G)) \to
  \bigtangent \mathcal{F}_{X,Z,\overline \rho},
\end{equation*}
where we have written $\mathrm{Ad}(G)$ for the Lie algebra of $G$ with
action of $\pi_1(X)$ pulled back along $\overline\rho$ and similarly
for $\mathrm{Ad}(Z)$.

\begin{Lemma} \label{Lemma:tangent space computation}    The tangent complex of $\mathcal{F}_{X,Z,\overline{\rho}}$  is quasi-isomorphic to the  mapping cone of      \begin{equation*}
    C^{*+1}(\pi_0(X);\mathrm{Ad}(Z)) \to C^{*+1}(X; \mathrm{Ad}(G)),  \end{equation*}    where $\mathrm{Ad}(G)$ denotes the Lie algebra of $G$ over $k$,
  acted on by $\pi_1(X)$ via $\overline \rho$, and $C^*$ denotes cochains.  The map is induced by
  the inclusion $Z \subset G$ and by the canonical map $X \to
  \pi_0(X)$ of (pro-)simplicial sets, and the domain is non-vanishing
  only for $\ast = -1$.

  In particular, the tangent complex is co-connective precisely when  
  the map of $k$-vector spaces $H^0(\pi_0(X);\mathrm{Ad}(Z)) \to
  H^0(X; \mathrm{Ad}(G))$ is an
  isomorphism, i.e.\ only central vectors in the Lie algebra of $G(k)$
  are fixed by the action of the pro-groupoid $\pi_1(X)$.  In that
  case we have an isomorphism
  \begin{equation*}
    \pi_{-n}\bigtangent \mathcal{F}_{X,Z,\overline\rho} =
    H^{n+1}(X; \mathrm{Ad}(G))
  \end{equation*}
  for all $n \geq 0$.
\end{Lemma}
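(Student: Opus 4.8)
The plan is to feed the homotopy fibre sequence \eqref{eq:21} into the tangent complex functor, read off that $\bigtangent\mathcal{F}_{X,Z,\overline\rho}$ is the asserted mapping cone, and then extract the numerical consequences from the long exact sequence of that cone.

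First I would pass to homotopy fibres over $\overline\rho$ in \eqref{eq:21}, obtaining a homotopy fibre sequence of homotopy invariant, pullback preserving functors $\smallC_k\to s\Sets$; applying $\bigtangent$, which carries such homotopy fibre sequences to cofiber sequences of $Hk$-module spectra (Lemma~\ref{lem:2.51}(iv)), yields a cofiber sequence
\[
\bigtangent\Map(\pi_0 X, BZ(-))\longrightarrow \bigtangent\mathcal{F}_{X,G,\overline\rho}\longrightarrow\bigtangent\mathcal{F}_{X,Z,\overline\rho}.
\]
By Lemma~\ref{lemma:tangent-complex-of-BG} applied to $Z$ together with Example~\ref{example:mapping-into-functor}, the first term is $C^{*+1}(\pi_0 X;\mathrm{Ad}(Z))$; the middle term was computed in \S\ref{sec:calc-tang-compl} to be $C^{*+1}(X;\mathrm{Ad}(G))$; and naturality in Example~\ref{example:mapping-into-functor} identifies the first arrow with the map induced by the inclusion $Z\subset G$ and the canonical map $X\to\pi_0(X)$. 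A cofiber sequence $A\to B\to C$ presents $C$ as the mapping cone of $A\to B$, so this is precisely the claimed description of $\bigtangent\mathcal{F}_{X,Z,\overline\rho}$.

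For the remaining assertions I would use that $Z$ is central in $G$: then the $G$-conjugation action, hence the $\pi_1(X)$-action through $\overline\rho$, on $\mathrm{Lie}(Z)$ is trivial, so $\mathrm{Ad}(Z)$ is the constant local system with fibre $\mathrm{Lie}(Z)$, and since $\pi_0(X)$ is a (pro-)set its cochain complex with constant coefficients is concentrated in cohomological degree $0$; thus $C^{*+1}(\pi_0 X;\mathrm{Ad}(Z))$ is non-vanishing only for $\ast=-1$, where it is $H^0(\pi_0 X;\mathrm{Ad}(Z))$. Substituting this into the long exact sequence of the mapping cone, using the normalization $\pi_{-i}C^{*+1}(Y;\mathcal L)=H^{i+1}(Y;\mathcal L)$ and that cohomology vanishes in negative degrees, gives $\pi_{-n}\bigtangent\mathcal{F}_{X,Z,\overline\rho}\cong H^{n+1}(X;\mathrm{Ad}(G))$ for all $n\ge 0$, the vanishing of $\pi_i\bigtangent\mathcal{F}_{X,Z,\overline\rho}$ for $i\ge 2$, and an exact sequence $H^0(\pi_0 X;\mathrm{Ad}(Z))\to H^0(X;\mathrm{Ad}(G))\to\pi_1\bigtangent\mathcal{F}_{X,Z,\overline\rho}\to 0$. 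Since $\mathrm{Lie}(Z)\hookrightarrow\mathrm{Lie}(G)$ is injective and $\pi_1(X)$-equivariant, the first map here is injective, so $\pi_1\bigtangent\mathcal{F}_{X,Z,\overline\rho}$ is its cokernel, and co-connectivity of $\bigtangent\mathcal{F}_{X,Z,\overline\rho}$ is equivalent to that map being an isomorphism, i.e.\ to every $\pi_1(X)$-fixed vector in $\mathrm{Lie}(G)$ being central.

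This is mostly bookkeeping rather than a real obstacle; the two places needing care are keeping the homological/cohomological conventions and the $\ast+1$ shift consistent, so that ``concentrated in degree $\ast=-1$'' matches the degree-$1$ placement of $\bigtangent BZ$ from Lemma~\ref{lemma:tangent-complex-of-BG}, and verifying that cochains on the pro-set $\pi_0(X)$ with constant coefficients are indeed concentrated in degree $0$, so that the single exact sequence above really does control $\pi_1$.
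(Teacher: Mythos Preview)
Your proof is correct and follows essentially the same approach as the paper: the paper's proof is a one-line reference to the fiber sequence~\eqref{eq:21} and the preceding discussion, which already records the cofiber sequence $C^{*+1}(\pi_0 X;\mathrm{Ad}(Z)) \to C^{*+1}(X;\mathrm{Ad}(G)) \to \bigtangent\mathcal{F}_{X,Z,\overline\rho}$. You have simply spelled out in full the identifications of the terms (via Lemma~\ref{lemma:tangent-complex-of-BG} and Example~\ref{example:mapping-into-functor}) and the long exact sequence bookkeeping for the ``in particular'' clause, all of which the paper leaves implicit.
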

\begin{proof}
  This follows from the discussion in
  Section~\ref{sec:verify-deriv-schl}, together with the fiber
  sequence~\eqref{eq:21}.
\end{proof}

\begin{Definition}
  A homomorphism $\overline\rho: \pi_1 X \to G(k)$ is
  \emph{irreducible} if the map of $k$-vector spaces $H^0(\pi_0 X;
  \mathrm{Ad}(Z)) \to  H^0(X, \Ad(G))$ is an isomorphism.
\end{Definition}

For $G = \GL_n$ this notion of irreducibility is sometimes called
\emph{absolute} irreducibility: it amounts to the representation of
$\pi_1 X$ being irreducible in the sense of representation theory,
even after extending scalars to an algebraic closure of $k$.

\def\acts{\mathrel{\reflectbox{$\righttoleftarrow$}}}
\newcommand{\usbS}{\overline{\mathrm{S}}}
\newcommand{\tame}{\mathrm{tame}}
\newcommand{\usbrS}{\overline{\mathrm{S}}^{\circ}}
\newcommand{\usbR}{\overline{\mathrm{R}}}
\newcommand{\rS}{ \mathrm{S}^{\circ}}
\newcommand{\usS}{\mathrm{S}}
\newcommand{\rmM}{\mathrm{M}}
\newcommand{\Dinf}{\mathsf{D}_{\infty}} 
\newcommand{\rhoglob}{\rho_{\mathcal{O}}}
\newcommand{\scc}{\mathrm{sc}}
\newcommand{\usrS}{\mathrm{S}^{\circ}}
\newcommand{\sigmabar}{\overline{\sigma}}
\newcommand{\ab}{\mathrm{ab}}
\newcommand{\usArt}{\mathrm{discArt}}
\newcommand{\Iinf}{\mathsf{I}}
\newcommand{\usrbS}{\overline{\mathrm{S}^{\circ}}}
\newcommand{\usR}{\mathrm{R}}
\newcommand{\weakequ}{\stackrel{\sim}{\dashrightarrow}}
\newcommand{\GSp}{\mathrm{GSp}}
\newcommand{\univ}{\mathrm{univ}}
\newcommand{\rmA}{\mathrm{A}}
\newcommand{\Lifts}{\mathrm{Lifts}}
\newcommand{\temp}{\mathrm{temp}}
\newtheorem{Assumption}{Assumption}
   \setcounter{tocdepth}{1}

          \section{ Number-theoretic notation: Galois representations} \label{numtheorynotn}
          The remainder of this paper is devoted to  studying the derived deformation ring of a Galois representation.
Both this section and the next one (\S \ref{numbertheorynotn2}) set up  notation that we will use.
The current section collects notation related to Galois representations and their cohomology, while
\S \ref{numbertheorynotn2} collects notation related to deformation rings and problems.

         \subsection{General setup}

In our number theory sections, we shall fix a prime $p$ and a finite field $k$ of characteristic $p$ with Witt vectors $W(k)$. 
(In the prior sections we have often used $p$ to denote ``simplicial degree,'' but we will never do so  hereafter.)
We let $W_n = W(k)/p^n$ be the corresponding ring of length $n$ Witt vectors.  \index{$W_n$} \index{$W(k)$}

   Let $S$ be a finite set of primes of $\mathbb{Q}$ containing $p$.  Let $T = S-\{p\}$. 
   
 Let $G$ be a split  semisimple algebraic
group over $W(k)$, e.g. $G = \mathrm{PGL}_n$.   This will be the target for our Galois representations.
 To avoid various  (not particularly important) subtleties with square roots, it is convenient for us to restrict to the case when $G$ is actually adjoint.\footnote{It might be worth remarking here
 that in, for example, the theory of modular forms for $\GL_2$ one usually studies Galois representations to $\GL_2$ with fixed determinant.
 However, this deformation functor maps to the corresponding $\PGL_2$ deformation functor, and, away from characteristic $2$, the map
 is an isomorphism by inspection of tangent spaces. Thus in such situations, so long as the characteristic
 does not divide the order of $\pi_1(G)$,  nothing is  lost by working with the adjoint form.}

 Let $T \subset G$ be a
maximal $k$-split torus, and let the Lie algebras of $T, G$ be denoted by $\Lie(T), \Lie(G)$ respectively; 
these are free $W(k)$-modules, and we denote   $\Lie(T)_k = \Lie(T) \otimes_{W(k)} k$ 
and $\Lie(G)_k = \Lie(G) \otimes_{W(k)} k$. Where typographically convenient we will use
the shorthand $\mathfrak{g}$ for $\Lie(G)_k$; we don't use this shorthand for $T$ because it clashes with notation for the tangent complex.

Since we will be passing often between simplicial rings and usual rings, in these sections we will
adhere to the following convention: script letters $\mathcal{A}, \mathcal{B}$ etc.\ denote
simplicial commutative rings, and roman letters $\mathrm{A}, \mathrm{B}$ etc.\ always denote usual rings.
(A normally italicized letter $R$ could denote either.)

\subsection{Galois groups} \label{sec:galoisgroups}

With $S$ as above, a finite set of primes containing $p$, we put
$$\Gamma_S := \pi_1^{\mathrm{et}}(\Z[\frac{1}{S}], x_0) \cong \Gal(\Q^{(S)}/\Q)$$
where $x_0$  is a fixed basepoint, and $\Q^{(S)}$ is the largest extension of $\Q$ unramified outside $S$. 
 
 It is traditional in number theory to think in terms of Galois groups, but given that our definition
 of representation rings is actually specified in terms of maps out  of the {\'e}tale homotopy type,
 it seems more consistent for us to use $\pi_1$ instead. We will usually be lazy about specifying basepoints,
 just as one is often lazy about algebraic closures when discussing Galois groups. (To be precise,
 we should fix, once and for all, basepoints for  the spectrum of $\Z[\frac{1}{S}], \Z_q$ and the other rings we use;
 no compatibility between these basepoints is required.)
 
   For every finite prime  $q$ there is a distinguished conjugacy class of mappings  
\begin{equation} \label{iotavdef} \iota_q:   \pi_1(\Q_q, x_1)  \longrightarrow \Gamma_S\end{equation} 
where again $x_1$ is a fixed basepoint; 
we have $\pi_1(\Q_q, x_1)  \cong \Gal(\overline{\Q}_q/\Q_q) . $
Moreover for  $q \notin S$ the map $\iota_q$ factors through the unramified quotient of  $\pi_1(\Q_q, x_1)$. 
This unramified quotient is pro-cyclic, generated by a Frobenius element at $q$: 
$$ \pi_1(\mathbb{F}_q, x_2) \cong  \Gal(\overline{\mathbb{F}_q}/\mathbb{F}_q) =  \langle \Frob_q \rangle.$$  

The abelianization  $\pi_1 \Q_q^{\ab}$ of $\pi_1 \Q_q$ is isomorphic, by class field theory, to the profinite completion of $\Q_q^*$.
 The tame part  $\pi_1 \Q_q^{\ab, \tame}$ of this abelianization is, by definition, the quotient by the maximal pro-$q$ subgroup $1+q\Z_q$ of $\Z_q^*$. 
  Thus it fits into a short exact sequence  
\begin{equation} \label{iqdef}  \underbrace{ I_q }_{\cong (\Z/q)^*} \hookrightarrow \pi_1 \Q_q^{\ab, \tame} \twoheadrightarrow  \underbrace{\pi_1 \mathbb{F}_q}_{\cong \hat{\Z}},\end{equation}
where we have defined $I_q$ to be the kernel of the natural surjection on the right. 
 The notation $I_q$ here is intended to suggest ``inertia,'' but it is  only a small quotient of the full inertia group: it is just the tame
 part of the abelian Galois group.

For any $p$-torsion  abelian group $M$ equipped with a $\Gamma_S$-action,  the natural map
  $$H^*(\Gamma_S,M) \stackrel{\sim}{\longrightarrow} H^*_{\mathrm{et}}(\Z[\frac{1}{S}], M)$$
  is an isomorphism, 
and we will denote this group    by $H^*(\Z[\frac{1}{S}], -)$ where convenient,
without explicitly writing the subscript ``etale.''
Similarly for $H^*(\Q_q, -)$  and  $H^*(\mathbb{F}_q, -)$.

   \subsection{Cohomology with local conditions} \label{local-cohomology}
   See \S \ref{GlobalDuality} for details:  
   
 Let $M$ be a $p$-torsion module under $\Gamma_S$.  \index{$H^*_{?}$} 
 Let $S' \subset S$.  
For each $v \in S'$ suppose given a subgroup $\mathfrak{l}_v \subset H^1(\Q_v, M)$.     One defines $H^1_{\mathfrak{l}}(\Z[\frac{1}{S}], M)$ (``cohomology with local conditions'')  to be the subgroup of  $x \in H^1(\Z[\frac{1}{S}], M)$ such that
the restriction  $x_v$ to $H^1(\Q_v, M)$ lies in  $\mathfrak{l}_v$ for every $v \in S'$. 
There is an exact sequence
{\small \begin{eqnarray} \label{long local cohomology}  0 \rightarrow H^1_{\mathfrak{l}}(\Z[\frac{1}{S}], M) \rightarrow H^1 (\Z[\frac{1}{S}], M) \rightarrow \prod_{v \in S'} H^1(\Q_v, M)/\mathfrak{l}_v  \\  
\rightarrow H^2_{\mathfrak{l}}(\Z[\frac{1}{S}], M) \rightarrow H^2(\Z[\frac{1}{S}], M) \rightarrow \prod_{v \in S'} H^2(\Q_v, M)  \rightarrow .
\end{eqnarray}
}where one {\em defines} 
 $H^2_{\mathfrak{l}}(\Z[\frac{1}{S}],M)$ by a cone construction  (see \S \ref{GlobalDuality} or \cite{CHT} -- there is a specific lift of $\mathfrak{l}$ to the chain level being used here,   prescribed in \S \ref{ExampleAB}). 
 
 If $S' = S$ it follows from duality in Galois cohomology (see \S \ref{GlobalDuality}) that
there is a perfect pairing
$$H^1_{\mathfrak{l}}(M) \times H^2_{\mathfrak{l}^{\perp}}(M^*) \longrightarrow \mathbb{Q}_p/\Z_p,$$
where $M^*$ is the dual group of homomorphisms $M \rightarrow \mu_{p^{\infty}}$ and \index{$M^*$}
 $\mathfrak{l}^{\perp}$ is the dual local condition, i.e.\  $\mathfrak{l}_v^{\perp}$ is the orthogonal complement to $H^1_{\mathfrak{l}_v}$ inside $H^1(\Q_v, M^*)$,
 with respect to the pairing of local Tate duality. \index{$\mathfrak{l}^{\perp}$} \index{$H^*_{\mathfrak{l}}$}
 
 \subsection{Fontaine--Laffaille and the $f$ cohomology} \label{FLreview}  
 
    We briefly recall the theory of Fontaine and Laffaille.   Fix once
    and for all an interval $[a,b] \subset \mathbb{Z}$ of
    ``Hodge weights'', 
   where $b-a \leq p-2$; for us, the interval $[-\frac{p-3}{2}, \frac{p-3}{2}]$ will do nicely. 
We say that a representation of $\Gal(\overline{\Q}_p/\Q_p)$ with coefficients in a finitely generated $\Z_p$-module $M$ is {\em crystalline}
   if it is  isomorphic to the quotient $L_1/L_2$, where $L_1, L_2$ are lattices inside a crystalline representation  $L_1 \otimes \Q_p=L_2 \otimes \Q_p$ with weights in the interval $[a,b]$.

   Fontaine and Laffaille describe an explicit category $MF$ of semilinear algebraic data
   (the ``Fontaine--Laffaille modules'').  See \cite{FL} or for a summary \cite[p.\ 363]{BK}.   
There is an equivalence of categories
   $$ MF \stackrel{FL}{ \longrightarrow} \mbox{ crystalline Galois modules} $$

 If $M$ is  a crystalline  module, we define the ``$f$-cohomology'' $H^1_f(  \Q_p, M)$ to be that 
 subset of the Galois cohomology group $H^1(\Q_p,M)$ consisting of classes classifying extensions $M \rightarrow \tilde{M} \rightarrow \Z_p$
 with $\tilde{M}$ crystalline. This is known to be a subgroup of $H^1(\Q_p, M)$.

{\em Globally} for $M$ a module under $\pi_1 \Z[\frac{1}{S}]$ we define the
 global $f$-cohomology
\begin{equation} \label{globalfcohomology}H^*_f(\Z[\frac{1}{S}], M)\end{equation}  by   imposing conditions $\mathfrak{l}$ only at $v=\{p\}$,  taking $\mathfrak{l}_p= H^1_f(\Q_p, M) \subset H^1(\Q_p, M)$. 

{\bf Remark.} Note global $f$-cohomology is often used to denote imposing, in addition to crystalline conditions at $p$, {\em unramified} conditions at all places except $p$. 
If we want to use this convention we will write instead $H^*_f(\Q, M)$.  When we write $H^*_f(\Z[\frac{1}{S}], M)$
or similar we always mean that we impose {\em only} the crystalline condition at $p$, and impose no restrictions at $v \neq p$. 

        \subsection{The arithmetic manifold $Y(K)$ and the existence of Galois representations} \label{YKdef}  \index{$Y(K)$} 
     
     We recall below the definition of the arithmetic manifold $Y(K)$,  associated to an algebraic $\Q$-group  $\mathbf{G}$ 
 whose dual is $G$, and an open compact subgroup  $K$ of its adelic points. 

  Galois representations into the group $G$ are related, by the Langlands program, to automorphic forms on a group {\em dual} to $G$.  
  Therefore, let $\mathbf{G}$ be the split reductive $\Q$-group whose root datum is dual to that of $G$,\footnote{We apologize for not introducing the customary ``dualizing'' signs,
  such as $\vee$, into the notation here:    it seems  too  much of a conflict with standard notation to introduce  use  such notation for the group on which the automorphic forms live;
  and it would introduce too much typographical difficulty to call our $G$ instead $G^{\vee}$. Since $\mathbf{G}$ is always used in boldface and comes up rather rarely,
  we hope this will not be too confusing for the reader.}  and let $\mathbf{B} \supset \mathbf{T}$ be a Borel and maximal split torus in $\mathbf{G}$.\index{$\mathbf{G}$} \index{$\mathbf{T}$}
  (Note that, to be more canonical, we could always replace $\mathbf{T}$ in our considerations with the torus quotient of $\mathbf{B}$).

 Fix a maximal compact subgroup $K_{\infty}^{\circ}$ of the connected component of $\mathbf{G}(\R)$. 
 A ``level structure'' $K$ for us is an 
   open compact subgroup $K \subset \mathbf{G}(\adele_f)$ of the points of $\mathbf{G}$
 over the finite adeles $\adele_f$; we will only consider examples
where $K$ is a product $K = \prod_{q} K_q$, with $K_q \subset \mathbf{G}(\Q_q)$,
 and where the ``level is prime to $p$'' --    we require that  $K_p$ is a hyperspecial  subgroup  of $\mathbf{G}(\Z_p)$
 (this means: the $\Z_p$-points of a smooth reductive model over $\Z_p$). 
 
 For such a level structure $K$,  define the ``arithmetic manifold of level $K$:'' 
 $$Y(K) = \mathbf{G}(\Q) \backslash \mathbf{G}(\adele) / K_{\infty}^{\circ} K.$$

 In fact, we can identify $Y(K)$ with a finite union of quotients of the  (contractible) symmetric space $\mathcal{Y}$ for $\mathbf{G}(\R)$,  by various arithmetic subgroups $\Gamma_i \leqslant \mathbf{G}(\Q)$:
 
 $$ Y(K) = \coprod_{i} \Gamma_i \backslash  \mathcal{Y}. $$
 In particular, the orbifold  cohomology of $Y(K)$ is the direct sum $\bigoplus_i H^*(\Gamma_i)$. 
 It is one candidate for the ``space of modular forms for $\mathbf{G}$.''   In particular, the Langlands program predicts that, to a Hecke eigenclass on $H^* (Y(K),-)$,
 there should be attached a Galois representation into $G$. We now formulate this  prediction precisely, in the form of the conjecture below.

\subsection{}
\label{sec:galois-rep-conjecture}  We now formulate more precisely the conjecture on Galois representations to be used.  We follow the version used by Khare--Thorne \cite{KT};
 the version of Calegari--Geraghty does not use derived categories. 
 
 Let $E$ denote a pro-$p$ coefficient ring, with a map $E \rightarrow k$. 
  
 We consider  pairs  $(K \vartriangleleft K')$ of  level structures, with $\Delta := K'/K$ abelian.
 Now $\Delta$ acts in the natural way on $Y(K)$,  and 
we  may consider $C_*^{\Delta}(Y(K), E)$,   the chain complex of $Y(K)$ with $E$ coefficients, 
 as an object in the derived category of $E \Delta$-modules.    Each Hecke operator gives an endomorphism of this object. 
Let $\widetilde{\mathrm{T}}_K$ be the ring of endomorphisms thus generated by all (prime-to-the-level) Hecke operators.  It is commutative.  
Also  by \cite[Lemma 2.5]{KT}
the natural map from $\widetilde{\mathrm{T}}_K$ to the usual (homological) Hecke algebra has nilpotent kernel.

\begin{Conjecture} \label{GaloisRepConjecture}  
Fix a surjection  $\widetilde{\mathrm{T}}_K \twoheadrightarrow k$ with  kernel the maximal ideal 
$\mathfrak{m}$ of $\widetilde{\mathrm{T}}_K$.  Let $\widetilde{\mathrm{T}}_{K, \mathfrak{m}}$
be the completion of $\widetilde{\mathrm{T}}_{K}$ at the maximal ideal $\mathfrak{m}$. 

Then there exists a Galois representation
$\overline{\sigma}: \mathrm{Gal}(\overline{\Q}/\Q) \longrightarrow   G(k)$ with the following properties:

\begin{itemize} 
\item[(a)] For all primes $q \neq p$ at which $K_q$ is hyperspecial, the representation $\sigmabar$ is unramified; moreover,  if we fix a representation $\tau$ of $G$,
 the trace $\mathrm{trace}(\tau \circ \sigma)(\mathrm{Frob}_q) \in   k$
coincides with the image of  the  associated Hecke operator 
$T_{q, \tau} \in \widetilde{\mathrm{T}}_K$ under $\widetilde{\mathrm{T}}_K \rightarrow k$.
(Explicitly, $T_{q, \tau}$  comes from the Satake isomorphism of the Hecke algebra at $q$ with the representation ring of $G$). 

\item[(b)]
$\sigmabar$ is odd at $\infty$, i.e., the image of complex conjugation   in $G(k)$ may be lifted to an involution in $G(W(k))$
whose trace, acting on $\mathrm{Lie}(G)$ in the adjoint action,  is minimal amongst all involutions.

 \item[(c)] At $p$, let $\bar{\sigma}_p: \Gal(\overline{\Q}_p/\Q_p) \rightarrow G(k)$ 
be the restriction of $\sigmabar$.  Then $\Ad( \bar{\sigma}_p): \Gal(\overline{\Q_p}/\Q_p) \rightarrow
\GL(\mathfrak{g})$ is torsion crystalline (see \S \ref{FLreview}).

\end{itemize}

Moreover, if  the image of $\sigmabar$ is large enough, e.g.\ if
\begin{equation} \label{bigimage} \mathrm{image}(\sigmabar)\supset \mathrm{image} (G^{\mathrm{sc}}(k) \rightarrow G(k)), \end{equation}
where $G^{\mathrm{sc}}$ is the simply connected cover of $G$, 
  then $\sigma$ can be lifted to a representation 
  $\sigma: \mathrm{Gal}(\overline{\Q}/\Q) \longrightarrow   G(\widetilde{\mathrm{T}}_{K, \mathfrak{m}})$
which continues to  have the obvious analogue of property (a), and moreover: 
 
 \begin{itemize} 
 \item[(d)] At places $v$ for which $K_v \neq K_v'$, 
 one has the ``local-global compatibility'' formulated later: see  Assumption \ref{LGasssumption} in \S~\ref{mprimeconstruc}.  This assumption is in fact only formulated for
 specific pairs $(K_v, K'_v)$, and these are the only type for which we will apply it; we regard (d) as being vacuous in other cases.

  \item[(e)]  
  Let $\mathrm{Def}_{\bar{\sigma}_p}$ be the usual (Mazur) deformation functor, from usual Artin rings augmented over $k$, to sets.
  There exists   a unobstructed  subfunctor  $\mathrm{Def}^{\crys}_{\bar{\sigma}_p}  \subset \mathrm{Def}_{\bar{\sigma}_p}$ 
  (``unobstructed'' means that \  square zero extensions $\tilde{\rmA} \rightarrow \rmA$ induce surjections  $\mathrm{Def}^{\crys}_{\bar{\sigma}_p}(\tilde{\rmA}) \twoheadrightarrow \mathrm{Def}^{\crys}_{\bar{\sigma}_p}(\rmA)$)
 with tangent space $H^1_f(\Q_p, \Ad \rho_p) \subset H^1(\Q_p, \Ad \rho_p)$,  such that
   $$ \sigma \in \mathrm{Def}^{\crys}_{\bar{\sigma}_p}(\widetilde{\mathrm{T}}_{K, \mathfrak{m}}).$$

 \end{itemize} 
 \end{Conjecture}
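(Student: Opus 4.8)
The statement is a conjecture --- an existence-and-local-global-compatibility statement for Galois representations attached to (possibly torsion) cohomology classes --- and at present it is a theorem only in restricted situations; so what follows is an outline of the \emph{strategy} by which such statements are proved, not a complete argument.

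\emph{Construction of $\sigmabar$ (parts (a)--(c)).} The plan is to realise the Hecke action on $C_*^{\Delta}(Y(K),E)$ inside the cohomology of a space to which Galois representations are known to be attached. When $\mathbf{G}$ gives rise to a Shimura variety one would use its \'etale cohomology and the Langlands--Kottwitz method; in general (e.g.\ $\mathbf{G}=\SL_n$ or $\PGL_n$ over $\Q$, whose symmetric space is not Hermitian) one follows Scholze's strategy, embedding the relevant cohomology into the completed boundary cohomology of a larger group that \emph{does} carry a Shimura variety and using $p$-adic interpolation together with a congruence/limiting argument to produce a residual $\sigmabar\colon\Gal(\overline{\Q}/\Q)\to G(k)$ with $\mathrm{trace}(\tau\circ\sigmabar)(\Frob_q)$ matching the image of $T_{q,\tau}$, which is (a). Oddness at $\infty$, (b), comes from the polarisation carried by these representations (Taylor; Bella\"iche--Chenevier; Caraiani--Le Hung); torsion-crystallinity at $p$, (c), follows from $K_p$ being hyperspecial together with $p$-adic Hodge theory, and because the Hodge--Tate weights lie in the Fontaine--Laffaille interval $[a,b]$ of \S\ref{FLreview} the notion of crystalline used there applies.

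\emph{Lifting to $G(\widetilde{\mathrm{T}}_{K,\mathfrak m})$.} Given $\sigmabar$, the plan is first to produce a $G$-valued pseudo-representation (a determinant in the sense of Chenevier, or a $G$-pseudocharacter in the sense of V.\ Lafforgue) with values in $\widetilde{\mathrm{T}}_{K,\mathfrak m}$ interpolating the Frobenius traces $T_{q,\tau}$: the cohomological construction above outputs exactly such data over every Artinian quotient, and one passes to the limit, which is harmless since $\widetilde{\mathrm{T}}_{K,\mathfrak m}$ is complete local Noetherian. One then invokes the big-image hypothesis \eqref{bigimage}: once the residual image contains $\mathrm{image}(G^{\mathrm{sc}}(k)\to G(k))$ the pseudo-representation rigidifies to an honest homomorphism $\sigma\colon\Gal(\overline{\Q}/\Q)\to G(\widetilde{\mathrm{T}}_{K,\mathfrak m})$ reducing to $\sigmabar$, unique up to conjugation. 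Properties (a) and (c) for $\sigma$ are inherited level by level from the construction; (d) --- local--global compatibility at the auxiliary primes $v$ with $K_v\neq K_v'$ --- is compatibility of that construction with local Langlands at $v$, and for the Taylor--Wiles primes and the specific pairs $(K_v,K'_v)$ of \S\ref{mprimeconstruc} it reduces to an explicit computation of the $I_v$-action, which is precisely the content of Assumption \ref{LGasssumption}.

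\emph{The crystalline subfunctor (e), and the main obstacle.} Part (e) is purely local: take $\mathrm{Def}^{\crys}_{\bar{\sigma}_p}$ to classify lifts of $\bar{\sigma}_p$ that are crystalline with Hodge--Tate weights in $[a,b]$; since $b-a\le p-2$ the Fontaine--Laffaille functor of \S\ref{FLreview} is an exact equivalence onto its image, so deforming such a representation is the same as deforming its Fontaine--Laffaille module (filtration plus Frobenius over an Artin ring), which is manifestly unobstructed --- square-zero extensions induce surjections --- with tangent space $H^1_f(\Q_p,\Ad\rho_p)$ by definition of $H^1_f$; and $\sigma$ lies in $\mathrm{Def}^{\crys}_{\bar{\sigma}_p}(\widetilde{\mathrm{T}}_{K,\mathfrak m})$ again by (c) applied level by level. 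The genuinely hard input --- the reason the statement remains a conjecture --- is the first step: constructing Galois representations attached to torsion classes in the cohomology of \emph{general} arithmetic groups and proving local--global compatibility \emph{at $p$} for them. Outside cases reducible to Shimura varieties (or Scholze's $\GL_n$-over-CM setting) this is open, and even there compatibility at $p$ and at ramified primes is delicate; everything else above is comparatively formal once this input is granted.
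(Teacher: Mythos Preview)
You have correctly identified the essential point: this is a \emph{conjecture}, not a theorem, and the paper does not prove it. The paper explicitly assumes Conjecture~\ref{GaloisRepConjecture} as a hypothesis throughout (see e.g.\ the statement of Theorem~\ref{TWoutputtheorem} and Theorem~\ref{TW main theorem}), and the remarks immediately following the conjecture make clear that the authors regard it as an axiom: they note that evidence has been given by Newton--Thorne, that part (e) ``has not been explicated for a general group'' and is simply assumed, and that (d) is only formulated for the specific Taylor--Wiles level structures where it is applied.

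Your outline of the strategy --- Scholze-type constructions via boundary cohomology of larger Shimura varieties, pseudo-representations rigidified under big-image hypotheses, Fontaine--Laffaille theory for part (e) --- is a reasonable sketch of how one would approach proving the conjecture in favourable cases, and your identification of the main obstacle (torsion classes for general groups, local-global compatibility at $p$) is accurate. But since the paper offers no proof to compare against, there is nothing further to say: your recognition that the statement is conjectural and your caveat that what follows is a strategy rather than a proof are exactly right.
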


Note that asking about $\widetilde{\mathrm{T}}$ is a slightly stronger statement than asking about the usual Hecke algebra $T$. 
The idea of considering $\widetilde{\mathrm{T}}$ is due to Khare and Thorne \cite{KT}, and evidence for this  stronger statement  has been given by Newton and Thorne \cite{NT}.

The necessity of condition (e) is that the notion of crystalline deformation has not been explicated for a general group, although it  has been verified in some important cases
(e.g.\ $\GL_n$ in \cite{CHT}, and the case of $\mathrm{GSp}$ is apparently analyzed in Patrikis' undergraduate thesis). We just assume this as an axiom. 
Note also that our assumption (c) that the adjoint representation is crystalline forces $p$ to be ``large'' relative
to $\mathbf{G}$:  one expects the Hodge weights of $\Ad(\bar{\rho}_p)$ to lie in the interval $[1-h, h-1]$
where $h$ is the  Coxeter number of $G$. 
 
\subsection{Taylor--Wiles primes} \label{TWprimesdef}
We summarize briefly the idea of Taylor--Wiles primes, which will play a central role in our analysis. 
Let 
$$\rho: \pi_1 \Z[\frac{1}{S}] \rightarrow G(k)$$
be a representation.

   A {\em Taylor-Wiles prime}
 $q$, for  the Galois representation $\rho$, is a prime $q \notin S$ with the property $q = 1$ in $k$ and
 $\rho(\mathrm{Frob}_q)$ is conjugate to a {\em strongly regular} element of $T(k)$ -- as usual, a strongly regular element  $t \in T(k)$ is  an element $t$ whose centralizer in $G$ coincides with $T$. 
   \index{Taylor-Wiles prime} 
  Therefore, if $G = \PGL_n$,  a Taylor--Wiles prime is simply one for which 
  the Frobenius has a representative in $\GL_n(k)$ with distinct eigenvalues, all belonging to $k$. 
   
 In particular, if $q$ is a Taylor-Wiles prime, one may choose a representation \begin{equation} \label{rqTd} \rho_{\Q_q}^{T}: \pi_1 \Q_q \rightarrow T(k)\end{equation}   factoring through $\pi_1 \Z_q$, 
   such that the composition of $\rho_{\Q_q}^T$ with $T \hookrightarrow G$ is isomorphic to $\rho_{\Q_q}$.  We denote by $\rho_{\Z_q}^T$
   the corresponding representation of $\pi_1 \Z_q$. Later we will study the $T$-valued deformation theory of $\rho_{\Q_q}^T$. 
   This depends on the choice of $\rho_{\Q_q}^T$; it is often convenient to think of a Taylor--Wiles prime as  being equipped with  such a choice.    Thus our basic objects are actually pairs $(q, t)$,
   where $t \in T(k)$ is regular and conjugate to $\rho(\mathrm{Frob}_q)$. That then fixes $\rho_{\Q_q}^T$,
by requiring that $\rho_{\Q_q}^T$ carry a Frobenius element to $t$.    
   
   What is really important to us are certain sets of Taylor--Wiles primes:
   \begin{Definition}  \label{Def:TWdefn}Let $\rho: \pi_1 \Z[\frac{1}{S}] \rightarrow G(k)$. 
An {\em    allowable   Taylor--Wiles datum of level $n$}, for  the Galois representation $\rho$,  
is a set of Taylor--Wiles  primes $Q=\{\ell_1, \dots, \ell_k\}$, disjoint from $S$, and each equipped with a regular element $t_{\ell_i} \in T(k)$ conjugate to $\rho(\mathrm{Frob}_{\ell_i})$,  
and  satisfying the  further conditions:   \begin{itemize}
 \item[(a)]  $p^n$ divides each $\ell_i-1$ 
\item[(b)] 
We have $H^2_{\mathfrak{l}}(\Z[\frac{1}{S  Q}], \Ad \rho)=0$, where   we impose (see \S \ref{local-cohomology},
\S \ref{FLreview})  local conditions $\mathfrak{l}_v $  at $v \in S\cup Q$,  namely  $\mathfrak{l}_v = \begin{cases}   H^1_f, \ v =p \\ 
H^1, v \in Q   \\
0, v \in S-p.  \end{cases} $.   \end{itemize} 
\end{Definition}

Note that although the choice of elements of $T(k)$ is part of the datum, we will often just informally  say ``let $Q$ be a Taylor--Wiles datum,'' with the understanding
that one actually carries along this extra information.  
More explicitly, the vanishing condition in (b) means that  
 in the sequence   (we write $SQ$ instead of $S \coprod Q$): 
\begin{equation}  
  \label{vanishing_sequence}  
  \begin{aligned}
    & H^1(\Z[\frac{1}{SQ}], \Ad \rho) \stackrel{A}{\longrightarrow}
    \frac{H^1(\Q_p, \Ad \rho)}{H^1_f(\Q_p, \Ad \rho)} \oplus
    \bigoplus_{v \in S-\{p\}} H^1(\Q_v, \Ad \rho) \\ & \rightarrow
    \underbrace{ H^2_{\mathfrak{l}} }_0 \rightarrow
    H^2(\Z[\frac{1}{SQ}], \Ad \rho) \stackrel{B}{\longrightarrow}
    \bigoplus_{SQ } H^2(\Q_v, \Ad \rho),
  \end{aligned}
\end{equation}   
that $A$ is surjective and $B$ is injective.  
Actually, the cokernel of $B$
is dual to $H^0(\Ad^* \rho(1))$; this
will vanish so long as $\rho$ has sufficiently large image, so if $B$ is injective it is an isomorphism.

{\bf Remark. }  One can find allowable Taylor--Wiles data of any level $n$ as long as $\rho$ satisfies a ``big image'' criterion; for example,
it suffices to assume   that $\rho$
restricted to $\Q(\zeta_{p^{\infty}})$ has image 
that contains the image of $G^{\mathrm{sc}}(k) \rightarrow G(k)$, where $G^{\mathrm{sc}}$ is the simply connected cover.
One proves this as in \cite[Proposition 5.9]{Gee}.
In particular, write $E$ for the fixed field of the kernel of $\rho$ and $E' = E(\zeta_{p^n})$. 
The main point of the proof is to check that the restriction map  from the cohomology of $\Z[\frac{1}{S}]$, with $\Ad^* \rho(1)$ coefficients, 
to the corresponding cohomology group for $E'$ is injective. To see this, in turn, it is enough to verify that the cohomology of
$ G  := \Gal(E'/F)$ acting on $\Ad^* \rho(1)$ is trivial.   Let $N$ be the  $(\Z/p)^*$ subgroup naturally  embedded in $\Gal(E'/E)$; it is a normal subgroup of
$G$, and  the claim follows easily from  the fact that $N$ has no invariants on $\Ad^* \rho(1)$. 
   
\section{Deformation-theory notation}  \label{numbertheorynotn2}

          We continue our summary of notation for the rest of the paper. Here we briefly repeat some facts
          of homotopical algebra from the first part and set up notation for deformation rings and deformation functors.

\subsection{Representable functors} \label{intro:functors}
  As discussed at length in \S \ref{sec:functors-artin-rings} we will deal extensively with objects of the category 
 \begin{equation} \label{Mdef} \mathcal{M} = \mbox{simplicially enriched functors $\Art_k \longrightarrow s\Sets$},\end{equation}
 namely, the deformation functors associated to various Galois representations. (We
 will almost always want to work with functors that are valued in Kan simplicial sets.)
 
 A morphism is a natural transformation of simplicially enriched functors. 
 There is a natural notion of weak equivalence for morphisms, namely, an object-wise weak equivalence, i.e.\ $F \rightarrow G$ is a weak equivalence
 when $F(A) \rightarrow G(A)$ is a weak equivalence for all $A \in \Art_k$.  Indeed $\mathcal{M}$ has a compatible model category structure (mentioned in  \cite[\S 2.3.1]{Toen}) with object-wise fibrations but we won't use it.     
 
 We will however use the following shorthand: Given objects $F,G \in \mathcal{M}$, we will write
 $$F \dashrightarrow G$$
 to abbreviate an explicit zig-zag of maps
\begin{equation} \label{roofy} F \stackrel{\sim}{ \leftarrow }F_1 \rightarrow F_2 \stackrel{\sim}{ \leftarrow} F_3 \rightarrow F_4 \cdots \rightarrow G, \end{equation} 
  where all left arrows are weak equivalences.  {\em Wherever we use this notation we have in mind an explicit zig-zag, not merely an morphism in the homotopy category of $\mathcal{M}$. }
In practice it would be overly cumbersome to write out this zig-zag in every case, thus our shorthand.

 Recall that, for any such zig-zag, we can find a ``roof'': a functor $F^*$ equipped with a natural weak equivalence $F^* \stackrel{\sim}{\rightarrow} F$,
 and a map $F^* \rightarrow G$; we can replace the long-zig zag above by \begin{equation} \label{shortzigzag} F  \stackrel{\sim}{\leftarrow} F^* \rightarrow G.\end{equation}
 Explicitly, we can take $F^*(A)$ to be the homotopy limit of the diagram $F(A) \leftarrow F_1(A) \rightarrow F_2(A) \dots$, that is to say,
 a collection of points  $x \in F(A), z_1 \in F_1(A), z_2 \in F_2(A)$ etc., together with a collection of paths: a path from the image of $z_1$ to $x$, etc.

We will use homotopy fiber products of functors:  for $F_1 \rightarrow F_2 \leftarrow F_3$
we denote by $F_1 \times_{F_2}^h F_3$ the  functor that assigns to $A \in \Art_k$
the homotopy pullback $F_1(A) \times_{F_2(A)}^h F_3(A)$.  (The homotopy pullback is defined in Definition \ref{homotopy pullback square Example}:
informally speaking, it consists of  points $v_i \in F_i(A)$ together with paths in $F_2$ between $v_2$ and the images of $v_1, v_3$.)
We say that a square in $\mathcal{M}$  \begin{equation} \label{hpssquarereview}
\xymatrix{
G
  \ar[r] \ar[d]  & F_1  \ar[d] 
  \\ F_3
 \ar[r]  &  F_2
}
\end{equation} 
is a homotopy pullback square if the induced map $G_1 \rightarrow F_1 \times^h_{F_2} F_3$
is an object-wise weak equivalence.

Recall (Definition \ref{defn:pro-rep})
 also that a functor is {\em pro-representable} if it is naturally weakly equivalent to 
a functor of the form
$$ \varinjlim_{\alpha} \Map_{\Art_k} (\mathcal{R}_{\alpha}, -)$$
where the $\mathcal{R}_{\alpha}$ are a projective system of  cofibrant objects of $\Art_k$, indexed by  $ \alpha \in \mathrm{Ob} \mathcal{C}$ for some cofiltered category $\mathcal{C}$. 
 (Each  $\Map$ space above is the simplicial set of maps in the category $\SCR_{/k}$, i.e.\ those maps of simplicial commutative rings that commute with the map to $k$.)

 In this case, the pro-ring $\alpha \mapsto \mathcal{R}_{\alpha}$ is said to represent the  given functor (for more precision, see
 Lemma \ref{lemma: pro representing objects}); we note here that 
 such a representing pro-ring is not determined up to unique isomorphism, but it is unique in a homotopical sense (see Lemma
\ref{Lemma:represent-natural-trans} or   \eqref{pistar iso}). We will nonetheless slightly abuse notation
and refer to ``the'' representing ring for a functor. Note   that although we often write ``deformation ring'' or ``representing ring,''
these are always permitted to be pro-objects of $\SCR$.

We say ({\em loc.\ cit.}) that a functor is {\em sequentially pro-representable} if it is possible to choose the indexing category $\mathcal{C}$ to be the natural numbers.
By  Corollary \ref{cor:Corollary hurewicz analogue}  this is automatically the case if the tangent complex of the functor is of countable dimension.
{\em This will be the case in all the applications to number theory,  and thus we will freely take our functors to be represented by pro-systems indexed by the natural numbers.} 
 
 The sequentially indexed projective system $\mathcal{R}_n \ (n \in \mathbb{N})$ is said to be {\em nice} when
 each $\mathcal{R}_n$ is cofibrant and all the transition maps $\mathcal{R}_n \rightarrow \mathcal{R}_m$ are fibrations; this is a 
 special case of Definition \ref{nice ring definition}.   Any sequentially pro-representable functor can 
 be represented by a nice pro-ring  (Lemma 
\ref{lemma:replace-by-nice}). 

It will be therefore be convenient to use the following notation: for a pro-object of $\Art_k$, say  $\mathcal{R} =  ( \mathcal{R}_{\alpha})$,    we understand the functor $\Map$ to mean      \index{cofibrant replacement functor $R\mapsto R^c$}
\begin{equation} \label{map_definition} \Map(\mathcal{R}, -) := \colim_{\alpha} \Map_{\Art_k}(\mathcal{R}_{\alpha} , - ).\end{equation}
 However, if $\mathcal{R}$ is not level-wise cofibrant this definition is not homotopy invariant,  and we  should  only use the functor  
 after first applying a level-wise cofibrant replacement.  We fix throughout a cofibrant replacement functor
 $c$ on the category $\SCR$; we will write it as $\mathcal{S} \mapsto c(\mathcal{S})$ or (more often)
 $\mathcal{S} \mapsto \mathcal{S}^c$ on the category $\SCR$; by  $\mathcal{R}^c$ we shall mean the pro-simplicial ring
 $(\mathcal{R}_{\alpha}^c)$, obtained by applying $c$ level-wise.

  We will quite frequently want to relate    the functor represented by $\mathcal{R}$ to the functor represented by $\pi_0 \mathcal{R}$.  We shall need this comparison both when $\mathcal{R} \in \smallC_k$ and when $\mathcal{R} \in \text{pro-}\smallC_k$; in the latter case $\pi_0 \mathcal{R}$ is a pro-finite set.   Even if $\mathcal{R} \in \smallC_k$ is cofibrant, the ordinary ring $\pi_0 \mathcal{R}$, regarded as a (constant) simplicial ring is not, but using the chosen cofibrant approximation functor we have a comparison zig-zag $(\pi_0 \mathcal{R})^c \leftarrow \mathcal{R}^c \rightarrow \mathcal{R}$, inducing a zig-zag of functors
  $$  \Map_{\Art_k}((\pi_0 \mathcal{R})^c, -) \rightarrow \Map_{\Art_k}(\mathcal{R}^c, -) \stackrel{\sim}{\longleftarrow} \Map_{\Art_k}(\mathcal{R}, -).$$
  Thus, for example, if $\mathcal{F}$ is representable with representing object $\mathcal{R}$,
  we get 
  \begin{equation} \label{sbpq} \Map_{\Art_k}((\pi_0 \mathcal{R})^c, -) \dashrightarrow \mathcal{F}.\end{equation}
    If $\mathcal{R}$ is a pro-object of $\Art_k$ the same construction applies levelwise.

  Finally,
   we will often encounter situations of the following type. Consider a sequence of maps in pro-$\Art_k$ \begin{equation} \label{invertingringmaps} \mathcal{R}_1   \mathop{\leftarrow}_{g}^{\sim}   \mathcal{R}_2   \stackrel{h}{\rightarrow} \mathcal{R}_3. \end{equation}
  where $\sim$ in the  first map means that it induces an objectwise weak equivalence $\Map(\mathcal{R}_1, -) \rightarrow \Map(\mathcal{R}_2, -)$
  of functors $\Art_k \rightarrow s\Sets$. 
If $\mathcal{R}_3$ is nice, we can ``invert'' the first arrow, in a homotopical sense,  to get a morphism $\mathcal{R}_1 \rightarrow \mathcal{R}_3$ in pro$\Art_k$,
in the sense that we produce
$$ f: \mathcal{R}_1 \rightarrow \mathcal{R}_3$$ 
such that $f \circ g$, although not literally equal to $h$, induces the same map on $\pi_0 \Map(-, \mathcal{A})$.
Thus, for example, $f \circ g$ and $h$ induce the same map on tangent complexes, 
and the same map in the pro-homotopy category (see discussion of \S \ref{sec:hocat}).

To produce $\mathcal{R}_1 \rightarrow \mathcal{R}_3$, consider the map of functors $$\Map(\mathcal{R}_3, -)  \rightarrow 
  \Map(\mathcal{R}_2, -)  \stackrel{\sim}{\leftarrow} \Map(\mathcal{R}_1, -)$$
  and as in \eqref{shortzigzag} replace $\Map(\mathcal{R}_3, -)$ by a naturally weakly equivalent functor $\mathcal{F}_3 \stackrel{\sim}{\rightarrow} \Map(R_3, -)$
  together with a map $\mathcal{F}_3 \rightarrow \Map(\mathcal{R}_1, -)$. Now $\mathcal{R}_3$ is still a representing ring for $\mathcal{F}_3$: there is 
a natural weak equivalence $ \hocolim \Map(\mathcal{R}_{3,\alpha}, -) \rightarrow \mathcal{F}_3$ by Lemma \ref{Lemma:hocolim-is-cofibrant}. 
Then apply Lemma \ref{Lemma:represent-natural-trans}
to produce a map $\mathcal{R}_1 \rightarrow \mathcal{R}_3$; the resulting diagrams of functors all commute objectwise in the homotopy category.

 \subsection{Tangent complex}  \label{tcrecall}

For a (possibly pro-) simplicial ring $\mathcal{R}$ augmented over $k$,  we define the tangent complex $\tangent \mathcal{R}$
to be the tangent complex of the associated functor $\Map(\mathcal{R}, -)$, where
the maps are taken in $\SCR/k$, and we use $\tangent^i \mathcal{R}$ to abbreviate $\pi_{-i} \tangent \mathcal{R}$. 
(As before we shall apply this definition only when $\mathcal{R}$ is cofibrant.)
Recall that $\tangent \mathcal{R}$ is a chain complex with degree-decreasing differentials, with homology
supported entirely in degrees $\leq 0$; therefore $\tangent^i \mathcal{R}$ is supported in $i \geq 0$.

  To be explicit,  if $\mathcal{R} = (\mathcal{R}_{\alpha})$,  where $\mathcal{R}_{\alpha}$ are all cofibrant, we have 
\begin{equation} \label{tidef}\tangent^i \mathcal{R} = \varinjlim_{\alpha} \pi_{j-i} \Map_{\Art_k}(\mathcal{R}_{\alpha},  k \oplus k[j]), \mbox{ any fixed } j  \geq  i. \end{equation}

Let $\mathrm{D}^i_A(B, M)$ denote the Andr{\'e}-Quillen cohomology functors  of the $A$-algebra $B$ with coefficients in the $B$-module $M$ (see discussion before
Example \ref{ex:tangent-complex-Hom-R}, or \cite{Quillen} -- in particular $\mathrm{D}^0$ is ``usual'' derivations). 
In the case of a simplicial ring $\mathcal{R}$ augmented over $k$, we have  by Example \ref{ex:tangent-complex-Hom-R} an identification  
   $$ \tangent^i \mathcal{R}  =  \Der^{i}_{\Z}(\mathcal{R}, k) $$ %
and in the ``pro'' case there is a corresponding assertion with $\varinjlim$.  

If $\mathcal{R} \rightarrow \mathcal{S}$ is a morphism of simplicial commutative rings (we won't need the ``pro'' case) we get a long exact sequence  
\begin{equation} \label{AQ} \Der^{i}_{\mathcal{R}}(\mathcal{S}, k) \rightarrow \tangent^i\mathcal{S} \rightarrow \tangent^i \mathcal{R} \stackrel{[1]}{\rightarrow} \end{equation} 
(this follows from the long exact sequence  for Andr{\'e}-Quillen cohomology, \cite[Theorem 5.1]{Quillen}). 

Let $\mathcal{R} \in \Art_k$. 
Let $\pi_0 \mathcal{R}/p^n$ be the quotient of $\pi_0 \mathcal{R} $ by the ideal generated by $p^n$
in the usual (non-derived) sense of quotient of a ring by an ideal; we  shall consider the result  as a discrete 
simplicial ring.  
  \label{pagerefB}  Then we  have natural isomorphisms
\begin{equation} \label{pagrefB}\tangent^0 \mathcal{R} \cong \tangent^0 (\pi_0 R) \cong \tangent^0 (\pi_0 R/p^n) \ \ (n \geq 1), \end{equation} 
because  the map $\mathcal{R} \rightarrow \pi_0 \mathcal{R} \rightarrow \pi_0 \mathcal{R}/p^n$ 
 all induce isomorphisms when we consider homomorphisms into $k[x]/x^2$, considered as a discrete simplicial ring.
 Indeed since any homomorphism $\pi_0\mathcal{R} \rightarrow k[x]/x^2$ is trivial on $(p, \mathfrak{m}_{\pi_0 R}^2)$, 
 with $\mathfrak{m}_{\pi_0 \mathcal{R}}$ the  kernel of $\pi_0 \mathcal{R} \rightarrow k$, we could have equally well 
  replaced $\pi_0 \mathcal{R}/p^n$ by $\pi_0 \mathcal{R}/\mathfrak{b}$ so long as $\mathfrak{b} \subset (p, \mathfrak{m}_{\pi_0 \mathcal{R}}^2)$. 

Similarly, the map $\mathcal{R} \to \pi_0\mathcal{R}$ always induces an injection
\begin{equation} \label{tangent-inclu} \tangent^1 (\pi_0 \mathcal{R}) \hookrightarrow \tangent^1 \mathcal{R}.\end{equation}

 Informally  \eqref{tangent-inclu} arises from the fact that $\pi_0 \mathcal{R}$ can be presented as a $\mathcal{R}$-algebra
 by freely adding relations in degree $2$ and higher to kill all higher homotopy. More formally, 
this  follows from Proposition \ref{Prop:hurewicz}
 part (ii):  
 writing $\mathcal{R}' = \pi_0 \mathcal{R}$, considered
as a discrete simplicial ring, 
we have an exact sequence
$$ \cdots \rightarrow \pi_1(\mathcal{R}') \rightarrow \pi_1(\mathcal{R}', \mathcal{R})   \rightarrow \pi_0 \mathcal{R} \rightarrow \pi_0 \mathcal{R}' \rightarrow \pi_0(\mathcal{R}', \mathcal{R}) \rightarrow 0.$$
and thus  $\pi_j(\mathcal{R}', \mathcal{R}) = 0$ for $j = 0,1$;
now the quoted Proposition shows that 
  $\pi_{-1} \tangent (\mathcal{R}', \mathcal{R})  = 0,$
  and then    
\eqref{t-long-exact-sequence}
 gives
the desired result.
 
 The assertion of \eqref{pagrefB} and \eqref{tangent-inclu} continue to hold for a pro-simplicial commutative ring  $(\mathcal{R}_{\alpha})$, where we define:
\begin{equation} \label{pi0def} \pi_0 \mathcal{R} =  \mbox{ the pro-ring} (\alpha \mapsto \pi_0 \mathcal{R}_{\alpha}).\end{equation}

  \subsection{The Galois representation } \label{setup}

  We will be interested in analyzing the derived deformation ring of a representation \index{$\rho$} 
      $$\rho: \Gamma_S \rightarrow G(k),$$
Eventually $\rho$ will be a Galois representation associated to a cohomology class,
      i.e.\ it will be the $\sigmabar$ from  Conjecture \ref{GaloisRepConjecture},
      and it will moreover be required to satisfy some ``big image'' and ``nice at $p$'' conditions
      that are formulated in \S \ref{minimalevel}. However, for the moment, 
    we do not impose any restrictions on $\rho$.

  We write $\Ad \  \rho$ for the   adjoint representation of $\Gamma_S$
        on the Lie algebra $\mathfrak{g}$ of $G$ over $k$, 
        i.e.\ the composition of   $\rho$ with the adjoint representation of $\mathbf{G}$: 
  $$\Ad \ \rho: \Gamma_S \rightarrow \GL(\mathfrak{g}).$$
  We will also be concerned with the $k$-linear dual of this representation, which acts on the $k$-linear dual
  $\mathfrak{g}^*$ to $\mathfrak{g}$; we denote it $\Ad^*$:
  $$ \Ad^* \ \rho:  \Gamma_S \rightarrow \GL(\mathfrak{g}^*).$$

  Our basic object of interest will be the deformation functor 
  corresponding to  deforming $\rho: \Gamma_S \rightarrow G(k)$.  
In the notation of Definition \ref{defn-repfunctor} (iii) this is 
the functor  $\mathcal{F}_{X, \rho}: \Art_k \rightarrow s\Sets$ where\footnote{\label{explicitetalehomotopytype}in fact, for our purposes, we could also replace $X$ by the pro-simplicial set $X'$
  which is the inverse system $(BG_{\alpha})_{\alpha}$, where we write $\pi_1 \Z[\frac{1}{S}] = \varprojlim_{\alpha} G_{\alpha}$
  as an inverse limit of finite groups. There is a natural map $X \rightarrow X'$ which need not be an isomorphism; nonetheless,
  they give the same deformation functor, because the {\'e}tale cohomology groups coincide with $p$-torsion coincides by the equality of {'e}tale and Galois cohomology with such coefficients \cite[Proposition 2.9]{MilneADT}.} 
$$ X = \mbox{{\'e}tale homotopy type of $\Spec \Z[\frac{1}{S}]$} \in \mathrm{pro}\left(s\Sets\right).$$

 We will denote this deformation functor by     $\mathcal{F}_{\Z[\frac{1}{S}], \rho}$. Explicitly it's given as 
\begin{equation} \label{explicit Frho} \mathcal{F}_{\Z[\frac{1}{S}], \rho}: A \mapsto   \mbox{ fiber of $\varinjlim_{\alpha} \Maps_{s\Sets}( X_{\alpha},   BG(A))$ above $\rho$},\end{equation}
  where we wrote $X \left(= \mbox{{\'e}tale homotopy type of $\Z[\frac{1}{S}]$}\right)$ as the pro-simplicial set $(X_{\alpha})$, and 
  where $BG(A)$ for a simplicial ring $A$ has been described in 
  Definition \ref{BG simplicial ring definition}.
  Informally $\mathcal{F}_{\Z[\frac{1}{S}], \rho}$  sends an Artinian simplicial ring $A$  
  to  a simplicial set of ``conjugacy classes of deformations of $\rho: \Gamma_S \rightarrow G(k)$ to $A$.'' 
  
  There is also a framed
  version where one does not quotient by conjugacy:
     $$\mathcal{F}_{\Z[\frac{1}{S}], \rho}^{\fram}: \Art_k \rightarrow s\Sets$$
which has been described in Definition \ref{defn-repfunctor} (i),
but informally sends an Artinian simplicial ring $A$  
  to  a simplicial set of ``lifts of $\rho: \Gamma_S \rightarrow G(k)$ to $A$,''
  i.e.\ we don't quotient by conjugacy.

Just as for rings (\S \ref{tcrecall})  we use the notation $\tangent^j \mathcal{F}_{\Z[\frac{1}{S}], \rho}$ for the $(-j)$th homotopy group $\pi_{-j} \tangent \mathcal{F}_{\Z[\frac{1}{S}], \rho}$ \index{$\tangent^j$}
  of the tangent complex of $\mathcal{F}_{\Z[\frac{1}{S}],\rho}$.  Recall  that this is $\pi_0\mathcal{F}_{\Z[\frac{1}{S}], \rho}(k \oplus k[j])$ by definition and that we have an identification    (Lemma \ref{Lemma:tangent space computation}, specialized to the case of trivial center)   $$ \tangent^j \mathcal{F}_{\Z[\frac{1}{S}], \rho} =   \pi_{-j} \tangent \mathcal{F}_{\Z[\frac{1}{S}], \rho} \cong  H^{j+1} (  \Z[\frac{1}{S}],  \Ad \rho),   j \geq -1 $$  between the  the homotopy groups of the tangent complex of $\mathcal{F}_{\Z[\frac{1}{S}],\rho}$ and the 
  adjoint cohomology.   Recall our convention above that $H^{*}(\Z[\frac{1}{S}], -)$ denotes the {\'e}tale cohomology.

  Recall  (\S \ref{tangent complex setup})   that the groups $\tangent^j$ are {\em a priori} the homotopy groups of a certain spectrum, but,  by the discussion
  of \S \ref{tangent complex setup}, we can canonically (but not very explicitly) consider them as the homology of a 
  chain complex of $k$-vector spaces, which we shall refer to as the tangent complex.

  We say that $\rho$ is {\em Schur} if its centralizer coincides with the center of $G$.  (For example, for $G =\GL_n$, this would be implied
  by absolute irreducibility of $\rho$; in our case, we are supposing $G$ is adjoint, and so we are requiring that the centralizer of $\rho$ is trivial.) 
In that case $\tangent^j$ is vanishing for negative $j$ and so Lurie's derived Schlessinger criterion (Theorem
\ref{thm:Lurie-Schlessinger}) 
 implies that $\mathcal{F}_{\Z[\frac{1}{S}],\rho}$ is represented by a pro-object in $\Art_k$;
we have reviewed what that means in 
  \S  \ref{intro:functors}. 
  
   Finally, 
an important remark is that $\mathcal{F}_{\Z[\frac{1}{S}],\rho}$ recovers the usual deformation functor of Mazur
upon passage to $\pi_0$: 
\begin{Lemma} \label{MazurPi0} Let $\usArt_k$ be the category of usual (i.e.\ discrete) Artin local rings $\rmA$ equipped with an identification of their residue field to $k$. 
Suppose also that $\rho$ is Schur, i.e.\ has trivial centralizer. 
Then the functor
\begin{equation} \label{moonb1}  \pi_0 \mathcal{F}_{\Z[\frac{1}{S}],\rho} :  \usArt_k \longrightarrow \mathrm{Sets} \end{equation}
obtained by sending a usual Artin ring $\rmA$ to $\pi_0 \mathcal{F}_{\Z[\frac{1}{S}], \rho}(\rmA)$
 is naturally isomorphic to the usual, underived deformation functor,
 that is to say
\begin{equation} \label{moonb2}  \rmA \rightarrow \mbox{ lifts of $\rho$ to $G(A)$}/\mbox{conjugation by $\ker(G(\rmA) \rightarrow G(k))$,} \end{equation}
\end{Lemma}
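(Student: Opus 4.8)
The plan is to unwind both sides into genuinely comparable data and then observe that the comparison is the classical one. First I would fix a cofibrant pro-object $(\mathcal{R}_\alpha)$ representing $\mathcal{F}_{\Z[\frac{1}{S}],\rho}$ (available by Lurie's criterion, Theorem~\ref{thm:Lurie-Schlessinger}, since $\rho$ is Schur hence the tangent complex is co-connective). By Lemma~\ref{lem:passage-to-pro-Ho}, for a discrete Artin ring $\rmA$ (viewed as a constant simplicial ring, after cofibrant approximation) we have $\pi_0 \mathcal{F}_{\Z[\frac{1}{S}],\rho}(\rmA) = \colim_\alpha \pi_0 \smallC_k(\mathcal{R}_\alpha,\rmA)$, and this is functorial on $\usArt_k$. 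So the left-hand functor \eqref{moonb1} is intrinsically defined; the task is to identify it with the set-valued functor \eqref{moonb2}.

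Next I would compute $\pi_0 \mathcal{F}_{\Z[\frac{1}{S}],\rho}(\rmA)$ directly from the explicit description \eqref{explicit Frho}: it is the set of path components of the homotopy fiber over $\rho$ of $\colim_\alpha \Maps_{s\Sets}(X_\alpha, BG(\rmA))$, where $X=(X_\alpha)$ is the {\'e}tale homotopy type of $\Spec\Z[\frac1S]$ and $BG(\rmA)$ is as in Definition~\ref{BG simplicial ring definition}. Since $\rmA$ is discrete, $BG(\rmA)$ is weakly equivalent to the ordinary classifying space of the discrete group $G(\rmA)$ (noted just after Definition~\ref{BG simplicial ring definition}), so it is a $K(G(\rmA),1)$, and likewise $BG(k)$ is a $K(G(k),1)$. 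Because these targets are aspherical, $\colim_\alpha \Maps_{s\Sets}(X_\alpha, BG(\rmA))$ has homotopy type governed entirely by $\pi_1$: its set of components is $\mathrm{Hom}(\pi_1 X, G(\rmA))/\!\sim$, where $\pi_1 X = \Gamma_S$ and $\sim$ is the homotopy relation, which here is precisely conjugation by $G(\rmA)$. Taking the homotopy fiber over $\rho$ then picks out those homomorphisms $\tilde\rho\colon\Gamma_S\to G(\rmA)$ whose composition with $G(\rmA)\to G(k)$ is conjugate to $\rho$, modulo the action of $G(\rmA)$; and passing to $\pi_0$ of this fiber, a standard homotopy-fiber-sequence argument (using $\pi_1 BG(k) = G(k)$ acting on the base point set) shows the result is exactly the set of lifts of $\rho$ modulo the kernel of $G(\rmA)\to G(k)$, i.e.\ \eqref{moonb2}. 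The Schur hypothesis guarantees that $G(k)$ (equivalently the centralizer of $\rho$) acts on the fiber without the kind of extra stabilizer that would collapse components, so that the two quotients — ``conjugation by $G(\rmA)$ up to $\rho$ being fixed up to conjugacy'' and ``conjugation by $\ker(G(\rmA)\to G(k))$ of actual lifts'' — agree.

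The remaining bookkeeping is to check this identification is natural in $\rmA\in\usArt_k$ and compatible with the colimit over $\alpha$; this is routine once one records that the whole construction is functorial in the target simplicial ring. The main obstacle, I expect, is the last point in the previous paragraph: carefully matching the two descriptions of the quotient — the intrinsic one coming from $\pi_0$ of a homotopy fiber of a mapping space into $BG$, versus Mazur's explicit ``lifts modulo $\ker(G(\rmA)\to G(k))$'' — and in particular verifying that the homotopy-fiber-over-$\rho$ construction does not introduce or identify components in an unexpected way. This is exactly where the Schur condition enters, and it is essentially the assertion that the relevant groupoid of deformations has trivial automorphisms, so that ``stacky'' quotient and ``naive'' quotient coincide; I would phrase it via the long exact sequence of the fibration $\colim_\alpha\Maps(X_\alpha,BG(\rmA)) \to \colim_\alpha\Maps(X_\alpha,BG(k))$ evaluated at $\rho$, noting $\pi_1$ of the total space at a lift $\tilde\rho$ is the centralizer $Z_{G(\rmA)}(\tilde\rho)$ which surjects onto $Z_{G(k)}(\rho)=Z(G)(k)$ trivial, giving exactly the stated orbit description. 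Everything else is formal, and I would cite Lemma~\ref{Lemma:tangent space computation} and the discussion around \eqref{explicit Frho} rather than re-deriving it.
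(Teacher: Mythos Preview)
Your proposal is correct and follows essentially the same route as the paper: identify $\pi_0$ of the mapping space $\Map(X,BG(\rmA))$ with conjugacy classes of homomorphisms $\Gamma_S\to G(\rmA)$ (since $BG(\rmA)$ is a $K(G(\rmA),1)$ for discrete $\rmA$), then pass to the homotopy fiber over $\rho$. Your first paragraph about choosing a representing pro-object is an unnecessary detour---the paper works directly with the functor---but your treatment of the homotopy-fiber step via the long exact sequence and the triviality of $Z_{G(k)}(\rho)$ is more explicit than the paper, which simply asserts that $\pi_0$ of the fiber ``is'' the relevant subset of conjugacy classes and leaves the use of the Schur hypothesis implicit.
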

This result implies, in particular,  that 
$\pi_0$ of a representing ring for $\mathcal{F}_{\Z[\frac{1}{S}], \rho}$
 recovers Mazur's deformation ring.  There is a very similar result for framed deformation rings, where one does not need to assume that $\rho$ has trivial centralizer.

 \proof 
Let  $X $ be  the etale homotopy type of $\Z[\frac{1}{S}]$. Then the functor
 $\mathcal{F}_{X,G}$  of Definition
 \ref{defn-repfunctor} 
 sends $\rmA$ to the mapping space $\Map_{s\Sets}(X, BG(\rmA))$ (see remark after Definition \ref{BG simplicial ring definition}).
 
 The components of the mapping space $\Map_{s\Sets}(X, BG(\rmA))$ are identified 
 with the set of $G(\rmA)$-torsors over $X$; these are, in turn, in bijection with conjugacy classes of maps
\begin{equation} \label{mb}  \tilde{\rho}: \pi_1 \Z[\frac{1}{S}] \rightarrow G(\rmA).\end{equation}
 
 Thus the functor $\mathcal{F}_{X, \rho}$ of Definition \ref{defn-repfunctor}, sends $\rmA \in \usArt_k$ to the
subset of \eqref{mb}  comprising those $\tilde{\rho}$ whose reduction to $G(k)$ is conjugate to $\rho$;
that is identified with the right-hand side of \eqref{moonb2}.
 \qed

\subsection{Notation for deformation rings and deformation functors} \label{Defringdeffunctors} 
    
    We will use the phrase ``deformation ring'' to mean a ring representing a deformation functor, if the
    functor is representable. For example, the deformation ring of $\rho: \pi_1 \Z[\frac{1}{S}] \rightarrow G(k)$, 
    is, by definition, the  pro-object of $\Art_k$ representing the functor $\mathcal{F}_{\Z[\frac{1}{S}], \rho}$, if that functor is indeed representable.

We need to study not just the deformation ring of $\rho: \pi_1 \Z[\frac{1}{S}] \rightarrow G(k)$, 
but also the deformation  ring of $\rho$ pulled back
   to various other groups,  e.g.\ the fundamental group of $\Q_q$.  This gives rise to a small zoo of deformation functors and rings.
   We briefly summarize the notation for them, although we will also define them again
   as we use them. In particular, we shall follow the

   {\em Notational convention:}  We  will always be deforming the same representation $\rho$, but pulled back to various other groups besides $\pi_1 \Z[\frac{1}{S}]$.
Therefore, we will omit completely $\rho$ from the notation for deformation rings, and rather keep track of
what $\rho$ has been pulled back to.  Thus, we will abridge $\mathcal{F}_{\Z[\frac{1}{S}], \rho}$
simply to $\mathcal{F}_{\Z[\frac{1}{S}]}$, or even to $\mathcal{F}_S$, and similarly for the representing ring; thus 
$$\mathcal{F}_{\Z[\frac{1}{S}]} = \mathcal{F}_S \mbox{ for short }, \mathcal{R}_{\Z[\frac{1}{S}]} = \mathcal{R}_S \mbox{ for short}$$
will denote the deformation functor and deformation ring for $\rho$ considered as a representation of $\pi_1 \Z[\frac{1}{S}]$.   As a warning (see discussion below) we note that the meaning of these notations is changed after \S \ref{minimalevel}: they will always
refer to the  functor and ring with {\em crystalline} conditions imposed.

Now we can pull back $\rho$ under $\pi_1 \Q_q \rightarrow \pi _1 \Z[\frac{1}{S}] $  or, if $q \notin S$, under $\pi_1 \Z_q \rightarrow  \pi_1 \Z[\frac{1}{S}]$; 
  we  denote the resulting representations as $\rho_{\Q_q}$ and $\rho_{\Z_q}$, and the resulting deformation functor (for $\pi_1 \Q_q$ or $\pi_1 \Z_q$, respectively)  \index{$\rho_{\Q_q}$}
  \index{$\rho_{\Z_q}$}
by $\mathcal{F}_{\Q_q}$ or $\mathcal{F}_{\Z_q}$.  
These are defined just as in
\eqref{explicit Frho}, but replacing the role of $\Z[\frac{1}{S}]$ by $\Q_q$ or $\Z_q$. 
These functors are rarely representable. The corresponding framed functors are representable,  leading to
a deformation functor and representing ring  
$$ \mathcal{F}_{\Q_q}^{\fram}, \mathcal{R}_{\Q_q}^{\fram}$$
and similarly for $\Z_q$. 

Later on we will consider certain sets $Q_n$ of auxiliary primes, disjoint from $S$, and  will want to consider $\rho$ as a representation
of $\pi_1 \Z[\frac{1}{SQ_n}]$. Again we can define deformation functors as in \eqref{explicit Frho}, replacing the role of $\Z[\frac{1}{S}]$
by $\Z[\frac{1}{SQ_n}]$.  These deformation functors and rings will be denoted by
$$ \mathcal{F}_{S \coprod Q_n} = \mathcal{F}_n \mbox{ for short}, \mathcal{R}_{S \coprod Q_n}=\mathcal{R}_n  \mbox{ for short},$$
where we will only use the abridged forms when $Q_n$ is understood. 

Finally, from  \S \ref{minimalevel} onwards, we will {\em only} consider the deformation functors with crystalline conditions imposed at $p$
(the precise meaning of this is spelled out in  \S \ref{sec:localconditions}). 
To avoid notational overload we will not explicitly include this in the notation. 
Therefore, from \S \ref{minimalevel} onwards, the notations $\mathcal{F}_{S}, \mathcal{R}_S, \mathcal{F}_{S \coprod Q_n}$ etc.\ always denote
the versions of these functors and rings with  crystalline conditions imposed at $p$.

\subsubsection{Representations with target $T$: the rings $\mathcal{S}$}  Finally, later on, we will study various deformation rings with targets not in the algebraic group $G$ but just in its torus $T$.

In particular, our representation functors
 with target $T$ will {\em never} be representable, because they always have automorphisms; but the framed versions will be. 
  We will use the notation $\mathcal{S}$ for deformation rings representing framed deformation functors with target $T$.
 This notation will be specified precisely when we use it, but typical examples will be the following:

For certain ``Taylor--Wiles'' primes $q \notin S$, we will choose a representation $\rho_{\Z_q}^T: \pi_1 \Z_q \rightarrow T(k)$
which is conjugate inside $G(k)$, to $\rho_{\Q_q}$ (see \eqref{rqTd}); let $\rho_{\Q_q}^T: \pi_1 \Q_q \rightarrow T(k)$
be its pullback to $\Q_q$. We write 
$$ \mathcal{S}_q = \mbox{ framed deformation ring  of $\rho_{\Z_q}^T$ pulled back to $\pi_1 \Q_q$},$$
 $$ \mathcal{S}_q^{\ur}= \mbox{ framed deformation ring of $\rho_{\Z_q}^T$,} $$
  the superscript $\ur$ is for ``unramified.'' 
  
For example, just to  be completely explicit  $\mathcal{S}_q$ represents 
the functor $\mathcal{F}_{X, \rho_{\Z_q}^T}^{\fram}: \Art_k \rightarrow s\Sets$
from  Definition \ref{defn-repfunctor}, where $X$ is now the 
  etale homotopy type of $\Q_q$,  and we replace $BG$ by  $BT$ in that Definition.

 \subsubsection{Mnemonics} 
In summary we have the following general mnemonic for our notations:
$$ \mathcal{F}_{?} = \mbox{ deformation functor for $\rho$, with target $G$,  at level $?$}, \\$$
$$ \mbox{ (if $? = n$, this is a shorthand for level $S \coprod Q_n$ for a set of auxiliary primes $Q_n$);}$$ 
$$ \mathcal{R}_{?} = \mbox{ representing ring for $\mathcal{F}_{?}$}$$
$$ \mathcal{R}_{?}^{\fram} = \mbox{representing ring for framed version of $\mathcal{F}_{?}$},$$
  $$ \mathcal{S}= \mbox{a representing ring for a framed deformation problem with target $T$}$$
$$\mbox{ superscript }  \ur = \mbox{``unramified'', e.g.\ deformations of $\pi_1 \Z_q$ as opposed to $\pi_1 \Q_q$} $$

  We will also use roman letters for the corresponding ``usual'' deformation rings. For example, 
  just as $\mathcal{R}_S$ denotes the deformation functor for $\rho$ with target $G$ at level $\Z[\frac{1}{S}]$,
  $\mathrm{R}_S = \pi_0 \mathcal{R}_S$ denotes the corresponding (Mazur) deformation ring.

 \subsection{Pro-rings and complete local rings}

 In the usual (non-simplicial)
 setting of deformation theory, it does not matter much whether one allows  functors from Artin rings  to be represented by a pro-object in
 Artin rings, or whether one allows the ``representing'' object to be
 the complete ring slightly outside the category of Artin rings. 
 The latter  is the traditional choice.  
  
   In our simplicial setting, 
 we have chosen to consistently work with pro-objects.  It is certainly possible to work with complete objects in this setting too, 
 as does Lurie \cite{LurieThesis}; however, we will always stick to the pro-object point of view
 when we deal with simplicial rings and derived deformation problems.

It  will be convenient for us to review certain aspects of the passage between
the ``complete'' and ``pro'' point of view in the case of usual (non-simplicial) deformation theory:

 \begin{Lemma}  \label{lem:complete local ring}
 Let $\mathrm{R} = (\mathrm{R}_{\alpha})$ be a sequentially indexed pro-object of the category of (usual) Artin local rings augmented over $k$.    Suppose that $\varinjlim \Hom(\mathrm{R}_{\alpha}, k[\epsilon]/\epsilon^2)$ 
 is finite-dimensional (i.e., $\mathfrak{t}^0 \mathrm{R}$ is finite-dimensional).  Then $\mathrm{R} := \varprojlim \mathrm{R}_{\alpha}$ is a complete local Noetherian ring. 
 \end{Lemma}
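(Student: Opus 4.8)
The plan is to show that $\mathrm{R} = \varprojlim_\alpha \mathrm{R}_\alpha$ is Noetherian by exhibiting a surjection onto it from a power series ring $W(k)[[x_1,\dots,x_d]]$, where $d = \dim_k \mathfrak{t}^0\mathrm{R}$. First I would observe that $\mathrm{R}$ is visibly a complete local ring with residue field $k$: it is an inverse limit of Artin local rings with residue field $k$, so it is local, its maximal ideal $\mathfrak{m}$ is the inverse limit of the maximal ideals $\mathfrak{m}_\alpha$, and completeness is automatic for an inverse limit of discrete (hence complete) rings along a countable system, since $\mathrm{R}/\mathfrak{m}^n$ is a quotient of $\mathrm{R}_\alpha$ for $\alpha$ large and the topology defined by the $\mathfrak{m}^n$ agrees with the inverse limit topology. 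The one point needing a small argument is that $\mathfrak{m}^n$ actually has finite colength, which will come out of the Noetherianity argument, so I would be slightly careful to bootstrap rather than assume it.

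The key step is the finiteness of $\mathfrak{m}/\mathfrak{m}^2$. The hypothesis says $\mathfrak{t}^0\mathrm{R} = \varinjlim_\alpha \Hom_k(\mathrm{R}_\alpha, k[\epsilon]/\epsilon^2)$ is finite-dimensional, and for each usual Artin local ring $\mathrm{R}_\alpha$ augmented over $k$ one has $\Hom(\mathrm{R}_\alpha, k[\epsilon]/\epsilon^2) = (\mathfrak{m}_\alpha/(\mathfrak{m}_\alpha^2 + p))^\vee$ (or $(\mathfrak{m}_\alpha/\mathfrak{m}_\alpha^2)^\vee$ in the equicharacteristic case; in the mixed characteristic case one keeps track of whether $p$ lies in $\mathfrak{m}^2$, but this only affects whether one needs one of the generators to be $p$, and either way the relative cotangent space $\mathfrak{m}_\alpha/(\mathfrak{m}_\alpha^2+p)$ is finite of dimension $\leq d$). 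Since the transition maps $\mathrm{R}_\beta \to \mathrm{R}_\alpha$ are local, the system of cotangent spaces $\mathfrak{m}_\alpha/\mathfrak{m}_\alpha^2$ is a system whose dual colimit is finite-dimensional; taking the limit, $\mathfrak{m}/(\mathfrak{m}^2+p)$ is finite-dimensional over $k$, say of dimension $\leq d$. Lift generators together with $p$ to elements $x_1,\dots,x_d \in \mathfrak{m}$, giving a local $W(k)$-algebra map $\Lambda := W(k)[[x_1,\dots,x_d]] \to \mathrm{R}$.

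Then I would argue that $\Lambda \to \mathrm{R}$ is surjective by the complete Nakayama lemma: the induced map on $\mathfrak{m}_\Lambda/\mathfrak{m}_\Lambda^2 \to \mathfrak{m}/\mathfrak{m}^2$ is surjective by construction, so the map is surjective modulo each $\mathfrak{m}^n$, i.e.\ $\Lambda \to \mathrm{R}/\mathfrak{m}^n$ is surjective for all $n$; since $\mathrm{R}$ is complete and separated for the $\mathfrak{m}$-adic topology (which I verified agrees with the pro-topology) the map $\Lambda \to \mathrm{R} = \varprojlim \mathrm{R}/\mathfrak{m}^n$ is surjective. As a quotient of the Noetherian complete local ring $\Lambda$, the ring $\mathrm{R}$ is Noetherian, complete, and local. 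This also retroactively shows each $\mathfrak{m}^n$ has finite colength, closing the gap mentioned above.

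The main obstacle I anticipate is purely bookkeeping: making sure the identification of $\Hom(\mathrm{R}_\alpha, k[\epsilon]/\epsilon^2)$ with a relative cotangent space is done uniformly in $\alpha$ and compatibly with transition maps, and handling the mixed-characteristic subtlety of whether $p \in \mathfrak{m}^2$ cleanly (this is exactly the distinction between $\mathfrak{t}^0$ as computed over $\Z$ versus the naive Zariski cotangent space — compare the remark in the excerpt following \eqref{pagrefB}). None of this is deep, but it is the only place where one must be attentive; the rest is the standard Schlessinger-style argument that a functor with finite-dimensional tangent space has a Noetherian (pro-)representing ring.
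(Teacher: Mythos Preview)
Your overall strategy --- surject from a power series ring $\Lambda = W(k)[[x_1,\dots,x_d]]$ --- is the same as the paper's, but there is a genuine circularity in your execution, precisely at the step you dismiss as bookkeeping. You assert both that ``$\mathrm{R}/\mathfrak{m}^n$ is a quotient of $\mathrm{R}_\alpha$ for $\alpha$ large'' and that ``taking the limit, $\mathfrak{m}/(\mathfrak{m}^2+p)$ is finite-dimensional,'' but neither is automatic. The easy inclusion is $\mathfrak{m}^{k_\alpha} \subseteq J_\alpha := \ker(\mathrm{R}\to\mathrm{R}_\alpha)$ (since $\mathrm{R}_\alpha$ is Artin), which only shows the $\mathfrak{m}$-adic topology is \emph{finer} than the profinite one. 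The reverse --- openness of $\mathfrak{m}^n$ --- is exactly what you need and is not free. Likewise the natural map $\mathfrak{m}/\mathfrak{m}^2 \to \varprojlim \mathfrak{m}_\alpha/\mathfrak{m}_\alpha^2$ has kernel $(\varprojlim \mathfrak{m}_\alpha^2)/\mathfrak{m}^2$, and showing this vanishes is the same problem restated. Your bootstrap doesn't close: the surjectivity argument for $\Lambda \to \mathrm{R}$ via complete Nakayama already consumes both of these unproven claims.

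The paper breaks the circle by proving $\mathfrak{m}^t = \varprojlim_\alpha \mathfrak{m}_\alpha^t$ directly for every $t$: the uniform bound $\dim_k \mathfrak{m}_\alpha/\mathfrak{m}_\alpha^2 \leq s$ forces each graded piece $\mathfrak{m}_\alpha^t/\mathfrak{m}_\alpha^{t+1}$ to have bounded dimension, and a successive-approximation argument writes any compatible system $(Y_\alpha) \in \varprojlim \mathfrak{m}_\alpha^t$ as a convergent sum $X_0 + X_1 + \cdots$ with $X_j \in \mathfrak{m}^{t+j}$. A shorter fix in the spirit of your outline (using that $k$ is finite in this paper): lift generators of $\varprojlim \mathfrak{m}_\alpha/\mathfrak{m}_\alpha^2$ --- \emph{not} of $\mathfrak{m}/\mathfrak{m}^2$, which you don't yet control --- to $x_1,\dots,x_d \in \mathfrak{m}$, so that ordinary Nakayama on each Artin ring $\mathrm{R}_\alpha$ gives $\Lambda \twoheadrightarrow \mathrm{R}_\alpha$; then $\Lambda$ is compact, the map $\Lambda \to \mathrm{R}$ is continuous, and its image is closed and dense, hence all of $\mathrm{R}$.
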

 
 We will refer to $\mathrm{R}$ as the ``associated complete local ring'' to the pro-object $\mathrm{R}_{\alpha}$.

 \proof   We can suppose all the transition maps on $\mathrm{R}_{\alpha}$ are actually surjective, otherwise we just replace $\mathrm{R}_{\alpha}$ by the intersection of all the images of the maps from $\beta > \alpha$. 
 
Write $\mathfrak{m}_{\mathrm{R}} = \ker(\mathrm{R} \rightarrow k) = \varprojlim \mathfrak{m}_{\alpha}$, where $\mathfrak{m}_{\alpha}$
is the kernel of $\mathrm{R}_{\alpha} \rightarrow k$.  Then $\mathrm{R}/\mathfrak{m}_{\mathrm{R}} \cong k$,
and moreover $\mathfrak{m}_{\mathrm{R}}$ is clearly the unique maximal ideal, for any element not in it is invertible .

Write $J_{\alpha}$ for the kernel of the natural map from $\mathrm{R}$
to $\mathrm{R}_{\alpha}$.  Thus $\mathrm{R} = \varprojlim \mathrm{R}/J_{\alpha}$. 
 Because $\mathrm{R}_{\alpha}$ is Artin local, $\mathfrak{m}_{\alpha}^{k_{\alpha}} = 0$ for some $k_{\alpha}$. Therefore, 
 $\mathfrak{m}_{\mathrm{R}}^{k_{\alpha}} \subset J_{\alpha}$.  Therefore, $\mathrm{R}$
 is complete with respect to the topology defined by powers of the maximal ideal.
(In fact, this topology is readily verified to coincide with the profinite topology.)

By assumption,  $\varinjlim ( \mathfrak{m}_{\alpha}/\mathfrak{m}_{\alpha}^2)^{\vee}$ is finite dimensional, say, of dimension $s$.  Since  the transition maps are injective, the  $k$-dimension of $\mathfrak{m}_{\alpha}/\mathfrak{m}_{\alpha}^2 \leq s$ for all $\alpha$.  

 We claim that in fact
\begin{equation} \label{moreannoyingthanIthought} \mathfrak{m}_{\mathrm{R}}^t = \varprojlim \mathfrak{m}_{\alpha}^t. \end{equation}
Assuming this we get  (everything is finite!) 
  $\mathfrak{m}_{\mathrm{R}}/\mathfrak{m}_{\mathrm{R}}^2 = \varprojlim \mathfrak{m}_{\alpha}/\mathfrak{m}_{\alpha}^2$, 
 which implies that $\mathrm{R}$ is a quotient of the ring $W(k)[[X_1, \dots, X_s]]$ and so Noetherian.

To verify \eqref{moreannoyingthanIthought}, first note that multiplication induces a surjection of $k$-vector spaces  $\left(\mathfrak{m}_{\alpha}/\mathfrak{m}_{\alpha}^2 \right)^{\otimes t} 
 \rightarrow \mathfrak{m}_{\alpha}^t/\mathfrak{m}_{\alpha}^{t+1}$ 
and so the map $$ \mathfrak{m}_{\mathrm{R}}^t \rightarrow \mathfrak{m}_{\alpha}^t/\mathfrak{m}_{\alpha}^{t+1}$$
is surjective.  Suppose $Y_{\alpha} \in \mathfrak{m}_{\alpha}^t$  form a compatible system; there is $X_0 \in \mathfrak{m}_{\mathrm{R}}^t$
 such that $X_0 \equiv Y_{\alpha} $ modulo $\mathfrak{m}_{\alpha}^{t+1}$.
 We can then find $X_1 \in \mathfrak{m}_{\mathrm{R}}^{t+1}$ such that $X_1 \equiv Y_{\alpha} -X_0$ modulo $\mathfrak{m}_{\alpha}^{t+2}$. Proceeding in this way, 
the series  $X_0 + X_1 + \dots $  converges to an element of $\mathrm{R}$, and its limit is equal to $Y_{\alpha}$ modulo all powers of $\mathfrak{m}_{\alpha}$, 
as desired.
\qed

   In that setting the following lemma will be useful, to compare
 the tangent complex of a pro-object with a tangent complex of the
 corresponding complete ring: 
  
 \begin{Lemma} \label{complete} Suppose that $\mathrm{R}$ is a (usual)
   complete local Noetherian ring with maximal ideal $\mathfrak{m}$, equipped with an isomorphism $\mathrm{R}/\mathfrak{m} \to k$.
   Then $\tangent^* \mathrm{R} \cong \varinjlim \tangent^*
   (\mathrm{R}/\mathfrak{m}^N)$.
 \end{Lemma}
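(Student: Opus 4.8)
The statement is a comparison between the tangent complex (André--Quillen cohomology with coefficients in $k$) of a complete local Noetherian ring $\mathrm{R}$ and the colimit of the tangent complexes of its Artinian quotients $\mathrm{R}/\mathfrak{m}^N$. First I would recall that, by definition, $\tangent^i \mathrm{R} = \Der^i_{\Z}(\mathrm{R}^c, k)$ where $\mathrm{R}^c$ is a cofibrant approximation of $\mathrm{R}$ as a simplicial commutative ring, and similarly $\tangent^i(\mathrm{R}/\mathfrak{m}^N) = \Der^i_{\Z}((\mathrm{R}/\mathfrak{m}^N)^c, k)$; the transition maps $\mathrm{R}/\mathfrak{m}^{N+1} \to \mathrm{R}/\mathfrak{m}^N$ (and $\mathrm{R} \to \mathrm{R}/\mathfrak{m}^N$) induce the maps in the colimit system and the comparison map. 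So the claim is that the natural map $\tangent^i \mathrm{R} \to \varinjlim_N \tangent^i(\mathrm{R}/\mathfrak{m}^N)$ is an isomorphism for every $i \geq 0$.

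The key observation is that all these groups are \emph{continuous} invariants: a derivation or higher André--Quillen cochain with values in the discrete $k$-module $k$ only sees a finite quotient of $\mathrm{R}$. Concretely, $\Der^0_{\Z}(\mathrm{R},k) = \Hom_k(\mathfrak{m}/(\mathfrak{m}^2 + p\mathrm{R}), k)$, which depends only on $\mathrm{R}/(\mathfrak{m}^2+p\mathrm{R})$, hence factors through some $\mathrm{R}/\mathfrak{m}^N$; the same is true of $\tangent^0(\mathrm{R}/\mathfrak{m}^M)$ for $M$ large. For higher degrees I would argue as follows. Choose a surjection $P = W(k)[[x_1,\dots,x_s]] \twoheadrightarrow \mathrm{R}$ (possible by Noetherianness, with $s = \dim_k \mathfrak{t}^0\mathrm{R}$, as in Lemma~\ref{lem:complete local ring}), with kernel $I$, and build a free simplicial resolution / cofibrant model of $\mathrm{R}$ over $\Z$ (or over $W(k)$, adjusting by the complete intersection $\Z\to W(k)$ as in the proof of Lemma on Artin rings) using generators in degree $0$ for $P$ and killing a generating set of $I$ in degree $1$, then higher cells to resolve. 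One can arrange that the same data, reduced modulo $\mathfrak{m}^N$ appropriately, gives compatible cofibrant models of $\mathrm{R}/\mathfrak{m}^N$. Applying $\Der_\Z(-,k)$ to such a cellular model produces, in each cohomological degree, a finite-dimensional $k$-vector space built from the generators, and the transition maps $\tangent^i(\mathrm{R}/\mathfrak{m}^{N+1})\to\tangent^i(\mathrm{R}/\mathfrak{m}^N)$ are eventually isomorphisms in each fixed degree $i$; passing to the colimit recovers $\tangent^i\mathrm{R}$. Alternatively, and perhaps more cleanly, I would use the Hurewicz-type results: $\tangent^i$ is computed by the dualized homotopy of the relative cotangent complex, and $L_{\mathrm{R}/\Z}\otimes_{\mathrm{R}} k$ has finite-dimensional homology in each degree (since $\mathrm{R}$ is Noetherian, $I/I^2$ and the Koszul-type higher terms are finitely generated), so each $\tangent^i\mathrm{R}$ is finite-dimensional and detected on a finite quotient.

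The \textbf{main obstacle} is making precise the claim that ``an André--Quillen $i$-cochain of $\mathrm{R}$ with values in $k$ factors through a finite quotient $\mathrm{R}/\mathfrak{m}^N$,'' uniformly over $i$, without circularity. The subtlety is that a cofibrant replacement of $\mathrm{R}$ is an infinite object, so one cannot naively say a cochain ``sees only finitely much.'' The way I would handle this is to exploit that the cofibrant model can be taken with \emph{finitely many cells in each dimension} (this uses Noetherianness of $\mathrm{R}$ and the Hurewicz theorem, Proposition~\ref{Prop:hurewicz}, in the form of Corollary~\ref{cor:Corollary hurewicz analogue} applied to $W(k)\to\mathrm{R}$ — though we must be slightly careful since $\mathrm{R}$ itself is not in $\smallC_k$; one works with the tower $\mathrm{R}/\mathfrak{m}^N$ and takes a limit of minimal cell structures). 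Once the cellular model has finitely many cells through any given dimension, the cochain complex $\Der_\Z(-,k)$ is degreewise finite-dimensional, each cell is supported in $\mathrm{R}/\mathfrak{m}^N$ for $N$ large enough, and the colimit comparison becomes a routine check that a degreewise eventually-isomorphic tower of complexes has the expected colimit cohomology. Finally I would note the degree-$0$ and degree-$1$ cases can also be checked by hand via the classical identifications ($\tangent^0 = (\mathfrak{m}/(\mathfrak{m}^2+p))^\vee$, $\tangent^1$ classifying square-zero extensions), which serves as a sanity check and covers the cases actually used in the sequel.
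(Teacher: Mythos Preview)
Your approach is different from the paper's, and the paper's is considerably shorter. The paper uses the transitivity long exact sequence in Andr\'e--Quillen cohomology for $\Z \to \mathrm{R} \to \mathrm{R}/\mathfrak{m}^N$ (Quillen, \emph{On the (co-)homology of commutative rings}, Theorem~5.1) to reduce the claim to the single vanishing statement $\varinjlim_N \Der^i_{\mathrm{R}}(\mathrm{R}/\mathfrak{m}^N, k) = 0$ for each $i$, and then cites Quillen's Theorem~6.15 for this (noting that Quillen states it without proof). That is the entire argument: no cell models are built.

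Your route via compatible cellular cofibrant models has a genuine gap at the step ``the same data, reduced modulo $\mathfrak{m}^N$ appropriately, gives compatible cofibrant models of $\mathrm{R}/\mathfrak{m}^N$.'' A cofibrant replacement $\mathcal{R}$ of $\mathrm{R}$ over $\Z$ has $\pi_0 \mathcal{R} = \mathrm{R}$, not $\mathrm{R}/\mathfrak{m}^N$; there is no ``reduction mod $\mathfrak{m}^N$'' operation on $\mathcal{R}$. To obtain a cofibrant model of $\mathrm{R}/\mathfrak{m}^N$ one must attach additional $1$-cells to $\mathcal{R}$ to kill generators of $\mathfrak{m}^N$, then further cells in higher dimensions, and all of these extra cells change with $N$. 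The assertion that these extra cells contribute nothing to $\varinjlim_N \tangent^i$ is, via the relative tangent complex of the cofibration $\mathcal{R} \to \mathcal{R}_N$, exactly the statement $\varinjlim_N \Der^i_{\mathrm{R}}(\mathrm{R}/\mathfrak{m}^N, k) = 0$ in disguise. So once made precise your approach does not sidestep the hard input; it repackages it. Likewise the phrase ``each cell is supported in $\mathrm{R}/\mathfrak{m}^N$ for $N$ large enough'' is not meaningful as written (cells live in the cofibrant model over $\Z$, not in $\mathrm{R}$), and finite-dimensionality of $\tangent^i\mathrm{R}$ alone does not yield the comparison without controlling that relative term.
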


In particular, with the notations of the Lemma, 
the functor $\Art_k \rightarrow s\Sets$ given by  $$A \mapsto \colim_{N \to \infty}\smallC_k(\mathrm{R}/\mathfrak{m}^N,A)$$
has $i$th tangent cohomology 
 given by $\tangent^i \mathrm{R} = \mathrm{D}^i_{\mathbb{Z}}(\mathrm{R},k) = \mathrm{D}^i_{W(k)}(\mathrm{R}, k)$. 
 (For the last equality we look at the long exact sequence in Andr{\'e}--Quillen cohomology for $\mathbb{Z} \rightarrow W(k) \rightarrow k$:
 that shows that $\mathrm{D}^i_{\mathbb{Z}}(W(k),k)= 0$.     This allows us to compare $\mathrm{D}^i_{\mathbb{Z}}(\mathrm{R},k)$
 and $\mathrm{D}^i_{W(k)}(\mathrm{R}, k)$, as desired.)
  
We will apply this later in the case where  $\mathrm{R} = W(k)[[X_1, \dots, X_s]]/(Y_1, \dots, Y_t)$  and the $Y_i$ are a regular sequence
 inside the maximal ideal  $(p, X_1, \dots, X_s)$ of $W(k)[[X_1, \dots, X_s]]$. 
 Then, if the reductions $\overline{Y}_i$ to $k[[X_1, \dots, X_s]]$ have only quadratic and higher terms,   standard computations of Andr{\'e}--Quillen cohomology imply that 
\begin{equation} \label{numerical} \dim_k \tangent^i \mathrm{R} = \begin{cases} s, i=0 \\ t, i=1, \\  0,  \mbox{else}  \end{cases}\end{equation}
 
 \proof (of Lemma \ref{complete}):
 It is enough to show that the induced map of Andr{\'e}-Quillen cohomology (denoted $\mathrm{D}^i$), with coefficients in $k$, 
 $$ \varinjlim \mathrm{D}^i_{\mathbb{Z}}(\mathrm{R}_N,k) \rightarrow \mathrm{D}^i_{\mathbb{Z}}(\mathrm{R},k)$$
 is an isomorphism.  
  From the long exact sequence \cite[Theorem 5.1]{Quillen}
 associated to $\mathbb{Z} \rightarrow \mathrm{R} \rightarrow \mathrm{R}_N $ 
 it is enough to see that, for all $i$, 
 $$\varinjlim \mathrm{D}^i_{\mathrm{R}}(\mathrm{R}_N,k)  =0$$
 which is a consequence of \cite[Theorem 6.15]{Quillen} (unfortunately there is no proof given). 
  \qed

We will want a reasonable way to say that a  pro-simplicial ring is discrete. We do not need to discuss any homotopy theory
of pro-simplicial rings: for us their  their importance is solely in the functors they represent, and the following definition
is adequate: 
    
   \begin{Definition} \label{homotopydiscretedefinition}
  We say that a pro-object $\mathcal{R} = \{ \mathcal{R}_{\alpha} \}_{\alpha}$ of $\Art_k$
  is {\em homotopy discrete} if 
  the map  $p: \mathcal{R} \rightarrow \pi_0 \mathcal{R}$  induces an equivalence on represented functors (after applying   level-wise cofibrant replacement), i.e.
  if the map
  $$ \colim_{\alpha} \Map_{\Art_k}( \pi_0 \mathcal{R}_{\alpha}^c, A) \to \colim_{\alpha} \Map_{\Art_k}(\mathcal{R}^c_{\alpha}, A)$$
  is a weak equivalence for all $A \in \Art_k$. 
  \end{Definition}

The following Lemma will show that, in the arithmetic contexts where we apply it,    the coincidence of ``derived'' and ``usual'' deformation rings  is equivalent to the  usual deformation ring being a complete intersection of the expected size:

 \begin{Lemma} \label{uvd}
 Suppose $\mathcal{R} = \{ \mathcal{R}_{\alpha} \}_{\alpha}$ is an object of pro-$\Art_{/k}$
 such that $b_i := \dim (\tangent^i \mathcal{R}) $ is finite for all $i$, and zero if  $i \notin \{0,1\}$. 
  Denote by $\pi_0 \mathcal{R} = (\alpha \mapsto \pi_0 \mathcal{R}_{\alpha})$ the associated pro-ring.
  In this case, the following are equivalent: 
\begin{itemize}
\item[(i)]  $\mathcal{R}$ is homotopy discrete, in the sense of Definition  \ref{homotopydiscretedefinition}.
  \item[(ii)]  The  complete local  ring $\varprojlim_{\alpha} \pi_0 \mathcal{R}_{\alpha}$ associated to $\pi_0 \mathcal{R}$ is isomorphic to  $$W(k)[[X_1, \dots, X_{b_0}]]/(Y_1, \dots, Y_{b_1})$$ 
for a regular sequence $Y_i$ of elements all belonging to 
$(p,\mathfrak{m}^2)$ (where $\mathfrak{m}$ is the maximal ideal of $W(k)[[X_*]]$). \end{itemize}\end{Lemma}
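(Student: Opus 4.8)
The plan is to reduce the statement, via the tangent--complex criterion for weak equivalence, to a purely commutative-algebra fact about the complete local ring $\mathrm{R} := \varprojlim_\alpha \pi_0\mathcal{R}_\alpha$, and then to invoke the classical characterization of complete intersection rings by vanishing of André--Quillen cohomology.

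First I would set up the following dictionary. Both $\Map(\mathcal{R}^c,-) = \colim_\alpha\Map(\mathcal{R}_\alpha^c,-)$ and $\Map((\pi_0\mathcal{R})^c,-)$ are formally cohesive (they are pro-representable and their value at $k$ is $\colim\{\ast\}=\{\ast\}$), so by Lemma~\ref{lem:tangent-complex-detects-equivalence} the natural transformation $\Map((\pi_0\mathcal{R})^c,-)\to\Map(\mathcal{R}^c,-)$ induced by $\mathcal{R}\to\pi_0\mathcal{R}$ --- whose being a weak equivalence is the definition (Definition~\ref{homotopydiscretedefinition}) of $\mathcal{R}$ being homotopy discrete --- is a natural weak equivalence if and only if it induces an isomorphism $\tangent^i(\pi_0\mathcal{R})\to\tangent^i\mathcal{R}$ for all $i$. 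We always have $\tangent^0(\pi_0\mathcal{R})\cong\tangent^0\mathcal{R}$ by \eqref{pagrefB} and an injection $\tangent^1(\pi_0\mathcal{R})\hookrightarrow\tangent^1\mathcal{R}$ by \eqref{tangent-inclu}; combining this with the exact sequence of Corollary~\ref{Corollary:compare} and the hypothesis $\tangent^i\mathcal{R}=0$ for $i\geq 2$, the map is an isomorphism in every degree exactly when $\dim_k\tangent^1(\pi_0\mathcal{R})=b_1$ and $\tangent^i(\pi_0\mathcal{R})=0$ for $i\geq 2$. Next, since $b_0=\dim_k\tangent^0\mathcal{R}=\dim_k\tangent^0(\pi_0\mathcal{R})$ is finite, Lemma~\ref{lem:complete local ring} shows $\mathrm{R}=\varprojlim_\alpha\pi_0\mathcal{R}_\alpha$ is a complete Noetherian local ring with residue field $k$; the powers of its maximal ideal being cofinal among the defining ideals $J_\alpha$, Lemma~\ref{complete} and the discussion following it identify $\tangent^i(\pi_0\mathcal{R})=\varinjlim_\alpha\tangent^i(\pi_0\mathcal{R}_\alpha)$ with $\mathrm{D}^i_{W(k)}(\mathrm{R},k)$. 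Thus the entire statement reduces to the claim: \emph{a complete Noetherian local $W(k)$-algebra $\mathrm{R}$ with residue field $k$ and $\dim_k\mathrm{D}^0_{W(k)}(\mathrm{R},k)=b_0$ satisfies $\dim_k\mathrm{D}^1_{W(k)}(\mathrm{R},k)=b_1$ and $\mathrm{D}^i_{W(k)}(\mathrm{R},k)=0$ for $i\geq 2$ if and only if $\mathrm{R}\cong W(k)[[X_1,\dots,X_{b_0}]]/(Y_1,\dots,Y_{b_1})$ for a regular sequence $Y_i$ lying in $(p,\mathfrak{m}^2)$.}

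For the direction (ii)$\Rightarrow$(i), given such a presentation, equation~\eqref{numerical} computes $\dim_k\mathrm{D}^i_{W(k)}(\mathrm{R},k)$ to be $b_0,b_1,0,0,\dots$ in degrees $0,1,2,\dots$ (the hypothesis $Y_i\in(p,\mathfrak{m}^2)$ is precisely what makes the reductions $\overline Y_i\in k[[X_1,\dots,X_{b_0}]]$ consist of quadratic and higher terms, as required there), so by the dictionary above $\tangent^\ast(\pi_0\mathcal{R})\to\tangent^\ast\mathcal{R}$ is an isomorphism and $\mathcal{R}$ is homotopy discrete. For (i)$\Rightarrow$(ii), I would choose a minimal presentation $P:=W(k)[[X_1,\dots,X_{b_0}]]\twoheadrightarrow\mathrm{R}$ with kernel $I$; minimality (equivalently $b_0=\dim_k\mathrm{D}^0_{W(k)}(\mathrm{R},k)=\dim_k(\mathfrak{m}_\mathrm{R}/(p\mathrm{R}+\mathfrak{m}_\mathrm{R}^2))$) forces $I\subseteq pP+\mathfrak{m}_P^2=(p,\mathfrak{m}^2)$. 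The hypothesis $\mathrm{D}^2_{W(k)}(\mathrm{R},k)=0$, together with the classical theorem (going back to Quillen) that vanishing of $\mathrm{D}^2_{W(k)}(\mathrm{R},k)$ characterizes complete intersection rings, shows $\mathrm{R}$ is a complete intersection, so $I$ is generated by a regular sequence. Finally, the Jacobi--Zariski long exact sequence for $W(k)\to P\to\mathrm{R}$ --- using $\mathrm{D}^{\geq 1}_{W(k)}(P,k)=0$ and the inclusion $I\subseteq(p,\mathfrak{m}_P^2)$ to see that $\mathrm{D}^0_{W(k)}(\mathrm{R},k)\to\mathrm{D}^0_{W(k)}(P,k)$ is an isomorphism --- yields $\mathrm{D}^1_{P}(\mathrm{R},k)\cong\mathrm{D}^1_{W(k)}(\mathrm{R},k)$, whose dimension $\dim_k(I/\mathfrak{m}_P I)$ is the minimal number of generators of $I$; hence $I$ is generated by a regular sequence of length $b_1$, and those generators already lie in $(p,\mathfrak{m}^2)$ by the minimality noted above, giving the presentation asserted in (ii).

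The main obstacle is the commutative-algebra input in the last paragraph: the characterization of complete intersections by the vanishing of higher André--Quillen cohomology (equivalently, the rigidity statement that $\mathrm{D}^2_{W(k)}(\mathrm{R},k)=0$ already forces $\mathrm{D}^{\geq 2}_{W(k)}(\mathrm{R},k)=0$, combined with the computation~\eqref{numerical} for complete intersections). Everything else is bookkeeping with the lemmas already established, the only delicate point being to check that the pro-ring $(\pi_0\mathcal{R}_\alpha)$, its associated complete local ring $\mathrm{R}$, and the truncations $\mathrm{R}/\mathfrak{m}^N$ all compute the same tangent cohomology --- which is exactly the content of Lemmas~\ref{lem:complete local ring} and~\ref{complete}.
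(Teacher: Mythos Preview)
Your proof is correct and follows essentially the same route as the paper's: both reduce to computing $\mathrm{D}^i_{W(k)}(\mathrm{R},k)$ for the complete local ring $\mathrm{R}=\varprojlim\pi_0\mathcal{R}_\alpha$ via Lemmas~\ref{lem:complete local ring} and~\ref{complete}, then invoke the characterization of complete intersections by vanishing of higher Andr\'e--Quillen cohomology (the paper cites Iyengar; you cite Quillen). Your write-up is more explicit about the tangent-complex criterion for weak equivalence and the Jacobi--Zariski count of generators, but the architecture is the same.
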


\proof

For (ii) implies (i): the map $\mathcal{R} \rightarrow \pi_0 \mathcal{R}$   always induces an isomorphism on $\tangent^0$, 
and an injection on $\tangent^1$ (see \eqref{tangent-inclu}); it is an isomorphism on $\tangent^1$  then by dimension-counting
(see Lemma \ref{complete} for why we can compute the tangent complex of the pro-ring $\pi_0 \mathcal{R}$
in terms of the tangent complex of the associated complete local ring.) 

For (i) implies (ii): Let $\mathrm{R} =\varprojlim_{\alpha} \pi_0 \mathcal{R}_{\alpha}$. 
Under the quoted finiteness assumption,   $\mathrm{R}$ is a complete  Noetherian local ring (Lemma \ref{lem:complete local ring}).
By assumption (i), together with Lemma \ref{complete}, we have  $\dim \Der^i_{W(k)}(\mathrm{R}, k) = b_i$. 
 Thus there  is a surjection  of complete local Noetherian rings 
 $$ W(k)[[X_1, \dots, X_{b_0} ]] \twoheadrightarrow \mathrm{R}$$
 compatible with the augmentations to $k$. 
 Any element $Y$ in the kernel  has the property that $\bar{Y} \in k[[X_i]]$ doesn't involve any linear terms,
otherwise $\dim \mathfrak{t}^0 \mathrm{R}$ would be too small.   Thus $Y \in (p, \mathfrak{m}^2)$.   What we must show is that the kernel
is generated by a regular sequence;
once this is so, the length of the regular sequence must be $  \dim \Der^1_{W(k)}(\mathrm{R}, k)=b_1$.  For  this,  see \cite[Theorem 8.5]{Iyengar} and its proof in particular.  
   \qed

   We conclude with a minor result about projective limits of Tor-groups (which is mainly cosmetic). 
 
\begin{Lemma} \label{Tor_limit}
Let $S$ be a complete local ring, and $I_n$ a sequence of ideals which
form a basis for the topology of $S$.
Let $M,N$ be finitely generated complete $S$-modules.  
Set $M_k =M/I_k, N_k = N/I_k, S_k=S/I_k$ and suppose these are all finite (as sets). Then the natural map
  $$\Tor_{S}(M,N) \rightarrow \varprojlim_k \Tor_{S_k}(M_k, N_k)$$ 
 is an isomorphism. 

 \end{Lemma}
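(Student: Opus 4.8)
\textbf{Proof plan for Lemma \ref{Tor_limit}.}
The plan is to reduce everything to a statement about a single free resolution and its reductions modulo the ideals $I_k$. First I would choose a resolution $P_\bullet \to M$ by finitely generated free $S$-modules; since $S$ is Noetherian (being a complete local ring here, or at least we may assume so in the context of application, where $S$ is a power series ring over $W(k)$), each $P_i$ is finitely generated and free. Then $\Tor_S(M,N)$ is computed by the complex $P_\bullet \otimes_S N$. The key observation is that reduction modulo $I_k$ commutes with tensor product: $(P_\bullet \otimes_S N) \otimes_S S_k \cong (P_\bullet \otimes_S S_k) \otimes_{S_k} N_k$, and $P_\bullet \otimes_S S_k$ is a complex of finitely generated free $S_k$-modules, though not necessarily a resolution of $M_k$. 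So I would not directly claim $P_\bullet\otimes_S S_k$ resolves $M_k$; instead I would compute $\varprojlim_k \Tor_{S_k}(M_k,N_k)$ by a different route.

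The cleanest route is via the Milnor $\varprojlim^1$ sequence together with a finiteness/Mittag--Leffler argument. First I would note that $\Tor_S(M,N)$ is a finitely generated $S$-module (Noetherian, finite free resolution of finite length in each degree after truncation is not needed; just that each $\Tor_i$ is f.g.), and that $S$ is $I_k$-adically complete, so $\Tor_S(M,N) \cong \varprojlim_k \big(\Tor_S(M,N)/I_k \Tor_S(M,N)\big)$; moreover each quotient here is finite since $S_k$ is finite and the module is f.g. Next, since $P_\bullet$ is a complex of finitely generated frees, I would invoke the fact that for a complex of finitely generated free modules over a Noetherian ring which is $I$-adically complete, homology commutes with the completion in an appropriate sense — concretely, the inverse system $\{H_i(P_\bullet\otimes_S S_k)\}_k$ has surjective transition maps eventually (Artin--Rees / Mittag--Leffler for the f.g. module $\Tor_S(M,N)$ and its subquotients), so $\varprojlim^1$ vanishes and $\varprojlim_k H_i(P_\bullet \otimes_S S_k) \cong H_i(\varprojlim_k P_\bullet\otimes_S S_k) = H_i(\widehat{P_\bullet}) = H_i(P_\bullet) = \Tor_i^S(M,N)$, using completeness of $P_\bullet$ (each $P_i$ finite free over complete $S$ is already complete).

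Finally I would identify $H_i(P_\bullet \otimes_S S_k)$ with $\Tor_i^{S_k}(M_k, N_k)$, or rather compare the two compatibly over $k$. Here one must be slightly careful: $P_\bullet \otimes_S S_k$ need not be acyclic onto $M_k$, so its homology is $\Tor_i^S(M, S_k)$-flavored rather than $\Tor_i^{S_k}(M_k,N_k)$. The fix is the base-change spectral sequence $\Tor_p^{S_k}(\Tor_q^S(M,S_k), N_k) \Rightarrow \Tor_{p+q}^S(M,N_k) = \Tor_{p+q}^S(M,N)\otimes$-ish; rather than unwinding this in general, I would instead directly set up the natural map $\Tor_i^S(M,N) \to \Tor_i^{S_k}(M_k,N_k)$ (induced by $P_\bullet \otimes_S N \to P_\bullet\otimes_S N \otimes_S S_k \to (P_\bullet\otimes_S S_k)\otimes_{S_k} N_k$, the last a surjection of complexes whose kernel involves $I_k$), pass to the inverse limit, and check it is an isomorphism by comparing with the computation above; the transition maps in $\{\Tor_i^{S_k}(M_k,N_k)\}$ are compatible and one checks the limit of the comparison maps is built from the completeness isomorphisms already established.

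\textbf{Main obstacle.} The subtle point is precisely the last one: $P_\bullet \otimes_S S_k$ is \emph{not} a free resolution of $M_k$ over $S_k$, so $\Tor^{S_k}_i(M_k,N_k)$ is not literally its homology, and one needs either the base-change spectral sequence or a direct chain-level comparison plus a Mittag--Leffler argument to see that the discrepancy dies in the inverse limit. I expect handling this comparison — showing the natural map from $\varprojlim$ of the ``naive'' homology to $\varprojlim \Tor^{S_k}$ is an isomorphism — to be where essentially all the work lies; the rest is standard $\varprojlim^1$-vanishing via finiteness. (In the application $S$ is a formal power series ring over $W(k)$ and the modules are very concrete, so one could alternatively just exhibit an explicit finite free resolution and check the statement by hand, which may be the pragmatic choice.)
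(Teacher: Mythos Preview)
Your approach is essentially the paper's, and you have correctly located the one genuine difficulty. The paper also fixes a finite free resolution $P_\bullet$ of $M$, first proves $\Tor_S(M,N)\cong\varprojlim_k \Tor_S(M,N_k)$ exactly as you do (homology commutes with inverse limits of complexes of finite abelian groups), and then turns to the base-change spectral sequence $\Tor^p_{S_n}(\Tor^q_S(M,S_n),N)\Rightarrow \Tor^{p+q}_S(M,N)$ to bridge $\Tor_S$ and $\Tor_{S_n}$. What you are missing is the concrete mechanism that kills the $q>0$ part in the limit: apply your first step with $N=S$ to get $\varprojlim_n \Tor^q_S(M,S_n)=0$ for $q>0$; since these groups are finite, this forces the transition maps $\Tor^q_S(M,S_N)\to\Tor^q_S(M,S_n)$ to be zero for $N\gg n$. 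Passing the spectral sequence to the inverse limit over $n$ (legitimate because every term is finite), all $q>0$ contributions therefore vanish, and the edge map becomes the isomorphism $\Tor_S(M,N)\cong\varprojlim_n \Tor_{S_n}(M_n,N)$ for $N$ an $S_k$-module; combined with the first step this finishes the proof. Your Mittag--Leffler intuition is correct but the actual argument is sharper (transition maps are eventually \emph{zero}, not merely with stabilizing image). Also, a minor slip: in your middle paragraph the complex should be $P_\bullet\otimes_S N_k$, not $P_\bullet\otimes_S S_k$, and its homology is $\Tor_i^S(M,N_k)$, not $\Tor_i^S(M,N)$.
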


\proof  
 
 First of all, the map
\begin{equation} \label{Firstcheck} \Tor_S(M,N) \rightarrow \varprojlim_k  \Tor_S(M, N_k)\end{equation} 
 is an isomorphism. To see this, choose a resolution $F_{\bullet}$ of $M$ by finite free $S$-modules; tensoring with $N$,  
 our result follows from the fact that $N=\varprojlim N_k$ and homology commutes with inverse limits for complexes of finite abelian groups.

 We are now reduced to verifying that  the natural map  
 $$\Tor_S(M,N) \rightarrow  \varprojlim_{n \geq k}  \Tor_{S_n}(M_n, N)$$ 
is an isomorphism when $N$ is an $S_k$-module.

By change of rings  \cite[Tag 068F]{StacksProject}
(in the derived sense, we have $M \dotimes_S N = M \dotimes_{S} S_n \dotimes_{S_n} N$) we get 
there is  a spectral sequence computing  $\Tor_S^r(M,N)$: 
\begin{equation}   \bigoplus_{p+q=r} \Tor^p_{S_n}(\Tor_S^q(M, S_n), N)\end{equation}  
  All the terms in this spectral sequence are finite abelian groups. 
 Also, this spectral sequence is ``compatible with change of $n$,''  i.e.\ there is a map  of spectral sequences from the spectral sequence at level $N$ to the spectral sequence at level $n < N$. 
Thus we can take the inverse limit and obtain a spectral sequence with first page 
\begin{equation} \label{invlimit}  \varprojlim_n \bigoplus_{p+q=r} \Tor^p_{S_n}(\Tor_S^q(M, S_n), N)\end{equation} 
which still computes $\Tor_S(M,N)$.

Because $\varprojlim_n \Tor_q^S(M, S_n) =0$ in positive degree $q$, by \eqref{Firstcheck} with $N=S$, 
the natural maps
$$\Tor_q^S(M, S_N) \rightarrow \Tor_q^S(M, S_n)$$
must be zero for fixed $n$ and big enough $N$. 
 
 That means that, in the \eqref{invlimit}  all the terms with $q  > 0$ go away.  Thus the natural edge map 
 $$  \Tor_S(M,N) \stackrel{\sim}{\rightarrow} \varprojlim_n \Tor^p_{S_n} (M \otimes_S S_n, N)$$
 which is what we wanted.\qed

%
%
%
%
%
%
%
%
%
%
%
%
%
%
%
%
%
%
%
%
%
%
%
%
%
%
%
%
%
%
%
%
%
%
%
%
%
%
%
%
%
%
%
%
%

  \section{ Allowing ramification at extra primes } \label{sec:TWprimes} 

  In this section we examine the behavior of the derived deformation ring when
  adding a prime to the set of ramification.  The statement
  is the obvious one -- it is a translation, at the level of
  deformation rings, of the statement ``a representation of
  $\pi_1(\Z[\frac{1}{Sq}])$ unramified at $q$ is actually a
  representation of $\pi_1(\Z[\frac{1}{S}])$.''
 
   We need this only for certain primes $q$, Taylor--Wiles primes -- see \S \ref{TWprimesdef} -- which present a particularly simple situation
 as far as local deformation theory goes.

  \subsection{Notation and the main result} \label{TWprimes notation section} 
  We briefly recall the relevant notation from the previous section, especially \S \ref{setup} and  \S \ref{Defringdeffunctors}: 
 we're deforming a fixed representation $\rho: \Gamma_S
 \rightarrow G(k)$ from $\Gamma_S = \pi_1^{\mathrm{et}}(\Z[1/S])$ into
 the $k$-points of the reductive group $G$. 
    We denote by $\mathcal{F}_S = \mathcal{F}_{\Z[\frac{1}{S}]}$
 the corresponding deformation functor (as discussed in \S \ref{Defringdeffunctors}, because $\rho$ is always fixed, we omit it from the notation). 
  Let now $q$ be a Taylor--Wiles prime, as in \S \ref{TWprimesdef}. Denote by $\rho_{\Z_q}$ the pullback of $\rho$ under (the map of {\'e}tale fundamental groups associated with)
$\Spec \Z_q \rightarrow \Spec \Z[\frac{1}{S}]$.  By assumption, $\rho_{\Z_q}$  is conjugate, in $G(k)$, to a 
representation of the {\'e}tale fundamental group of $\Z_q$ into $T(k)$. 
As part of the data associated to a Taylor--Wiles prime, we have fixed such a representation  $\rho_{\Z_q}^T$, {\em together with} an isomorphism of 
\begin{equation} \label{above} \mathrm{inclusion}_{T}^G \circ \rho_{\Z_q}^T      \cong 
 \rho_{\Z_q},\end{equation}
 and  our constructions that follow will depend on this specific isomorphism.

 We define similarly $\rho_{\Q_q}, \rho_{\Q_q}^T$
 by pullback via $\pi_1 \Q_q \rightarrow \pi_1 \Z_q$.  
 
 Then make the following definitions:
 
  \begin{itemize}
  \item[-] $\mathcal{F}_{\Z_q}, \mathcal{F}_{\Q_q}$ are the deformation functors for 
 $\rho_{\Z_q}$ and $\rho_{\Q_q}$, considered as representations into $G(k)$; 
\smallskip 
  
    \item[-] $\mathcal{F}_{\Z_q}^T$ or $\mathcal{F}_{\Q_q}^T$  are deformation functors for $\rho_{\Z_q}^T$ or $\rho_{\Q_q}^T$ {\em valued in $T$} (and
    recall the convention about $T$-valued deformation  functors from \S \ref{Defringdeffunctors}).

     The choice of isomorphism of \eqref{above}
    induces a map $\mathcal{F}_{\Z_q}^T \rightarrow \mathcal{F}_{\Z_q}$, similarly for $\Q_q$. 
    \smallskip
    
  \item[-] $\mathcal{F}_{\Z_q}^{T, \fram}$ or $\mathcal{F}_{\Q_q}^{T, \fram}$ are framed deformation functors for $\rho_{\Z_q}^T$ or $\rho_{\Q_q}^T$  {\em valued in $T$}.
  \smallskip
  
  \item[-] (Ignore this one until you need to read it:) Let $\usS_q^{\circ}$ be the usual  (underived) framed deformation ring  for the trivial representation $I_q \rightarrow T$.  
See  \eqref{iqdef} for the definition of $I_q \cong (\Z/q)^*$.    
    \end{itemize}

  Our main results can be summarized in the following diagram, where
  all squares are  object-wise homotopy pullback squares (see \eqref{hpssquarereview} for definition), 
 the $s$-maps are sections of  the natural maps that they are adjacent to, and the symbol $\sim$ means objectwise weak equivalence.    The maps are the ``natural ones'', except for $s, j$ which are discussed  in \S \ref{TWdiagsections}
and \S \ref{jdef} respectively; the notation $(\usS_q^{\circ})^c$ means a cofibrant replacement for $\usS_q^{\circ}$, where the latter
is considered as a discrete simplicial commutative ring.  The square (c) is a homotopy pullback  considered both ways, i.e.\ both with the ``left to right'' horizontal
arrows and with the ``right to left'' horizontal actions.

        \begin{equation}  \label{TWdiag} 
 \xymatrix{
\ar @{} [dr] |{(a)}      \mathcal{F}_{\Z[\frac{1}{S}]}   \ar[r]\ar[d]  &   \ar @{} [dr] |{(b)}        \mathcal{F}_{\Z_q} \ar[d]    & \ar @{} [dr] |{(c)}    \ar[d] \ar[l]^{\sim}  \mathcal{F}_{\Z_q}^{T} \ar@/^1pc/[r]^{s_{\Z_q}}   &  \ar @{} [dr] |{(d)}  \ar[l] \ar[d]  \mathcal{F}_{\Z_q}^{T, \fram}\ar[r]    &   \mbox{$*$} \ar[d]  \\ 
 \mathcal{F}_{\Z[\frac{1}{Sq}]}   \ar[r]  &   \mathcal{F}_{\Q_q}  &  \ar[l]^{\sim}  \mathcal{F}_{\Q_q}^{T} \ar@/_1pc/[r]_{s_{\Q_q}}  &  \ar[l] \mathcal{F}_{\Q_q}^{T, \fram} \ar[r]^{j\qquad }   &
 \Maps_{\Art_k}((\usS_q^{\circ})^c, -)  
 }
 \end{equation}

  \begin{Remark} \label{lawyerly}
  Strictly speaking, this diagram is not correct:
  it exists only after replacing some of the functors by naturally weakly equivalent ones; the diagram
  as above exists literally only in the homotopy category, i.e.\ at the cost of replacing functors by their $\pi_0$.
We have allowed ourselves to be imprecise about this
   since they do not affect the key deduction  
   \eqref{rugelach} that follows. 

We briefly describe the sources of this: Firstly,   to produce the splittings of square (c), we must replace 
the functors occurring there by 
  naturally weakly equivalent functors,  as mentioned in  the discussion around \eqref{BigUglyDiagram}.   Secondly, in square (d), 
we must insert some homotopy commutative squares with horizontal weak equivalences; these arise from passing from functors to representing rings, as in Lemma \ref{Lemma:represent-natural-trans}.
Also, in square (d),  the map $j$ exists  only as a zig-zag, for reasons similar to those mentioned near \eqref{sbpq}. 
  \end{Remark}

\begin{Remark} The first square  (a) is a homotopy pullback square for any prime $q$.  It is only in the analysis of the other squares that use the fact that $q$ is a ``Taylor--Wiles'' prime. 
 In fact, square (a) already expresses 
 that ``a representation of
  $\pi_1(\Z[\frac{1}{Sq}])$ unramified at $q$ is actually a
  representation of $\pi_1(\Z[\frac{1}{S}])$.''     The reason to go to all the trouble of going through squares (b), (c), 
  is that the functors $\mathcal{F}_{\Z_q}$ and $\mathcal{F}_{\Q_q}$ are not representable -- they have too many automorphisms. 
  But the framed $T$-functors,, and of course the  trivial functor $*$ and the functor $ \Maps_{\Art_k}((\usS_q^{\circ})^c, -)$,
  are pro-representable. 
   \end{Remark}

By concatenating all the squares, we arrive at a homotopy pullback square  
\begin{equation} \label{rugelach}
  \begin{aligned}
    \xymatrix{ \mathcal{F}_{\Z[\frac{1}{S}]}' \ar[r] \ar[d] &
      \mbox{$*$} \ar[d] \\ \mathcal{F}_{\Z[\frac{1}{Sq}]}' \ar[r] &
      \Maps_{\Art_k}((\usS_q^{\circ})^c, -) }
  \end{aligned}
\end{equation} 
 where the functor $\mathcal{F}_{\Z[\frac{1}{Sq}]}'$ comes with a natural weak equivalent to $\mathcal{F}_{\Z[\frac{1}{Sq}]}$, etc.;
 we have to make this naturally equivalent replacements to ``invert'' the horizontal weak equivalences of \eqref{TWdiag} (cf. discussion after \eqref{roofy}).

 If we were dealing with the usual deformation rings, the corresponding statement is (cf. \eqref{Descent}) 
\begin{equation} \label{rugelach2} \left( \mbox{ deformation ring at level $S$ } \right)=  \left( \mbox{ deformation ring at level $S q$} \right)\otimes_{\usS_q^{\circ}} W(k).\end{equation}
The diagram \eqref{rugelach}  will imply a similar result for the derived deformation rings, with tensor products now taken in the derived sense. 

 Now let us introduce notation for representing rings: 
\begin{itemize}

  \item[-] We denote representing rings for $\mathcal{F}_{\Q_q}^{T,\fram}$ and $\mathcal{F}_{\Z_q}^{T, \fram}$ by $\mathcal{S}_q$ and $\mathcal{S}_q^{\ur}$ respectively
  (the superscript ``ur'' is for ``unramified,'' and we omit any square $\fram$ on the representing rings for typographical simplicity -- $\mathcal{S}$-rings will always denote framed deformations into $T$.)  Thus these are representing rings for the framed deformation functors of $\rho_{\Q_q}^T$ and $\rho_{\Z_q}^T$ with targets in $T$. 
  
  \item[-] We set  also 
\index{$\usS_q$}   $\usS_q = \pi_0 \mathcal{S}_q$
and $\usS_q^{\ur} = \pi_0 \mathcal{S}_q^{\ur}$. \index{$\usS_q^{\ur}$}
These are, therefore, the (underived) framed deformation rings  for $\rho_{\Q_q}^T$ and $\rho_{\Z_q}^T$,  again with targets in $T$.

  \item[-]   Recall the definition of $I_q \cong (\Z/q)^*$ from \eqref{iqdef}. The usual (underived) framed deformation ring of the trivial representation:
$ I_q \longrightarrow T(k)$
will be denoted by $\rS_q$.   \index{$\rS_q$}
  \end{itemize}

   The remainder of the section will prove that each square (a), (b), (c), (d)  in \eqref{TWdiag} is homotopy pullback, construct the sections $s$ and the map $j$. 
   We will analyze the squares (a), (b), (c), (d) in order. 

\subsection{The square (a) from \eqref{TWdiag} is a homotopy pullback square}  \label{SquareA}

 Examine just square (a):
 \begin{equation}  \label{pi1fiberA}
   \begin{aligned}
     \xymatrix{
       \ar@{} [dr] |{(a)}      \mathcal{F}_{\Z[\frac{1}{S}]}   \ar[r]\ar[d]  &          \mathcal{F}_{\Z_q} \ar[d]    \\
       \mathcal{F}_{\Z[\frac{1}{Sq}]} \ar[r] & \mathcal{F}_{\Q_q} }
   \end{aligned}
  \end{equation} 
 
This  arises from a corresponding  diagram of pro-simplicial sets, the {\'e}tale homotopy types of the following diagram
   \begin{equation}  \label{pi1fiberB}
     \begin{aligned}
       \xymatrix{
         \Spec \  \Q_q \ar[r]\ar[d]  &  \Spec \  \Z[\frac{1}{Sq}]   \ar[d] \\
         \Spec \  \Z_q   \ar[r] &     \Spec \  \Z[\frac{1}{S}]  \\
      } 
     \end{aligned}
 \end{equation}
 
 To verify that square (a) is homotopy pullback, it is enough
 to check that the corresponding map on   tangent complexes is  also
a pullback square (see discussion after Lemma \ref{lem:2.51})
that is to say, that the induced map  
\begin{equation} \label{hofibdef} \mathfrak{t} \mathcal{F}_{\Z[1/S]} \rightarrow \hofib \left(\tangent \mathcal{F}_{\Z_q}  \oplus  \tangent \mathcal{F}_{\Z[1/Sq]} \rightarrow  \tangent \mathcal{F}_{\Q_q} \right)\end{equation}
is an isomorphism.   Here the ``homotopy fiber'' $\hofib(A_{\bullet} \rightarrow B_{\bullet})$ of a map of chain complexes   \index{``hofib'' (map of complexes)}
is the 
chain complex $C_{\bullet} := A_{\bullet} \oplus B_{\bullet}[-1]$, with the usual mapping cone differential;
it fits into a triangle $C_{\bullet} \rightarrow A_{\bullet} \rightarrow B_{\bullet}  \stackrel{[1]}{\rightarrow}$. 
In other words  (Lemma  \ref{Lemma:tangent space computation})
we should get a homotopy pullback square when we apply  {\'e}tale cochains, valued in $\Ad \rho$, to the {\'e}tale homotopy type of the diagram  \eqref{pi1fiberB}.

To verify that, we can replace $\Spec \  \Z_q$ and $\Spec \  \Q_q$ 
 by $\Spec \  \Z_q^{hs}$ and $\Spec \  \Q_q^{hs}$, where $\Z_q^{hs}$ is the henselization of $\Z$ at the closed point corresponding to $q$
 (one can present $\Z_q^{hs}$ as the ring of integers
 inside the algebraic closure of $\Q$ within $\Q_q$)
 and $\Q_q^{hs}$ is its quotient field, obtained by inverting $q$.  
In fact the maps  $\Z_q^{\hs} \rightarrow \Z_q $ and $\Q_q^{\hs} \rightarrow \Q_q $ induce
  isomorphisms in etale cohomology.  (Actually this step is wholly cosmetic; we could
  replace everywhere $\Z_q$ by $\Z_q^{hs}$.) 
  
  In turn the henselization can be presented
 as a direct limit of the ring of regular functions on etale coverings $V \rightarrow \Spec \Z[\frac{1}{S}]$ equipped with a lift of $\Spec  \ \Z/q \rightarrow \Spec \Z[\frac{1}{S}]$;
  and the induced map
 $$\varinjlim C^*_{et}(V; \Ad \rho) \rightarrow C^*_{et} (\Z_q^{\hs}; \Ad \rho) $$ is
 a quasi-isomorphism; there is a similar assertion for $\Q_q^{hs}$ replacing $V$ by $V \times_{\Z[\frac{1}{S}]} \Z[\frac{1}{Sq}]$ (this can be seen directly or see
\cite[VIII, Corollaire 5.8]{SGA4}). 

 We are reduced  in this way to the ``Mayer--Vietoris'' sequence: if $V \rightarrow X$ is an {\'e}tale map, and $U \subset X$ is Zariski-open,  the corresponding sequence of {\'e}tale cochains
 $$C^*(X) \rightarrow C^*(U) \oplus C^*(V) \rightarrow C^*(U \times_{X} V)$$ induces an isomorphism from $C^*(X)$ to the homotopy fiber of the latter morphism.    After  checking the compatibility of connecting homomorphisms, this follows from the cohomological version proved in \cite[Tag 0A50]{StacksProject};
 there both $U, V$ are Zariski-open and $U \times_X V  = U \cap V$, but the proof only requires {\em one} of them to be Zariski-open.  
 We apply this with $X = \Spec \Z[\frac{1}{S}], 
 U = \Spec \Z[\frac{1}{Sq}]$, and $V$ as above.     
\qed

 \subsection{Analysis of square (b) from diagram \eqref{TWdiag}}  \label{subsec:TWprimes}

The following Lemma shows that the  horizontal maps of square (b) are weak equivalences:

\begin{Lemma}  \label{TWcohomology}
Let $q$ be a Taylor-Wiles prime for $\rho$, as described above.  Assume by conjugating $\rho$
that $\rho(\mathrm{Frob}_q)$ lies in $T(k)$.  Let $\pi_1 \Z_q$ and $\pi_1 \Q_q$
act on $\Lie(T)_k$ and $\Lie(G)_k$ by means of the adjoint action, composed with $\rho$.  
   Then all  horizontal maps in the following diagram are isomorphisms:  

    \begin{equation}  \label{TWdiagEASY} 
      \begin{aligned}
        \xymatrix{
          H^*(\mathbb{Z}_q, \Lie(T)_k)   \ar[r]\ar[d]  &   H^*(\mathbb{Z}_q, \Lie(G)_k) \ar[d] \\
          H^*(\Q_q, \Lie(T)_k)   \ar[r] &   H^*(\Q_q, \Lie(G)_k)   \\
        }
    \end{aligned}
          \end{equation}
  \end{Lemma}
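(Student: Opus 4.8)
The claim is purely a statement about Galois cohomology of a tamely ramified situation, so the plan is to reduce everything to the decomposition $\Lie(G)_k = \Lie(T)_k \oplus \bigoplus_{\alpha} \mathfrak{g}_\alpha$, where $\alpha$ runs over the roots of $T$ in $G$ and $\mathfrak{g}_\alpha$ is the corresponding root space (a one-dimensional $k$-vector space on which $T$ acts through $\alpha$). Since $\rho(\mathrm{Frob}_q) \in T(k)$ and, for $q \notin S$, the local Galois group acts on $\Lie(G)_k$ through this Frobenius (factoring through the unramified quotient $\pi_1 \mathbb{F}_q$ for the $\mathbb{Z}_q$-cohomology), each root space $\mathfrak{g}_\alpha$ is a $\pi_1 \mathbb{Z}_q$-stable line. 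So the horizontal maps in \eqref{TWdiagEASY} will be split injections with complement $\bigoplus_\alpha H^*(-, \mathfrak{g}_\alpha)$, and the whole content is to show each $H^*(\mathbb{Z}_q, \mathfrak{g}_\alpha)$ and $H^*(\mathbb{Q}_q, \mathfrak{g}_\alpha)$ vanishes.

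The first key step is the vanishing for $\mathbb{Z}_q$: here $H^*(\mathbb{Z}_q, \mathfrak{g}_\alpha) = H^*(\pi_1 \mathbb{F}_q, \mathfrak{g}_\alpha) = H^*(\widehat{\mathbb{Z}}, \mathfrak{g}_\alpha)$, which in degrees $0$ and $1$ is the invariants and coinvariants of $\mathrm{Frob}_q$ acting on the line $\mathfrak{g}_\alpha$. Since $\rho(\mathrm{Frob}_q)$ is \emph{strongly regular} in $T(k)$, the scalar $\alpha(\rho(\mathrm{Frob}_q)) \in k^\times$ is $\neq 1$ for every root $\alpha$ (that is exactly what strong regularity buys: no root is trivial on the element), so $\mathrm{Frob}_q - 1$ acts invertibly on $\mathfrak{g}_\alpha$ and both the invariants and coinvariants vanish; higher cohomology of $\widehat{\mathbb{Z}}$ vanishes for cohomological-dimension reasons. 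The second key step is the vanishing for $\mathbb{Q}_q$: by the local Euler characteristic formula over $\mathbb{Q}_q$ (with coefficients a $p$-torsion module, $q \neq p$) one has $\#H^0 \cdot \#H^2 = \#H^1$ up to a factor that is a power of $q$ prime to $p$, hence trivial; and local Tate duality identifies $H^2(\mathbb{Q}_q, \mathfrak{g}_\alpha)$ with the dual of $H^0(\mathbb{Q}_q, \mathfrak{g}_\alpha^*(1))$. Since $\mathfrak{g}_\alpha$ is unramified and $\mathrm{Frob}_q$ acts by $\alpha(\rho(\mathrm{Frob}_q)) \neq 1$, and $q \equiv 1$ is \emph{not} forced here (we only need $\alpha(\mathrm{Frob}_q)\neq 1$ and the cyclotomic character is unramified at $q$), the Frobenius eigenvalue on $\mathfrak{g}_\alpha^*(1)$ is $\alpha(\rho(\mathrm{Frob}_q))^{-1} q$, and the point is that $H^0(\mathbb{Q}_q,\mathfrak g_\alpha)=H^2(\mathbb{Q}_q,\mathfrak g_\alpha)=0$ because $\alpha(\rho(\mathrm{Frob}_q))\neq 1$ and $\alpha(\rho(\mathrm{Frob}_q))\neq q$ — the latter being automatic when $q=1$ in $k$ unless $\alpha(\rho(\mathrm{Frob}_q))=1$, which is excluded. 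Then the Euler characteristic forces $H^1(\mathbb{Q}_q, \mathfrak{g}_\alpha) = 0$ as well.

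With all $H^*(-, \mathfrak{g}_\alpha) = 0$ established, I would conclude by assembling: the inclusion $\Lie(T)_k \hookrightarrow \Lie(G)_k$ induces on cohomology the inclusion of a direct summand whose complement has been shown to vanish, hence an isomorphism, compatibly with restriction from $\mathbb{Z}_q$ to $\mathbb{Q}_q$, giving the commuting square of isomorphisms. I expect the main obstacle — or rather the only place requiring care — to be the $\mathbb{Q}_q$ step: one must be careful about which Frobenius eigenvalue appears after the Tate twist and check that strong regularity of $\rho(\mathrm{Frob}_q)$ together with $q \equiv 1 \pmod{p}$ (recall $q=1$ in $k$ for Taylor–Wiles primes, \S\ref{TWprimesdef}) really does rule out both eigenvalue $1$ on $\mathfrak{g}_\alpha$ and eigenvalue $1$ on $\mathfrak{g}_\alpha^*(1)$; once that bookkeeping is done the Euler characteristic argument is immediate. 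The rest is routine.
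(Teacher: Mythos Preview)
Your proposal is correct and follows essentially the same approach as the paper: root-space decomposition, strong regularity to kill $H^0$, Tate duality together with $q\equiv 1$ in $k$ to kill $H^2$, and the local Euler characteristic formula to handle $H^1$. The only organizational difference is that you work root-space-by-root-space throughout, whereas the paper proceeds degree-by-degree (and for the bottom-row $H^1$ applies the Euler characteristic to the full $\Lie(T)_k\hookrightarrow\Lie(G)_k$ rather than to each $\mathfrak g_\alpha$ separately); the ingredients are identical. One small wording issue: your parenthetical ``$q\equiv 1$ is \emph{not} forced here'' is misleading, since you do (correctly) use $q=1$ in $k$ a few lines later to rule out the Frobenius eigenvalue on $\mathfrak g_\alpha^*(1)$ being $1$.
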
 

 For the notation, see the last paragraph of \S \ref{sec:galoisgroups}: in particular $H^*(\mathbb{Z}_q)$ is the {\'e}tale cohomology of $\Spec \ \mathbb{Z}_q$, etc.    
 
   \proof

Indeed, for $H^0$ on either the top row or the bottom row, this is just the assertion that the fixed space of $\rho(\mathrm{Frob}_q)$
on $\Lie(G)_k$ is exactly $\Lie(T)_k \subset \Lie(G)_k$, which follows from  the  assumed (\S \ref{TWprimesdef})
strong regularity
of $\rho(\mathrm{Fr ob}_q)$. 

Note in what follows that the inclusion $\Lie(T)_k \subset \Lie(G)_k$ is $T(k)$-equivariantly split (by means of root spaces). 

For $H^2$, the top row is identically zero: the {\'e}tale cohomology of $\Z_q$ is in dimensions $0,1$ only.
  Bijectivity of the bottom 
map  for $H^2$ amounts (under Poitou--Tate duality) to the fact that
  $\mathrm{Frob}_q$ fixes only $  \Hom(\Lie(T)_k, \mu_{p^{\infty}})$ inside $\Hom(\Lie(G)_k, \mu_{p^{\infty}})$: 
  this follows from the assumption that $q=1$ in $k$, so that the
  cyclotomic character is trivial, and the assumption of
  regularity. Note in particular that
\begin{equation} \label{H2comp} \dim H^2(\Q_q, \Lie(G)_k) = \mathrm{rank} \ G.\end{equation} 
It remains to check for $H^1$. 

On  the bottom row: By the Euler characteristic formula
 the map  goes between groups of the same sizes; it is injective because the inclusion  of $\pi_1 \Q_q$-modules $\Lie(T)_k \rightarrow \Lie(G)_k$
is split, so it is an isomorphism. 

On the top row:  we check root space by root space:
our assertion comes down to the triviality
of  the first group cohomology $H^1$,  for the profinite group $\hat{\Z}$
acting nontrivially on $k$.  
\qed

\begin{Remark} \label{factoring remark}
The Lemma implies that any lift of $\rho_{\Q_q}: \pi_1 \Q_q \rightarrow G(k)$ 
 actually factors through $\pi_1 \Q_q^{\tame,\ab}$.  Indeed, the Lemma means that
 the map from the $T$-valued deformation functor of $\rho_{\Q_q}^T$ to the $G$-valued deformation functor 
 of $\rho_{\Q_q}^T$
 is an equivalence. This implies that any lift of $\rho_{\Q_q}^T$
 into $G$ admits a unique $T$-valued conjugate;   in particular, such a lift factors through $\pi_1 \Q_q^{\tame, \ab}$, as claimed. 
\end{Remark}

 Later on we will use the following terminology: 
 \begin{Definition} \label{inertialleveldef}
 Let $q$ be a Taylor--Wiles prime and $\rmA$ a (usual) Artin local ring  over $k$. 
  A (usual, underived) lift  $\tilde{\rho}: \pi_1 \Q_q \rightarrow G(\rmA)$
of $\rho_{\Q_q}$ is of {\em inertial level $\leq r$} \index{inertial level} 
if the restriction of $\tilde{\rho}$ to $I_q$:
\begin{equation} \label{ILL def} I_q \rightarrow G(\rmA)\end{equation}
 factors through $I_q/p^r$.
Here $I_q \leqslant \pi_1 \Q_q^{\tame,\ab}$ is as in \eqref{iqdef}. 
\end{Definition}

 Thus, for example, if   the $p$-valuation of $q-1$ is exactly equal to $n$, then every deformation  of $\rho_{\Q_q}$  is certainly of inertial level $\leq n$,
 because in that case the $p$-part of $I_q$ has size precisely $p^n$.

\subsection{Analysis of square (c) from diagram \eqref{TWdiag}. Construction of the sections $s_{\Z_q}, s_{\Q_q}$ in the diagram \eqref{TWdiag}} \label{TWdiagsections}
 We now examine the square (c), which we draw at first without the splittings: 
      \begin{equation*} 
 \xymatrix{
  \ar @{} [dr] |{(c)}    \ar[d]    \mathcal{F}_{\Z_q}^{T}   &  \ar[l] \ar[d]  \mathcal{F}_{\Z_q}^{T, \fram}      \\ 
   \mathcal{F}_{\Q_q}^{T}   &  \ar[l] \mathcal{F}_{\Q_q}^{T, \fram} \  }
 \end{equation*}
This is a homotopy pullback square: both above and below, one can obtained the framed
version from the unframed version by taking a homotopy fiber of a map to  $BT(A)$.
 
 We will construct splittings for   $$\mathcal{F}_{\Q_q}^T \leftarrow \mathcal{F}_{\Q_q}^{T,\fram} $$
 and the $\Z_q$ analog, compatibly.  
 Actually, strictly speaking, the splitting we construct is in the sense of zig-zags; explicitly we construct   functors $(\dots)^*$ naturally weakly equivalent to the corresponding unstarred functors $(\dots)$  and  a diagram   
      \begin{equation}   \label{BigUglyDiagram}   \xymatrix{
  \mathcal{F}_{\Z_q}^{T,*}  \ar[d]  \ar[r]  &   \mathcal{F}_{\Z_q}^{T,\fram*}   \ar @{} [dr] |{(c)^*}   \ar[d]   \ar[r] &  \mathcal{F}_{\Z_q}^{T,*}  \ar[d] \\ 
    \mathcal{F}_{\Q_q}^{T,*}  \ar[r]  & \mathcal{F}_{\Q_q}^{T,\fram*}   \ar[r]  & \mathcal{F}_{\Q_q}^{T,*}    \\ 
 }
 \end{equation}
 where the horizontal compositions are the identity, and the square  $(c)^*$ is connected by natural weak equivalences of squares,  to square (c). 
It  then follows that the left-hand square is also a homotopy pullback square. 
   
To make this splitting, we note that
 the $A$-points of framed and unframed representation functors into $T$ correspond to based and unbased  maps of (the etale homotopy type of) $\Spec \  \Q_q$ into $BT(A)$.  Since $T$ is commutative,  it is possible to make a weakly equivalent model $BT(A)^*$ of $BT(A)$ which 
is a simplicial group (for example, take the simplicial loop group of $B^2T(A)$; the construction of $B^2T(A)$ is discussed after Definition \ref{BG simplicial ring definition}).

  Now, for $\mathcal{E}$ a simplicial group and $X$ an arbitrary based simplicial set the natural map
$$ \mbox{ based }\Maps_{s\Sets}(X, \mathcal{E})\rightarrow  \Maps_{s\Sets}(X, \mathcal{E})$$ 
is naturally split, because one can use the group structure to take the image of the basepoint in $X$ to the identity.  
This discussion gives rise to a splitting of the map
$(\mathcal{F}_{X, T}^{\fram})^* \rightarrow (\mathcal{F}_{X,T})^*$
 (notation as in Definition \ref{defn-repfunctor}) whenever $G$ is commutative;
 we have superscripted with $*$ because we replaced $BT$ by $(BT)^*$ in the definitions. 

  Finally, the actual deformation functors are obtained from these (up to replacement by a naturally weakly equivalent functor) by taking a homotopy fiber over
  a certain vertex $v_{\rho} \in  (\mathcal{F}_{X,T}^{\fram})^*(k)$  and its image $\bar{v}_{\rho} \in \mathcal{F}_{X,T}^*(k)$;
  we may choose any vertex $v_{\rho}$   whose class in $\pi_0 \left((\mathcal{F}_{X,T}^{\fram})^*(k) \right)\simeq  \pi_0  (\mathcal{F}_{X,T}(k))$
is the class corresponding to  $\pi_1(X, x_0) \rightarrow T(k)$.

\subsection{Analysis of the square (d); Construction of the map $j$.}   \label{jdef}  
A very useful fact is that the pro-rings $\mathcal{S}_q^{\ur}$ and $\mathcal{S}_q$ 
representing the functors $\mathcal{F}_{\Z_q}^{T, \fram}$ and $\mathcal{F}_{\Q_q}^{T, \fram}$
are already homotopy discrete:

\begin{Lemma}  \label{HDiscreteness} Let $q$ be a Taylor--Wiles prime (\S \ref{TWprimesdef}). 
The pro-rings $\mathcal{S}_q^{\ur}$ and $\mathcal{S}_q$ are homotopy discrete in the sense of Definition \ref{homotopydiscretedefinition},
i.e.\ the maps $\mathcal{S}_q^{\ur } \rightarrow \usS_q^{\ur}$  and 
    $\mathcal{S}_q \rightarrow \usS_q$
induce a weak equivalence of represented functors (as usual, after applying level-wise cofibrant replacement).
  \end{Lemma}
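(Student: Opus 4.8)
The strategy is to verify the criterion of Lemma~\ref{uvd}: since each of $\mathcal{S}_q^{\ur}$ and $\mathcal{S}_q$ has a tangent complex concentrated in degrees $0$ and $1$ (as we will check), it suffices to exhibit the associated complete local ring of $\pi_0$ as $W(k)[[X_1,\dots,X_{b_0}]]/(Y_1,\dots,Y_{b_1})$ for a regular sequence $Y_i \in (p,\mathfrak{m}^2)$ -- or, even more simply, to show that the tangent complex is co-connective and then either produce an explicit smooth presentation or compare Euler characteristics. In fact, for the framed deformation functors with target the torus $T$, one expects the representing rings to be honest power series rings over $W(k)$, i.e. formally smooth, so that $b_1 = 0$ and homotopy discreteness is immediate once the tangent complex is shown to vanish above degree $0$.

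First I would compute the tangent complex of $\mathcal{F}_{\Z_q}^{T,\fram}$ and $\mathcal{F}_{\Q_q}^{T,\fram}$. By Lemma~\ref{Lemma:tangent space computation} (applied with $G$ replaced by $T$, $Z = T$, so that the $\mathrm{Ad}$ action is trivial and the ``central'' correction is the whole thing), the tangent complex of the framed functor $\mathcal{F}_{X,T}^{\fram}$ at $\rho^T_{?}$ is quasi-isomorphic to $C^{*+1}(X;\mathrm{Lie}(T)_k)$ with a shift accounting for the framing, where $X$ is the {\'e}tale homotopy type of $\Z_q$ (resp. $\Q_q$) and $\mathrm{Lie}(T)_k$ carries the $\pi_1 X$-action via $\rho^T_{?}$ composed with the adjoint action -- but since $T$ is abelian this action is \emph{trivial}. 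So I am reduced to a computation of $H^*(\Z_q;\mathrm{Lie}(T)_k)$ and $H^*(\Q_q;\mathrm{Lie}(T)_k)$ with trivial coefficients, root-space by root-space exactly as in Lemma~\ref{TWcohomology}. The key local facts are: $H^*(\Z_q;k)$ is supported in degrees $0,1$; $H^*(\Q_q;k)$ is supported in degrees $0,1,2$, but $H^2(\Q_q;k)$ is dual to $H^0(\Q_q;\mu_{p^\infty} \otimes k^\vee)$ which, because $q \equiv 1 \bmod p$ so the cyclotomic character is trivial mod $p$, is nonzero -- however for the \emph{framed} functor the relevant complex is shifted and one checks the contribution of $H^2$ is exactly cancelled, or rather that the framing variables soak it up, leaving a tangent complex supported in degree $\leq 0$ (equivalently $\tangent^i = 0$ for $i \geq 2$, and one verifies $\tangent^1 = 0$ as well). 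Concretely, the deformations of the trivial/tame representation of $\pi_1\Q_q^{\tame,\ab}$ into the torus are unobstructed: the framed local deformation ring at a Taylor--Wiles prime is well known to be a power series ring over $W(k)$ (of dimension $\dim T$ for $\mathcal{S}_q^{\ur}$, and $\dim T + \mathrm{rank}\,G$ for $\mathcal{S}_q$, the extra variables coming from the tame inertia $I_q$, cf.~\eqref{H2comp}). This is the substance of the computation.

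Second, having shown $\tangent^i \mathcal{S}_q^{\ur} = \tangent^i \mathcal{S}_q = 0$ for $i \notin \{0\}$ (or at worst with $\tangent^1$ also vanishing after the framing correction), I invoke Lemma~\ref{uvd}: with $b_1 = 0$ there is nothing to quotient, the associated complete local ring is $W(k)[[X_1,\dots,X_{b_0}]]$, which is trivially of the required form (empty regular sequence), hence the pro-ring is homotopy discrete. Alternatively, if it is more convenient not to invoke the explicit smoothness, I can argue directly: co-connectivity of the tangent complex together with Theorem~\ref{thm:Lurie-Schlessinger} gives pro-representability, and then Corollary~\ref{cor:Corollary hurewicz analogue} shows the representing functor is obtained from the terminal functor by attaching $\dim_k \tangent^{-*}$ cells, all in degree $0$ when $\tangent^i = 0$ for $i \geq 1$; attaching only $0$-cells (Definition~\ref{defn:attach-cell-to-functor} with $n = -1$) produces a functor of the form $A \mapsto \mathfrak{m}(A)^{b_0}$, which is represented by a homotopy discrete pro-ring, namely (a cofibrant approximation to) a tower of truncations of $W(k)[[X_1,\dots,X_{b_0}]]$.

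\textbf{Main obstacle.} The only real work is the tangent complex computation at $\Q_q$ and the bookkeeping of the shift in the framed case: one must be careful that the potentially problematic class in $H^2(\Q_q;\mathrm{Lie}(T)_k)$ does not obstruct the \emph{framed} deformation problem. The cleanest way around this is to note that $\mathcal{F}_{\Q_q}^{T,\fram}$ is, up to the splitting constructed in \S\ref{TWdiagsections}, just the space of based maps from the {\'e}tale homotopy type of $\Q_q$ into $(BT)^*$, and that a deformation of $\rho^T_{\Q_q}$ into $T$ automatically factors through the tame abelianized quotient $\pi_1\Q_q^{\tame,\ab}$ (Remark~\ref{factoring remark}); the deformation theory of an abelian pro-group into a torus over $W(k)$ is visibly unobstructed, its framed deformation ring being a power series ring over $W(k)$ of the expected dimension. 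I would spell this out via the structure $\pi_1\Q_q^{\tame,\ab} \cong \widehat{\Z} \times I_q$ with $I_q$ of order prime-to-$p$ away from its $p$-part $\cong \Z/p^{v_p(q-1)}$, deforming each factor separately. Everything else is a direct appeal to Lemmas~\ref{uvd}, \ref{complete}, \ref{lem:complete local ring} and Lemma~\ref{Lemma:tangent space computation}.
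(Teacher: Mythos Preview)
Your argument is correct for $\mathcal{S}_q^{\ur}$ but contains a genuine error for $\mathcal{S}_q$: you claim that $\tangent^1 \mathcal{S}_q = 0$ and that $\usS_q$ is a power series ring over $W(k)$, and neither is true. The framing only kills the $H^0$ contribution (the infinitesimal automorphisms, living in $\tangent^{-1}$ of the unframed functor); it does nothing to $H^2$. Concretely, $\tangent^1$ of the framed $T$-valued functor at $\Q_q$ is $H^2(\Q_q,\Lie(T)_k)$, and since $q\equiv 1\bmod p$ the cyclotomic character is trivial mod $p$, so this group has dimension $r=\mathrm{rank}\,G$ (this is exactly \eqref{H2comp}). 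Correspondingly, $\usS_q$ is \emph{not} formally smooth: using the splitting $\pi_1\Q_q^{\tame,\ab}\cong\widehat{\Z}\times I_q$ one finds
\[
\usS_q \;\cong\; W[[X_1,\dots,X_r,Y_1,\dots,Y_r]]\big/\big((1+Y_i)^{p^N}-1\big),
\]
where $p^N\Vert(q-1)$. Deforming the trivial character of the finite $p$-group $I_q/($prime-to-$p)$ into $\mathbb{G}_m$ is obstructed integrally, contrary to what you assert in your final paragraph.

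The fix is exactly the full strength of Lemma~\ref{uvd}: you must allow $b_1>0$ and verify that $\usS_q$ is a complete intersection of the expected size, i.e.\ with $b_1=r$ relations, each lying in $(p,\mathfrak{m}^2)$. The explicit presentation above does this: the sequence $\big((1+Y_i)^{p^N}-1\big)_{i=1}^r$ is regular, and each generator lies in $(p,\mathfrak{m}^2)$ since $(1+Y)^{p^N}-1 = p^N Y + \binom{p^N}{2}Y^2+\cdots$. Matching $r=\dim\tangent^1\mathcal{S}_q$ then gives homotopy discreteness via Lemma~\ref{uvd}. This is precisely what the paper does.
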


A word of preparation before the proof.
Any (usual, underived) $T$-valued deformation of $\rho^T_{\Q_q}$  factors through the tame abelian quotient of $\pi_1 \Q_q$,
which fits into the exact sequence \eqref{iqdef}; we have, non-canonically,  
\begin{equation} \label{presentation} \pi_1 \Q_q^{\tame, \ab} \cong \langle \Frob_q \rangle \times  I_q \end{equation} 
where $I_q$ is as in \eqref{iqdef}, a cyclic group of order $q-1$.

\begin{proof}[Proof (of the Lemma)]
  We apply Lemma \ref{uvd}. It is enough to
  check that the complete local rings associated to $\usS_q^{\ur}$ and
  $\usS_q$ (as in Lemma \ref{uvd}, the inverse limit of the
  corresponding projective system) are complete intersection ``of the
  expected size,'' i.e.\ their tangent spaces and dimensions are as one
  would predict from the tangent complex.

  By a mild abuse of terminology, in the remainder of this proof, we
  will use $\usS_q^{\ur}$ and $\usS_q$ to actually denote these
  complete local rings.  We emphasize that in the rest of the proof we
  are working with underived deformation rings.

  It will be convenient to recall the following simple fact: For a
  finitely generated discrete group $\Gamma$, with profinite
  completion $\widehat{\Gamma}$, the usual (underived) framed
  deformation ring of a representation
  $\rho_0: \widehat{\Gamma} \rightarrow G(k)$ is (after passing to the
  associated complete ring) the completed local ring
  $\widehat{\mathcal{O}}_{X,\rho_0}$ of the $W(k)$-scheme $X$
  parameterizing maps $\Gamma \rightarrow G$, at the point
  $\rho_0 \in X(k)$.

  First of all, $\usS_q^{\ur}$ is a power series ring over $W(k)$
  because it arises from the deformation ring of the pro-cyclic group
  $\widehat{\Z}$.  It obviously has the ``expected size.''
 
  Now for $\usS_q$.  Let $p^N$ be the highest power of $p$ dividing
  $q-1$.  It follows from \eqref{presentation} that $\usS_q$ is the
  completed local ring of functions on
  $$\left( (F, t_2) \in T \times T : t_2^{p^N} = 1 \right)$$ at the
  maximal ideal corresponding to $F =\rho(\Frob_q)$ and
  $t_2=\mathrm{identity}$.  In other words, if $\rmA$ is the
  completion of ring of functions on $(t_1, t_2) \in T \times T$ at
  the identity point of $T(k) \times T(k)$, and $J$ the ideal defined
  by $t_2^{p^N}=e$, we have
$$\usS_q \cong \rmA/J.$$

In suitable coordinates, with $r=\dim(T)$, we have
\begin{equation} \label{coordinate_ref} \rmA \cong W[[X_1, \dots, X_r,
  Y_1, \dots, Y_r]], \ \ J = \langle (1+Y_i)^{p^N}-1
  \rangle \end{equation} This is visibly a complete intersection, with
$r$ relations; and the dimension of
$ \mathfrak{t}^1 \mathcal{F}_{\mathcal{S}_q} $ equals
$ \dim H^2(\Q_q, \Ad \rho) = r$ by \eqref{H2comp}.
\end{proof}

  We are now ready to construct the final square (d):

Clearly, any $T$-valued deformation $\tilde{\rho}$ of $\rho|_{\Q_q}^T$
actually factors through $\pi_1(\Q_q)^{\tame,\ab}$.  
 Thus, from the sequence \eqref{iqdef} $I_q  \hookrightarrow \pi_1 \Q_q^{\tame,\ab} \rightarrow
\pi_1 \mathbb{F}_q$  we get  a sequence of 
maps of the associated $T$-valued framed usual deformation rings
\begin{equation} \label{have-you-not-heard-of-this?} \underbrace{ \rS_q}_{W[[ Y_1, \dots, Y_r]]/J} \rightarrow \underbrace{  \usS_q }_{W[[X_1, \dots, X_r, Y_1, \dots, Y_r]]/J} \rightarrow \underbrace{ \usS_q^{\ur}}_{W[[X_1, \dots, X_r]]}\end{equation}
Here, recall that  $\rS_q$ was  the framed deformation ring  
 for the trivial representation of $I_q$ with targets in $T$;
 intrinsically, we can identify   $\rS_q$ with  the group algebra of $\mathbf{T}(\mathbb{F}_q)_p$. 
 The maps of \eqref{have-you-not-heard-of-this?} are really maps of  pro-finite rings, but we wrote below how the sequence of associated complete local rings  
  looks in the coordinates of \eqref{coordinate_ref}; the second map sends $Y_i$ to $0$.  
  
  \begin{Remark} \label{canonicalS}  
  It  is convenient to identify these rings more canonically.  For any  finitely generated abelian group $\Gamma$, 
  the usual, un-derived framed deformation ring of a representation $\sigma: \Gamma \rightarrow T(k)$ is identified
  with the completed group algebra    \begin{equation} \label{cga} W(k) [[ X_*(\mathbf{T}) \otimes \Gamma_{(p)}]]\end{equation}
   where $\Gamma_{(p)}$ is the quotient of $\Gamma$ by prime-to-$p$ torsion. 
   
   In fact, the unique splitting of $T(W(k)) \rightarrow T(k)$ gives us a lift $\tilde{\sigma}: \Gamma \rightarrow T(W)$,
   which permits us to reduce to the case (by twisting) when $\sigma$ is trivial. 
   Then, for any Artin ring $A \twoheadrightarrow k$, 
  the homomorphisms from $\Gamma$ to $T(A)$, reducing to the trivial homomorphism, are identified with 
  group homomorphisms 
  $$  \Hom(\Gamma_{(p)}, X_*(T) \otimes (1+\mathfrak{m}_A)^{\times})
  = \Hom(X^*(T) \otimes \Gamma_{(p)}, 1+\mathfrak{m}_A),$$
  i.e.\ homomorphisms from \eqref{cga} to $A$ preserving augmentations to $k$. 
  
  In this way,   writing $\mathbf{T}$ for the dual torus to $T$, 
  $$ \rS_q =  \mbox{ completed group algebra of $\mathbf{T}(\F_q)$}$$
  $$ \usS_q = \mbox{ completed group algebra of $\mathbf{T}(\Q_q)^{\tame}$},$$
  $$ \usS_q^{\ur} = \mbox{ completed group algebra of $\mathbf{T}(\Q_q)^{\ur}$}$$
    Here the 	``tame'' quotient of, e.g.\ $\mathbb{G}_m(\Q_q) = \Q_q^*$ is the profinite completion of the quotient by $1+q\Z_q$, and the ``unramified''
  quotient of the same group is the profinite completion of the quotient by $\Z_q^*$.
 \end{Remark}

In  fact the following diagram
\begin{equation}   \label{inertia_pullback} 
  \begin{aligned}
    \xymatrix{
      \rS_q  \ar[d]   \ar[r] & \usS_q \ar[d] \\
      W(k) \ar[r] & \usS_q^{\ur} }
  \end{aligned}
 \end{equation}
 induces a homotopy pullback square of represented functors
(equivalently the map $\usS_q \dotimes_{\rS_q} W(k) \rightarrow \usS_q^{\ur}$, where $\dotimes$ is the derived tensor product, is an equivalence, in
the sense of inducing a weak equivalence on represented functors).  
  This can be checked
 easily from the expressions above:
 $\usS_q^{\ur}$ is obtained from $W(k)$ by freely adding the generators $X_i$, 
 and $\usS_q$ is obtained from $\rS_q$ by the same procedure.  
 
 Passing from \eqref{inertia_pullback} to the corresponding square of represented functors, and using 
 Lemma \ref{HDiscreteness}, we get square (d) from \eqref{TWdiag}. 
As we have mentioned in Remark \ref{lawyerly}, this can strictly speaking only be done
after replacing the functors with weakly equivalent ones, for the reasons already mentioned there.

{\bf Remark:}  In our later presentation of the Taylor--Wiles method, it 
will be useful to observe that 
exactly the same holds true if we quotient all the rings by $p^n$, 
that is 
 to say, writing  $   \usbrS_q = \rS_q/p^n.$  
 \begin{equation}   \label{inertia_pullback2} 
   \begin{aligned}
     \xymatrix{
       \usbrS_q  \ar[d]   \ar[r] & \usS_q/p^n \ar[d] \\
       W_n \ar[r] & \usS_q^{\ur}/p^n }
   \end{aligned}
 \end{equation}
again  induces a homotopy pullback square of represented functors.

\section{The deformation ring with local conditions imposed} \label{sec:localconditions}
In the theory of Galois deformations much of the subtlety comes from the local theory at $p$. 
Thus we discuss how to impose local conditions on a derived deformation ring.  
After an abstract discussion in \S \ref{derivedlocalconditions}, we specify which conditions we actually use in  \S \ref{minimalevel}.

\subsection{Imposing local conditions} \label{derivedlocalconditions} 
  Let $\rho: \pi_1 \Z[\frac{1}{S}]  \rightarrow G(k)$. 
As discussed, for  $v$ a finite place\footnote{When we will apply this discussion later on, we will first have enlarged $S$ to a larger set $S' = S \coprod Q$, and
we will actually take $v \in Q$.  }    in $S$ we let $\mathcal{F}_{\Q_v}$ be the deformation functor for $\rho$ pulled back to $\Q_v$;
thus, there is a natural  transformation 
$$ \mathcal{F}_{\Z[\frac{1}{S}]} \rightarrow \mathcal{F}_{\Q_v}$$ 
of functors from $\Art_k$ to $s\Sets$. 
Note that  $\mathcal{F}_{\Q_v}$ is often not representable
because of infinitesimal automorphisms,  even when $\mathcal{F}_{\Z[\frac{1}{S}]}$ is, but it doesn't matter for our discussion. 

\begin{Definition} \label{Local conditions} 
A {\em local deformation condition} will be, by definition, a  simplicially enriched functor
$\mathcal{D}_v$ from $\Art_k$ to $s\Sets$, equipped with a map$$ \mathcal{D}_v  \longrightarrow \mathcal{F}_{\Q_v}$$ 
  We define the {\em global deformation  functor with local conditions} as the homotopy fiber product (see \S \ref{sec:holim}) 
  \begin{equation} \label{DFdef} \mathcal{F}_{\Z[\frac{1}{S}]}^{\mathcal{D}} = \mathcal{F}_{\Z[\frac{1}{S}]} \times^h_{\mathcal{F}_{\Q_v}} \mathcal{D}_v.\end{equation}
This comes with natural maps to $\mathcal{D}_v$ and to
$\mathcal{F}_{\Z[\frac{1}{S}]}$. 
\end{Definition}

 {\bf Remark.} It will happen (indeed, in the main case we use it) that a deformation condition
 is not presented as a map $\mathcal{D}_v \longrightarrow \mathcal{F}_{\Q_v}$,
 but rather as a zig-zag: 
 \begin{equation} \label{Zigzag} \mathcal{D}_v  \stackrel{\sim}{\leftarrow} \mathcal{D}_v^* \rightarrow \mathcal{F}_{\Q_v}^* \stackrel{\sim}{\leftarrow} \mathcal{F}_{\Q_v},\end{equation} 
 where $\sim$ denotes an object-wise weak equivalence. In this case, we convert this to a local deformation condition in the sense above
 by setting $\mathcal{D}_v' = \mathcal{D}_v^* \times_{\mathcal{F}_{\Q_v}^*}^h \mathcal{F}_{\Q_v} $;
 then $\mathcal{D}_v'$ is naturally weakly equivalent to $\mathcal{D}_v$, and is equipped with a map $\mathcal{D}_v' 
 \rightarrow \mathcal{F}_{\Q_v}$. 
  One can proceed in a similar way with longer zig-zags; in all cases one obtains a functor naturally weakly equivalent to $\mathcal{D}_v$
  with an actual map to $\mathcal{F}_{\Q_v}$.   Cf. discussion around \eqref{roofy}.

\medskip

 {\bf Example (unramified local condition):}  
Suppose that $\rho$ is actually unramified at $v$, i.e., it extends to a representation of $\Z[\frac{1}{S'}]$, where
$S' = S-\{v\}$. 
Thus it can be pulled back to $\Spec  \ \Z_v$. 
The natural maps  $\Spec \ \Q_v \rightarrow \Spec \ \Z_v$ 
induces a map of deformation functors
$ \mathcal{F}_{\Z_v} \rightarrow \mathcal{F}_{\Q_v}$ (as always, we are deforming $\rho$, but we omit this from our notation).
If we take
$$ \mathcal{D}_v = \mathcal{F}_{\Z_v}, \mbox{ with its natural map to $\mathcal{F}_{\Q_v}$},$$
the corresponding deformation functor with local conditions $\mathcal{F}_{\Z[\frac{1}{S}]}^{\mathcal{D}}$
is then naturally weakly equivalent  to $\mathcal{F}_{\Z[\frac{1}{S'}]}$, i.e.\ the deformations of $\rho$ but considered
now as a representation of $\pi_1 \Z[\frac{1}{S'}]$.  This  statement is precisely what we proved
in the last section: it follows from the homotopy pullback
square constructed in  \S \ref{SquareA}  (the assumption of $q$ being a Taylor-Wiles prime was not used for this part).

\medskip

 Return now to the general case. 
We get  from Lemma \ref{lem:2.51}
\begin{equation} \label{hofib-calc}  \mathfrak{t} \mathcal{F}^D_{\Z[\frac{1}{S}]} = \mbox{hofib}(  \tangent \mathcal{F}_{\Z[\frac{1}{S}]} \oplus \tangent \mathcal{D}_v \rightarrow \tangent \mathcal{F}_{\Q_v}) \end{equation}
(for $\hofib$ see \eqref{hofibdef}) 
with associated exact sequence \begin{equation} \label{tangent_def}   \tangent^i \mathcal{F}^D_{\Z[\frac{1}{S}]} \rightarrow \tangent^i \mathcal{F}_{\Z[\frac{1}{S}]} 
\oplus \tangent^i \mathcal{D}_v \rightarrow \tangent^i \mathcal{F}_{\Q_v} \stackrel{[1]}{\longrightarrow} \end{equation}

One can generalize this definition and discussion,  replacing the role of $v$ by a finite set of places.

\subsection{Lifting an underived local condition to a derived local condition} \label{lifting}

In \cite[\S 2]{CHT} there is an extensive study of various types of (underived) local conditions $\mathrm{D}$. Each such
gives rise to a quotient of the usual (underived) framed deformation ring $\mathrm{R}_v^{\fram} \twoheadrightarrow \mathrm{R}_v^{\mathrm{D}, \fram}$;
if the representation functor is representable, we get in fact 
a quotient of the underived deformation ring: 
$$ \mathrm{R}_v \twoheadrightarrow \mathrm{R}_v^{\mathrm{D}}.$$
(These objects are actually pro-rings; we use the notation $\twoheadrightarrow$ to denote that the induced map of functors is injective, 
or equivalently that the map on associated complete local rings is surjective.)

Assume, for simplicity, that $\mathcal{F}_{\Q_v}$ is representable, with representing ring $\mathcal{R}_v$;
then $\pi_0 \mathcal{R}_v \simeq \mathrm{R}_v$.  We can lift an underived local deformation condition to a derived deformation condition, in the sense of \S \ref{derivedlocalconditions}, 
by using the sequence  (for the framed case, see below):
$$ \mathcal{R}_v \rightarrow  \pi_0 \mathcal{R}_v  = \mathrm{R}_v \twoheadrightarrow  \mathrm{R}_v^{\mathrm{D}}.$$

More precisely, 
considering  now $\mathrm{R}_v^{\mathrm{D}}$ as a 
pro-simplicial ring, we may form
$\mathcal{D}_v= \Map((\mathrm{R}_v^{\mathrm{D}})^c, -)$,  as a $s\Sets$-valued functor on $\Art_k$  
(the superscript $c$ is for level-wise cofibrant replacement, as discussed in the final paragraph of
\S \ref{intro:functors}). 
Then we get an natural zig-zag:  \footnote{Explication:  Apply the cofibrant replacement 
to the series of rings to get   $$ \mathcal{R}_v \stackrel{\sim}{\leftarrow} \mathcal{R}_v^c \rightarrow  \mathrm{R}_v^c \rightarrow \mathrm{R}_v^{D, c} $$ 
where the left $\sim$ means level-wise weak equivalence. Now apply $\Maps$.  } 
\begin{equation} \label{Zigzag2} \mathcal{D}_v \dashrightarrow \Map(\mathcal{R}_v, -)  \end{equation}

As in the Remark of 
\S \ref{derivedlocalconditions}  we obtain from  \eqref{Zigzag2} a map 
\begin{equation} \mathcal{D}_v^* \longrightarrow \mathcal{F}_{\Q_v}, \end{equation}
where $\mathcal{D}_v^*$ is naturally weakly equivalent to $\Map((\mathrm{R}_v^{\mathrm{D}})^c,  -)$. 
We call this  the {\em derived deformation condition associated with the (usual) deformation condition $\mathrm{D}$.}  

Before we formulate our main theorem, note that  $\mathrm{R}_v \twoheadrightarrow \mathrm{R}_v^{\mathrm{D}}$
defines a subspace  $H^1_{\mathrm{D}} \subset H^1(\Q_v, \Ad \rho)$, where $H^1_{\mathrm{D}}$ is the (usual)
tangent space to the (usual) functor represented by $\mathrm{R}_v^{\mathrm{D}}$. 

\begin{theorem} \label{tangent complex with def conditions}
Suppose that $\mathrm{R}_v^{\mathrm{D}}$ is actually formally smooth (i.e.\ its tangent complex is nonvanishing only in degree $0$, or equivalently  the complete local ring associated to $\mathrm{R}_v^{\mathrm{D}}$
is isomorphic to $W(k)[[Y_1, \dots, Y_{m}]]$).   Form from $\mathrm{R}_v^{\mathrm{D}}$ a local deformation condition, as described above, and then a global deformation functor
$\mathcal{F}_{\Z[\frac{1}{S}]}^{\mathrm{D}}$ with this local condition imposed, as in \eqref{DFdef}. 

The cohomology of the tangent complex of $\mathcal{F}_{\Z[\frac{1}{S}]}^{\mathrm{D}}$ 
is naturally identified  with the cohomology with local conditions (\S  \ref{local-cohomology}): 
\begin{equation} \label{earned} \mathfrak{t}^i \mathcal{F}_{\Z[\frac{1}{S}]}^{\mathrm{D}} \cong H^{i+1}_{\mathrm{D}}(\Z[1/S], \Ad \rho).\end{equation} 
 where the  local conditions at $v$ are prescribed by the subspace
   $H^1_{\mathrm{D}} \subset H^1(\Q_v, \Ad \rho)$. 
\end{theorem}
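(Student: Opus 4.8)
The plan is to compute the tangent complex of $\mathcal{F}_{\Z[\frac{1}{S}]}^{\mathrm{D}}$ directly from the homotopy-fiber-product definition \eqref{DFdef} together with the long exact sequence \eqref{tangent_def}, and then identify the resulting complex with the mapping cone that computes $H^*_{\mathrm{D}}$ in \S \ref{local-cohomology}. First I would invoke Lemma \ref{lem:2.51}(iv), in the form of \eqref{hofib-calc}, to write
\begin{equation*}
  \mathfrak{t}\, \mathcal{F}_{\Z[\frac{1}{S}]}^{\mathrm{D}} \simeq \hofib\big( \tangent \mathcal{F}_{\Z[\frac{1}{S}]} \oplus \tangent \mathcal{D}_v \longrightarrow \tangent \mathcal{F}_{\Q_v}\big),
\end{equation*}
which makes sense because $\mathcal{D}_v$ (after the zig-zag replacement of \S \ref{lifting}) is equipped with an honest map to $\mathcal{F}_{\Q_v}$. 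Then I would feed in the three computations that are already available: $\tangent^i \mathcal{F}_{\Z[\frac{1}{S}]} \cong H^{i+1}(\Z[\frac{1}{S}],\Ad\rho)$ and $\tangent^i \mathcal{F}_{\Q_v} \cong H^{i+1}(\Q_v,\Ad\rho)$ from Lemma \ref{Lemma:tangent space computation} (specialized to trivial center), and $\tangent^i \mathcal{D}_v$, which by Example \ref{ex:tangent-complex-Hom-R} is the Andr\'e--Quillen cohomology $\Der^i_{\Z}(\mathrm{R}_v^{\mathrm{D}},k)$ of the representing ring.

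The key input is the formal smoothness hypothesis on $\mathrm{R}_v^{\mathrm{D}}$: since its associated complete local ring is $W(k)[[Y_1,\dots,Y_m]]$, its tangent complex is concentrated in degree $0$, so $\tangent^i \mathcal{D}_v = 0$ for $i \geq 1$ and $\tangent^0 \mathcal{D}_v \cong H^1_{\mathrm{D}} \subset H^1(\Q_v,\Ad\rho)$; the last identification is the standard statement that the (underived) tangent space of the functor represented by $\mathrm{R}_v^{\mathrm{D}}$ is $H^1_{\mathrm{D}}$, combined with \eqref{pagrefB}. Plugging this into the long exact sequence \eqref{tangent_def}, the term $\tangent^i \mathcal{D}_v$ drops out for $i \geq 1$, and the sequence becomes
\begin{equation*}
  \cdots \to \mathfrak{t}^i \mathcal{F}_{\Z[\frac{1}{S}]}^{\mathrm{D}} \to H^{i+1}(\Z[\tfrac1S],\Ad\rho) \to H^{i+1}(\Q_v,\Ad\rho)/\delta_{i,0} H^1_{\mathrm{D}} \to \mathfrak{t}^{i+1}\mathcal{F}_{\Z[\frac1S]}^{\mathrm{D}} \to \cdots,
\end{equation*}
which is exactly the defining exact sequence \eqref{long local cohomology} for $H^{*}_{\mathrm{D}}(\Z[\frac1S],\Ad\rho)$ (with $S' = \{v\}$ and $\mathfrak{l}_v = H^1_{\mathrm{D}}$). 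A diagram chase, plus the fact that an isomorphism of long exact sequences compatible with the maps to the ambient groups forces an isomorphism of the remaining terms, then yields \eqref{earned}; alternatively, and more cleanly, I would argue at the level of complexes that $\hofib\big(\tangent \mathcal{F}_{\Z[\frac1S]}\oplus \tangent\mathcal{D}_v \to \tangent\mathcal{F}_{\Q_v}\big)$ is, up to the degree shift, quasi-isomorphic to the cone construction defining $H^*_{\mathrm{D}}$, since both are cones on the same map $C^*(\Z[\frac1S];\Ad\rho) \to C^*(\Q_v;\Ad\rho)/\mathfrak{l}_v$ after choosing the chain-level lift of $\mathfrak{l}_v$ prescribed in \S \ref{ExampleAB}.

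The main obstacle I anticipate is not the homological algebra but the bookkeeping needed to match the chain-level lift of the local condition: the group $H^2_{\mathrm{D}}$ is \emph{defined} via a specific cone (\S \ref{GlobalDuality}, following \cite{CHT}), and I must check that the chain-level map implicit in the derived fiber product \eqref{DFdef} realizes the \emph{same} lift of $\mathfrak{l}_v = H^1_{\mathrm{D}}$, or at least one related to it by a quasi-isomorphism of cones, so that the two constructions agree and not merely have isomorphic cohomology in a range. Concretely this means tracking the map $\tangent \mathcal{D}_v \to \tangent \mathcal{F}_{\Q_v}$ at the chain level, identifying its image with a chosen complement-up-to-homotopy of the subspace $H^1_{\mathrm{D}}$, and comparing with the lift used in \S \ref{ExampleAB}. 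Once that compatibility is pinned down, the naturality of \eqref{earned} is automatic from the naturality of all the identifications of tangent complexes with Galois cochains. I would also remark that the hypothesis that $\mathcal{F}_{\Q_v}$ be representable is used only to have a clean $\mathcal{R}_v$ to quotient; in the case of actual interest ($v = p$, crystalline conditions) one works with the framed version and descends, exactly as in \S \ref{TWprimes}, and the same computation goes through after the standard framing-correction term $\dim\Ad\rho - \dim H^0(\Q_v,\Ad\rho)$ is accounted for.
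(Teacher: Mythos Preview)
Your proposal is correct and follows essentially the same route as the paper: both start from \eqref{hofib-calc}, use formal smoothness to see that $\tangent\mathcal{D}_v$ is concentrated in degree $0$ with $\tangent^0\mathcal{D}_v = H^1_{\mathrm{D}}$, and then identify the resulting cone with the complex \eqref{local cohomology definition}. The paper executes your ``cleaner'' chain-level alternative rather than the long-exact-sequence comparison: it replaces $\tangent\mathcal{D}_v$ by $\tau_{\geq 0}\tangent\mathcal{D}_v$, factors the map to $\tangent\mathcal{F}_{\Q_v}$ through $\tau_{\geq 0}\tangent\mathcal{F}_{\Q_v}$, and observes that this lands in the subcomplex of closed $1$-cochains with class in $H^1_{\mathrm{D}}$---which is exactly the chain-level lift prescribed in \S\ref{ExampleAB}---thereby dispatching the bookkeeping obstacle you correctly flagged.
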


\proof 
  Consider the map of tangent  complexes $\mathfrak{t} \mathcal{D}_v \rightarrow \mathfrak{t} \mathcal{F}_{\Q_v}$.
 The canonical map $\tau_{\geq 0} \left( \mathfrak{t} \mathcal{D}_v \right) \rightarrow \mathfrak{t} \mathcal{D}_v $
 is a quasi-isomorphism; and so the tangent complex of $\mathcal{F}_{\Z[\frac{1}{S}]}^{\mathcal{D}}$ is naturally 
 equivalent to  the homotopy fiber of
 $$ \mathfrak{t} \mathcal{F}_{\Z[\frac{1}{S}]} \oplus \tau_{\geq 0} \mathfrak{t} \mathcal{D}_v \rightarrow  \mathfrak{t} \mathcal{F}_{\Q_v}.$$
 
 However, the map   $\tau_{\geq 0} ( \mathfrak{t} \mathcal{D}_v) \rightarrow  \mathfrak{t} \mathcal{F}_{\Q_v}$
 can be factored
 $$ \tau_{\geq 0} (\mathfrak{t} \mathcal{D}_v)  \stackrel{\varpi}{\rightarrow} \tau_{\geq 0} (\mathfrak{t} \mathcal{F}_{\Q_v}) \rightarrow
 \mathfrak{t} \mathcal{F}_{\Q_v}$$
 The source and target of the first map $\varpi$
 both have homotopy only in degree $0$; thus,  $\varpi$ induces
 a quasi-isomorphism
 onto the subcomplex of 
 $\tau_{\geq 0} \mathfrak{t} \mathcal{F}_{\Q_v}$  corresponding to the subgroup  $H^1_{\mathrm{D}} \subset H^1(\Q_v, \Ad \rho)
 \cong \pi_{0}  \left( \tau_{\geq 0} \mathfrak{t} \mathcal{F}_{\Q_v} \right)$. 
 
 This discussion gives a zig-zag of weak equivalences (i.e.\ quasi-isomorphisms) between the tangent complex of 
 $\mathcal{F}_{\Z[\frac{1}{S}]}^{\mathrm{D}}$, and the chain complex (see \eqref{local cohomology definition}) that computes $H^i_{\mathrm{D}}$. 
  \qed

{\bf Remark.}  When we will use this construction,
$\mathcal{R}_v \simeq \pi_0 \mathcal{R}_v$ will be homotopy discrete, and so
both $\mathcal{R}_v$ and $\mathrm{R}_v^{\mathrm{D}}$ will actually be formally smooth. The definition makes sense without the assumption that $\mathcal{R}_v$ is homotopy discrete 
but it seems unlikely to be of much use if this fails.

 {\bf Remark.}  If the local representation functors are not representable, we can  proceed 
 in a similar fashion using the framed functors instead. Namely,  the usual framed
deformation ring $\mathrm{R}_{v}^{\fram}$ is equipped with a $G$-action, i.e.\ 
$$\mathrm{R}_{v}^{\fram} \rightarrow \mathrm{R}_{v}^{\fram} \widehat{\otimes} \mathcal{O}_G,$$
where the $\widehat{\otimes}$ denotes completed tensor product, i.e.\ the level-wise tensor product for pro-objects; 
also, given a quotient $\mathrm{R}_v^{\fram} \twoheadrightarrow \mathrm{R}_{v}^{\fram, \mathrm{D}}$
together with a compatible $G$-action, we may then form a derived deformation condition
by using -- just as in \S \ref{BGA simplicial} -- a bar construction to ``quotient by $G$.''

\section{Restrictions on $\rho$} \label{minimalevel}
Recall that we have been discussing the deformation theory of a Galois representation $\rho: \pi_1 \Z[\frac{1}{S}] \rightarrow G(k)$.  Thus far, $\rho$ has been quite general. 
Now, to simplify our life as far as possible,  we will impose the  following conditions: %
   (Recall the shorthand that $T = S-\{p\}$, where $p$ is the characteristic of $k$.)
 \begin{Assumption}
 Assumptions on $\rho$: 
\begin{itemize}
  \item[(a)]   $H^0(\Q_p, \Ad \rho_{\Q_p}) = H^2(\Q_p, \Ad \rho_{\Q_p}) = 0$. 
 \item[(b)]  For $v \in T$, the local cohomology $H^j(\Q_v, \Ad \rho_{\Q_v})=0$ for $j=0,1,2$.  
 \item[(c)]   $\rho$ has big image: the image of $\rho$ restricted to $\Q(\zeta_{p^{\infty}})$ contains 
 the image of $G^{\mathrm{sc}}(k)$ in $G(k)$ (here $G^{\mathrm{sc}}$ is the simply connected cover).
 
 Moreover, we suppose that $G^{\mathrm{sc}}(k)$ has no invariants in the adjoint action.\footnote{This is automatic, by results of Steinberg \cite{Steinberg}, if $p$ is a very good prime for $\mathbf{G}$;
 in particular if $p >5$ and does not divide $r+1$ for any $\mathfrak{sl}_r$ factor of the Lie algebra, cf. \cite[6.4(b)]{Jantzen}). }
  
  \item[(d)] $\rho_p $ (the restriction of $\rho$ to $G_{\Q_p}$) satisfies (c) and (e) from Conjecture~\ref{GaloisRepConjecture} -- in particular,  
 it is equipped with a sub-functor  $\Def^{\crys}_{\rho_p}$ of the usual deformation functor, the ``crystalline deformations,''
 whose tangent space is the local $f$-cohomology. 
    \end{itemize} 
 \end{Assumption}

In particular, (a) means that the local deformation ring at $p$ is formally smooth, isomorphic to $W(k)[[Y_1, \dots, Y_s]]$ for $s=\dim H^1(\Q_p, \Ad \rho_{\Q_p})$; and the local deformation ring at $v$, for $v \in T$, is even isomorphic to $W(k)$. 
  We will sometimes refer to these conditions as ``minimal level'', although they are a bit stronger than the usual usage of that term.

Now,   in this context, we may  use  $\Def^{\crys}_{\rho_p}$ and the process of    \S \ref{lifting} to {\em define}  a derived
deformation condition at $p$, and then we get as in
Definition \ref{Local conditions}
a global deformation functor $\mathcal{F}_{\Z[\frac{1}{S}]}^{\crys} $ with crystalline conditions imposed. 
 So by \eqref{earned}  and the definition \eqref{globalfcohomology} of the global $f$-cohomology we obtain  
\begin{equation} \label{polanski} \tangent^i \mathcal{F}_{\Z[\frac{1}{S}]}^{\crys} \cong H^{i+1}_f(\Z[\frac{1}{S}], \Ad \rho). \end{equation} The functor and its representing ring will be our main object of study in the rest of this paper:

{\bf {\em In the rest of the paper, whenever we refer to a representation $\rho: \Gamma_S \rightarrow G(k)$, we assume that it satisfies conditions (a)--(d) above; whenever we refer to a deformation ring or deformation functor for $\rho$, considered as a global Galois representation,  we always understand the ``crystalline deformation ring''  or ``crystalline deformation functor'' -- that is to say, we have imposed crystalline local conditions at $p$, 
in the sense just described. We henceforth drop the subscript ``crys'' from the notation. }}

   {\bf Remark.}  Why should the ``lifting'' process of \S \ref{lifting} give a reasonable definition of the derived version of the crystalline deformation functor?
In the  Fontaine--Laffaille range, one can reasonably guess that the tangent space of a putative derived version of the local crystalline  functor 
    should be given by the $f$-cohomology (\S \ref{FLreview}) and this is enough to force this  functor to be formally smooth, thus with representing ring that is homotopy discrete.
Therefore, it must arise by means of the construction of \S \ref{lifting}.

\index{$\mathcal{R}_S$} \index{$\mathcal{R}^{\crys}$} \index{crystalline deformation ring}

  \section{Descending from Taylor--Wiles level } \label{main}

  We   continue with a representation $\rho: \Gamma_S \rightarrow
  G(k)$ satisfying the assumptions fixed in \S \ref{minimalevel}.    It is ramified at a set $S = T \cup \{p\}$.    We denote
  by $\mathcal{R}_S$ the crystalline deformation ring of $\rho$; see  \S \ref{Defringdeffunctors} 
for a summary of our various notations concerning deformation functors and rings. 

In the Taylor-Wiles method, one studies $\mathcal{R}_S$ by relating
it to a larger deformation ring, wherein one allows extra ramification.
Namely, we consider the deformation ring which 
also allows ramification at an auxiliary set $Q_n$ of primes, which is allowable
in the sense of Definition  \ref{Def:TWdefn} -- i.e., $Q_n$ satisfies some carefully chosen cohomological criteria. 
Although one eventually uses a sequence of such sets of auxiliary primes (thus the notation $Q_n$), 
{\em the set $Q_n$ of auxiliary primes can be regarded as fixed in this section.} It is only in later sections that
we will study the situation as one varies the set of auxiliary primes. 

  We will study the deformation rings further under these cohomological conditions.  More precisely,
  we have seen in \S  \ref{sec:TWprimes}   -- see especially \eqref{rugelach2}   -- how to  recover the derived deformation ring at base level from the derived deformation ring at level $S Q_n$. 
 Here we will see that the derived deformation ring  at level $S$ can actually be ``well--approximated,'' at least as far as $\tangent^0$ and $\tangent^1$ go, just using the {\em usual} deformation ring at level $SQ_n$,
 or even a sufficiently deep Artinian quotient of it; 
 the final result is Theorem \ref{TWcloseness}.  
   
\subsection{Review of tensor products}  \label{tpreview}

In Definition \ref{defn-tensor-product} we have  given a definition of the derived tensor products  $\mathcal{A} \dotimes_{\mathcal{B}} \mathcal{C}$ of pro-simplicial rings.  
This definition is somewhat {\em ad hoc}, and not even entirely functorial; however, it {\em is} functorial
for diagrams with a common level representation (i.e., where all of the pro-objects are indexed by the same category),
which will be enough for our needs; we will often briefly abusively still say ``by functoriality.''  The derived tensor product also has the following property (see Definition
\ref{defn-tensor-product} and the discussion following it): 
 
Suppose given functors $\mathcal{F}_{\mathcal{A}} \rightarrow \mathcal{F}_{\mathcal{B}} \leftarrow \mathcal{F}_{\mathcal{C}}$
 and representing pro-rings $\mathcal{A}, \mathcal{B}, \mathcal{C}$
 which are {\em nice} in the sense of Definition \ref{nice ring definition}.
According to Lemma \ref{Lemma:represent-natural-trans}
we may promote the functor maps to ring maps  (i.e., maps in pro-$\Art_k$) $\mathcal{A} \leftarrow \mathcal{B} \rightarrow \mathcal{C}$.  
In this case\begin{equation} \label{tprep} \mbox{ $\mathcal{A} \dotimes_{\mathcal{B}} \mathcal{C}$
pro-represents the functor   $\mathcal{F}_{\mathcal{A}} \times_{\mathcal{F}_{\mathcal{B}}}^h \mathcal{F}_{\mathcal{C}}$.}  \end{equation}
Although this looks obvious, this requires a word of explanation to navigate the various homotopies, so let us talk through the technical details:

{\small
Denote by (e.g.)\ $\Maps'(\mathcal{A}, -)$ the mapping space   defined as  in \eqref{map_definition} but replacing colimit by homotopy colimit.
Lemma \ref{Lemma:represent-natural-trans} gives us a diagram \begin{equation}  
 \xymatrix{
\mathcal{F}_{\mathcal{A}}  \ar[r]  &  \mathcal{F}_{\mathcal{B}}      &  \ar[l] \mathcal{F}_{\mathcal{C}} \\ 
\Maps'(\mathcal{A}, -) \ar[u]\ar[d] \ar[r]&   \Maps'(\mathcal{B},-)  \ar[u]\ar[d]  &  \ar[l]    \Maps'(\mathcal{C}, -) \ar[u]\ar[d]  \\
\Maps(\mathcal{A}, -)  \ar[r] &   \Maps(\mathcal{B},-)   &   \ar[l]  \Maps(\mathcal{C}, -) }
\end{equation} 
The squares commute only  up to natural simplicial homotopy, as in  Lemma \ref{Lemma:represent-natural-trans}. 
 By inserting some extra weak equivalences, we may replace this diagram
with a strictly commutative diagram: given a  ``homotopy coherent'' collection of maps $X_i \rightarrow Y_i$ indexed by a diagram $I$ (in our case, the arrows along a row)
we replace it by the diagram $X_i \stackrel{\sim}{\leftarrow} X_i' \rightarrow Y_i$, where we may take $X_i' = \hocolim_{j \to i} X_j$ (see e.g.\ \cite[\S 10]{Shulman}). 
Thus we have enlarged the above diagram, by inserting various weak equivalences, to obtain a strictly commutative diagram. 
Take homotopy limits of each row to obtain
a zig-zag of weak equivalences $$ \Map(\mathcal{A}, -) \times^h_{\Map(\mathcal{B},-)} \Map(\mathcal{C},-) \stackrel{\sim}{\dashrightarrow} \mathcal{F}_{\mathcal{A}} \times_{\mathcal{F}_{\mathcal{B}}}^h \mathcal{F}_{\mathcal{C}},$$
and this gives a zig-zag of equivalences exhibiting \eqref{tprep}. }
  \subsection{Sets of Taylor-Wiles primes} \label{TWintro}

Let $Q_n$ be an allowable Taylor--Wiles datum  of level $n$, as in \S \ref{TWprimesdef}.    We'll extend the functor notation of 
\S \ref{TWprimes notation section} to many primes thus:
 \index{$ \mathcal{F}_{S \coprod Q_n}$} \index{$ \mathcal{R}_n$}  
 \index{$\mathcal{F}_n^{\loc}$} \index{$\mathcal{F}_n^{\loc,\ur}$}
  $$\mathcal{F}_{S \coprod Q_n} \mbox{ or } \mathcal{F}_n \mbox{ when clear } = \mbox{ crystalline deformation  functor at level $S \coprod Q_n$} $$ 
 $$    \mathcal{R}_{S \coprod Q_n}  \mbox{ or } \mathcal{R}_n = \mbox{ representing ring for $\mathcal{F}_{S \coprod Q_n}$} $$ 
 $$\mathcal{F}_n^{\loc} = \prod_{q \in Q_n} \mathcal{F}_{\Q_q}^{T,\fram} = \mbox{ ``framed def. functor at primes in $Q$ into the torus''} $$ 
 $$\mathcal{S}_n =  \mbox{ representing ring for $\mathcal{F}_n^{\loc}$} $$
 $$\mathcal{F}_n^{\loc,\ur} =\prod_{q \in Q}  \mathcal{F}_{\Z_q}^{T,\fram}= \mbox{ ``unramified framed  def.functor at primes in $Q$ into the torus''} $$ 
  $$  \mathcal{S}_n^{\ur} =  \mbox{ representing ring for $\mathcal{F}_n^{\loc,\ur}$} $$

   We apologize for one feature of the notation: 
   the $\mathcal{S}$-rings  and the $\mathcal{F}^{\loc}$-functors are framed, but
  we have not put explicit $\fram$ into the notation, to keep the typography simple. 
 
Using  squares (a), (b), (c) of  \eqref{TWdiag}, or rather an analogous diagram  imposing crystalline conditions at each stage and using $Q_n$ instead of just $\{q\}$, gives
\begin{equation} \label{moonrock}   \mathcal{F}_{S \coprod Q_n}' \times^h_{{ \mathcal{F}_n^{\loc}}'}{ \mathcal{F}_n^{\loc,\ur}}'  \weakequ \mathcal{F}_{S} \end{equation}
where a prime denotes a weakly equivalent functor.   

The functors on the left are pro-representable, with representing pro-rings $\mathcal{R}_n, \mathcal{S}_n, \mathcal{S}_n^{\ur}$ (by definition, if a functor is pro-representable, so is a weakly equivalent functor, and they can be taken to be represented by the same ring.)
We may suppose that these representing rings are nice, in the sense of Definition \ref{nice ring definition}.
Just as in the discussion of \S \ref{tpreview},
we obtain a corresponding diagram of rings $\mathcal{R}_n \leftarrow \mathcal{S}_n \rightarrow \mathcal{S}_n^{\ur}$.

The maps $   \mathcal{F}_{S \coprod Q_n}' \rightarrow {\mathcal{F}_n^{\loc}}' $ gives a map $\mathcal{S}_n \rightarrow \mathcal{R}_n$, by Lemma \ref{Lemma:represent-natural-trans} and the Remark following it; 
it is compatible with the original map of functors in the sense specified in that Lemma. The corresponding map 
$\pi_0 \mathcal{S}_n \rightarrow \pi_0 \mathcal{R}_n$ is then the natural map of usual (underived) deformation rings. Similarly
we have $\mathcal{S}_n \rightarrow \mathcal{S}_n^{\ur}$.

The equivalence \eqref{moonrock}  implies that \begin{equation} \label{ringmap} \mathcal{R}_S \simeq \mathcal{R}_n \dotimes_{\mathcal{S}_n} \mathcal{S}_n^{\ur}\end{equation}
where $\simeq$ here means that the functors that they represent are naturally weakly equivalent. A brief word of warning about this notation is in order. We have not defined
``directly'' the notion of weak equivalence for pro-simplicial rings, and nor do we need it: we think only in terms of functors represented.  
However,   Lemma \ref{Lemma:represent-natural-trans}  shows at least that we may find a map of pro-objects  $\mathcal{R}_S  \rightarrow  \mathcal{R}_n \dotimes_{\mathcal{S}_n} \mathcal{S}_n^{\ur}$
 which induces an isomorphism on tangent complexes; the assumptions of that Lemma are satisfied because we built ``niceness'' into the definition of derived tensor product. 

\subsection{Setup} \label{patchingdiscussion}

    In what follows we suppose given  pro-Artinian quotients   \index{$\usbR_n$} \index{$\usbS_n$} \index{$\usbS_n^{\ur}$}
\begin{equation}\label{Artin-quotients}\pi_0 \mathcal{R}_n \twoheadrightarrow \usbR_n, \pi_0 \mathcal{S}_n \twoheadrightarrow  \usbS_n, \  \pi_0 \mathcal{S}_n^{\ur} \twoheadrightarrow  \usbS_n^{\ur} \end{equation}
    which are compatible, in that there is a diagram $\usbR_n \leftarrow \usbS_n \rightarrow \usbS_n^{\ur}$ which is compatible with the same diagram for the $\pi_0$ rings.

We get   \begin{equation} \label{ringmap22} \mathcal{R}_S  \stackrel{\eqref{ringmap}}{\longrightarrow} \mathcal{R}_n \dotimes_{\mathcal{S}_n} \mathcal{S}_n^{\ur} 
\rightarrow \usbR_n \dotimes_{\usbS_n} \usbS_n^{\ur}. \end{equation}   
where $\dotimes$ is derived tensor product.  Recall that derived tensor product was not genuinely functorial: it is only so when we have fixed
a common level representation for all the pro-objects appearing. However, we will allow ourselves to ignore this issue. For one thing,
it is easy to fix such a level representation in the case at hand. For another, one could avoid the language
of derived tensor product of rings entirely and just work with the functors; we talk about rings just in the hope that they are easier to relate to. 
At the level of represented functors, \eqref{ringmap22} corresponds to the diagram (see \eqref{sbpq} for an explication of this)
       {\small  \begin{equation} \label{fibersquare0}  \Map(\usbR_n^c, -) \times^h_{ \Map(\usbS_n^c, -)} \Map((\usbS_n^{\ur})^c, -)  \dashrightarrow   (\mathcal{F}_{S \coprod Q_n}')  \times_{( \mathcal{F}_n^{\loc} )' } (\mathcal{F}_n^{\loc,\ur})'  \weakequ \mathcal{F}_{S} \end{equation}  } 
     where the second map came from \eqref{moonrock}.  

We show that, for allowable Taylor-Wiles data $Q_n$ (cf. \S \ref{TWprimesdef}),  the  
map of  \eqref{ringmap22} (equivalently the composite of \eqref{fibersquare0}) 
is isomorphic on $\tangent^0$ and surjective on $\tangent^1$.    The point of this result  is that the Taylor-Wiles limit process will give very tight control on   $\usbR_n$ and the other rings
   appearing on the right-hand side of \eqref{ringmap22}.

\begin{theorem}  \label{TWcloseness}  
Let $Q_n = \{\ell_1, \dots, \ell_q\} $ be an allowable Taylor--Wiles datum  of level $n$, in the sense defined in \S \ref{TWprimesdef}.
Let other notation be as above.  
If the maps  of \eqref{Artin-quotients} all induce isomorphisms on $\tangent^0$, then 
the   map   $\mathcal{R}_S \rightarrow  \usbR_n \dotimes_{\usbS_n} \usbS_n^{\ur}$ (or equivalently
the map
 \eqref{fibersquare0} of functors)  induces an isomorphism on $\tangent^0$ and a
surjection on $\tangent^1$.
\end{theorem}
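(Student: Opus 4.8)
The strategy is to reduce everything to a computation with tangent complexes, using Lemma~\ref{lem:2.51}(iv) and the long exact sequences it produces, together with the cohomology-theoretic identifications of the tangent complexes of the relevant functors. First I would assemble the map \eqref{ringmap22} at the level of tangent complexes. By Lemma~\ref{lem:2.51} (specifically the homotopy-pullback reformulation following it), the tangent complex of the source $\mathcal{R}_S \simeq \mathcal{R}_n \dotimes_{\mathcal{S}_n} \mathcal{S}_n^{\ur}$, and that of the Artinian version $\usbR_n \dotimes_{\usbS_n} \usbS_n^{\ur}$, both sit in Mayer--Vietoris exact sequences
\begin{equation*}
\cdots \to \tangent^i \to \tangent^i(\text{glob})\oplus \tangent^i(\text{loc}^{\ur}) \to \tangent^i(\text{loc}) \to \tangent^{i+1}\to\cdots,
\end{equation*}
one for the deformation rings at level $S\coprod Q_n$ and one for their Artinian quotients. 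The map \eqref{ringmap22} gives a map between these two exact sequences, and the whole question becomes a diagram chase once we know the behaviour of the three vertical maps $\tangent^i(\pi_0\mathcal{R}_n)\to\tangent^i\usbR_n$, $\tangent^i(\pi_0\mathcal{S}_n)\to\tangent^i\usbS_n$, $\tangent^i(\pi_0\mathcal{S}_n^{\ur})\to\tangent^i\usbS_n^{\ur}$ in degrees $0$ and $1$. Here I would also use the identifications $\tangent^0 \mathcal{R}_n \cong \tangent^0(\pi_0\mathcal{R}_n)$ from \eqref{pagrefB} and the injection $\tangent^1(\pi_0\mathcal{R}_n)\hookrightarrow \tangent^1\mathcal{R}_n$ from \eqref{tangent-inclu}, and similarly for $\mathcal{S}_n$ and $\mathcal{S}_n^{\ur}$; note $\mathcal{S}_n$ and $\mathcal{S}_n^{\ur}$ are homotopy discrete by Lemma~\ref{HDiscreteness}, so their tangent complexes are concentrated in degree $0$.

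The degree-$0$ statement: by hypothesis the three maps of \eqref{Artin-quotients} induce isomorphisms on $\tangent^0$, and since for each ring $\tangent^0$ of the pro-ring agrees with $\tangent^0$ of its $\pi_0$ (and in turn, for $\mathcal{S}_n,\mathcal{S}_n^{\ur}$, with $\tangent^0$ of the discrete ring), the map of Mayer--Vietoris sequences is an isomorphism on the $\tangent^0$ and $\tangent^0(\text{glob})\oplus\tangent^0(\text{loc}^{\ur})$ and $\tangent^0(\text{loc})$ columns; moreover the relevant $\tangent^{-1}$ terms vanish because all the functors in sight are formally cohesive (our global functor has trivial centralizer by Assumption (c), and the local $T$-valued framed functors are representable). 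A short five-lemma argument then forces the induced map on the leftmost $\tangent^0$ to be an isomorphism, which is exactly $\tangent^0\mathcal{R}_S \xrightarrow{\sim}\tangent^0(\usbR_n\dotimes_{\usbS_n}\usbS_n^{\ur})$ — here I use that $\tangent^0\mathcal{R}_S$ is computed by the same homotopy pullback via \eqref{ringmap}.

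For the degree-$1$ (surjectivity) statement the situation is subtler because the maps of \eqref{Artin-quotients} are only assumed to be $\tangent^0$-isomorphisms, so on $\tangent^1$ they need not be surjective or injective. The key point is to exploit the allowability of $Q_n$: condition (b) of Definition~\ref{Def:TWdefn} says $H^2_{\mathfrak l}(\Z[\tfrac{1}{SQ_n}],\Ad\rho)=0$, and by Theorem~\ref{tangent complex with def conditions} (applied with the crystalline condition at $p$ and the full and trivial conditions at the primes of $Q_n$ and $T$ respectively) this is precisely the statement that $\tangent^1$ of the homotopy fiber product defining $\mathcal{R}_S$ — taken at level $S\coprod Q_n$ — vanishes, i.e.\ $\tangent^1\mathcal{R}_S = H^2_f(\Z[\tfrac1S],\Ad\rho)$ maps isomorphically onto a group which the Mayer--Vietoris sequence exhibits as a \emph{cokernel}: $\tangent^1\mathcal{R}_S$ is the cokernel of $\tangent^0(\text{glob}_n)\oplus\tangent^0(\text{loc}_n^{\ur})\to\tangent^0(\text{loc}_n)$. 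Since $\tangent^0$ of each of the level-$S\coprod Q_n$ rings agrees with $\tangent^0$ of its Artinian quotient (hypothesis of the Theorem), the same cokernel computes the corresponding piece of $\tangent^1(\usbR_n\dotimes_{\usbS_n}\usbS_n^{\ur})$, and the map between them is an isomorphism onto that piece; what remains in $\tangent^1(\usbR_n\dotimes_{\usbS_n}\usbS_n^{\ur})$ beyond this cokernel comes from $\tangent^1\usbR_n\oplus\tangent^1\usbS_n^{\ur}$ modulo $\tangent^1\usbS_n$, i.e.\ from the failure of the Artinian quotients to be formally smooth, and surjectivity onto the image of $\tangent^1\mathcal{R}_S$ is automatic because that image lands in the cokernel part. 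I expect \textbf{the main obstacle} to be bookkeeping the precise identification of $\tangent^1\mathcal{R}_S$ with the cokernel term in the Artinian Mayer--Vietoris sequence — one must check that the vanishing of $H^2_{\mathfrak l}(\Z[\tfrac1{SQ_n}])$ combined with \eqref{pagrefB} really does collapse the degree-$1$ contribution on the nose, and that passing to the Artinian quotients $\usbR_n$ etc.\ does not introduce spurious degree-$1$ classes in the direction that would obstruct surjectivity; this is where the hypothesis that \eqref{Artin-quotients} are $\tangent^0$-isomorphisms is used essentially, via \eqref{pagrefB} and \eqref{tangent-inclu}.
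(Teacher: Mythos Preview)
Your overall strategy---compare the two Mayer--Vietoris sequences from Lemma~\ref{lem:2.51}(iv) and chase---is exactly the paper's, and your $\tangent^0$ argument is fine. The $\tangent^1$ argument, though, has one explicit error and one real gap.

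The error: you assert that because $\mathcal{S}_n$ and $\mathcal{S}_n^{\ur}$ are homotopy discrete (Lemma~\ref{HDiscreteness}), their tangent complexes are concentrated in degree~$0$. That is false for $\mathcal{S}_n$. Homotopy discrete means $\mathcal{S}_n\simeq\pi_0\mathcal{S}_n$, but $\pi_0\mathcal{S}_n$ is \emph{not} formally smooth: in the coordinates of \eqref{coordinate_ref} it is $W[[X_i,Y_j]]/((1+Y_j)^{p^N}-1)$, and $\tangent^1\mathcal{S}_n\cong\prod_{q\in Q_n}H^2(\Q_q,\Ad\rho)$ has dimension $r\cdot\#Q_n$. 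Only $\mathcal{S}_n^{\ur}$ has vanishing $\tangent^1$.

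The gap: your surjectivity argument rests on the claim that $\tangent^1\mathcal{R}_S$ is exactly the cokernel of $\tangent^0\mathcal{R}_n\oplus\tangent^0\mathcal{S}_n^{\ur}\to\tangent^0\mathcal{S}_n$, i.e.\ that the next arrow $\gamma\colon\tangent^1\mathcal{R}_S\to\tangent^1\mathcal{R}_n\oplus\tangent^1\mathcal{S}_n^{\ur}$ is zero. You try to derive this by saying $H^2_{\mathfrak l}(\Z[\tfrac1{SQ_n}],\Ad\rho)=0$ ``is precisely the statement that $\tangent^1$ of the homotopy fiber product defining $\mathcal{R}_S$ vanishes,'' but that is not correct: with the local conditions you list (crystalline at $p$, full at $Q_n$, trivial at $T$) the resulting functor is just $\mathcal{F}_n$ itself, and $\tangent^1\mathcal{R}_n\cong H^2(\Z[\tfrac1{S'}],\Ad\rho)\ne 0$. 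No functor in your diagram has $H^2_{\mathfrak l}$ as its $\tangent^1$. The actual content of $H^2_{\mathfrak l}=0$ is one step further along the sequence: it says (via \eqref{starredeqn}) that the restriction $H^2(\Z[\tfrac1{S'}],\Ad\rho)\to\bigoplus_{v\in S'}H^2(\Q_v,\Ad\rho)=\bigoplus_{v\in Q_n}H^2(\Q_v,\Ad\rho)$ is an isomorphism (the $S$-summands vanish by the hypotheses of \S\ref{minimalevel}). Identifying source and target with $\tangent^1\mathcal{R}_n$ and $\tangent^1\mathcal{S}_n$ respectively, and using $\tangent^1\mathcal{S}_n^{\ur}=0$, this says the map $h\colon\tangent^1\mathcal{R}_n\oplus\tangent^1\mathcal{S}_n^{\ur}\to\tangent^1\mathcal{S}_n$ is an isomorphism; hence $\ker h=0$, hence $\gamma=0$ by exactness. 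With that in hand, your diagram chase (which is the paper's) goes through: $\beta$ is surjective, $g$ is an isomorphism by hypothesis, so $j_2$ is surjective.
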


\proof Recall our notation for places: $p$ is the characteristic of
$k$, $S$ is the ramification set of $\rho$, $T = S-\{p\}$, $Q$ is the
set of Taylor-Wiles primes. We also put $S' = S \cup Q$.  Our
``allowability'' condition on Taylor--Wiles primes means (see
\eqref{vanishing_sequence} and the conditions on $\rho$ from \S \ref{minimalevel})
that we get
\begin{equation} \label{starredeqn}
  \begin{aligned}
    H^1(\Z[\frac{1}{S'}], \Ad \rho ) &\stackrel{A}{\twoheadrightarrow}
    \frac{ H^1(\Q_p , \Ad \rho)}{H^1_f(\Q_p,\Ad \rho)},\\    
    H^2(\Z[\frac{1}{S'}], \Ad \rho) &\stackrel{\sim}{\rightarrow}
    \bigoplus_{S' } H^2(\Q_v, \Ad \rho).
  \end{aligned}
\end{equation}
  
The tangent complexes of the  derived rings can be computed via \eqref{polanski}; here we get:  $$ \tangent^0 \mathcal{R}_n = \ker(H^1(\Z[\frac{1}{S'}], \Ad \rho) \rightarrow H^1(\Q_p, \Ad \rho)/H^1_f ),$$
 \begin{equation} \label{tangentRn} \tangent^1 \mathcal{R}_n \stackrel{\sim}{\rightarrow}  H^2(\Z[\frac{1}{S'}], \Ad \rho). \end{equation}
 Note that \eqref{polanski} actually says that $\tangent^1$ is the kernel of $H^2(\Z[\frac{1}{S'}], \Ad \rho) \rightarrow H^2(\Q_p, \Ad \rho)$, 
 augmented by the cokernel of $A$, 
 but $H^2(\Q_p, \Ad \rho)$ vanishes by assumption (\S \ref{minimalevel}) and the cokernel of $A$ is zero by \eqref{starredeqn}. 
 
  From \eqref{tangentRn}, \eqref{starredeqn} and the assumed vanishing (\S \ref{minimalevel}) of $H^2(\Q_v, \Ad \rho)$ for $v \in T$ and for $v=p$, we see that  
 the map  \begin{equation} \label{TWkey} \tangent^1 \mathcal{R}_n \rightarrow  \tangent^1 \mathcal{S}_n  \cong  \prod_{v  \in  Q} H^2(\Q_v, \Ad \rho)\end{equation} 
 is an isomorphism.    (As concerns the latter equality: $\tangent^1 \mathcal{S}_n$ is a priori a product of Galois cohomology groups
 with coefficients in the Lie algebra of the torus, but then we can apply  Lemma \ref{TWcohomology}). 

Also the maps 
\begin{equation} \label{inj}  \mathfrak{t}^i \mathcal{R}_n \rightarrow \mathfrak{t}^i \usbR_n,  \mathfrak{t}^i \mathcal{S}_n \rightarrow \mathfrak{t}^i \usbS_n, \mathfrak{t}^i \mathcal{S}_n^{\ur} \rightarrow \mathfrak{t}^i \usbS_n^{\ur}\end{equation}
 are isomorphisms for $i=0$ by assumption; and also
\begin{equation} \label{sophie} \mathfrak{t}^1 \mathcal{S}_n^{\ur} = 0,\end{equation}
 since $\mathcal{S}_n^{\ur}$ is formally smooth (see Lemma \ref{HDiscreteness}, and \eqref{have-you-not-heard-of-this?}).

 We can now verify the claim:

For $\tangent^0$,   we must check that $j_1$ is an isomorphism in the following diagram, where the rows
are the long exact sequences arising from Lemma \ref{lem:2.51} (iv): 
 \begin{equation} 
      \begin{aligned}
        {\small
        \xymatrix{
          0 \ar[r] \ar[d]  & \tangent^0(\usbR_n  \otimes_{\usbS_n } \usbS_n^{ur} )  \ar[r]  \ar[d]^{j_1} &\tangent^0(\usbR_n )  \oplus \tangent^0(\usbS_n^{ur}  ) \ar[r]  \ar[d]^f & \tangent^0(\usbS_n )  \ar[d]^g \\
          0 \ar[r] & \tangent^0 (\mathcal{R}_n \otimes_{\mathcal{S}_n}
          \mathcal{S}_n^{ur}) \ar[r]^{ \alpha \qquad} &
          \tangent^0(\mathcal{R}_n) \oplus
          \tangent^0(\mathcal{S}_n^{ur}) \ar[r] &
          \tangent^0(\mathcal{S}_n) }
        }
      \end{aligned}
 \end{equation}
Indeed, by \eqref{inj},  $f,g$ are both isomorphisms. Since $f$ is injective, $j_1$ is injective. Since $\alpha$ is injective,  
 $f$ is surjective and $g$ is injective, we get that  $j_1$ is surjective.  Therefore $j_1$ is an isomorphism, as desired.

For $\tangent^1$ we must check that $j_2$ is surjective in the following diagram, which is  just the  continuation of the previous one:
 \begin{center}
 {\small 
\begin{equation} \label{foot2} 
 \xymatrix{
\tangent^0(\usbR_n )  \oplus \tangent^0(\usbS_n ) \  \ar[r]  \ar[d]^f  & \tangent^0(\usbS_n ) \ar[r] \ar[d]^g & \tangent^1(\usbR_n  \otimes_{\usbS_n } \usbS_n)  \ar[r] \ar[d]^{j_2} &\tangent^1( \usbR_n )  \oplus \tangent^1( \usbS_n )\ar[d] \\ 
\tangent^0(\mathcal{R}_n)  \oplus \tangent^0(\mathcal{S}_n^{ur}) \  \ar[r]  &\tangent^0(\mathcal{S}_n) \ar[r]^{\beta \qquad} & \tangent^1 (  \mathcal{R}_n \otimes_{\mathcal{S}_n} \mathcal{S}_n^{ur})    \ar[r]^{\gamma} & \tangent^1( \mathcal{R}_n )  \oplus \tangent^1(\mathcal{S}_n^{ur}) \ar[r]^{\qquad h} & \tangent^1( \mathcal{S}_n ) \\ 
 }
 \end{equation}}
 \end{center}
   Because of \eqref{TWkey}  and \eqref{sophie} the kernel of $h$ is zero. So
 $\gamma$ is zero.  So $\beta$ is surjective.  We saw above that $g$ is an isomorphism. So $j_2$ is surjective.  \qed

\section{A patching theorem}  \label{sec:patching} 

The current section proves a ``patching'' theorem  for the derived deformation ring. 
In terms of the discussion of the introduction, we describe the compactness argument required to extract a limit in the context
of \eqref{patching_baby}.

\begin{theorem} \label{Patching_Theorem} 
Suppose given $\mathcal{R}_0$ a pro-object of $\Art_k$ such that 
  $\tangent^i \mathcal{R}_0$ is supported in degrees $0$ and $1$.\index{$\usrS_{\infty}$}\index{$\usR_{\infty}$} 
 Suppose also given a continuous map of (usual)  complete local rings  $\iota:   \usrS_{\infty} \rightarrow \usR_{\infty}$
where $\usrS_{\infty} = W(k)[[X_1, \dots, X_s]], \usR_{\infty} = W(k)[[X_1, \dots , X_{s-\delta}]]$,  
the map $\iota$ makes $\usR_{\infty}$ a finite $\usrS_{\infty}$-module, and 
\begin{equation} \label{assump:dim} \dim \  \mathfrak{t}^0 \mathcal{R}_0 - \dim \  \mathfrak{t}^1 \mathcal{R}_0 = \dim(\usR_{\infty}) - \dim (\usrS_{\infty}) (=\delta).\end{equation}
  Let $\mathfrak{a}_n$ be the descending sequence  of ideals of $\usrS_{\infty}$ defined as
$\mathfrak{a}_n = (p^n, (1+X_i)^{p^n}-1)$.

Regard the Artinian rings $\usR_{\infty}/\mathfrak{a}_n, \usrS_{\infty}/\mathfrak{a}_n, W_n$ 
as constant objects of pro-$\Art_k$, indexed by a category $J$ that is independent of $n$.
 Set 
$$\mathcal{C}_n :=  \usR_{\infty}/\mathfrak{a}_n \dotimes_{\usrS_{\infty}/\mathfrak{a}_n} W_n$$
where $\dotimes$ is derived tensor product, as in Definition~\ref{defn-tensor-product}, and where the map $\usrS_{\infty} \rightarrow W_n$ is the natural augmentation. 
By functoriality we obtain $e_{n,m}: \mathcal{C}_n \rightarrow \mathcal{C}_m$ for $n  > m$.

  Finally, suppose given     a collection of maps in pro-$\Art_k$  
\begin{equation} \label{input-to-theorem} f_n :   \mathcal{R}_0 \rightarrow  \mathcal{C}_n  ,\end{equation} 
   such that, for every $n > m$,
if we write $f_{n,m} = e_{n,m} \circ f$ for the composite 
$$f_{n,m}: \mathcal{R}_0 \rightarrow \mathcal{C}_n   \stackrel{e_{n,m}}{\rightarrow} \mathcal{C}_m   $$
we have 
\begin{equation}\label{input-to-theorem-2} \mbox{$\mathfrak{t}^0 f_{n,m}$ is an isomorphism and  $\mathfrak{t}^1 f_{n,m}$ is a surjection.}\end{equation} 
 
Then there is  an isomorphism of graded rings $$ \pi_* \mathcal{R}_0 \cong \Tor^*_{\usrS_{\infty}}(\usR_{\infty}, W(k)),$$
where we understand $\pi_i \mathcal{R}_0$  of the pro-object $(\mathcal{R}_{0,n})$ to be defined as
$\varprojlim \pi_i \mathcal{R}_{0,n}$ (for discussion of this  definition, see
around \eqref{naive pi}).   \end{theorem}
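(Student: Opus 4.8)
The strategy is to extract a limiting object from the maps $f_n$ using a compactness (K\"onig's lemma style) argument, identify that limit with $\mathcal{R}_0$, and then compute its homotopy groups via an Andr\'e--Quillen cohomology comparison. First I would fix, once and for all, nice pro-ring presentations for all objects involved: write $\mathcal{R}_0 = (\mathcal{R}_{0,j})_{j \in J}$ as a nice pro-object (Lemma~\ref{lemma:replace-by-nice}), and similarly realize each $\mathcal{C}_n$ as a nice pro-object indexed by the same $J$, so that the derived tensor products $\usR_{\infty}/\mathfrak{a}_n \dotimes_{\usrS_{\infty}/\mathfrak{a}_n} W_n$ of Definition~\ref{defn-tensor-product} are genuinely functorial in $n$ and the transition maps $e_{n,m}$ are strictly defined. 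Since $\tangent^* \mathcal{R}_0$ is supported in degrees $0,1$ and finite-dimensional in each (by~\eqref{assump:dim} and the running finiteness hypotheses), Corollary~\ref{cor:Corollary hurewicz analogue} lets me take $\mathcal{R}_0$ to be built from the terminal functor by finitely many cell attachments, hence I may assume each level $\mathcal{R}_{0,j}$ is a small Artin ring and the pro-system is ``nice'' in a strong sense.

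Next comes the patching/compactness step, which is the technical heart. For each $n$ the map $f_n: \mathcal{R}_0 \to \mathcal{C}_n$ is a map in pro-$\Art_k$; restricting to a finite level $j$ of $J$, $f_n$ is (up to homotopy) a ring map from a fixed small Artin ring into a level of $\mathcal{C}_n$, and there are only finitely many such maps up to homotopy. Using a diagonal/K\"onig argument on the partially ordered indexing set $J$ together with the tower $n$, I can pass to a subsequence of the $f_n$ which is ``coherent'': for each $j$ the induced maps stabilize compatibly with the $e_{n,m}$. This coherent family assembles (after the usual manipulations to rigidify homotopy-coherent diagrams into strictly commutative ones, as in the proof of Lemma~\ref{Lemma:represent-natural-trans} and the discussion in \S\ref{tpreview}) into a single map in pro-$\Art_k$
\begin{equation*}
  f_\infty: \mathcal{R}_0 \longrightarrow \mathcal{C}_\infty := \usR_{\infty} \dotimes_{\usrS_{\infty}} W(k),
\end{equation*}
where $\mathcal{C}_\infty$ is the nice pro-object $\bigl(c\,\tau_{\leq N}(\widehat{\usR_{\infty}/\mathfrak{a}_n} \otimes_{\usrS_{\infty}/\mathfrak{a}_n} W_n)\bigr)_{(n,N)}$; concretely $\mathcal{C}_\infty$ pro-represents the homotopy fiber product $\Map(\usR_{\infty},-) \times^h_{\Map(\usrS_{\infty},-)} \ast$. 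The condition~\eqref{input-to-theorem-2} passes to the limit to give that $\tangent^0 f_\infty$ is an isomorphism and $\tangent^1 f_\infty$ is surjective.

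The third step is to promote this to an equivalence. Both $\mathcal{R}_0$ and $\mathcal{C}_\infty$ have tangent complexes concentrated in degrees $0$ and $1$: for $\mathcal{R}_0$ this is a hypothesis, and for $\mathcal{C}_\infty$ it follows from Lemma~\ref{lem:2.51}(iv) applied to the homotopy pullback defining it, since $\usrS_{\infty} \to \usR_{\infty}$ and $\usrS_{\infty} \to W(k)$ are complete intersection maps of regular rings (so their tangent complexes sit in degrees $0,1$ by~\eqref{numerical}), giving a Mayer--Vietoris sequence that is exact and forces $\tangent^i \mathcal{C}_\infty = 0$ for $i \neq 0,1$. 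Knowing $f_\infty$ is an isomorphism on $\tangent^0$ and a surjection on $\tangent^1$, I compute Euler characteristics: $\dim \tangent^0 - \dim \tangent^1$ equals $\delta$ on the $\mathcal{R}_0$ side by~\eqref{assump:dim}, and equals $\dim \usR_{\infty} - \dim \usrS_{\infty} = \delta$ on the $\mathcal{C}_\infty$ side by the standard computation of Andr\'e--Quillen/Koszul cohomology for the finite flat complete intersection map $\iota$ (this is exactly the numerology recorded in~\eqref{numerical} combined with Lemma~\ref{complete}). Hence $\dim \tangent^1 f_\infty$-source $=$ $\dim \tangent^1 f_\infty$-target, so the surjection on $\tangent^1$ is an isomorphism; thus $f_\infty$ induces an isomorphism on all of $\tangent^*$, and by Lemma~\ref{lem:tangent-complex-detects-equivalence} (applied to the associated formally cohesive functors, using that both are represented by pro-objects so their functors are formally cohesive) $f_\infty$ is a natural weak equivalence of represented functors.

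Finally, I transport the computation of homotopy groups across $f_\infty$. By~\eqref{pistar iso} and the discussion of \S\ref{sec:hocat}, a natural weak equivalence of pro-represented functors induces an isomorphism of pro-graded rings $(\pi_* \mathcal{R}_{0,j})_j \cong (\pi_* \mathcal{C}_{\infty,(n,N)})$, and passing to the (inverse) limit---legitimate since all the groups in sight are finite, cf.\ the discussion around~\eqref{naive pi}---gives $\pi_* \mathcal{R}_0 \cong \pi_* \mathcal{C}_\infty$ as graded rings. It remains to identify $\pi_* \mathcal{C}_\infty$ with $\Tor^*_{\usrS_{\infty}}(\usR_{\infty}, W(k))$. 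This is the spectral sequence $E^2 = \Tor_{\pi_*(\usrS_{\infty})}(\pi_*(\usR_{\infty}), \pi_*(W(k))) \Rightarrow \pi_*(\usR_{\infty} \dotimes_{\usrS_{\infty}} W(k))$ from~\cite[Theorem~6, \S6, II]{QuillenHomotopicalAlgebra}, which degenerates here because $\usrS_{\infty}, \usR_{\infty}, W(k)$ are all discrete (constant simplicial rings), so $\pi_*$ of each is concentrated in degree $0$ and the $E^2$-page is simply $\Tor^*_{\usrS_{\infty}}(\usR_{\infty}, W(k))$ in the appropriate degrees; the multiplicative structure matches as well, using that $\dotimes$ is a ring object and the Tor product is the one induced from the (graded-commutative) derived tensor product. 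The compatibility of the inverse-limit description with this Tor computation uses Lemma~\ref{Tor_limit} to identify $\varprojlim_n \Tor^*_{\usrS_{\infty}/\mathfrak{a}_n}(\usR_{\infty}/\mathfrak{a}_n, W_n)$ with $\Tor^*_{\usrS_{\infty}}(\usR_{\infty}, W(k))$.

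\textbf{Main obstacle.} The serious work is the compactness step: making the passage to a coherent subsequence of the $f_n$ precise, and then rigidifying the resulting homotopy-coherent pro-system map into an honest map $f_\infty$ in pro-$\Art_k$ without losing control of $\tangent^0$ and $\tangent^1$. Everything downstream (tangent complex supported in two degrees, Euler characteristic comparison, the degenerate Quillen spectral sequence) is routine given the machinery already developed in \S\S\ref{sec:functors-artin-rings}--\ref{sec:tang-compl-funct}; the delicate point is purely the bookkeeping of the limit, which is why the niceness conditions on the pro-objects and the fixed common indexing category $J$ are built into the hypotheses.
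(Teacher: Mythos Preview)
Your proposal is correct and follows essentially the same three-step strategy as the paper: compactness to extract a compatible family, Euler-characteristic comparison on $\tangent^0$ and $\tangent^1$ to upgrade to an equivalence (via Lemma~\ref{lem:tangent-complex-detects-equivalence}), and identification of $\pi_*\mathcal{C}_\infty$ with $\Tor^*_{\usrS_\infty}(\usR_\infty,W)$ using the Quillen spectral sequence and Lemma~\ref{Tor_limit}.

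The one organizational difference worth noting is in the compactness step. You propose restricting to finite levels of a common indexing category $J$ and running K\"onig's lemma there; the paper instead works directly with the sets $[\mathcal{R}_0,\mathcal{C}_n] = \pi_0\big(\text{pro-}\smallC_k(\mathcal{R}_0,\mathcal{C}_n)\big)$ of homotopy classes of pro-maps, and shows each such set is \emph{finite} outright. The key observation making this work is that since $\tangent^*\mathcal{R}_0$ is concentrated in degrees $0,1$, the maps $[\mathcal{R}_0,\tau_{\leq i+1}(\cdots)] \to [\mathcal{R}_0,\tau_{\leq i}(\cdots)]$ are bijections for $i\geq 1$, so the inverse limit over truncations stabilizes. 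One then takes the inverse limit over $n$ of the nonempty finite subsets $X_n \subset [\mathcal{R}_0,\mathcal{C}_n]$ cut out by the tangent conditions. This sidesteps the level-by-level bookkeeping you flagged as the ``main obstacle''; the rigidification of the resulting homotopy-coherent tower into a natural transformation $\hocolim_n\Hom(\mathcal{C}_n,-)\to\Hom(\mathcal{R}_0,-)$ is then immediate from choosing $1$-simplices in the pro-mapping spaces. Your approach would also work, but the paper's packaging is a bit cleaner.
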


Note we don't assume any type of compatibility between the $f_n$. What we will do instead is to  extract a compatible sequence by compactness.   
Note  also that if the map $\usrS_{\infty} \rightarrow \usR_{\infty}$ is surjective,  the $\Tor$-algebra above is actually an exterior algebra on $s-r$ generators.

\subsection{The proof of Theorem \ref{Patching_Theorem}}  \label{patching-proof} 

In this proof we shall use a simplicial enrichment of the category
pro-$\smallC_k$, defined as
pro-$\smallC_k(A,B) = \lim_i \colim_j \smallC_k(A_j,B_i)$, if
 $A = (j \mapsto A_j)$ and $B = (i \mapsto B_i)$.  
 
 Note that  this has a nice meaning in terms of  functors represented:
the $q$-simplices of pro-$\smallC_k(A,B)$ are natural transformations from $\Delta^q \times  \colim_i \smallC_k(A_i, -)$ to $\colim_j \smallC_k(B_j, -)$.

 If each $A_i$ is
cofibrant and $B$ is \emph{nice}, then the natural map
\begin{equation}\label{eq:22}
  \holim_i \colim_j
  \smallC_k(A_j,B_i) \to \text{pro-}\smallC_k(A,B)
\end{equation}
is a weak equivalence,    by the argument of
Lemma \ref{lem:factor-through-strict-colim}; 
 we shall only ever consider the simplicial
set pro-$\smallC_k(A,B)$ when~\eqref{eq:22} holds.

Let us also note
that if $\pi_*\bigtangent A$ is finite dimensional, then the space
pro-$\smallC_k(A,B_i)$ has finite homotopy groups for all $i$:   this 
reduces to the case $B_i = k\oplus k[n]$  by applying  Lemma  \ref{lem:build-Artin-rings-Postnikov}.
Hence in that case we have (assuming~\eqref{eq:22} as always, and   using
Lemma \ref{pi0 commutes holim 1}): 
\begin{equation*}
  \pi_0 \big(\text{pro-}\smallC_k(A,B)\big) = \lim_i \pi_0 \big(\text{pro-}\smallC_k(A,B_i)\big).
\end{equation*}
We shall write $[A,B]$ for this profinite set.  The above equation
then implies $[A,B] = \lim_i [A,B_i]$.

We wish to apply the above discussion for $A = \mathcal{R}_0$ and
$B = \mathcal{C}_n$, and see that $[\mathcal{R}_0,\mathcal{C}_n]$ is
in fact a finite set.  By definition, the levels of the pro-object
$B = \mathcal{C}_n$ have the homotopy types
$B_i \simeq \tau_{\leq i}\big(\usR_{\infty}/\mathfrak{a}_n
\otimes_{\usrS_{\infty}/\mathfrak{a}_n} W_n^c\big)$, where $W_n^c$ is
a cofibrant replacement of $W_n$ as an algebra over  
$\usrS_\infty/\mathfrak{a}_n$.    Up to homotopy $B_{i+1}$ may be
obtained from $B_i$ by taking homotopy fibers of a map to
$k \oplus k[i+2]$ finitely many times (see 
Lemma \ref{lem:build-Artin-rings-Postnikov} again), so the assumption 
that $\pi_* \bigtangent \mathcal{R}_0$ vanishes in degrees besides $0,-1$    implies that the map
$[A,B_{i+1}] \to [A,B_i]$ is bijective for $i \geq 1$.  
Just as above, $[A,B_i]$ is
finite for all $i$.  Hence $[A,B]$ is finite in this case.

For any
$[f] \in [\mathcal{R}_0,\mathcal{C}_n]$ there are well defined induced
maps
\begin{equation*}
  \bigtangent^i(e_{n,m} \circ f): \bigtangent^i \mathcal{C}_m \to
  \bigtangent^i \mathcal{C}_n \to \bigtangent^i\mathcal{R}_0
\end{equation*}
for all $m \leq n$ and all $i$.  As $n$ varies the finite
sets $[\mathcal{R}_0,\mathcal{C}_n]$ form an inverse system, and we shall consider the
subsystem
\begin{equation*}
  X_n = \{[f] \in [\mathcal{R}_0,\mathcal{C}_n] \mid
  \text{$\forall m \leq n:$ $\bigtangent^i (e_{n,m} \circ f)$ is iso for
    $i=0$ \& epi for $i=1$}\}.
\end{equation*}
By assumption $X_n \neq \emptyset$ for all $n \geq 0$.  Hence this is
an inverse system of non-empty finite sets, so by compactness the
inverse limit is non-empty.

   We may therefore pick morphisms  $g_n: \mathcal{R}_0 \to \mathcal{C}_n$ representing compatible elements of the subset $X_n \subset [\mathcal{R}_0,\mathcal{C}_n]$ and hence simplicial homotopies
 between $g_n$ and the composition  $e_{n+1,n} \circ g_{n+1}$; 
 that is to say, there
 exists a $1$-simplex   in the mapping space $\text{pro-}\smallC_k(\mathcal{R}_0, \mathcal{C}_n)$
 with these vertices. This $1$-simplex induces a natural simplicial homotopy between the    natural transformations of functors 
 $$ \Hom(\mathcal{C}_n, -) \rightarrow \Hom(\mathcal{R}_0, -).$$
 induced by $g_n$ and by $e_{n+1,n} \circ g_{n+1}$.
This data induces a natural transformation of functors $\Art_k \rightarrow s\Sets$
 $$ \hocolim_n \Hom(\mathcal{C}_n, -) \rightarrow  \Hom(\mathcal{R}_0, -)$$
 which is then also an isomorphism in $\bigtangent^0$ and an
epimorphism on $\bigtangent^1$.

We claim that it is in fact an
isomorphism on $\bigtangent^i$ for all $i$, for which it suffices to
verify that both sides vanish for $i \notin \{1,0\}$, and that both
sides have the same ``Euler characteristic.''

The vanishing is true for $\tangent^* \mathcal{R}_0$ by assumption.
As for $\varinjlim_n \tangent^i \mathcal{C}_n$, there's an exact
triangle in the derived category of $k$-modules
$$ \mathfrak{t}  \mathcal{C}_n  \rightarrow \mathfrak{t}
(\usR_{\infty}/\mathfrak{a}_n) \oplus \mathfrak{t} (W_n) \rightarrow
\mathfrak{t} (\usrS_{\infty}/\mathfrak{a}_{n} )
\stackrel{[1]}{\rightarrow} $$ 
and taking cohomology and  taking direct limit as $n \rightarrow
\infty$,  we get 
$$ \tangent^i \mathcal{C}_n  \rightarrow \tangent^i \usR_{\infty}
\oplus \underbrace{ \mathfrak{t}^i  W(k) }_{0} \rightarrow  \tangent^i
\usrS_{\infty}  \stackrel{[1]}{\rightarrow}.$$
Here we have used
$\tangent^i \usR_{\infty} = \varinjlim \mathfrak{t}^i
(\usR_{\infty}/\mathfrak{a}_n)$ and similarly for $\usrS_{\infty}$,
because $\usR_{\infty}$ considered in pro-$\Art_k$ can be defined by
the projective system $\usR_{\infty}/\mathfrak{a}_n$ inside
$\Art_k$.\footnote{Details: we need to see that $\mathfrak{a}_n$
  defines the standard profinite topology on $\usR_{\infty}$.  Write
  $\mathfrak{m}_{\usR}$ for the maximal ideal of $\usR_{\infty}$. On
  the one hand, the rings $\usR_{\infty}/\mathfrak{a}_n$ are Artin
  because $\usR_{\infty}$ is module-finite over $\usrS_{\infty}$ by
  assumption; therefore there exists $n_1$ such that
  $\mathfrak{m}_{\usR}^{n_1} \subset \mathfrak{a}_n$.  On the other
  hand, $\mathfrak{a}_n \subset \mathfrak{m}_{\usR}^{n}$: clearly
  $p^n \in \mathfrak{m}_{\usR}^n$, and if $Y \in \mathfrak{m}_{\usR}$,
  then $(1+Y)^{p^n}-1 = \sum_{j \geq 1} {p^n \choose j} Y^j$ belongs
  to $\mathfrak{m}_{\usR}^n$ too.  } Now the claimed vanishing follows
from our computation of the tangent complex for power series rings
\eqref{numerical}, and the Euler characteristic claim follows from our
assumption \eqref{assump:dim}.

Hence the pro-objects $\mathcal{R}_0$ and $(n \mapsto \mathcal{C}_n)$
represent equivalent functors, and hence the induced map of homotopy
groups
\begin{equation*}
  \varprojlim \pi_* \mathcal{R}_0 \to \varprojlim \pi_* \mathcal{C}_n
\end{equation*}
is also (see \eqref{pistar iso}) an isomorphism for all $i$.  This concludes the proof of
Theorem~\ref{Patching_Theorem} since
$$\varprojlim \pi_i \mathcal{C}_n = \varprojlim
\Tor^i_{\usrS_{\infty}/\mathfrak{a}_{n}}(\usR_{\infty}/\mathfrak{a}_n,
W_n) \cong \Tor^i_{\usrS_{\infty}}(\usR_{\infty},W). $$ (for the last
step see Lemma \ref{Tor_limit}; for the computation of homotopy groups
of a tensor product, see \cite[Theorem 6]{QuillenHomotopicalAlgebra}).

\section{Background on the obstructed Taylor--Wiles method, after Calegari--Geraghty}\label{derivedTW}

  We present the ``obstructed'' Taylor--Wiles method, in the form given by Khare and Thorne \cite{KT};
 it is originally discovered by Calegari--Geraghty \cite{CG}, and another version  has been developed by D. Hansen \cite{H}. 
  This section has no original ideas  (although any errors are due to us).  
  It is largely independent of the rest of the paper; it just provides the input for us to apply our previous theorems. 
 It uses the conjecture on the existence of Galois representations, as formulated in 
 Conjecture \ref{GaloisRepConjecture}, and outputs a set
 of Taylor-Wiles primes $Q_n$ where one has very good control, if not on all of $\pi_0 \mathcal{R}_n$, 
 then at least on a ``very large'' Artinian quotient of it.

 \subsection{Notation and setup}  \label{TWnotationsetup} 
  
 Let us try to describe more carefully our setup. Thus far we have dealt with an abstract Galois representation $\rho$. We will  henceforth be
 considering the case where $\rho$ comes from a modular form, and we will specify some details about this modular form. 
\begin{enumerate}
 
  \item  As before, $S$ will be a finite set of primes containing $p$, and $T=S-\{p\}$.

 \item   As before (\S \ref{YKdef})  $\mathbf{G}$ is a split semisimple $\Q$-group dual to $G$. We suppose that $\mathbf{G}$
 admits a smooth reductive model over $\Z[\frac{1}{T}]$. 
 
  \item     We let $Y_0$ be the arithmetic manifold associated to a subgroup $K_0 =  \prod K_{0,v}$, i.e.
 \begin{equation} \label{Y0def} Y_0  =  \mathbf{G}(\Q) \backslash \mathbf{G}(\adele) / K_{\infty}^{\circ} K_0,\end{equation}
   as in \S \ref{YKdef}.

Here we suppose that  $K_{0,v}$ is obtained from the integral points of $\mathbf{G}$ for $v \notin T$,
 and for $v \in T$ we take $K_{0,v}$ to be an Iwahori subgroup.  (Informally,  
 this means one takes the preimage of a Borel subgroup over the residue field.  
For a formal definition see \cite[\S 3.7]{TitsCorvallis}). 

More generally, we define $Y(K)$  just as in \eqref{Y0def} for an open compact subgroup $K \subset \mathbf{G}(\adele_f)$; we will only deal with examples where $K$ decomposes
as a product $\prod_{v} K_v$. 
 
 \item
We follow \cite{KT} on Hecke algebras:  

   Consider the chain complex of $Y_0$ as an object in the derived category of $\Z$-modules; each
  Hecke operator gives an endomorphism of this object. 
   The Hecke algebra $\mathrm{T}_0$ for $Y_0$ then means  the ring   of endomorphisms 
 generated by all Hecke operators prime to the level.      
  (This has a few minor advantages, in particular,  $\mathrm{T}_0$ acts on cohomology with any coefficients.)

 For some larger level $K'$, the Hecke algebra $\mathrm{T}_{K'}$ is defined in the same way (again, Hecke operators relatively prime to the level). 
 
 Warning: we will use at certain points slight variations of this definition, in particular enlarging the Hecke operator by using certain operators at primes dividing the level,
  but if used without explanation, ``Hecke algebra'' should be taken in the sense just defined. 
 
 \item  The invariants $q,\delta$:
 
 It is known
(see 
  \cite[III, \S 5.1]{BW}),  \cite[VII, Theorem 6.1]{BW} and for the non-compact case \cite[5.5]{Borel2})  
  that the tempered cuspidal cohomology  $$H^*(Y(K), \C)_{\temp}$$  of $Y(K)$,
 i.e.\ that part of the cohomology associated to tempered cuspidal representations
 under the standard indexing of cohomology by representations, is concentrated  in degrees $[q, q+\delta]$. 
  Here $2q+\delta =\dim Y(K)$ and $\delta$ is the difference $\mathrm{rank} \  \G(\R) - \mathrm{rank}  \  K_{\infty}$. 

 \item 
 Fix a Hecke eigenclass $f \in H^q(Y_0, \overline{\Q})_{\temp} := H^q(Y_0, \overline{\Q}) \cap H^q(Y(K), \C)_{\temp}$; let $K_f$ be the field generated
 by all Hecke eigenvalues, and let $\mathcal{O}$ be the ring of integers of $K_f$. 
 Thus $f$ defines a homomorphism
 $$ \mathrm{T}_0 \longrightarrow \mathcal{O}$$
 from the Hecke algebra for $Y_0$ to $\mathcal{O}$.    
 
\item   Let $\wp$ be a  prime of $\mathcal{O}$, above the rational prime $p$, such that   \begin{itemize}
 \item[(a)]  $H^*(Y_0, \Z)$ is $p$-torsion free. \item[(b)] $p$ is ``large'' relative to $\GG$: larger than the order of the Weyl group. 
\item[(c)]  ``No congruences between $f$ and other forms:''  Writing $\mathfrak{m}$ for the maximal ideal of $\mathrm{T}_0$ 
given by the kernel of $\mathrm{T}_0 \rightarrow \mathcal{O}/\wp$, 
the induced map of completions $\left(\mathrm{T}_0\right)_{\mathfrak{m}} \rightarrow \mathcal{O}_{\wp}$ is an isomorphism. 

 \item[(d)] $\mathcal{O}$ over $\Z$ is unramified at $\wp$. 
\item[(e)] The localization $H_j(Y_0, \Z_p)_{\mathfrak{m}}$  vanishes in degrees $j \notin [q, q+\delta]$. 
   \end{itemize} 
 
 One expects that all the conditions are   satisfied for all but  finitely many $\wp$.   
 
 Note also that if $K' \supset K_0$ is some deeper level structure, with Hecke algebra $\mathrm{T}_{K'}$,
  there is still a morphism $\mathrm{T}_{K'} \rightarrow \mathcal{O}$; as an abuse of notation
  we will still use $\mathfrak{m}$ for the correspondingly defined maximal ideal,
  and $\mathrm{T}_{K', \mathfrak{m}}$ the completion of $\mathrm{T}_{K'}$. 
   
 \item  
Set $k=\mathcal{O}/\wp$, the residue field of $\wp$. 
Let  $$\rho:
 \pi_1 \Z[\frac{1}{S}] \rightarrow G(k)$$
 be the Galois representation (modulo $\wp$) associated to $f$  (see Conjecture
\ref{GaloisRepConjecture}).  Again, by Conjecture \ref{GaloisRepConjecture}, it is the reduction of a representation
\begin{equation} \label{tilderhodef} \rhoglob:  \pi_1 \Z[\frac{1}{S}] \rightarrow G(\mathcal{O}_{\wp}= W(k)).\end{equation}
 
 \item 
  We suppose 
that the image of $\rho$ satisfies the conditions of \S \ref{minimalevel}, i.e.\  (among others)  big image, trivial deformation theory at $T$, 
and crystalline with small weights at $p$.

 \end{enumerate}

 We prove the following result (we apologize that to precisely formulate the Theorem here, we have to make a few forward references to later in the text):
 
 \begin{theorem} \label{TWoutputtheorem}
 Let assumptions be as in \S  \ref{TWnotationsetup} and assume also Conjecture
 \ref{GaloisRepConjecture}. 
 Let $q$ be a large enough integer, and let $s = q \cdot \mathrm{rank}(G)$. 
 Let $\delta$ be the ``defect'' of $G$   as in (5) of \S \ref{TWnotationsetup}. Set 
 $$\usrS_{\infty} = W(k)[[X_1, \dots, X_s]], \usR_{\infty} = W(k)[[X_1, \dots , X_{s-\delta}]],$$
Let  $\mathfrak{a}_n $ be  the ideal  of $\usrS_{\infty}$ defined by 
\begin{equation} \label{ardef} \mathfrak{a}_n =  (p^n, (1+X_i)^{p^n}-1)\end{equation}  inside $\usrS_{\infty}$;
thus $\usrS_{\infty}/\mathfrak{a}_n$ is naturally identified with the $\Z/p^n$-group algebra of $(\Z/p^n)^s$. 
Then we can find the following data: 
\begin{itemize}
\item[(a)]  Homomorphisms $\usrS_{\infty} \stackrel{\iota}{\longrightarrow }\usR_{\infty} \twoheadrightarrow  \pi_0 \mathcal{R}_{\Z[\frac{1}{S}]}$ of complete local rings, 
whose composite is the natural augmentation $\usrS_{\infty} \rightarrow W \rightarrow \pi_0 \mathcal{R}_{\Z[\frac{1}{S}]}$.   
\item[(a)']  For each integer $n$, allowable Taylor--Wiles data $Q_n$ of level $n$,
with associated covering groups $\Delta_n$ (\eqref{DeltaQdef}), group rings
$\overline{\usrS_n} := W_n[\Delta_n]$, and deformation rings $\mathcal{R}_n$; 

\item [(b)']  An explicit function $K(n) \rightarrow \infty$ and for each integer $n$,   homomorphisms  $f_n, g_n$ rendering commutative the diagram 
   \begin{equation} \label{SinftySn0}
 \xymatrix{
\usrS_{\infty}  \ar[r]^{\iota} \ar[d]^{f_n}  &  \usR_{\infty}  \ar[d]^{g_n} \ar[r]  & \ar[d]  \pi_0 \mathcal{R}_{\Z[\frac{1}{S}]}     \\
\overline{\usrS_n} \ar[r]   & \usbR_n  \ar[r] &  \pi_0 \mathcal{R}_{\Z[\frac{1}{S}]}/(p^n, \mathfrak{m}^{K(n)})  
 }
\end{equation}

  where:
  \begin{itemize}
  \item[-]  we write $\usbR_n$ for the quotient of $\pi_0 \mathcal{R}_n/ (p^n, \mathfrak{m}^{K(n)})$
 that classifies deformations of inertial  level $\leq n$ (see \eqref{ILL def})  at all primes in $Q_n$,  
 \item[-] 
  the map $\overline{\usrS_n} = W_n[\Delta_n] \rightarrow \usbR_n$  is the natural one (explained in and before \eqref{quotient2}).    
  \item[-] All the maps commute with the natural augmentations to $k$, i.e.\ are local homomorphisms. 
  \end{itemize} 
 Also $f_n(\mathfrak{a}_n) = 0$, and the induced maps
\begin{equation} \label{Snid0}  \begin{cases} \usrS_{\infty}/\mathfrak{a}_n \longrightarrow  \overline{\usrS_n}, \\ \usR_{\infty}/\mathfrak{a}_n \cong \ \usR_{\infty} \otimes_{\usrS_{\infty}}   \overline{\usrS_n}   \longrightarrow \usbR_n  \end{cases} \end{equation}
are both isomorphisms.

 \item[(d)]  \label{GGB2}
 A  complex $\Dinf$ of finite free $\usrS_{\infty}$-modules, equipped with a compatible $\usR_{\infty}$-action in the derived category of $\usrS_{\infty}$-modules;
 and equipped with an isomorphism
 $$ H_*(\Dinf \otimes_{\usrS_{\infty}} W) \cong H_{*}(Y_0, W)_{\mathfrak{m}} ,$$
 which is compatible for the $\usR_{\infty}$-actions. Here $\usR_{\infty}$ acts on the right-hand side via the map $\usR_{\infty} \rightarrow \pi_0 \mathcal{R}_{\Z[\frac{1}{S}]}$
 from (a),  followed by the action of $\pi_0 \mathcal{R}_{\Z[\frac{1}{S}]}$ supplied by  Conjecture \ref{GaloisRepConjecture}. 
 Moreover, $H_*(\Dinf)$ is concentrated in degree $q$ and $H_q(\Dinf)$ is a finite free $\usR_{\infty}$-module. 
 \item[(e)]  [Easy consequence of (d):] 
 The natural surjections   \begin{equation} \label{RequalsT} \usR_{\infty}\otimes_{\usrS_{\infty}} W  \twoheadrightarrow \pi_0 \mathcal{R}_{\Z[\frac{1}{S}]} \rightarrow  \mbox{image of Hecke algebra on $H_{q}(Y_0, W)_{\mathfrak{m}} $}\end{equation}
are both isomorphisms.  The action of $\usR_{\infty} \otimes_{\usrS_{\infty}} W$ on $H_*(Y_0, W)_{\mathfrak{m}}$
extends to a free graded action of  $\Tor_*^{\usrS_{\infty}}(\usR_{\infty},  W) $ on $ H_{*} (Y_0, W)_{\mathfrak{m}} $, 
over which this homology is freely generated in degree $q$. 
 
\end{itemize}
\end{theorem}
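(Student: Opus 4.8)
The plan is to deduce Theorem~\ref{TWoutputtheorem} essentially as a package of consequences of the obstructed Taylor--Wiles patching machinery of Calegari--Geraghty in the form of Khare--Thorne \cite{KT}, together with the cohomological bookkeeping developed in \S\ref{minimalevel} and \S\ref{main}. The key realization is that items (a), (a$'$), (b$'$), (d) are, individually, standard outputs of the patching construction, and item (e) is a formal consequence of (d) once one has the correct numerical identities; so the work is largely one of reorganizing and matching notation rather than proving anything genuinely new. I would organize the proof in the order the data are produced by the patching argument.

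First I would construct the Taylor--Wiles data $Q_n$. By the big-image hypothesis (c) of \S\ref{minimalevel} and the Remark at the end of \S\ref{TWprimesdef}, for every $n$ one can choose an allowable Taylor--Wiles datum $Q_n$ of level $n$; here ``allowable'' (Definition~\ref{Def:TWdefn}) forces $|Q_n|$ to be a fixed integer $r$ with $s := r\cdot\mathrm{rank}(G)$, and forces the Greenberg--Wiles Euler characteristic count to give the relation $\dim\mathfrak{t}^0\mathcal{R}_n - \dim\mathfrak{t}^1\mathcal{R}_n = \delta$. This gives (a$'$) with $\Delta_n = \prod_{q\in Q_n}I_q(p)$, the maximal $p$-quotient of $\prod I_q$, and $\overline{\usrS_n} = W_n[\Delta_n]\cong \usrS_\infty/\mathfrak{a}_n$ via \eqref{ardef}. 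Next, using Conjecture~\ref{GaloisRepConjecture}(a)--(e), the (underived) deformation ring $\mathrm{R}_n = \pi_0\mathcal{R}_n$ acts on the patched homology; the $K(n)\to\infty$ truncation is exactly the Artinian-quotient device recalled in \S\ref{introintro} and \S\ref{main}, and the inertial-level quotient $\usbR_n$ is as in Definition~\ref{inertialleveldef}. The compatible diagram \eqref{SinftySn0} and the isomorphisms \eqref{Snid0} are then the defining properties of the patching datum; the second isomorphism in \eqref{Snid0} uses the local analysis at Taylor--Wiles primes, namely the homotopy pullback square \eqref{inertia_pullback2} and Lemma~\ref{HDiscreteness}. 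This yields (a) and (b$'$).

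For the patched complex $\Dinf$ in (d), I would invoke the Calegari--Geraghty/Khare--Thorne construction directly: one patches the chain complexes $C_*^{\Delta_n}(Y(K_n),W_n)_{\mathfrak{m}}$ as objects in the derived category of $W_n[\Delta_n]$-modules, using condition (e) of \S\ref{TWnotationsetup} (concentration of localized homology in degrees $[q,q+\delta]$, and here in fact the minimal-level and no-congruence hypotheses pin it down further) to extract a perfect complex $\Dinf$ over $\usrS_\infty$ of amplitude concentrated so that $H_*(\Dinf\otimes_{\usrS_\infty}W)\cong H_*(Y_0,W)_{\mathfrak{m}}$, carrying a compatible $\usR_\infty$-action. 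The key point, which is where the obstructed method differs from the classical one, is a dimension/depth count: one shows $\usR_\infty$ is finite flat over $\usrS_\infty$ and that $H_*(\Dinf)$ is a finite free $\usR_\infty$-module concentrated in degree $q$. This last assertion is the main obstacle of the argument, and it is handled exactly as in \cite{CG, KT}: the minimal presentation of $\usR_\infty$ as a quotient of a power series ring, combined with the numerical identity $\dim\mathfrak{t}^0\mathcal{R}_0 - \dim\mathfrak{t}^1\mathcal{R}_0 = \delta$ (which is \eqref{assump:dim}), forces $\usR_\infty\cong W(k)[[X_1,\dots,X_{s-\delta}]]$ to be a power series ring, and a standard commutative-algebra argument (Auslander--Buchsbaum, or the depth argument of \cite{CG}) then forces $H_q(\Dinf)$ to be free over $\usR_\infty$ and the higher homology to vanish. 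I would then remark that this is literally the statement assembled in \S\ref{derivedTW}, so the ``proof'' here is mostly a pointer to that section once the hypotheses are verified.

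Finally, (e) is formal. The surjections in \eqref{RequalsT} are isomorphisms because $\usR_\infty\otimes_{\usrS_\infty}W\twoheadrightarrow\pi_0\mathcal{R}_{\Z[\frac1S]}$ is an iso by the Taylor--Wiles descent \eqref{tw2} together with the fact that $\usR_\infty$ is the power series ring just identified, and because $\pi_0\mathcal{R}_{\Z[\frac1S]}$ surjects onto the Hecke image with nilpotent-free target by Conjecture~\ref{GaloisRepConjecture} and the no-congruence hypothesis (c) of \S\ref{TWnotationsetup}. The free graded action of $\Tor_*^{\usrS_\infty}(\usR_\infty,W)$ on $H_*(Y_0,W)_{\mathfrak{m}}$ comes from the spectral sequence (or rather the degeneration thereof) computing $H_*(\Dinf\otimes_{\usrS_\infty}W)$: since $H_*(\Dinf)=H_q(\Dinf)$ is free over $\usR_\infty$, we get
\begin{equation*}
  H_*(Y_0,W)_{\mathfrak{m}} \cong H_*(\Dinf\otimes^{\mathbf{L}}_{\usrS_\infty}W) \cong H_q(\Dinf)\otimes_{\usR_\infty}\big(\usR_\infty\otimes^{\mathbf{L}}_{\usrS_\infty}W\big) \cong H_q(\Dinf)\otimes_{\usR_\infty}\Tor_*^{\usrS_\infty}(\usR_\infty,W),
\end{equation*}
which exhibits the homology as freely generated in degree $q$ over the $\Tor$-algebra, with $\Tor_*^{\usrS_\infty}(\usR_\infty,W)$ an exterior algebra on $\delta$ generators because $\iota$ is a surjection of power series rings. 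This chain of identifications is exactly the input that Theorem~\ref{Patching_Theorem} and the main theorem \ref{TW main theorem} will feed on, so I would close by noting that (e) is stated here mainly to record it in a form ready for that later application.
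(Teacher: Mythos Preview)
Your approach is essentially the paper's: set up Taylor--Wiles data, define the Artinian quotients $\usbR_n$, patch the chain complexes of $Y(Q_n,n)$ into $\Dinf$ by a compactness argument, apply the Calegari--Geraghty depth/Auslander--Buchsbaum lemma to get concentration and freeness, and read off (e). Two points, however, are not right as stated.

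First, the claim that $\usR_\infty$ is ``finite flat over $\usrS_\infty$'' is false when $\delta>0$: the dimensions are $s+1$ and $s-\delta+1$, so flatness is impossible. What the argument actually uses is only that the $\usrS_\infty$-action on $H_*(\Dinf)$ factors through $\usR_\infty$, together with the dimension bound; the Auslander--Buchsbaum step then forces $H_*(\Dinf)$ to be concentrated in degree $q$ and free over $\usR_\infty$. Flatness of $\usR_\infty$ plays no role.

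Second, and more substantively, your ordering is off for the second isomorphism in \eqref{Snid0}. You say it ``uses the local analysis at Taylor--Wiles primes, namely \eqref{inertia_pullback2} and Lemma~\ref{HDiscreteness},'' but those results concern the \emph{derived} local deformation rings and are used later, in the proof of Theorem~\ref{TW main theorem}, not here. In the paper the surjection $\usR_\infty/\mathfrak{a}_n \twoheadrightarrow \usbR_n$ is only shown to be an \emph{isomorphism} at the very end, \emph{after} one knows $M=H_q(\Dinf)$ is free over $\usR_\infty$: if $x\in\usR_\infty$ dies in $\usbR_n$ then $x$ acts trivially on $H_q(D_n)\cong M/\mathfrak{a}_n M$, and freeness of $M$ forces $x\in\iota(\mathfrak{a}_n)\usR_\infty$. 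So (b$'$) in full strength is a consequence of (d), not an input to it; the patching construction only gives the surjection and the commutative diagram, and injectivity must wait. Your outline should reflect this dependency.
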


This is exactly the setup needed to apply the results of the prior sections.
For later use, observe that  our assumptions (7(e) of \S \ref{TWnotationsetup})  imply that the ``image of the Hecke algebra'' appearing in \eqref{RequalsT} is simply $W$; 
then (e) implies that $\usrS_{\infty} \rightarrow \usR_{\infty}$  is surjective.

\subsection{  Outline of the argument}  
We briefly outline how the argument goes. 
The discussion is somewhat vague, but we hope it is better than saying nothing at all. 

The sets $Q_n$ will be allowable Taylor--Wiles  data (\S \ref{TWprimesdef}). The cohomological condition
in ``allowable'' means  that we know exactly the minimal number of generators for $$\usR_n := \pi_0 \mathcal{R}_n$$
(the usual deformation ring at level $S \coprod Q_n$); this number is $s-\delta$.  Any choice of elements $x_1, \dots, x_{s-\delta} \in \mathfrak{m}(\usR_n)$ projecting to a basis for $\mathfrak{m}/\mathfrak{m}^2$ results in surjections
 $$ W[[x_1, \dots, x_{s-\delta}]] \twoheadrightarrow \usR_n \twoheadrightarrow k[[x_1, \dots, x_{s-\delta}]]/(x_i x_j).$$ 
 
Informally speaking, the content of the Theorem is that $\usR_n$ actually becomes closer and closer to $W[[x_1, \dots, x_{s-\delta}]]$ as $n$ gets large.
To prove this, we construct a morphism
\begin{equation} \label{iota finite} \usrS_{\infty} \rightarrow \usR_n\end{equation} 
which comes, in the end, from studying the action of the inertia group at places in $Q_n$ (see \eqref{DeltaR}).
We try to show that the image of $\usrS_{\infty}$ is ``big''. The map $\iota$ of  Theorem  \ref{TWoutputtheorem}  also arises by taking a ``limit'' of the maps \eqref{iota finite}, using a compactness argument.

To  prove that the image of $\usrS_{\infty}$ is big,  we study the action of $\usR_n$ on a space of modular forms -- indeed, the homology of a certain arithmetic manifold obtained by adding level $Q_n$. 
We claim the pullback of this action    to $\usrS_{\infty}$ has kernel that is ``not too large.''  The action  of $\usR_n$ comes from Conjecture~\ref{GaloisRepConjecture}, and the ``local-to-global compatibility'', which we shall state more precisely in Section~\ref{mprimeconstruc}, asserts that the resulting action of $\usrS_\infty$  agrees 
with the action defined geometrically via
``diamond operators,'' i.e.\
out of the action of explicit groups $\Delta_n$ of automorphisms of the underlying arithmetic manifold. (Studying  such genuine automorphisms is much easier than studying Hecke operators.)

The key point here comes from the fact that $\Delta_n$ acts freely on the underlying arithmetic manifold, and so  the homology is computed from a complex of free modules whose length is tightly bounded from above. 
Then the Auslander-Buchsbaum theorem states, roughly, that if a module has a short projective resolution, then its annihilator cannot be too large. 
The relevance of this last point is  a key observation of Calegari and Geraghty.

 If the reader will forgive one more vagueness, the mechanism of this last argument is similar to a well-known mechanism that prohibits free group actions of a finite group $\Delta$
 on a homology sphere $M$: the quotient $M/\Delta$ has homology ``approximated'' by that of $\Delta$, and thus has difficulty looking like the homology of a manifold.

  \subsection{Construction of complexes}  \label{KTreduced}
  
  A key  point for the analysis is to construct complexes 
  which compute the cohomology of various $Y(K)$'s localized at $\mathfrak{m}$.
  These complexes should be of  reasonable size (much smaller than the full chain complex) so that one can make compactness arguments.
  These should also carry 
  actions of the usual deformation ring, when we consider them as objects in the derived category. 
 We summarize briefly here the properties of these complexes and how they are constructed. 
  \begin{Lemma}  \label{ConsequenceOfConjectureLemma} Assume Conjecture \ref{GaloisRepConjecture}. 
 Then for each  complete local $W$-algebra $E$ and 
 each pair $(K \vartriangleleft K')$ as in \S \ref{sec:galois-rep-conjecture}, with $\Delta =K'/K$ acting in the natural way on $Y(K)$, 
we may construct an object 
  $C_*^{\Delta}(Y(K); E)_{\mathfrak{m}}$   
  in the derived category of $E\Delta$-modules\footnote{Despite the notation,   $C_*^{\Delta}(Y(K); E)_{\mathfrak{m}}$    is not intended
  as the localization or completion of some complex at $\mathfrak{m}$!} , equipped with a $\Delta$-equivariant identification of its homology with  $H_*(Y(K); E)_{\mathfrak{m}}$,  
  in such a way that the following properties are satisfied:
  
 \begin{itemize}
 \item[a.]   The object $C_*^{\Delta}(Y(K), E )_{\mathfrak{m}}$ is  quasi-isomorphic to a bounded complex of finite free $E \Delta$-modules. 
 (In fact, we will just fix such a quasi-isomorphism and use  $C_*^{\Delta}(Y(K), E )_{\mathfrak{m}}$ to denote that complex.) 
 \item[b.]  Let $\mathrm{R}_{K}$ be the (usual, underived) deformation ring  classifying deformations of $\rho$  that are unramified at each place where $K$ is hyperspecial. 
  There is an action of $\mathrm{R}_K$ on $C_*^{\Delta}(Y(K); E)_{\mathfrak{m}}$ by endomorphisms in the derived category of $E\Delta$-modules, satisfying, at the level of homology,  
 the usual compatibility between Frobenius and Hecke operators.  
 
 \item[c.] 
 We have  descent from level $K$ to level $K'$, i.e.\ there is an isomorphism 
 $$C_*^{\Delta}(Y(K), E)_{\mathfrak{m}} \otimes_{E \Delta} E \simeq C_*(Y(K'), E)_{\mathfrak{m}}$$
 where $\otimes_{E \Delta} E$ refers to the derived tensor product, a functor
 from the derived category for $E\Delta$ to that for $E$.  This homomorphism is compatible with  the homomorphism $\mathrm{R}_{K} \rightarrow \mathrm{R}_{K'}$.

\end{itemize} 
\end{Lemma}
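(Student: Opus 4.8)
The plan is to produce the object $C_*^{\Delta}(Y(K);E)_{\mathfrak{m}}$ by a three-step recipe: first build an honest complex of $E\Delta$-modules computing $H_*(Y(K);E)_{\mathfrak{m}}$, then localize it appropriately, then transport the Galois-deformation action via the conjecture. Concretely, I would start from the singular chain complex $C_*(Y(K);E)$, which, because $\Delta = K'/K$ acts freely on $Y(K)$ (the level structure is small enough that $\mathbf{G}(\Q)$ acts without isotropy, as recalled in \S\ref{YKdef}), is a complex of free $E\Delta$-modules once we pass to a $\Delta$-CW structure on $Y(K)$; since $Y(K)$ is a manifold of finite type this CW structure is finite, so we get a \emph{bounded} complex of \emph{finite} free $E\Delta$-modules. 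Localizing at $\mathfrak{m}$ (an idempotent operation in the derived category, realized by a direct summand after passing to $\widetilde{\mathrm{T}}_K$-equivariant complexes and using that $\widetilde{\mathrm{T}}_{K,\mathfrak{m}}$ is a direct factor of $\widetilde{\mathrm{T}}_K\otimes_{\Z}E$ when $E$ is $\wp$-complete) gives property (a). Here one must check that localization at $\mathfrak{m}$ is $\Delta$-equivariant, which is immediate because the Hecke operators commute with the $\Delta$-action (diamond operators and prime-to-level Hecke operators live at disjoint places), and that the resulting summand is still perfect over $E\Delta$, which follows since it is a summand of a perfect complex.

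For property (b): Conjecture~\ref{GaloisRepConjecture} attaches to $\mathfrak{m}$ a Galois representation $\overline\sigma$ and, under the big-image hypothesis, a lift $\sigma: \mathrm{Gal}(\overline\Q/\Q)\to G(\widetilde{\mathrm{T}}_{K,\mathfrak{m}})$ satisfying local-global compatibility. By the universal property of the (usual, underived) deformation ring $\mathrm{R}_K$ classifying deformations unramified where $K$ is hyperspecial, the representation $\sigma$ and property (a) of the conjecture (matching $\mathrm{trace}(\tau\circ\sigma)(\mathrm{Frob}_q)$ with $T_{q,\tau}$) give a canonical homomorphism $\mathrm{R}_K\to \widetilde{\mathrm{T}}_{K,\mathfrak{m}}$. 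Since $\widetilde{\mathrm{T}}_{K}$ acts on $C_*^{\Delta}(Y(K);E)_{\mathfrak{m}}$ by endomorphisms in the derived category of $E\Delta$-modules (it acts on chains up to quasi-isomorphism, and this action commutes with $\Delta$), composing gives the desired $\mathrm{R}_K$-action; the ``usual compatibility between Frobenius and Hecke operators'' is exactly the content of property (a) of the conjecture, transported through $\mathrm{R}_K\to\widetilde{\mathrm{T}}_{K,\mathfrak{m}}$.

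For property (c): one has the standard transfer/projection formula $C_*(Y(K);E)\otimes^{\mathbf L}_{E\Delta}E \simeq C_*(Y(K');E)$, valid because $Y(K)\to Y(K')$ is a $\Delta$-covering (so $C_*(Y(K);E)$ is a complex of free $E\Delta$-modules whose coinvariants compute the chains of the quotient). Localizing both sides at $\mathfrak{m}$ is compatible with the derived tensor product because the Hecke operators prime to the level act compatibly at levels $K$ and $K'$ (the prime-to-level Hecke algebra maps $\widetilde{\mathrm{T}}_{K'}\to\widetilde{\mathrm{T}}_K$ are the obvious ones), and compatibility of $\mathrm{R}_K\to\mathrm{R}_{K'}$ with the $\widetilde{\mathrm{T}}$-maps follows from the characterizing trace identities of property (a) of the conjecture, since both sides are determined by the same family of $\mathrm{trace}(\tau\circ\sigma)(\mathrm{Frob}_q)$.

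\textbf{Main obstacle.} The genuinely delicate step is (b): upgrading the $\widetilde{\mathrm{T}}$-action to an $\mathrm{R}_K$-action \emph{in the derived category of $E\Delta$-modules} (not merely on homology), and doing so functorially in $(K,K')$ and $E$ so that (c) holds on the nose. The subtlety is that $\widetilde{\mathrm{T}}_K$ is defined precisely as endomorphisms of the chain complex in the derived category, so the action on $C_*^{\Delta}$ is automatic; but one needs the $\mathrm{R}_K\to\widetilde{\mathrm{T}}_{K,\mathfrak{m}}$ map to respect the $\Delta$-structure and the base change in $E$, and one needs the local-global compatibility assumption (Assumption~\ref{LGasssumption}, referenced in the conjecture) to identify the action of the inertia/diamond part correctly --- this is what makes the later identification $\overline{\usrS_n}=W_n[\Delta_n]\to\usbR_n$ with the geometric diamond-operator action possible. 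I would isolate this as a separate lemma invoking Conjecture~\ref{GaloisRepConjecture}(d) and handle the $\Delta$-equivariance by working throughout with $\widetilde{\mathrm{T}}$-equivariant perfect complexes over $E\Delta$, where the Hecke and diamond actions are manifestly commuting.
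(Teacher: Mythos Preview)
Your proposal is correct and follows essentially the same approach as the paper: take the chain complex of $Y(K)$ as an object in the derived category of $E\Delta$-modules, split off the $\mathfrak{m}$-summand via an idempotent (the paper cites \cite[Proposition~3.4]{BN} for this splitting, then passes to a minimal resolution for (a)), and invoke the conjecture for the $\mathrm{R}_K$-action via the map $\mathrm{R}_K\to\widetilde{\mathrm{T}}_{K,\mathfrak{m}}$. One difference in emphasis: you flag (b) as the delicate step, but the paper treats (b) as immediate from the conjecture and instead flags (c) as the part needing care, since the Hecke algebras acting at levels $K$ and $K'$ are not literally the same algebra---they refer to \cite[Lemma~6.6]{KT} for the required compatibility.
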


 \proof (Outline, summarizing the work of Khare--Thorne \cite{KT}):   

The main point is that we can do the localization at $\mathfrak{m}$ at the derived level:

 Take first of all $C_*^{\Delta}(Y(K), E)$ to be the chain complex of $Y(K)$ with $E$ coefficients, 
considered as an object in the derived category of $E \Delta$-modules.  Let $\mathrm{T}_K$ be the Hecke algebra at level $K$.
(For the current discussion this means the Hecke algebra generated by prime-to-$K$ operators, as in \S \ref{TWnotationsetup};  however, the same arguments will apply in the setting
of \S \ref{mprimeconstruc}, where will use a slightly larger Hecke algebra.) 

   As before we define
 $\mathrm{T}_{K,\mathfrak{m}}$  as the completion of $\mathrm{T}_K$ at $\mathfrak{m}$; it  is a direct factor of the $p$-completion of $\mathrm{T}_K$. Denote by $e$ the corresponding idempotent; it is an endomorphism
of $C_*^{\Delta}(Y(K), E )$ in the derived category, and thus \cite[Proposition 3.4]{BN} there is a splitting $C_* = C_* e  \oplus C_* e'$ corresponding to $1=e + e'$; the resulting
splitting is unique up to unique isomorphism (``unique isomorphism'' understood to be in the derived category).
Thus we put
$$C_*^{\Delta}(Y(K), E )_{\mathfrak{m}} := C_*^{\Delta}(Y(K), E) \cdot e.$$

 Its homology is finitely generated as $E\Delta$-module and we obtain (a) by passing to a minimal resolution.  
 Part (b) follows at once from the Conjecture enunciated above.

For descent, part (c), we need to be careful since the Hecke algebras acting at different levels are not quite the same;
for discussion of that issue in the case needed we refer to \cite[Lemma 6.6]{KT}.  \qed 

\subsection{Taylor-Wiles sets}  \label{moredetails}  
  
 Fix an allowable Taylor--Wiles datum $Q$ of level $n$ (see \S \ref{TWprimesdef});
 recall this comes equipped with a choice of element of $T(k)$ conjugate to Frobenius, for each prime in $Q$. 

 If $Q$ is an auxiliary set of primes, disjoint from $S$, we let $Y_0(Q)$
 be the manifold obtained by adding Iwahori level at $Q$, i.e.
 replacing $K_0 = \prod K_{0,v}$ by $$\prod_{v \notin Q} K_{0,v} \times \prod_{v \in Q} \mathrm{Iwahori}_v.$$

 As usual (generalizing the case of the covering of modular curves $X_1(q) \rightarrow X_0(q)$)
 we can produce a manifold $Y_1(Q)$ which covers $Y_0(Q)$ with Galois group $$\prod_{q \in Q}  \mathbf{T}(\F_q)   $$
 where $\mathbf{T}$ was, we recall, a maximal split torus in $\mathbf{G}$.

From the surjection $\mathbf{T}(\F_q) \twoheadrightarrow \mathbf{T}(\F_q)/p^n  \cong (\Z/p^n)^r$ (where $r$ is the rank of $G$)   
we get a unique subcovering $Y(Q, n)$ with Galois group \begin{equation} \label{DeltaQdef} \Delta_Q :=  \prod_{q \in Q}  \mathbf{T}(\F_q)/p^n  \cong (\Z/p^n)^{r \cdot \# Q} \end{equation}
over $Y_0(Q)$.  
In other words, this fits in the following diagram 
$$\overbrace{Y_1(Q) \rightarrow \underbrace{ Y(Q, n) \rightarrow Y_0(Q)}_{\Delta_Q \cong (\Z/p^n)^{r \cdot \# Q}}}^{\prod_{q \in Q}  \mathbf{T}(\F_q)}$$   
$\Delta_Q$ also enters into the deformation theory of Galois representations at level $Q$, as we now recall:

Let $\usR_{S \coprod Q}$ be the usual deformation ring for Galois representations   at level $S \coprod Q$,
which are, as always, crystalline at $p$.  Thus $\usR_{S \coprod Q} = \pi_0 \mathcal{R}_{S\coprod Q}$
in the notation of  \S \ref{TWintro}. 
Form  a universal deformation
$$ \widetilde{\rho}^{\univ}: \Gal(\overline{\Q}/\Q) \longrightarrow G(\usR_{S \coprod Q})$$
and restrict this to the Galois group  of a place $q \in Q$. 
 
This restricted representation  can be conjugated, by Remark \ref{factoring remark},  to take values in $T$.
Note that the resulting conjugated homomorphism  $\pi_1(\Q_v) \to T(R_{S \coprod Q})$ is uniquely determined by requiring
that the image of Frobenius reduce to the element of $T(k)$ prescribed as part of the Taylor--Wiles data    (see first paragraph of this subsection). 
We shall suppose that this has been done.

We therefore get a map of deformation rings
$$ \mbox{framed deformation ring of $ \rho_{\Q_q}^T$, with target group $T$} \longrightarrow \usR_{S \coprod Q}.$$  
 Now the left-hand side is identified with the group algebra of a certain quotient of $\mathbf{T}(\Q_q)$ (Remark \ref{canonicalS})
and in particular we obtain a map 
\begin{equation} \label{oldTmap} \mathbf{T}(\mathbb{F}_q)_p \rightarrow \usR_{S \coprod Q}^{*}.\end{equation}  
 where the subscript $p$ means ``$p$-part'', i.e.\ $A_p = A \otimes \Z_p$.

(More explicitly,  we can factorize
$ \widetilde{\rho}^{\univ}|_{\pi_1 \Q_q}:   (\pi_1 \Q_q)^{\ab} \rightarrow  T(\usR_{S \coprod Q})$, and if we compose this 
  with any character $\chi:  T \rightarrow \mathbb{G}_m$, 
we get a homomorphism $(\pi_1 \Q_q)^{\ab} \rightarrow \usR_{S \coprod Q}^*$; in particular, restricting to $\mathbb{F}_q^* \subset (\pi_1 \Q_q)^{\ab}$ and noting that the image is pro-$p$, we get a map
$\mathbb{F}_q^* \otimes \Z_p \rightarrow \usR_{S \coprod Q}^*$.  Thus, we have produced a map $\mathbb{F}_q^* \otimes \Z_p \rightarrow \usR_{S \coprod Q}^*$ for each character  $T \rightarrow \mathbb{G}_m$ --
which, said intrinsically, amounts to \eqref{oldTmap}). 

Taking a product over $q \in Q$, we get at last a morphism
\begin{equation} \label{DeltaR} \widetilde{ \Delta_Q } := \prod_{q \in Q} \mathbf{T}(\mathbb{F}_q)_p \rightarrow \usR_{S \coprod Q}^*\end{equation} 
Note that $\widetilde{\Delta}_Q /p^n = \Delta_Q$. Observe that  $\widetilde{\Delta}_Q$ is isomorphic to $\prod_{q \in Q} (\Z/p^{n'_q})^{r}$, where $n'_q$ is the highest power of  $p$ dividing $q-1$. 

\subsection{Enlarging $\mathfrak{m}$}  \label{mprimeconstruc}

 In the classical theory of modular forms,
 the space of modular oldforms at level $q$ is a sum of two copies of the space of modular forms at level $1$;
one can distinguish these two copies as the two eigenspaces for the $U_q$-operator. We carry out the analogue here, 
using the ``$U_q$-operator'' to slightly enlarge the maximal ideal $\mathfrak{m}$.

Let $\mathrm{I}_q$ be an Iwahori subgroup of $\G(\Q_q)$
and $\mathrm{I}_q' \leqslant \mathrm{I}_q$ the subgroup corresponding to $Y(Q, n)$,
so that $\mathrm{I}_q/\mathrm{I}_q' \simeq (\Z/p^n)$. 
Writing $X_*$ for the co-character group of $\mathbf{T}$ (more canonically
one replaces $\mathbf{T}$ by the torus quotient of a Borel subgroup)
and let $X_*^+ \subset X_*$ be the positive submonoid defined by the Borel subgroup $\mathbf{B}$ (the dual to the cone spanned by roots). 
 In the usual way each $\chi \in X_*^+$ gives rise to a Hecke operator at level $Y(Q, n)$,
 namely the operator defined by the double coset $\mathrm{I}_q'   \ \chi(q)  \ \mathrm{I}_q'$;
 for $\mathbf{G}=\PGL_2$ and $\chi$ a generator for $X_*^+$
 this gives the $U_q$-operator.

Let the extended Hecke algebra at level $Y(Q, n)$
be the subalgebra generated\footnote{As with the Hecke algebra, we regard these as acting
on the chain complex of $Y(Q, n)$, considered as an object of a suitable derived category, and ``generated''
is taken inside the endomorphism ring of that object.}  by prime-to-level Hecke operators
together with all the $\mathrm{I}_q' \chi(q) \mathrm{I}_q'$, as above.
We make an ideal $\mathfrak{m}'$
of this extended Hecke algebra thus:
$$ \mathfrak{m}' = \left( \mathfrak{m},   \mathrm{I}_q' \  \chi(q)  \  \mathrm{I}_q' -   \langle \chi, \Frob^T_q \rangle\right). $$ 
Here, we have usen the choice of $\Frob^T_q \in T(k)$
that comes with a Taylor--Wiles prime, 
and the identification $\chi \in X_*(\mathbf{T}) \simeq X^*(T)$ 
to form  $\langle \chi, \Frob^T_q \rangle \in k.$

We can now formulate: 
\begin{Assumption} \label{LGasssumption} {\em   (Local-global compatibility): } 
Let $Q$ be a Taylor-Wiles datum of  level $n$, and let $\mathfrak{m}'$ be as defined in the preceding two paragraphs. The action of $\widetilde{\Delta}_Q$ on 
$C_*^{\Delta_Q}(Y(Q,n); E)_{\mathfrak{m}'}$ 
via \eqref{DeltaR}  and Conjecture  \ref{GaloisRepConjecture}  
coincides with the natural action, that is to say, the action via $\widetilde{\Delta}_Q \rightarrow \Delta_Q$
and deck transformations.
\end{Assumption}

\subsection{Sequences of Taylor--Wiles sets }  \label{seq TW sec}
Now choose arbitrarily allowable Taylor--Wiles data $(Q_n, n)$, indexed by an increasing sequence of integers $n$,
and write for short $\Delta_n = \Delta_{Q_n}$ and $\widetilde{\Delta}_n = \widetilde{\Delta}_{Q_n}$ (see \eqref{DeltaR} for definition).   We choose all the $Q_n$   to be of the same size $\# Q$; recall that 
\begin{equation} \label{Deltaiso} \widetilde{\Delta}_n/p^n = \Delta_n \cong (\Z/p^n)^{r \# Q}\end{equation}
where $r$ is the rank of $G$.

Let $C_n$ be the reduced chain complex constructed in Assumption \ref{LGasssumption} for $Y_n = Y(Q_n,n)$ with $E=W_n$
and with maximal ideal $\mathfrak{m}'$ as in \S \ref{mprimeconstruc}, i.e.
\begin{equation} \label{Cndef} C_n = C_*^{\Delta_n}(Y(Q_n, n), W_n)_{\mathfrak{m}'},\end{equation}
so it computes the homology of $Y(Q_n, n)$ with $W_n$ coefficients, localized at $\mathfrak{m}'$,
and equivariantly for the action of the Galois group $\Delta_n$ of $Y(Q_n, n) \rightarrow Y_0(Q)$.

The complex $C_n$ is by definition a complex of   $\overline{\usrS_n}$-modules,  where 
   $$\usrS_{n} :=  W[\Delta_n], \qquad \qquad  \overline{\usrS_n} = \usrS_n/p^n.$$ \index{$\usrS_n$}     \index{$\overline{\usrS_n}$}

   We observe for later use that we have an identification  \begin{equation} \label{Fred0} C_n \otimes_{  \overline{\usrS_n}} W_n \simeq C(Y_0, W_n)_{\mathfrak{m}}, \end{equation}
inside the derived category of $W_n$-modules.  In fact, part (c) of Lemma
\ref{ConsequenceOfConjectureLemma} says that the left-hand side is identified with $C(Y_0(Q_n), W_n)_{\mathfrak{m}'}$,
where $\mathfrak{m}'$ is the analogue to $\mathfrak{m}'$ above but at level $Y_0(Q_n)$; and then 
  the natural projection $Y_0(Q_n) \rightarrow Y_0$ induces an isomorphism
$C(Y_0(Q_n), W_n)_{\mathfrak{m}'} \stackrel{\sim}{\rightarrow}  C(Y_0, W_n)_{\mathfrak{m}}$ (this corresponds to \cite[Lemma 6.25 (4)]{KT}). 
Note that \eqref{Fred0} implies that \begin{equation}
\label{vanishing0} H_j(C_n) = 0,  \ \ j \notin  [q,q+\delta].\end{equation}
because, by  5 and 7  from \S \ref{TWnotationsetup}, the homology $H_*(Y_0, k)_{\mathfrak{m}}$
is vanishing for $j \notin [q, q+\delta]$.  In particular, we can and do suppose
that $C_n$ itself is supported in degrees $[q, q+\delta]$.

We let $$\mathrm{R}_n = \mbox{usual deformation ring at level }S \coprod Q_n.$$
By virtue of our assumptions (Lemma  \ref{ConsequenceOfConjectureLemma})
 there is a natural action of $\mathrm{R}_n$  
on $C_n$, by endomorphisms in the derived category of $\overline{\usrS_n}$-modules. 
The assumption 
of local-global compatibility (Assumption
\ref{LGasssumption}) means that this action factors through the
quotient
\begin{equation} \label{quotient}
  \begin{aligned}
    \theta: \usR_n \twoheadrightarrow \underbrace{ \usR_n
      \otimes_{W[\widetilde{\Delta}_n]} W_n[\Delta_n]}_{\mbox{inertial
        level $\leq n$}}
  \end{aligned}
\end{equation}
We can think of the right hand side as classifying deformations of inertial level $\leq n$  \eqref{ILL def} at primes in $Q$ -- that is to say, deformations 
such that, at each prime $q\in Q$, the deformation factors not merely through the $(\Z/q)^*$ quotient of inertia,
 but actually through the maximal exponent $p^n$ quotient of that group.  
 In particular, we have a natural map
\begin{equation} \label{quotient2}  \Delta_n \rightarrow  \left(  \usR_n \otimes_{W[\widetilde{\Delta}_n]} W_n[\Delta_n] \right)^*\end{equation}
 
 To do the limit process we  replace $\mathrm{R}_n$ (which might, {\em a priori},  be a bit too large -- e.g., infinite) by an Artinian quotient: 
 
  \begin{Lemma} \label{nilpotence}  Suppose that $D_n$ is any complex of finite free $\overline{\usrS_n}$-modules.   Let $\mathbf{e} $ be an endomorphism of $D_n$, 
 considered as an object in the derived category, which acts nilpotently on homology. Then $\mathbf{e}^{K(n)} = 0$, where we can take
 $K(n) = \ell \cdot Q \cdot o$, with
 \begin{itemize}
 \item $\ell$ the length of $\overline{\usrS_n}$ over itself;
 \item $o$ the range of degrees in which $D_n$ is supported (i.e., if supported in $[a,b]$, we have $o=b-a+1$);
 \item $Q$ is the total rank of $D_n$ over $\overline{\usrS_n}$. 
 \end{itemize}
  Without loss of generality we can and do always suppose $K(n) \geq 2n$. 
 \end{Lemma}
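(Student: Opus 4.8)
The plan is to prove Lemma~\ref{nilpotence} by a completely elementary argument with minimal resolutions, reducing everything to linear algebra over the local ring $\overline{\usrS_n}$. First I would recall that an endomorphism $\mathbf e$ of an object $D_n$ in the derived category of $\overline{\usrS_n}$-modules, where $D_n$ is a bounded complex of finite free modules, can be represented by an honest chain map $\mathbf e \colon D_n \to D_n$ (this is standard: maps in the derived category out of a bounded-above complex of projectives into any complex are computed by honest chain maps up to homotopy). So it suffices to bound the nilpotence degree of $\mathbf e$ as an endomorphism of the complex, up to chain homotopy; but since we only want $\mathbf e^{K(n)} = 0$ in the derived category, it suffices to bound it up to homotopy, and in fact I expect it to be cleanest to first replace $D_n$ by a \emph{minimal} complex of free modules (one where all differentials have entries in the maximal ideal $\mathfrak m_n$ of $\overline{\usrS_n}$), which exists since $\overline{\usrS_n}$ is a finite local ring; this is harmless since the total rank can only decrease and the range of degrees is unchanged, so the claimed $K(n)$ still works.

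Next, the key observation: if $D_n$ is a minimal complex, then for any chain endomorphism $\mathbf e$ the induced map on $H_*(D_n / \mathfrak m_n D_n) = D_n / \mathfrak m_n D_n$ (the homology of the complex with zero differential) is just $\mathbf e \bmod \mathfrak m_n$ acting on each graded piece. Since $\mathbf e$ acts nilpotently on $H_*(D_n)$ and $H_*(D_n)$ surjects onto $H_*(D_n \otimes k) $... — actually the cleaner route is: $\mathbf e$ acts nilpotently on $H_*(D_n)$, hence on the finite-length module $H_*(D_n)$ it is nilpotent of degree at most the length; but I want the bound in terms of ranks. Here I would argue degree by degree. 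Filter by the ``degree of nilpotence'' and use that an endomorphism of a free module $\overline{\usrS_n}^{\oplus r}$ which is nilpotent modulo the differential structure — more precisely, I would show that $\mathbf e$ induces a nilpotent endomorphism of the graded vector space $H_*(D_n \otimes_{\overline{\usrS_n}} k)$, deduce that after raising $\mathbf e$ to the power (range of degrees) $\times$ (dimension of that vector space) one gets an endomorphism that is zero modulo $\mathfrak m_n$ on homology-with-$k$-coefficients, hence (by minimality and a Nakayama-style descending induction on powers of $\mathfrak m_n$, using that $\overline{\usrS_n}$ has finite length $\ell$) that $\mathbf e^{K(n)}$ kills $D_n$ up to homotopy. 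The bookkeeping gives exactly $K(n) = \ell \cdot Q \cdot o$ with $Q$ the total rank and $o$ the number of degrees.

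The main obstacle — and the step I would spend the most care on — is making precise the passage from ``$\mathbf e$ nilpotent on homology'' to ``$\mathbf e^N$ nilpotent on the complex up to homotopy'' with an explicit, correct bound. The subtlety is that an endomorphism of a complex that is null-homotopic on homology need not be null-homotopic as a chain map; one genuinely needs the minimality hypothesis (so that the complex is a minimal free resolution-type object and $H_*(- \otimes k)$ detects the ranks) together with an iteration over the finite filtration $D_n \supset \mathfrak m_n D_n \supset \mathfrak m_n^2 D_n \supset \cdots$, which terminates after $\ell$ steps. At each associated-graded stage one is looking at an endomorphism of a complex of $k$-vector spaces, where nilpotence on homology plus the rank bound $Q$ and degree bound $o$ give nilpotence of the chain map (not just on homology) after power $Q \cdot o$ — this is where the theory of the ``Loewy length'' / the fact that on a bounded complex of vector spaces a chain endomorphism nilpotent on homology is chain-nilpotent of controlled degree enters. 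Combining the $\ell$ filtration steps with this per-step bound $Q \cdot o$ yields $K(n) = \ell \cdot Q \cdot o$. Finally, the harmless normalization $K(n) \geq 2n$ is achieved simply by enlarging $K(n)$ if necessary (replacing it by $\max(K(n), 2n)$), which changes nothing in the conclusion.

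I would remark that the only properties of $\overline{\usrS_n} = W_n[\Delta_n]$ actually used are that it is a commutative local Artinian (finite-length) ring, so the lemma is really a general statement about derived-category endomorphisms of bounded complexes of finite free modules over a finite local ring, and the proof should be phrased at that level of generality.
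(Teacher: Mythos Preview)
The paper's proof is two lines: the total length of $H_*(D_n)$ as an $\overline{\usrS_n}$-module is at most $Q \cdot \ell$, so $\mathbf{e}^{Q\ell}$ already vanishes on homology; then \cite[Lemma 2.5]{KT} (the ghost-map lemma: a derived endomorphism of a bounded complex of projectives of amplitude $o$ that is zero on homology has $o$-th power zero in the derived category) gives $\mathbf{e}^{Q\ell \cdot o} = 0$. No minimal complexes, no $\mathfrak m$-adic filtration.

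Your route has a real gap at the step ``$\mathbf e$ induces a nilpotent endomorphism of $H_*(D_n \otimes_{\overline{\usrS_n}} k)$''. Your first justification, that $H_*(D_n)$ surjects onto $H_*(D_n \otimes k)$, is false: for $\overline{\usrS_n} = k[t]/t^2$ and the minimal complex $\overline{\usrS_n} \xrightarrow{\,t\,} \overline{\usrS_n}$, the map $H_1(D_n) = (t) \to H_1(D_n \otimes k) = k$ is zero. You then pivot to the $\mathfrak m$-adic filtration, but by minimality the associated graded pieces have zero differential, so ``nilpotent on their homology'' is literally the statement ``$\mathbf e \bmod \mathfrak m$ is nilpotent'' --- exactly what you set out to prove, so the argument is circular as written. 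The general ``fact'' you invoke, that a chain endomorphism of a bounded complex of $k$-vector spaces which is nilpotent on homology is nilpotent as a chain map, is false (the identity on any nonzero acyclic complex is a counterexample). The honest fix is: take $N$ with $\mathbf e^N = 0$ on $H_*(D_n)$, apply the ghost lemma to get a null-homotopy $\mathbf e^{No} = dh + hd$, and reduce mod $\mathfrak m$ using $d \equiv 0$. But that \emph{is} \cite[Lemma 2.5]{KT}, and once you have it the paper's argument is already complete; your minimal-complex and filtration machinery would then at best sharpen the constant to $Q\ell$, which is irrelevant for the application.
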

 \proof  The total length of homology, as a $\overline{\usrS_n}$-module, is clearly at most
 $ Q  \cdot  \ell$. This shows that  $\mathbf{e}^{Q \ell} $ acts trivially on homology.
 Using \cite[Lemma 2.5]{KT} the claimed result follows. 
 \qed   
 
 With $K(n)$ as in the above lemma, applied to $C_n$ in place of $D_n$, 
  put
$$\mathfrak{d}_n = \left( \mathfrak{m}_{\usR_n}^{K(n)},  \ker (\theta ) \right)\subset R_n$$
where $\theta$ is as in \eqref{quotient}. 
Then $$\usbR_n =  \usR_n/\mathfrak{d}_n$$
is Artinian, and the action of $\usR_n$ on $C_n$ factors through $\usbR_n$. 
The map from $\widetilde{\Delta}_n$ to 
$\mathrm{R}_n$ yields a map  from the group algebra of $\Delta_n$ into $\usbR_n$, i.e.
\begin{equation} \label{martian} \overline{ \usrS_n} \rightarrow \usbR_n \end{equation}
and we also have descent
\begin{equation} \label{DescentEquation} \usbR_n \otimes_{\overline{\usrS_n}} W_n\cong \pi_0 \mathcal{R}_{S}/(p^n, \mathfrak{m}^{K(n)})\end{equation}
which expresses the fact that ``a Galois representation at level $Q_n$, which is trivial on $\Delta_n$,  actually descends to base level,''
i.e.\ the underived version of the discussion of \S \ref{sec:TWprimes}.   \footnote{More precisely, $ \usbR_n \otimes_{\overline{\usrS_n}} W_n$
computes the quotient of $\mathrm{R}_n$ by the ideal generated by elements $\delta-1$ with $\delta \in \widetilde{\Delta}_n$, 
by $p^n$, and by $\mathfrak{m}^{K(n)}$. The quotient of $\mathrm{R}_n$ by the ideal generated by $\delta-1$
recovers $\pi_0 \mathcal{R}_S$, whence \eqref{DescentEquation}. }

Finally, the assumption of local-global compatibility from the previous sections  still says that
 the action of $\usrS_n$ on $C_n$ via  $\overline{\usrS_n} \rightarrow \usbR_n$ (see \eqref{martian}) 
coincides with the natural action of $\overline{\usrS_n}$ on $C_n$ (i.e.\ the natural action by deck transformations.)  

\subsection{Numerology} \label{choice}  

Recall \eqref{vanishing_sequence} that, the $Q_n$ being an allowable Taylor--Wiles datum of level $n$,  we have with  $S' = S \coprod Q_n$, 
a surjection
$$A: H^1(\Z[\frac{1}{S'}], \Ad \rho) \twoheadrightarrow H^1(\Q_p, \Ad \rho)/H^1_f(\Q_p, \Ad \rho).$$  
 The kernel of $A$ describes the minimum number of generators for $\usR_n$ as a $W(k)$-module. 
An Euler characteristic computation shows that $\ker(A)$ has dimension  
$$  \underbrace{ \dim(B) - \delta }_{  \dim H^1(\Z[\frac{1}{S}], \Ad \rho)   - \dim H^2(\Z[\frac{1}{S}], \Ad \rho) }   - \underbrace{ \dim G}_{\dim H^1(\Q_p, \Ad \rho) } +  \underbrace{ \dim U}_{\dim H^1_f} +  \underbrace{  (\mathrm{rank} G) \cdot \# Q }_{\bigoplus_{Q} H^2}  $$
which equals $r \cdot \# Q - \delta$; recall that $r$ is the rank of $G$.  In other words, $\usR_n$ is a quotient of a power series ring over $W(k)$ in $r \# Q -\delta$ variables. 
We set $$s=r \cdot \# Q.$$ \index{$s$} 
and will choose $\# Q$ so large that $\pi_0 \mathcal{R}_S$ can be generated over $W(k)$ by $s-\delta$ elements.  
\subsection{Limit rings} 
   
 Now we pass to the limit in the following way.  
As we just proved in \S \ref{choice}, each $\usR_n$ and so also $\usbR_n$ is a quotient of $\usR_{\infty} :=W(k)[[x_1, \dots, x_{s-\delta}]]$. 
 Choose  once and for all a surjection
$\usR_{\infty} \rightarrow \pi_0 \mathcal{R}_{S}$ and choose  $\usR_{\infty} \rightarrow  
\usbR_n$ to lift
the resulting map $\usR_{\infty} \rightarrow \pi_0 \mathcal{R}_{S}/ (p^n,\mathfrak{m}^{K(n)})$, i.e.\ the  following diagram commutes 
   \begin{equation}  \label{factor_through0}
     \begin{aligned}
       \xymatrix{
         \usR_{\infty} \ar[r] \ar[d]^{=} &  \usbR_n   \ar[d]^{\pi_n} \\
         \usR_{\infty} \ar[r] & \pi_0 \mathcal{R}_{S}/(p^n,
         \mathfrak{m}^{K(n)}) .  }
     \end{aligned}
\end{equation}

-- this can be  done since the right vertical map $\pi_n$ from \eqref{DescentEquation} is surjective. In fact, $\usR_{\infty} \rightarrow  \usbR_n$ can and will be chosen to be {\em surjective}:\footnote{We can also avoid this point,  by not choosing $\usR_{\infty} \rightarrow \pi_0 \mathcal{R}_S$ 
at the start,  instead just choosing the map $\usR_{\infty} \rightarrow \usbR_n$ and then using a compactness argument
to pass to a subsequence where the resulting maps $\usR_{\infty} \rightarrow \pi_0 \mathcal{R}_{S}/ (p^n,\mathfrak{m}^{K(n)})$ converge. }
\begin{Lemma}Suppose $A \stackrel{e}{\twoheadrightarrow} B \rightarrow k$ are two Artin local rings over $k$, both abstractly
 quotients of $W[[x_1, \dots, x_k]]$. Then any surjection $\varphi: W[[x_1, \dots, x_k]] \twoheadrightarrow B$
can be lifted to a surjection $W[[x_1, \dots, x_k]] \twoheadrightarrow A$.  (All the maps here are assumed to commute with the augmentations to $k$). 
\end{Lemma}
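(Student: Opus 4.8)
The plan is to first write down \emph{some} continuous $W$-algebra map $\psi_0\colon P\to A$, compatible with the augmentations, with $e\circ\psi_0=\varphi$, ignoring surjectivity, and then to correct it into a surjection without spoiling the relation $e\circ\psi=\varphi$. For the first step there is nothing to do beyond choosing lifts: writing $P=W[[x_1,\dots,x_k]]$, compatibility of $\varphi$ with the augmentations puts $\varphi(x_i)$ in the maximal ideal $\mathfrak{m}_B$, and since $e$ is a surjection compatible with the augmentations it restricts to a surjection $\mathfrak{m}_A\twoheadrightarrow\mathfrak{m}_B$, so one may pick $a_i\in\mathfrak{m}_A$ with $e(a_i)=\varphi(x_i)$. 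As $A$ is Artinian, $\mathfrak{m}_A$ is nilpotent, so $x_i\mapsto a_i$ extends uniquely to a continuous $W$-algebra map $\psi_0\colon P\to A$; by construction $e\circ\psi_0=\varphi$, and $\psi_0$ is compatible with the augmentations since $\psi_0(x_i)\in\mathfrak{m}_A$. No smoothness of $P$ over $W$ is needed here: a power-series ring maps to an Artinian ring by sending each variable to an arbitrary element of the maximal ideal.

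The real content is the correction step. First I would record the criterion: a map $\psi\colon P\to A$ as above is surjective as soon as $p,\psi(x_1),\dots,\psi(x_k)$ generate $\mathfrak{m}_A$ as an ideal, because then $\psi(P)$ is a $W$-subalgebra containing $p$ and an ideal-generating set of the nilpotent ideal $\mathfrak{m}_A$, so $\psi(P)\supseteq W+\mathfrak{m}_A=A$. Since $A$ is a quotient of $P$, the $k$-vector space $V:=\mathfrak{m}_A/(\mathfrak{m}_A^2+pA)$ has dimension $\le k$, and by Nakayama's lemma (applicable because $\mathfrak{m}_A$ is nilpotent) it then suffices to choose the $\psi(x_i)$ so that their classes span $V$. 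Now $e\circ\psi_0=\varphi$ surjective forces $(p,\psi_0(x_1),\dots,\psi_0(x_k))+I=\mathfrak{m}_A$ with $I:=\ker(e)\subseteq\mathfrak{m}_A$; reducing modulo $\mathfrak{m}_A^2+pA$ shows that the classes $w_i$ of the $\psi_0(x_i)$, together with the image $U$ of $I$, span $V$. A short linear-algebra step finishes it: because $\dim_kV\le k$, I can select indices $I_1$ so that $\{w_i:i\in I_1\}$ is a basis of $\mathrm{span}(w_i)$, leave $d_i=0$ for those, pick $\dim\bigl(V/\mathrm{span}(w_i)\bigr)$ of the remaining indices and assign to them corrections $d_i\in U$ whose classes form a basis of $V/\mathrm{span}(w_i)$, and set $d_i=0$ for all other indices; then $\{w_i+d_i\}$ spans $V$. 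Lifting each $d_i$ to $c_i\in I$ and putting $\psi(x_i)=\psi_0(x_i)+c_i$ yields a continuous $W$-algebra map $\psi\colon P\to A$ with $e\circ\psi=\varphi$ (as $e(c_i)=0$), surjective by the criterion above, and compatible with the augmentations.

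Equivalently, one can package the correction step as the statement that any two continuous augmentation-compatible $W$-algebra surjections $P\twoheadrightarrow B$ differ by a continuous $W$-algebra automorphism of $P$: choosing some surjection $\alpha\colon P\twoheadrightarrow A$ (which exists by hypothesis), one applies this to $\varphi$ and to $e\circ\alpha$ to get $\sigma\in\mathrm{Aut}_W(P)$ with $(e\circ\alpha)\circ\sigma=\varphi$, and takes $\psi=\alpha\circ\sigma$; proving that reformulation amounts to the same bookkeeping, selecting the $\sigma(x_i)$ lifting $\varphi(x_i)$ along $e\circ\alpha$ with linearly independent linear parts, which is possible precisely because their classes modulo the linearized kernel of $e\circ\alpha$ span the target, so that $\sigma$ has invertible linearization and is an automorphism. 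I expect the only genuinely nontrivial point to be this correction: it is \emph{not} enough to know $\psi_0(P)+I=A$, one must actually modify the lifts $a_i$, and the care lies in keeping track of which generators to leave untouched and which to correct. All the underlying inputs — nilpotence of $\mathfrak{m}_A$, Nakayama's lemma, and the bound $\dim_kV\le k$ coming from ``$A$ is a quotient of $P$'' — are immediate, so beyond that bookkeeping the proof should be short.
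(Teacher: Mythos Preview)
Your proof is correct and takes essentially the same approach as the paper: lift the $\varphi(x_i)$ arbitrarily, then correct by elements of $I=\ker(e)$ so that the images span the cotangent space, using $\dim_k V\le k$ and Nakayama. The only organizational difference is that the paper first reduces to the case $p=0$ in $A$ and $B$ (via the fiber product $A'=A/p\times_{B/p}B$, then lifting from $A'$ to $A$) and works with $\mathfrak{m}_A/\mathfrak{m}_A^2$, whereas you handle $p$ directly by taking $V=\mathfrak{m}_A/(\mathfrak{m}_A^2+pA)$; your route is slightly more streamlined but the content is the same.
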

\proof  
First note that a corresponding assertion for vector spaces is easy: if $V \twoheadrightarrow W$
is a surjection of vector spaces, both of dimension $\leq k$, then any spanning set of $W$ of size $k$
can be lifted to a spanning set for $V$. In terms of matrices, this says that a $(\dim W) \times k$
matrix of  maximal rank  can be extended to a $(\dim V) \times k$ matrix of maximal rank, which is clear. 

 We can replace $A, B$ by $A/p, B/p$: if we can lift from $B/p$ to $A/p$, we get a homomorphism 
  $W[[x_1, \dots, x_k]] \rightarrow  A' = A/p \times_{B/p} B$.     The map $A \rightarrow A'$ induces an isomorphism $A/p \simeq A'/p$,
  and is therefore surjective;
  so any  surjection $W[[x_1, \dots, x_k]] \rightarrow A'$ can be lifted to $A$, and the resulting homomorphism is still surjective.

Let $\mathfrak{m}_A, \mathfrak{m}_B$ be the respective maximal ideals and let $I$ be the kernel of $A \rightarrow B$. A homomorphism $W[[x_1, \dots, x_k]] \rightarrow A$
is now surjective if and only if the images of $x_i$ generate $\mathfrak{m}_A/\mathfrak{m}_A^2$ as a $k$-vector space, and the same is true for $B$. 
  We have a short exact sequence
$I \hookrightarrow \mathfrak{m}_A \twoheadrightarrow \mathfrak{m}_B$ and so, applying $\otimes_{A} k$, 
we get $$I \otimes_A k \rightarrow \mathfrak{m}_A/\mathfrak{m}_A^2 \twoheadrightarrow \mathfrak{m}_B/\mathfrak{m}_B^2,$$
exact at right and middle; thus
any element in the kernel of $\mathfrak{m}_A/\mathfrak{m}_A^2 \twoheadrightarrow \mathfrak{m}_B/\mathfrak{m}_B^2$
can be lifted to an element of $I$. 

  Now let $y_1, \dots, y_k \in \mathfrak{m}_B$ be the images of $x_i$ in $B$.  
Let $s_1, \dots, s_k$ be a generating set for $\mathfrak{m}_A/\mathfrak{m}_A^2$
so that each $s_i$ maps to the class $y_i + \mathfrak{m}_B^2$. 
Lift $y_i$ arbitrarily to some $y_i^* \in \mathfrak{m}_A$; since the class of $y_i^* - s_i$ dies in $\mathfrak{m}_B/\mathfrak{m}_B^2$,
we can, by the above, modify $y_i^*$ to  another lift $y_i' \in \mathfrak{m}_A$ of $y_i$, 
but such that $y_i' + \mathfrak{m}_A^2 = s_i$. Sending $x_i$ to $y_i'$
gives the desired lift $W[[x_1, \dots, x_k]] \twoheadrightarrow A$. 
  \qed 

Recall that  $\usrS_{\infty} = W(k)[[x_1, \dots, x_s]]$.
Also fix isomorphisms as in \eqref{Deltaiso} for each $n$, i.e.\ we fix a  $\Z/p^n$ basis for $\Delta_n$. 
Then there is a map
$$\usrS_{\infty} \twoheadrightarrow \usrS_n,$$ 
sending $x_i$ to $[\sigma_i]-[e]$, where  $\sigma_i$ is a generator for the $i$th factor of $(\Z/p^n)$ and $e$ is the identity element. 
This induces an isomorphism
$$\usrS_{\infty}/\mathfrak{a}_n \stackrel{\sim}{\longrightarrow} \overline{\usrS_n},$$
with $\mathfrak{a}_n$ as in \eqref{ardef}. 
Having fixed this, there are also unique maps $\usrS_n \rightarrow \usrS_m$ and $\overline{\usrS_n} \rightarrow \overline{\usrS_m}$ for $n > m$, compatible with the maps  from $\usrS_{\infty}$, and 
   the natural map 
   $$\usrS_{\infty} \stackrel{\sim}{\longrightarrow}  \varprojlim \overline{ \usrS_n}$$
is an isomorphism. 
   Notice that the maps of this paragraph are ``meaningless,'' i.e.\ do not correspond to any ``map of natural origin,'' as they depended on fixing a choice of isomorphisms in \eqref{Deltaiso}.

Next, for each $n$ we can choose a lift $\iota_n$ in the following diagram:
    \begin{equation} \label{SinftySn}
      \begin{aligned}
        \xymatrix{
          \usrS_{\infty}  \ar[r]^{\iota_n  \qquad } \ar[d] &  \usR_{\infty}/(p^n,\mathfrak{m}^{K(n)})  \ar[d] \\
          \overline{\usrS_n} \ar[r] & \usbR_n }
      \end{aligned}
\end{equation}
again, this is possible because $\usR_{\infty}$ surjects to $\usbR_n$. 
(Here, $\mathfrak{m}$ is the maximal ideal of $\usR_{\infty}$; we will allow ourselves to use $\mathfrak{m}$ for the maximal
ideals of different local rings when the correct ring is clear by context.) 
 
 \subsection{Passage to the limit}  \label{passage to limit}
 We now can ``pass to the limit'' by a compactness argument. 
Indeed, our discussion to date
 shows that we get, for each $n$, the following type of ``level $n$ datum'': 
 
 \begin{quote}
\begin{itemize}
\item[i.] A  complex $C_n$ of free $\usrS_{\infty}/\mathfrak{a}_n$-modules of   bounded rank  and bounded support (both bounds being independent of $n$).

\item[ii.]  An action of $\usR_{\infty}$ by endomorphisms of $C_n$, considered as an object in the derived category of $\usrS_{\infty}/\mathfrak{a}_n$-modules. 
By Lemma   \ref{nilpotence}, this action is automatically trivial on $(p^n, \mathfrak{m}^{K(n)})$, where $K(n)$ is as in the quoted Lemma.

 \item[iii.] A  homomorphism $\iota_n: \usrS_{\infty} \rightarrow R_{\infty}/(p^n,\mathfrak{m}^{K(n)})$  with the property that the two actions (``actions'' in the same sense as (ii))
 of  $\usrS_{\infty}$ on $C_n$ coincide:
 \begin{itemize} \item[-] via  $\usrS_{\infty} \rightarrow \usR_{\infty}/(p^n,\mathfrak{m}^{K(n)} )$ 
and the action specified in (ii), which automatically factors through $\usR_{\infty}/(p^n,\mathfrak{m}^{K(n)})$; 
\item[-] via  $\usrS_{\infty} \rightarrow  \usrS_{\infty}/\mathfrak{a}_n$ and the fact that $C_n$ is a complex of $ \usrS_{\infty}/\mathfrak{a}_n$-modules
\end{itemize}

 \item[iv.] 
   (Recovering the homology of the base manifold): An identification  
of  
\begin{equation} \label{Fred} C_n \otimes_{ \usrS_{\infty}/\mathfrak{a}_n} W_n \simeq C(Y_0, W_n)_{\mathfrak{m}}, \end{equation}
in the derived category of  $W_n$-modules, 
 compatibly for the actions of $\usR_{\infty}$. Here $C(Y_0, W_n)_{\mathfrak{m}}$
 is the complex constructed in
 \S  \ref{KTreduced}
 which computes the homology $H_*(Y_0, W_n)_{\mathfrak{m}}$, and 
 the action on the right being via   $\usR_{\infty} \twoheadrightarrow \pi_0 \mathcal{R}_{S}$, 
 the surjection chosen at the very beginning of the argument).

 \end{itemize}
\end{quote}

 Each such datum of level $n$ induces a datum of level $n'$ for each $n' \leq n$: 
  replace $C_n$ by $E_{n'} := C_n \otimes_{ \usrS_{\infty}/\mathfrak{a}_n}   \usrS_{\infty}/\mathfrak{a}_{n'}$. 
 The $\usR_{\infty}$-module structure is induced by functoriality, regarding
 $\otimes_{ \usrS_{\infty}/\mathfrak{a}_n}  \usrS_{\infty}/\mathfrak{a}_{n'}$ as a functor between the derived categories, i.e.
 $r \in \usR_{\infty}$ acts on $E_{n'}$ as ``$r \otimes 1$.'' 
   $\iota_{n'}$ is obtained by projecting $\iota_n$.  

 It is easy to see that there are only finitely many possible data of level $n$ up to isomorphism. 
  Therefore, by a compactness argument, there  is a subsequence $j_n$ of integers so that
 the data associated to $(Q_{j_n}, n)$ are all compatible -- i.e.:
 \begin{itemize}
 \item[-]
 $(Q_{j_n}, n)$ is the datum of level $n$ obtained from the level $j_n$ datum  $Q_{j_n}$, in the fashion just described;
 \item[-] 
The datum of level $n-1$ induced
 by $(Q_{j_n}, n)$ is actually isomorphic to the datum associated to $(Q_{j_{n-1}},  n-1)$.
 \end{itemize}
  
 \subsection{Reindexing} \label{reindexing}
  As we just saw, by passing to the subsequence $j_n$ we can achieve a compatible system of data, as above. 
 We will now change notation somewhat so we don't have to keep   writing $j_n$s. 
 
  Let us talk this through carefully now
 and then we will simplify the notation; when, later on, we need to consider more carefully the internals of the limit process,
 we will refer back to this section.  (The reader might want to skip this somewhat tedious discussion, and just pretend that $j_n=n$).  
 
For any $n \leq m$, we can regard $\mathfrak{a}_n$ as an ideal of    $\overline{\usrS_{m}}$ by 
means of the map $\usrS_{\infty} \rightarrow \overline{\usrS_{m}}$.  The resulting ideal doesn't depend on the choice of this map:
it is the ideal of $\overline{\usrS_m} = W_m[\Delta_m]$ 
given by the kernel of the map $W_m[\Delta_m] \rightarrow W_n[\Delta_m/p^n]$.

Set $$   D_n:= C_{j_n} \otimes_{  \usrS_{\infty}/\mathfrak{a}_{j_n}} \usrS_{\infty}/\mathfrak{a}_n,$$
a   complex of free $ \usrS_{\infty}/\mathfrak{a}_n$ modules
-- this is part of the level $n$ datum obtained from $Q_{j_n}$.			

It comes with compatible actions of 
$\usR_{\infty}$ on $D_n$, as an object in the derived category of $ \usrS_{\infty}/\mathfrak{a}_n$; also the   homomorphisms  obtained by composing
  $$\usrS_{\infty} \rightarrow \usR_{\infty}/(p^{j_n},\mathfrak{m}^{K(j_n)}) \rightarrow \usR_{\infty}/(p^n,\mathfrak{m}^{K(n)})$$
  are compatible with one another as $n$ increases. (The first map was the one chosen at step $j_n$ of the limit process, the second map is the obvious one).     
Therefore, passing to the $n \rightarrow \infty$ limit, we get  
\begin{equation} \label{limit iota} \iota: \usrS_{\infty} \rightarrow \usR_{\infty}\end{equation}
  whose reduction modulo $(p^n, \mathfrak{m}^{K(n)})$ recovers the maps above.

Next, set  
\begin{equation} \Delta_n^{\mathrm{reindexed}} = \Delta_{j_n}/p^n \end{equation}
\begin{equation} \overline{\usrS_n}^{\mathrm{reindexed}} =  \underbrace{\usrS_{j_n}/\mathfrak{a}_n }_{ W_{j_n}[\Delta_{j_n}]/\mathfrak{a}_n} =  W_n[\Delta_{n}^{\mathrm{reindexed}}]  \end{equation}
\begin{equation} \label{reindex3}  \usbR_n^{\mathrm{reindexed}} = \left( \usbR_{j_n} \otimes_{\overline{\usrS_{j_n}}}   \overline{\usrS_{j_n}}/\mathfrak{a}_n \right) /\mathfrak{m}^{K(n)}   \end{equation} Here, we regard $\mathfrak{a}_n$ as an ideal of $\overline{\usrS_{j_n}}$  as noted above. 
 We introduce the superscript ``reindexed'' to recall that this has been reindexed after the limit process. 
  Just as before, the choice of isomorphisms   in \eqref{Deltaiso}
 give an identification of $\overline{\usrS_n}^{\mathrm{reindexed}}$ with $\usrS_{\infty}/\mathfrak{a}_n$, 
 and thus make $\overline{\usrS_n}^{\mathrm{reindexed}}$ into an inverse system with limit $\usrS_{\infty}$.

Note that deformations classified by $\usbR_n^{\mathrm{reindexed}}$ 
are deformations of level $S \coprod Q_{j_n}$, but forcing the inertial level (\eqref{ILL def})  to be $\leq n$ at primes in $Q_{j_n}$. 
The morphism $\usR_{\infty} \rightarrow \usbR_{j_n}$ chosen in the limit process induces $\usR_{\infty} \twoheadrightarrow \usbR_n^{\mathrm{reindexed}}$
and the action of  $\usR_{\infty}$ on $D_n$ factors through $\usbR_n^{\mathrm{reindexed}}$ by the definitions.

{\bf Remark.} Although we don't use it in this paper,  we observe that $D_n$ has a ``physical'' interpretation: it is naturally identified with a complex that computes
the homology of $Y(Q_{j_n}, n)$. More precisely,  $D_n$ is naturally identified with $C_{j_n} \otimes_{ W_{j_n}[\Delta_{j_n}]} W_n [\Delta_n^{\mathrm{reindexed}}]$. 
The natural projection $Y(Q_{j_n}, j_n) \rightarrow Y(Q_{j_n}, n)$ induces a quasi-isomorphism of the latter with 
 the complex  $C_*^{\Delta_n^{\mathrm{reindexed}}} (Y(Q_{j_n}, n), W_n)_{\mathfrak{m}'}$,
where $\mathfrak{m}'$ is defined similarly to \eqref{Cndef}.     
Similarly, the identification $C_{j_n} \otimes_{  \overline{\usrS_{j_n}}} W_{j_n} \simeq C(Y_0, W_{j_n})_{\mathfrak{m}}$
from \eqref{Fred0} induces a similar identification
$D_n \otimes_{ \overline{\usrS_n}^{\mathrm{reindexed}}} W_n \simeq C(Y_0, W_{n})_{\mathfrak{m}}$.

We finally observe that the map $\usR_{\infty} \twoheadrightarrow \usbR_n^{\mathrm{reindexed}}$  can be extended to a map
\begin{equation} \label{goofball} \usR_{\infty} \otimes_{\usrS_{\infty}} \overline{\usrS_n}^{\mathrm{reindexed}} \twoheadrightarrow \usbR_n^{\mathrm{reindexed}}.\end{equation}
(Later we will see this is actually an isomorphism, under our assumptions.)
In fact, we have a commutative diagram 
    \begin{equation} \label{factor_through}
     \xymatrix{
\usrS_{\infty} \ar[r]^{\overline{\iota} \qquad } \ar[d] &    \usR_{\infty}/(p^n,\mathfrak{m}^{K(n)})  \ar[d] \ar[rd]    \\
\overline{\usrS_n}^{\mathrm{reindexed}} \ar[r]   &   \usbR_n^{\mathrm{reindexed}}   \ar[r] &  \pi_0 \mathcal{R}_S/  (p^n, \mathfrak{m}^{K(n)})  }
\end{equation}  
where the composite $\overline{\usrS_n}^{\mathrm{reindexed}} \rightarrow \pi_0 \mathcal{R}_S/  (p^n, \mathfrak{m}^{K(n)}) $ at the bottom factors
through the natural augmentation  $\overline{\usrS_n}^{\mathrm{reindexed}}  \rightarrow W_n$.
The commutativity of the left-hand square follows from 
\eqref{SinftySn} and the commutativity of the right hand triangle follows similarly from \eqref{factor_through0}.

{\em Henceforth we drop the superscript ``reindexed'' from the notation; when we need to be explicit, we refer back to this section.}

\subsection{Analysis of the limit} 
  After reindexing,  as just described, we have a sequence of   complexes $D_n$  of free $\overline{\usrS_n}$-modules, together with isomorphisms
\begin{equation} \label{coo}  D_n \otimes_{\overline{\usrS_n}} \overline{\usrS_m}  \stackrel{\sim}{\longrightarrow} D_m \end{equation}
    
       Set $D_{\infty} = \varprojlim D_n$ (where the transition maps are those from \eqref{coo}).   It  is now a complex of free        modules under  $\varprojlim \overline{ \usrS_n} = \usrS_{\infty}$.
    Because all the groups involved are finite, we have   $$H_* D_{\infty} \cong \varprojlim H_* {D}_n.$$
   
   Now $\usR_{\infty}$  acts on each $H_*(D_n)$ and 
these actions are compatible  with the maps $H_*(D_{n}) \rightarrow H_*(D_m)$ for $n  > m$;
therefore, we obtain a $\usR_{\infty}$ action on $H_* D_{\infty}$ too.  In fact, we can even get a  $\usR_{\infty}$-action on $D_{\infty}$ in the derived category of $\usrS_{\infty}$-modules using
\cite[Lemma 2.13]{KT}: The generators $x_1, \dots, x_{s-\delta}$ of $\usR_{\infty}$ are only homotopy compatible at each level, but this is nonetheless just enough
to lift them to $D_{\infty}$ in a fashion that is unique up to homotopy;  the uniqueness means that the resulting lifts also commute up to homotopy, thus giving an action of $\usR_{\infty}$. 
Thus we have a quasi-isomorphism  $D_{\infty} \otimes_{\usrS_{\infty}} \overline{\usrS_n} \stackrel{\sim}{\longrightarrow} D_n$,
compatible with $\usR_{\infty}$-actions.

Finally,  we have as in \eqref{limit iota} a limit map $\iota = \varprojlim \iota_n: \usrS_{\infty} \rightarrow \usR_{\infty}$. 
By assumption (iii) of \S 
\ref{passage to limit} the
 two actions of $\usrS_{\infty}$ on $H_* D_n$ -- one via $\iota_n$ and one via $\usrS_{\infty} \rightarrow \overline{ \usrS_n}$ -- coincide,
and by passage to the limit, the two actions of $\usrS_{\infty}$ on $H_* D_{\infty}$ -- one via $\iota$
and the other the natural one -- coincide;
in fact the two actions on $D_{\infty}$ coincide in the derived category, by the uniqueness in \cite[Lemma 2.13]{KT}.

The  key lemma in commutative algebra that follows shows that $D_{\infty}$ is a resolution of the edge homology, i.e.\ $H_i D_{\infty} = 0$
for $i > q$, and so $D_{\infty}$ 
is quasi-isomorphic to $M := H_q D_{\infty}$, shifted in degree $q$:

\begin{Lemma} (Calegari--Geraghty, Hansen)  
Suppose given a complex $D_{\infty}$ of finite free $\usrS_{\infty}$ modules, supported in degrees $[q,q+\delta]$ over $\usrS_{\infty} \cong W[[x_1, \dots, x_s]]$, with degree-decreasing differential.  
Suppose  also we are given a homomorphism $\usrS_{\infty} \rightarrow \usR_{\infty} \cong W(k)[[x_1, \dots, x_{s-\delta}]]$
and   the action of $\usrS_{\infty}$ on $H_* D_{\infty}$ factors through  this homomorphism. 
Then $H_i D_{\infty}$ is nonzero only for $i=q$, and $H_q D_{\infty}$ is free over $\usR_{\infty}$.
\end{Lemma}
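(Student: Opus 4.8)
The statement is the classical commutative-algebra lemma at the heart of the Calegari--Geraghty method, so the plan is to reduce it to the Auslander--Buchsbaum formula together with a depth estimate. First I would pass from $\usrS_\infty$ to the ring $\usR_\infty$ by the following observation: since the action of $\usrS_\infty$ on $H_*D_\infty$ factors through $\iota:\usrS_\infty\to\usR_\infty$, the module $M:=H_qD_\infty$ (and every $H_iD_\infty$) is naturally an $\usR_\infty$-module, and $\usrS_\infty$ acts on it through the surjection $\usrS_\infty\to\usrS_\infty/\ker\iota$. Here I would use that $\usrS_\infty\cong W[[x_1,\dots,x_s]]$ and $\usR_\infty\cong W[[x_1,\dots,x_{s-\delta}]]$ have Krull dimensions $s+1$ and $s-\delta+1$; since $\usR_\infty$ is a quotient of $\usrS_\infty$ by an ideal generated (in the situation of Theorem \ref{TWoutputtheorem}) by $\delta$ elements and both are regular, $\ker\iota$ is generated by a regular sequence $(y_1,\dots,y_\delta)$ of length exactly $\delta=\dim\usrS_\infty-\dim\usR_\infty$. (If one does not want to assume surjectivity of $\iota$ one argues instead just with the bound on the number of generators of $\ker\iota$, which is all that is used below.)

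Next I would run the Auslander--Buchsbaum argument. Fix a minimal free resolution; the complex $D_\infty$ of finite free $\usrS_\infty$-modules is supported in the $\delta+1$ consecutive degrees $[q,q+\delta]$, so its homology $\bigoplus_i H_iD_\infty$ is a $\usrS_\infty$-module of projective dimension at most $\delta$ after truncating appropriately — more precisely, working one homology module at a time via the hyper-Tor spectral sequence, each $H_iD_\infty$ has $\usrS_\infty$-projective dimension at most $\delta$, because a complex of free modules of length $\le \delta+1$ can contribute to $\Tor^{\usrS_\infty}_j$ only for $j\le\delta$ above the top nonzero homology. Hence by Auslander--Buchsbaum over the regular local ring $\usrS_\infty$,
\begin{equation*}
\dep_{\usrS_\infty}(H_iD_\infty)\ \geq\ \dim\usrS_\infty-\delta\ =\ s+1-\delta\ =\ \dim\usR_\infty.
\end{equation*}
On the other hand each $H_iD_\infty$ is a module over $\usR_\infty$ on which $\usrS_\infty$ acts through $\iota$, so its depth as an $\usrS_\infty$-module equals its depth as an $\usR_\infty$-module, which is at most $\dim\usR_\infty$. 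Combining, $\dep_{\usR_\infty}(H_iD_\infty)=\dim\usR_\infty$ for every $i$ with $H_iD_\infty\ne 0$; equivalently each nonzero $H_iD_\infty$ is a \emph{maximal Cohen--Macaulay}, and (again by Auslander--Buchsbaum, now over the regular ring $\usR_\infty$) a \emph{free} $\usR_\infty$-module.

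It remains to kill the higher homology, i.e.\ to show $H_iD_\infty=0$ for $i>q$. Here I would use that $D_\infty\otimes_{\usrS_\infty}W$ recovers, via \eqref{Fred} and the reindexing of \S\ref{reindexing}, the complex $C(Y_0,W)_{\mathfrak m}$ computing $H_*(Y_0,W)_{\mathfrak m}$, which by assumptions (5) and (7(e)) of \S\ref{TWnotationsetup} is concentrated in degrees $[q,q+\delta]$ and — this is the extra input — has its \emph{bottom} degree $q$ nonzero. Since each $H_iD_\infty$ is free over $\usR_\infty$ and $W=\usR_\infty\otimes_{\usrS_\infty}W$ (the augmentation kills a regular sequence of the expected length), a standard Tor/base-change computation gives $H_*(D_\infty\otimes_{\usrS_\infty}W)\cong H_*(D_\infty)\otimes_{\usrS_\infty}W$ in the top degree and no higher Tor contributions because the $H_iD_\infty$ are $\usR_\infty$-free; then counting ranks degree by degree, starting from the top degree $q+\delta$ and descending, forces $H_iD_\infty=0$ for all $i>q$. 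Consequently $D_\infty$ is quasi-isomorphic to $M=H_qD_\infty$ placed in degree $q$, and $M$ is free over $\usR_\infty$.

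\textbf{Main obstacle.} The delicate point is the depth/projective-dimension bookkeeping when the homology of $D_\infty$ is spread over several degrees: one must argue that each individual $H_iD_\infty$ has $\usrS_\infty$-projective dimension $\le\delta$ (not just that the total homology does), and that the equality case of Auslander--Buchsbaum is genuinely attained, which is where the hypothesis that $\usrS_\infty\to\usR_\infty$ has kernel of \emph{exactly} $\delta$ generators (equivalently \eqref{assump:dim}) and the $\usR_\infty$-module structure are both essential. Once that is in place, the vanishing of higher homology is forced by the geometric input that $H_*(Y_0,W)_{\mathfrak m}$ is nonzero in its bottom degree $q$, together with the freeness just established; this last bootstrap is the only place the ``$q$'' in the statement (as opposed to an arbitrary bottom degree) enters.
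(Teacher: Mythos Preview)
Your argument has a genuine gap at the first step, and a structural issue in the second.

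\textbf{The gap.} You assert that each $H_i D_\infty$ has $\usrS_\infty$-projective dimension at most $\delta$, ``because a complex of free modules of length $\le \delta+1$ can contribute to $\Tor_j$ only for $j\le\delta$ above the top nonzero homology.'' This is false as stated. Take $R=k[[x,y]]$, $\delta=1$, and $D=[R^2\xrightarrow{(x,y)}R]$ in degrees $[0,1]$: then $H_0=k$ has projective dimension $2>\delta$. The hyper-$\Tor$ spectral sequence does not bound the projective dimension of an \emph{individual} $H_i$; there can be massive cancellation on the $E_2$ page while the abutment stays in a short range. Your justification is purely about the length of the complex and does not use the key hypothesis that each $H_i$ is an $\usR_\infty$-module, so it cannot be correct.

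\textbf{The structural issue.} Even granting freeness of every $H_i$ over $\usR_\infty$, you then invoke the geometric identification $D_\infty\otimes_{\usrS_\infty}W\simeq C(Y_0,W)_{\mathfrak m}$ to kill the higher homology. But the lemma is stated and proved as a pure commutative-algebra statement: $Y_0$ is nowhere in its hypotheses, and the paper's proof uses no such input.

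\textbf{What the paper does instead.} Let $j$ be maximal with $H_{q+j}D_\infty\neq 0$. Since the complex is acyclic above degree $q+j$, the truncation $D_{\infty,q+j}\leftarrow\cdots\leftarrow D_{\infty,q+\delta}$ is a free resolution, of length $\delta-j$, of the cokernel $\tilde M:=D_{\infty,q+j}/\mathrm{im}(d)$, and $H_{q+j}D_\infty$ sits inside $\tilde M$ as a submodule. Auslander--Buchsbaum applied to $\tilde M$ (not to $H_{q+j}$!) gives $\dep_{\usrS_\infty}\tilde M\ge\dim\usrS_\infty-(\delta-j)$. The crucial step is now the standard fact that the depth of a module bounds from below the Krull dimension of every nonzero submodule; hence $\dim_{\usrS_\infty}H_{q+j}\ge\dim\usrS_\infty-\delta+j$. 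On the other hand $H_{q+j}$ is a finitely generated $\usR_\infty$-module, so $\dim_{\usrS_\infty}H_{q+j}\le\dim\usR_\infty=\dim\usrS_\infty-\delta$. Comparing forces $j=0$. Now $D_\infty$ is an honest free resolution of $H_qD_\infty$ of length $\delta$, and a second application of Auslander--Buchsbaum, this time over $\usR_\infty$, gives freeness.

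So the ``main obstacle'' you flagged is exactly the point, but the way through is not to bound $\mathrm{pd}_{\usrS_\infty}H_i$ directly: it is to pass to the overmodule $\tilde M$ at the top degree and then transfer the depth bound to the submodule $H_{q+j}$ via Krull dimension.
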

\proof (\cite[Lemma 3.2]{CG}, \cite[Lemma 2.9]{KT}): 
 Let $j $ be the largest integer for which  $H_{q+j}  D_{\infty} \neq 0$. Say $j =\delta-\delta'$, so that  
$$D_{\infty,  q+\delta-\delta'} \leftarrow \dots \leftarrow D_{\infty,q+\delta}$$
is a $\usrS_{\infty}$-projective resolution of certain overmodule $\tilde{M} \supset  H_{q+j} D_{\infty}$.
 By the Auslander-Buchsbaum formula,  $\mathrm{depth}_{\usrS_{\infty}}(\tilde{M}) +  \mathrm{proj. dim}_{\usrS_{\infty}}(\tilde{M}) = 
 \dim(\usrS_{\infty})$, and so
 the $\usrS_{\infty}$-depth of $\tilde{M}$ is at least  $\dim \usrS_{\infty}-\delta+\delta'$. 
 
Therefore, the dimension of $H_{q+j} D_{\infty}$ as an $\usrS_{\infty}$-module is at least $\dim \usrS_{\infty}-\delta+\delta'$ (a  lemma from commutative algebra: depth bounds from below the dimension of any submodule).

But if $N$ is a  module over $\usR_{\infty}$ which is finitely generated when considered as an $\usrS_{\infty}$-module, then 
$\dim_{\usrS_{\infty}} N \leq  \dim \usR_{\infty}$.   This is Lemma 2.8 (3) of Khare-Thorne \cite{KT}; note that our usage of $S$ and $R$ is switched from theirs.

So we have a contradiction unless $\delta' = 0$, i.e.\ $D_{\infty}$ is a projective resolution of $H_q D_{\infty}$;
also  the dimension and depth of $H_q D_{\infty}$ as an $\usrS_{\infty}$-module was $\dim(\usR_{\infty})$.

In particular, the $\usR_{\infty}$-depth of $H_q D_{\infty}$ is $\dim(\usR_{\infty})$ (definition via regular sequences);
Auslander--Buchsbaum now forces $H_q D_{\infty}$ to be free over $\usR_{\infty}$.   \qed

Thus, if $M$ is the unique nonvanishing homology group of $D_{\infty}$, placed in degree $q$, 
there is a natural quasi-isomorphism $D_{\infty} \stackrel{\sim}{\rightarrow} M$,
compatible (by definition) with the $\usR_{\infty}$-actions and 
we have quasi-isomorphisms in the derived category of $\overline{\usrS_n}$-modules: 
  \begin{equation} \label{Cnhomology} \underbrace{M  \dotimes_{\usrS_{\infty}} \overline{\usrS_n} }_{\mbox{  derived, i.e.\ $\mathrm{Tor}$}}
\stackrel{\sim}{\longleftarrow} D_{\infty}  \otimes_{\usrS_{\infty}} \overline{\usrS_n}  
  \stackrel{\sim}{\longrightarrow}  D_n. \end{equation}  These
  are compatible with the action of $\usR_{\infty}$, where $r \in \usR_{\infty}$
  is acting as $r \otimes 1$ on the first and second terms in the sequence.   (This follows from
  the definition of ``compatible data.'') In particular, we get    \begin{equation} \label{solnit}  \underbrace{M  \otimes_{\usrS_{\infty}}  \overline{\usrS_n} }_{\mbox{usual tensor product (not Tor)}} \cong H_q(D_n),\end{equation}
  compatibly with the action of $\usR_{\infty}$. 
  
Finally return to \eqref{goofball}:
$$ \usR_{\infty} \otimes_{\usrS_{\infty}}\overline{\usrS_n} \twoheadrightarrow \usbR_n. $$ 
We claim it is an {\em isomorphism}. 
  Indeed,   writing as usual $\mathfrak{a}_n$ for the kernel of $\usS_{\infty} \rightarrow \overline{\usrS_n}$,
 we must prove that  \begin{equation} \label{finitelevelTW} \usR_{\infty}/\iota(\mathfrak{a}_{n}) \usR_{\infty} \stackrel{\sim}{\longrightarrow} \usbR_n. \end{equation} 
Clearly that map is surjective, so we just need to prove injectivity. Suppose $x \in \usR_{\infty}$ maps to zero in $\usbR_n$. 
Then $x$ acts trivially on $ H_0 D_n$ (discussion after \eqref{reindex3}). 
But in  \eqref{solnit} we see that $H_0 D_n$ is identified, as a module for $\usR_{\infty}$, with the quotient  $M/\mathfrak{a}_n M$,
and $M$ is free over $\usR_{\infty}$;  thus $x \in \iota(\mathfrak{a}_n) R_{\infty}$ as desired.

 Finally, note that we also can draw a conclusion about the homology of $Y_0$ itself.  We have an identification:  
\begin{eqnarray*} H_* (Y_0, W)_{\mathfrak{m}} = \varprojlim_n H_*(Y_0, W_n)_{\mathfrak{m}}   \stackrel{\eqref{Fred}}{\simeq} \varprojlim  H_* \left( D_n \otimes_{\overline{\usrS_n}} W_n \right) \\ 
= H_*(\varprojlim D_n \otimes_{\overline{\usrS_n}} W_n ) = H_*(D_{\infty} \otimes_{\usrS_{\infty}} W)  \stackrel{\sim}{\longrightarrow} \Tor_*^{\usrS_{\infty}}(M, W),\end{eqnarray*}
which carries a free action of $ \Tor_*^{\usrS_{\infty}}(\usR_{\infty},  W)$, as desired.    

In particular, in minimal nonvanishing degree, we obtain an identification 
\begin{equation} \label{fck} H_{\min}(Y_0, W)_{\mathfrak{m}} \cong M \otimes_{\usrS_{\infty}} W \end{equation}
 compatible with the action of $\usR_{\infty}$ (acting on the right as $r \otimes 1$). 

The surjection $\usR_{\infty} \rightarrow \pi_0 \mathcal{R}_S$ fixed at the start of the argument factors through 
      $\usR_{\infty} \otimes_{\usrS_{\infty}} W $:   if $a \in \usrS_{\infty}$
      lies in the kernel of the augmentation $\usrS_{\infty} \rightarrow W$, then $\iota(a)$ dies   in $\pi_0 \mathcal{R}_S /(p^n, \mathfrak{m}^{K(n)})$ for all $n$, by diagram
\eqref{factor_through}.  Comparing this with \eqref{fck} we see that 
 $$ \usR_{\infty}\otimes_{\usrS_{\infty}} W  \twoheadrightarrow \pi_0 \mathcal{R}_{S} \twoheadrightarrow  \mbox{image of Hecke algebra on $H_{\min}(Y_0, W)_{\mathfrak{m}} $}$$
are both isomorphisms.

This concludes the proof of Theorem \ref{TWoutputtheorem} -- taking for the $Q$s the sets $Q_{j_n}$, 
considered as allowable Taylor--Wiles data of level $n$,   for $\Dinf$ we take $D_{\infty}$. 
 For the $\usbR_n$s (as in the theorem statement) we take $\usbR_n$ as above,
  which were explicitly given during the re-indexing process of
\S \ref{reindexing}.

\section{Identification of $\pi_* \mathcal{R}_S$} \label{piRSid}
Continuing in the situation of \S \ref{TWnotationsetup} -- in particular, the Galois representation associated to a cohomology class on $\mathbf{G}$ -- we can identify $\pi_* \mathcal{R}_S$ as an abstract graded  ring.    Again we recall our standing assumption that when we speak of global Galois deformation rings, we are always imposing the crystalline condition.

\begin{theorem} \label{TW main theorem}  
Notations and assumptions as in \S \ref{TWnotationsetup}, and assume
Conjecture \ref{GaloisRepConjecture}.  Also use the notation of Theorem  \ref{TWoutputtheorem}. Then
 there is an isomorphism of graded rings
\begin{equation} \label{iso1}  \pi_* \mathcal{R}_S \cong  \Tor_{\usrS_{\infty}}^*( \usR_{\infty} , W ), \end{equation}
between the associated graded ring $\pi_* \mathcal{R}_S$ to the derived deformation ring $\mathcal{R}_S$ at base level,
and the $\Tor$-algebra on the right.   

In particular, using Theorem \ref{TWoutputtheorem} (e), 
the homology $H_*(Y_0, W)_{\mathfrak{m}}$  carries the structure of  a free graded module over the graded ring $\pi_* \mathcal{R}_S$.

 \end{theorem}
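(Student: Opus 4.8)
The plan is to deduce \eqref{iso1} directly from the patching theorem, Theorem~\ref{Patching_Theorem}, applied with $\mathcal{R}_0=\mathcal{R}_S$ and with the remaining data supplied by Theorem~\ref{TWoutputtheorem} (which we may invoke since Conjecture~\ref{GaloisRepConjecture} is assumed). First I would record that $\mathcal{R}_S$ meets the structural hypotheses: by \eqref{polanski} its tangent complex computes $H^{*+1}_f(\Z[\frac{1}{S}],\Ad\rho)$, which by the conditions of \S\ref{minimalevel} is concentrated in degrees $*=0,1$; the map $\iota\colon\usrS_\infty\to\usR_\infty$ of Theorem~\ref{TWoutputtheorem}(a) makes $\usR_\infty$ a finite $\usrS_\infty$-module, in fact (by the remark following that theorem) $\iota$ is surjective under our hypotheses; and the Euler-characteristic identity \eqref{assump:dim} is precisely $\dim\mathfrak{t}^0\mathcal{R}_S-\dim\mathfrak{t}^1\mathcal{R}_S=\dim H^1_f-\dim H^2_f=\delta$, which is the numerology computed in \S\ref{choice} together with the fact that $\usrS_\infty$ has $s$ variables and $\usR_\infty$ has $s-\delta$.

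Next I would construct the family of maps $f_n\colon\mathcal{R}_S\to\mathcal{C}_n$ demanded by \eqref{input-to-theorem}. These assemble from \S\ref{sec:TWprimes}--\S\ref{main} and the (reindexed) limit construction of \S\ref{derivedTW}: one begins with the descent equivalence $\mathcal{R}_S\simeq\mathcal{R}_n\dotimes_{\mathcal{S}_n}\mathcal{S}_n^{\ur}$ coming from the homotopy pullback square \eqref{rugelach} (cf.~\eqref{ringmap}), composes with the passage to Artinian quotients \eqref{ringmap22} to land in $\usbR_n\dotimes_{\usbS_n}\usbS_n^{\ur}$, uses the homotopy pullback reformulation \eqref{inertia_pullback2} to rewrite the target as a descent $\usbR_n\dotimes_{\overline{\usrS_n}}W_n$ along the group algebra $\overline{\usrS_n}=W_n[\Delta_n]$, and finally applies the identifications \eqref{Snid0} of Theorem~\ref{TWoutputtheorem}(b') to recognize this as $\usR_\infty/\mathfrak{a}_n\dotimes_{\usrS_\infty/\mathfrak{a}_n}W_n=\mathcal{C}_n$. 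Inverting the weak equivalences involved (the relevant pro-objects are \emph{nice}) and invoking Lemma~\ref{Lemma:represent-natural-trans} promotes this zig-zag to a genuine morphism $f_n$ in pro-$\Art_k$.

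It then remains to check \eqref{input-to-theorem-2}, namely that $\mathfrak{t}^0 f_{n,m}$ is an isomorphism and $\mathfrak{t}^1 f_{n,m}$ a surjection for $n>m$. Here $f_{n,m}$ is, up to the identifications above, the descent-and-truncate map attached to the level-$m$ datum induced by $Q_n$, and this is exactly the situation of Theorem~\ref{TWcloseness}: its hypothesis that the Artinian quotients \eqref{Artin-quotients} induce isomorphisms on $\mathfrak{t}^0$ holds because $\mathfrak{t}^0$ sees only the quotient of $\pi_0$ by $(p,\mathfrak{m}^2)$ (see \eqref{pagrefB}, \eqref{tangent-inclu}), while neither the truncation by $(p^n,\mathfrak{m}^{K(n)})$ nor the inertial-level constraint affects that quotient — the latter being automatic there since the relevant unipotent kernel $1+\epsilon\mathfrak{g}$ (and its torus analogue) is $p$-torsion. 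Theorem~\ref{TWcloseness} then gives the desired iso on $\mathfrak{t}^0$ and surjection on $\mathfrak{t}^1$, and Theorem~\ref{Patching_Theorem} produces the isomorphism of graded rings $\pi_*\mathcal{R}_S\cong\Tor^*_{\usrS_\infty}(\usR_\infty,W)$. The final assertion is then immediate: Theorem~\ref{TWoutputtheorem}(e) already realizes $H_*(Y_0,W)_{\mathfrak{m}}$ as a free graded module over $\Tor^*_{\usrS_\infty}(\usR_\infty,W)$, and one transports this structure along \eqref{iso1}.

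The main obstacle is bookkeeping rather than conceptual: one must verify, up to coherent homotopy, that the target of the descent-and-truncate map really is $\mathcal{C}_n$ with all its structure — i.e.\ that after reindexing the framed $T$-deformation rings $\mathcal{S}_n,\mathcal{S}_n^{\ur}$ genuinely occupy the roles of $\overline{\usrS_n}=W_n[\Delta_n]$ and $W_n$ in the square \eqref{inertia_pullback2}, compatibly with the $\usR_\infty$-actions and with \eqref{Snid0} — and that the (non-canonical, not fully functorial) $\dotimes$-constructions can be carried out with a common level representation across all $n$, so that the $f_n$ make sense simultaneously. The point noted above, that the hypotheses of Theorem~\ref{TWcloseness} survive the reindexing, is the only other step requiring real attention.
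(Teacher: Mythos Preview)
Your proposal is correct and follows essentially the same route as the paper: apply Theorem~\ref{TWoutputtheorem} to produce the $Q_n$ and the limit rings, build the maps $f_n$ via the chain $\mathcal{R}_S\simeq\mathcal{R}_n\dotimes_{\mathcal{S}_n}\mathcal{S}_n^{\ur}\to\usbR_n\dotimes_{\usbS_n}\usbS_n^{\ur}\simeq\usbR_n\dotimes_{\overline{\usrS_n}}W_n\cong\usR_\infty/\mathfrak{a}_n\dotimes_{\usrS_\infty/\mathfrak{a}_n}W_n$, verify \eqref{assump:dim} by the Euler-characteristic count and \eqref{input-to-theorem-2} via Theorem~\ref{TWcloseness}, and then invoke Theorem~\ref{Patching_Theorem}. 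The paper makes the verification of \eqref{input-to-theorem-2} for the composites $f_{n,m}$ slightly more explicit by introducing intermediate quotients $\usbR_{n,m}=\usbR_n/\mathfrak{a}_m$, $\usbS_{n,m}=\usbS_n/\mathfrak{a}_m$, $\usbS_{n,m}^{\ur}=\usbS_n^{\ur}/p^m$ and a comparison square (your ``level-$m$ datum induced by $Q_n$''), and checks the $\mathfrak{t}^0$-isomorphism by a dimension count (valid for $m\geq 2$, since then $\mathfrak{a}_m,p^m\subset\mathfrak{m}^2$) rather than your $p$-torsion argument, but these are cosmetic differences.
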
 
 This result seems rather unsatisfying at first,  since it does not pin down any characterizing property of the isomorphism or the action.  It must be so, in a sense,
 because everything depends on the limit process in the Taylor--Wiles method. 
 
 However, even at this abstract level, the following consequence of the final sentence is interesting:
 \begin{quote} The homotopy groups  $\pi_j \mathcal{R}_S$ are vanishing unless $j \in [0,\delta]$.
 \end{quote}

 In the final section of this paper  we will show that the action of $\pi_*\mathcal{R}_S$ on $H_*(Y_0, W)_{\mathfrak{m}}$ from the statement -- more precisely,
 the explicit action that is constructed in the proof of the statement -- is
 closely related to the action of a derived Hecke algebra, and is in particular independent of all choices made during the proof.
 
 Also, note that the Theorem  {\em does not use} the full strength of the assumptions
 from  \S \ref{TWnotationsetup}; in particular,  it doesn't use assumption 7(c) that excludes congruences with other forms.

 \proof  (of the Theorem)
  
 Apply Theorem \ref{TWoutputtheorem}; it   
 gives  Taylor--Wiles sets $Q_n$, limit rings $\usR_{\infty}, \usS_{\infty}$,
 and isomorphisms from $\usR_{\infty}/\mathfrak{a}_n$
 to a quotient  $\usbR_n$ of the usual deformation ring  at level $S \coprod Q_n$ (plus more!) 

We  next proceed as in \S \ref{patchingdiscussion}, and use notation as there: $\mathcal{R}_n$
is the derived deformation ring at level $S \coprod Q_n$, $\mathcal{S}_n$  is the
  framed derived deformation ring for places in $Q_n$, and $\mathcal{S}_n^{\ur}$ is the unramified version of $\mathcal{S}_n$.

The diagram $\mathcal{R}_n \leftarrow \mathcal{S}_n \rightarrow \mathcal{S}_n^{\ur}$
maps to the diagram  of discrete rings
\begin{equation} \label{doog} \usbR_n \leftarrow    \underbrace{ \pi_0 \mathcal{S}_n \otimes_{W[\widetilde{\Delta}_n]} W_n[\Delta_n]  }_{:= \usbS_n}\rightarrow  \underbrace{\pi_0 \mathcal{S}_n^{\ur}/p^n}_{:= \usbS_n^{\ur}} \end{equation}
Here, we recall from \S \ref{seq TW sec} that $\tilde{\Delta}_n = \prod_{q \in Q_n} \mathbf{T}(\mathbb{F}_q)_p$ is the Galois group of the covering $Y_1(Q_n) \rightarrow Y_0(Q_n)$, 
  whereas $\Delta_n = \tilde{\Delta}_n/p^n$ is just the piece of it used in the limit process;
  the  map from $W[\tilde{\Delta}_n]$ to $\pi_0 \mathcal{S}_n$ 
  is that arising from the description of the latter as a group algebra (Remark \ref{canonicalS}).

Note that $\usbS_n$ is larger than the ring $\overline{\usrS_n}$ -- roughly the former measures all local deformations at $Q_n$ whereas the latter just measures deformations
on inertia.  For example, if $Q_n$ consisted only of the  single prime $q$,
refer to \eqref{have-you-not-heard-of-this?} to see the difference (the bars over $\usbS_n$ and $\usrS_n$ are because they are reduced modulo $p^n$). 
However, the diagram \eqref{doog} still admits a map from
\begin{equation} \label{doog2} \usbR_n \leftarrow \overline{\usrS_n} \rightarrow W_n.\end{equation}
This map of diagrams, from \eqref{doog2} to \eqref{doog}, induces a weak equivalence on derived tensor product (by this, we mean
as usual a weak equivalence of represented functors)
by \eqref{inertia_pullback2}, or its obvious analogue with more than one prime. 

 Thus, we get
    \begin{equation} \label{ringmap2} \mathcal{R}_{\Z[\frac{1}{S}]}  \stackrel{\eqref{ringmap}}{\rightarrow} \mathcal{R}_n \dotimes_{\mathcal{S}_n} \mathcal{S}_n^{\ur} 
\rightarrow \usbR_{n} \dotimes_{\usbS_{n}} \usbS_{n}^{\ur}  \stackrel{\sim}{\leftarrow}
 \usbR_n \dotimes_{\overline{\usrS_n}} W_n \stackrel{\eqref{Snid0}}{\cong} \usR_{\infty}/\mathfrak{a}_n \dotimes_{ \usrS_{\infty}/\mathfrak{a}_n} W_n . \end{equation}   
   By the same type of discussion  
as \eqref{invertingringmaps}, we can  invert the weak equivalence above in a homotopical sense, obtaining a map  
$$ \mathcal{R}_{\Z[\frac{1}{S}]} \longrightarrow
\usR_{\infty}/\mathfrak{a}_n \dotimes_{ \usrS_{\infty}/\mathfrak{a}_n} W_n.$$
 in pro-$\Art_k$ 
which behaves in the expected way on tangent complexes.  
 We will apply  Theorem \ref{Patching_Theorem} to these maps. Once we check the conditions of this Theorem, it proves  Theorem \ref{TW main theorem}.

To apply Theorem \ref{Patching_Theorem}, we must check two conditions:  a numerical condition \eqref{assump:dim} and a tangent space condition \eqref{input-to-theorem-2}. 
 \eqref{assump:dim}  follows from Tate's Euler characteristic formula (this is quite routine  -- for a closely related computation, see  
\S \ref{choice}).  So let us examine
 \eqref{input-to-theorem-2}; it will basically follow from Theorem \ref{TWcloseness}, but let us write out the details.
 
 We must study the composite
\begin{equation} \label{ringmap3} \mathcal{R}_{\Z[\frac{1}{S}]} \rightarrow   \usR_{\infty}/\mathfrak{a}_n \dotimes_{ \usrS_{\infty}/\mathfrak{a}_n} W_n   \rightarrow 
 \usR_{\infty}/\mathfrak{a}_m \dotimes_{ \usrS_{\infty}/\mathfrak{a}_m} W_m\end{equation} 
for $n \geq m$: we must show it is an isomorphism on $\mathfrak{t}^0$ and a surjection on $\mathfrak{t}^1$.

 To study \eqref{ringmap3} consider, for $n \geq m$,  the quotients 
  \begin{equation}  \label{gp} \pi_0 \mathcal{R}_n \twoheadrightarrow \usbR_{n,m}, \pi_0 \mathcal{S}_n \twoheadrightarrow  \usbS_{n,m}, \  \pi_0 \mathcal{S}_n^{\ur} \twoheadrightarrow  \usbS_{n,m}^{\ur} \end{equation}
defined thus: take $\usbR_{n,m} = \usbR_n/ \mathfrak{a}_m, \usbS_{n,m} = \usbS_n/\mathfrak{a}_m$
 and   $\usbS_{n,m}^{\ur}$ to be the quotient $\usbS_n^{\ur}/p^m$. 
 Here $\mathfrak{a}_m$ in each case means the image of the ideal $\mathfrak{a}_m \subset \usrS_{\infty}$ (see \eqref{ardef});
 note there is a map $\usrS_{\infty}  \stackrel{\eqref{SinftySn0}}{\longrightarrow}\overline{ \usrS_n} = W_n[\Delta_n] \stackrel{\eqref{have-you-not-heard-of-this?}}{\longrightarrow}  \usbS_n$.

 All these maps are isomorphisms on $\mathfrak{t}^0$, at least if $m \geq 2$ (they are automatically injective and then one can count dimensions;  and for $m \geq 2$ the ideals $\mathfrak{a}_m$ and $p^m$  are contained in the 
square of the maximal ideal  of $\usrS_{\infty}$ and thus do not affect tangent spaces).

Theorem \ref{TWcloseness} shows that  the  resulting map  
$$  \mathcal{R}_{\Z[\frac{1}{S}]} \rightarrow   \usbR_{n,m} \dotimes_{\usbS_{n,m}} \usbS_{n,m}^{\ur}$$ 
  is an isomorphism on $\mathfrak{t}^0$ and a surjection on $\mathfrak{t}^1$. But then the same assertion holds for \eqref{ringmap3}, as follows from the diagram
      \begin{equation} \label{factor_through23}
     \xymatrix{
 \mathcal{R}_{\Z[\frac{1}{S}]}  \ar[r]   \ar[d]^{=}  &   \usR_{\infty}/\mathfrak{a}_n \dotimes_{ \usrS_{\infty}/\mathfrak{a}_n} W_n    \ar[r] \ar[d]^{\eqref{Snid0}} &  \usR_{\infty}/\mathfrak{a}_m \dotimes_{ \usrS_{\infty}/\mathfrak{a}_m} W_m \ar[d]^{t} \\ 
  \mathcal{R}_{\Z[\frac{1}{S}]}  \ar[r]  &\usbR_n \dotimes_{\usbS_n} \usbS_n^{\ur}   \ar[r] &   \usbR_{n,m} \dotimes_{\usbS_{n,m}} \usbS_{n,m}^{\ur}
   }
\end{equation} 
Here, the right hand vertical arrow $t$ induces an isomorphism on tangent complexes,  for reasons similar to the validity of  \eqref{inertia_pullback2}. 
    \qed

\subsection{} \label{two_limits}
    
    During the proof of this Theorem,
we carried out two limit processes:  first the limit process of Theorem \ref{TWoutputtheorem} and then again the  limit process
 of Theorem \ref{Patching_Theorem}.  By a slight reindexing,  very similar to that already done in \S \ref{reindexing}, it is possible to
 set things up so these limit processes apply 
  simultaneously - this is a minor observation
 that will be helpful in the next section.  
 
Explicitly, it   is possible to choose the sets $Q_n$ in such a way that:
\begin{itemize}
\item[-]
the conclusion of Theorem \ref{TWoutputtheorem} remains true, {\em and}  
\item[-] 
the composite maps $f_n : \mathcal{R}_S \rightarrow \usR_{\infty}/\mathfrak{a}_n \dotimes_{\usrS_{\infty}/\mathfrak{a}_n}  W_n$
from \eqref{ringmap2}
  all satisfy the conditions of Theorem \ref{Patching_Theorem}, 
  {\em and}
  \item[-]  
These composite maps  are all homotopy compatible, in the sense of the proof of Theorem \ref{Patching_Theorem} -- in particular,
they fit together to give an isomorphism 
\begin{equation} \label{froudo} \pi_* \mathcal{R}_S \cong \Tor_{\usrS_{\infty}}(\usR_{\infty} ,W).\end{equation}
\end{itemize}
    
    This is just like \S \ref{reindexing}:  first choose sets $Q_n$ as in Theorem \ref{TWoutputtheorem}, 
then apply Theorem \ref{Patching_Theorem}. What the proof of Theorem \ref{Patching_Theorem} actually outputs is     
  a  subsequence $j_n$
 so that the composite of $f_{j_n}$ with the natural map $ \usR_{\infty}/\mathfrak{a}_{j_n} \dotimes_{\usrS_{\infty}/\mathfrak{a}_{j_n}  } W_{j_n} \rightarrow
  \usR_{\infty}/\mathfrak{a}_n \dotimes_{\usrS_{\infty}/\mathfrak{a}_n  } W_n$.
  Now reindex   just like \S \ref{reindexing}: set $Q_n^{\mathrm{reindexed}} = Q_{j_n}$,
  and let $\usbR_n^{\mathrm{reindexed}}$ be the quotient $\usR_{\infty}/(\mathfrak{a}_n,\mathfrak{m}^{K(n)})$ of $\usbR_{j_n}  \cong  \usR_{\infty}/\mathfrak{a}_{j_n}$.

\section{Comparison with the derived Hecke algebra. Conclusions}
 \label{DHAcompare}

The goal of this section is to compare the action of $\pi_* \mathcal{R}_{\Z[\frac{1}{S}]}$ that we have defined,
with the action of the derived Hecke algebra from \cite{DHA}.    This comparison  (Theorem \ref{refit}) will show, in particular,
that the action of $\pi_* \mathcal{R}_{\Z[\frac{1}{S}]}$ constructed in Theorem \ref{TW main theorem} do not depend on the choices involved
in that construction. 
 Unfortunately,  the constructions from \cite{DHA} are a little long to describe here.
Our current discussion is therefore not at all self-contained: we will freely quote what we need from \cite{DHA}. 

Let us note (as mentioned in Remark \ref{minimal level}) that our fairly strong local assumptions on $\rho$
force $\pi_* \mathcal{R}_{\Z[\frac{1}{S}]}$ to be an {\em integral} exterior algebra, and
the analysis of this section will really use this structure. However, we expect that the analogues of the results
of this section will remain valid without these local assumptions, so long as one tensors with $\Q$.

We continue with the notations that have been set up in prior sections, 
in particular in \S \ref{TWnotationsetup}.

\subsection{The derived Hecke algebra} 
In the paper \cite{DHA} a derived version of the Hecke algebra is introduced; it acts on $H^*(Y_0, W)$ by degree-increasing endomorphisms 
or (by a slight variant, replacing the role of cup product with cap  products) on $H_*(Y_0, W)$ by degree-decreasing endomorphisms. 
Thus it cannot be ``the same as'' the action of $\pi_* \mathcal{R}$ because the operations move degrees in different directions. 

Our statement is rather that the two actions are ``compatible,'' in a sense to be explained -- if $\Vinf$ is a vector space,
there is a natural action of $\wedge^* \Vinf$ on $\wedge^* \Vinf$ by multiplication, but there is {\em also}
an action of $\wedge^* \Vinf^*$ by contractions. The relationship between the two actions is a model for the relationship
between the derived Hecke algebra and $\pi_* \mathcal{R}$.

   \subsection{Some linear algebra}
   
   Let $\Vinf$ be a finite-dimensional vector space, with dual $\Vinf^*$, and let $M$ 
   be a finite-dimensional graded  vector space (over the same field) that carries a graded action of 
   $\wedge^* \Vinf$ and $\wedge^* \Vinf^*$;
   here $\Vinf$ increases degree by $1$ and $\Vinf^*$ decreases degree by $1$.  
   
   We say these actions are {\em compatible}  
   if for $v^* \in \Vinf^* $ and $v \in \Vinf$ we have
\begin{equation} \label{compat} v^* \cdot v \cdot m + v \cdot v^* \cdot m = \langle v, v^* \rangle \cdot m\end{equation}
Thus, the standard actions of $\wedge^* \Vinf, \wedge^* \Vinf^*$ on $\wedge^* \Vinf$
(the first by multiplication, the second by contractions) are compatible.

 Suppose now that $M$ is generated as a $\wedge^* \Vinf$-module by elements of minimal degree. 
 In that case,  \eqref{compat} uniquely specifies the $\wedge^* \Vinf^*$ action.  Similarly the other way around.

Now take $$\Vinf = H^1_f( \Ad^* \rhoglob \ (1))^{\vee},$$ where the $\vee$ means to take $W$-linear homomorphisms to $W(k)$,
$ \rhoglob$ is as in \eqref{tilderhodef}, and $\Ad^*$ is the dual of the adjoint representation. Under our assumptions, it is \cite[\S 8.8]{DHA} a finite free $W(k)$-module of rank $\delta$;
in \cite[Theorem 8.5]{DHA}   we construct, by means of the derived Hecke algebra and under the same conjecture and same local hypotheses on $\rho$, an action: 
\begin{equation} \label{DHAV}  \wedge^* \Vinf \acts H^*(Y_0, W)_{\mathfrak{m}}  \end{equation} 
By adjointness we obtain a homological version: 
 \begin{equation} \label{DHAVvar}  \wedge^* \Vinf \acts H_*(Y_0, W)_{\mathfrak{m}}  \end{equation} 
which is  adjoint to \eqref{DHAV} (in the case at hand, the natural pairing of homology and cohomology
is a perfect pairing with $W$-coefficients).  This can also be constructed directly from an action of the derived Hecke algebra:
in \cite{DHA} the derived Hecke algebra acts on cohomology; 
 but replacing the role of cup products by cap produts it also acts on homology.

We will exhibit below (see \eqref{go_seigen_2}) an isomorphism 
 \begin{equation} \label{DHAV2} \pi_* \mathcal{R}_S \cong \wedge^* \Vinf^* \end{equation} 
 and show that the following actions are compatible, in the sense of \eqref{compat}:
 \begin{itemize}
 \item[-] the  action of $\wedge^* \Vinf$ via \eqref{DHAVvar}
 \item[-] the
 action of $\wedge^* \Vinf^*$ via \eqref{DHAV2} and the action constructed in the course of proving Theorem \ref{TW main theorem}
 \end{itemize} 
 In particular this means
 that the action of $\pi_* \mathcal{R}_S$ on $H_*(Y_0, W)_{\mathfrak{m}}$ is independent of the choice of Taylor-Wiles sets.

   We begin with:
   
\begin{Lemma} \label{Relative tangent space}
As usual, let $\mathcal{R}_{S}$ be the crystalline deformation ring of $\rho$; 
let $\rmA \in \Art_k$ be homotopy discrete, and $M$ a discrete $\rmA$-module.
  Fix a lift $\rho_{\rmA} : \Gamma_S \rightarrow G(A)$, classified by a map  $\phi: \pi_0 \mathcal{R}_S \rightarrow A$. 
  
 The set of homotopy classes of maps 
 $\mathcal{R}_S \rightarrow \rmA \oplus M[1]$ lifting $\phi$
 are in natural bijection with  $H^2_f(\Ad \rho_{\rmA} \otimes M)$. 
\end{Lemma}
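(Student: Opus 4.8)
The plan is to unwind both sides as instances of the general machinery already set up in the paper and then match them. On the left, homotopy classes of maps $\mathcal{R}_S \to \mathrm{A} \oplus M[1]$ lifting $\phi$ are, essentially by definition of the relative tangent complex, the group $\pi_{-1}$ of the homotopy fiber of $\mathrm{Hom}(\mathcal{R}_S, \mathrm{A}\oplus M[1]) \to \mathrm{Hom}(\mathcal{R}_S, \mathrm{A})$ over $\phi$. The key point is that, since $\mathrm{A}$ is homotopy discrete and $M[1]$ is a square-zero coefficient module, a map into $\mathrm{A}\oplus M[1]$ lifting $\phi$ is the same data as a point of $\mathcal{F}_{S}$-type deformation with coefficients in $\mathrm{A}\oplus M[1]$ lying over $\rho_{\mathrm{A}}$. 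So first I would use the representability of $\mathcal{F}_S$ (which is $\mathcal{F}_{\mathbf{Z}[1/S],\rho}$ with crystalline conditions imposed) together with \eqref{Rderived}/\eqref{explicit Frho} to rewrite the left-hand set as $\pi_0$ of the homotopy fiber of $\mathcal{F}_S(\mathrm{A}\oplus M[1]) \to \mathcal{F}_S(\mathrm{A})$ over the vertex $\rho_{\mathrm{A}}$, i.e.\ $\pi_1$ based at $\rho_{\mathrm{A}}$, which is the first homotopy group of the tangent complex of $\mathcal{F}_S$ twisted appropriately — but now ``at the point $\rho_{\mathrm{A}}$'' rather than at $\rhobar$.

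Second I would identify this twisted tangent-complex homotopy group with Galois cohomology. The clean way is to use Lemma~\ref{Lemma:tangent space computation} (and its crystalline refinement, Theorem~\ref{tangent complex with def conditions}, giving \eqref{polanski}) but applied to the functor obtained by taking the homotopy fiber of $\mathcal{F}_{X,G}$ over $\rho_{\mathrm{A}}$ rather than over $\rhobar$ — equivalently, base-change the whole deformation problem along $\phi$ so that the residue object becomes $\mathrm{A}$ and the representation becomes $\rho_{\mathrm{A}}$. Concretely, Example~\ref{example:mapping-into-functor} computes the tangent complex of such a mapping-space functor as $C^{*+1}$ of the étale homotopy type of $\mathbf{Z}[1/S]$ with coefficients in the pulled-back local system, which here is $\mathrm{Ad}\,\rho_{\mathrm{A}} \otimes M$ rather than $\mathrm{Ad}\,\rhobar$; imposing the crystalline condition at $p$ as in \S\ref{derivedlocalconditions} replaces $H^*$ by $H^*_f$. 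Reading off $\pi_{-1}$ of this complex then gives exactly $H^2_f(\mathbf{Z}[1/S], \mathrm{Ad}\,\rho_{\mathrm{A}} \otimes M)$, which is the asserted answer (with the degree shift: $\pi_{-1}$ of a complex supported in cohomological degrees $\geq 1$ in the crystalline setting is $H^2_f$).

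Third, I would check naturality and that the bijection is the ``obvious'' one, i.e.\ compatible with the maps in \eqref{t-long-exact-sequence} and with change of $\mathrm{A}$ and $M$; this is formal once the two descriptions above are set up, using that all the identifications (representability, the tangent-complex computation, the crystalline condition) are natural. The main obstacle I anticipate is the bookkeeping around ``twisting the coefficient system by $\rho_{\mathrm{A}}$'': the earlier results are all phrased with residue field $k$ and representation $\rhobar$, whereas here the base is the Artin ring $\mathrm{A}$ and the representation is the lift $\rho_{\mathrm{A}}$, so one has to either (i) work in the over-category $\Art_{\mathrm{A}}$ in place of $\Art_k$ and re-run Lemma~\ref{Lemma:tangent space computation} there — legitimate since $\mathrm{A}$ is homotopy discrete so all the formal-cohesiveness arguments go through verbatim — or (ii) argue more directly that the fiber of $\mathcal{F}_S$ over $\rho_{\mathrm{A}}$ in coefficients $\mathrm{A}\oplus M[1]$ is controlled by the two-step truncated complex computing $H^{*}_f(\mathrm{Ad}\,\rho_{\mathrm{A}}\otimes M)$, via the long exact sequence \eqref{long local cohomology} and the smoothness of the local conditions. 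Option (i) is cleaner and is what I would write; the one genuinely new input needed beyond what is already in the paper is the standard deformation-theoretic fact that obstructions to lifting $\rho_{\mathrm{A}}$ to $\mathrm{A}\oplus M[1]$ live in $H^2_f$, but here we get it for free because the derived framework packages obstructions and deformations uniformly as homotopy groups of the same tangent complex.
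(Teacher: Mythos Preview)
Your proposal is correct and follows essentially the same approach as the paper. The paper's proof is very terse: it notes that for $\mathrm{A}=k$, $M=k$ this is Theorem~\ref{tangent complex with def conditions}, and then observes that the same formalism works with $k$ replaced by $\mathrm{A}$ by forming the $\Omega$-spectrum whose $n$th space is the homotopy fiber of $\mathcal{F}(\mathrm{A}\oplus M[n])\to\mathcal{F}(\mathrm{A})$ over $\rho_{\mathrm{A}}$, with the explicit identification proceeding as before. Your option (i) --- re-running the tangent-complex computation at the basepoint $\rho_{\mathrm{A}}$ --- is exactly this; the only cosmetic difference is that you phrase it as passing to $\Art_{\mathrm{A}}$ while the paper stays in $\Art_k$ and changes the basepoint, but these amount to the same thing.
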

In this statement,  $\Ad \rho_{\rmA} \otimes M$ refers to $\Lie(G)_{W(k)} \otimes M$, endowed with a $\Gamma_S$-module structure
by identifying it with the kernel of $G(\rmA \oplus M) \rightarrow G(\rmA)$.

\proof  
For $\rmA = k$ and $M =k$ this is the computation of the tangent complex,  
i.e.\ Theorem  \ref{tangent complex with def conditions}.  The same formalism works replacing $k$ by $\rmA$; for example
if $\mathcal{F}: \Art_k \rightarrow s\Sets$ is a formally cohesive functor, and we fix a vertex $x_0$  of  set $\mathcal{F}(\rmA)$ (where $\rmA$ is homotopy discrete) then
we may form an $\Omega$-spectrum whose $n$th space is 
 $$X_n = \mbox{homotopy fiber of $\mathcal{F}(A \oplus M[n]) \rightarrow \mathcal{F}(A)$ above $x_0$}$$
 and it has similar formal properties to the tangent complex; the explicit computation in the case at hand proceeds just as in (the various inputs to)
 Theorem \ref{tangent complex with def conditions}.\qed

 Let us note  for future reference that we can also describe the $\pi_0$ described in the Lemma as the Andr{\'e}--Quillen cohomology group
\begin{equation} \label{needs a proof}  \Der^1_{\Z}(\mathcal{R}_S,  M), \end{equation}   which we understand, for $\mathcal{R}_{\alpha}$ a pro-system, to be the direct limit
 $\varinjlim \Der^1_{\Z}(\mathcal{R}_{\alpha}, M)$;
this description is valid with $\mathcal{R}_S$ replaced by any pro-simplicial ring.

Now, any map $\tilde{\phi}: \mathcal{R} _S\rightarrow \rmA \oplus \rmM[1]$ induces a map $\pi_1 \mathcal{R}_S \rightarrow \rmM$. This,
   and the statement of the Lemma, gives us 
a   pairing 
\begin{equation} \label{go_seigen}  \pi_1 \mathcal{R}_S  \times  H^2_f(\Ad \rho_{\rmA} \otimes \rmM)  \rightarrow \rmM.\end{equation} 
 In particular, taking $A=W_n$, $M=(W \otimes \Q)/W$,  and the representation $\rho_{\rmA}$ to be 
 the mod $\varpi^n$ reduction of the representation \eqref{tilderhodef}  defined by $\Pi$, 
  we get (after Tate global duality   and   passage to the limit) 
\begin{equation} \label{go_seigen_2} \pi_1 \mathcal{R}_S \rightarrow    H^1_f(\Ad^* \rhoglob(1) ) = \Vinf^*.  \end{equation}
We will see later that this map is an isomorphism, and its inverse  induces an isomorphism $\wedge^* \Vinf^* \rightarrow \pi_* \mathcal{R}_S$.

   \subsection{Background on the action \eqref{DHAV} } 
Now and in the remainder of this section, we put ourselves in the situation of \S \ref{two_limits}. In other words, 
we are given a sequence of allowable Taylor--Wiles data such that the limit process of 
 Theorem \ref{TWoutputtheorem} and the limit process
 of Theorem \ref{Patching_Theorem} can be carried out simultaneously (see \S \ref{two_limits}).
 In particular, as in Theorem \ref{TWoutputtheorem}, we have ``limit rings,'' augmented to $W(k)$: 
$$\usrS_{\infty} \stackrel{\iota}{\longrightarrow }\usR_{\infty} \twoheadrightarrow W(k).$$ 
 
 Let $\mathfrak{p}_S$ be the kernel of the augmentation $\usS_{\infty} \rightarrow W(k)$ (and similar for $\usR_{\infty}$);
write $\mathfrak{t}_S^*$ and $\mathfrak{t}_R^*$ for the quotient space $\mathfrak{p}_S/\mathfrak{p}_S^2$. Let $\mathfrak{t}_S, \mathfrak{t}_R$ be the $W$-linear duals.  To say differently,
$$ \mathfrak{t}_S = \Der^0_W(\usrS_{\infty}, W)$$
is the set of derivations of the $W$-algebra $\usrS_{\infty}$ with values  in $W$ and similarly for $\usR_{\infty}$. 

The map $\usrS_{\infty}  \twoheadrightarrow \usR_{\infty}$  (recall it is surjective, discussion after Theorem \ref{TWoutputtheorem}) induces
$\mathfrak{t}_R \hookrightarrow \mathfrak{t}_S$, and we write $  \mathfrak{t}_S/\mathfrak{t}_R$ be the cokernel.   
In suitable coordinates we have (\cite[\S 7.3]{DHA}): 
$$\usS_{\infty} \cong W[[x_1, \dots, x_s]], \usR_{\infty} \cong W[[y_1, \dots, y_{s-\delta}]]$$
and the map between them sends $x_i$ to $y_i$ for $i \leq s-\delta$ and kills $x_i$ for $i > s-\delta$. 
 
There are natural isomorphisms
\begin{equation} \label{first0} \Ext_{\usS_{\infty}}^1(W,W) \cong  \mathfrak{t}_S \end{equation}
\begin{equation} \label{second0} \Tor^{\usS_{\infty}}_1(\usR_{\infty}, W) \cong  (\mathfrak{t}_S/\mathfrak{t}_R)^* \end{equation}
Indeed the  sequence $\mathfrak{p}_S \rightarrow \usS_{\infty} \rightarrow W$ induces
 $\Ext^1_{\usS_{\infty}}(W, W) \cong  \Hom_{\usS_{\infty}-\mathrm{mod}}(\mathfrak{p}_S/\mathfrak{p}_S^2,W) $. Similarly  write $K$ for the kernel of $\usS_{\infty} \rightarrow \usR_{\infty}$; the sequence $K \rightarrow \usS_{\infty} \rightarrow \usR_{\infty}$ 
 of $\usS_{\infty}$-modules induces 
 $\Tor_1^{\usS_{\infty}}(\usR_{\infty}, W)  =   K \otimes_{\usS_{\infty}}  W = (K/\mathfrak{p}_S K)$;
the inclusion $K \hookrightarrow \mathfrak{p}_S$ maps  this isomorphically to the kernel of $\mathfrak{p}_S/\mathfrak{p}_S^2 \rightarrow \mathfrak{p}_R/\mathfrak{p}_R^2$, 
giving rise to an isomorphism $ \Tor_1^{\usS_{\infty}}(\usR_{\infty}, W)  \cong  (\mathfrak{t}_S/\mathfrak{t}_R)^*$.

The isomorphisms \eqref{first0} and \eqref{second0} induce:  
\begin{equation} \label{first} \Ext_{\usS_{\infty}}^*(W,W) \cong \wedge^* \mathfrak{t}_S \end{equation}
\begin{equation} \label{second} \Tor^{\usS_{\infty}}_*(\usR_{\infty}, W) \cong \wedge^*  (\mathfrak{t}_S/\mathfrak{t}_R)^* \end{equation}
More precisely, in both cases, one computes that the left-hand side with its natural algebra structure
is a free exterior algebra on its degree $1$ component.

Now Theorem \ref{TWoutputtheorem} part (d) gives an identification
\begin{equation} \label{id} H_*(Y_0,W)_{\mathfrak{m}} \cong \Tor^{\usS_{\infty}}_*(\Dinf, W) \cong \Tor^{\usS_{\infty}}_*(\underbrace{H_q(\Dinf)[-q]}_{ M},  W) \end{equation}
 where $\Dinf$ is homologically concentrated in degree $q$ and $M := H_q(\Dinf)$ is a free module over $\usR_{\infty}$. 
  $\Tor^{\usS_{\infty}}_*(M,  W) $ has both a natural 
  structure of a graded  $\Tor^{\usS_{\infty}}_*(\usR_{\infty},W) \cong \wedge^* (\mathfrak{t}_S/\mathfrak{t}_R)^* $-module
  and also a natural structure of  module under $\Ext_{\usS_{\infty}}^*(W,W)  \cong \wedge^* \mathfrak{t}_S$. The latter action 
  factors through $\wedge^* \mathfrak{t}_S \rightarrow \wedge^* (\mathfrak{t}_S/\mathfrak{t}_R)$.
  The resulting actions of $ \wedge^* (\mathfrak{t}_S/\mathfrak{t}_R)$ and $\wedge^* (\mathfrak{t}_S/\mathfrak{t}_R)^*$
  are compatible,    in the sense of \eqref{compat}. All these  assertions  are verified by explicit computation.

Now given a ``convergent'' sequence of Taylor--Wiles data as in the statement of Theorem \ref{TW main theorem},  
yields an identification  \cite[\S 8.25, proof of Theorem 8.5]{DHA}\begin{equation} \label{from another world} \mathfrak{t}_S/\mathfrak{t}_R \cong \Vinf\end{equation}
such that  the following diagram commutes: 
    \begin{equation} \label{SinftySn2}
 \xymatrix{
  \Ext_{\usS_{\infty}}(W, W) \ar[rr]  \ar[d]_{\eqref{first}}   &&    \End  \left( \Tor^{\usS_{\infty}}_*(M,  W) \right) \ar[d]^{\sim}_{\eqref{id}}  \\
  \wedge^* (\mathfrak{t}_S/\mathfrak{t}_R)  \ar[r]_{\eqref{from another world}}  &  \wedge^* \Vinf\ar[r]^{\eqref{DHAVvar}\qquad }  &    \End \left(  H_*(Y_0, W)_{\mathfrak{m}}  \right)  }
\end{equation}
(of course, in this diagram, the identification \eqref{id} also depended on the choices of Theorem \ref{TW main theorem}).

We are now ready for the  basic result relating the derived Hecke algebra with the action of the derived deformation ring: 
\begin{theorem} \label{refit}  
The actions of $ \wedge^* \Vinf$ on $H_*(Y_0, W)_{\mathfrak{m}}$
via \eqref{DHAVvar} and $\wedge^* \Vinf^*$ on  $H_*(Y_0, W)_{\mathfrak{m}}$
(via the isomorphism $\wedge^* \Vinf^* \cong \pi_* \mathcal{R}$ induced by the inverse of
\eqref{go_seigen_2}) are compatible with one another, in the sense of \eqref{compat}. 
\end{theorem}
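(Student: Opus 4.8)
The plan is to reduce the assertion to the compatibility, recorded in the discussion preceding the theorem, between the actions of $\wedge^*(\mathfrak{t}_S/\mathfrak{t}_R)$ and $\wedge^*(\mathfrak{t}_S/\mathfrak{t}_R)^*$ on $\Tor^{\usrS_{\infty}}_*(M,W)$. I would work throughout in the simultaneous-limit setup of \S \ref{two_limits}, so that the identification $\pi_*\mathcal{R}_S \cong \Tor^{\usrS_{\infty}}_*(\usR_{\infty},W)$ from \eqref{froudo}, the identification $H_*(Y_0,W)_{\mathfrak{m}} \cong \Tor^{\usrS_{\infty}}_*(M,W)$ from \eqref{id} (with $M = H_q(\Dinf)$ free over $\usR_{\infty}$), and the identification $\mathfrak{t}_S/\mathfrak{t}_R \cong \Vinf$ from \eqref{from another world} are all available at once and are compatible with the diagram \eqref{SinftySn2}.

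Granting this, the argument has three pieces. First, \eqref{SinftySn2} together with \eqref{from another world} and the isomorphism \eqref{first} identifies the $\wedge^*\Vinf$-action \eqref{DHAVvar} on $H_*(Y_0,W)_{\mathfrak{m}}$ with the action of $\Ext^*_{\usrS_{\infty}}(W,W)\cong\wedge^*(\mathfrak{t}_S/\mathfrak{t}_R)$ on $\Tor^{\usrS_{\infty}}_*(M,W)$. Second, Theorems \ref{TW main theorem} and \ref{TWoutputtheorem}(e) identify the $\pi_*\mathcal{R}_S$-action with the action of $\Tor^{\usrS_{\infty}}_*(\usR_{\infty},W)\cong\wedge^*(\mathfrak{t}_S/\mathfrak{t}_R)^*$, via \eqref{froudo} and \eqref{second}. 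Third — the only non-formal point — one must check that the composite isomorphism $\pi_1\mathcal{R}_S\cong\Tor^{\usrS_{\infty}}_1(\usR_{\infty},W)\cong(\mathfrak{t}_S/\mathfrak{t}_R)^*$ given by \eqref{froudo} and \eqref{second0} coincides with the map \eqref{go_seigen_2} from $\pi_1\mathcal{R}_S$ to $\Vinf^*$ followed by the transpose of the isomorphism \eqref{from another world}. Once the third point is settled, substituting these identifications into the compatibility of $\wedge^*(\mathfrak{t}_S/\mathfrak{t}_R)$ and $\wedge^*(\mathfrak{t}_S/\mathfrak{t}_R)^*$ yields precisely \eqref{compat} for $\wedge^*\Vinf$ and $\wedge^*\Vinf^*$; and since $\pi_*\mathcal{R}_S$ is a free exterior algebra on $\pi_1\mathcal{R}_S$ by \eqref{second}, the degree-$1$ identification $\pi_1\mathcal{R}_S\cong\Vinf^*$ extends canonically to $\pi_*\mathcal{R}_S\cong\wedge^*\Vinf^*$, which in particular shows \eqref{go_seigen_2} is an isomorphism.

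For the third point I would compare the two descriptions of the map $\pi_1\mathcal{R}_S\to\Vinf^*$ at each finite Taylor--Wiles level and pass to the limit. On one side, \eqref{go_seigen_2} is assembled from the deformation-theoretic pairing \eqref{go_seigen}, which by Lemma \ref{Relative tangent space} (equivalently \eqref{needs a proof}) identifies $\pi_1\mathcal{R}_S$ with the $W$-dual of $H^2_f(\Z[\frac{1}{S}],\Ad\rhoglob\otimes(W\otimes\Q)/W)$ through evaluation of square-zero lifts, after which Poitou--Tate global duality and the self-duality of the crystalline local condition in the Fontaine--Laffaille range land it in $H^1_f(\Ad^*\rhoglob(1))=\Vinf^*$. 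On the other side, \eqref{from another world} comes from \cite{DHA}: the relative cotangent space $\mathfrak{t}_S^*/\mathfrak{t}_R^*$ of $\usrS_{\infty}\to\usR_{\infty}$ is, by the construction of $\iota$ out of the inertia action at the Taylor--Wiles primes, identified with the image of $\tangent^1\mathcal{R}_n$ in $\bigoplus_{v\in Q_n}H^2(\Q_v,\Ad\rho)$, and the allowability isomorphisms \eqref{starredeqn} together with Poitou--Tate identify the dual of this, in the limit, with $H^1_f(\Ad^*\rhoglob(1))$ once more. I would trace the comparison $\mathcal{R}_S\simeq\mathcal{R}_n\dotimes_{\mathcal{S}_n}\mathcal{S}_n^{\ur}$ of \eqref{ringmap} and \eqref{ringmap2} through on relative tangent complexes and verify that the resulting square of Poitou--Tate pairings commutes; the genuine content is that the local pairings at the Taylor--Wiles places and at $p$ used on the two sides are the same cup-product pairings $H^1\times H^2\to\Q_p/\Z_p$, which reduces to matching the chain-level conventions fixed in \cite{KT} and \cite{DHA}.

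I expect the third point to be the main obstacle: it requires bookkeeping across two independently-developed formalisms — the obstruction-theoretic picture of the derived deformation ring built up here, and the tangent/diamond-operator picture of the derived Hecke action from \cite{DHA} — with Poitou--Tate duality normalized consistently throughout. The final assertion of the section, that the $\pi_*\mathcal{R}_S$-action on $H_*(Y_0,W)_{\mathfrak{m}}$ is independent of the Taylor--Wiles choices, is then formal: by \cite[Theorem 8.5]{DHA} the cohomology $H^*(Y_0,W)_{\mathfrak{m}}$ is freely generated over $\wedge^*\Vinf$ in degree $q$, hence (dualizing, and using that $\wedge^*\Vinf$ acts on homology by lowering degree) $H_*(Y_0,W)_{\mathfrak{m}}$ is generated over $\wedge^*\Vinf$ by its top-degree part; by the linear-algebra uniqueness statement preceding the theorem, \eqref{compat} then determines the $\wedge^*\Vinf^*$-action — that is, the $\pi_*\mathcal{R}_S$-action — uniquely from the manifestly choice-independent $\wedge^*\Vinf$-action.
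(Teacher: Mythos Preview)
Your proposal is correct and follows essentially the same approach as the paper: the paper reduces Theorem~\ref{refit} to a Lemma asserting exactly your ``third point'' (that the composite $\pi_1\mathcal{R}_S \to \Tor_1 \to (\mathfrak{t}_S/\mathfrak{t}_R)^* \to \Vinf^*$ agrees with \eqref{go_seigen_2}), then proves that Lemma by a finite-level diagram chase passed to the limit, with Poitou--Tate duality as the bridge to the construction in \cite{DHA}. One small correction: the key local-duality computation the paper isolates is an $H^1\times H^1$ pairing at the Taylor--Wiles places (relating the Mayer--Vietoris connecting map $\beta$ of \eqref{foot2} to the sum-of-local-pairings map built in \cite{DHA}), not an $H^1\times H^2$ pairing as you wrote; the paper's explicit diagram \eqref{cow-2} makes this step cleaner than your sketch suggests.
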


 In view of  the discussion of the paragraph following \eqref{id},   this follows from the following Lemma. 
  
 \begin{Lemma}
Let notation be as above; in particular,  $Q_n$ and other data $\usrS_{\infty}, \usR_{\infty}$ etc.\ as in  \S \ref{two_limits}. The composite  of the isomorphisms 
\begin{equation} \label{tobedone} \pi_* \mathcal{R}  \stackrel{\eqref{froudo}}{\longrightarrow} \Tor^{\usrS_{\infty}}_*(\usR_{\infty}, W)  \stackrel{\eqref{second}}{\cong}  \wedge^*  (\mathfrak{t}_S/\mathfrak{t}_R)^*  \stackrel{\eqref{from another world}}{\longrightarrow} \wedge^* \Vinf^* \end{equation}  
coincides, in degree $1$, with the map $\pi_1 \mathcal{R} \rightarrow \Vinf^*$ constructed in
\eqref{go_seigen_2}. 
\end{Lemma}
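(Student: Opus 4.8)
The plan is to rewrite both maps through the patching equivalence and then recognise them as the same tautological pairing. By the proof of Theorem~\ref{TW main theorem} (and the simultaneous limit process of \S\ref{two_limits}), the map $\mathcal{R}_S \to \usR_{\infty}\dotimes_{\usrS_{\infty}}W =: \mathcal{C}_\infty$ is a weak equivalence of represented functors, and the induced isomorphism $\pi_*\mathcal{R}_S \xrightarrow{\sim} \pi_*\mathcal{C}_\infty = \Tor^{\usrS_{\infty}}_*(\usR_{\infty},W)$ is exactly \eqref{froudo} (using Lemma~\ref{Tor_limit} and \eqref{pistar iso}). Under the standing hypotheses of \S\ref{TWnotationsetup} one has $\pi_0\mathcal{R}_S = W$ (Theorem~\ref{TWoutputtheorem}(e) together with assumption~7(c)), hence $\pi_0\mathcal{C}_\infty = W$ and $\usrS_{\infty}\to\usR_{\infty}$ is surjective with kernel $K$ generated by a regular sequence. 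A standard cotangent-complex computation for this quotient map, via $L_{\mathcal{C}_\infty/W}\simeq (K/K^2)[1]\dotimes_{\usR_{\infty}}\mathcal{C}_\infty$, then gives that $\pi_1\mathcal{C}_\infty = K/\mathfrak{p}_S K$ is free of rank $\delta$ over $W$, that $\Der^1_{\Z}(\mathcal{C}_\infty,W_n) = \Hom_W(K/\mathfrak{p}_S K,W_n)$, and that the evaluation pairing $(\xi,\tilde\phi)\mapsto\tilde\phi_*(\xi)$ between $\pi_1\mathcal{C}_\infty$ and $\Der^1_{\Z}(\mathcal{C}_\infty,W_n)$ is the tautological one. (The identification of $\pi_1$ with the $\Der^1$-dual is the instance of Corollary~\ref{Corollary:compare} / Proposition~\ref{Prop:hurewicz} valid because $\pi_0\mathcal{C}_\infty = W$ has vanishing tangent complex.) Via \eqref{second0} this reads $\pi_1\mathcal{C}_\infty = (\mathfrak{t}_S/\mathfrak{t}_R)^{*}$ and $\Der^1_{\Z}(\mathcal{C}_\infty,W_n) = (\mathfrak{t}_S/\mathfrak{t}_R)\otimes_W W_n$, with tautological evaluation.

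Consequently the composite \eqref{tobedone} in degree $1$ is just the map $\pi_1\mathcal{R}_S = (\mathfrak{t}_S/\mathfrak{t}_R)^{*}\to\Vinf^{*}$ dual to \eqref{from another world}. On the other side, unwinding \eqref{go_seigen_2}: for each $n$ it is the $W_n$-dual of the pairing of \eqref{go_seigen}, which by Lemma~\ref{Relative tangent space} and the identification \eqref{needs a proof} is the tautological evaluation of $\pi_1\mathcal{R}_S$ against homotopy classes of lifts $\mathcal{R}_S\to W_n\oplus W_n[1]$ over the augmentation, followed by Tate global duality $H^2_f(\Z[\tfrac1S],\Ad\rho_{W_n}\otimes(W\otimes\Q)/W)^{\vee}\cong H^1_{f^{\perp}}(\Ad^{*}\rhoglob(1)) = H^1_f(\Ad^{*}\rhoglob(1)) = \Vinf^{*}$ (the self-duality of the crystalline condition in the Fontaine--Laffaille range), and then $n\to\infty$. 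Transporting this along $\mathcal{R}_S\simeq\mathcal{C}_\infty$ and using the previous paragraph, \eqref{go_seigen_2} becomes the composite $(\mathfrak{t}_S/\mathfrak{t}_R)^{*}\to\Vinf^{*}$ in which the second arrow is: identify $(\mathfrak{t}_S/\mathfrak{t}_R)\otimes W_n = \Der^1_{\Z}(\mathcal{C}_\infty,W_n) = \Der^1_{\Z}(\mathcal{R}_S,W_n) = H^2_f(\Z[\tfrac1S],\Ad\rho_{W_n})$ (via \eqref{polanski} / Lemma~\ref{Relative tangent space}), apply Tate global duality, pass to the limit. So the statement reduces to showing that this Tate-duality identification of $(\mathfrak{t}_S/\mathfrak{t}_R)^{*}$ with $\Vinf^{*}$ is the same as (the dual of) \eqref{from another world}.

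This last point is a matter of matching two descriptions of the same Selmer group. In \cite{DHA} the identification \eqref{from another world} is obtained by observing that $(\mathfrak{t}_S/\mathfrak{t}_R)^{*}$ is carried mod $p^n$ by the inertia at the Taylor--Wiles primes --- the ``diamond'' directions $\Delta_n$, equivalently the ``new at $Q_n$'' subquotient of $H^1(\Z[\tfrac1{S\coprod Q_n}],\Ad\rho)$ --- onto a finite-level quotient of $H^1_f(\Ad^{*}\rho(1))$ by local Tate duality at the primes of $Q_n$, and that, under the allowability condition (Definition~\ref{Def:TWdefn}), Tate \emph{global} duality for $\Z[\tfrac1{S\coprod Q_n}]$ restricts to exactly this local duality. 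Running the same computation through the localisation sequences of Theorem~\ref{TWcloseness} --- which identify $\mathfrak{t}^1\mathcal{R}_n$ with $\bigoplus_{v\in S\coprod Q_n}H^2(\Q_v,\Ad\rho)$, the $Q_n$-part being $\mathfrak{t}^1\mathcal{S}_n$ by \eqref{TWkey} --- shows that the deformation-theoretic route of the previous paragraph (the relative tangent complex \eqref{polanski} followed by global duality) produces the same pairing on $(\mathfrak{t}_S/\mathfrak{t}_R)^{*}$. Hence \eqref{from another world} and the Tate-duality identification coincide, and the two maps $\pi_1\mathcal{R}_S\to\Vinf^{*}$ agree; passing to higher exterior powers then gives the isomorphism $\wedge^{*}\Vinf^{*}\cong\pi_*\mathcal{R}$ referred to in \S\ref{DHAcompare}.

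\textbf{The main obstacle} is precisely this final matching: verifying on the nose that the local Tate pairing at the Taylor--Wiles primes built into the construction of \eqref{from another world} in \cite{DHA} (via diamond operators and $U_q$-eigenspaces) is the one that emerges from unwinding the relative tangent-complex computation of Lemma~\ref{Relative tangent space} through the sequences of Theorem~\ref{TWcloseness}. Both are ``the'' local duality at $q\equiv 1\pmod{p^n}$, but they are set up along visibly different routes, so the delicate part is the normalisation bookkeeping --- a fixed trivialisation of $\mu_{p^{\infty}}$ modulo $p^n$ and a fixed basis \eqref{Deltaiso} of $\Delta_n$ --- needed to see they literally coincide. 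The commutative-algebra input (the cotangent-complex computation of $\pi_1$, $\Der^1$ and their pairing for a quotient by a regular sequence, together with Corollary~\ref{Corollary:compare}) is routine by comparison.
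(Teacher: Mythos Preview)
Your approach is essentially correct and follows the same overall strategy as the paper: both reduce the question to identifying the evaluation pairing of $\pi_1$ against lifts to $W_m \oplus W_m[1]$ (via Lemma~\ref{Relative tangent space} and \eqref{needs a proof}), and then matching this with the construction of \eqref{from another world} from \cite{DHA} using local duality at the Taylor--Wiles primes.

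The organizational difference is that the paper works at finite level throughout rather than directly with $\mathcal{C}_\infty = \usR_\infty \dotimes_{\usrS_\infty} W$. It sets up a three-row commutative diagram whose middle row involves $\usbR_n \dotimes_{\overline{\usrS_n}} W_n$, and explicitly identifies the connecting map
\[
\theta: \frac{\Der^0_W(\overline{\usrS_n}, W_m)}{\Der^0_W(\usbR_n, W_m)} \cong \bigoplus_{v \in Q_n} \frac{H^1(\Q_v, \Ad\rho_m)}{H^1_{\ur}} \longrightarrow H^2_f(\Z[\tfrac1S], \Ad\rho_m)
\]
as ``sum of local pairings at $Q_n$, then global Tate duality'' --- exactly your ``local Tate pairing at Taylor--Wiles primes'' observation, but made concrete as the map $\beta$ from \eqref{foot2}. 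It then checks this is compatible with the inverse system in $n$ and passes to the limit. Your route via the cotangent complex of $\mathcal{C}_\infty$ (using $L_{\mathcal{C}_\infty/W} \simeq (K/K^2)[1] \dotimes_{\usR_\infty} \mathcal{C}_\infty$) is conceptually cleaner, but the paper's explicit finite-level tracking buys something: since \eqref{from another world} in \cite{DHA} is \emph{itself} constructed as a limit over $n$, the comparison becomes a levelwise check rather than a matching of two already-assembled limit objects, and the compatibility with the inverse system (which you implicitly assume) is verified rather than asserted.

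Both proofs ultimately defer the final normalization step --- that $\theta$ really coincides with the map of \cite[\S 8.15]{DHA} --- to unwinding the constructions there. You correctly flag this as the main obstacle; the paper acknowledges the same (``to verify this requires wading into the maze of \cite{DHA}'').
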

The map $\pi_1 \mathcal{R} \rightarrow \Vinf^*$ is constructed in a ``natural'' fashion, whereas the 
   first map   and the third map  in sequence \eqref{tobedone}  depend on the various choices made to set up the situation of \S \ref{two_limits}. 
Nonetheless the Lemma asserts that the composite is independent of all choices. 
   
 \proof  (of Lemma): 
 
As above $\usrS_{\infty}, \usR_{\infty}$ are naturally augmented to $W$.
For $n \geq m$ consider the following diagram, where $\Hom$ simply means homomorphisms of abelian groups:  
  \begin{equation}
 \xymatrix{
\Hom( \pi_1(\usR_{\infty} \dotimes_{\usrS_{\infty}} W) , W_m)  & \ar[l]  \pi_0 \left( \mbox{ lifts to $\usR_{\infty} \dotimes_{\usrS_{\infty}} W \rightarrow W_m \oplus W_m[1]$} \right)  \\
 \Hom(\pi_1(\usbR_n \dotimes_{\overline{\usrS_n}} W_n) , W_m)   \ar[u] \ar[d]^{\eqref{ringmap2}} &  \ar[l]    \pi_0 \left( \mbox{ lifts to $\usbR_n \dotimes_{\overline{\usrS_n}} W_n \rightarrow W_m \oplus W_m[1]$} \right) \ar[u] \ar[d]      \\ 
\Hom(\pi_1 \mathcal{R}, W_m)  &    \ar[l]  \pi_0 \left( \mbox{ lifts to $\mathcal{R} \rightarrow W_m \oplus W_m[1]$}\right)  .%
 }
 \end{equation} 
  
On the right hand side of this diagram, ``lifts''  always refers to lifting the natural map to the discrete ring $W_m$ given by the augmentations;
and the horizontal maps result by applying $\pi_1$ to such a lift. 

Consider the middle row. Using 
\eqref{needs a proof}, and the long exact sequence for Andr{\'e}-Quillen cohomology of a derived tensor product\footnote{This can be deduced from Lemma \ref{lem:2.51} (iv).}
, we get a sequence:
 $$ \frac{\Der^0_W(\overline{\usrS_n}, W_m) }{ \Der^0_W(\usbR_n, W_m)  }  \rightarrow
\Der^1_W(\usbR_n \otimes_{\overline{\usrS_n}} W_n, W_m)  \stackrel{\sim}{\rightarrow}
  \pi_0 \left( \mbox{ lifts to $\usbR_n \dotimes_{\usbS_n} W \rightarrow W_m \oplus W_m[1]$} \right) .$$

Proceeding similarly for the top row, and using
  Lemma \ref{Relative tangent space}  for the bottom row,  we arrive at the following diagram:
  
 \begin{equation} \label{cow-2} 
 \xymatrix{
\Hom( \pi_1(\usR_{\infty} \dotimes_{\usS_{\infty}} W) , W_m)  & \ar[l]_{\ \qquad \ \  \sim}  ( \mathfrak{t}_{S}/\mathfrak{t}_{R}) \otimes W_m  \ar[l] \\ 
\Hom(\pi_1(\usbR_n \dotimes_{\overline{\usrS_n}} W_n) , W_m)   \ar[u]^f \ar[d]^g   &   \ar[l]   \frac{ \Der^0_W(\overline{\usrS_n}, W_m) }{ \Der^0_W(\usbR_n, W_m)}   \ar[u]^h \ar[d]^{\theta}  \\ 
\Hom(\pi_1 \mathcal{R}, W_m)  &    \ar[l]^{{\tiny \mbox{Lemma \ref{Relative tangent space}}}}  
H^2_f(\Z[\frac{1}{S}], \Ad \rho_m).%
 }
 \end{equation} 
  
The top horizontal isomorphism is the the dual of the isomorphism \eqref{second0} $$\underbrace{\Tor^{\usS_{\infty}}_1(\usR_{\infty}, W)}_{= \pi_1(\usR_{\infty} \dotimes_{\usS_{\infty}} W)} \cong  (\mathfrak{t}_S/\mathfrak{t}_R)^*.$$ 
We need to identify the map  
$
 \theta: \Der^0_W(\overline{\usrS_n}, W_m)    \rightarrow H^2_f(\Z[\frac{1}{S}], \Ad \rho_m) $ 
in the diagram above. This is  
of the deformation rings $\mathcal{R}_S$ and $\mathcal{R}_{SQ_n}$ -- that is to say, the map $\beta$
in \eqref{foot2}; in our case: 
 \begin{equation} \label{moored}  \theta:  \bigoplus_{v \in Q_n} \frac{ H^1(\Q_v, \Ad \rho_m)}{H^1_{\ur}(\Q_v, \Ad \rho_m)} \longrightarrow  H^2_f(\Z[\frac{1}{S}], \Ad \rho_m),\end{equation} 
where the identification of
$\Der^0_W(\overline{\usrS_n}, W_m)    $ with $ \bigoplus_{v \in Q_n} \frac{ H^1(\Q_v, \Ad \rho_m)}{H^1_{\ur}(\Q_v, \Ad \rho_m)}$
is as in   \cite [\S 8.14]{DHA}.   

%
%
%
%
%
%
%
%
%

We will now use  the following fact, which can be proved by tracing through the definitions:  for $\beta \in \bigoplus_{v \in Q_n} \frac{ H^1(\Q_v, \Ad \rho_m)}{H^1_{\ur}(\Q_v, \Ad \rho_m)}$
and $\alpha \in H^1_f(\Z[\frac{1}{S}], \Ad^* \rho_m(1))$, the pairing $\langle \theta(\beta), \alpha \rangle$
of Tate global duality
can be computed as the sum of local pairings $\sum_{v \in Q_n} \langle \beta_v, \alpha|_{\Q_v} \rangle$. 
This  shows that $\theta$ coincides with the composite
\begin{equation} \label{Cow7}  \bigoplus_{v \in Q_n} \frac{ H^1(\Q_v, \Ad \rho_m)}{H^1_{\ur}(\Q_v, \Ad \rho_m)} \rightarrow H^1_f(\Z[\frac{1}{S}], \Ad^* \rho_m(1))^*   \rightarrow H^2_f(\Z[\frac{1}{S}], \Ad \rho_m)  \end{equation}
where the former map   is induced by the 
sum of local pairings 
 $\sum_{v \in Q_n} \langle \beta_v, \alpha|_{\Q_v} \rangle$ (this coincides with a map constructed in \cite[\S 8.15]{DHA}) 
 and the latter map is induced by Tate global duality.\footnote{In the situation at hand, $H^1_f$ and so also $H^2_f$ is a free
module over $W_m$, and the pairing of Tate global duality is perfect.}%

Having identified $\theta$, let us return to diagram \eqref{cow-2}. 
The middle row forms an inverse system over $n$ (by means of the identifications of (b) of Theorem  \ref{TWoutputtheorem}).
We claim that all the maps in diagram \eqref{cow-2} 
  are compatible with this inverse system. For the maps $f,h$ this is clear by definition; for the map $g$ 
it is the homotopy compatibility discussed in 
\S \ref{two_limits}; for the map $\theta$ it is  discussed in \cite[\S 8.25, after (162)]{DHA}.  

Therefore, we may obtain a new diagram by passing to an inverse limit over $n$. 
After one passes to the inverse limit over $n$ the upper maps become isomorphisms;
then we can invert  the 
top layer of vertical maps in \eqref{cow-2}. The result is 
   \begin{equation} \label{cow-3} 
 \xymatrix{
\Hom( \pi_1(\usR_{\infty} \dotimes_{\usS_{\infty}} W) , W_m) \ar[d]^F   & \ar[l]^{\qquad \eqref{second0}}  ( \mathfrak{t}_{S}/\mathfrak{t}_{R}) \otimes W_m  \ar[l]  \ar[d]^G \\ 
\Hom(\pi_1 \mathcal{R}, W_m)  &    \ar[l]^{{\tiny \mbox{Lemma \ref{Relative tangent space}}}}   
H^2_f(\Z[\frac{1}{S}], \Ad \rho_m).%
 }
 \end{equation} 
where $F =  g \circ (\varprojlim_n f)^{-1}$ and $G = \theta \circ (\varprojlim_n h)^{-1}$. 

 The isomorphism $F$ is, by definition,  exactly that  of \eqref{froudo}. Thus the composite
 $ ( \mathfrak{t}_{S}/\mathfrak{t}_{R}) \otimes W_m  \rightarrow \Hom(\pi_1 \mathcal{R}, W_m) $ is 
 the map induced by the first two arrows in  \eqref{tobedone}. 
The equality of $\theta$ and the composite of \eqref{Cow7} shows that the map $G$
above
coincides with the (reduction mod $p^m$ of the) isomorphism induced by   \eqref{from another world}. 
(Unfortunately, to verify this requires wading into the maze of \cite{DHA} to unravel the origin of \eqref{from another world}: the relevant definitions, which match well with \eqref{Cow7}, are in \cite[\S 8.15]{DHA}). 
This concludes the proof of the Lemma, and so also of Theorem \ref{refit}. \qed

\printindex 
\appendix

\section{Homotopy colimits and homotopy limits}
\label{sec:homotopy-theory}

We shall use the \emph{Bousfield--Kan formula} for homotopy (co)limits %
of simplicial sets.  We recall the definition and some well known properties here.  Reference: Bousfield and
Kan's book, chapter XI and XII.

Notationwise, we shall write $C(x,y)$ for the set of morphisms from
$x$ to $y$ in a category $C$.  For \emph{simplicially enriched
  categories} we shall also write $C(x,y)$ for the simplicial set of
morphisms.  For example, if $X$ and $Y$ are simplicial sets we shall
write $s\Sets(X,Y)$ for the simplicial set of maps $X \to Y$.

\subsection{Nerves of categories}
\label{sec:nerve}

The set $[p] = \{0, \dots, p\}$ may be regarded as an ordered set
using $\leq$, and hence a category, with one morphism $i \to j$ if
$i \leq j$ and none otherwise.  This gives a functor
$\Delta \to \mathrm{Cat}$.

If $C$ is a small category, the \emph{nerve} of $C$ is the simplicial
set whose $p$-simplices are the functors $[p] \to C$, i.e.\ $p$-tuples
of composable morphisms.  We write $NC$ for the nerve and $N_pC$ for
its set of $p$-simplices.

For an object $c \in C$, the \emph{under category} $(c \downarrow C)$
has objects pairs $(d,f)$ with $f: c \to d$ and morphisms commutative
triangles.  It comes with a forgetful functor $(c \downarrow C) \to C$
and we have a canonical functor
\begin{align*}
  C^\mathrm{op} &\to s\Sets\\
  c & \mapsto N(c \downarrow C).
\end{align*}
All values $N(c \downarrow C)$ are contractible simplicial sets, so
the functor is naturally weakly equivalent to the terminal functor
which takes all objects to a point.

\subsection{Homotopy colimits}
\label{sec:hocolim}

For a small category $C$ and a simplicial set $Y$ we obtain a functor
\begin{align*}
  C &\to s\Sets\\
  c & \mapsto s\Sets(N(c\downarrow C), Y),
\end{align*}
which we shall denote $s\Sets(N(-\downarrow C),Y)$.  The homotopy
colimit of a functor $X: C \to s\Sets$ is a simplicial set and has the
universal property that the set of maps
\begin{equation*}
  f: \hocolim_{c \in C} X(c) \to Y
\end{equation*}
are in natural bijection with the set of natural transformations
\begin{equation*}
  f: X \to s\Sets(N(- \downarrow C),Y).
\end{equation*}
(And $p$-simplices in $s\Sets(\hocolim X,Y)$ are given by the same
formula with $Y$ replaced by $s\Sets(\Delta[p],Y)$.)  In other words,
to specify a map $f: \hocolim X \to Y$ amounts to specifying maps of
simplicial sets $f_c: X(c) \times N(c \downarrow C) \to Y$ for all
objects $c \in C$, in a way that is compatible with morphisms
$c_0 \to c_1$ in $C$.  A simplicial set $\hocolim X$ with this
universal property can be constructed as a quotient of
$\coprod_{c \in C} X(c) \times N(c \downarrow C)$: explicitly it is
the coequalizer of a diagram
\begin{equation*}
  \coprod_{(c_0 \to c_1) \in N_1 C} X(c_0) \times N(c_1 \downarrow C)
  \rightrightarrows \coprod_{c \in N_0 C} X(c) \times N(c \downarrow C).
\end{equation*}

For example, if $G$ is a group considered as a category with one
element then a functor $G \to s\Sets$ is a simplicial set with an
action and the homotopy colimit is just the ``Borel construction''.

The most important property of homotopy colimits is their homotopy
invariance, in the following sense.
\begin{Lemma}
  Let $F,G: C \to s\Sets$ be two functors, and let $T: F \to G$ be a
  natural transformation.  If $T: F(c) \to G(c)$ is a weak equivalence
  for all objects $c$, then the induced map
  \begin{equation*}
    \hocolim_C F \stackrel{T}\to \hocolim_C G
  \end{equation*}
  is a weak equivalence.\qed
\end{Lemma}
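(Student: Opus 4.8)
The statement to prove is the homotopy invariance of the Bousfield--Kan homotopy colimit: a natural transformation $T\colon F\to G$ of functors $C\to s\Sets$ that is an objectwise weak equivalence induces a weak equivalence $\hocolim_C F\to\hocolim_C G$.

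My plan is to exploit the explicit coequalizer presentation of the homotopy colimit given just above the statement. The homotopy colimit is the realization of the simplicial replacement (the bar construction): the bisimplicial set $[p]\mapsto \coprod_{c_0\to\cdots\to c_p} X(c_p)$ whose diagonal (or, equivalently, realization in the $C$-direction) is $\hocolim_C X$. First I would recall that for a map of bisimplicial sets which is a levelwise weak equivalence in one simplicial direction, the induced map on diagonals (equivalently, on realizations) is a weak equivalence; this is the standard Bousfield--Kan/Reedy fact (cf.\ \cite[XII.4.3]{GoerssJardine}-style results, or the realization lemma). So it suffices to check that at each bidegree $[p]$ the map
\begin{equation*}
  \coprod_{c_0\to\cdots\to c_p} F(c_p) \longrightarrow \coprod_{c_0\to\cdots\to c_p} G(c_p)
\end{equation*}
is a weak equivalence. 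But this is a coproduct, indexed by the same set $N_pC$ of $p$-tuples of composable morphisms, of the maps $T_{c_p}\colon F(c_p)\to G(c_p)$, each of which is a weak equivalence by hypothesis; and a coproduct of weak equivalences of simplicial sets is a weak equivalence. That finishes the argument.

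The one point requiring a little care, which I expect to be the main (mild) obstacle, is justifying the passage from "levelwise weak equivalence of bisimplicial sets" to "weak equivalence on the diagonal" without circularity or heavy machinery. I would handle this by invoking the well-known realization/diagonal lemma for bisimplicial sets directly, or alternatively by noting that all simplicial sets in sight are cofibrant, that $\hocolim$ as defined is the geometric realization of the simplicial space $[p]\mapsto\coprod_{N_pC}X(c_p)$, and that geometric realization of simplicial spaces preserves objectwise weak equivalences between Reedy-cofibrant (here: degreewise-cofibrant, hence Reedy-cofibrant since everything is a simplicial set) objects. A second, even more elementary route — which I would mention as an alternative — is to factor $T$ through the mapping-cylinder construction applied objectwise, reducing to the case where each $T_c$ is a trivial cofibration; then $\hocolim$ applied to a natural trivial cofibration is again a trivial cofibration because $\hocolim$ is a left Quillen functor on the projective model structure on $s\Sets^C$, and the projective-cofibrant objects include all diagrams (every object of $s\Sets$ being cofibrant does not immediately give projective cofibrancy, so this route needs the cylinder factorization to stay clean). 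Either way the proof is short; I would present the bisimplicial/realization version as the primary argument since it is the most transparent and uses only the explicit formula already displayed in the text.
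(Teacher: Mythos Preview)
The paper does not actually prove this lemma: it is stated with a \qed\ and no argument, treated as a well-known fact from Bousfield--Kan (chapter XII). Your bisimplicial/realization argument is the standard proof and is correct; the levelwise coproduct of weak equivalences is a weak equivalence, and the diagonal/realization lemma for bisimplicial sets then finishes it. One cosmetic point: with the paper's conventions the bar construction has $p$th level $\coprod_{c_0\to\cdots\to c_p} X(c_0)$ rather than $X(c_p)$, but this does not affect your argument at all.
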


Recall that a category is \emph{filtered} if for any objects $c,c'$ of
$C$ there exists an object $c''$ and morphisms $c \to c'$ and
$c \to c''$, and that any two parallel arrows $c \rightrightarrows c'$
become equal after composing with some arrow $c' \to c'''$.  For such
categories the ordinary categorical colimit is already homotopy
invariant.
\begin{Lemma}
  If $C$ is filtered and $F: C \to s\Sets$ is a functor, then the
  natural map
  \begin{equation*}
    \hocolim_{c \in C} F(c) \to \colim_{c \in C} F(c)
  \end{equation*}
  is a weak equivalence.\qed
\end{Lemma}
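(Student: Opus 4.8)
The plan is to reduce, via the standard bisimplicial model of the homotopy colimit, to the case where $F$ takes values in (discrete) sets, where the statement becomes a purely categorical fact about filtered categories. Model $\hocolim_{c\in C}F(c)$ as the diagonal of the bisimplicial set $B_{\bullet\bullet}$ with $B_{p,q}=\coprod_{c_0\to\cdots\to c_p}F(c_0)_q$, the coproduct over chains of $p$ composable morphisms of $C$; this agrees up to natural weak equivalence with the construction of \S\ref{sec:hocolim}. There is an augmentation $B_{\bullet\bullet}\to\colim_C F$ with the target constant in the ``nerve'' direction, and the induced map $\mathrm{diag}(B)\to\colim_C F$ is the map of the Lemma. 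By the bisimplicial realization lemma (a map of bisimplicial sets that is a weak equivalence in each fixed degree of one variable induces a weak equivalence on diagonals; see \cite{GoerssJardine}), it suffices to show that for each fixed $q$ the map
\[ [p]\mapsto B_{p,q}=\coprod_{c_0\to\cdots\to c_p}F(c_0)_q\;\longrightarrow\;(\colim_C F)_q=\colim_C F(c)_q \]
is a weak equivalence. But the source is canonically the nerve $N(\int_C S)$ of the category of elements of the set-valued functor $S=F(-)_q\colon C\to\Sets$, and the target is $\pi_0(\int_C S)$ (the set of connected components, viewed as a discrete simplicial set), with the canonical map. So the Lemma reduces to the claim that for $C$ filtered and $S\colon C\to\Sets$, the natural map $N(\int_C S)\to\pi_0(\int_C S)$ is a weak equivalence.

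Since $N(\int_C S)$ is the disjoint union of the nerves of the connected components of $\int_C S$, it is enough to show each component $D$ has contractible nerve, and I would do this in two steps. First, each component $D$ is itself a filtered category: given objects $(c,x),(c',x')$ of $D$, they lie in a common component, and chasing the connecting zig-zag while repeatedly applying the cocone axiom for $C$ produces a $c''$ with morphisms $c\to c''$, $c'\to c''$ carrying $x,x'$ to a common element $x''\in S(c'')$, so that $(c,x)\to(c'',x'')\leftarrow(c',x')$ lies in $D$; and given parallel morphisms $(c,x)\rightrightarrows(c',x')$, i.e.\ $f,g\colon c\to c'$ with $f_*x=g_*x=x'$, filteredness of $C$ gives $h\colon c'\to c''$ with $hf=hg$, and $(c',x')\to(c'',h_*x')$ coequalizes them inside $D$. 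Second, any filtered category $\mathcal{E}$ has contractible nerve: by simplicial approximation, any map of a finite complex into $N\mathcal{E}$ factors through $NE_0$ for a finite subcategory $E_0\subseteq\mathcal{E}$; since $\mathcal{E}$ is filtered, the finite diagram $E_0$ admits a cocone $e\in\mathcal{E}$, which gives a factorization $NE_0\to N(E_0\ast\{e\})\to N\mathcal{E}$ through the nerve of a cone, hence a null-homotopy; so all homotopy groups of $N\mathcal{E}$ vanish and (being a CW complex after realization) $N\mathcal{E}$ is contractible. Combining the two steps, every component of $\int_C S$ has contractible nerve, which proves the claim and hence the Lemma.

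The one step that requires genuine (if routine) care is showing that a connected component of the category of elements is filtered, since one must turn an arbitrary zig-zag into a single cocone using filteredness of $C$; the remaining ingredients — the bisimplicial realization lemma and ``filtered $\Rightarrow$ contractible nerve'' — are standard and could simply be cited. A more homotopy-theoretic alternative would be to first replace $F$ by $\mathrm{Ex}^\infty F$ (legitimate, as $\mathrm{Ex}^\infty$ commutes with filtered colimits and by homotopy invariance of $\hocolim$ from the previous Lemma) to reduce to levelwise-Kan $F$, then compute $\pi_*$ of both sides directly; but this founders because identifying $\pi_*\hocolim_C F$ with $\colim_C\pi_*F(c)$ is essentially the content one is trying to prove, so the combinatorial reduction above seems the cleaner route.
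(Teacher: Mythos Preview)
Your argument is correct. The paper itself provides no proof of this lemma: it is stated with a $\qed$ immediately after the statement, treating it as a standard fact from the Bousfield--Kan literature. Your reduction via the bisimplicial realization lemma to the set-valued case, followed by the identification with the nerve of the category of elements and the observation that connected components of $\int_C S$ are themselves filtered (hence have contractible nerve), is a clean and complete way to supply the omitted proof.
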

Despite this lemma, the homotopy colimit over a filtered category can
still be quite useful: for example, it can be easier to define
explicit maps out of the homotopy colimit.

\begin{Remark}\label{rem:telescope}
  In the special case $C = \N$ there is a convenient smaller model for
  the homotopy colimit of $X: \N \to s\Sets$.  Namely for each
  $n \in \N$ we have the sub simplicial space
  \begin{equation*}
    \Delta[1] \cong N(n \downarrow \{n, n-1\}) \subset N(n \downarrow \N),
  \end{equation*}
  and the union of the sub simplicial sets
  $X(n) \times \Delta[1] \subset \hocolim X(n)$ is the \emph{mapping
    telescope} of $X(0) \to X(1) \to \dots$, obtained by gluing the
  spaces $X(n) \times \Delta[1]$ along the maps $X(n) \to X(n+1)$.
  The inclusion of the telescope into the homotopy colimit is a weak
  equivalence.
\end{Remark}

\subsection{Homotopy limits}
\label{sec:holim}

The homotopy limit of a functor $Y: C^\mathrm{op} \to s\Sets$ is  defined dually.  The under category $(c \downarrow C^\mathrm{op})$ is
opposite to the over category $(C\downarrow c)$, and gives a functor
\begin{align*}
  C & \to s\Sets\\
  c & \mapsto N(c \downarrow C^\mathrm{op}).
\end{align*}
The homotopy limit of a functor $Y: C^\mathrm{op} \to s\Sets$ is a
simplicial set with the universal property that the set of maps
\begin{equation*}
  X \to \holim_{c \in C^\mathrm{op}} Y(c)
\end{equation*}
are in natural bijection with the set of natural transformations
\begin{equation*}
  X \times N(- \downarrow C^\mathrm{op}) \to Y.
\end{equation*}
In fact, the homotopy limit can be regarded as the simplicial set of
natural transformations $N(-\downarrow C^\mathrm{op}) \Rightarrow Y$,
where the simplicialness comes from the fact that both functors take
values in the simplicially enriched category $s\Sets$.  Thus, $\holim
Y$ is the simplicial subspace
\begin{equation*}
  \holim Y \subset \prod_{c \in N_0 C} s\Sets(N(c \downarrow C^\mathrm{op}),Y(c))
\end{equation*}
consisting of elements satisfying that for each
$(C_0 \to C_1) \in N_1(C)$ the usual square (defining ``naturality'')
commutes, as a diagram of simplicial sets.
 
\begin{Example} \label{homotopy pullback square Example}
  When $C$ is the three-object category
  $C = (\bullet \leftarrow \bullet \rightarrow \bullet)$, a functor
  $Y: C^\mathrm{op} \to s\Sets$ is the same thing as a diagram of
  simplicial sets $Y_0 \rightarrow Y_{01} \leftarrow Y_1$.  In this
  case the homotopy limit is often called the \emph{homotopy
    pullback}, at least when all three spaces are Kan, and we shall
  denote it $Y_0 \times_{Y_{01}}^h Y_1$.  Spelling out the definition,
  there is a bijection between maps $X \to Y_0 \times_{Y_{01}}^h Y_1$
  and tuples $(f_0, f_1, f_{01},h_0,h_1)$ consisting of maps
  \begin{equation*}
    \xymatrix{
      X \ar[r]^-{f_1}\ar[dr]^-{f_{01}}\ar[d]_{f_{0}} & Y_1\ar[d]^-{g_1} \\
      Y_0 \ar[r]_{g_0} & Y_{01}
      }
  \end{equation*}
  and simplicial homotopies $h_0: \Delta[1] \times X \to Y_{01}$  from
  $g_0 \circ f_0$ to $f_{01}$ and
  $h_1: \Delta[1] \times X \to Y_{01}$ from $g_1 \circ f_1$ to $f_{01}$.
\end{Example}

We have the following nice adjunction formula
\begin{Lemma}
  Let $X: C \to s\Sets$ be a functor and $Y$ be a simplicial set.
  Then we have a natural isomorphism of simplicial sets
  \begin{equation*}
    s\Sets(\hocolim_{c \in C} X(c),Y) \cong \holim_{c \in C^\mathrm{op}}
    s\Sets(X(c),Y).
  \end{equation*}
\end{Lemma}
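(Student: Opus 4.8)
The plan is to prove the isomorphism by unwinding the defining universal properties of $\hocolim$ and $\holim$ on the two sides and checking that they classify the same data. By the Yoneda lemma for simplicial sets it suffices to exhibit, naturally in an arbitrary simplicial set $W$, a bijection
$$s\Sets_0(W, s\Sets(\hocolim_{c\in C} X(c), Y)) \;\cong\; s\Sets_0(W, \holim_{c\in C^{\mathrm{op}}} s\Sets(X(c), Y)),$$
compatible with the simplicial structures (which is automatic, since $W$ ranges over all simplicial sets, once the bijection is natural).

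First I would unwind the left-hand side. Using that $s\Sets$ is cartesian closed, $s\Sets_0(W, s\Sets(\hocolim_C X, Y)) = s\Sets_0(W\times\hocolim_C X, Y)$; and since $W\times(-)$ preserves colimits it commutes with the coequalizer presentation of $\hocolim_C X$ recalled in Section~\ref{sec:hocolim}. Hence a map $W\times\hocolim_C X\to Y$ is exactly a family of maps $g_c\colon W\times X(c)\times N(c\downarrow C)\to Y$, one for each object $c$, compatible with the morphisms of $C$ in the dinatural sense dictated by that coequalizer; equivalently, again by cartesian closedness, a natural transformation from the functor $c\mapsto W\times X(c)$ (from $C$ to $s\Sets$) into the functor $c\mapsto s\Sets(N(c\downarrow C), Y)$.

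Next I would unwind the right-hand side. Writing $\mathcal{Y} = s\Sets(X(-), Y)\colon C^{\mathrm{op}}\to s\Sets$, the universal property of $\holim_{C^{\mathrm{op}}}$ stated in Section~\ref{sec:holim} identifies $s\Sets_0(W, \holim_{C^{\mathrm{op}}}\mathcal{Y})$ with the set of natural transformations $W\times N(-\downarrow C^{\mathrm{op}})\to\mathcal{Y}$, which, once more by cartesian closedness, is the set of compatible families $h_c\colon W\times N(c\downarrow C^{\mathrm{op}})\times X(c)\to Y$. It then remains to match the families $(g_c)$ with the families $(h_c)$. The main work — and the step I expect to be the real obstacle, since it is pure variance bookkeeping that is easy to get subtly wrong — is the identification of the indexing data: one uses that $(c\downarrow C^{\mathrm{op}})$ is the opposite of the slice $(C\downarrow c)$ (as noted in Section~\ref{sec:holim}), together with the standard fact that a ``compatible family over the under-categories $N(c\downarrow C)$'' carries precisely the same information as a ``compatible family over the slices $(C\downarrow c)$''.

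The cleanest way to make this last point precise, which I would adopt, is to pass on both sides to the simplicial (resp.\ cosimplicial) replacement: one shows that each side is canonically the object
$$\int_{[n]\in\Delta}\;\prod_{(c_0\to\cdots\to c_n)\in N_nC} s\Sets(X(c_0), Y)^{\Delta[n]},$$
that is, the totalization of the cosimplicial simplicial set $[n]\mapsto\prod_{N_nC}s\Sets(X(c_0), Y)$. For $\hocolim$ this uses that mapping out of a geometric realization is a totalization and that $s\Sets$ is cartesian closed; for $\holim_{C^{\mathrm{op}}}$ it is exactly the cobar/$\mathrm{Tot}$ model of the homotopy limit; and one verifies that the two canonical identifications with the displayed object agree. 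Finally I would check naturality in $X$, $Y$ and $W$, which is routine.
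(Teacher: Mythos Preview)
The paper does not supply a proof of this lemma; it is stated without argument as a direct consequence of the universal properties of $\hocolim$ and $\holim$ recorded just above it in the appendix. Your argument is correct and is exactly what is intended: via cartesian closedness of $s\Sets$ and the enriched Yoneda lemma, both sides are identified with the same end (the simplicial set of compatible families $X(c)\times N(c\downarrow C)\to Y$). The variance bookkeeping you flag is the only real content, and your handling of it --- either via the identification $(c\downarrow C^{\mathrm{op}})\cong(C\downarrow c)^{\mathrm{op}}$ together with the standard fact that the under-category and over-category weightings define the same (co)end, or via the cosimplicial $\mathrm{Tot}$ model --- is sound.
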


\begin{Lemma}
  Let $Y \to Y'$ be a natural transformation of functors
  $C^\mathrm{op} \to s\Sets$ be a functor such that $Y(c) \to Y'(c)$
  is a weak equivalence of Kan simplicial sets for all objects
  $c \in C$.  Then the induced map $\holim Y \to \holim Y'$ is a weak
  equivalence.
\end{Lemma}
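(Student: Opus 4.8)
The plan is to identify $\holim Y$ with the totalization $\mathrm{Tot}$ of the Bousfield--Kan cosimplicial replacement of $Y$, and then to prove the homotopy invariance of $\mathrm{Tot}$ on Reedy fibrant cosimplicial spaces by the same kind of inductive tower-of-fibrations argument already used in the proof of Lemma~\ref{lem:factor-through-strict-colim}.

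First I would recall (Bousfield--Kan, Ch.~XI) that, for $Y\colon C^{\mathrm{op}}\to s\Sets$, the simplicial set $\holim Y$ of natural transformations $N(-\downarrow C^{\mathrm{op}})\Rightarrow Y$ defined above is naturally isomorphic to $\mathrm{Tot}$ of the cosimplicial simplicial set $\prod^{\bullet}Y$ whose space of $n$-cosimplices is a product of copies of values of $Y$ indexed by the $n$-simplices of the nerve $N(C^{\mathrm{op}})$, with cosimplicial structure maps built from the simplicial operators and the functoriality of $Y$. A natural transformation $T\colon Y\to Y'$ which is an objectwise weak equivalence of Kan complexes then induces, in each cosimplicial degree, a product of weak equivalences between Kan complexes; since products of Kan complexes are Kan and homotopy groups (and $\pi_{0}$) commute with products, this is again a weak equivalence of Kan complexes. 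Thus $\prod^{\bullet}Y\to\prod^{\bullet}Y'$ is a levelwise weak equivalence of cosimplicial Kan complexes.

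Next I would verify that $\prod^{\bullet}Y$ and $\prod^{\bullet}Y'$ are Reedy fibrant. The point is that the $n$-th matching map $\prod^{n}Y\to M^{n}(\prod^{\bullet}Y)$ is, by the standard identification of the matching objects of a cosimplicial replacement, the coordinate projection from the product over all $n$-simplices of $N(C^{\mathrm{op}})$ onto the sub-product indexed by the degenerate $n$-simplices; any coordinate projection $\prod_{i\in I}K_{i}\to\prod_{j\in J}K_{j}$ with $J\subset I$ and each $K_{i}$ Kan is a pullback of $\prod_{i\in I\setminus J}K_{i}\to\ast$, hence a Kan fibration. Granting this, $\mathrm{Tot}(\prod^{\bullet}Y)=\lim_{n}\mathrm{Tot}_{n}$ with each transition map $\mathrm{Tot}_{n}\to\mathrm{Tot}_{n-1}$ a base change of the Leibniz (pushout--product) map of $\partial\Delta[n]\hookrightarrow\Delta[n]$ against the matching fibration $\prod^{n}Y\to M^{n}(\prod^{\bullet}Y)$, hence itself a Kan fibration; similarly for $Y'$.

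Finally, I would conclude in two steps. An induction on $n$ shows $\mathrm{Tot}_{n}\to\mathrm{Tot}_{n}'$ is a weak equivalence: it is a map of pullback squares, a weak equivalence on each of the three corners (using the inductive hypothesis for $\mathrm{Tot}_{n-1}$ and the levelwise weak equivalence of the previous paragraphs, together with the fact that $s\Sets(K,-)$ preserves weak equivalences between Kan complexes for $K$ one of $\Delta[n]$, $\partial\Delta[n]$), one of the legs into the corner being a fibration, so the gluing lemma for homotopy pullbacks applies. Since $\{\mathrm{Tot}_{n}\}$ and $\{\mathrm{Tot}_{n}'\}$ are towers of Kan fibrations and the comparison is a levelwise weak equivalence, the induced map $\lim_{n}\mathrm{Tot}_{n}\to\lim_{n}\mathrm{Tot}_{n}'$ is a weak equivalence --- e.g.\ via the Milnor $\lim^{1}$ sequence, since $\lim$ and $\lim^{1}$ of the homotopy groups are both preserved, followed by a five-lemma argument with the usual care at $\pi_{0}$ and basepoints. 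The only genuine obstacle is the bookkeeping in the third paragraph, namely pinning down the matching objects of the cosimplicial replacement precisely enough to see the matching maps are coordinate projections; once that is in hand the rest is an assembly of standard facts, and an entirely parallel argument may instead be run directly on $\holim Y$ using the skeletal filtration of the functor $c\mapsto N(c\downarrow C^{\mathrm{op}})$, turning colimits in the first variable of $\mathrm{Nat}(-,Y)$ into the same tower of pullbacks and bypassing $\mathrm{Tot}$ entirely.
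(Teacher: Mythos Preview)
Your proposal is essentially correct and follows the standard Bousfield--Kan argument. However, you should know that the paper does not actually prove this lemma: it appears in an appendix that explicitly reviews ``well known properties'' with a blanket reference to Bousfield--Kan, chapters XI and XII, and the lemma is simply stated without proof (followed only by a caution that the Bousfield--Kan formula does not behave well without the Kan hypothesis). So there is no paper proof to compare against; your write-up is precisely the kind of argument the paper is implicitly invoking by citing Bousfield--Kan.

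One small remark on your sketch: in the final step, appealing to the Milnor $\lim^{1}$ sequence plus a five-lemma is fine, but since you already have a map of towers of Kan fibrations which is a levelwise weak equivalence, the more direct route is the one the paper itself uses elsewhere (see the proof of Lemma~\ref{lem:factor-through-strict-colim}): for towers of fibrations the strict limit computes the homotopy limit, so it suffices to observe that a levelwise weak equivalence of such towers induces a weak equivalence on inverse limits. This avoids the basepoint and $\pi_0$ bookkeeping you flag.
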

However, we caution the reader that the Bousfield--Kan formula for the
homotopy limit of $Y \to s\Sets$ does \emph{not} have good homotopical
properties unless all values $Y(c)$ are Kan.  In fact it might not
even agree with what other authors (such as Quillen) call ``homotopy
limit'' in such cases.

\subsubsection{Products}

The following lemma follows from the definition.
\begin{Lemma}
  Let $X,Y: C \to s\Sets$ be two functors and let
  $X \times Y: C \to s\Sets$ denote the objectwise cartesian product.
  Then the natural map
  \begin{equation*}
    \holim(X \times Y) \to (\holim X) \times (\holim Y)
  \end{equation*}
  is an isomorphism of simplicial sets.\qed
\end{Lemma}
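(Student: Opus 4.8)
The plan is to deduce the isomorphism directly from the Bousfield--Kan description of the homotopy limit, using the two elementary facts that $s\Sets(A,-)$ preserves products and that, in any category, a map into a product $A\times B$ is the same datum as a pair of maps into $A$ and into $B$. Since the homotopy limit is defined for functors out of $C^{\mathrm{op}}$, I read the statement for functors $X,Y\colon C^{\mathrm{op}}\to s\Sets$; the variance is immaterial because the product $X\times Y$ is formed objectwise in $s\Sets$ in either case, and the argument only uses objectwise products.

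First I would invoke the universal property recalled in \S\ref{sec:holim}: for every simplicial set $W$ there is a bijection, natural in $W$, between maps $W\to\holim_{c}Z(c)$ and natural transformations $W\times N(-\downarrow C^{\mathrm{op}})\Rightarrow Z$, and more generally the $p$-simplices of $s\Sets(W,\holim Z)$ are the natural transformations obtained by replacing $W$ with $W\times\Delta[p]$. Apply this with $Z=X\times Y$. For each object $c$, a map of simplicial sets $W\times N(c\downarrow C^{\mathrm{op}})\to X(c)\times Y(c)$ is exactly a pair of maps, one into $X(c)$ and one into $Y(c)$, and the naturality square for $X\times Y$ over a morphism of $C^{\mathrm{op}}$ commutes precisely when the two component squares do. Hence a natural transformation $W\times N(-\downarrow C^{\mathrm{op}})\Rightarrow X\times Y$ is the same as a pair of natural transformations into $X$ and into $Y$ separately, giving, naturally in $W$,
\[
  s\Sets\big(W,\holim(X\times Y)\big)\;\cong\;s\Sets(W,\holim X)\times s\Sets(W,\holim Y)\;=\;s\Sets\big(W,\ \holim X\times\holim Y\big).
\]

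Taking $W=\Delta[p]$ for all $p$ and checking compatibility with the face and degeneracy operators, this yields an isomorphism of simplicial sets $\holim(X\times Y)\stackrel{\sim}{\longrightarrow}\holim X\times\holim Y$. It remains only to identify this with the \emph{natural} map of the statement, i.e.\ the one induced by the projections $X\times Y\to X$ and $X\times Y\to Y$; this is automatic from the identifications, since the two components of a natural transformation into $X\times Y$ are by construction its composites with those projections. Equivalently, one can argue more concretely from the explicit formula
\[
  \holim Z\ \subset\ \prod_{c\in N_0C}s\Sets\big(N(c\downarrow C^{\mathrm{op}}),Z(c)\big),
\]
using that $s\Sets(A,-)$ preserves products to rewrite the right-hand side for $Z=X\times Y$ as the product of the corresponding expressions for $X$ and for $Y$, and observing that the ``naturality'' conditions cutting out $\holim$ on the two sides match up because a product of squares commutes if and only if each factor square does.

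I do not expect a genuine obstacle: the statement is formal. The only points that need a moment's care are (i) confirming that the bijection produced is the projection-induced natural map and not merely some abstract isomorphism — which the universal-property argument handles for free — and (ii) keeping the $C$ versus $C^{\mathrm{op}}$ bookkeeping consistent, which is harmless here since only objectwise products in $s\Sets$ intervene. One also tacitly uses the standard fact that $s\Sets$ has finite products and that the hom-functors $s\Sets(A,-)$ and the functors $W\times(-)$ preserve them.
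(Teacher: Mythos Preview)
Your argument is correct and is essentially the same as the paper's: the paper simply writes ``follows from the definition'' and places a \qed, and your proposal spells out exactly how that goes via the universal property (equivalently, the explicit Bousfield--Kan formula) together with $s\Sets(A,-)$ preserving products. There is nothing to add.
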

\begin{Corollary}
  Let $\SCR$ denote the category of simplicial commutative rings and
  let $X: C \to \SCR$ be a diagram of such.  Then $\holim X$ is
  naturally a simplicial commutative ring again.  Similarly for
  simplicial modules, simplicial abelian groups, etc.\qed
\end{Corollary}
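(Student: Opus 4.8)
The plan is to reduce the statement to the preservation of finite products by the Bousfield--Kan homotopy limit, which is exactly the content of the preceding Lemma, supplemented by the (immediate) fact that $\holim$ also preserves the terminal object. The key observation is that a simplicial commutative ring is precisely a simplicial set $R$ together with structure maps $\mu,\alpha\colon R\times R\to R$, $\nu\colon R\to R$, and $e_0,e_1\colon\Delta[0]\to R$, subject to a finite list of axioms, each of which asserts that a diagram built from these maps, projections, diagonals, and the unique maps to $\Delta[0]$ commutes. Equivalently, $\SCR$ is the category of finite-product-preserving functors from (the opposite of) the Lawvere theory of commutative rings into $s\Sets$, and the same description applies to simplicial abelian groups and to simplicial modules over a fixed simplicial ring, using the relevant algebraic theory.

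Granting this, I would proceed as follows. A diagram $X\colon C\to\SCR$ unpacks as a diagram $X\colon C\to s\Sets$ of underlying simplicial sets equipped with natural transformations $X\times X\to X$ (twice), $X\to X$, and $\Delta[0]\to X$ (twice), which are level-wise subject to the ring axioms. Applying $\holim$ and using the isomorphism $\holim(X\times X)\xrightarrow{\ \cong\ }(\holim X)\times(\holim X)$ from the previous Lemma, each structure map induces a map on $\holim X$, e.g.\ multiplication $(\holim X)\times(\holim X)\cong\holim(X\times X)\to\holim X$; for the nullary operations one uses that $\holim$ of the constant functor with value $\Delta[0]$ is again $\Delta[0]$, which follows at once from the Bousfield--Kan formula because $\Delta[0]$ is terminal in $s\Sets$. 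Each ring axiom for $\holim X$ is then the commutativity of the diagram obtained by applying the finite-product-preserving functor $\holim$ to the corresponding commuting diagram of natural transformations, hence holds. This gives $\holim X$ a canonical structure of simplicial commutative ring whose underlying simplicial set is the homotopy limit of the underlying diagram.

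The remaining cases are handled by the identical argument with the Lawvere theory of commutative rings replaced by that of abelian groups or of $R$-modules (for diagrams of modules over a varying base ring, one works in the category of pairs consisting of a ring and a module over it). I do not anticipate a genuine obstacle: the only points requiring care are the precise packaging of the algebraic structure as a product-preserving functor out of an algebraic theory, so that ``preserves finite products'' is literally what is needed, and the preservation of the terminal object, needed so that the constants $0,1$ and the axioms referring to them transport correctly; both are routine once the previous Lemma is available.
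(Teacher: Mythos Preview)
Your argument is correct and is exactly the intended one: the paper gives no proof beyond the \qed, treating the Corollary as immediate from the preceding product-preservation Lemma, and you have simply spelled out the implicit reasoning (transport the finitary algebraic structure through a finite-product-preserving functor).
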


 $\SCR$ has the structure of a 
\emph{simplicial model category}, and hence it makes sense to talk
about ``internal'' homotopy limits and colimits of functors
$C \to \SCR$.  The above corollary shows that the forgetful functor
$\SCR \to s\Sets$ commutes with homotopy limits, just as the forgetful
map from commutative rings to sets commutes with ordinary limits.

\subsection{Calculation of homotopy limits and colimits}
Let us briefly discuss a few tools for manipulating homotopy limits
and colimits.

\subsubsection{Cofinality}
\label{sec:cofinality}

If $D$ is a category and $X: D \to s\Sets$ and
$Y: D^\mathrm{op} \to s\Sets$ are diagrams, then any functor $F: C \to
D$ induce maps of simplicial sets
\begin{align*}
  \hocolim_C (X \circ F) &\to \hocolim_D X\\
  \holim_{D^\mathrm{op}} Y \to \holim_{C^\mathrm{op}} Y \circ F.
\end{align*}
These maps are both weak equivalences, provided $F$ is \emph{cofinal}
in the strong sense that for all objects $d \in D$ the under category
$(d \downarrow F)$, whose objects are pairs $(c,f)$ with $c \in C$ and
$f: d \to F(c)$, have \emph{contractible} nerve.  (Assuming only that
these under categories have \emph{connected} nerves is sufficient for
the corresponding maps for limits and colimits of sets to be
bijections.)

Let us also recall from [SGA 4, Expose 1, 8.1.6] or [Edwards--Hastings
2.1.6] that any filtered category $D$ admits a cofinal functor
$t: C \to D$ with $C$ a \emph{directed set}.  The standard proof is to
let $C$ be the set of finite subdiagrams $A \subset D$ which has a
unique final element.  Then $C$ is ordered by inclusion and
$t: C \to D$ sends a diagram to its final element.  Let us point out
that the resulting directed set $C$ comes with an order preserving map
$C \to \N$ given by sending $A$ to its cardinality.  Thus, when
calculating homotopy limits or colimits over filtered categories, we
may assume that the indexing category is a directed set equipped with
an order preserving map to $\N$.

\subsubsection{Homotopy groups of homotopy limits}

It can be difficult to understand the homotopy groups of a homotopy
limit, even when the indexing category is filtered and even for
$\pi_0$.  Under an additional assumption on either the indexing
category or on the values of the functor we can say more.
\begin{Lemma} \label{pi0 commutes holim 1}
  If $C$ is countable and filtered and $F: C^\mathrm{op} \to s\Sets$
  has Kan values, then
  \begin{equation*}
    \pi_0 \holim F(c) \to \lim \pi_0 F(c)
  \end{equation*}
  is surjective.  For any $k \geq 0$ and any point $x \in \holim F(c)$
  there is a short exact sequence of groups (pointed sets for $k = 0$)%
  \begin{equation*}
    {\lim_{c \in C^\mathrm{op}}}^1 \pi_{k+1} (F(c),x) \to \pi_k (\holim_{c \in C^\mathrm{op}} F(c),x)
    \to \lim_{c \in C^\mathrm{op}}
    \pi_k(F(c),x).
  \end{equation*}
\end{Lemma}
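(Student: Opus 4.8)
The plan is to reduce by cofinality to the case of a tower, $C=\N$, and then invoke the classical Milnor $\varprojlim^1$ exact sequence.

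\emph{Reduction to a tower.} By \S\ref{sec:cofinality} (citing SGA~4) the filtered category $C$ admits a cofinal functor $t\colon C'\to C$ from a directed set $C'$, and the standard construction of $C'$ (finite subdiagrams of $C$ with a unique terminal object) keeps $C'$ countable when $C$ is. A countable directed set $C'$ then carries a cofinal chain: enumerating its objects $c_0,c_1,\dots$ and choosing inductively $d_n\ge d_{n-1}$ with $d_n\ge c_n$, the order-preserving map $u\colon\N\to C'$, $n\mapsto d_n$, has each under category $(c\downarrow u)$ equal to an up-set in $\N$, hence a nonempty directed poset with contractible nerve; and $(c\downarrow t)$ is contractible for every $c\in C$ by the cited result. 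So $t\circ u\colon\N\to C$ is cofinal in the strong sense of \S\ref{sec:cofinality}, whence $\holim_{C^{\mathrm{op}}}F\to\holim_{\N^{\mathrm{op}}}(F\circ t\circ u)$ is a weak equivalence; since a cofinal subcategory also computes $\varprojlim$ and $\varprojlim^1$ correctly, it suffices to treat the tower $F(0)\leftarrow F(1)\leftarrow\cdots$ of Kan complexes.

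\emph{The tower case.} By homotopy invariance of homotopy limits over Kan-valued diagrams (\S\ref{sec:holim}) we may replace $F$ by a weakly equivalent tower of Kan fibrations $\widetilde F(n)\twoheadrightarrow\widetilde F(n-1)$, for which the Bousfield--Kan homotopy limit is weakly equivalent to the strict limit $\varprojlim_n\widetilde F(n)$. The latter is the homotopy fiber of the ``$1-\mathrm{shift}$'' map
\begin{equation*}
  \prod_n\widetilde F(n)\longrightarrow\prod_n\widetilde F(n)
\end{equation*}
(made into a fibration by the mapping-path construction when there is no group structure); the long exact sequence of this fibration at a basepoint $x$, together with $\pi_k(\prod_n\widetilde F(n),x)=\prod_n\pi_k(\widetilde F(n),x)$ and the identification of the kernel and cokernel of $1-\mathrm{shift}$ on this product with $\varprojlim_n\pi_k(\widetilde F(n),x)$ and $\varprojlim^1_n\pi_k(\widetilde F(n),x)$, yields the asserted short exact sequence (of pointed sets when $k=0$), whose right-exactness at $k=0$ is the surjectivity of $\pi_0\holim F(c)\to\varprojlim\pi_0 F(c)$. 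This is exactly the $\varprojlim^1$ sequence of Bousfield--Kan, Ch.~IX.

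\emph{Main obstacle.} There is no deep point; the only care needed is that both reductions ($C$ to $C'$, and $C'$ to $\N$) be cofinal in the strong (contractible under-categories) sense required for homotopy limits rather than merely for one-categorical inverse limits, and that one reduce all the way to a \emph{chain} and not merely to an arbitrary countable directed set, since it is the chain that makes $\varprojlim^1$ come out correctly. Once we are over $\N$ the statement is classical.
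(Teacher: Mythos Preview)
Your proof is correct. Both you and the paper reduce by cofinality to a tower indexed by $\N$; you spell out this reduction in more detail than the paper does. For the tower step the approaches diverge: the paper argues surjectivity on $\pi_0$ directly---given compatible components, pick representatives $x_n$ and connecting $1$-simplices $h_n$, then use the Kan condition to inductively extend these to a point of $\holim$ (the paper does not actually write out the full $\lim^1$ sequence). You instead replace by a tower of Kan fibrations so that $\holim$ agrees with the strict limit, and then read off the Milnor sequence from the $(1-\text{shift})$ fiber sequence; this has the advantage of producing the full short exact sequence in one stroke rather than only the surjectivity. One small caveat: your fiber-sequence argument is phrased relative to a basepoint $x \in \holim F$, so as written it presupposes $\holim F$ is nonempty; but after passing to a fibrant tower any element of $\lim \pi_0$ lifts to the strict limit by iterated path-lifting, and in any case this surjectivity is part of the Bousfield--Kan result you cite.
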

\begin{proof}[Proof sketch]
  The lemma is clear if $C$ has a final element.  If this is not the
  case, countability of $C$ implies that there is a cofinal functor
  $\N \to C$, so without loss of generality we may just assume
  $C = \N$.  An element in $\lim \pi_0 F(n)$ may then be represented
  by zero-simplices $x_n \in F(n)$ such that there exists 1-simplices
  $h_n: \Delta[1] \to F(n)$ from $x_n$ to the image of $x_{n+1}$.
  Then use the Kan-ness of the $F(n)$ to inductively extend the $h_n$
  to compatible maps $N(n \downarrow \N^\mathrm{op}) \to F(n)$.  (Or
  alternatively use a different model for the homotopy limit, dual to
  the telescope model for homotopy colimit, in which no further data
  than the $(x_n,h_n)$ is required.)
\end{proof}
Without the countability assumption on $C$ it can apparently happen
that for example $\pi_0 \holim F \to \lim \pi_0 F$ is not
surjective.  However, if we assume that the values of $F$ have
finite homotopy groups we may use Tychonoff's theorem to rule out this
kind of behavior.
\begin{Proposition} \label{homotopy limits in the finite case}
  Let $C$ be filtered and $F: C^\mathrm{op} \to s\Sets$ be a functor
  whose values are Kan and have $\pi_0(F(c))$ finite for all $c$ and
  $\pi_k(F(c),x)$ finite for all $k$ and all $x \in F(c)$.  Then the
  natural map
  \begin{equation*}
    \pi_0 \holim F(c) \to \lim \pi_0 F(c)
  \end{equation*}
  is a bijection and, for any point $x \in \holim F(c)$, so is the
  natural map
  \begin{equation*}
    \pi_k(\holim F(c),x) \to \lim \pi_k(F(c),x).
  \end{equation*}
\end{Proposition}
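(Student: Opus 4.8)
\emph{Proof plan.} The plan is to reduce to a countable tower of fibrations --- where Lemma~\ref{pi0 commutes holim 1} applies --- and then to use compactness (Tychonoff) to kill the derived-limit error terms that would otherwise obstruct the conclusion. First I would apply the cofinality discussion of \S\ref{sec:cofinality} to replace $C$ by an equivalent directed poset $I = C^{\mathrm{op}}$ carrying a strictly increasing function $q\colon I\to\N$ with every down-set $I_{\le i}=\{j\le i\}$ finite, and replace $F$ by an objectwise-weakly-equivalent \emph{nice} (Reedy-fibrant) diagram $F'$ with $F'(i)\twoheadrightarrow\lim_{j\lneq i}F'(j)$ a Kan fibration (Lemma~\ref{lemma:replace-by-nice}); since $F'(i)\simeq F(i)$, all the $F'(i)$ still have finite homotopy groups. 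As in the proof of Lemma~\ref{lem:factor-through-strict-colim}, filtering $I$ by the subposets $I_n=q^{-1}(\{0,\dots,n\})$ exhibits $\holim_C F\simeq\lim_I F'=\lim_n G_n$ with $G_n:=\lim_{I_n}F'\simeq\holim_{I_n}F'$, and each $G_{n+1}\to G_n$ a Kan fibration, being a base change of the product of the fibrations $F'(i)\twoheadrightarrow\lim_{j\lneq i}F'(j)$ over the objects $i$ of level $n{+}1$. Thus $\holim_C F\simeq\holim_n G_n$, a countable tower of fibrations.

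Next I would show, by induction on $n$ using the Mayer--Vietoris long exact sequence of a homotopy pullback of spaces together with Tychonoff's theorem to handle the possibly infinite products $\prod_{q(i)=n}(\cdots)$ appearing at each stage, that every $\pi_k(G_n,x)$ is a profinite group and $\pi_0 G_n$ a profinite set. Then Lemma~\ref{pi0 commutes holim 1} applies to the tower $(G_n)$: for $k\ge 1$ it gives a short exact sequence $\lim^1_n\pi_{k+1}(G_n,x)\to\pi_k(\holim_n G_n,x)\to\lim_n\pi_k(G_n,x)$, and for $k=0$ surjectivity of $\pi_0\holim_n G_n\to\lim_n\pi_0 G_n$. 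Since $\lim^1$ of a tower of compact Hausdorff (in particular profinite) groups vanishes --- the partial limits are compact, so the relevant transition map is surjective --- this yields $\pi_k(\holim_n G_n,x)\xrightarrow{\ \sim\ }\lim_n\pi_k(G_n,x)$ for $k\ge 1$, and for $k=0$ injectivity follows from $\lim^1_n\pi_1(G_n)=0$ after changing basepoint. Surjectivity in the Proposition can alternatively be seen very concretely (first for $\pi_0$, then for $\pi_k$ by applying it to $\Omega^k F$): given a compatible family $(a_c)\in\lim_C\pi_0 F$, the sub-functor $F_a$ selecting the component $a_c$ has each $F_a(c)$ connected with finite homotopy groups, so each $\lim_{I_n}F_a'$ is nonempty (a product of fibrations surjective on $\pi_0$), whence $\lim_n\pi_0(\lim_{I_n}F_a')$ is an inverse limit of nonempty compact sets, hence nonempty, so $\holim_C F_a\ne\emptyset$.

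The remaining and main point is to identify $\lim_n\pi_k(G_n,x)$ with $\lim_{c\in C^{\mathrm{op}}}\pi_k(F(c),x)=\lim_n\lim_{I_n}\pi_k F'$; equivalently, that the natural maps $\pi_k(\holim_{I_n}F')\to\lim_{I_n}\pi_k F'$ become isomorphisms after passing to $\lim_n$. Their kernels and cokernels are controlled, via iterated Mayer--Vietoris, by the higher derived inverse limits $\lim^s_{I_n}\pi_t F'$ with $s\ge 1$ --- again profinite, being inverse limits of finite groups --- and the content is that these assemble into pro-zero towers as $n$ grows, i.e.\ that the Bousfield--Kan homotopy-limit spectral sequence $E_2^{s,t}=\lim^s_{C^{\mathrm{op}}}\pi_t F\Rightarrow\pi_{t-s}(\holim_{C^{\mathrm{op}}}F)$ collapses onto the line $s=0$. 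What forces this is the vanishing of the higher derived limits $\lim^s_{C^{\mathrm{op}}}$ ($s\ge 1$) of a diagram of finite groups over a cofiltered category; this is where Tychonoff's theorem genuinely bites, and establishing it cleanly (together with convergence of the spectral sequence in this infinite setting) is the step I expect to be the main obstacle. Granting it, combining the cases $k=0$ and $k\ge 1$ gives both assertions of the Proposition.
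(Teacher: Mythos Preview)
Your plan is essentially the ``spectral sequence'' route that the paper itself mentions in its first sentence before opting for a different argument. You correctly locate the heart of the matter: the vanishing of $\lim^s_{C^{\mathrm{op}}}$ on diagrams of finite groups for $s\ge 1$, via Tychonoff. That is indeed equivalent to what is needed, and if you establish it (together with convergence of the Bousfield--Kan spectral sequence) the proposition follows. The paper instead gives a direct obstruction-theoretic construction: pick vertices $x_c\in F(c)$, then $1$-simplices, then fill $2$-simplices, etc.; at each stage the obstruction lies in a product $\prod_{c_0<\cdots<c_s}\pi_{s-1}(F(c_0))$ of finite groups, and the ``rechoice'' map from the previous stage has dense image (one can solve on any finite sub-poset), hence surjective image by compactness. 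This is really a bare-hands proof of the same $\lim^s$-vanishing, packaged so that no spectral-sequence convergence issues arise.

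Two comments on your execution. First, the reduction to the countable tower $(G_n)$ does not buy you much: showing that $\pi_k(G_n)$ is profinite, and then identifying $\lim_n\pi_k(G_n)$ with $\lim_{C^{\mathrm{op}}}\pi_k F$, each already requires the full $\lim^s$-vanishing (over $I_n$ and over $C^{\mathrm{op}}$ respectively). So the tower is a detour rather than a simplification; you may as well invoke the spectral sequence for $C^{\mathrm{op}}$ directly. Second, your parenthetical justification ``(a product of fibrations surjective on $\pi_0$)'' for the nonemptiness of $\lim_{I_n}F'_a$ is not right: the matching-object maps $F'_a(i)\to\lim_{j<i}F'_a(j)$ need not be $\pi_0$-surjective once $\{j<i\}$ has height $\ge 2$, since that matching space can be disconnected even when all $F'_a(j)$ are connected (e.g.\ take $\{j<i\}$ to be two incomparable elements over two common minimal elements). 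Nonemptiness of $\holim_{I_n}F'_a$ is true, but proving it is again an instance of the same obstruction argument, not something that comes for free from Reedy fibrancy.
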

\begin{proof}[Proof sketch]
  It should be well known
     that this follows from the Bousfield--Kan spectral
  sequence and the vanishing of higher derived limits of finite groups
  over filtered categories.  Let us outline a ``manual'' argument.

  It suffices to prove the claim about $\pi_0$, since $\holim$
  commutes with based loop spaces.  We shall outline the argument for
  surjectivity of the map $\pi_0 \holim \to \lim \pi_0$.  An element
  in $\lim \pi_0 F(c)$ determines a sub-functor of $F$ with path
  connected values, so it suffice to prove that if $F(c)$ is path
  connected for all $c$ then $\holim F(c)$ is non-empty.

  Without loss of generality $C$ is a directed set such that
  $\{c \in C \mid c \leq d\}$ is finite for all $d$ and hence that any
  finite collection of objects is contained in a finite sub poset
  which has a maximal terminal element.  Let $x_c \in F(c)$ be a
  vertex, and choose for all pairs $c_0 < c_1$ a 1-simplex
  $h_{c_0 < c_1}: \Delta[1] \to F(c_0)$ from $x_{c_0}$ to the image of
  $x_{c_1} \in F(c_1) \to F(c_0)$.  These 1-simplices may be assembled
  to define compatible maps
  $N_{\leq 1}(c \downarrow C^\mathrm{op}) \to F(c)$, where
  $N_{\leq 1}$ denotes the 1-skeleton of the nerve.  The problem is
  that these maps may not admit extensions over the 2-skeleton, and in
  fact there is an obstruction in $\pi_1(F(c_0),x_{c_0})$ for each
  $c_0 < c_1 < c_2$, given by concatenating the (images in $F(c_0)$
  of) the paths $h_{c_0 < c_1}$, $h_{c_0 < c_2}$, and $h_{c_1 < c_2}$
  in the appropriate order.  These obstructions assemble to an element
  of
  \begin{equation}\label{eq:1}
    \prod_{c_0 < c_1 < c_2} \pi_1(F(c_0),x_{c_0})
  \end{equation}
  which vanishes if and only if the maps
  $N_{\leq 1}(c \downarrow C^\mathrm{op}) \to F(c)$ extend to
  compatible maps over the two-skeleton.  We decide to keep the $x_c$
  and rechoose the $h_{c_0 < c_1}$.  The homotopy class of
  $h_{c_0 < c_1}$ relative to its endpoints is a torsor for
  $\pi_1(F(c_0),x_{c_0})$, and acting simultaneously for all $(c_0 <
  c_1)$ gives a map
  \begin{equation}\label{eq:2}
    \prod_{c_0 < c_1} \pi_1(F(c_0),x_{c_0}) \to \prod_{c_0 < c_1 <
      c_2} \pi_1(F(c_0),x_{c_0}).
  \end{equation}
  If we can show that this map is surjective, the paths
  $h_{c_0 < c_1}$ may be rechosen to make the obstruction element
  in~\eqref{eq:1} vanish.  Such a re-choice is always possible on a
  sub poset of the form $\{c \in C \mid c \leq d\}$ so the image
  of~\eqref{eq:2} surjects onto any finite product of the factors.
  Another way to say this is that the image of~\eqref{eq:2} is dense
  in the product topology on
  $\prod_{c_0 < c_1 < c_2} \pi_1(F(c_0),x_{c_0})$ when each
  $\pi_1(F(c_0),x_{c_0})$ is given the discrete topology.  If each
  $\pi_1(F(c_0),x_{c_0})$ is finite, the source of~\eqref{eq:2} is
  compact and hence its image is too so density of the image implies
  surjectivity, and hence there is no obstruction to rechoosing the
  $h_{c_0 < c_1}$ and get compatible extensions to maps
  \begin{equation*}
    N_{\leq 2}(c \downarrow C^\mathrm{op}) \to F(c).
  \end{equation*}

  Extending this map over the 3-skeletons we encounter obstructions in
  the cokernel of a homomorphism
  \begin{equation*}
    \prod_{c_0 < c_1 < c_2} \pi_2(F(c_0),x_{c_0}) \to \prod_{c_0 < c_1 <
      c_2 < c_3} \pi_2(F(c_0),x_{c_0}),
  \end{equation*}
  which is surjective by a similar argument (in fact, its cokernel is
  precisely the derived limit $\lim^3 \pi_2(F(c),x_c)$).  Continuing
  this way we end up with maps
  $N(c \downarrow C^\mathrm{op}) \to F(c)$, compatible over varying
  $c \in C$, and hence a point in $\holim F$.  Injectivity is similar,
  with obstructions vanishing for the same reason as
  $\lim^k \pi_k(F(c))$ vanishes.
\end{proof}
As the proof shows, one can sometimes get by with slightly weaker
assumptions in the above Proposition, e.g.\ that $\pi_k(F(c))$ is a
compact group/set for some topology in which the functoriality is
continuous.

\begin{Corollary}
  Let $C$ is a filtered category, $F: C \to s\Sets$ a functor, and $Z$
  is a Kan simplicial set such that $[F(c),Z] = \pi_0 s\Sets(F(c),Z)$
  is finite for all $c$ and $\pi_k(s\Sets(F(c),Z),f)$ is finite for
  all $k$ and all $c \in C$ and all $f: F(c) \to Z$.  (This happens
  e.g.\ if $Z$ has finite homotopy groups and $F(c)$ is equivalent to
  a finite CW complex for all $c$.)  Then the natural map
  \begin{equation*}
    [\hocolim_{c \in C} F(c),Z] \to \lim_{c \in C} [F(c),Z]
  \end{equation*}
  is a bijection.  In other words, any collection of maps $F(c) \to Z$
  which are compatible up to some (unspecified) homotopies may in fact
  be glued together to a map out of the homotopy colimit, and uniquely
  so up to homotopy.\qed
\end{Corollary}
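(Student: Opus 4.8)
The plan is to deduce this from the adjunction between homotopy colimits and homotopy limits, together with Proposition~\ref{homotopy limits in the finite case}. First I would form the functor
\[
  G : C^{\mathrm{op}} \to s\Sets, \qquad G(c) = s\Sets(F(c),Z),
\]
which is contravariant in $c$ via precomposition. Since $Z$ is Kan, each mapping space $G(c) = s\Sets(F(c),Z)$ is again a Kan simplicial set; and the hypotheses on $[F(c),Z] = \pi_0 s\Sets(F(c),Z)$ and on $\pi_k(s\Sets(F(c),Z),f)$ say precisely that $\pi_0 G(c)$ is finite for all $c$ and that $\pi_k(G(c),f)$ is finite for all $k$, all $c$, and all basepoints $f$. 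Thus $G$ satisfies the hypotheses of Proposition~\ref{homotopy limits in the finite case} (with the indexing category $C$ filtered), so the natural map
\[
  \pi_0 \holim_{c \in C^{\mathrm{op}}} G(c) \longrightarrow \lim_{c \in C^{\mathrm{op}}} \pi_0 G(c) = \lim_{c \in C} [F(c),Z]
\]
is a bijection. (Note that the Bousfield--Kan $\holim$ is computed correctly here precisely because each $G(c)$ is Kan.)

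Next I would invoke the adjunction isomorphism of simplicial sets
\[
  s\Sets\Bigl(\hocolim_{c \in C} F(c),\,Z\Bigr) \;\cong\; \holim_{c \in C^{\mathrm{op}}} s\Sets(F(c),Z) \;=\; \holim_{c \in C^{\mathrm{op}}} G(c),
\]
which is the Lemma asserting $s\Sets(\hocolim X,Y) \cong \holim s\Sets(X,Y)$. Applying $\pi_0$ to both sides and combining with the previous display yields the desired bijection $[\hocolim_{c} F(c),Z] \cong \lim_{c}[F(c),Z]$; one checks directly from the definition of the adjunction that this agrees with the map induced by the canonical maps $F(c) \to \hocolim_{c} F(c)$. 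The ``in other words'' reformulation is then just an unwinding of what this bijection says.

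There is essentially no obstacle once Proposition~\ref{homotopy limits in the finite case} is available; the only two points needing a word are (i) that $s\Sets(F(c),Z)$ is Kan, which is the standard fact that a simplicial set of maps into a Kan complex is Kan, and (ii) the justification of the parenthetical sufficient condition. For (ii), assume $Z$ has finite homotopy groups and each $F(c)$ is weakly equivalent to a finite CW complex $K$; I would argue by induction on the cells of $K$. The empty case is trivial, and if $K$ is obtained from $K'$ by attaching an $n$-cell, then $s\Sets(K,Z)$ is (weakly equivalent to) the homotopy pullback of $s\Sets(K',Z) \to s\Sets(\partial\Delta^n,Z) \leftarrow s\Sets(\Delta^n,Z) \simeq Z$, where $s\Sets(\partial\Delta^n,Z)$ has finite homotopy groups since these are built from finitely many of the $\pi_j(Z)$ by finitely many fibration sequences; the long exact sequence of the homotopy fiber sequence then propagates finiteness. (Alternatively one may simply cite the finiteness of the relevant Federer/obstruction-theory spectral sequence, whose $E_2$-terms are $H^j(K;\pi_{j+k}(Z))$, finite for $K$ finite and $\pi_*Z$ finite.)
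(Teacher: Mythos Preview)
Your proposal is correct and is exactly the argument the paper has in mind: the paper's ``proof'' is just a \qed, since the Corollary follows immediately by combining the adjunction Lemma $s\Sets(\hocolim_C F,Z) \cong \holim_{C^{\mathrm{op}}} s\Sets(F(-),Z)$ with Proposition~\ref{homotopy limits in the finite case} applied to $G(c)=s\Sets(F(c),Z)$. Your write-up spells this out in full, and your additional justification of the parenthetical sufficient condition (finiteness of mapping-space homotopy groups when $Z$ has finite homotopy and $F(c)$ is a finite complex) goes beyond what the paper records.
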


\begin{Corollary}
  Let $X$ be a simplicial set and $F: C^\mathrm{op} \to s\Sets$ be a
  functor with Kan values, such that $s\Sets(X,F(c))$ has finite
  homotopy groups for all $c \in C$.  Then the natural map
  \begin{equation*}
    [X,\holim_{c \in C} F(c)] \to \lim_{c \in C} [X,F(c)]
  \end{equation*}
  is a bijection.
\end{Corollary}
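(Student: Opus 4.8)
The plan is to reduce the statement directly to Proposition~\ref{homotopy limits in the finite case}, applied not to $F$ itself but to the functor $c\mapsto s\Sets(X,F(c))$. The only extra ingredient needed is the standard ``exponential law'' identifying maps out of $X$ into a Bousfield--Kan homotopy limit with the homotopy limit of the mapping spaces.

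First I would record the natural isomorphism of simplicial sets
\begin{equation*}
  s\Sets\big(X,\ \holim_{c\in C^\mathrm{op}} F(c)\big)\ \cong\ \holim_{c\in C^\mathrm{op}} s\Sets\big(X,F(c)\big).
\end{equation*}
This follows from the description of the Bousfield--Kan homotopy limit as the simplicial set of natural transformations $N(-\downarrow C^\mathrm{op})\Rightarrow F$ together with the fact that $s\Sets$ is cartesian closed: exponentiating by $X$ preserves the (enriched) limit defining $\holim$, because $-\times X$ preserves limits of simplicial sets in each simplicial degree. (Compare the adjunction formula $s\Sets(\hocolim_C Z,Y)\cong\holim_{C^\mathrm{op}} s\Sets(Z,Y)$ stated in the appendix.) Note that each $s\Sets(X,F(c))$ is Kan because $F(c)$ is, so the functor $G\colon C^\mathrm{op}\to s\Sets$, $G(c)=s\Sets(X,F(c))$, takes Kan values; and by hypothesis its homotopy groups $\pi_k(G(c),f)$ are finite for all $c$, all $k\ge 0$, and all basepoints $f$ --- which is how the finiteness assumption in the statement is to be read. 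Proposition~\ref{homotopy limits in the finite case} (applied, as in the preceding corollary, with $C$ filtered) then yields a bijection $\pi_0\holim_{C}G(c)\to\lim_C\pi_0 G(c)$. Transporting this across the isomorphism above, the source becomes $\pi_0 s\Sets(X,\holim_C F(c))=[X,\holim_C F(c)]$ and the target becomes $\lim_C\pi_0 s\Sets(X,F(c))=\lim_C[X,F(c)]$, giving the asserted bijection.

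The one point deserving real care is the last step: one must verify that the displayed isomorphism is compatible with the canonical evaluation maps $\holim_{c}F(c)\to F(c')$ (respectively $\holim_{c}G(c)\to G(c')$), so that the bijection produced by Proposition~\ref{homotopy limits in the finite case} is literally the natural comparison map of the statement, and not merely some abstract bijection of the two sets. This is a diagram chase through the definitions of $\holim$ and of the exponential law, and it also makes clear why the hypothesis that $C$ be filtered cannot be dropped: for a general indexing category Proposition~\ref{homotopy limits in the finite case} does not apply, and the comparison map genuinely fails to be bijective already for homotopy pullback squares. Everything else --- the exponential law, the Kan-ness of the mapping spaces, the finiteness input --- is routine.
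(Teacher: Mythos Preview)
Your proposal is correct and is exactly the intended argument: the paper states this Corollary without proof, leaving it as an immediate consequence of Proposition~\ref{homotopy limits in the finite case} applied to the functor $c\mapsto s\Sets(X,F(c))$, together with the exponential isomorphism $s\Sets(X,\holim_c F(c))\cong\holim_c s\Sets(X,F(c))$. Your observation that the filteredness of $C$ is an implicit hypothesis (carried over from the Proposition and the preceding Corollary) is also right.
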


We shall not directly use these two results, but the analogous results
in the similar setting of functors into simplicial commutative rings
will be very useful.

  \section{Duality and local conditions in Galois cohomology} \label{GlobalDuality}
  We formulate a version of Poitou--Tate duality with local constraints, following a suggestion of Harris to work with cone constructions.
  This is implicit in \cite{CHT} and surely known to all experts.

 \subsection{Statement of the theorem} \label{A1}

  \subsection{}  
  
  We work in {\'e}tale cohomology of $\Z[\frac{1}{S}]$ where $p \in S$. Let $M$ be a $p$-torsion {\'e}tale  locally constant sheaf, which
  we may think of as a representation of the $S$-unramified quotient $\Gal(\Q_S/\Q)$ of the Galois group;
  we write simply $H^i(M)$ for the etale cohomology or $H^i(\Z_S, M)$ where the set $S$ must be made explicit.
  
    We fix once and for all algebraic closures $\overline{\Q_v}$ and embeddings $ \iota_v: \Q_S \hookrightarrow \overline{\Q_v}$ for each $v \in S$. 
  This induces a map on Galois groups
  $$\iota_v^* : \Gal(\overline{\Q_v}/\Q_v) \rightarrow \Gal(\Q_S/\Q).$$
  
We write $H^*(\Q_v, M)$ for the Galois cohomology of $H^*(\Gal(\overline{\Q_v}/\Q_v), M)$.

  If $G$ is any group and $M$ a $G$-module, we refer to the ``standard cochain complex'' of {\em inhomogeneous} cochains computing the cohomology $H^*(G, M)$;
  for example, a $2$-cochain is a function $G \times G \rightarrow M$, etc.  Note that $G$ acts on this complex by conjugation on the domain (e.g. $G \times G$ in the example just given),
  this action descends to the trivial action on cohomology.  Also the cup product lifts to the cochain level in the standard (back face, front face) way. 
  If $G$ is a profinite group and $M$ a discrete $G$-module we will always understand cochains to be continuous.

    Let $C^*(\Q_v, M)$ be the standard cochain complex computing Galois cohomology of $H^*(\Gal(\overline{\Q_v}/\Q_v), M)$,
    and similarly define $C^*(M)$  as the  the standard cochain complex computing Galois cohomology  $H^*(\Gal(\Q_S/\Q), M)$.
    For brevity, if the module $M$ is understood, we will sometimes refer (e.g.) to $C^2(M)$ as $C^2$ and $C^2(\Q_v, M)$ as $C^2_v$. 
  
  If $x$ is a cochain in the standard cochain complex computing  the cohomology of $\Gal(\Q_S/\Q)$ 
  we use the notation $x|_{\Q_v}$ -- or simply $x_v$ if there is no risk of confusion -- for the
  cochain obtained by pulling back $x$ under the map   $\iota_v^* : \Gal(\overline{\Q_v}/\Q_v) \rightarrow \Gal(\Q_S/\Q).$

  \subsection{Local conditions, lifted to the cochain level} 
  We want to impose ``local conditions'' $\mathcal{L}_v^i \subset H^i(\Q_v, M)$ at places in $S$; 
  we will usually just write for short $\mathcal{L}_v \subset H^*(\Q_v, M)$. 
There will be a long exact sequence
 $$H^*_{\mathcal{L}}(M) \rightarrow H^*(M) \rightarrow  \prod_{v \in S} H^*(\Q_v, M)/\mathcal{L} \stackrel{[1]}{\rightarrow} $$
  where $H^*_{\mathcal{L}}$ are ``cohomology classes that belong to $\mathcal{L}$.''

To achieve this precisely, we need lifts to the co-chain level. 
  More precisely, we will suppose that $\mathcal{L}_v$ comes equipped with a  subcomplex  $C^*_{\mathcal{L}, v} \subset  C^*(\Q_v, M)$ satisfying the following axioms: 
  
  \begin{itemize}
  \item[(i)] $C^*_{\mathcal{L},v}$ is closed under the differential, and 
  \item[(ii)] $C^*_{\mathcal{L},v}$ is invariant under conjugacy, and
\item[(iii)] The cohomology of $C^*(\Q_v, M)/C^*_{\mathcal{L}}(\Q_v, M)$ ``is'' $H^*/\mathcal{L}$, i.e. the natural map
from the cohomology of $C^*(\Q_v, M)$ to the cohomology of $C^*(\Q_v, M)/C^*_{\mathcal{L}}(\Q_v, M)$ is surjective
in each degree and its kernel is precisely $\mathcal{L}_v$. 
  \end{itemize}

   We define $H^i_{\mathcal{L}}$ to be the {\em derived} set of classes in $H^i(M)$ that lie inside $\mathcal{L}$ for each $v \in S$, that is to say 
  the cohomology of the cone
\begin{equation} \label{local cohomology definition} C^n(M) \oplus  \bigoplus_{v \in S}  \frac{C^{n-1}(\Q_v, M)}{C^{n-1}_{\mathcal{L}}(\Q_v, M)} \end{equation}
We denote an element of this group by $(x, y_v)$, where $y_v$ really denotes an element of
$C^{n-1}/C^{n-1}_{\mathcal{L}}$ for each $v \in S$. The differential is the ``cone'' differential, that is to say,
$d(x,y_v) = (-dx, dy_v+x|_v)$. 

To be explicit:  cocycle in this group is a pair
$$ \left( x \in C^n(M), y_v \in C^{n-1}(\Q_v, M)/C^{n-1}_{\mathcal{L}}(\Q_v, M)\right)$$
with the property that $x|_{\Q_v} + d(y_v)$ belongs to $C^{n}_{\mathcal{L}}$, i.e.
``$x$ equipped with a reason for its restriction to $\Q_v$ to belong to $\mathcal{L}$.''

   Write $M = \Hom(M^*, \mu_{p^{\infty}})$.
  In what follows, we will want to consider also a dual local condition $\mathcal{L}^{\perp}$.  By this, we mean
  that we take the orthogonal complement $\mathcal{L}^{\perp} \subset H^*(\Q_v, M^*)$ 
  and {\em assume} that it is equipped with   a similar enrichment to the cochain level,
  $C_{\mathcal{L}^{\perp}} \subset C(\Q_v, M)$, which satisfies (i)--(iii) above and additionally 
  
\begin{itemize}
\item[(iv)]  the cup product 
\begin{equation} \label{desid:cup} C^i_{\mathcal{L}}(\Q_v, M) \times C^{3-i}_{\mathcal{L}^{\perp}}(\Q_v , M^*) \rightarrow C^3(\Q_v, \mu_{p^{\infty}})\end{equation}
vanishes {\em at the chain level.}  
\end{itemize}
 
 We are ready to formulate the statement of duality:
    
\begin{theorem} Suppose  $p>2$, that $\mathcal{L}, \mathcal{L}^{\perp}$ are as above, and both equipped with lifts to the cochain level, where both lifts   satisfy (i)--(iv) above.  There is a  duality
  $$H^i_{\mathcal{L}}(M) \times H^{3-i}_{\mathcal{L}^{\perp}}(M^*) \longrightarrow \mathbf{Q}/\mathbf{Z},$$
  where as usual $M^*=\Hom(M, \mu_{p^{\infty}})$.
  \end{theorem}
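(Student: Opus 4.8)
The plan is to recast the definition of $H^*_{\mathcal L}(M)$ in terms of a \emph{Selmer complex} and then deduce the duality from the classical Artin--Verdier/Poitou--Tate duality together with local Tate duality, all at the cochain level. First I would replace $C^*_{\mathcal L}(M)$, the mapping fiber of the restriction map $C^*(M)\to\bigoplus_{v\in S}C^*(\Q_v,M)/C^*_{\mathcal L,v}(M)$ whose cohomology is $H^*_{\mathcal L}(M)$, by the quasi-isomorphic complex
$$\widetilde C_{\mathcal L}(M):=\mathrm{fib}\Bigl(C^*(M)\oplus\bigoplus_{v\in S}C^*_{\mathcal L,v}(M)\xrightarrow{\ \mathrm{res}-\mathrm{incl}\ }\bigoplus_{v\in S}C^*(\Q_v,M)\Bigr).$$
The natural comparison map $\widetilde C_{\mathcal L}(M)\to C^*_{\mathcal L}(M)$ is a quasi-isomorphism: using axioms (i)--(ii) its kernel is $\bigoplus_v\mathrm{cone}(\mathrm{id}_{C^*_{\mathcal L,v}(M)})[-1]$, which is acyclic, and its cokernel vanishes. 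The advantage of this rewriting is that $\widetilde C_{\mathcal L}(M)$ is manifestly built, by two cone constructions, from the three complexes $C^*(\Z_S,M)$, $\bigoplus_v C^*(\Q_v,M)$, and $\bigoplus_v C^*_{\mathcal L,v}(M)$, each of which has a known chain-level duality partner.

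Next I would assemble the standard input dualities at the chain level: (a) local Tate duality, giving for finite $v$ a cup-product pairing $C^*(\Q_v,M)\otimes C^*(\Q_v,M^*)\to\Q/\Z\,[-2]$ that is a perfect duality on cohomology; and (b) global Artin--Verdier/Poitou--Tate duality, giving that the compactly supported complex $C^*_c(\Z_S,M):=\mathrm{fib}\bigl(C^*(M)\to\bigoplus_{v\in S}C^*(\Q_v,M)\bigr)$ is, via cup product, a perfect $\Q/\Z$-dual of $C^*(\Z_S,M^*)$ up to the shift by $3$ --- equivalently, the nine-term Poitou--Tate exact sequence together with the ``sum of local invariants $=0$'' reciprocity law, lifted coherently to cochains. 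This is where $p>2$ enters: the real place contributes nothing, since $\widehat H^*(\R,M)=0$ for $M$ a finite $p$-group with $p$ odd, so whether or not $\infty\in S$ and whether one uses ordinary or modified cochains at $\infty$ is immaterial.

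The pairing of the theorem is then obtained by fitting (a) and (b) together across the two cone constructions defining $\widetilde C_{\mathcal L}(M)$ and $\widetilde C_{\mathcal L^\perp}(M^*)$. Axiom (iv) --- the chain-level vanishing of the cup product on the local-condition subcomplexes --- is precisely the strict compatibility (a null-homotopy that can be taken to be zero) needed to promote the separate local and global cohomological pairings of Step~2 to an honest pairing of complexes $\widetilde C_{\mathcal L}(M)\otimes\widetilde C_{\mathcal L^\perp}(M^*)\to\Q/\Z\,[-3]$. Concretely, writing an $i$-cocycle of $\widetilde C_{\mathcal L}(M)$ as a triple $(x,(a_v),(c_v))$ and a $(3-i)$-cocycle of $\widetilde C_{\mathcal L^\perp}(M^*)$ as $(x',(a'_v),(c'_v))$, the pairing is the global Artin--Verdier invariant of $x\cup x'$ minus $\sum_{v\in S}$ of the local invariants of a correction term built from $c_v\cup a'_v$ and $a_v\cup c'_v$; axioms (i)--(iv) are exactly what one checks to see that this expression is closed, kills coboundaries, and therefore descends to the required bilinear map $H^i_{\mathcal L}(M)\times H^{3-i}_{\mathcal L^\perp}(M^*)\to\Q/\Z$.

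Finally one proves perfectness. Both $\widetilde C_{\mathcal L}(M)$ and $\mathrm{RHom}\bigl(\widetilde C_{\mathcal L^\perp}(M^*),\Q/\Z\bigr)[-3]$ sit in exact triangles built from $C^*(\Z_S,-)$, $\bigoplus_v C^*(\Q_v,-)$ and $\bigoplus_v C^*_{\mathcal L,v}(-)$; Step~2 identifies the global term and the all-local term of one with those of the other, so by the five lemma it remains to check a purely local statement: that the chosen lift $C^*_{\mathcal L^\perp,v}(M^*)$ is a \emph{derived} (not merely cohomological) orthogonal complement of $C^*_{\mathcal L,v}(M)$ under local Tate duality, i.e.\ that the induced map $C^*_{\mathcal L,v}(M)\to\mathrm{RHom}\bigl(C^*(\Q_v,M^*)/C^*_{\mathcal L^\perp,v}(M^*),\Q/\Z\bigr)[-2]$ is a quasi-isomorphism. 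Granting this, the five lemma on the long exact cohomology sequences forces $\widetilde C_{\mathcal L}(M)\to\mathrm{RHom}(\widetilde C_{\mathcal L^\perp}(M^*),\Q/\Z)[-3]$ to be a quasi-isomorphism, which is the asserted perfect duality. I expect this local derived-orthogonality to be the main obstacle: (iii) says the lifts compute the right cohomological orthogonal complements and (iv) says they are strictly orthogonal on cochains, and local Tate duality gives the correct dimensions, so the real work is verifying that passing to the cone (the quotient complex $C^*(\Q_v,M^*)/C^*_{\mathcal L^\perp,v}(M^*)$) loses no higher information --- a compatibility between cup products, quotient complexes and Tate duality which, though standard in spirit, must be carried through carefully with signs and shifts. (An alternative, lower-tech route would bypass the Selmer complex and instead compare, via the five lemma, the long exact sequence $\cdots\to H^i_{\mathcal L}(M)\to H^i(M)\to\bigoplus_v H^i(\Q_v,M)/\mathcal L_v\to H^{i+1}_{\mathcal L}(M)\to\cdots$ coming directly from the cone with the $\Q/\Z$-dual of the corresponding sequence for $M^*$ and $\mathcal L^\perp$, using the classical Poitou--Tate nine-term sequence and local Tate duality; the content is the same, now concentrated in checking that the connecting maps are mutually dual, which is again where (iv) and reciprocity are used.)
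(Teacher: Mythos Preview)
Your approach is correct in outline and follows the Nekov\'a\v{r} Selmer-complex framework, but the paper takes a different, more hands-on route. The paper does not set up comparison triangles or invoke a five lemma. Instead, for each $i$ separately it writes down an explicit cochain-level formula for the pairing (for $i=1$ it is the sum over $v\in S$ of invariants of $(y_v\cup x'_v)-(\epsilon_v\cup y'_v)+z_v\in C^2(\Q_v,\mu_{p^\infty})$, where $\epsilon_v=dy_v+x|_v$, $\epsilon'_v=dy'_v+x'|_v$, and $dz=x\cup x'$), checks closedness, symmetry, and independence of all auxiliary choices by direct cochain manipulation, and then proves perfectness not via triangles but by a three-step filtration of $H^i_{\mathcal L}$ with graded pieces a cokernel term, $\Sha^i$, and an image term; on these the pairing is identified respectively with local Tate duality, the Cassels--Tate pairing on $\Sha$, and local Tate duality again. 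The case $i=0$ is done separately, directly from the nine-term Poitou--Tate sequence.

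Your approach is more conceptual and handles all $i$ at once; the paper's is more elementary and makes the pairing completely explicit. One point to watch in your version: you describe axiom (iv) as supplying ``a null-homotopy that can be taken to be zero,'' but (iv) only asserts vanishing in total degree $3$ --- in the paper's main example $C^1_{\mathcal L,v}\cup C^1_{\mathcal L^\perp,v}$ lands nontrivially in $C^2_v$. This is still enough to define the pairing into $\Q/\Z[-3]$, and your ``main obstacle'' (local derived orthogonality) is softer than you suggest since $\Q/\Z$ is injective, so derived maps into it are detected on cohomology; but making the comparison of triangles strictly commute, so that the five lemma applies, needs more bookkeeping than you indicate. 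The paper's filtration argument sidesteps that entirely. Your ``lower-tech route'' is closer in spirit to the paper but still not quite the same: the paper does not compare two long exact sequences via the five lemma either --- it filters each single group $H^i_{\mathcal L}$ and matches graded pieces directly.
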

 
\subsection{Examples of local conditions meeting our conditions}  \label{ExampleAB}

The most important example, besides the trivial example\footnote{ take $\mathcal{L} = 0$ in all degrees, with $C_{\mathcal{L}}(\Q_v, M) = 0$ also;
dually take $\mathcal{L}^{\perp} = H^i$ in all degrees, with $C_{\mathcal{L}^{\perp}}(\Q_v, M^*) = C(\Q_v, M^*)$}  
is the following generalization of ``unramified'' local conditions: 

Take an arbitrary subgroup $\mathfrak{l} \subset H^1$, and take 
  $$\mathcal{L} = \begin{cases}  H^0 \subset H^0  \\  \mathfrak{l} \subset H^1 \\ 0 \subset H^2\end{cases} , \ \ \ 
 \mathcal{L}^{\perp} = \begin{cases}  H^0 \subset H^0 \\   \mathfrak{l}^{\perp} \subset H^1 \\ 0  \subset H^2 \end{cases}.$$ 
 
 We take, both for $\mathcal{L}$ (respectively $\mathcal{L}^{\perp}$):
  \begin{itemize}
 \item $C_{\mathcal{L}}^{0} =  C_0$. 
 \item $C_{\mathcal{L}}^1 $ to consist of all classes $x \in C^1(\Q_v, M)$ where $dx=0$ and $[x] \in \mathfrak{l}$ (resp. $\mathfrak{l}^{\perp})$
 \item  $C_{\mathcal{L}}^j = 0$ for $j \geq 2$. 
  \end{itemize}
 
The complex $C/C_{\mathcal{L}}$ looks like 
 $$0  \rightarrow C^1(\Q_v, M)/C^1_{\mathcal{L}} \rightarrow C^2(\Q_v, M) \rightarrow \dots $$
 and visibly the cohomology groups are  $0, H^1/\mathcal{L}_1, H^2$ as desired. The vanishing of the desired cup product
 \eqref{desid:cup} is obvious.

\subsection{Proof of the theorem}

\subsubsection{} 
First we verify this for $i=0$.  Given
  $$ \alpha \in \ker(H^0(M) \rightarrow   \prod_{v \in S} H^0(\Q_v, M)/\mathcal{L}^0_v)$$
$$ \beta \in \mathrm{coker} \left( H^2(M^*) \rightarrow  \prod_{v \in S} H^2(\Q_v, M^*)/\mathcal{L}_{\perp, v}^2 \right)$$
we define $\langle \alpha, \beta \rangle$ as the sum of local reciprocity pairings
$$\sum_{v \in S} (\alpha_v, \beta_v).$$
This is  well-defined because in fact
$\alpha|_{\Q_v} \in \mathcal{L}_v^0$ for each $v \in S$.   We claim it is a perfect pairing. 

The perfection of this pairing amounts to the statement that the resulting map 
\begin{equation} \label{brianL}  \frac{ \prod_{v \in S} H^2(\Q_v, M^*) }{ \langle \mathcal{L}_{\perp,v}^2, H^2(M^*) \rangle } \rightarrow \ker(H^0(M) \rightarrow   \prod_{v \in S} H^0(\Q_v, M)/\mathcal{L}_v)^{\vee} \end{equation}
is an isomorphism.   Here $(\dots)^{\vee} $ on the right hand side means maps to $\Q/\Z$; in what follows, let us use the word ``functional'' for ``map to $\Q/\Z$.'' 

The map \eqref{brianL} is visibly surjective: the nine-term exact sequence asserts that $\prod_{v \in S} H^2(\Q_v, M^*) \rightarrow 
H^0(M)^{\vee}$ is surjective (which is obvious anyway). 
For injectivity,  suppose that $\beta \in \prod_{v \in S} H^2(\Q_v, M^*)$
induces the zero functional on the right-hand side.  In particular ``pairing with $\beta$'' descends to a  functional on 
the image of $H^0(M)$ inside $\prod_{v} H^0(\Q_v, M)/\mathcal{L}^0_v$. 
If we replace $\beta$ by $\beta + s_v$ for $s_v \in \mathcal{L}_{\perp,v}^2$, the induced functional on 
this image changes  the restriction of $\langle -, s_v \rangle$ on $\prod_{v} H^0(\Q_v, M)/\mathcal{L}_v$.  But the pairing $$\mathcal{L}_{2,v}^{\perp} \times H^0(\Q_v, M)/\mathcal{L}_v^0 \rightarrow \Q/\Z$$ is an isomorphism, 
and so we can modify $\beta$ by an element of $\prod \mathcal{L}_{\perp,v}^2$ in such a way that 
the functional ``pairing with $\beta$'' is actually zero on the image of $H^0(M)$. 
 Then the nine-term exact sequence means that $\beta$ actually lies in the image of $H^2(M^*)$, as desired.

\subsubsection{}  \label{explicit pairing formula}
Now we examine the trickier case $i=1$.  
The main issue is to construct the pairing. We then verify it is perfect by filtering the groups involved and looking at the pairing on graded pieces,
where it reduces to better-known pairings. 

Take classes $$(x \in C^1, \overline{y_v} \in C^0_v/C^0_{\mathcal{L},v})$$ and
$$(x' \in C^2,  \overline{y'_v } \in C^1_v/C^1_{\mathcal{L}^{\perp},v})$$ representing elements of $H^1_{\mathcal{L}}$
and $H^2_{\mathcal{L}^{\perp}}$.  Lift $\overline{y_v}, \overline{y'_v}$ to $y_v \in C^0_{\mathcal{L},v}, y'_v \in C^1_{L^{\perp},v}$. 
 Set $$\epsilon_v = dy_v + x_v \in C^1_{v, \mathcal{L}}, \ \ \epsilon'_v  = dy'_v +x'_v \in C^2_{\mathcal{L}^{\perp},v} $$ similarly. Note that $d\epsilon_v = 0$.   Take $z \in C^2$ with $dz = x \cup x'$.
Now form
\begin{equation} \label{moo} (y_v \cup x'_v) - (\epsilon_v \cup y'_v) + z_v \in C^2(\Q_v, \mu_{p^{\infty}})  \end{equation} The differential of this equals
 $$  \epsilon_v \cup x'_v   + \epsilon_v \cup dy'_v =  (\epsilon_v \cup \epsilon_v')  \stackrel{(iv)}{=} 0.$$
 where we used assumption (iv). 

In other words, we have a class in $H^2(\Q_v, \mu_{p^{\infty}})$ for each  $v \in S$.
Taking the sum of invariants - evidently independent of choice of $z$ -- gives an element of $\Q_p/\Z_p$
associated to the classes $(x, y_v)$ and $(x', y_v')$.  This is our pairing
\begin{equation} \label{12pairing} H^1_{\mathcal{L}}(M) \times H^{2}_{\mathcal{L}^{\perp}}(M^*) \longrightarrow \Q_p/\Z_p.\end{equation}

\subsubsection{Symmetry}

This construction is ``symmetric,'' or rather a similar constructio with roles reversed gives the same:
if we consider, instead of \eqref{moo}, the class 
\begin{equation} \label{moo2}  - x_v \cup y'_v  + y_v \cup \epsilon'_v  +  z_v \end{equation}
and form an invariant similarly, the result is the same. 

 In fact, the differential of \eqref{moo2} is $  x_v \cup \epsilon'_v  +dy_v \cup \epsilon'_v =  \epsilon_v \cup \epsilon_v' = 0$; 
and \eqref{moo} differs from \eqref{moo2} by 
 $$ y_v \cup x'_v + x_v \cup y'_v - \epsilon_v \cup y'_v - y_v \cup \epsilon'_v = - y_v \cup dy'_v - dy_v \cup y'_v =-d(y_v \cup y'_v)$$ 
 thus they have the same cohomology class.

 \subsubsection{Independence}
 
 Let's try to see this is independent of   the various choices we made. 
 
 \begin{itemize}
 \item[(a)] 
 If we modify $y_v$ by $w_v \in C^0_{v, \mathcal{L}}$  
 the class \eqref{moo} changes by
 $$w_v \cup x'_v - d w_v \cup y'_v   = w_v \cup (x'_v + dy'_v) -  \underbrace{  d(w_v \cup y'_v) }_{ d w_v \cup y'_v  + w_v \cup dy'_v  }$$
 which is cohomologically trivial. 
  
 If  we modify $y_v'$ by $w_v'$  the situation is similar.
  \item[(b)] Suppose we modify $x$ by a boundary in the fashion $(x  \mapsto x_v - da, y_v \mapsto y_v+a )$, for some $a \in C^0$. 
This does not change $\epsilon_v$. Replacing $z$ with
 $z  - a \cup x'$, we see that  the class \eqref{moo} is unchanged.

\end{itemize}

 \subsubsection{Perfect pairing}
 We now verify that \eqref{12pairing} is perfect, by comparing it to standard pairings.  
  We have filtrations on $H^i_{\mathcal{L}}$
 with successive  graded pieces as follows:
 $$ \underbrace{ \mathrm{cokernel}(H^{i-1} \rightarrow \prod_v H^{i-1}(\Q_v)/\mathcal{L})}_{x_v=\epsilon_v=0}, 
  \  \underbrace{ \Sha^{i}}_{\epsilon_v=0},  \mathrm{image}(H^i \rightarrow \prod_{v} H^i(\Q_v)/\mathcal{L}),$$
  e.g. the first piece comes from the image in cohomology of cycles satisfying $x_v=\epsilon_v=0$ for all $v \in S$. 
  As usual, we define $\Sha^i$ here to be global cohomology classes that are  everywhere locally trivial.
    
  It's easy to see that this filtration is its own dual under this pairing.  %
  We will explicate the pairing piece by piece, with respect to this filtration:

\begin{equation} \label{fro1}   \mathrm{cokernel}(H^{0} \rightarrow \prod_v H^{0}(\Q_v,M)/\mathcal{L}^0_v) \times   \mathrm{image}(H^2 \rightarrow \prod_{v} H^2(M^*)/\mathcal{L}_{\perp,v}^2) 
\end{equation}
\begin{equation} \label{fro2}  \Sha^1 \times \Sha^2 \end{equation}
\begin{equation} \label{fro3}   \mathrm{image}(H^1 \rightarrow \prod_{v} H^1(M^*)/\mathcal{L}_{v,\perp}^1)  
\times   \mathrm{cokernel}(H^{1} \rightarrow \prod_v H^{1}(M)/\mathcal{L}^1_v)    \end{equation}
  
For the first: we take $(x =0, y_v \in H^0(\Q_v))$ representing the left-hand class,  and on the right we can take 
any pair $(x', y'_v)$ with $x'$ representing the given class in the image.   We can take $z=0$. Then \eqref{moo} is given by 
$$ y_v \cup x'_v - dy_v \cup y'_v \sim y_v \cup (x'_v - dy'_v)$$ 
and thus realizes the standard local duality of $H^0$ and $H^2$.

For \eqref{fro2}: it's easy to see the pairing from above recovers the Tate pairing.

 For \eqref{fro3}:  We can choose a representative for the right-hand class with $x'= \epsilon'_v = 0$; 
 on the left we choose a representative $(x_v, y_v)$.  Again we can take $z=0$. Then \eqref{moo} is given by
 $-\epsilon_v \cup y_v'  =  -(x_v+ d y_v)  \cup y'_v$, which realizes up to sign the standard local pairing of $H^1$ and $H^1$.

\newpage

\bibliography{ddr-refs}
  \bibliographystyle{plain}

\end{document}